\documentclass[11pt,letterpaper]{amsart}
\usepackage{graphicx} 
\input{style.sty}
\title{Real Link Floer Homology}
\author{Yonghan Xiao}
\address{Department of Mathematics, School of Mathematical Sciences\\Peking University}
\email{judy\_xyh0530@stu.pku.edu.cn}

\makeatletter
\let\@wraptoccontribs\wraptoccontribs
\makeatother
\contrib[with an appendix joint with]{Zhenkun Li}
\address{Academy of Mathematics and Systems Science\\ Chinese Academy of Sciences}
\email{zhenkun@amss.ac.cn}

\usepackage{fullpage, graphicx, enumitem}
\usepackage{url}
\usepackage{verbatim}
\usepackage{bbm}
\usepackage{amssymb,amsmath}
\usepackage{amsfonts,savesym,graphicx,bm,amsthm}
\usepackage{color}
\usepackage[mathscr]{eucal}
\usepackage[all]{xy}

\date{\today}
\begin{document}
\maketitle

\begin{abstract} 
In this paper, we define real link Floer homology for strongly invertible and doubly periodic links in closed real $3$-manifolds with connected fixed sets, which generalizes real Heegaard Floer homology defined in~\cite{guth2025real} and real sutured Heegaard Floer homology defined in~\cite{BGX}. We give a combinatorial description of the theory in $S^3$ via real grid diagrams and use it to investigate structural properties of the theory as well as properties of strongly invertible knots. A computer implementation computing real grid homology of knots was written by Zhenkun Li in \cite{ZhenkunLioythonprgram}. The appendix includes real grid homology of 50+ small knots, from which we observe several interesting phenomenon.

\end{abstract}
\tableofcontents

\section{Introduction}
In \cite{ozsvath2004holomorphic}, Ozsv\'ath and Szab\'o introduced various versions of Heegaard Floer homology $\HF^{\circ}$, which are useful invariants of closed oriented $3$-manifolds. Afterwards, knot Floer homology was introduced by Ozsv\'ath and Szab\'o in \cite{Holomorphicdisksandknotinvariants} and independently by Rasmussen in \cite{Rasmussen}. In \cite{SarkarWang2010}, Sarkar and Wang defined nice Heegaard diagram, from which $\widehat{\HF}$ can be computed combinatorially. Following that, a combinatorial version of knot Floer homology first appeared in \cite{MOS2009knot}. Later, Manolescu, Ozsv\'ath, Szab\'o, and Thurston \cite{MOST2007grid} generalized this to links. The book~\textit{Grid homology for knots and links} \cite{OSS2015grid} gives a detailed and complete description of this theory. Recently, Guth and Manolescu \cite{guth2025real} defined real Heegaard Floer homology for closed real $3$-manifolds (closed oriented $3$-manifolds equipped with an orientation-preserving involution whose fixed set is non-empty) and Hendricks \cite{hendricks2025noterealheegaardfloer} gave a localization spectral sequence relating it to classical Heegaard Floer homology. In \cite{BGX} and \cite{LOrealbordered}, real sutured Floer homology and real Bordered Floer homology were developed and the authors introduced the notion of a real nice Heegaard diagram. 

On the other hand, equivariant knots and links in real $3$-manifolds have always been interesting for low-dimensional topologists. The investigation of periodic \cite{Trotter1961PeriodicAO,Murasugi1971OnPK,Edmonds1983GroupAO} and strongly invertible knots \cite{Sakuma} dates back more than 60 years. More recently, in \cite{hirasawa2023equivariant} and \cite{hirasawainvariantseifertsurface}, Hirasawa, Huira and Sakuma investigated equivariant Seifert genera of strongly invertible knots in $S^3$. Equivariant slice genera of strongly invertible knots have also been estimated using various tools in recent years: Donaldson's theorem \cite{BIequiv4genusofsiandperidicknots}, Blanchfield form \cite{MillerPowellequivslicegenera}, knot Floer homology \cite{DMSequivariantknotandHFK}, Khovanov homology \cite{Lobb2021ArefinementofKhovanovhomology,Sano_2025, borodzik2025khovanovhomologyequivariantsurfaces}, etc. Other invariants like the equivariant unknotting number \cite{boyle2025equivariantunknottingnumbersstrongly} have also been defined and estimated. Moreover, strongly invertible knots can be used to construct exotic disks \cite{DMSequivariantknotandHFK,dai2026exoticdiskssingularinstanton}.

In previous works, real theoretic tools have been applied to double branched cover of knots \cite{Konno2024,li2024realmonopolesspectralsequence,guth2025real}, which have several amazing application, see \cite{miyazawa2023gaugetheoreticinvariantembedded,miyazawa2025satelliteformularealseibergwitten,DKMPS,Kang_2026}, but the investigation of symmetries of knots is still left open. An idea of real knot Floer homology has appeared in \cite{hendricks2025noterealheegaardfloer}, but its invariance was not proved and its properties were not investigated. In this paper, we introduce real link Floer homology for (generalized) strongly invertible and doubly periodic links (see Definition~\ref{def:strongly invertible links} and \ref{def:periodic links}) in closed real $3$-manifolds with connected fixed sets. We will see that this homology is algorithmically computable, provides bounds on various equivariant knot invariants, detects non-commutativity of the equivariant connected sum and also has interesting structural properties like or in contrast with existing knot homologies.

\begin{theorem}\label{thm:intro-real link Floer homology} Let $(Y,\tau)$ be closed real $3$-manifold with $\fix(\tau)$ connected.

\begin{enumerate}
    \item If $(L,\fra)$ is a generalized strongly invertible link with auxiliary data in $(Y,\tau)$(see Definition~\ref{def:strongly invertible knots with extra data} for the notion of auxiliary data), then there are real link Floer homology groups \[\HFLR^-(L,\fra) \quad \text{and}\quad \widehat{\HFLR}(L,\fra),\] which are invariants of the isotopy class of $(L,\fra)$ valued in the category of modules over a suitable polynomial ring and $\F$-vector spaces, respectively.
    \item If $(L,\fro)$ is an oriented generalized doubly periodic link in $(Y,\tau)$, then there is a real link Floer homology group \[ \widehat{\HFLR}(L,\fro),\] which is an invariant of the isotopy class of $(L,\fro)$ valued in the category of $\F$-vector spaces.
\end{enumerate}
Throughout this paper, $\F$ denotes the $2$-element field $\F_2$.
\end{theorem}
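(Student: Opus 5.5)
The invariance argument will follow the template of Guth--Manolescu for real Heegaard Floer homology, adapted to multi-pointed diagrams in the spirit of Ozsv\'ath--Szab\'o's link Floer homology. It has three stages.

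\textbf{Diagrams.} First we build real Heegaard diagrams compatible with the link. Given $(L,\fra)$ strongly invertible, we choose a $\tau$-equivariant Morse function $f$ on $Y$, with $f\circ\tau=f$, and a suitable metric. We arrange that $L$ is a union of flowlines between index-$0$ and index-$3$ critical points, and that the critical points on $L$ lie on $\fix(\tau)$ or are swapped in pairs. This produces a multi-pointed diagram $(\Sigma,\boldsymbol\alpha,\boldsymbol\beta,\mathbf w,\mathbf z)$ together with an orientation-reversing involution $\tau_\Sigma$ satisfying $\tau_\Sigma(\boldsymbol\alpha)=\boldsymbol\beta$. The basepoints are either fixed or exchanged: for strongly invertible links, $\tau$ reverses the orientation of $L$ and so exchanges the roles of $w$ and $z$; for doubly periodic links the orientation is preserved. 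The auxiliary data $\fra$ records the choice of half-axes, and hence which basepoints go to which.

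\textbf{Chain complexes.} The complex $\mathit{CFLR}$ is generated by the $\tau$-invariant intersection points $\mathbf x\in\mathbb T_\alpha\cap\mathbb T_\beta$. Its differential counts $\tau$-invariant Maslov index-one holomorphic disks, equivalently index-one real disks, with the usual $U$-weights recorded by $n_{\mathbf w}$ and by the invariant parts of the multiplicities. Following Guth--Manolescu, we identify these with holomorphic disks in $\mathrm{Sym}^g(\Sigma)$ with boundary on the Lagrangian $\mathbb T_\alpha$ via the doubling trick, so $\partial^2=0$ follows from Gromov compactness for real moduli spaces. Since $\fix(\tau)$ is connected, the real spin structures and the orientation issues over $\F$ are harmless. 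The suitable polynomial ring has one variable for each $\tau$-orbit of $w$-basepoints.

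For the hat version we set the orbit variables to zero. In the doubly periodic case only $\widehat{\HFLR}$ is claimed, because invariant disks crossing basepoints interact badly with the $U$-action there.

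\textbf{Invariance.} Next we connect any two real diagrams of isotopic $(L,\fra)$ by real Heegaard moves:
\begin{itemize}
\item equivariant isotopies;
\item equivariant handleslides, obtained by doing an $\alpha$-handleslide and its $\tau$-mirror $\beta$-handleslide simultaneously;
\item equivariant (de)stabilizations, both index-$1$/$2$ and index-$0$/$3$ (adding basepoint pairs);
\item equivariant diffeomorphisms.
\end{itemize}
This is an equivariant Cerf theory argument: a generic path of $\tau$-invariant Morse functions and gradient-like fields, relative to $L$, has codimension-one singularities that yield exactly these moves. The needed statement should be the real-link analogue of the corresponding Guth--Manolescu and Borodzik--Guth--Xiao (BGX) results.

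For each move we write down a chain map. For isotopies and handleslides these are real triangle maps counting invariant triangles for $(\boldsymbol\alpha,\boldsymbol\beta,\boldsymbol\beta')$, arranged so that the triple is symmetric. Quasi-isomorphism then follows from real associativity of polygon counts. For stabilizations we use the standard neck-stretching identification. The index-$0$/$3$ stabilization adding an extra fixed or exchanged basepoint pair yields a tensor factor, which we quotient out or account for using the $U$-variable relation.

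\textbf{Main obstacle.} The main obstacle is the equivariant Cerf theory. We must show that any two real diagrams adapted to the same isotopy class of $(L,\fra)$ are related by equivariant moves that never pass basepoints across $\boldsymbol\alpha$ or $\boldsymbol\beta$ curves and that respect $\fra$. Near $\fix(\tau)\cap L$, birth-death of critical points must occur symmetrically. My first attempt would be to reduce to the sutured setting by removing an equivariant tubular neighbourhood of $L$ with meridional sutures, and then invoke the BGX invariance of real sutured Floer homology. The link version would then follow after identifying the extra basepoint variables, and checking that the identification commutes with the $U$-actions, which needs a separate real handle-attachment argument.
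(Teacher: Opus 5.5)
Your skeleton matches the paper's: adapted diagrams, a real complex, completeness of real Heegaard moves via BGX, and invariance move by move following Guth--Manolescu. However, you get the basic symmetry of the diagrams backwards, and you mishandle the one move that is genuinely new. As written, the argument does not go through.

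\textbf{The diagram symmetry.} A real splitting requires $\tau$ to exchange the handlebodies, so the Morse function must satisfy $f\circ\tau=3-f$. Your choice $f\circ\tau=f$ makes each handlebody $\tau$-invariant, which contradicts your own $\tau_\Sigma(\bm{\alpha})=\bm{\beta}$.
\begin{itemize}
\item With the correct convention, no critical point is fixed and $\fix(\tau)=\fix(R)\subset\Sigma$. So the two points of $K\cap C$ on a strongly invertible component $K$ are necessarily basepoints fixed by $R$.
\item $\tau$ carries arcs in $U_\beta$ (oriented from $\bfO$ to $\bfX$) to arcs in $U_\alpha$ (oriented from $\bfX$ to $\bfO$). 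Hence $\tau$ reverses the orientation of $L$ exactly when $R$ \emph{preserves} $\bfO$ and $\bfX$ setwise: one fixed $O$ and one fixed $X$ per strongly invertible component, all other basepoints in exchanged pairs.
\item It is in the doubly periodic case that $R(O_i)=X_i$. You have the two cases reversed.
\end{itemize}
This is not cosmetic. If $R$ exchanged $O$ and $X$, every real class would have $n_{O_i}=n_{X_i}$, so blocking $\bfX$ would block $\bfO$ and there would be no nontrivial minus theory. That is exactly why the paper defines only $\widehat{\HFLR}$ for doubly periodic links. It is also why $\fra$ matters: it records the $O/X$ labeling $\frl$ of the fixed basepoints, and connectivity of $\fix(\tau)$ is what makes the half-axis and orientation of $C$ meaningful.

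Two smaller inaccuracies:
\begin{itemize}
\item The differential counts classes of real index $\mu_R=1$. These are not the invariant classes of ordinary Maslov index one; for example, a pair of disjoint exchanged rectangles has ordinary index $2$.
\item A real handleslide cannot be modelled on an $R$-symmetric triple $(\bm{\alpha},\bm{\beta},\bm{\beta}')$, because $R$ exchanges $\alpha$- and $\beta$-curves. Here you should simply invoke the Guth--Manolescu proof, as the paper does.
\end{itemize}

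\textbf{The move set.} Your move set is wrong along the fixed set, and this is the one move the paper treats by hand. No $(0,3)$-stabilization at $C$ preserves $(L,\fra)$. Since $L\cap C$ already consists of the fixed basepoints, a symmetric stabilization there can only relabel them, for instance producing a component with two fixed $O$'s. The result represents $L$ but not $(L,\fra)$; it computes a different theory, and the groups genuinely depend on $\fra$. So no invariance can be claimed under it.

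The only legitimate $(0,3)$-moves are $\Z/2$-stabilizations away from $C$, adding pairs $(O,O')$ and $(X,X')$. Together with BGX's existence and completeness results for the link exterior with meridional sutures, these supply the Cerf-theoretic input you flagged as the main obstacle.

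Their effect on the minus complex is a mapping cone, not a tensor factor. Viewing the move as two Zemke quasi-stabilizations performed symmetrically gives
\[\partial_{\cH^+}=\begin{bmatrix}\partial_{\cH} & U_O+U_{O_a}\\ V_X+V_{X_a} & \partial_{\cH}\end{bmatrix}.\]
After setting the $V$'s to zero this yields $\CFLR^-(\cH^+)\cong\Cone\bigl(U_O+U_{O_a}\colon \CFLR^-(\cH)[U_O]\to\CFLR^-(\cH)[U_O]\bigr)$, where $U_{O_a}$ means $u_{o_a}^2$ when the adjacent basepoint is fixed. Invariance then follows from $H_*(\Cone(U_1-U_2\colon C[U_1]\to C[U_1]))\cong H_*(C)$, and for the hat flavor from the Manolescu--Ozsv\'ath--Sarkar argument.

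Finally, BGX invariance of real sutured Floer homology by itself only controls the fully blocked groups, not $\HFLR^-$ with its module structure. That is why the paper uses BGX only for diagrams and moves.
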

The real link Floer homology groups split along real $\spinc$ structures into \[\HFLR^\circ(L,\fra)=\bigoplus_{\s^R\in \rspinc(Y,\tau)} \HFLR^{\circ}(L,\fra,\s^R), \] similar to the real Heegaard Floer groups associated to closed or sutured real $3$-manifolds. It can also be equipped with additional grading structures (see Section~\ref{sub:Strongly invertible links}). More precisely, when $c_1(\s^R)$ is torsion, there is a real Maslov grading $M^R$ valued in $\Z$ on $\HFLR^{\circ}(L,\fra,\s^R)$. In the strongly invertible case, if we further assume that $L$ is null-homologous, then there is a real Alexander grading $A^R$ valued in $\frac{1}{2}\Z$. 

\begin{rem}\label{rmk:importance of auxiliary data}
For a strongly invertible knot, the choice of auxiliary data reduces to the choice of a direction, which is a more classical notion (see \cite{Sakuma}). One will see later that the choice of auxiliary data in this paper forces the Heegaard diagram to have one $O$ and one $X$-base point on each strongly invertible knot component. We will see through Example~\ref{ex:direction dependence} that the real link Floer group depends essentially on the this choice. Via the spectral sequence roughly from $\widehat{\HFK}$ to $\widehat{\HFKR}$ in Theorem~\ref{thm:spectral sequence from HFK,strongly invertible case}, this explains why a choice of direction is essential for defining $\tau$ action on $\fullCFK^-$ in \cite[Section~3]{DMSequivariantknotandHFK}(see Section~\ref{sub:calculatetauKonclassicalHFK}).

We can define another real link Floer theory for (generalized) strongly invertible links using real Heegaard diagrams that have one pair of $O$-base points and no $X$-base points along the fixed set on each strongly invertible knot component. It can be seen from examples that this theory is quite different from the one defined in this paper and we will come back to this theory and the comparison between two versions in a later work~\cite{YXHFLR2}.

We also want to remark that the connectivity assumption on the fixed set is not very restrictive, as it has been shown in \cite[Section~2]{Myers1981HomologyW} that every real structure on an integral homology $3$-sphere has connected fixed set.
\end{rem}

With the help of real nice Heegaard diagram, as defined in \cite{BGX} and \cite{LOrealbordered}, we give a combinatorial description for real link Floer homology in $(S^3,\tau_{\mathrm{std}})\subset (\C^2,\text{conjugation})$ via real grid diagrams (see Definition~\ref{def:real grid diagram}). 

\begin{theorem}\label{thm:intro-real grid homology}
Fix the background real $3$-manifold to be $(S^3,\tau_{\mathrm{std}})$.
\begin{enumerate}

\item If $(L,\fra)$ is a (generalized) strongly invertible link with auxiliary data, then there are real grid homology groups \[\GHR^-(L,\fra) \quad \text{and}\quad \widehat{\GHR}(L,\fra),\] which are invariants of the isotopy class of $(L,\fra)$ valued in the category of bigraded modules over a suitable polynomial ring and bigraded $\F$-vector spaces, respectively.

\item If $(L,\fro)$ is an oriented (generalized) doubly periodic link, then there is a real grid homology group \[ \widehat{\GHR}(L,\fro),\] which is an invariant of the isotopy class of $(L,\fro)$ valued in the category of $\F$-vector spaces.

\end{enumerate}
\end{theorem}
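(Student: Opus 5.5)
The plan is to identify real grid homology with real link Floer homology, so that invariance follows from Theorem~\ref{thm:intro-real link Floer homology}. The bigradings then need a separate check. First I would show that a real grid diagram for $(L,\fra)$ (resp. $(L,\fro)$), placed on the torus with the involution induced by $\tau_{\mathrm{std}}$, is a real Heegaard diagram for the link in $(S^3,\tau_{\mathrm{std}})$ in the sense used to define $\HFLR^\circ$. Concretely, the involution on the grid torus should interchange the $\alpha$ (horizontal) and $\beta$ (vertical) circles, as required by the real Heegaard diagram convention. It should also preserve the $O$ and $X$ markings. The auxiliary data must match the base point requirement in Remark~\ref{rmk:importance of auxiliary data}: exactly one $O$ and one $X$ on the fixed diagonal for each strongly invertible component. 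In the doubly periodic case, each component instead has its markings off the fixed set.

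Next I would argue that this diagram is a real nice Heegaard diagram in the sense of \cite{BGX,LOrealbordered}. All elementary regions are rectangles, and after deleting the regions containing markings every remaining region is a square. The index-one real holomorphic disks counted by the real differential are then real (invariant) domains in the torus. By the real nice-diagram analysis, the moduli spaces of these domains have a combinatorial count. Invariant empty rectangles, and invariant pairs of rectangles that are exchanged by the involution, should be the only contributions, each counted once modulo $2$. This gives a chain isomorphism between the real grid complex $\GCR^-$ (resp. $\widehat{\GCR}$) and the real link Floer complex computed from this diagram. It respects the $U$-actions and the real Maslov and Alexander gradings. By Theorem~\ref{thm:intro-real link Floer homology}, the homology of this complex is an isotopy invariant.

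The main obstacle is the bigrading and the module structure. Theorem~\ref{thm:intro-real link Floer homology} gives invariance only as ungraded modules, or with relative gradings. For absolute gradings, I would use the direct combinatorial route as a cross-check. Any two real grid diagrams of isotopic equivariant links are related by real (equivariant) versions of commutation, stabilization, and translation/cyclic permutation moves. This is an equivariant Cromwell-type theorem, which I would prove by equivariantizing the standard argument via symmetric arc presentations. For each move I would construct a chain homotopy equivalence using equivariant pentagon and hexagon counts, adapting the proofs in \cite{OSS2015grid}. Invariant pentagons that are their own image need special care, and I would handle them by pairing them off. I would then check that the grading formulas $M^R$ and $A^R$, defined via the $\mathcal{J}$-functions on the fixed-locus generators, shift correctly under each move.

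The real stabilizations are the delicate case. They must be performed at an $O$ on the fixed set, or as a pair of symmetric stabilizations off it. These cases require a real analogue of the destabilization map together with a filtration argument. I expect this, together with proving the equivariant Cromwell theorem while preserving the auxiliary data $\fra$, to be the hardest part.
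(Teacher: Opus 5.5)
Your overall route is the paper's: you realize a real grid diagram as a real nice Heegaard diagram and import invariance from Theorem~\ref{thm:intro-real link Floer homology} via \cite{BGX,LOrealbordered}, with a direct combinatorial proof via real grid moves as the alternative. However, two concrete steps are wrong.

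\textbf{Your list of real index-one domains is incomplete.} This list is the heart of the identification. The real index of an $R$-invariant domain $D$ from $\xv$ to $\yv$ carries a correction from corners on $C$. From $M^R_{\bfO}(\xv)=\frac12 M_{\bfO}(\xv)-\frac14|\xv\cap C|+\frac14 l_f$ one gets $\mu_R(D)=\frac12\mu(D)-\frac14\bigl(|\xv\cap C|-|\yv\cap C|\bigr)$. Two consequences follow. Invariant squares count in only one direction: from the two exchanged corners to the two corners on $C$. More importantly, besides squares and exchanged pairs of rectangles there are two further types (Proposition~\ref{prop:domains of real index 1}):
\begin{itemize}
\item $R$-invariant $L$-shaped hexagons, with $\mu=2$; the elbow and the reflex corner both lie on $C$, one in $\xv$ and one in $\yv$.
\item $R$-invariant $X$-shaped octagons, with $\mu=3$ and $|\xv\cap C|=|\yv\cap C|+2$.
\end{itemize}
All of these have real index one and mod-$2$ count one, and the paper's $\partial^{G,-}$ counts all four types.

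Omitting the hexagons and octagons changes the map, not just the bookkeeping. Take $R$ acting as transpose on three coordinates $a<b<c$ and ignore the rest of the grid. An invariant $L$-hexagon with elbow at $(a,a)$ and reflex corner at $(b,b)$ connects $\{(a,c),(c,a),(b,b)\}$ with $\{(a,a),(b,c),(c,b)\}$. In this $3\times 3$ local model, the two such hexagons are exactly what make the homology vanish; with only squares and pairs you get rank $2$. So neither your chain isomorphism with $\CFLR^\circ$ nor a direct check of $\partial^2=0$ and move invariance goes through with your list.

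\textbf{Your stabilization at a marking on the fixed set is not an allowed move.} In a symmetric stabilization at a marking on $C$, the new $2\times2$ block has one diagonal on $C$ and the other diagonal exchanged by $R$. $R$-invariance forces the doubled marking type onto the exchanged diagonal and puts the single marking of the other type on $C$. So stabilizing at an $O$ on $C$ leaves that component with two $X$'s and no $O$ on $C$, and dually for $X$. This violates $|\fix(R)\cap\bfO|=|\fix(R)\cap\bfX|=l_f$, so the new diagram no longer represents $(L,\fra)$. It computes a genuinely different theory, so there is no invariance map to construct.

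The paper instead does the following:
\begin{itemize}
\item it only stabilizes in symmetric pairs off $C$;
\item it realizes the fixed-point RI move as a real commutation (a switch), Figure~\ref{fig:RI move as real commutation};
\item it gets completeness of the move set from the equivariant Reidemeister theorem for transvergent diagrams, not from an equivariant Cromwell theorem.
\end{itemize}

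A minor point: in the doubly periodic case $R$ exchanges $\bfO$ with $\bfX$; it does not preserve them.
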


These are isomorphic to the real link Floer groups defined in Theorem~\ref{thm:intro-real link Floer homology} and have the advantage of being computable. A computer implementation by Zhenkun Li is given in \cite{ZhenkunLioythonprgram} and some calculation results will be given in Appendix~\ref{app:Calculation results for knots with small crossing numbers}. 

In classical knot Floer homology, torsion order $\ord_{U}$ and $\tau$-invariants provide useful bounds on knot invariants. We investigate their counterparts $\ord_{u}$ and $\tau^R$ in real knot Floer homology via the combinatorial description following the strategy from \cite{Alishahi_2020}, \cite{Sarkar2011tau} and \cite{OSS2015grid}. More precisely, we provide bounds on type $A$ and $B$ unknotting number (see Definition~\ref{def:equiv unknotting number}). 

\begin{prop}\label{prop:intro-bounding u_A and u_B}
Let $K$ be a strongly invertible knot in $(S^3,\tau)$. For any choice of auxiliary $\fra$, we have \[\ord_{u}(K,\fra)\le 2\widetilde{u}_A(K)\quad \text{and}\quad \ord_{u}(K,\fra)\le 2\widetilde{u}_B(K),\] as well as \[\vert \tau^R(K,\fra)\vert\le \widetilde{u}_A(K)\quad \text{and}\quad \vert \tau^R(K,\fra)\vert\le \widetilde{u}_B(K).\] 
\end{prop}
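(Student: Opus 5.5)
The plan follows the classical grid-diagram argument of \cite{OSS2015grid} and \cite{Alishahi_2020} for bounding the unknotting number by $\ord_U$ and $|\tau|$, adapted to real grid diagrams. First we fix what a single type $A$ or type $B$ equivariant crossing change does on a real grid diagram (Definition~\ref{def:equiv unknotting number}). A type $A$ move is a single crossing change on the fixed axis. A type $B$ move is a symmetric pair of crossing changes exchanged by $\tau$. In either case we realize the move as a local modification of a real grid diagram $\G$ of $K$, yielding a real grid diagram $\G'$ of $K'$; this is the real analogue of the crossing-change configuration in \cite[Chapter~6]{OSS2015grid}.

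For each such move we then build two chain maps
\[ f\colon \GCR^-(\G)\to \GCR^-(\G') \quad\text{and}\quad g\colon \GCR^-(\G')\to \GCR^-(\G). \]
They are defined by counting $\tau$-invariant pentagons (or pairs of pentagons), or real domains, supported near the modified columns. The key properties to establish are:
\begin{enumerate}
\item both maps preserve the real Maslov grading up to a fixed shift;
\item they shift the real Alexander grading by at most $\tfrac12$ in one direction;
\item their compositions satisfy $g\circ f\simeq u\cdot \mathrm{id}$ and $f\circ g\simeq u\cdot\mathrm{id}$, where $u$ is the variable of the real polynomial ring.
\end{enumerate}
This mirrors the classical relation $g\circ f\simeq U$. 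The point needing care is the normalization. A type $B$ move changes two crossings, but in the real theory the symmetric pair contributes only once, via the quotient, which is why $u$ rather than $u^2$ appears. For a type $A$ move we expect the relevant real Alexander shift to be half that of a classical crossing change. We will calibrate both using the unknot, whose real grid homology is $\F[u]$ (computed earlier via a small real grid), and we will check which of $u$ or $u^2$ appears. Whichever power $u^{c}$ arises, the constant $2$ in $\ord_u\le 2\widetilde u$ suggests that the composite is $u^{2}$-homotopic to the identity in the real normalization, or else that $\ord_u$ is measured with halved gradings. We will fix this constant by the unknot computation.

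With these maps in hand, both bounds follow by induction along a minimal sequence of $\widetilde u_A(K)$ (resp.\ $\widetilde u_B(K)$) moves from $K$ to the unknot.
\begin{itemize}
\item \textbf{Torsion order.} Suppose $x$ is $u$-torsion in $\GHR^-(K)$. Pushing $x$ through the chain of $f$-maps and back through the $g$-maps multiplies it by $u^{2\widetilde u}$. The result factors through the unknot's $\F[u]$, on which torsion vanishes, so $u^{2\widetilde u}x=0$.
\item \textbf{$\tau^R$.} As in \cite{Sarkar2011tau}, we define $\tau^R$ as the negative of the maximal real Alexander grading of a homogeneous non-torsion element. The maps $f$ and $g$ send non-torsion elements to non-torsion elements, since their composites are multiplication by a power of $u$. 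Each crossing change therefore moves $\tau^R$ by at most $1$ in each direction, and since $\tau^R(\text{unknot})=0$ we get $|\tau^R(K)|\le\widetilde u$.
\end{itemize}
The choice of auxiliary data $\fra$ only fixes the placement of the $O$ and $X$ markings on the axis. We choose the local modification away from these markings, so the argument works for every $\fra$.

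The main obstacle is constructing $f$ and $g$ as genuine chain maps in the real setting. We must verify that the count of $\tau$-invariant pentagons and hexagons satisfies the real analogue of the pentagon/hexagon degeneration argument. This is especially delicate for a type $A$ move, whose crossing lies on the fixed axis, so the local domains meet the fixed set and boundary degenerations there must be handled. Our first attempt is to reduce to domains in the quotient grid, using the real nice diagram framework of \cite{BGX}, and to reuse the classical combinatorics there.
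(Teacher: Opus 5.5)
The skeleton of your argument is the paper's: crossing-change maps whose composites are a power of $u$, followed by induction down to the unknot. Pushing a torsion class all the way to $\GHR^-(U)\cong\F[u]$ and back is a fine substitute for the paper's stepwise estimate $|\ord_{u}(K_+,\fra_+)-\ord_{u}(K_-,\fra_-)|\le 2$.

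However, you have the two move types reversed. In the paper, following Boyle--Chen, type $A$ is the pair of crossing changes exchanged by $\tau$, and it counts as two crossing changes. It is realized as a symmetric pair of cross-commutations and handled with real pentagons and hexagons as in \cite[Section~6.2]{OSS2015grid}. Type $B$ is the single crossing on the axis. The paper realizes it as a paired merge-split move (two saddles exchanged by $\tau$, off the axis), so its maps are the band maps of Proposition~\ref{prop:band maps}: the identity or multiplication by $u^2$ on two classes of generators. The on-axis pentagon analysis you single out as the main obstacle is therefore never needed.

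More importantly, the composites are $u^2$, not $u$, for both types. For type $A$ the hexagon count shows $c_+^A\circ c_-^A$ is chain homotopic to multiplication by the variable $U$ of a pair of $O$-markings, and $U\simeq u^2$ by Proposition~\ref{prop:U,v action identification}. The degrees $(0,0)+(-2,-1)$, resp.\ $2\cdot(-1,-\tfrac12)$, also force this. No computation on the unknot can detect the exponent. With $u$, your argument would give $\ord_{u}\le\widetilde u_B$, which $6_1$ violates ($\ord_{u}=2$, one type $B$ move). Your torsion paragraph in fact uses $u^{2\widetilde u}$, which is inconsistent with your item (3).

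The real gap is the $\tau^R$ step. Sending non-torsion classes to non-torsion classes bounds nothing by itself; the bound comes from the real Alexander degrees of the maps.

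\begin{itemize}
\item For type $B$, both maps have degree $(-1,-\tfrac12)$. So $\tau^R$ changes by at most $1$, and you get $|\tau^R|\le\widetilde u_B$.
\item For type $A$, the degrees are $(0,0)$ and $(-2,-1)$. Let $g_\pm$ generate the towers, with $C_-^A(g_+)\equiv u^{j}g_-$ and $C_+^A(g_-)\equiv u^{j'}g_+$ modulo torsion. Then $j+j'=2$ and $\tau^R(K_+)-\tau^R(K_-)=j\in\{0,1,2\}$.
\end{itemize}

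So your item (2) is false for type $A$, and the argument only yields $|\tau^R|\le 2\widetilde u_A$. The paper asserts $0\le\tau^R(K_+)-\tau^R(K_-)\le 1$ ``by the strategy of Theorem~\ref{thm:comparison of real tau sets}''. With these degrees, however, that strategy is exactly the paired split case of that theorem, whose maps also have Alexander degrees $0$ and $-1$, and it gives $2$, not $1$.

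Moreover, the paper's own table contradicts $|\tau^R|\le\widetilde u_A$. For $(3_1\#3_1)_{\text{swap}}$ the tower sits at $(M^R,A^R)=(-2,-1)$, so $\tau^R=2$. This is forced by the hat group, which is supported in $(-2,-1),(-1,0),(0,1)$, together with Proposition~\ref{prop:a single tower}. Yet changing an unknotting crossing of one trefoil summand together with its $\tau$-image is a single type $A$ move to the unknot.

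So at this step your proof and the paper's fail in the same way. What the maps actually prove is $|\tau^R|\le 2\widetilde u_A$, together with the other three inequalities.
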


We also show that $\tau^R$ provides a lower bound on a variation of equivariant slice genus.
\begin{theorem}\label{thm:intro-bounding g+m/2}
For any strongly invertible knot $K$ in $S^3$, let $S$ be an equivariant smooth slice surface of $K$ inside $(B^4,\tau_c)$. Then for any choice of auxiliary data $\fra$, \[\vert \tau^R(K,\fra) \vert\le g(S)+(m-1)/2,\] where $m$ is the minimal number of fixed critical points of $r|_{S'}$ for $r$ the radius function on $B^4$ ranging over all generic $S'$ (in the sense that $r|_{S'}$ is a Morse function) that are equivariantly isotopic to $S$.
\end{theorem}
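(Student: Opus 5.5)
We follow the classical strategy for bounding $\tau$ by the slice genus via grid diagrams (as in \cite{OSS2015grid} and \cite{Sarkar2011tau}): decompose an equivariant slice surface into a movie of elementary equivariant moves and bound the change of $\tau^R$ at each step. First put $S$ in generic position, replacing it by an equivariantly isotopic $S'$ for which $r|_{S'}$ is an equivariant Morse function with the minimal number $m$ of fixed critical points. The remaining critical points come in $\tau_c$-swapped pairs. Removing a small invariant ball around the origin (which we may assume contains exactly one fixed index-$0$ critical point, i.e.\ an invariant unknotted disk), we obtain an equivariant cobordism in $S^3\times I$ from the strongly invertible unknot to $K$. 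Reading it level by level, $K$ is obtained from the unknot by a sequence of moves: equivariant isotopies; pairs of swapped births/deaths or saddles; and single fixed births/deaths or fixed saddles, each centred on the fixed axis. Euler characteristic bookkeeping gives $2-2g(S)-1 = \#(\text{index }0,2)-\#(\text{index }1)$. We split the count into free pairs and fixed points, so that in the final estimate each free pair of saddles contributes at most $1$ in total, while each fixed critical point other than the initial minimum contributes at most $1/2$.

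Next we realize each move by real grid diagrams and prove the corresponding inequalities for $\tau^R$. Here we want the analogue of \cite[Chapter~8]{OSS2015grid}: for a pair of swapped saddles (or a fixed saddle) relating $L_1$ and $L_2$, we construct $u$-equivariant chain maps $\GCR^-(L_1)\to\GCR^-(L_2)$ and back, homogeneous of Alexander degree shifted by at most one unit (respectively one half), whose composite is multiplication by a power of $u$. A pair of swapped saddles is a symmetric pair of commutation-type moves on a real grid. Under the real-grid identification (only half the generators are counted), it changes real Alexander grading by $\le 1$, which matches the factor $1/2$ in the ordinary knot setting. A fixed saddle lives on the axis and changes it by $\le 1/2$. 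Births and deaths are handled by the stabilization/unlink formulas: adding an unknotted component changes the module by tensoring with a two-dimensional factor in a controlled bigrading. Because the intermediate objects are generalized strongly invertible links, we must also track the auxiliary data $\fra$ through each move. We would define $\tau^R$ for links via the maximal-Alexander-grading non-$u$-torsion class (tensoring unlink factors away), as in Sarkar's link version.

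Finally we sum the inequalities. This gives $\tau^R(K,\fra)\le \tau^R(U)+g(S)+(m-1)/2$. Applying the same argument to the mirror, using the duality for $\GHR^-$ under mirroring, gives the lower bound. The value $\tau^R(U)=0$ comes from a direct small real grid computation.

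The main obstacle I expect is the fixed saddle and fixed birth/death steps. These change the number of fixed points of $L\cap\fix(\tau)$ and can move $X$/$O$ base points on or off the axis, and they can alter the auxiliary data. Proving that a fixed saddle shifts $\tau^R$ by at most $1/2$ requires a careful local grading computation at the central square of the real grid, where the $\frac12\Z$-valued real Alexander grading arises. I would first attempt this by an explicit local model: a fixed saddle realized as an exchange of two $X$'s symmetric across the diagonal, checked by computing the grading difference of the canonical generators.
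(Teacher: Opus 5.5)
There is a genuine gap, and it is the step you flag yourself. Nothing in your proposal, or in the paper, produces maps for a fixed saddle, and the local model you suggest cannot exist in this theory.

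A real grid diagram for $(L,\fra)$ must carry exactly one $O$ and one $X$ on the axis for each strongly invertible component, so $l_f$ of each. This is why real stabilizations at marks on $C$ are forbidden; see Remark~\ref{rmk:importance of auxiliary data}. A fixed saddle always changes $l_f$ by $\pm1$. Switching two $X$'s in a $2\times 2$ square centred on the diagonal instead changes the number of $X$'s on the axis by $\pm 2$ and leaves the $O$'s untouched. The result has a fixed component with two $O$'s and no $X$ (or two $X$'s) on the axis. That configuration is excluded by the definition; it is the kind of diagram that remark sets aside, and it represents no $(L',\fra')$.

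More generally, off-axis marks come in swapped pairs, so $n-l_f$ is even for a size-$n$ real grid diagram. A fixed saddle therefore forces the grid size to change parity. No local switch of markings does that, and neither does any real grid move, since real stabilizations add two rows and columns. So your claimed shift of at most $1/2$ per fixed saddle has no construction behind it.

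The missing idea is to remove fixed saddles topologically, as in Proposition~\ref{prop:rearrangement of saddlemoves in an equivariant cobordism}. By an equivariant isotopy, trade each fixed $1$-handle for a pair of swapped $1$-handles together with either a fixed $0$-handle (if it creates fixed points in the level set) or a fixed $2$-handle (if it destroys them). This preserves the number of fixed critical points.

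Afterwards every saddle is paired and can be realized away from the axis by the OSS switch plus its reflection (Proposition~\ref{prop:band maps}). The fixed critical points are then $d_1$ fixed births and $d_1$ fixed deaths, so $m=2d_1$. The argument then runs as follows.

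\begin{itemize}
\item Push births down and deaths up, then merge the swapped unlinks into the knot by paired merges. Adding a strongly invertible unknot or a swapped unlink just doubles $c\GHR^-$, so these steps keep $\tau^R_{\min}=\tau^R_{\max}=\tau^R(K)$.
\item The $d_1$ fixed unknots cannot be absorbed by paired bands. An Euler characteristic count shows the middle cobordism, between two strongly invertible links with $l_p=0$, consists of exactly $g+d_1$ paired saddles.
\item Theorem~\ref{thm:comparison of real tau sets} bounds the change by $g+d_1=g+m/2$. Paired splits and merges occur equally often and move $\tau^R_{\min}$, $\tau^R_{\max}$ one-sidedly by at most $2$; merge-splits move them by at most $1$.
\item Capping off a fixed minimum gives $(m-1)/2$ in the slice case.
\end{itemize}

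This also repairs your bookkeeping, which as written overcounts. Charging $1$ to every paired saddle and $0$ to births and deaths fails as soon as paired births or deaths appear. For example, a swapped unlink born and then merged into the unknot by one paired merge is a genus-$0$ cobordism, yet your tally charges it $1$. The $m/2$ is not a local cost of fixed critical points: fixed births and deaths do not move the $\tau^R$-set at all. It is the cost of the extra paired saddles forced by fixed unknots that cannot be absorbed.
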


\begin{rem}
In this theorem, we restrict ourselves to considering equivariant slice surfaces in $(B^4,\tau_c)$, which is the unit ball of $(\C^2,\text{conjugation})$. The restriction comes from the combinatorial nature of our argument. This notion of slice genus is the same as the one appeared in \cite{borodzik2025khovanovhomologyequivariantsurfaces}, while the one considered in \cite{DMSequivariantknotandHFK} and some other papers is slightly more general. In their setup, $B^4$ is allowed to have any involution extending $(S^3,\tau_{\std})$. It was known from \cite{boyle2025involutionss4} that the extension of $\tau_{\std}$ is not unique, while it is unknown whether the two notions of equivariant slice genus are different. 
\end{rem}

\begin{rem}
It was pointed out by Alessio Di Prisa after the author posted this paper on arXiv that the bound in Theorem \ref{thm:intro-bounding g+m/2} is possibly related to the notion of ``slice complexity'' defined in \cite[Definition~4.14]{DiPrisaequivalgconcordance}. In particular, using the argument from Proposition 4.16 of that paper, one can get another bound  \[\vert \tau^R(K,\fra) \vert\le 2\widetilde{g}_4(K),\] where $\widetilde{g}_4(K)$ is the equivariant slice genus of $K$.
\end{rem}

The term $m$ appearing in Theorem~\ref{thm:intro-bounding g+m/2} is possibly unavoidable. In Section~\ref{sub: Proof of the main theorem for tauR}, we give an estimation of $m$ and propose a possible obstruction for $m=0$ in Remark~\ref{rmk:m is unavoidable}. In particular, we will see in Example~\ref{ex:bounding butterfly 4-genus} that for butterfly 4-genus (\cite{BIequiv4genusofsiandperidicknots}), $\tau^R$ provides a genuine lower bound. 

The knot Floer group $\widehat{\HFK}$ serves as a categorification of the Alexander polynomial. In particular, it satisfies an oriented skein relation (see \cite{Holomorphicdisksandknotinvariants}). We investigate the real version. 

\begin{thm}\label{thm:intro-minus oriented skein triple}
Let $(L_+, L_- ,L_0,\fra)$ be a real oriented skein triple defined as in the beginning of Section~\ref{sub:Oriented version}. Let $l_p$ ($l_p'$) be the number of pairs of components in $L_+$ ($L_0$).
\begin{itemize}
 \item If $l_p'=l_p+1$, we have a long exact sequence \begin{equation*}
    \to c\GHR^-_d(L_+,s) \xrightarrow{f^-} c\GHR^-_d(L_-,s) \xrightarrow{g^-} c\GHR^-_{d-1}(L_0,s) \xrightarrow{h^-} c\GHR^-_{d-1}(L_+,s) \to 
    \end{equation*}
 \item If $l_p'=l_p$, we have a long exact sequence \begin{equation*}
    \to c\GHR^-_d(L_+,s) \xrightarrow{f^-} c\GHR^-_d(L_-,s) \xrightarrow{g^-} (c\GHR^-(L_0)\otimes W)_{d-1,s} \xrightarrow{h^-} c\GHR^-_{d-1}(L_+,s) \to 
    \end{equation*}
    for $W=\F_{(0,-1/2)}\oplus \F_{(-1,-1/2)}$.
    \item If $l_p'=l_p-1$, we have a long exact sequence \begin{equation*}
    \to c\GHR^-_d(L_+,s) \xrightarrow{f^-} c\GHR^-_d(L_-,s) \xrightarrow{g^-} (c\GHR^-(L_0)\otimes J)_{d-1,s} \xrightarrow{h^-} c\GHR^-_{d-1}(L_+,s) \to 
    \end{equation*}
    for $J=\F_{(0,1)}\oplus \F_{(-1,0)}\oplus \F_{(-1,0)}\oplus \F_{(-2,-1)}$.
\end{itemize}
Here, $c\GHR^-$ is a variation of $\GHR^-$ that will be defined in Section~\ref{sub:Collapsed grid homology for generalized strongly invertible links} and the subscripts denote the bigrading $(M^R,A^R)$. We also have a corresponding version for $\widehat{\GHR}$.
\end{thm}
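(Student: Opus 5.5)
I would follow the grid-diagram proof of the oriented skein exact sequence in \cite[Chapter~9]{OSS2015grid} and adapt it to real grid diagrams, where everything is restricted to $\tau$-invariant generators and invariant domains. The first step is to choose real grid diagrams $\mathbb{G}_+$ and $\mathbb{G}_-$ for $L_+$ and $L_-$ that agree except in a small symmetric region near the crossing. Since the skein triple is real, the crossing is either fixed by $\tau$ and sits on the fixed axis, or it belongs to a pair of symmetric crossings that are changed together. In the first situation the two diagrams differ by exchanging two $X$-markings in adjacent columns placed symmetrically about the axis of the real grid. The diagram $\mathbb{G}_0$ is then obtained from either one by moving the relevant $X$ markings so that the resolved link appears. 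Following \cite{OSS2015grid}, I would also introduce the auxiliary ``collapsed'' complexes: these are the complexes for $c\GHR^-$, in which the $u$-variables on the components meeting the crossing are identified.

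The next step is to split $c\GRC^-(\mathbb{G}_-)$ using the position of the distinguished intersection point(s) near the crossing. This presents $c\GRC^-(\mathbb{G}_-)$ as a mapping cone of a chain map from a subcomplex to a quotient complex. One piece is identified with $c\GRC^-(\mathbb{G}_+)$ via a real pentagon-counting map. Here the pentagons must be $\tau$-invariant, or one counts orbits of pentagons, which is how the usual pentagon maps are adapted in real grid homology. The other piece is identified with the complex of $L_0$, possibly after stabilization. The long exact sequence then comes from the exact triangle of the mapping cone. I would check the grading shift $d\mapsto d-1$ using the real Maslov and Alexander formulas for invariant generators. Exactness and the fact that the maps are chain maps should follow from the real version of the domain-decomposition arguments, that is, invariant juxtaposition of pentagons and rectangles.

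The three cases come from how many markings $L_0$ has relative to the number of components of $L_0$.

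1. If $l_p'=l_p+1$, the resolution merges or splits components in a way that exactly matches the marking count. The identification is then direct and no tensor factor appears.

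2. If $l_p'=l_p$, one extra $O$–$X$ pair appears on $L_0$, as in the nonreal case. The destabilization lemma for real grids, in the form $c\GHR^-(\mathbb{G}')\cong c\GHR^-(\mathbb{G})\otimes W$, then produces the factor $W$. Its generator gradings $(0,-1/2)$ and $(-1,-1/2)$ come from the real Maslov and Alexander normalizations, which halve the classical contributions.

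3. If $l_p'=l_p-1$, the pair structure forces two extra symmetric marking pairs. Iterating the stabilization gives a factor $W\otimes W$ up to a shift. I expect the precise identification with $J$, including the Alexander shift that produces $(0,1),(-1,0)^2,(-2,-1)$, to need a separate count of how the real Alexander grading normalization changes with the number of paired components.

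I expect the main obstacle to be the case analysis of symmetric crossing pairs. The two simultaneous crossing changes must be realized by a single invariant modification of the grid. Moreover, the counts of invariant pentagons and hexagons must satisfy the needed identities mod $2$, because invariant domains can contain orbits of even size that cancel and fixed domains that do not. My first approach would be to reduce to domains meeting the axis and to verify these identities directly, as in the real commutation invariance proof.
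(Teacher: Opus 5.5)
There is a genuine gap. The step that is supposed to produce the triangle does not work as you describe it, and the key identification of the proof is missing.

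\textbf{The mapping-cone step.} Splitting the $L_-$ complex along the distinguished pair $(\widetilde{c},\widetilde{c}')$ gives $\GCR^-(\cH_-)=\Cone(\mathbf{N}_-\to\mathbf{I}_-)$. There is no reason for either piece to be quasi-isomorphic to the $L_+$ complex or to an $L_0$ complex. A pentagon count between the $L_+$ and $L_-$ grids is a crossing-change map, not an identification of a sub- or quotient complex.

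In the actual argument (the real version of \cite[Chapter~9]{OSS2015grid}) the pieces are strictly smaller than all four complexes:
\begin{itemize}
\item On the same generators, $\GCR^-(\cH_+)$ and $\GCR^-(\cH_0)$ are cones between $\mathbf{I}_+$ and $\mathbf{N}_+$ with the arrow in opposite directions. Likewise $\GCR^-(\cH_-)$ and $\GCR^-(\cH_0')$ are opposite cones between $\mathbf{I}_-$ and $\mathbf{N}_-$.
\item The isomorphism $T\colon\mathbf{I}_-\to\mathbf{I}_+$ lets you assemble a $2\times 2$ square whose columns are the $L_\pm$ complexes and whose rows are the two $L_0$ complexes.
\item The total complex of this square is simultaneously $\Cone(P_{+,-})$ and the cone of the vertical map $\GCR^-(\cH_0')\to\GCR^-(\cH_0)$.
\item The heart of the proof: after composing with the real commutation quasi-isomorphism $P$, this vertical map equals $h_{X_2}h_Y+h_Yh_{X_2}$, which is homotopic to multiplication by $U_1-U_2$.
\end{itemize}
So the third term is $\Cone(U_1-U_2)$ on the $L_0$ complex, not the $L_0$ complex itself. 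This is proved for the uncollapsed complexes first. Only afterwards does one descend to collapsed complexes, using that a quasi-isomorphism of free complexes induces one on quotients by a homogeneous element.

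\textbf{The three cases.} Your explanation of the three cases is also not the right mechanism. $\cH_0$ carries the same markings as $\cH_+$, and $c\GHR^-$ is independent of grid size. So no factor can come from extra $O$--$X$ pairs or from a destabilization of the form $c\GHR^-(\mathbb{G}')\cong c\GHR^-(\mathbb{G})\otimes W$. The factors come from comparing the collapsing relations of $L_+$, imposed on the $L_0$ complex, with those of $L_0$, inside $\Cone(U_1-U_2)$:
\begin{itemize}
\item If $l_p'=l_p+1$, the cone supplies exactly the missing identification, giving $c\GCR^-(\cH_0)$.
\item If $l_p'=l_p$, the strands lie on strongly invertible components. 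The two sets of relations agree and $U_1\simeq U_2\simeq u^2$, so $U_1-U_2$ is null-homotopic and the cone is $c\GHR^-(L_0)\otimes W$.
\item If $l_p'=l_p-1$, the $L_+$ relations additionally identify two $U$'s on one pair of components of $L_0$. This gives one $W$, by the analogue of \cite[Lemma~8.3.8]{OSS2015grid}. The null-homotopic cone gives a second, and together with the grading shift this yields $J$.
\end{itemize}

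\textbf{The on-axis case.} The on-axis case you include is not part of the statement. A real oriented skein triple changes a pair of crossings swapped by $\tau$, away from the fixed set; that is why $l_f$ and $\fra$ are preserved. An oriented resolution of an on-axis crossing adds two fixed points and changes $l_f$.
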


We can take Euler characteristic of $\widehat{\GHR}(L,\fra)$ to define a real Alexander polynomial $\Delta^R(L,\fra)$. As a corollary of Theorem~\ref{thm:intro-minus oriented skein triple}, we have 
\begin{cor}\label{cor:intro-skein relation for real Alexander polynomial}
Let $(L_+, L_- ,L_0,\fra)$ be a real oriented skein triple. Then we have \[\Delta^R(L_+,\fra)(t)-\Delta^R(L_-,\fra)(t)=(t-t^{-1})\cdot\Delta^R(L_0,\fra)(t).\]
\end{cor}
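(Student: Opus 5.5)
We take Euler characteristics of the hat version of the long exact sequences in Theorem~\ref{thm:intro-minus oriented skein triple}, Alexander grading by Alexander grading. Concretely, set
\[\chi(H)(t)=\sum_{d,s}(-1)^d\dim_\F H_{d,s}\,t^s\]
for a finite-dimensional bigraded space $H$, and write $\Delta^R(L,\fra)$ as $\chi$ of $\widehat{\GHR}(L,\fra)$, possibly up to a normalizing factor.

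The first step is to relate $\Delta^R$ to the collapsed version $c\widehat{\GHR}$. As in the classical grid setting \cite{OSS2015grid}, $\widehat{\GHR}$ should be $c\widehat{\GHR}$ tensored with a fixed two-dimensional bigraded space for each extra basepoint beyond the collapsed ones. Each such factor contributes a multiplicative term like $(1-t^{\pm 1})$ or $(t^{1/2}-t^{-1/2})$. The normalization of $\Delta^R$ is presumably chosen so that $\Delta^R(L,\fra)$ equals $\chi(c\widehat{\GHR}(L))$ times a factor depending only on the number of components and pairs. I will treat this normalization as fixed by the definition in Section~\ref{sub:Collapsed grid homology for generalized strongly invertible links}.

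The second step uses exactness. In each fixed Alexander grading $s$, the long exact sequence of finite-dimensional spaces has alternating dimension sum zero. The maps $f$ and $g$ preserve $d$, while $h$ shifts $d$ by $-1$, so the third term enters with a shifted sign. This gives
\[\chi(c\widehat{\GHR}(L_+))-\chi(c\widehat{\GHR}(L_-))=\chi(\text{third term}).\]
Care is needed with the $d-1$ shift, which produces exactly one sign flip, and possibly with a global sign convention.

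The third step computes the third term in the three cases.
\begin{itemize}
\item When $l_p'=l_p+1$, the third term is $\chi(c\widehat{\GHR}(L_0))$.
\item When $l_p'=l_p$, we get $\chi(W)\,\chi(c\widehat{\GHR}(L_0))$, with $\chi(W)=t^{-1/2}-t^{-1/2}=0$ up to sign. Since this vanishes, the hat version presumably uses a different $W$ (as in the hat version of the classical skein sequence). I would instead take $\widehat W$ with $\chi(\widehat W)=\pm(t^{1/2}-t^{-1/2})$.
\item When $l_p'=l_p-1$, we get $\chi(J)=t-1-1+t^{-1}$, up to the sign from gradings, which is $(t^{1/2}-t^{-1/2})^2$.
\end{itemize}

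The final step is bookkeeping with the normalizations. Since $L_+$ and $L_-$ have the same numbers of components and pairs, they carry the same normalizing factor. For $L_0$, the normalization factor changes according to how $l_p$ and the component count change, and this change should exactly supply the missing powers of $(t^{1/2}-t^{-1/2})$. In every case the right-hand side then becomes $(t-t^{-1})\Delta^R(L_0)$.

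I expect the main obstacle to be this uniformity. I need to check that the normalization in $\Delta^R$, and the hat-version analogues of $W$ and $J$, combine to give the same factor $t-t^{-1}$ in all three cases, including signs and the half-integer shifts in Alexander grading. I would verify it first on the unknot/unlink skein triple to pin down the conventions.
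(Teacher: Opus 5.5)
Your route is the paper's: the paper's one-line proof derives the corollary from the hat-version exact sequences of Theorem~\ref{thm:hat oriented skein triple}, which amounts to taking graded Euler characteristics Alexander grading by Alexander grading as you do, and your sign count $\chi(\widehat{\GHR}(L_+))-\chi(\widehat{\GHR}(L_-))=\chi(\text{third term})$ is correct. The bookkeeping you left open closes immediately once you use the actual definition $\Delta^R(L,\fra)=(t-t^{-1})^{-l_p}\sum_{d,s}(-1)^d t^{2s}\dim\widehat{\GHR}_d(L,\fra,s)$ (note the weight $t^{2s}$, not $t^{s}$; and no collapsed-hat correction factors arise, since $c\GCR^-$ modulo $u$ is exactly $\widehat{\GCR}$) together with the body's $W=\F_{(0,1/2)}\oplus\F_{(-1,-1/2)}$ (the $(0,-1/2)$ in the introduction is a typo, not a separate hat-version $W$): then $\chi(W)=t-t^{-1}$ and $\chi(J)=(t-t^{-1})^2$, so in all three cases the third term equals $(t-t^{-1})^{1+l_p-l_p'}\chi(\widehat{\GHR}(L_0))=(t-t^{-1})^{l_p+1}\Delta^R(L_0,\fra)$ with no sign ambiguity, and dividing by $(t-t^{-1})^{l_p}$ gives the relation.
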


\begin{rem}\label{rmk:intro-polynomial}
After the first version of this paper was posted on arxiv, Collari and Lisca \cite{collari2026polynomialinvariantstronglyinvolutive} introduced a polynomial invariant $P^{e}(L)\in \Z[a^{\pm 1}, z^{\pm 1}]$ for strongly involutive links which is an equivariant counterpart of the HOMFLY--PT polynomial. It is interesting to observe that the skein relation in Corollary~\ref{cor:intro-skein relation for real Alexander polynomial} is the specialization of their Equation (2) by setting $a=1$ and $z=t-t^{-1}$. By comparing Appendix~\ref{app:Calculation results for knots with small crossing numbers} with their Appendix~B, one sees that these polynomial also coincide on strong inversions on all knots with crossing numbers less than or equal to $7$. Careful readers may notice a discrepancy at $7_7$. This is because to obtain the smallest real grid diagrams of the knots, the two strong inversions we computed for $7_7$ are actually for $7_7$ and $\bar{7}_7$, respectively. Applying the mirroring formula from Proposition~\ref{prop:mirroring the polynomial invariant}, we get the correct polynomial. The same remark needs to be taken into consideration for Proposition~\ref{prop:intro-distinguish involutions on <=7 crossing knots}. These observations may motivate the conjecture that our polynomial is a specialization of theirs. 

However, we then found that our polynomial is no longer a specialization of theirs when we move to links. After setting $a=1$ in their Equation (5), we see that $P^e|_{a=1}$ vanishes on split strongly invertible links, which is not true for the real Alexander polynomial. Also, if we let  $U_2$ denote the two component unlink in $S^3$ with two components interchanged by the involution, then $\Delta^R(U_2)\ne P^e(U_2)|_{a=1,z=t-t^{-1}}$. The former can be computed using a minimally pointed real Heegaard diagram or a real grid diagram and the later appears in \cite[Equation (4)]{collari2026polynomialinvariantstronglyinvolutive}. Thus, we may form the conjecture that for a strongly invertible knot $K$ in $S^3$, $\Delta^R(K)= P^e(K)|_{a=1,z=t-t^{-1}}$ and pose the question asking how $\Delta^R$ and $P^e$ are related in general. 
\end{rem}

Following \cite[Chapter~10]{OSS2015grid}, we also investigate the  real grid homology associated to an unoriented skein triple.

\begin{thm}\label{thm:intro-unoriented skein exact sequence}
Let $L$, $L_1$ and $L_2$ be a real unoriented skein triple defined in Section~\ref{sub:Unoriented version} and $l_p$, $l_p^1$, $l_p^2$ be the number of paired components in $L$, $L_1$ and $L_2$, respectively. By definition, they share the same number of strongly invertible components $l_f$. For sufficiently large $N\in \Z$, we have an exact triangle  
\[\begin{tikzcd}
    \widehat{\GHR}(L_1)\otimes W^{N-l_f-l_p^1} \arrow{rr} & & \widehat{\GHR}(L_2)\otimes W^{N-l_f-l_p^2}  \arrow{dl}\\
    & \widehat{\GHR}(L)\otimes W^{N-l_f-l_p} \arrow[swap]{ul}\\
\end{tikzcd}\]
Here, $W$ is a 2-dimensional $\F$-vector space. This exact triangle respects a $\delta^R$-grading which will be introduced in Section~\ref{sub:Unoriented version}.
\end{thm}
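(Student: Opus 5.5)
Following \cite[Chapter~10]{OSS2015grid}, I will realize the three links of the real unoriented skein triple by real grid diagrams $\G$, $\G_1$, $\G_2$ of the same (large) size that agree except near the crossing. Real grid diagrams are symmetric under the involution, and the skein crossing must either sit on the fixed axis or come as a $\tau$-symmetric pair of crossings. In both cases I would arrange the local change to be a switch between two vertical $\beta$-circles (or a $\tau$-invariant pair of such switches) on a fixed toroidal grid. The three diagrams then differ only in one (real) column, obtained by three different choices of $\beta$-curve, $\beta$, $\gamma$, $\delta$, through the same $O$/$X$ markings. To equalize the number of markings and keep $N$ common, I will add extra unknotted components or real stabilizations that are $\tau$-compatible, and use a real stabilization invariance statement of the form $\widehat{\GHR}$ of the enlarged diagram $\cong \widehat{\GHR}(L)\otimes W^{N-l_f-l_p}$. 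Here $N$ is the number of $\tau$-orbits of $O$-markings, and each strongly invertible component or pair of components accounts for one orbit that is not "extra". This step fixes the exponents in the statement.

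Next I will define chain maps between the real grid complexes $\widetilde{\GCR}(\G_1)\to\widetilde{\GCR}(\G_2)\to\widetilde{\GCR}(\G)\to\widetilde{\GCR}(\G_1)$. Each counts real (i.e.\ $\tau$-invariant, taken modulo the involution on invariant generators) empty pentagons between the relevant pair of curves, exactly as in the real commutation maps already used for invariance of $\GHR$. I then check three things. First, each map is a chain map; this uses the usual decomposition of juxtaposed rectangle-pentagon domains, now applied to $\tau$-invariant domains counted via their quotient. Second, each composite of two consecutive maps is null-homotopic, via a homotopy counting real hexagons. Third, the sum $f_2 f_1 + h$ relations are enough to apply the exact triangle detection lemma \cite[Lemma~10.?]{OSS2015grid}: the composite of three consecutive maps plus the two homotopies is a quasi-isomorphism, namely a filtered map whose leading term is the identity (or an isomorphism counting small triangles/quadrilaterals).

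Finally, the $\delta^R$-grading is handled as in the classical case. $\delta^R = A^R - M^R$ is independent of orientation choices up to an overall shift depending on linking data. Each pentagon map shifts it by a fixed constant, computed locally at the crossing and symmetrized, and these constants are absorbed into the normalization of $\delta^R$ fixed in Section~\ref{sub:Unoriented version}.

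The main obstacle is the real version of the domain-decomposition arguments when the crossing lies on the fixed axis. There a $\tau$-invariant hexagon or pentagon may contain fixed corners, so the count modulo $2$ of invariant domains is not simply half the classical count. One must check that the boundary degenerations pair up: each ends either with an invariant partner, or in pairs exchanged by $\tau$. I would first try to verify this by enumerating the local configurations near the fixed column. Failing that, I would pass to the equivalent real Heegaard-diagram description and use a real holomorphic triangle/quadrilateral argument instead.
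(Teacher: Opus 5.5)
Your route is genuinely different from the paper's. You transplant Manolescu's original holomorphic strategy to real grids: three complexes for diagrams differing in one real pair of curves, pentagon maps around the cycle, hexagon null-homotopies, and the Ozsv\'ath--Szab\'o exact-triangle detection lemma.

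The paper instead follows \cite[Chapter~10]{OSS2015grid} and never builds a third map. It first reduces to the case where the two strands in a disk lie on different components. It then takes $L=L_+$ and two real oriented skein triples $(L_+,L_-,L_0)$ and $(R_+,R_-,R_0)$, whose oriented resolutions are the two unoriented resolutions. The diagrams $\cH_+,\cH_-'$ (and likewise $\cH_-,\cH_+'$) differ only by an $O/X$ swap. The argument then runs in four steps:
\begin{enumerate}
\item The tilde form of the oriented mapping-cone theorem gives $\Cone(\widetilde P_{+,-})\simeq\widetilde{\GCR}(\cH_0)^{\oplus 2}$ up to shift.
\item The map $\widetilde P_{-,+}$ is $\widetilde P'_{+,-}$ conjugated by the $O/X$-swap identifications of $\delta^R$-graded tilde complexes.
\item Both composites are sums of crossing-change composites. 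These are homotopic to $u^2$, so they are null-homotopic in the tilde complex.
\item A purely formal lemma on the cone of a composite gives the triangle, with the third cone equal to $\widetilde{\GHR}(\cH_+)^{\oplus 2}$. Destabilizing via $\widetilde{\GHR}=\widehat{\GHR}\otimes W^{(n-l_f-2l_p)/2}$ gives the stated exponents, which your orbit count also reproduces.
\end{enumerate}

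The paper's route needs no polygon counts beyond those already used for the oriented sequence and the crossing-change maps, and it has no quasi-isomorphism step. Yours is self-contained and symmetric in $L,L_1,L_2$. However, all of its real content lies in showing that $f_{i+2}\circ h_i+h_{i+1}\circ f_i$ is a quasi-isomorphism for each $i$. That is the detection lemma's hypothesis, not a statement about a triple composite. Proving it requires new real polygon classes with corners on all three curves, built from all four types of real rectangles, together with an area filtration. You only assert this step.

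Two corrections to your setup.

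First, the three diagrams cannot literally share one $O/X$ labeling. Of the three pairings of the four local markings, exactly one puts two $O$'s in the same column; this is the orientation incompatibility of the unoriented triple. You must reorient components of that resolution, keeping $\tau$ orientation-reversing so that $R$ still preserves $\bfO$. You must also track the $\delta^R$-shift of $-\frac12$ times the linking number between the reoriented and unchanged sublinks, from the paper's lemma on partial $O/X$ switches. This is precisely the input the paper uses to define $\widetilde P_{-,+}$.

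Second, the obstacle you single out does not arise. The real unoriented skein triple is defined by a $\tau$-symmetric pair of disks, so no skein crossing sits on the axis. The modification is a $\beta$-circle change together with its $R$-image $\alpha$-circle change, in disjoint regions, and the distinguished corners come in symmetric pairs with none fixed. The real difficulty is the real-domain bookkeeping described above, not fixed corners.

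Two smaller points: equalize sizes only by real stabilizations, since adding unlinked unknots changes the link; and the paper's convention is $\delta^R=M^R-A^R$.
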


Via the python program \cite{ZhenkunLioythonprgram}, the author computed real grid invariants for various strongly invertible knots with transvergent diagrams of small crossing number. From the results, we get amphichiral knots with non-vanishing $\tau^R$, knots with $\tau^R> g_4$, $\HFK$-thin knots with interesting $\HFKR$ (asked for by \cite{hendricks2025noterealheegaardfloer}), etc. Most notably, we find that 
\begin{prop}\label{prop:intro-distinguish involutions on <=7 crossing knots}
The real Alexander polynomial (thus the real knot Floer group) is able to distinguish different strong inversions on all knots with crossing number $\le 7$.
\end{prop}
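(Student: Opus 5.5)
This is a finite, computational statement, so the proof is verification rather than a general argument.

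The first step is to fix the list of objects. I will take the classification of strongly invertible knots with crossing number at most $7$, as tabulated by Sakuma \cite{Sakuma} and refined in later tables (for instance those used by Collari--Lisca). For each prime knot $K$ with $c(K)\le 7$, this gives the finite list of strong inversions up to equivariant isotopy. Torus knots, including the unknot and the trefoil, admit a unique strong inversion, so there is nothing to distinguish for them. The remaining knots (such as $5_2$, $6_1$, $6_2$, $6_3$, $7_2,\dots,7_7$) carry two strong inversions, or more for a few of them. Since $\Delta^R(K,\fra)$ is defined using the auxiliary data $\fra$, here a choice of direction, the claim should be read as follows: the \emph{set} of real Alexander polynomials obtained as $\fra$ ranges over all directions differs between inequivalent inversions. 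If possible, the stronger statement that some single direction already separates them.

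The second step is to build a real grid diagram for each $(K,\tau)$ in the sense of Definition~\ref{def:real grid diagram}. I will start from a transvergent diagram, whose axis is the fixed circle of $\tau_{\mathrm{std}}$, and equivariantly rectify it into a symmetric grid with one $O$ and one $X$ on the fixed set, as Remark~\ref{rmk:importance of auxiliary data} requires. For $7_7$ I will record whether the diagram actually represents the knot or its mirror. Mirroring changes $\Delta^R$ in a controlled way, analogous to Proposition~\ref{prop:mirroring the polynomial invariant}, so the distinction must be tracked and not ignored.

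The third step is to run Li's program \cite{ZhenkunLioythonprgram} to compute $\widehat{\GHR}$ with its bigrading $(M^R,A^R)$. Theorem~\ref{thm:intro-real grid homology} guarantees the result is an invariant of $(K,\fra)$. I will then take the graded Euler characteristic to get $\Delta^R$ and normalize away the $W$-type tensor factors coming from extra grid basepoints. As an independent check, I will verify a few entries against the skein relation of Corollary~\ref{cor:intro-skein relation for real Alexander polynomial}, using equivariant crossing changes between inversions of the same knot.

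Finally, I will compare the polynomials pairwise within each knot type; the tabulated data in Appendix~\ref{app:Calculation results for knots with small crossing numbers} then proves the proposition. I expect the main obstacle to be the bookkeeping, not any theory. The delicate points are: making sure every inequivalent inversion, together with each direction, is realized by a correct real grid diagram; matching each diagram to the right entry in Sakuma's table; and handling chirality correctly, as in the $7_7$ case.
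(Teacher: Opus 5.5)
Your proposal is correct and matches the paper's own argument. In Section~\ref{sec:Examples and application}, the author builds real grid diagrams from the transvergent diagrams of \cite{Lobb2021ArefinementofKhovanovhomology} for every strong inversion on knots with at most $7$ crossings, computes $\Delta^R$ with \cite{ZhenkunLioythonprgram} (Appendix~\ref{app:Calculation results for knots with small crossing numbers}), and compares knot by knot. The paper also handles the delicate point you flagged in the same way: the two $7_7$ diagrams actually represent $7_7$ and its mirror, so Proposition~\ref{prop:mirroring the polynomial invariant} ($t\mapsto -t$) must be applied before comparing, as stated in Remark~\ref{rmk:intro-polynomial}.
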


Actually, we can recover the classical knot Floer homology from real knot Floer homology.

\begin{prop}\label{prop:intro-recovering hat HFK from hat HFKR}
Let $K$ be any knot in $S^3$. Then the bigraded group $\widehat{\HFK}(K)$ can be recovered from $\widehat{\HFKR}(K\#\tau K^r,\fra)$ for any choice of auxiliary data. Here, $K\#\tau K^r$ is the strongly invertible knot canonically associated to $K$ in \cite[Section~2]{BIequiv4genusofsiandperidicknots}(see also \cite[Section~4.2]{DMSequivariantknotandHFK}).

Similarly, we can consider the doubly periodic knot $K\#K$ from \cite[Section~8]{JZstabilizationdistancefromHFL}. We have \[\rank \widehat{\HFK}(K)=\rank \widehat{\HFKR}(K\# K,\fro),\] for either choice of orientation. 
\end{prop}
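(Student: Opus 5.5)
We will prove both statements by building real Heegaard diagrams that are symmetric doublings of an ordinary doubly pointed diagram for $K$. Under the involution, the real generators and the real holomorphic disks then correspond to ordinary generators and disks for a single copy of $K$.

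For $K\#\tau K^r$, start with a doubly pointed Heegaard diagram $(\Sigma,\alpha,\beta,w,z)$ for $(S^3,K)$. Take its mirror copy, and connect the two surfaces by a tube. Place the tube so that the involution $\tau$ swaps the two halves and fixes the connecting tube, with the fixed circle of $\tau$ running through the tube. The fixed arc of the knot should cross the connected-sum region once on each side, so that one $O$ and one $X$ base point lie on the fixed set, as the auxiliary data $\fra$ demands. The attaching curves of a real Heegaard diagram satisfy $\beta=\tau(\alpha)$ up to the usual conventions. Real generators are then the $\tau$-invariant tuples of $\mathbb{T}_\alpha\cap\mathbb{T}_\beta$. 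These correspond to tuples on one half whose intersection pattern reproduces the pairing $\alpha_i\cap\beta_j$ of the original diagram, together with a fixed intersection point in the neck. So the generators of $\widehat{\CFKR}$ are in bijection with $\mathfrak{S}(\Sigma,\alpha,\beta)$, or with two copies of it if the neck contributes two fixed points.

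Next we match differentials. A real disk is determined by its quotient, which lives on one half of the doubled diagram. Positivity at the $O$ and $X$ points forbids domains that cross the neck. Hence the real moduli spaces are identified with the ordinary moduli spaces of the original diagram. This identification should also carry the real Maslov and Alexander gradings to the ordinary ones, up to an explicit factor and shift. We fix that factor and shift on the unknot, where $K\#\tau K^r$ is the unknot and $\widehat{\HFKR}$ is known from the real grid computation. The result is $\widehat{\HFKR}(K\#\tau K^r,\fra)\cong \widehat{\HFK}(K)\otimes V$, where $V$ is a fixed graded vector space of rank at most $2$ coming from the neck, with gradings rescaled. Knowing $V$, we can read off the bigraded $\widehat{\HFK}(K)$. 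For the second choice of auxiliary data, the roles of $O$ and $X$ on the fixed arc are interchanged. Reversing the orientation of $K$ handles this case, since $\widehat{\HFK}$ is symmetric.

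Alternatively, we can work combinatorially. Take a grid diagram $G$ for $K$ and form a real grid diagram for $K\#\tau K^r$ by placing $G$ and its $\tau$-image in complementary quadrants, with two extra markings on the diagonal. Real grid states are invariant permutations, so they restrict to grid states of $G$. Empty invariant rectangles are either pairs of symmetric rectangles or self-symmetric ones crossing the diagonal. We would argue that the self-symmetric rectangles contain the diagonal markings and are therefore excluded from $\widehat{\partial}$. We plan to use whichever version makes the gradings easier to check.

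For the doubly periodic knot $K\#K$, apply the same doubling. This time $\tau$ is rotation about an axis disjoint from the knot, and the two copies of $K$ are swapped. The generators and the (unpointed-on-axis) domains are again identified with those of a single copy, so the ranks agree. Since only rank is claimed, we need no grading bookkeeping here, and the result holds for either orientation $\fro$.

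The main obstacle is showing that no real disk passing through the fixed locus or the neck contributes to the differential beyond the identified ones. Equivalently, in the grid version, we must show that self-symmetric empty rectangles are blocked by markings. If that fails, we will instead use the spectral sequence of Theorem~\ref{thm:spectral sequence from HFK,strongly invertible case}. The approach is to compute $E_1\cong \widehat{\HFK}(K)\otimes\widehat{\HFK}(K)$, then show that the real invariant rank equals $\rank\widehat{\HFK}(K)$ by exhibiting enough symmetric cycles, using a filtration by the diagonal generators.
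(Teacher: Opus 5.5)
\textbf{Verdict: there is a genuine gap.} The decisive step is your claim that positivity at $O$ and $X$ forbids domains crossing the neck, so that real moduli spaces coincide with ordinary moduli spaces on one half. For the diagrams you describe this is either vacuous or false.

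\emph{Strongly invertible case.} Suppose $(\Sigma_0,\alpha,\beta)$ and $(\overline{\Sigma_0},R(\beta),R(\alpha))$ are joined by a tube whose core is the fixed circle $C$, with $O,X\in C$ and no curves crossing the tube.
\begin{itemize}
\item $C$ bounds disks $D_\alpha$ and $D_\beta$ in the two handlebodies.
\item The two knot arcs can be chosen as the two arcs of $C\setminus\{O,X\}$, pushed into $D_\beta$ and $D_\alpha$.
\item So the diagram represents the \emph{unknot}.
\end{itemize}
Your generator and disk identification is in fact correct for this diagram. But it computes the hat complex of $(\Sigma_0,\alpha,\beta)$ with a single blocked region, i.e.\ $\widehat{\HF}(S^3)\cong\F$, not $\widehat{\HFK}(K)$. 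To block both $w$ and $z$ in a half with only one $O$ and one $X$ on $C$, attaching curves must cross $C$. Once they do, two things happen:
\begin{itemize}
\item $R$-invariant domains meeting $C$, and domains through unmarked neck regions, are no longer excluded.
\item The quotient of such a real disk has boundary on $\T_\alpha$ and on the real locus. It is not an ordinary disk of $(\Sigma_0,\alpha,\beta)$.
\end{itemize}
The grid variant fails for the same reason. $R$-invariant grid states need not be block-diagonal, since symmetric pairs can sit in the off-diagonal blocks, so they do not restrict to grid states of $G$. Likewise, self-symmetric real rectangles (squares, $L$-hexagons, $X$-octagons) need not contain the diagonal markings.

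\emph{Periodic case.} The natural doubled diagram takes the connected sum at $z$ and $R(z)$, with $O=w$ and $X=R(w)$. It does represent $K\#K$ with no curve crossing the neck, but the neck region carries no marking.
\begin{itemize}
\item A real class $\cD_1+R(\cD_1)$ with $n_z(\cD_1)=k$ has $\mu_R=\mu(\cD_1)-k$, because the Euler measure drops by $2k$ across the neck.
\item Hence index-one real disks come from ordinary classes of index $k+1$ with $n_w=0$. They must also satisfy a reflection-symmetry matching condition at their $k$ neck crossings.
\item These have no counterpart in the hat complex of $K$. On this diagram you only get a complex, filtered by the Alexander grading of $K$, whose associated graded is the hat knot complex.
\item That yields at best $\rank\widehat{\HFKR}(K\#K,\fro)\le\rank\widehat{\HFK}(K)$, not equality.
\end{itemize}
Your spectral-sequence fallback also gives only upper bounds; ``exhibiting symmetric cycles'' provides no lower bound.

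\emph{What is missing.} You need an argument that replaces the neck analysis. The paper uses the real sutured decomposition theorem of BGX (Proposition~4.6):
\begin{itemize}
\item The connected-sum sphere becomes an equivariant product annulus in the knot exterior: a $\{1\}$-reflection annulus in the strongly invertible case, a $\Z/2$-reflection annulus in the periodic case.
\item Decomposing along it yields $N\sqcup\tau(N)$, where $N=S^3-\nu(K)$ carries two meridional sutures.
\item For such a swapped disjoint union, the real theory is $\SFH(N)\cong\widehat{\HFK}(K)$.
\end{itemize}
Your doubling heuristic is exactly this last, easy step; the decomposition theorem supplies the hard one. It also shows that your factor $V$ has rank one, with $(M,A)=(M^R,2A^R)$.
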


In a previous version of this paper, this proposition was stated as a conjecture. It was then pointed out by Fraser Binns to the author that this can be proved using the real sutured decomposition developed in \cite[Section~4]{BGX}.

\begin{proof}
We first argue for the strongly invertible case. After removing an equivariant neighborhood $\nu(K\#\tau K^r)$ from $S^3$, the connected sum sphere becomes a separating $\{1\}$-reflection product annulus (see \cite[Definition~4.4]{BGX}) in the real sutured exterior of $K\#\tau K^r$ (see \cite[Example~2.5]{BGX}). A real product decomposition along this annulus provides us with the disjoint union of two copies of $S^3-\nu(K)$ interchanged by the involution, each equipping with two meridian sutures. Then a combined use of \cite[Proposition~4.6]{BGX} and the tautological isomorphisms $\SFH(S^3-\nu(K))\cong \widehat{\HFK}(S^3,K)$ and  $\RSFH(S^3-\nu(K\#\tau K^r),\tau)\cong \widehat{\HFKR}(S^3,K\#\tau K^r,\tau)$ provides us with the desired result. By keeping track of the Maslov index of disks and relative $\spinc$ structures carefully, the bigradings can also be recovered via $(M,A)=(M^R, 2A^R)$.

A similar argument works in the periodic case. More precisely, after removing an equivariant neighborhood $\nu(K\# K)$ from $S^3$, the connected sum sphere becomes a separating $\Z/2$-reflection product annulus (see \cite[Definition~4.4]{BGX}) in the real sutured exterior of $K\# K$. Decomposing this annulus again provides us with the disjoint union of two copies of $S^3-\nu(K)$ interchanged by the involution, each equipping with two meridian sutures. Then result follows in exactly the same way as above.  
\end{proof}

For more examples and applications, see Section~\ref{sec:Examples and application}, in which we also make comparison between different spectral sequences and try to recover $\tau_K$ action on the usual knot Floer complex.

\begin{remark}
Before ending the introduction, we would like to make some comparison between our theory and existing theories. \begin{itemize}
\item Comparing to equivariant Khovanov homology, as considered in \cite{Lobb2021ArefinementofKhovanovhomology}, \cite{Sano_2025} and \cite{borodzik2025khovanovhomologyequivariantsurfaces}, our theory is known to be sensitive to the choice of auxiliary data (i.e., direction of strongly invertible knots), but their invariants are not. On the other hand, a certain Kunneth principle is satisfied by equivariant Khovanov homology according to \cite[Section~6]{borodzik2025khovanovhomologyequivariantsurfaces}, which is not true for real knot Floer homology (see Section~\ref{sub:Kunneth principle fails}). A common feature of the two theories is that they are both sensitive to different strong inversions on the same knots on many small examples.

\item Comparing to the classical knot Floer homology, again the failure of Kunneth principle serves as an interesting new feature. Thus, $\tau$ is known to be a group homomorphism from the concordance group to $\Z$, while $\tau^R$ is only known to be a map from the equivariant concordance group. On the other hand, $\tau$ always vanishes on amphichiral knots, but $\tau^R$ can take non-zero values on them, as we shall see in Example~\ref{ex:amphichiral}. Of course, this paper only covers a small number of properties enjoyed by real knot Floer theory, more aspects will be developed or investigated in future works. 
\end{itemize}
\end{remark}

\subsection{Organization}
In Section~\ref{sec:Real knot Floer chain complex and real knot Floer homology}, we define real link Floer homology for equivariant links. In Section~\ref{sec:A combinatorial description}, we reformulate the theory in $S^3$ combinatorially and prove its basic properties in Section~\ref{sec:Basic properties and examples}. Real torsion order and $\tau^R$-invariant will be defined and investigated in Section~\ref{sec:Torsion order and equivariant unknotting number} and~\ref{sec:Real tau-invariant and equivariant slice genus}. Then, we prove various skein relations in Section~\ref{sec:Skein exact triangles}. Finally, we provide applications of real knot Floer homology and analyze examples from Appendix~\ref{app:Calculation results for knots with small crossing numbers} in Section~\ref{sec:Examples and application}.  

\begin{ack}
The author would like to thank Zhenkun Li for useful discussions during the preparation of this paper and his contribution on computing examples and making Appendix~\ref{app:Calculation results for knots with small crossing numbers} via python program.
The author is also grateful to Fraser Binns, Wenzhao Chen, Irving Dai, Gary Guth, Kristen Hendricks, Robert Lipshitz, Ciprian Manolescu and Lisa Piccirillo for helpful discussions. Moreover, the author thanks Alessio Di Prisa for his comments on an early draft. Many useful discussions happened during the author's visit to Stanford and Princeton University, she is thankful to Fraser Binns, Gary Guth and Ciprian Manolescu for providing her the chance to visit.
\end{ack}

\section{Real link Floer chain complex and real link Floer homology}\label{sec:Real knot Floer chain complex and real knot Floer homology}
\subsection{Strongly invertible links}\label{sub:Strongly invertible links}
\begin{defn}\label{def:strongly invertible links}
    Let $(Y,\tau)$ be a closed oriented real 3-manifold and $L$ be a link in $Y$.
    \begin{itemize}
        \item $L$ is called \emph{strongly invertible} if for any choice of orientation on $L$, $\tau$ restricts to an orientation-reversing involution on each component of $L$. 
        \item $L$ is called \emph{generalized strongly invertible} if there exists an orientation on $L$ such that $\tau$ restricts to an orientation-reversing involution on $L$. Here, we allow pairs of components to be interchanged in an orientation-reversing way.
    \end{itemize} 
    We add the adjective ``generalized'' in the second case to avoid confusion with the notion of strongly invertible link already defined in \cite{Montesinos+1975+227+260}.
\end{defn}

\begin{defn}\label{def:strongly invertible knots with extra data}

Let $K$ be a strongly invertible knot in $S^3$ and $C=\fix (\tau)$ be the axis of the involution, where $\tau$ is the unique involution on $S^3$ up to isotopy. Note that $C\cap K$ consists of two points, which divides $C$ into two segments. Each of these will be regarded as a half axis for the strongly invertible knot. Basing on this observation, a \emph{directed strongly invertible knot} in $S^3$ is defined as strongly invertible $K$ together with a choice of an orientation on $C$ and a half axis.  

We generalize this notion as follows. Let $(Y,\tau)$ be a closed real $3$-manifold  with $C=\fix(\tau)$ connected. A generalized strongly invertible links \emph{with auxiliary data} in $(Y,\tau)$ is a pair $(L,\fra)$, in which $L$ is a generalized strongly invertible link in $(Y,\tau)$ and $\fra$ is a data set consisting of \begin{enumerate}
    \item a choice of orientation $\fro$ on $L$ that fits $L$ into the definition of a generalized strongly invertible link;
    \item a labeling $\frl$ using $(O,X)$ on two fixed points for each strongly invertible knot component in $L$;
    \item a choice of orientation $\fro'$ on $C$;
    \item a choice of half-axis $A$ on $C$, i.e., a segment on $C$ containing $L\cap C$;
\end{enumerate}
 satisfying that \begin{itemize}
     \item for each strongly invertible knot component $K$ of $L$, the orientation of $A$ given by orienting from $O$ to $X$ is the same as that gotten from $\fro'$;
     \item or equivalently, for each strongly invertible knot component $K$ of $L$, moving $O$ and $X$ along $\fro|_K$ for a short distance, then at two intersection points $K\cap C$, the orientation of $A$ and $K_O$ agree. Here, $K_O$ means the segment of $K\setminus C$ containing $O$. 
 \end{itemize}
Here, the first description generalizes \cite{hendricks2025noterealheegaardfloer} while the second generalizes \cite{DMSequivariantknotandHFK}.
\end{defn}

One may have noticed that the compatibility requirement in the definition above is quite strong. Actually, the four choices above are closely related as observed in \cite[Section~3]{DMSequivariantknotandHFK}. For a strongly invertible link,
\begin{itemize}
    \item after we fix a choice of (1), (3) and (4), then (2) is automatically determined; Similarly, if we fix (2), (3) and (4), then (1) is determined. More precisely, once we fix (3) and (4), for each component $K\subset L$, there are exactly two choices of $(\fro|_K,\frl|_K)$ that are compatible with the given information on axis related by changing $\fro|_K$ and the labeling simultaneously.
    \item On the other hand, if we fix choices of (1) and (2) at first, the choices for (3) and (4) are also nearly uniquely determined if a compatible one exists.
\end{itemize}

When pairs of components appear in $L$, we only need to choose an orientation on each of them and we have two choices for each pair, since the compatibility condition has nothing to do with them. Specializing to strongly invertible knots, this recovers the discussion in \cite[Section~3.5]{DMSequivariantknotandHFK}.

Motivated by this, we say two auxiliary data $\fra$ and $\fra'$ are \emph{equivalent} if they share the same data (1) and (2). One will see later that fixing (1) and (2) is enough for the bigraded real link Floer group to be well-defined (uniquely determined up to isomorphism). We will abuse the notation $\fra$ for an equivalent class of auxiliary data.

\begin{defn}\label{def:real Heegaard diagram for strongly invertible links}
Let $(Y,\tau)$ be a closed real $3$-manifold with $\fix(\tau)$ connected and $L$ be a (generalized) strongly invertible link in $(Y,\tau)$. Fix an auxiliary data $\fra$ for $L$. We say a real Heegaard diagram $\cH=(\Sigma,\bm\alpha,\bm\beta,\bfO,\bfX, R)$ represents $(L,\fra)$ if as usual: 
    \begin{itemize}
        \item $(\Sigma,\bm\alpha,\bm\beta,\bfO, R)$ and $(\Sigma,\bm\alpha,\bm\beta,\bfX, R)$ are both multi-based real Heegaard diagram for $(Y,\tau)$.  Here, we only require $\bfO$ and $\bfX$ to be fixed as sets by $R$, which is more general than the notion of real Heegaard diagram defined in \cite{guth2025real}.
        \item Each connected component of $\Sigma\setminus \bm\alpha$ contains exactly one $O$-mark and one $X$-mark. The same holds for $\Sigma\setminus \bm\beta$.
        \item Connect arcs from $\bfO$ to $\bfX$ in the complement of $\bm\beta$ and push their interior into the $\beta$-handlebody. Then, connect arcs from $\bfX$ to $\bfO$ in the complement of $\bm\alpha$ and push their interior into the $\alpha$-handlebody. Performing these simultaneously and equivariantly, we get an oriented equivariant link in $(Y,\tau)$. We require it to be equivariantly isotopic to $(L,\fro)$.
        \item If $l_f$ is the number of components in $L$ that are fixed setwise by $\tau$, then $\vert \fix (R)\cap \bfO\vert=\vert \fix (R)\cap \bfX\vert= l_f$.
    \end{itemize}
    In addition to these ``classical'' requirements, we require that $(O,X)$ labelings on $C\cap L$ coming from $\cH$ coincides with that chosen in $\fra$.
    Furthermore, $\cH$ is said to be \emph{minimal} if there is exactly one pair of $(O,X)$ on each component of $L$.
\end{defn}

\begin{convention}\label{conv:setup of holomorphic structure}
Since our theory follows closely from those in \cite{guth2025real} and \cite{BGX}, we decide not to include basic definitions in real Heegaard Floer theory. We will setup some notations here and refer readers to these papers for details. 
\begin{itemize}
    \item $\mathrm{Sym}^m(\Sigma)$ will denote the $m$-fold symmetric product of $\Sigma$, on which we have an induced involution which we still denote by $R$. For a point $p\in \Sigma$ and a map $\phi$ from $D^2$ to $\mathrm{Sym}^m(\Sigma)$, we use $n_{p}(\phi)$ to denote the intersection number $\# (\im(\phi)\cap \{p\} \times \mathrm{Sym}^{m-1}(\Sigma))$, when the intersection is transverse. 
    
    \item $\T_\alpha$ and $\T_\beta$ are two Lagrangian tori (this can be achieved by using a special class of symplectic forms on $\mathrm{Sym}^m(\Sigma)$, see~\cite{guth2025real}) in $\mathrm{Sym}^m(\Sigma)$ coming from the products $\alpha_1\times \alpha_2\times \ldots\times \alpha_m$, $\beta_1\times \beta_2\times\ldots\times \beta_m$, in which $m=\vert\bm\alpha\vert=\vert\bm\beta\vert$. By definition, $R$ interchanges $\T_\alpha$ and $\T_\beta$ diffeomorphically.

    \item We always fix a generic choice of a real almost complex structure on $\mathrm{Sym}^m(\Sigma)$, so that all the index $1$ moduli spaces (see below) are cut out transversely.
   
    \item We will assume $\T_\alpha$ and $\T_\beta$ intersect transversely, so that $(\T_\alpha\cap \T_\beta)^R$ is a finite set of points. These points will be refer to as generators.
    
    \item For $\xv, \yv \in (\T_\alpha\cap \T_\beta)^R$, we use $\pi_2^R(\xv,\yv)$ to denote the set of homotopy classes of disks connecting $\xv$ to $\yv$. For $\phi\in \pi_2^R(\xv,\yv) $, $\ind_R(\phi)$ denotes the Fredholm index of $\bar{\partial}$-operator acting on the space of real sections after suitable Sobolev completion and $\mu_R(\phi)$ denotes its real Maslov index. These two indices coincide on most classes that we will consider. Moreover, $\widehat{\cM}_R(\phi)$ denotes the moduli space of real holomorphic representatives of $\phi$ quotienting the natural $\R$-translation on $\C \supset D^2-\{\pm i\}\cong [0,1]\times \R$ and $\#\widehat{\cM}_R(\phi)$ denotes the modulo $2$ count of points in it when it is of dimension zero.

    \item We will use $\rspinc(Y,\tau)$ to denote the set of real $\spinc$ structures on the real manifold $(Y,\tau)$. There is a complete characterization of this in \cite[Section~3.5-3.8]{guth2025real} and a classification theorem was proved in \cite{li2022monopolefloerhomologyreal}. As in \cite[Section~3.7]{guth2025real}, we can assign to each generator $\xv$, a real $\spinc$ structure $\s^R(\xv)$. When we do this assignment, we always use $\bfO$ as the base points of the underlying real $3$-manifold.

    \item We need some real admissibility assumptions on the diagram for the homology theories to be well-defined. In \cite[Section~3.9]{guth2025real}, the authors introduced two notions of admissibility on real Heegaard diagram. The weak admissibility works once for all, while the strong admissibility is defined for each real $\spinc$ structure.
    One can generalize the argument from \cite[Lemma~3.33-3.34]{guth2025real} to show that \begin{itemize}
        \item Fix a real $\spinc$ structure $\s^R$. Any real Heegaard diagram is equivariantly isotopic to a $\s^R$-strongly admissible one. 
        \item Any two strongly $\s^R$-admissible (weakly admissible) real Heegaard diagrams can be connected by a sequence of real Heegaard moves such that each intermediate diagram is strongly $\s^R$-admissible (weakly admissible).
    \end{itemize}
    
\end{itemize}
\end{convention}

\begin{prop}\label{prop:existence of real HD for strongly invertible knots and real H moves}
For each generalized strongly invertible link with auxiliary data $(L,\fra)$ in $(Y,\tau)$, there exists a (minimal) real Heegaard diagram representing it. Moreover, if $\cH$ and $\cH'$ both represent $(L,\fra)$, then they can be connected by a sequence of real Heegaard moves (see \cite[Definition~2.13]{BGX}) :\begin{itemize}
    \item $\{1\}$-(de)stabilization along the fixed set away from the base points.
    \item $\Z/2$-(de)stabilizations of indices (1,2) or (0,3) away from the fixed point set;
    \item real handleslide away from the base points;
    \item equivariant isotopy of $\alpha$ and $\beta$-curves away from the base points.
\end{itemize}
Moreover, for any real $\spinc$ structure $\s^R$ on $(Y,\tau)$, we can choose a strongly $\s^R$-admissible real Heegaard diagram $\cH$ representing $(L,\fra)$. If two real Heegaard diagrams representing 
$(L,\fra)$ are both strongly $\s^R$-admissible, then they can be connected by a sequence of real Heegaard moves such that each intermediate real Heegaard diagram is also strongly $\s^R$-admissible.
\end{prop}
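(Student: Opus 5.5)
The plan is to imitate the classical proof for link diagrams (Ozsv\'ath--Szab\'o's existence and Heegaard-move theorem for multi-pointed diagrams of links) equivariantly. The real-sutured machinery of \cite{BGX} does most of the work. There are three steps: build a diagram, reduce uniqueness to the sutured setting, and handle admissibility.

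\emph{Existence.} Start from an equivariant tubular neighborhood $\nu(L)$ and the real sutured manifold $Y\setminus \nu(L)$. On each component we place two meridional sutures, one near an $O$ and one near an $X$. For a strongly invertible component, the two fixed points $K\cap C$ carry the $(O,X)$ labels prescribed by $\frl$. For a pair of interchanged components, the sutures are placed in swapped pairs away from $C$. Because $\fix(\tau)$ is connected, $C\setminus L$ meets the sutured exterior in arcs, and the compatibility conditions on $\fro'$ and $A$ guarantee that the orientations of these arcs match the suture data. \cite{BGX} provides an equivariant self-indexing Morse function, or a real sutured Heegaard diagram, for this real sutured manifold. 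Collapsing each suture annulus to a pair of base points then yields a real Heegaard diagram $\cH$ representing $(L,\fra)$. It has exactly one $O$ and one $X$ per component, so it is minimal, and it has $|\fix(R)\cap\bfO|=|\fix(R)\cap \bfX|=l_f$. The remaining checks are directly from the construction:
\begin{itemize}
\item each component of $\Sigma\setminus\bm\alpha$ and of $\Sigma\setminus\bm\beta$ contains one $O$ and one $X$;
\item the reconstructed link is $(L,\fro)$;
\item the labels on $C\cap L$ agree with $\frl$.
\end{itemize}
This uses the description of real Heegaard diagrams via equivariant Morse functions in \cite{guth2025real}.

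\emph{Uniqueness.} Let $\cH$ and $\cH'$ both represent $(L,\fra)$. First reduce to minimal diagrams. Extra pairs of base points come from equivariant index $0/3$ stabilizations: either $\Z/2$-pairs off the fixed set, or, for fixed components, pairs placed symmetrically. These are cancelled by the $(0,3)$ moves in the list, in the usual way linked basepoints are removed. Since $\fra$ fixes both orientation and labeling, the data $(1)$ and $(2)$ are the same for both diagrams. Hence the associated real sutured manifolds, with their sutures and the induced labels on $C$, are equivariantly diffeomorphic by a map isotopic to the identity. The real sutured analogue of Reidemeister--Singer in \cite[Definition~2.13 and the accompanying theorem]{BGX} (equivalently, \cite{LOrealbordered}) then connects the two diagrams by real Heegaard moves supported away from the sutures, that is, away from the base points.

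I expect this step to be the main obstacle. One must check that the equivariant Cerf theory of \cite{BGX} produces only the listed moves, and in particular that births or deaths on the fixed set are $\{1\}$-stabilizations away from the base points. One must also check that the identification of the exteriors respects the chosen labels, rather than swapping $O$ and $X$ on a fixed component. I would control the labels by noting that any equivariant isotopy of $L$ preserving $\fro$ must preserve $\frl$, using the equivalence discussion following Definition~\ref{def:strongly invertible knots with extra data}.

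\emph{Admissibility.} The last two statements follow by adapting \cite[Lemma~3.33--3.34]{guth2025real}, as indicated in Convention~\ref{conv:setup of holomorphic structure}. Winding the curves equivariantly along a basis of real periodic domains, away from the base points, achieves strong $\s^R$-admissibility. Each move in the sequence can then be preceded by such a winding, so that all intermediate diagrams stay admissible.
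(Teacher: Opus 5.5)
Your overall route is the paper's. Existence comes from viewing a minimal diagram as a real sutured diagram of the exterior with meridional sutures (\cite[Example~2.5, Proposition~2.12]{BGX}). Connectivity comes from the real sutured move theorem \cite[Proposition~2.15]{BGX}, together with $\Z/2$-$(0,3)$-stabilizations to adjust base points. Admissibility comes from the arguments of \cite[Section~3.9]{guth2025real}. Two steps need repair.

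\emph{Reducing to minimal diagrams.} You cannot in general reduce a given non-minimal diagram to a minimal one by $(0,3)$-destabilizations, because an arbitrary diagram with extra base points need not be locally of stabilized form. Run the argument in the other direction, as the paper does. First $(0,3)$-stabilize, symmetrically and away from $C$, until both diagrams carry the same number of base points on each component. Then apply \cite[Proposition~2.15]{BGX} to these equal-count diagrams, which are real sutured diagrams of the exterior with more meridional sutures. For the admissible statement, also record that $(0,3)$-(de)stabilizations do not destroy strong $\s^R$-admissibility.

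\emph{Controlling the labels.} Your proposed control rests on a false claim: an equivariant isotopy preserving $\fro$ can swap $\frl$. In $(S^3,\tau)\subset(\C^2,\text{conjugation})$, consider the real rotations $\rho_t\in SO(2)\subset SU(2)$ for $t\in[0,\pi]$. They commute with conjugation. The endpoint $\rho_\pi=-\mathrm{id}$ preserves the unknot $\{|z_1|=1,\ z_2=0\}$ together with its orientation, yet exchanges its two fixed points $(\pm1,0)$, carrying $A$ to the complementary half-axis.

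No such claim is needed, however. In the definition of ``represents'', the $(O,X)$-labels of $\cH$ can only be compared with $\frl$ through the equivariant isotopy to $(L,\fro)$. Composing these isotopies for $\cH$ and $\cH'$ therefore already gives a label-preserving identification of the decorated real sutured exteriors, which is all \cite{BGX} requires.
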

\begin{proof}
Note that a minimal diagram of $(L,\fra)$ is exactly a real sutured Heegaard diagram for the real sutured manifold $(Y-\nu(L),\gamma,\tau)$ characterized in \cite[Example~2.5]{BGX} with sutures suitably decorated by $O$ and $X$. So the existence follows from \cite[Proposition~2.12]{BGX}.

When the manifold is $(S^3,\tau_{\text{std}})\subset (\C^2,\text{conjugation})$, there is a more constructible proof: Fixing a generic point $\infty$ on $C-A$, we can project $L$ to an equivariant copy of $\R^2\subset \R^3\subset S^3$ and obtain a transvergent diagram for $L$. From this, a multi-pointed real Heegaard diagram can be constructed easily then we can trade base points with genus to eliminate unwanted pairs of $(O,X)$ to get a minimal diagram, as Hendricks did in \cite{hendricks2025noterealheegaardfloer}. 

For the real Heegaard moves, it is easy to see that they do not affect the fact that $\cH$ represents $(L,\fra)$. It follows from \cite[Proposition~2.15]{BGX} that if two real Heegaard diagrams representing $(L,\fra)$ have the same number of base points on each component, then they can be connected by a finite sequence of real Heegaard moves without using $\Z/2$-(de)stabilization of indices $(0,3)$. On the other hand, using a finite sequence of such (de)stabilizations, we can make any two real Heegaard diagrams representing $(L,\fra)$ to share the same number of base points on each component. The discussion on admissibility follows from that in \cite[Section~3.9]{guth2025real} and the fact that $\Z/2$-(de)stabilizations of indices $(0,3)$ do not break admissibility. These together complete the proof.
\end{proof}

Fix a real $3$-manifold $(Y,\tau)$ with $\fix(\tau)$ connected and also a real $\spin^c$ structure $\s^R\in \rspinc(Y,\tau)$. 

Let $\cH=(\Sigma,\bm\alpha,\bm\beta,\bfO,\bfX, R)$ be a real Heegaard diagram representing a generalized strongly invertible link $(L,\fra)$ in $(Y,\tau)$. Assume it is strongly $\s^R$-admissible. We name the $O$ and $X$-base points so that $O^f_1\ldots O^f_{l_f}$ and $X^f_1\ldots X^f_{l_f}$ lie on $\fix(R)=\fix (\tau)$ and $(O_1,O_1'),\ldots,(O_k,O_k')$ and $(X_1,X_1'),\ldots,(X_k,X_k')$ are pairs of base points interchanged by $R$. We introduce a graded polynomial ring \[\cR(\cH)=\F[u_1,\ldots u_{l_f}, U_1,\ldots,U_k,v_1,\ldots,v_{l_f},V_1,\ldots V_k].\] The grading is given by  \[M_{\bfO}^R(u_i)=-1, \quad M_{\bfX}^R(u_i)=0, \quad \forall 1\le i\le l_f;\quad M_{\bfO}^R(U_j)=-2, \quad M_{\bfX}^R(U_j)=0;\quad \forall 1\le j\le k; \]
\[M_{\bfO}^R(v_i)=0, \quad M_{\bfX}^R(v_i)=-1, \quad \forall 1\le i\le l_f;\quad M_{\bfO}^R(V)=0, \quad M_{\bfX}^R(V)=-2, \quad \forall 1\le j\le k. \] Then let \[A^R(-)=\frac{1}{2}(M_{\bfO}^R(-)-M_{\bfX}^R(-)).\]
Throughout this paper, when we talk about bigrading, we mean $(M^R_{\bfO},A^R)$ and we will often omit $\bfO$ from the notation.

We define the \emph{(minus) full real link Floer complex} $\fullCFLR^-(\cH,\s^R)$ to be the module over $\cR(\cH)$ with generators \[\{\xv\in (\T_\alpha\cap \T_\beta)^R| \s^R(\xv)=\s^R \}.\] We define an endomorphism on it by \[\partial \xv =\sum_{\yv} \sum_{\phi\in \pi_2^R(\xv,\yv),\mu_R(\phi)=1}\# \widehat{\cM}_R(\phi) \prod_{1\le i\le l_f} v_i^{n_{X_i^f}(\phi)} u_i^{n_{O_i^f}(\phi)}\prod_{1\le j\le k} V_j^{n_{X_j}(\phi)} U_j^{n_{O_j}(\phi)}\yv\]
on generators and extend linearly over the base ring.
A prior, the summation above is not necessarily finite. Under the strongly $\s^R$-admissible assumption, the sum is indeed finite using the argument from \cite[Lemma~3.32]{guth2025real}.

Note also that $\partial^2$ is in general not zero, but we can calculate the curvature explicitly as in \cite{guth2025real}. To make the expression clear, we need to introduce some new notations. If $K\subset L$ is a strongly invertible knot, then we label the base points so that the two fixed base points on $K$ are $O_1$ and $X_n$ and along the arc from $O_1$ to $X_n$, the base points are \[O_1, X_1, \ldots, X_{n-1}, O_{n}, X_{n}\] while along the arc from $X_n$ to $O_1$, the base points are \[X_n,O_n',X_{n-1}', \ldots,X_1',O_1.\]  Now, we define \[\omega_{K}=u_1^2\cdot V_1+ U_2\cdot V_1+ U_2\cdot V_2+\ldots U_{n}\cdot V_{n-1}+ U_{n}\cdot v_n^2.\] If $(K,K')\subset L$ is a pair of components interchanged by $\tau$, then we label the base points on $K$ so that it reads \[O_1, X_1,\ldots,O_n,X_n\] in it's orientation. Define  \[\omega_{K,K'}=U_1\cdot V_1+ U_2\cdot V_1+ U_2\cdot V_2+\ldots U_{n}\cdot V_{n-1}+ U_{n}\cdot V_n+ V_n\cdot U_1.\] Using these, we can write down the curvature as \[\partial^2= (\sum_{K\subset L, K \text{ strongly invertible}} \omega_K+\sum_{(K,K')\subset L, \text{ }\tau(K)=K'}\omega_{K,K'})\cdot \id\] if $L$ is a genuine link or $L$ is a knot and have more than one pair of base points. On the other hand, if $K$ is a strongly invertible knot and the real Heegaard diagram is minimal, then $\partial^2=0$.


When the diagram is minimal, the homotopy type of $\fullCFLR^-(\cH,\s^R)$ is an invariant of $(L,\fra)$ and $\s^R$, which we shall denote by $\fullCFLR^-(L,\fra,\s^R)$. This is a generalization of the full link Floer complex $\mathcal{CFL}^\circ$ from \cite{zemke2019link}. As seen in Proposition~\ref{prop:existence of real HD for strongly invertible knots and real H moves}, any multi-based diagram is related to a minimal one by some real Heegaard moves. When $\cH$ is any diagram representing $(L,\fra)$ that is strongly $\s^R$-admissible, by setting all the $U$ and $V$ variables belonging to same component of $L$ to be equal, $\fullCFLR^-(\cH,\s^R)$ is up to homotopy an algebraic stabilization of $\fullCFLR^-(L,\fra,\s^R)$. Here, by an algebraic stabilization, we mean taking tensor product with $(\F^2)^{\otimes s}$, where $s$ is roughly the number of extra pairs of $O$ and $X$-base points. 

For a strongly invertible link with auxiliary data $(L,\fra)$ in $(Y,\tau)$, fix a real $\spinc$ structure $\s^R$ as above and pick a real Heegaard diagram $\cH$ representing $(L,\fra)$ that is strongly $\s^R$-admissible. Then, we define \[\CFLR^-(\cH,\s^R)=\fullCFLR^-(\cH,\s^R)/ (v_i,V_j)_{1\le i\le l_f,1\le j\le k}\] which is a genuine chain complex and let $\partial^-$ denote the induced differential on it. Then, the homology 
\[\HFLR^-(\cH,\s^R)=H_*(\CFLR^-(\cH,\s^R),\partial^-)\] 
is an invariant of $(L,\fra)$ and $\s^R$ when it is regarded as an $\F[u_1,\ldots,u_{l_f}]$-module. We will denote it by $\HFLR^-(L,\fra,\s^R)$ and call it \emph{minus version of real link Floer homology} associated to $(L,\fra)$ in the real $\spinc$ structure $\s^R$. We can further introduce the chain complex \[\widehat{\CFLR}(\cH,\s^R)=\CFLR^-(\cH,\s^R)/(u_i)_{1\le i\le l_f}\] with induced differential $\widehat{\partial}$. Then 
\[\widehat{\HFLR}(\cH,\s^R)=H_{*}(\widehat{\CFLR}(\cH,\s^R),\widehat{\partial})\] 
is an invariant of $(L,\fra)$ and $\s^R$ as a vector space over $\F$. We will denote it by $\widehat{\HFLR}(L,\fra,\s^R)$ and call it \emph{hat version of real link Floer homology} associated to $(L,\fra)$ in the real $\spinc$ structure $\s^R$. A further variation is \[\widetilde{\CFLR}(\cH,\s^R)=\CFLR^-(\cH,\s^R)/(u_i,U_j)_{1\le i\le l_f,1\le j\le k},\] the homology $\widetilde{\HFLR}(\cH,\s^R)$ is not an invariant of $(L,\fra)$ and $\s^R$, but it appears as a suitable stabilization of $\widehat{\HFLR}(L,\fra,\s^R)$ according to the size of the real Heegaard diagram.

More generally, when $L$ is a generalized strongly invertible link, we let $l_p$ be the number of pairs of components interchanged by $\tau$. (We then have $\vert L\vert= 2l_p+l_f$.) Without loss of generality, we assume that $O_1, \ldots, O_{l_p}$ lie on distinct pairs of components. With these notations prepared, we define 
\[\CFLR^-(\cH,\s^R)=\fullCFLR^-(\cH,\s^R)/ (v_i,V_j)_{1\le i\le l_f,1\le j\le k},\] 
\[\widehat{\CFLR}(\cH,\s^R)=\CFLR^-(\cH,\s^R)/(u_i,U_j)_{1\le i\le l_f,1\le j\le l_p},\] and 
\[\widetilde{\CFLR}(\cH,\s^R)=\CFLR^-(\cH,\s^R)/(u_i,U_j)_{1\le i\le l_f,1\le j\le k}.\] 
The homology 
\[\HFLR^-(\cH,\s^R)=H_*(\CFLR^-(\cH,\s^R),\partial^-)\] is an invariant of $(L,\fra)$ and $\s^R$, when it is regarded as an $\F[u_1,\ldots,u_{l_f},U_1,\ldots,U_{l_p}]$-module, while \[\widehat{\HFLR}(\cH,\s^R)=H_{*}(\widehat{\CFLR}(\cH,\s^R),\widehat{\partial})\] is an invariant of $(L,\fra)$ and $\s^R$ as a vector space over $\F$. We name them in the same fashion as for genuine strongly invertible links. Again, the homology $\widetilde{\HFLR}(\cH,\s^R)$ is not an invariant of $(L,\fra)$ and $\s^R$, but it appears as a suitable stabilization of $\widehat{\HFLR}(L,\fra,\s^R)$ according to $k-l_p$.

When $\s^R$ has torsion first Chern class, we can define a relative real Maslov grading $M^R$ on the generators by \[M^R_{\bfO}(\xv,\yv)=\mu_R(\phi)-n_{\bfO}(\phi)\]
for any $\phi\in \pi^R_2(\xv,\yv)$. The torsion assumption makes sure that this is well-defined independent of the choice of $\phi$. Further assume $L$ is null-homologous, then we can define \[M^R_{\bfX}(\xv,\yv)=\mu_R(\phi)-n_{\bfX}(\phi)\] and introduce the relative real Alexander grading as their difference \[A^R(\xv,\yv)= \frac{1}{2} (M^R_{\bfO}(\xv,\yv)-M^R_{\bfX}(\xv,\yv)).\] The null-homologous and  torsion assumptions make sure that this is also well-defined. In general, if $c_1(\s^R)$ is non-torsion with divisibility $2n$, we can define a $\Z/n\Z$-valued real Maslov grading on $\CFLR^\circ$ using the same formula.

Taking direct sum over all real $\spinc$ structures, we define \[\HFLR^-(L,\fra)=\bigoplus_{\s^R\in \rspinc(Y,\tau)} \HFLR^-(L,\fra,\s^R) \quad\text{ and } \quad\widehat{\HFLR}(L,\fra)=\bigoplus_{\s^R\in \rspinc(Y,\tau)} \widehat{\HFLR}(L,\fra,\s^R), \] which are invariants of $(L,\fra)$. 

We have now finished the construction. To prove (1) of Theorem~\ref{thm:intro-real link Floer homology}, it remains to check the invariance.

The invariance almost follows from the complete set of real Heegaard moves for sutured manifold introduced in \cite{BGX} and the proof of invariance for closed 3-manifolds in \cite{guth2025real}. The only move that hasn't been dealt with in previous works is that real $(0,3)$ stabilization, which introduces two new pairs of base points. This is exactly two  ``quasi-stabilization'' operations defined in \cite{Zemkequasistabandbasepointmoving} being performed in a symmetrical way. The argument of Proposition~5.3 in that paper applies directly to our case and shows that using suitable real Heegaard diagrams $(\cH,\cH^+)$ that are strongly $\s^R$-admissible and a sufficiently stretched real almost complex structure on the Heegaard surface, the endomorphisms on the full real link Floer complexes are related by \[\partial_{\cH^+}=
\begin{bmatrix}\partial_{\cH}& U_{O}+U_{O_a} \\
V_{X}+V_{X_a} &\partial_{\cH}\\
\end{bmatrix},\] in which $U_O$ and $V_X$ are the variables associated to the new pairs of base points $(O,O')$ and $(X,X')$ and $U_{O_a}$ and $V_{X_a}$ are the variables associated to the pairs of base points adjacent to the new base points. $U_{O_a}$ denotes $u_{o_a}^2$ if $O_a$ lies on the fixed, and similarly for $V_{X_a}$. Set all $V$ variables to zero, we see that \[\CFLR^-(\cH^+,\s^R)\cong \Cone(U_{O}+U_{O_a}\colon \CFLR^-(\cH,\s^R)[U_O]\to \CFLR^-(\cH,\s^R)[U_O]).\] Then the invariance of minus theory follows from the following lemma. 

\begin{lem} (A generalization of \cite[Lemma~5.2.16]{OSS2015grid})
Let $C$ be a (bigraded) chain complex over $\F[U_2,\ldots,U_k,u_1,\ldots,u_f]$. Then there is a quasi-isomorphism between (bigraded) $\F[U_2,\ldots,U_k,u_1,\ldots,u_f]$ modules \[H(\Cone(U_1-U_2\colon C[U_1]\to C[U_1]))\cong H(C).\]
\end{lem}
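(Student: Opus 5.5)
The plan is to mimic the proof of \cite[Lemma~5.2.16]{OSS2015grid} and reduce everything to the short exact sequence of the mapping cone. Write $C[U_1]=C\otimes_{\F}\F[U_1]$, so that $C[U_1]$ is a chain complex over $\F[U_1,U_2,\ldots,U_k,u_1,\ldots,u_f]$. The map $U_1-U_2$ commutes with the differential and is injective on $C[U_1]$, because $U_1$ is a free polynomial variable. I then expect $\Cone(U_1-U_2)$ to be quasi-isomorphic to the quotient complex $C[U_1]/(U_1-U_2)C[U_1]$, and I will identify this quotient with $C$ itself.

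First, I write the cone as $C[U_1]\oplus C[U_1]$ with differential $\begin{bmatrix}\partial & 0\\ U_1-U_2 & \partial\end{bmatrix}$. The grading shift on the first summand is chosen so that $U_1-U_2$ is homogeneous. Since $U_1$ and $U_2$ carry the same bigrading, this is possible, and the second summand is then unshifted. Next, I define the projection $\Pi\colon \Cone\to C$. It is zero on the first summand, and on the second summand it sends $\sum_i c_iU_1^i$ to $\sum_i U_2^ic_i$, i.e.\ it substitutes $U_1=U_2$. This is an $\F[U_2,\ldots,u_f]$-linear chain map: it kills $(U_1-U_2)x$, so $\Pi\circ\partial_{\Cone}=\partial\circ\Pi$. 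It also preserves the bigrading.

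Then I show that $\Pi$ is a quasi-isomorphism. The map $U_1-U_2$ is injective, and its cokernel is exactly $C[U_1]/(U_1-U_2)\cong C$ via $\Pi$. Hence there is a short exact sequence
\[0\to \Cone(\id_{C[U_1]})\to \Cone(U_1-U_2)\to C\to 0,\]
or equivalently one applies the standard lemma that the cone of an injective chain map is quasi-isomorphic to its cokernel. Alternatively, I can write down an explicit homotopy inverse. The inclusion $\iota\colon C\to\Cone$ sends $c$ to $(0,c)$. The homotopy $H$ is given by division by $U_1-U_2$: every $x\in C[U_1]$ decomposes uniquely as $x=(U_1-U_2)q(x)+\iota\Pi(x)$, and I set $H(0,x)=(q(x),0)$. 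Checking $\iota\Pi+\id=\partial H+H\partial$ is then a direct computation using that $q$ commutes with $\partial$. This step needs $\partial$ to be $U_1$-linear, which holds on $C[U_1]$ by definition.

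The main care needed is over $\F$-linearity versus module structure and gradings, rather than any real obstacle. One must check that the division $q$ is $\F[U_2,\ldots,U_k,u_1,\ldots,u_f]$-linear, which is clear because it is polynomial long division in $U_1$ with coefficients in $C$. One must also check that the bigrading shifts on the cone are consistent, which uses that $U_1$ and $U_2$ have the same $(M^R,A^R)$ bigrading. In the application the $u$-variables $u_{o_a}^2$ appear in place of $U_2$. The same argument works with $U_2$ replaced by any homogeneous element $p$ of the ground ring of the same bigrading as $U_1$, since only the division $x=(U_1-p)q+r$ with $r$ free of $U_1$ is used. I would state the lemma in this slightly greater generality.
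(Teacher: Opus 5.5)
Your proposal is correct and follows the same route as the proof of Lemma~5.2.16 in \cite{OSS2015grid}, which the paper cites in place of its own argument: $U_1-U_2$ is injective on $C[U_1]$, so its cone is quasi-isomorphic to the cokernel $C[U_1]/(U_1-U_2)\cong C$. Your explicit homotopy via division by $U_1-U_2$ checks out, and your observation that $U_2$ may be replaced by any homogeneous ring element of the same bigrading (e.g.\ $u_{o_a}^2$) is exactly what the paper's quasi-stabilization application needs when the adjacent base point lies on the fixed set.
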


The invariance of hat theory can be proved similarly. One can refer to \cite[Proposition~2.3]{MOS2009knot} for a detailed argument, which asserts $(0,3)$-stabilization invariance for the usual knot Floer homology and still works in our case. 

Currently, we can only assert that the isomorphism class of the group or module is a link invariant. These invariants can be made natural once the naturality of real Heegaard theory is established. This issue has been addressed in more recent works, see \cite{GM_real_naturality} and \cite{YXHFLR2}.

\begin{convention}
Above, we mainly focused on minus version of full real link Floer complex and its derivatives and used the notations like $\fullCFLR^-$, $\CFLR^-$, etc. In fact, we can consider an infinity counterpart by inverting variables in the base ring. In the following, we will use the terminology \emph{real knot Floer homology} and notations like $\CFKR^-$ when we want to stress the object is a strongly invertible knot.
\end{convention}

\subsection{Doubly periodic link}
\begin{defn}\label{def:periodic links}
    Let $(Y,\tau)$ be a closed oriented real 3-manifold and $L$ be a link in $Y$.
    \begin{itemize}
        \item $L$ is called \emph{doubly periodic} if for any choice of orientation on $L$, $\tau$ restricts to an orientation-preserving involution on each component of $L$.
        \item $L$ is called \emph{generalized doubly periodic} if there is a choice of orientation on $L$, so that $\tau$ restricts to an orientation-preserving involution on $L$. Here, we allow pairs of components to be interchanged in an orientation-preserving way.
    \end{itemize} 
\end{defn}

\begin{defn}\label{def:real Heegaard diagram for periodic links}
Let $(Y,\tau)$ be a closed real $3$-manifold with connected fixed set and $L$ be a (generalized) doubly periodic link in $(Y,\tau)$. We fix a compatible orientation $\fro$ on $L$. We say a real Heegaard diagram $\cH=(\Sigma,\bm\alpha,\bm\beta,\bfO,\bfX, R)$ represents $(L,\fro)$ if the following holds. 
    \begin{itemize}
        \item $\cH=(\Sigma-\nu(\bfO)-\nu(\bfX),\bm\alpha,\bm\beta, R)$ is a real sutured Heegaard diagram for $Y-\nu(L)$ with meridian sutures (one positively oriented meridian for each $O$ and one negatively oriented meridian for each $X$) as defined in \cite{BGX}.
        \item $R$ interchanges $\bfO$ and $\bfX$.
        \item Each connected component of $\Sigma\setminus \bm\alpha$ contains exactly one $O$-mark and one $X$-mark. The same holds for $\Sigma\setminus \bm\beta$.
        \item Connect arcs from $\bfO$ to $\bfX$ in the complement of $\bm\beta$ and push their interior into the $\beta$-handlebody. Then, connect arcs from $\bfX$ to $\bfO$ in the complement of $\bm\alpha$ and push their interior into the $\alpha$-handlebody. Performing these simultaneously and equivariantly, we get an oriented equivariant link in $(Y,\tau)$. We require it to be equivariantly isotopic to $(L,\fro)$.
    \end{itemize}
\end{defn}

The existence of real Heegaard diagrams and the complete set of real Heegaard moves are characterized and proved in the same way as in the strongly invertible case. We still setup notations and holomorphic structures as in Convention~\ref{conv:setup of holomorphic structure}. Now, we can proceed to define real link Floer homology for doubly periodic links.

Again, we fix a closed real $3$-manifold $(Y,\tau)$ with connected fixed set and a real $\spinc$ structure $\s^R\in \rspinc(Y,\tau)$. Let $(L,\fro)$ be a (generalized) doubly periodic link in $(Y,\tau)$ and $\cH$ be a strongly $\s^R$-admissible real Heegaard diagram for $(L,\fro)$. We label the base points in pairs as $(X_1,O_1),\ldots (X_k,O_k)$ so that $R(O_i)=X_i$ for all $i$ and define $\cR(\cH)=\F[V_1,\ldots,V_k]$ with $M^R_{\bfO}(V_i)= M^R_{\bfX}(V_i)=-1$. Then let the \emph{full real link Floer complex} $\fullCFLR^-(\cH,\s^R)$ be the module over $\cR(\cH)$ generated by \[\{\xv\in (\T_\alpha\cap \T_\beta)^R| \s^R(\xv)=\s^R \},\]   with an endomorphism \[\partial \xv =\sum_{\yv} \sum_{\phi\in \pi_2^R(\xv,\yv),\mu_R(\phi)=1}\# \widehat{\cM}_R(\phi) \prod_{1\le i\le k} V_i^{n_{X_i}(\phi)} \yv=\sum_{\yv} \sum_{\phi\in \pi_2^R(\xv,\yv),\mu_R(\phi)=1}\# \widehat{\cM}_R(\phi) \prod_{1\le i\le k} V_i^{n_{O_i}(\phi)} \yv,\] in which the second equality follows from the symmetry assumption on $\bfO$ and $\bfX$. The strongly admissible assumption ensures that the sum above is finite. Again, $\partial^2$ may not be zero, but we can calculate it explicitly. For a periodic component $K\subset L$, we assume the labels read \[O_1, X_2, O_3, X_4,\ldots O_n, X_1, O_{2}, X_{3},\ldots O_{n-1}, X_n\] along the orientation of $K$ and $\tau(O_i)=X_i$ for each $i$. Let \[\omega_{K}^p= V_1V_2+ V_2V_3+\ldots V_{n-1}V_{n}+V_1V_n.\]  

An interesting observation is the following, which implies that killing the curvature $\omega_{K}^p$ is difficult even after modulo $2$.
\begin{lem}\label{lem:odd number of pairs of base points on a periodic knot}
If $\cH$ is a real Heegaard diagram representing a doubly periodic knot in $(Y,\tau)$, then the number $n$ of base point pairs must be odd.  
\end{lem}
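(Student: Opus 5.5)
The plan is a purely combinatorial argument on the knot $K$ itself: $\tau$ acts on the cyclically ordered set of base points on $K$ by a rotation, and that rotation must send $O$-marks to $X$-marks.

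First, I will set up the cyclic order. By Definition~\ref{def:real Heegaard diagram for periodic links}, $K$ is built by joining arcs from $\bfO$ to $\bfX$ (pushed into the $\beta$-handlebody) and from $\bfX$ to $\bfO$ (pushed into the $\alpha$-handlebody). Hence, traversing $K$ along $\fro$, the base points alternate $O,X,O,X,\ldots$. If there are $n$ pairs, $K$ carries $2n$ base points. I index them cyclically by $\Z/2n$ in the order of $\fro$, with $O$-marks at even positions and $X$-marks at odd positions. The equivariant isotopy in the definition lets me regard these as points of $K$ with $\tau$ acting as $R$ on them.

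Second, I will show that $\tau|_K$ is a free involution. Since $K$ is doubly periodic, $\tau|_K$ is an orientation-preserving involution of a circle. Such a map is either the identity or conjugate to the rotation by $\pi$. If it were the identity, every base point on $K$ would be fixed by $R$. This is impossible, because $R$ interchanges $\bfO$ and $\bfX$ and these sets are disjoint. So $\tau|_K$ is fixed-point free and orientation-preserving.

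Third, I will read off the action on positions. The base-point set on $K$ is $\tau$-invariant, and $\tau|_K$ is an orientation-preserving homeomorphism of the circle. So it induces a cyclic-order-preserving bijection of $\Z/2n$, that is, a shift $i\mapsto i+k$. Because $\tau$ is an involution without fixed points, we have $2k\equiv 0 \pmod{2n}$ and $k\not\equiv 0$, which forces $k=n$. Finally, $R$ maps $O$-marks (even positions) to $X$-marks (odd positions), so $n$ must be odd. This also matches the labelling $O_1,X_2,\ldots,X_1,\ldots$ used for $\omega^p_K$.

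The only delicate point is the second step: I must make sure that the identification of $K$ with the link built from the diagram is $\tau$-equivariant, so that the actions of $\tau$ and $R$ on the base points agree. This is guaranteed by the construction being performed ``simultaneously and equivariantly'' in Definition~\ref{def:real Heegaard diagram for periodic links}.
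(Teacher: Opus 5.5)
Your proof is correct, but it takes a different route from the paper.

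The paper argues by contradiction. If $n$ were even, it trades base points for genus equivariantly as in \cite{hendricks2025noterealheegaardfloer}, removing two pairs at a time, until only two pairs remain. Let $A_1,A_2$ be the components of $\Sigma\setminus\bm\alpha$ and $B_i=R(A_i)$ those of $\Sigma\setminus\bm\beta$. The marks $O_1$ and $X_1=R(O_1)$ are adjacent on the knot, so they share one region, say $A_1$. Hence they also share $B_1$, and the knot closes up after two marks, which is a contradiction.

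You instead work directly on the $\tau$-invariant knot built from $\cH$. The $2n$ marks alternate along it, and $\tau$ acts on them by a cyclic shift $k$ with $2k\equiv 0 \pmod{2n}$. The condition $R(\bfO)=\bfX$ forces $k$ to be odd, so $k=n$ and $n$ is odd.

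What your route buys:
\begin{itemize}
\item It is more elementary and self-contained: it needs no equivariant handle-trading step, which the paper imports from \cite{hendricks2025noterealheegaardfloer}.
\item It applies verbatim to each $\tau$-invariant component of a generalized doubly periodic link.
\item It explains the labeling $O_1,X_2,\ldots,O_n,X_1,O_2,\ldots,X_n$ used for $\omega^p_K$, which is exactly the shift by $n$.
\end{itemize}

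Two steps in your write-up can be shortened:
\begin{itemize}
\item You do not need the classification of involutions of the circle. The fact that $k\neq 0$ already follows from $R$ fixing no mark.
\item You can read off orientation-preservation of $\tau$ on the constructed knot directly from the construction. Since $\tau$ swaps the two handlebodies, it carries the arcs in the $\beta$-handlebody (oriented from $O$ to $X$) to arcs in the $\alpha$-handlebody (oriented from $X$ to $O$). So you do not need to transport this property through the equivariant isotopy.
\end{itemize}
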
 
\begin{proof}
Suppose for contradiction that we have a real Heegaard diagram $\cH$ for a doubly period knot with an even number of pairs of $(O,X)$-base points. Then we use genus to trade handles as in \cite{hendricks2025noterealheegaardfloer} until we obtain a diagram $\cH'$ for the same knot with exactly two pairs of base points. This is possible since each time, we eliminate two pairs of base points. Label the two pairs by $(O_1,X_1)$, $(O_2,X_2)$ and let $(A_1,A_2)$ ($(B_1,B_2)$) be the two $\alpha$ ($\beta$)-periodic domains on $\cH'$, so that $\tau(A_i)=B_i$. Since $\cH$ represents a knot, $O_1$ and $X_1$ must share one of the four domains. Without loss of generality, we may assume $A_1$ contains $O_1$, $X_1$, but now they also share $B_1$, which contradicts the assumption that $\cH'$ represents a periodic knot.
\end{proof}
For a pair of components $(K,K') \subset L$, we assume the labels read \[O_1, X_2, O_3, X_4,\ldots O_{n-1}, X_{n}\] along the orientation of $K$ and $\tau(O_i)=X_i$ for each $i$. Let \[\omega_{K,K'}^p= V_1V_2+ V_2V_3+\ldots V_{n-1}V_{n}+V_1V_n.\] Then,  \[\partial^2= (\sum_{K\subset L, K \text{ periodic}} \omega^p_K+\sum_{(K,K')\subset L, \text{ }\tau(K)=K'}\omega^p_{K,K'}) \cdot\id\] if $L$ is a genuine link or $L$ is a knot and has more than one pair of base points. On the other hand, if $K$ is a doubly periodic knot and the diagram is minimal, then $\partial^2=0$.

When diagram is minimal, we assume that $L$ has $l_f$ components fixed setwise by $\tau$ and $l_p$ pairs of components interchanged by $\tau$. 
Then, we consider $\widehat{\CFLR}(\cH,\s^R)=\fullCFLR^-(\cH,\s^R)/(V_i)_{1\le i\le l_p+l_f}$, which is a $\F$-vector space. Of course, now the curvature becomes zero, so we can take homology \[\widehat{\HFLR}(\cH,\s^R)=H_{*}(\widehat{\CFLR}(\cH,\s^R),\widehat{\partial}),\] which is an invariant of $(L,\fro)$ and $\s^R$ as a vector space over $\F$.  We denote it by $\widehat{\HFLR}(L,\fro,\s^R)$ and call it the \emph{hat version of real link Floer homology for the generalized doubly periodic link} in the real $\spinc$ structure $\s^R$. When the diagram is not minimal, we can define a tilde version of homology, which appears as a stabilization of the hat version. 

When $\s^R$ is torsion, we can define a real Maslov grading in exactly the same way for strongly invertible links. But the real Alexander grading no longer makes sense. 

Taking direct sum over all real $\spinc$ structures, we define \[\widehat{\HFLR}(L,\fro)=\bigoplus_{\s^R\in \rspinc(Y,\tau)} \widehat{\HFLR}(L,\fro,\s^R), \] which is the invariant of $(L,\fro)$ promised in (2) of Theorem~\ref{thm:intro-real link Floer homology}. The invariance can be proved in exactly the same way as for strongly invertible links.


\section{A combinatorial description}\label{sec:A combinatorial description}
In this section, we give a combinatorial description for the real link Floer homologies introduced in Section~\ref{sec:Real knot Floer chain complex and real knot Floer homology} following the strategy in \cite{OSS2015grid}. We mainly focus on strongly invertible case, the periodic theory can be reformulated in exactly the same way.

It is well-known that $S^3$ admits a unique real structure up to isotopy and any strongly invertible knot in $S^3$ is ``Sakuma equivalent'' (see~\cite{Sakuma}) to a strongly invertible knot in $S^3$ equipped with the standard involution. So from now on, we will only consider strongly invertible knots in $(S^3,\tau)\subset (\C^2, \text{conjugation})$. For notational simplicity, we omit the subscript $\std$ from $\tau$. This involution admits a standard genus $1$ real Heegaard diagram as shown in Figure~\ref{fig:A real Heegaard diagram for S3}.   

\begin{figure}
    \centering
    \includegraphics[width=0.5\linewidth]{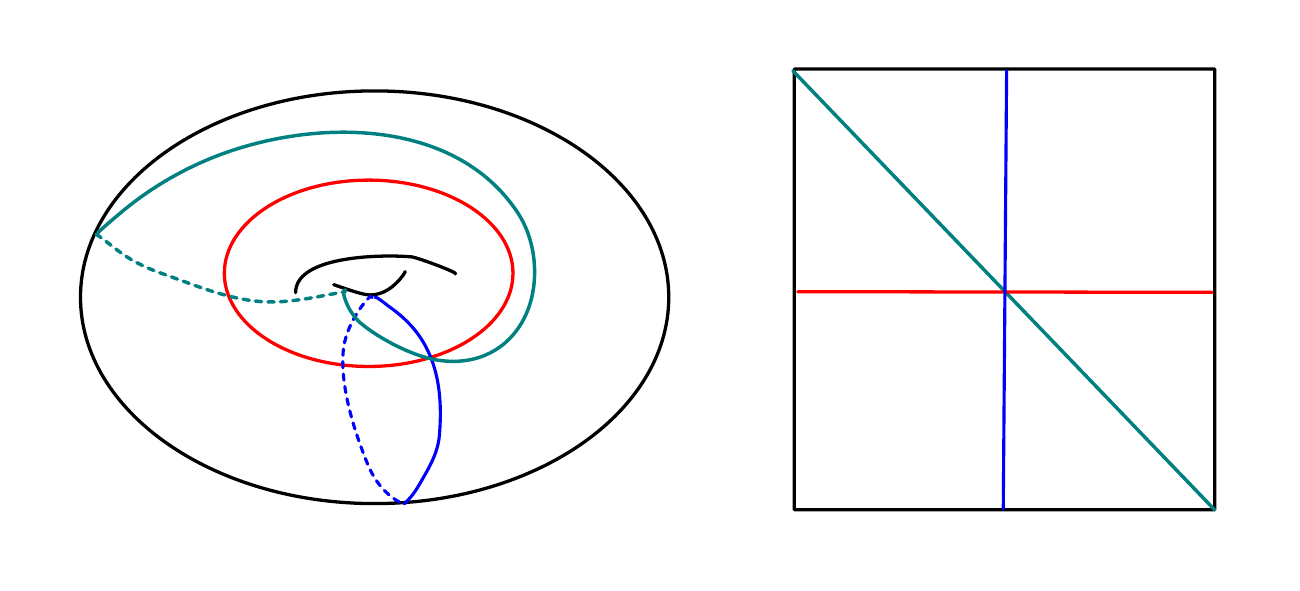}
    \caption{A genus one real Heegaard splitting of $S^3$.}
    \label{fig:A real Heegaard diagram for S3}
\end{figure}

\subsection{Real grid homology}\label{sub:Real grid homology}
\begin{defn}\label{def:real grid diagram}
Let $(L,\fra)$ be an equivariant link with auxiliary in $(S^3,\tau)$. A \emph{real grid diagram} for $(L,\fra)$ is a real Heegaard diagram $\cH=(\Sigma,\bm\alpha,\bm\beta,\bfO,\bfX,R)$ such that \begin{itemize}
    \item $\Sigma=T^2$ is a torus;
    \item $\bm\alpha$ and $\bm\beta$ are parallel copies of meridians and longitudes in $T^2$;
    \item $R$ is the reflection of $T^2$ across the circle of slope $-1$.
\end{itemize}
Such a real grid diagram is of \emph{size $n$} if it has $n$ pairs of $\alpha$ and $\beta$-curves.
\end{defn}

Choose any pair of $\alpha$, $\beta$-curves interchanged by $R$. Then we can cut $T^2$ open and get a \emph{real planar grid diagram} for $L$. Connecting $O$, $X$-base points as we described in Definition~\ref{def:real Heegaard diagram for strongly invertible links} and~\ref{def:real Heegaard diagram for periodic links}, we get a transvergent diagram for $L$. For a generalized strongly invertible link with auxiliary data, this choice of cut really matters. More precisely, when projecting a link to a plane, we are making a choice of $\infty$ point which is closely related to choice (4) in Definition~\ref{def:strongly invertible knots with extra data}. A shifting of $\infty$ leads to a change of $A$, such $(L,\fra)$ and $(L,\fra')$ share the same real grid diagram, but have different preferred cuts. In terms of real grid moves, they are related by a real cyclic permutation that happens to move the row and column with an $O$ or $X$ on $C$. See Figure~\ref{fig:shifting A} for an intuition and see Figure~\ref{fig:grid diagrams for 8-19,8-20 and 9-42} for a concrete example. In Figure~\ref{fig:shifting A}, $A$ is marked in solid line while $C-A$ is drawn in dashed line. These two choices of $\fra$ are equivalent in the sense we remarked after Definition~\ref{def:strongly invertible knots with extra data}.

\begin{figure}
\begin{overpic}[width=0.4\textwidth]{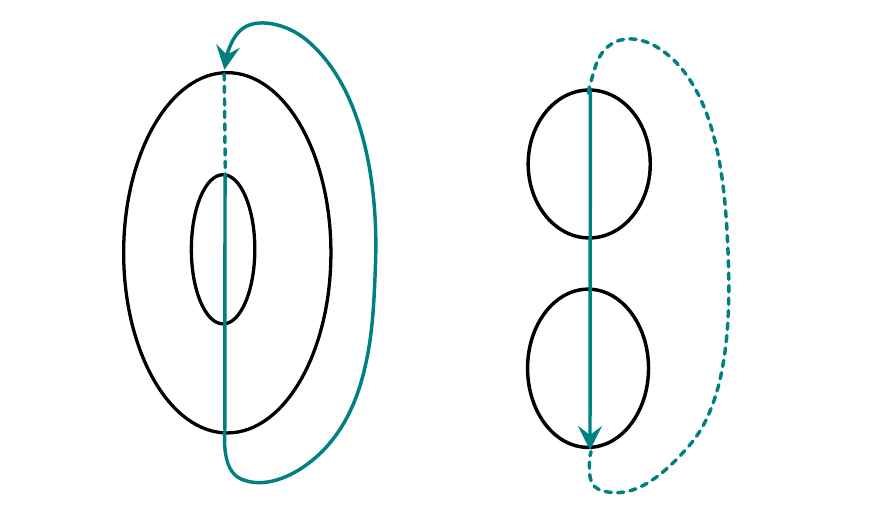}
			\put(18,5) {$X$}
			\put(18,20) {$O$}
            \put(18,52) {$X$}
			\put(18,40) {$O$}
            \put(68,5) {$X$}
			\put(68,26.5) {$O$}
            \put(60,50) {$O$}
			\put(60,29) {$X$}
            \put(20,30) {\textcolor{Green}{$A$}}
            \put(66,40) {\textcolor{Green}{$A$}}
		\end{overpic}
    \centering
    \caption{Choice of axis: in each frame, $A$ is marked by a solid arrow and $C-A$ is marked by a dotted arc.}
    \label{fig:shifting A}
\end{figure}

It was shown in \cite[Theorem~2.10]{borodzik2025khovanovhomologyequivariantsurfaces} that (see also \cite[Theorem~2.3]{Lobb2021ArefinementofKhovanovhomology} and see \cite{BDMS26} for a proof.)
\begin{prop} 
If $D_1$ and $D_2$ are transvergent diagrams for equivariantly isotopic links in $S^3$, then they are related by a finite sequence of involutive Reidemeister moves shown in Figure~\ref{fig:equivariant R moves} and the $I$-move shown in Figure~\ref{fig:I-move}.
\end{prop}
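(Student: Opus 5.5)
The plan is to reduce to the classical Reidemeister theorem for transvergent diagrams by working in the quotient, using equivariant general position for the isotopy. Start from an equivariant isotopy $\Phi_t$ between the two links and perturb it, rel endpoints, so that it is generic with respect to the projection $\pi\colon S^3\setminus\{\infty\}\to\R^2$ used to form transvergent diagrams. Here $\infty\in C$ is fixed, and the projection plane contains the image of the axis $C$, which $\pi$ sends to a line $\ell$ on which $\tau$ acts by reflection of $\R^2$ across $\ell$.

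Equivariant transversality lets us achieve genericity off the fixed set. On a $\tau$-invariant neighbourhood of $C$ we instead choose an explicit local model. Away from $C$, the genericity argument is applied in the quotient, or equivalently to one of each pair of $\tau$-swapped pieces, and then doubled. This ensures that codimension-one degenerations occur in symmetric pairs, or are themselves symmetric.

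Next, list the codimension-one degenerations of $\pi\circ\Phi_t$. Off $\ell$ they are ordinary cusps, tangencies and triple points, appearing in mirror pairs. Each such pair yields one of the involutive Reidemeister moves in which two mirror copies of an R1, R2 or R3 move happen simultaneously. Degenerations centred on $\ell$ are the genuinely new cases, and I would classify them by local symmetric normal forms:
\begin{itemize}
\item a strand crossing $\ell$ orthogonally develops a symmetric kink;
\item two strands crossing $\ell$ pass through each other at a point of $\ell$, giving a symmetric R2 or R3 centred on the axis;
\item a fixed point of $K\cap C$ moves past a crossing lying on $\ell$;
\item the projection of a fixed point of the knot passes through $\infty$, that is, the knot crosses the point at infinity on $C$.
\end{itemize}
The first three are matched to the remaining involutive Reidemeister moves in Figure~\ref{fig:equivariant R moves}. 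The last degeneration corresponds to the $I$-move of Figure~\ref{fig:I-move}. I will also need to rule out, or absorb via $\pi$-genericity, any tangency of $K$ with $C$, using that $K$ meets $C$ transversally throughout a generic equivariant isotopy of links.

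The main obstacle is the completeness of this local classification near $\ell$. The $\tau$-invariant jets of a space curve near the axis must be analysed to show that no other codimension-one symmetric singularity occurs. I would handle this by working in the quotient orbifold $S^3/\tau$, where $K/\tau$ is an arc with endpoints on the branch locus $C/\tau$. This turns the question into classical Reidemeister theory for arcs ending on a line, where the extra moves at the endpoints are explicit. Each quotient move is then lifted back to $S^3$ and identified with one of the pictured moves.
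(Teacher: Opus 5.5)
The paper does not prove this statement itself. It imports it from \cite[Theorem~2.10]{borodzik2025khovanovhomologyequivariantsurfaces} (see also \cite[Theorem~2.3]{Lobb2021ArefinementofKhovanovhomology}), with the proof deferred to \cite{BDMS26}. Your general-position strategy is the natural one. Isolating the passage of a fixed point of the link through $\infty\in C$ as the source of the $I$-move is also the right idea: this event is codimension one exactly because fixed points move along the $1$-dimensional set $C$. But the step you yourself call the main obstacle, completeness of the list of symmetric events on $\ell$, is never carried out, and the route you propose for it does not work.

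First, ordinary genericity breaks down not just near $C$ but along the whole $\tau$-invariant plane $P=\pi^{-1}(\ell)\supset C$, on which $\tau$ acts as reflection in $C$. If a strand $s$ meets $P$ at a point $q\notin C$, then $\tau s$ meets $P$ at $\tau q$ and $\pi(q)=\pi(\tau q)$. So that crossing on $\ell$ is forced, and perturbing one strand of each swapped pair and then doubling never makes $(s,\tau s)$ generic in the usual sense there. The events have to be read off from the equivariant local form:
\begin{enumerate}
\item Where $s$ becomes tangent to $\ell$ at a point of $P\setminus C$, two on-axis crossings are born or cancel.
\item Near an on-axis crossing where $s$ is transverse to $\ell$, write $s$ as $x=f_t(y)$. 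The crossings of $s$ with $\tau s$ are the zeros of the odd part $y\,h_t(y^2)$ of $f_t$. The generic degeneration $h_t(0)=0$, where the strands become perpendicular to $\ell$, is a pitchfork: one on-axis crossing becomes an on-axis crossing plus a mirror pair of off-axis crossings, all with the same strand on top.
\item A fixed point acquires a vertical tangent, giving your on-axis kink.
\item A fixed point passes an on-axis crossing.
\item Two on-axis crossings meet at one point of $\ell$; this is a four-strand event, not an R3.
\end{enumerate}
Each of these must then be matched with, or written as a composite of, the moves in Figure~\ref{fig:equivariant R moves}. Your second bullet does not clearly include the pitchfork or the four-strand coincidence. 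Incidentally, transversality of $L$ to $C$ is automatic rather than generic, since $d\tau$ acts by $-1$ on the tangent line at a fixed point.

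Second, passing to $S^3/\tau$ does not reduce this to ``classical Reidemeister theory for arcs ending on a line.'' Take coordinates with $\tau(x,y,z)=(x,-y,-z)$ and $\pi(x,y,z)=(x,y)$, and let $r$ be the reflection of $\R^2$ in $\ell$. Since $\pi$ intertwines $\tau$ with $r$, it descends to a map from $S^3/\tau$ onto the half-plane $\R^2/r$. This map is not a linear projection: in coordinates $(x,(y+iz)^2)$ on $\R^3/\tau$ its fibres are confocal parabolas. It also collapses the entire half-plane $P/\tau$, not just the branch line $C/\tau$, onto the boundary line. Interior points of $K/\tau$ lying on $P/\tau$ are exactly the crossings of the transvergent diagram on $\ell$, which is where all the new moves live. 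So the quotient problem is the original one in a less familiar form. If you instead use an honest linear projection of the quotient, the lifts of quotient diagrams and classical moves through the branched double cover are not transvergent diagrams. There is then nothing to lift back move by move.

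What is missing is the direct equivariant jet analysis along $P$ and $C$ sketched above. You also need an equivariant transversality statement guaranteeing that a generic equivariant isotopy exhibits only these events, together with mirror pairs of ordinary events off $\ell$, one at a time.
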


\begin{figure}
    \centering
    \includegraphics[width=1.0\linewidth]{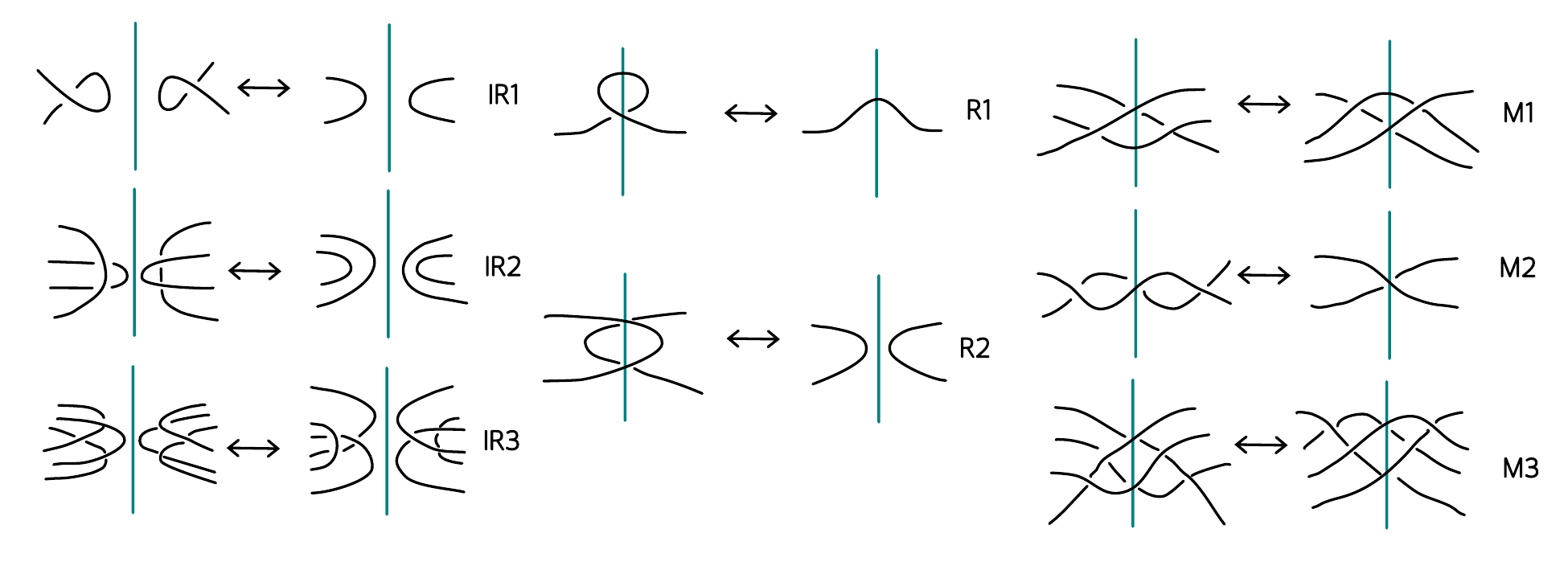}
    \caption{Involutive Reidemeister moves.}
    \label{fig:equivariant R moves}
\end{figure}

\begin{figure}
    \centering
    \includegraphics[width=0.4\linewidth]{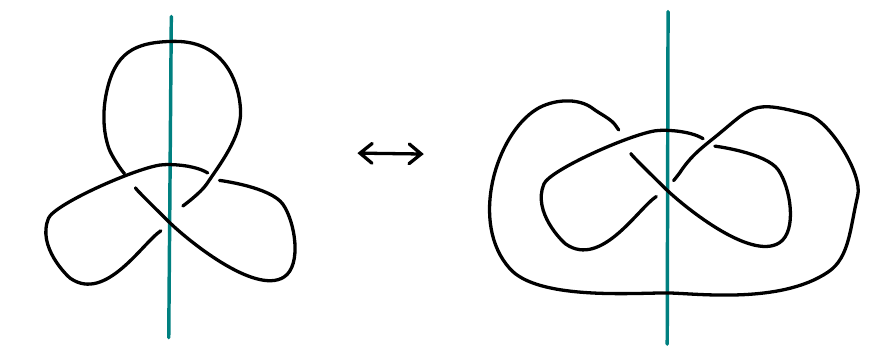}
    \caption{An example of an I-move.}
    \label{fig:I-move}
\end{figure}

Following this, we can show that 
\begin{prop}\label{prop:real grid moves}
If $\cH_1$ and $\cH_2$ are two real grid diagrams representing the same link with auxiliary data $(L,\fra)$, then they can be connected by a finite sequence of the following \emph{real grid moves}. 
\begin{itemize}
    \item real cyclic permutation;
    \item real stabilization;
    \item real destabilization;
    \item real commutation.
\end{itemize}
\end{prop}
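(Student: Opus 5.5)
The plan mirrors the classical argument of Cromwell and Dynnikov as presented in \cite[Chapter~3]{OSS2015grid}. We pass through transvergent diagrams and use the equivariant Reidemeister theorem just stated.

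Given a real grid diagram $\cH_i$ together with its preferred cut, we obtain a real planar grid diagram and hence a transvergent diagram $D_i$ for $(L,\fra)$. The point at $\infty$ lies on $C-A$, and the fixed axis is the diagonal of slope $-1$. Since the links are equivariantly isotopic, the preceding proposition connects $D_1$ and $D_2$ by involutive Reidemeister moves and $I$-moves.

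The first step is a realization lemma. Every transvergent diagram, after an equivariant planar isotopy, is a ``symmetric rectilinear'' diagram: vertical segments over horizontal ones, with the reflection across the diagonal exchanging vertical and horizontal segments. Such a diagram is exactly the projection of a real grid diagram. We also need that any two real grid diagrams realizing equivariantly planar-isotopic transvergent diagrams are related by real commutations and real cyclic permutations. This is the real analogue of \cite[Lemma~3.2.x]{OSS2015grid}. We prove it by performing every classical move together with its mirror image under $R$. A commutation of two columns away from the diagonal is paired with the commutation of the corresponding rows. A commutation involving the central row/column pair across the diagonal is a single self-symmetric move; this is where the definition of real commutation must allow it. Cyclic permutations are likewise done simultaneously on a row and its symmetric column.

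Next we realize each local move. For each involutive Reidemeister move away from the axis, we perform the classical grid realization of \cite{OSS2015grid} (stabilizations and commutations) together with its $R$-image. The two changes happen in disjoint regions, so we get a sequence of real moves, with a ``$\Z/2$'' stabilization adding a symmetric pair of rows and columns. For the moves meeting the axis (R1 and R2 on the axis, the R3-type move through the axis, and the $I$-move), we write explicit real grid pictures. We use stabilizations at an $O$ or $X$ lying on the diagonal ($\{1\}$-type real stabilization) followed by real commutations. The $I$-move corresponds to rotating a strand around $\infty$. We expect to realize it by a real cyclic permutation, as discussed around Figure~\ref{fig:shifting A}, combined with commutations.

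Finally we must check the auxiliary data. Fixed points of $L\cap C$ stay labelled $O$ or $X$ as in $\fra$, because every move keeps the fixed markings on the diagonal. Cyclic permutations passing a diagonal $O/X$ through the cut change $A$ only within its equivalence class, so $(L,\fra)$ is preserved.

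The main obstacle will be the moves meeting the axis, in particular the $I$-move and the Reidemeister moves through a fixed point of $L$. There the symmetric constraint prevents the usual independent row/column manipulations, and one must verify by hand that a short explicit sequence of real stabilizations, commutations and cyclic permutations suffices without disturbing the labelling on the fixed set. We would first attempt these by drawing minimal size local grids around the diagonal and checking each case directly.
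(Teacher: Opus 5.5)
You have the right overall route: pass to transvergent diagrams, use the involutive Reidemeister theorem, double the classical grid realizations away from the axis, and use real cyclic permutations for the choice of cut. That matches the paper's sketch.

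\textbf{The gap.} The step you propose for the moves meeting the axis fails. You plan to stabilize at an $O$ or $X$ on the diagonal.
\begin{itemize}
\item An $R$-symmetric stabilization at a diagonal marking must replace it by a $2\times 2$ block centred on the diagonal.
\item The empty cell of that block must be fixed by $R$. So the three marked cells form the $L$-shape whose corner is a diagonal cell and whose arms are the two interchanged off-diagonal cells.
\item Each new row and column must contain exactly one $O$ and one $X$. This forces the corner to carry the label \emph{opposite} to the stabilized one: stabilizing a diagonal $O$ produces a diagonal $X$ plus an interchanged pair of $O$'s, and vice versa.
\end{itemize}
That strongly invertible component then has two $X$'s (or two $O$'s) on the fixed set. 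So $\vert\fix(R)\cap\bfO\vert=\vert\fix(R)\cap\bfX\vert=l_f$ fails, and the diagram no longer represents $(L,\fra)$. This is exactly why the paper forbids stabilization at markings on $C$ in its list of real grid moves. It also shows your closing check (``every move keeps the fixed markings on the diagonal'') is not enough: the marking stays on the diagonal, but its label flips. Destabilizing back later does not repair this. The intermediate diagrams are not diagrams for $(L,\fra)$, the moves through them are not allowed real grid moves, and the invariance argument for real grid homology could not pass through them.

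\textbf{The missing idea.} You need to realize the on-axis moves without ever stabilizing at a diagonal marking. The paper's key observation is that the involutive R1 move at a fixed point is itself a single self-symmetric real commutation, with switches in the sense of \cite[Definition~3.1.10]{OSS2015grid} allowed (Figure~\ref{fig:RI move as real commutation}). For instance:
\begin{itemize}
\item Suppose the diagonal marking shares its row with a marking in the adjacent column. By symmetry it then shares its column with the mirror marking in the adjacent row.
\item Swap that pair of columns together with the mirror pair of rows. The diagonal marking moves to the neighbouring diagonal cell with the same label.
\item This creates or removes a kink through the axis. Already in a size-$3$ real grid for the unknot, it takes a diagram with one crossing on the axis to one with none.
\end{itemize}
The paper then asserts that all moves of Figure~\ref{fig:equivariant R moves} can be realized, as in \cite[Appendix~B.4]{OSS2015grid}, using only real commutations of this kind and $R$-symmetric pairs of (de)stabilizations at off-diagonal markings. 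Redo your case analysis for R1, R2 and R3 through the axis within that restricted set of moves. Commutations never relabel markings and the allowed stabilizations stay off the diagonal, so the labelling on $C$ is preserved automatically.
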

The definition of real grid moves follows immediately from the usual ones. More precisely, we perform a usual move with its reflection under $R$ simultaneously. This is easy to imagine, so we decide not to provide details on this. The only case we need to remark is that we do not allow the stabilization to happen at any mark point on $C$. Instead, we realize a $RI$ move as a real commutation (Here, we include ``switch'' (see \cite[Definition~3.1.10]{OSS2015grid}) as a special commutation move), see Figure~\ref{fig:RI move as real commutation} for an example. 

\begin{figure}
    \centering
    \includegraphics[width=0.45\linewidth]{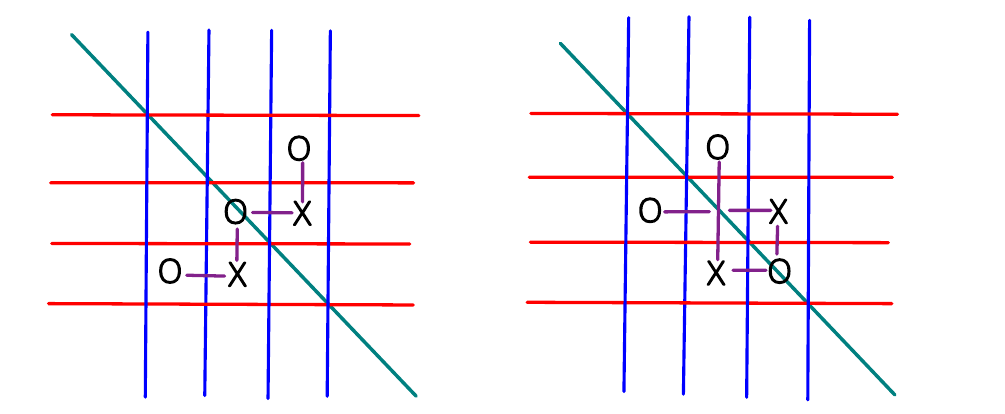}
    \caption{Realizing RI move as a real commutation.}
    \label{fig:RI move as real commutation}
\end{figure}

\begin{proof}
Note that planar diagrams obtained from different choices of cut on $T^2$ are related to real cyclic permutations on planar grid diagram. Then one can check that each of the moves listed in Figure~\ref{fig:equivariant R moves} can be realized by a finite sequence of real (de)stabilizations and real commutations as in \cite[Appendix~B.4]{OSS2015grid}.
\end{proof}

\begin{rem}
It seems unreasonable to prohibit (de)stabilizations at the first glance, but actually, it is important: such a move will break the assumption of having a single pair of $(O,X)$-base points on the fixed set for each strongly invertible knot component. The diagram still represents the equivariant link $L$, but no longer represents $(L,\fra)$. As we mentioned in Remark~\ref{rmk:importance of auxiliary data}, this type of diagrams will lead to a different theory which is also interesting.
\end{rem}

By definition, real grid diagrams are special real Heegaard diagrams, so they are valid for computation of real link Floer homologies. They have a crucial and useful property that they are \emph{real nice Heeggard diagrams} defined in \cite{BGX} and \cite{LOrealbordered}, so the differentials can be computed combinatorially. It follows from \cite[Section~8]{BGX} or \cite[Section~5]{LOrealbordered} that

\begin{prop}\label{prop:domains of real index 1}
Let $\cH=(T^2,\bm\alpha,\bm\beta,\bfO,\bfX,R)$ be a real grid diagram for some equivariant link $L$ in $(S^3,\tau)$. Any real domain $\cD$ of real Maslov index 1 takes one of the following forms. 
\begin{enumerate}
    \item $\cD$ is an embedded square that is fixed by $R$ as a set. $R|_{\cD}$ is the reflection across its diagonal.
    \item $\cD$ is an embedded $L$-shaped hexagon that is fixed by $R$ as a set. $R|_{\cD}$ is a reflection that interchanges two arms of $L$.
    \item $\cD$ is an embedded $X$-shaped octagon that is fixed by $\tau$ as a set. $R|_{\cD}$ is a reflection across one of the obvious symmetry axis of $X$.
    \item $\cD$ is the union of two (not necessarily disjoint) rectangles with no vertex on $C$ and $R$ interchanges them.
\end{enumerate}

Examples of such domains are shown in Figure~\ref{fig:real rectangles}. We call a real domain in the list above a \emph{real rectangle}, since they play the roles of rectangles in the usual grid homology. For a pair of generators (in a grid diagram, we also call them \emph{real grid states}) $\xv$ and $\yv$ in $(\T_\alpha\cap \T_\beta)^R$, we denote the set of real rectangles from $\xv$ to $\yv$ by $\RRect(\xv,\yv)$. For a real rectangle $r$ in $\RRect(\xv,\yv)$ to have real Maslov index $1$, it needs to be \emph{empty}, i.e., $\mathrm{int}(r)\cap \xv=\mathrm{int}(r)\cap \yv=\emptyset$. We refer to $\RRecto(\xv,\yv)$ as the set of empty real rectangles from $\xv$ to $\yv$.
\begin{figure}
    \centering
    \includegraphics[width=0.6\linewidth]{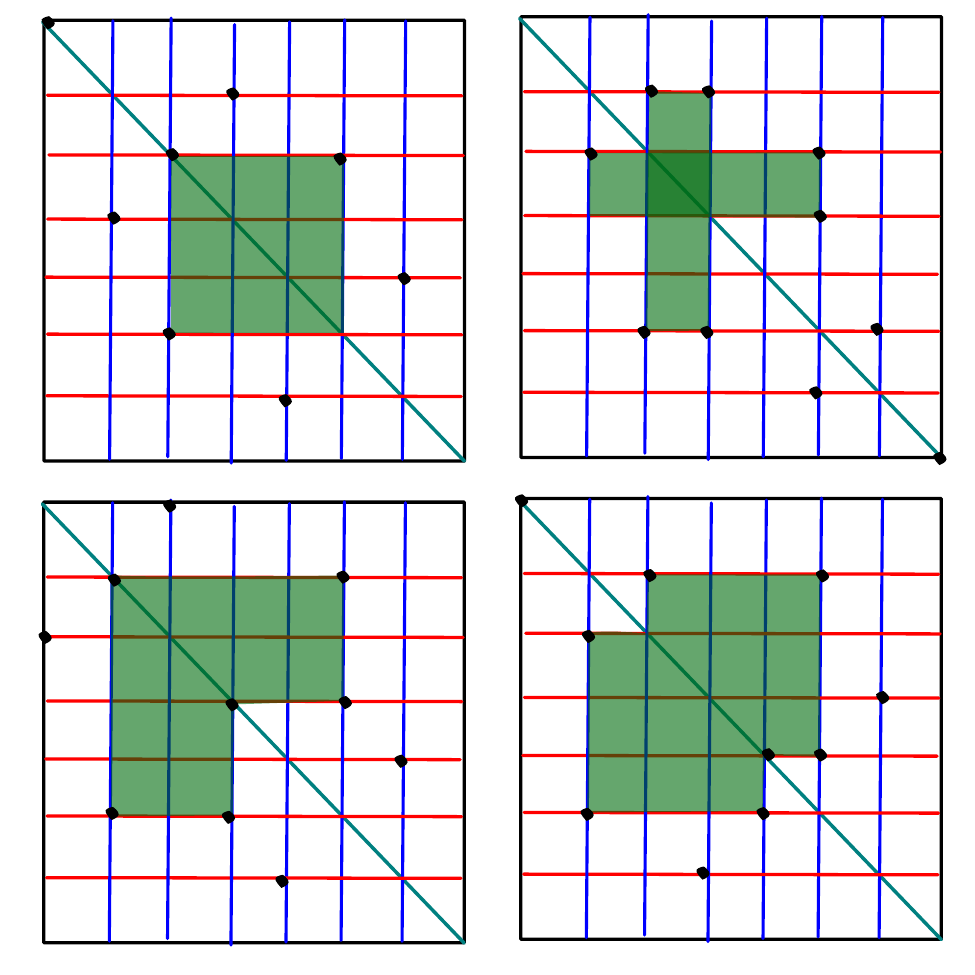}
    \caption{Example of real rectangles.}
    \label{fig:real rectangles}
\end{figure}
\end{prop}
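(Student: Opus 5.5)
The plan is to classify index-one real domains in a real grid diagram by passing to the quotient orbifold, following the nice-diagram arguments of \cite[Section~8]{BGX} and \cite[Section~5]{LOrealbordered}.

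\textbf{Step 1: reduce to a positive domain.} Let $\cD\in\pi_2^R(\xv,\yv)$ have real Maslov index $1$ and admit a real holomorphic representative. Positivity of intersections gives $\cD\ge 0$. Since $\cD$ is $R$-invariant, I would write it as the domain $\widetilde{\cD}$ of the underlying classical disk in $\mathrm{Sym}^n(T^2)$, connecting $\xv$ to $\yv=R(\yv)$. The doubling formula from \cite{guth2025real} should give
\[\mu(\widetilde{\cD})=2\mu_R(\cD)-(\text{correction from corners on } \fix(R)) .\]
In the grid, the fixed circle $C$ meets the $\alpha\cap\beta$ intersection points exactly along the diagonal of slope $-1$.

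\textbf{Step 2: quotient the diagram.} Form the quotient $T^2/R$, which is an annulus (or a disk-type orbifold) with reflector boundary $C$. On this quotient the $\alpha$- and $\beta$-curves become a single family of curves. Every region of $T^2\setminus(\bm\alpha\cup\bm\beta)$ is a square, so its image is either a square or a half-square triangle with one edge on $C$. Real index $1$ domains then correspond to index $1$ domains in this nice sutured-type diagram with one mirror boundary. The key input is Sarkar--Wang's Euler measure formula adapted as in \cite{BGX}:
\[\mu_R(\cD)=e(\cD^{\mathrm{quot}})+n_{\xv}+n_{\yv},\]
with the corner and mirror-edge contributions counted appropriately. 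Because every quotient region has Euler measure $0$ except for corrections at corners, index $1$ forces the quotient domain $\cD^{\mathrm{quot}}$ to be an embedded polygon with exactly four convex corners (counting mirror corners with weight $1/2$ where appropriate), and to contain no generator coordinates in its interior.

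\textbf{Step 3: enumerate the possibilities.} Lifting the quotient polygon back to $T^2$ splits into two cases.

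If $\cD^{\mathrm{quot}}$ avoids $C$, then $\cD$ is the union of a rectangle and its reflection, which is case (4). The two rectangles may overlap, but neither can have a vertex on $C$.

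If $\cD^{\mathrm{quot}}$ meets $C$, then $\cD$ is a connected $R$-symmetric region, and its shape is governed by how many of its corners lie on $C$:
\begin{itemize}
\item two mirror corners on $C$ give a square reflected across its diagonal, which is case (1);
\item one mirror corner, together with one reflex-compatible corner pair, gives the $L$-shaped hexagon of case (2);
\item no corners on $C$ while the domain crosses $C$ gives the $X$-shaped octagon of case (3).
\end{itemize}
For the count, I would use the rule that a generator coordinate on $C$ contributes $1/4$-type corner terms, while a paired coordinate contributes $2\times 1/4$.

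Emptiness follows from the $n_{\xv}+n_{\yv}$ term. Any interior point of $\xv$ or $\yv$ raises the index, which gives $\RRecto$.

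\textbf{Step 4: the main obstacle.} The hardest step is the precise real Euler measure/index formula at mirror corners, together with ruling out exotic immersed (non-embedded) quotient polygons. This includes polygons that wrap around the torus or overlap themselves. I would handle these by the standard grid argument of \cite[Section~4.6]{OSS2015grid}, which shows that a positive domain with the minimal corner count is an embedded polygon. I would apply that argument to the double $\widetilde{\cD}$, whose classical index is at most $2$. An index-$2$ classical domain on a grid is a union of two rectangles or an $L$/$X$-shape, and imposing $R$-symmetry on that known list yields exactly the four cases above.
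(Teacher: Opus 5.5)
\textbf{There is a genuine gap.} The paper gives no argument of its own here; it imports the classification from \cite[Section~8]{BGX} and \cite[Section~5]{LOrealbordered}. So your outline has to stand on its own, and its decisive step fails.

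\textbf{Step 4 rests on a false index bound.} You reduce to the underlying domain ``whose classical index is at most $2$''. Combine the paper's formula $M^R_{\bfO}=\frac12 M_{\bfO}-\frac14|\xv\cap C|+\frac14 l_f$ with $M^R_{\bfO}(\xv)-M^R_{\bfO}(\yv)=\mu_R(\cD)-n_{\bfO}(\cD)$ and $M_{\bfO}(\xv)-M_{\bfO}(\yv)=\mu(\cD)-2n_{\bfO}(\cD)$. This gives
\[\mu(\cD)=2\mu_R(\cD)+\frac12\big(|\xv\cap C|-|\yv\cap C|\big).\]
So a real index one domain has classical index $1$ in case (1), $2$ in cases (2) and (4), and $3$ for the octagon of case (3). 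The last is the paper's own computation: three rectangles, with $|\xv\cap C|=|\yv\cap C|+2$. Symmetrizing the list of classical index $\le 2$ domains therefore never produces case (3). Moreover, nothing in your outline bounds $|\xv\cap C|-|\yv\cap C|$, which is where the content lies.

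\textbf{What is missing is a local estimate at corners.} Since Euler measure vanishes on $T^2$,
\[\mu_R(\cD)=\frac12\big(n_{\xv}(\cD)+n_{\yv}(\cD)\big)-\frac14\big(|\xv\cap C|-|\yv\cap C|\big).\]
At a point of $C$, $R$-invariance forces the two quadrants that $R$ swaps to carry equal multiplicity. It follows that:
\begin{enumerate}
\item an $\xv$-corner on $C$ contributes at least $\frac18$, with equality only at a reflex corner;
\item a $\yv$-corner on $C$ contributes at least $\frac38$, with equality only at a convex corner whose quadrant is bisected by $C$;
\item each $R$-orbit of off-$C$ corners contributes at least $\frac14$.
\end{enumerate}
Hence if $N$ coordinates move and $f$ of the $\yv$-corners lie on $C$, then $N+f\le 4$. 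Only with this bound is a finite case check possible.

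\textbf{Step 3's corner bookkeeping is wrong.} The $L$-hexagon has two corners on $C$: its reflex $\xv$-corner and the opposite convex $\yv$-corner. The octagon has two reflex $\xv$-corners on $C$. A real index one domain that crosses $C$ with no corner on $C$ is an overlapping pair from case (4). So the dichotomy ``quotient avoids $C$ iff case (4)'' fails. Also, $T^2/R$ is a M\"obius band, not an annulus.

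\textbf{Embeddedness cannot come from corner counting plus \cite[Section~4.6]{OSS2015grid}.} Here is a concrete obstruction:
\begin{enumerate}
\item Let $H$ be a width-one horizontal annulus such that $Q=H\cap R(H)$ is a unit square whose diagonal lies on $C$.
\item Let $\xv$ contain the two corners of $Q$ on $C$, and let $\yv$ be obtained by replacing them with the other two corners of $Q$.
\item Then $H\cup R(H)$, with multiplicity one, is a positive real domain from $\xv$ to $\yv$. All four of its corners are reflex, so $\mu=3$ and $\mu_R=\frac32-\frac12=1$.
\end{enumerate}
Yet this domain is none of (1)--(4). It is harmless only because it contains a whole row, hence an $X$ (and an $O$). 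So the list can only be complete for domains avoiding $\bfX$. That is all $\partial^{G,-}$ counts, and in the sutured picture of \cite{BGX} the base points are removed altogether. A correct proof must use that hypothesis to rule out domains that wrap around the torus.
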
 

\begin{rem}
The usual grid homology allows direct generalization to lens spaces, which are the only spaces admitting toroidal Heegaard diagrams other than $S^3$ and $S^1\times S^2$. However, one must be careful when considering generalization of real homology to real lens spaces, due to the following reasons. \begin{itemize}
    \item Involutions on lens spaces have been classified in \cite{Involutionsandisotopiesoflensspaces}. Firstly, the involution is far from unique. Moreover, it was mentioned in \cite{Realopenbooksandrealcontactstructures} that some of these real structures do not admit genus $1$ real Heegaard splitting. 
    \item Even after fixing a real lens spaces $(L(p,q),\tau_0)$ that admits a genus $1$ real Heegaard splitting, the domains listed in are not enough to defined the differential- there are nontrivial real domains with nontrivial topology (annular and toroidal) and real index $1$ (See~\cite[Section~8.2]{BGX}).
\end{itemize} 
\end{rem}

Using this proposition, we can redefine the real link Floer homologies combinatorially as follows. Let $\cH$ be a real grid diagram representing  $(L,\fra)$, a strongly invertible link with auxiliary data. The $O$ and $X$-base points are labeled as in Section~\ref{sub:Strongly invertible links}. Let $\cR^{G}(\cH)$ be the ring $\F[u_1,\ldots,u_{l_f}, U_1,\ldots, U_k]$, where as usual, $k$ is the number of pairs of $O$-base points and the $u_i$'s are of degree $(-1,-1/2)$ while $U_j$'s are of degree $(-2,-1)$. We define 
\[\GCR^-(\cH)=\cR^G(\cH) \left \langle (\T_\alpha\cap \T_\beta)^R \right \rangle,\] equipped with differential 
\[\partial^{G,-}\xv=\sum_{\yv}\sum_{r\in \RRecto(\xv,\yv),r\cap \bfX=\emptyset} \prod_{1\le i\le l_f} u_{i}^{n_{O_i^f}(r)}\prod_{1\le j\le k} U_{j}^{n_{O_j}(r)} \yv.\]  
Taking homology, we get \[\GHR^-(\cH)=H_*(\GCR^-(\cH),\partial^{G,-}).\]
Regarding it as a module over $\F[u_1,\ldots,u_{l_f}]$, we get an invariant of $(L,\fra)$, denoted by $\GHR^-(L,\fra)$. Similarly, we can define 
\[\widehat{\GCR}(\cH)=\GCR^-(\cH)/(u_i)_{1\le i\le l_f},\] and take homology \[\widehat{\GHR}(\cH)=H_*(\widehat{\GCR}(\cH),\widehat{\partial^G}),\] which is a $\F$-vector space valued invariants associated to $(L,\fra)$. $\widetilde{\GCR}$ can be characterized similarly and $\widetilde{\GHR}(\cH)$ appears as a stabilization of $\widehat{\GHR}(L,\fra)$ as usual.

Actually, $\widetilde{\GHR}$ admits a more straightforward description, which is useful for computing $\widehat{\GHR}(L,\fra)$ in practice. In fact, we can consider \[\widetilde{\GCR}(\cH)=\F\left \langle (\T_\alpha\cap \T_\beta)^R \right \rangle\] equipped with differential \[\widetilde{\partial^{G}}\xv =\sum_{\yv}\sum_{r\in \RRecto(\xv,\yv),r\cap \bfX=\emptyset, r\cap\bfO=\emptyset} \yv.\] Then $H_*(\widetilde{\GCR}(\cH),\widetilde{\partial^{G}})$ is the same as $H_*(\GCR^-(\cH)/(u_i,U_j),\text{induced differential})$.

For the convenience of computer computation, we introduce a notion of minus-tilde real grid homology $\widetilde{\GHR^-}(\cH)$ when $\cH$ describes a strongly invertible knot. (A similar description works for strongly invertible links, but we restrict ourselves to knots for notational convenience.) As a module, $\widetilde{\GCR^-}(\cH)$ is generated by $(\T_\alpha\cap \T_\beta)^R$ over $\F[u]$, the polynomial ring with a single variable. The differential is given by \[\widetilde{\partial^{G,-}}\xv =\sum_{\yv}\sum_{r\in \RRecto(\xv,\yv),r\cap \bfX=\emptyset} u^{\vert\bfO\cap r\vert}\yv.\]

Following \cite[Lemma~4.6.9]{OSS2015grid}, we have the following identification of base point actions.

\begin{prop}\label{prop:U,v action identification}
Let $\cH$ be a real grid diagram representing $(K,\fra)$, a strongly invertible knot  with auxiliary data in $S^3$. Then there is a single $u$-variable in $\cR^G(\cH)$, and for any $1\le j\le k$ ($k$ is the number of pairs of $O$-base points in $\cH$), the $U_j$-action on $\GCR^-(\cH)$ is homotopic to the $u^2$-action.    

More generally, let $\cH$ be any real diagram representing a generalized strongly invertible link $(L,\fra)$. If $(O_i,O_i')$ and $O^f_j$ lie on the same component of $L$, then the $U_i$-action on $\GCR^-(\cH)$ is homotopic to the $u_j^2$-action. If $(O_i,O_i')$ and $(O_j,O_j')$ lie on the same component or same pair of components of $L$, then the $U_i$ and $U_j$-actions on $\GCR^-(\cH)$ are homotopic.    
\end{prop}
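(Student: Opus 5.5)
Following \cite[Lemma~4.6.9]{OSS2015grid}, I would produce explicit null-homotopies by counting real rectangles that pass through a chosen $X$-base point. Let $X$ be an $X$-marking (or a pair $(X_i,X_i')$) and let $O_a$, $O_b$ be the $O$-markings adjacent to it along the oriented link. On $\GCR^-(\cH)$ define
\[\cH_{X}(\xv)=\sum_{\yv}\sum_{r\in \RRecto(\xv,\yv),\; r\cap \bfX=\{X\}\ (\text{or } \{X_i,X_i'\})} \prod_i u_i^{n_{O_i^f}(r)}\prod_j U_j^{n_{O_j}(r)}\,\yv .\]
The claim to prove is $\partial^{G,-}\circ\cH_X+\cH_X\circ\partial^{G,-}=U_{O_a}+U_{O_b}$, with the convention that $U_{O}$ is replaced by $u^2$ (for the relevant $u$) when $O$ lies on $C$. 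The statements of the proposition then follow by chaining along each component. On a strongly invertible component the markings run $O_1,X_1,O_2,\ldots,X_{n-1},O_n$, each interior pair of $X$'s links $U_j$ to $U_{j+1}$, and the step at the end joins $U$ to $u^2$. On a pair of components, chaining around $K$ identifies all the $U_j$ on that pair.

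The key step is the standard juxtaposition count. Compositions $r_1*r_2$ of two empty real rectangles, one of which contains the chosen $X$, are grouped by their total domain. Each such domain has exactly one alternate decomposition, and the two decompositions cancel in pairs. The exceptions are the thin annular domains: the real row together with its mirror column through $X$, and so on. These give the identity times the $O$-variables in the row and in the column, and those variables are $O_a$ and $O_b$.

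In the real setting the domains are $R$-invariant unions of rectangles. Real rectangles of types (1)--(3) in Proposition~\ref{prop:domains of real index 1} are the quotient pieces meeting $C$, and type (4) is a symmetric pair. For each shape I would redo the case analysis of \cite[Lemma~4.6.9]{OSS2015grid}, or equivalently \cite[Proposition~4.6.7]{OSS2015grid}, showing that every real index-two domain $\psi$ with $n_X(\psi)=1$ (counted in the quotient) has an even number of decompositions into two empty real rectangles. The exception is the $R$-invariant annulus formed by the row through $X$ and its reflected column. That annulus is real and of index two, and it contributes $\prod$ of the $O$-variables it contains. When $X=X_n^f$ is fixed, the row and the column pass through the $O$'s adjacent to it, and one of these might be $O_1^f$ on $C$. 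The annulus then has multiplicity two at that point, which gives $u^2$. This matches the curvature formula $\omega_K$ containing $u_1^2V_1$.

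The main obstacle I expect is the boundary-degeneration bookkeeping near $C$. An annulus through a fixed $X$ meets the diagonal. I need to check that this annulus really decomposes in exactly one way into real rectangles (an L-hexagon or octagon combined with a square), and that the count of $O$'s in the quotient domain gives $u^2$ rather than $u$. I would settle this by drawing the grid neighborhood of the diagonal and enumerating the decompositions explicitly, taking the non-fixed interior cancellations as identical to the classical ones. Alternatively, the analytic argument of \cite{Zemkequasistabandbasepointmoving} with the curvature computation of Section~\ref{sub:Strongly invertible links} could be used: the $V$-derivative of $\partial^2=\omega$ gives the relation directly.
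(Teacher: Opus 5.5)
Your overall route is the paper's (it only says ``following \cite[Lemma~4.6.9]{OSS2015grid}''), and your identity $\partial^{G,-}\cH_X+\cH_X\partial^{G,-}=U_{O_a}+U_{O_b}$ is right for a paired $X$. Your treatment of the fixed markings, however, is wrong, and that is the only genuinely real part of the argument.

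You are implicitly using the classical cyclic order $O_1,X_1,\dots,O_n,X_n,O_1$. In a real grid diagram each row meets $C$ in exactly one square, namely where it crosses its reflected column, and the same holds for columns. So a row or column never contains two fixed markings, and in particular $X_n^f$ is never adjacent to $O_1^f$. Along the component the markings actually read $O_1^f,X_1,O_2,\dots,O_n,X_n^f,O_n',X_{n-1}',\dots,X_1'$. The neighbours of $X_n^f$ are therefore $O_n$ and $O_n'$, both weighted by $U_n$.

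Moreover, for a row $A$ the $R$-invariant annulus $A+R(A)$ has multiplicity two in that diagonal square. For the row of $X_n^f$, this square is the one containing $X_n^f$ itself. So that domain has $n_{X_n^f}=2$ and never enters $\cH_{X_n^f}$; it accounts for the term $U_nv_n^2$ of $\omega_K$, which has no part linear in $v_n$. Hence $\cH_{X_n^f}$ gives only $U_n+U_n=0$, and your ``step at the end joining $U$ to $u^2$'' cannot come from the fixed $X$.

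The $u^2$ enters through the pair $(X_1,X_1')$. With the paper's convention of joining $O$ to $X$ inside columns, the column of $X_1$ contains $O_1^f$. Its reflection is the row of $X_1'$, and the two cross in the diagonal square of $O_1^f$, so this real annulus has weight $u^2$. It is exactly the term $u_1^2V_1$ you quoted, which carries $V_1$, not $v_n$. The row of $X_1$ together with the column of $X_1'$ has an unmarked crossing square and weight $U_2$.

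So a paired $X$ has two real annuli, one through each neighbouring $O$. Then $\cH_{X_1}$ gives $u^2\simeq U_2$, the maps $\cH_{X_i}$ for $2\le i\le n-1$ give $U_i\simeq U_{i+1}$, and the fixed $X$ is not used at all.

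Even after this correction, two combinatorial claims are asserted rather than checked, exactly as in the paper:
\begin{itemize}
\item every other real index-two domain cancels in pairs;
\item each $A+R(A)$ has an odd number of decompositions, including when $\xv$ has points at the two corners of the doubled square lying on $C$.
\end{itemize}
Your fallback settles this. The coefficient of $V_i$ in $\partial^2=\omega_K\cdot\id$ (resp.\ $\omega_{K,K'}$) is $u_1^2+U_2$, then $U_i+U_{i+1}$, and $0$ for $v_n$. That is precisely the list of homotopies needed, and it would also have exposed the misattribution.
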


Using this proposition together with some straightforward homological algebra, we know that for $\cH$ representing a strongly invertible knot \[\widetilde{\GHR^-}(\cH)=\GHR^-(\cH)\otimes (\F^2)^{\otimes k},\] where $2k+1$ is the size of the real grid diagram.

By manipulating the $u_i$ and $U_j$ variables carefully, we can define $\GHR^-(L,\fra)$, $\widehat{\GHR}(L,\fra)$ and $\widetilde{\GHR}(\cH)$ for a generalized strongly invertible link. By blocking one pair of $(O,X)$ on each component of $L$ or blocking all $O$ and $X$-base points, we can also define hat or tilde version of combinatorial real knot Floer homology for generalized doubly periodic links. 

To stress the combinatorial aspect of this theory and its relationship with grid homologies defined in \cite{OSS2015grid}, we use notations $\GCR^\circ$ and $\GHR^\circ$ instead of $\CFLR^\circ$ and $\HFLR^\circ$ in the discussion above.

\begin{rem}[On the well-definedness and invariance of combinatorial real link Floer homology]
On one hand, real grid diagrams form a subset of real Heegaard diagram for links and the real rectangles we counted in real grid homology are exactly domains of real Maslov index 1 as noted in Proposition~\ref{prop:domains of real index 1}. In \cite{BGX} and \cite{LOrealbordered}, it was shown that for such a real homotopy class, the modulo $2$ count of holomorphic moduli space is always one. Therefore, $\GCR^{\circ}$ is indeed a combinatorial reintepretation of $\CFLR^{\circ}$, its well-definedness and invariance follow from this. On the other hand, one can mimic the proof of \cite[Chapter~4\&5]{OSS2015grid} to check $(\partial^{G,\circ})^2=0$ directly and construct chain homotopy equivalence between $\GCR^\circ$ chain complexes associated to real grid diagrams related  by real grid moves using ``real pentagon'' and ``real hexagon'' counts (see Section~\ref{sub:Equivariant crossing changes and maps associated to them}). The first approach is more concise, while the later one shows that $\GHR^\circ$-homology theories can be defined without reference to holomorphic theory. 

In this way, we have proved Theorem~\ref{thm:intro-real grid homology} modulo the assertion on bigrading, which will be defined in next subsection.
\end{rem}

\subsection{Relative gradings}\label{sub:Relative gradings}

The usual link Floer homology theory in $S^3$ is a bigraded homology theory. More precisely, we have Maslov grading and Alexander grading, among which the former acts as a homological grading while the second one acts as a counterpart of the quantum grading in TQFTs. The relative gradings between generators were defined using Maslov index and multiplicities of base points of the domain connecting them. For the absolute grading, $M$ is pinned down using the fact that forgetting one of the family of base points, we get back to a Heegaard diagram of $S^3$, which has $\widehat{\HF}(S^3)=\F$ that we assume living in level $M=0$, $A$ is pinned down using the symmetric property of Alexander polynomial. In \cite[Section~4.3]{OSS2015grid}, the authors provided a combinatorial description of these. We will follow their approach to define two real relative gradings on $\GHR^\circ$ for generalized strongly invertible links, absolute lifts will be pinned down later. This agrees with the bigrading in holomorphic theory defined in previous section (upto a shift).

\begin{prop}
On any real grid diagram $\cH$, there exist two functions 
\[M^R_{\bfO}:(\T_\alpha\cap\T_\beta)^R\to \Z\]
\[M^R_{\bfX}:(\T_\alpha\cap\T_\beta)^R\to \Z\] satisfies the following property. If $\xv$ and $\yv$ are two grid states and $r\in \RRect(\xv,\yv)$, then
\[M^R_{\bfO}(\xv)-M^R_{\bfO}(\yv)=1-\vert(r\cap \bfO)\vert+\vert(\xv\cap r)\vert,\]
\[M^R_{\bfX}(\xv)-M^R_{\bfX}(\yv)=1-\vert(r\cap \bfX)\vert+\vert(\xv\cap r)\vert.\]
They will be called \emph{real $\bfO$-Maslov function} and \emph{real $\bfX$-Maslov function} on real grid states, respectively. Furthermore, they are uniquely determined up to over all shift by $\Z$.
\end{prop}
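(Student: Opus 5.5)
The plan is to build the real Maslov functions from the classical grid Maslov functions of \cite[Section~4.3]{OSS2015grid}, using the fact that a real grid diagram is an ordinary toroidal grid diagram of size $n$ with a symmetry $R$. A real grid state $\xv\in(\T_\alpha\cap\T_\beta)^R$ is an ordinary grid state with $R(\xv)=\xv$. So the classical functions $M_{\bfO}(\xv)=\mathcal{J}(\xv-\bfO,\xv-\bfO)+1$ and $M_{\bfX}$ (defined via the bilinear count $\mathcal{J}$ on a planar representative) restrict to real grid states. I would set
\[M^R_{\bfO}(\xv)=\tfrac12\bigl(M_{\bfO}(\xv)+c_{\bfO}(\xv)\bigr)\]
and define $M^R_{\bfX}$ analogously. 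Here $c_{\bfO}(\xv)$ is a correction term that depends only on the points of $\xv$ and of $\bfO$ on the fixed circle $C$ (for instance, the number of them, or a count of fixed points of $\xv$ relative to a fixed reference state). It is chosen so that the relation holds for the symmetric domains; equivalently, $M^R$ should behave like ``half the classical index plus half the contribution of fixed corners''.

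Next I would verify the required formula for each of the four types of real rectangles listed in Proposition~\ref{prop:domains of real index 1}. For type (4), $r=r_1\cup R(r_1)$ decomposes into two ordinary rectangles $r_1,r_2$, and the composite domain $\xv\to\yv$ is a sum of two rectangles in the classical sense. Applying \cite[Proposition~4.3.1]{OSS2015grid} twice, including the case where the two rectangles overlap, gives
\[M(\xv)-M(\yv)=2-|r\cap\bfO|+2|\xv\cap r|\ \text{(suitably counted)},\]
so halving yields the claimed formula; the symmetry of $r$ means $\bfO$- and $\xv$-counts come in pairs. For types (1)--(3), the square, $L$-hexagon and $X$-octagon are ordinary polygonal domains. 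Their classical Maslov index can be computed by Lipshitz's formula $e(D)+n_{\xv}(D)+n_{\yv}(D)$, equivalently by decomposing them into rectangles. Comparing this with the real count, using the diagonal corners that are fixed by $R$, determines $c_{\bfO}$ and checks the formula. In practice this says $\mu=2\mu_R-(\text{number of fixed corners changed})$, and $c$ exactly compensates.

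For uniqueness up to an overall shift, and also for existence without guessing $c$, I would use a connectivity argument. Any two real grid states are connected by a sequence of real rectangles: a real state corresponds to a symmetric permutation (an involution), and any two involutions are related by conjugating transpositions, which correspond to type (4) or type (1) domains. Hence the differences are determined, and the functions are unique up to a constant. For well-definedness, one alternatively defines $M^R(\xv)-M^R(\yv)$ through any real domain $\phi\in\pi_2^R(\xv,\yv)$ as $\mu_R(\phi)-n_{\bfO}(\phi)$, and checks independence of $\phi$ because real periodic domains on $T^2$ (symmetric combinations of rows and columns) have index equal to their $\bfO$-multiplicity, as in the classical case.

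I expect the main obstacle to be the uniform bookkeeping of the fixed-corner correction across the three symmetric types. In particular, the $X$-shaped octagon and the overlapping rectangle pairs in type (4) need care, to confirm that a single function $c$ works and that the right-hand side $1-|r\cap\bfO|+|\xv\cap r|$ (with $\xv\cap r$ counting interior points of the real domain, i.e.\ pairs counted once) comes out correctly.
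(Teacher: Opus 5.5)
Your strategy is the paper's: half the classical Maslov function of \cite[Section~4.3]{OSS2015grid} plus a correction supported on $C$, checked type by type by decomposing real rectangles into ordinary ones, with uniqueness from connectivity. But the existence half is not actually carried out. Its whole content is exhibiting a single function of $\xv$ that works for all four types at once; you leave $c_{\bfO}$ undetermined, and the concrete guides you offer are off.

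The correction that works is $c_{\bfO}(\xv)=-\frac{1}{2}\vert\xv\cap C\vert+\mathrm{const}$, i.e.\ $M^R_{\bfO}(\xv)=\frac{1}{2}M_{\bfO}(\xv)-\frac{1}{4}\vert\xv\cap C\vert+\frac{1}{4}l_f$, and the same for $\bfX$. What you need to check is that the underlying domain of an empty real rectangle has classical index $2+\frac{1}{2}(\vert\xv\cap C\vert-\vert\yv\cap C\vert)$, with each interior point of $\xv$ adding $2$. Case by case:
\begin{itemize}
\item the square has index $1$, and $\yv$ acquires the two diagonal corners on $C$;
\item the $L$-hexagon has index $2$, with one corner on $C$ in each of $\xv$ and $\yv$;
\item the octagon has index $3$, and $\xv$ has two more points on $C$ (its concave corners);
\item a pair of rectangles has index $2$ and leaves $\xv\cap C$ unchanged.
\end{itemize}
Your heuristic ``$\mu=2\mu_R-(\text{number of fixed corners changed})$'' cannot be right for any unsigned count. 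The square and the octagon both change two corners on $C$ but have classical indices $1$ and $3$; what enters is half the signed change.

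You also never address integrality. The paper gets it from the constant $\frac{1}{4}l_f$, which normalizes $M^R_{\bfO}(\xv^{\mathrm{NW}})=0$ because $\vert\xv^{\mathrm{NW}}\cap C\vert=l_f$. Alternatively, integer differences plus connectivity plus an overall shift suffice.

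Two bookkeeping points would also break your verification as written. First, for a disjoint type (4) pair the classical difference is $2-2\vert r\cap\bfO\vert+2\vert\xv\cap r\vert$; your display has coefficient $1$ on the $\bfO$-term. Halving yields the statement only if both $\vert r\cap\bfO\vert$ and $\vert\xv\cap r\vert$ count points individually, each $R$-pair contributing $2$; this is also forced by $\deg U_j=-2$. Your stated convention of counting pairs once would make the formula false.

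Second, for connectivity: conjugating an involution by transpositions preserves its number of fixed points, so it cannot by itself join states with different $\vert\xv\cap C\vert$. The simplest argument is that every $R$-swapped pair $\{p,R(p)\}\subset\xv$ is the diagonal of an $R$-symmetric square whose other diagonal lies on $C$. So type (1) squares alone connect every real grid state to the unique one contained in $C$.
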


\begin{proof}
    The uniqueness part is easy using the fact that any two real grid states in a real grid diagram can be connected by a finite sequence of real rectangles. 
    
    For the existence part, we will only construct $M^R_{\bfO}$, since the same argument works for $M^R_{\bfX}$ with $X$ in place of $O$. Forgetting the real structure for a moment, recall that there is a function $M_{\bfO}$ in \cite[Section~4.3]{OSS2015grid} that satisfies similar axiom with genuine rectangles in place of real rectangles. We claim that $M_{\bfO}^R$ can be defined by the formula \[M_{\bfO}^R(\xv)=\frac{1}{2}M_{\bfO}(\xv)-\frac{1}{4}\vert \xv\cap C\vert +\frac{1}{4} l_f. \] 
    
    By decomposing real rectangles into usual rectangles, one can check case by case that $M_{\bfO}^R$ satisfies the axiom. We use $X$-shaped octagon as an example, the other cases are similar and easier.
    
 	When $r$ is a $X$-shaped octagon, it can be decomposed into a composition of three usual rectangles $r_1\in \Rect(\xv,\zv_1)$, $r_2\in \Rect(\zv_1,\zv_2)$ and $r_3\in \Rect(\zv_2,\yv)$, for some non-real grid states $\zv_1$, $\zv_2$. Then one has \[\vert \xv\cap C\vert=\vert \yv\cap C\vert+2,\] \[M_{\bfO}(\xv)-M_{\bfO}(\yv)=3-2\vert r_1\cap \bfO \vert-2\vert r_2\cap \bfO \vert-2\vert r_3\cap \bfO \vert+2\vert \xv\cap r_1 \vert+2\vert \xv\cap r_2 \vert +2\vert \xv\cap r_3 \vert.\]
 	Since $\vert \xv\cap r \vert=\vert \xv\cap r_1 \vert+\vert \xv\cap r_2 \vert+\vert \xv\cap r_3 \vert$, while $\vert r\cap \bfO \vert=\vert r_1\cap \bfO \vert+\vert r_2\cap \bfO \vert+\vert r_3\cap \bfO \vert$, the desired formula for $M_{\bfO}^R(\xv)-M_{\bfO}^R(\yv)$ follows. 

    To show that $M^R_{\bfO}$ is integer valued, we recall that $M_{\bfO}$ was normalized by $M_{\bfO}(\xv^{\mathrm{NW}})=0$. Then since $\vert\xv^{\mathrm{NW}}\cap C\vert=l_f$, we know that \[M^R_{\bfO}(\xv^{\mathrm{NW}})=\frac{1}{2}M_{\bfO}(\xv^{\mathrm{NW}})-\frac{1}{4}\vert\xv^{\mathrm{NW}}\cap C\vert+\frac{1}{4}l_f=0\in \Z.\] Since any other real grid state can be connected to $\xv^{\mathrm{NW}}$ be a sequence of real rectangles and the relative grading is obviously $\Z$-valued. Now we have shown the claim and the proposition follows.
\end{proof}

As in the usual case, we define Alexander grading as the difference of $\bfO$- and $\bfX$-Maslov gradings. More precisely, the (relative) real Alexander grading is characterized by \[A^{R}(\xv)-A^R(\yv)= \frac{1}{2}(M^R_{\bfO}(\xv)-M^R_{\bfX}(\xv)-M^R_{\bfO}(\yv)+M^R_{\bfX}(\yv)).\] This is just one half of $A(\xv)-A(\yv)$, which can be seen from the proof of the proposition above. Actually, we will see later that after normalization, the absolute $A^R$ grading is exactly $\frac{1}{2} A$ for strongly invertible knots.


In the following, $M^R$ means $M_{\bfO}^R$ as we refer to $O$-base points as base points for the underlying $3$-manifold.

\begin{rem}
\begin{itemize}

    \item In case of links, the method above is also capable of define multi-real Maslov or Alexander gradings with one component for each strongly invertible knot component or pair of components as in \cite[Chapter~11]{OSS2015grid}. More generally, one can consider colored grading system as in \cite{Zemke2019absolutegradinginHFL}. Since we won't make use of these in this paper, we postpone details to later papers.

    \item Above we choose to work with generalized strongly invertible links, a similar combinatorial formula is also valid on real grid diagrams for periodic links. There are two subtleties: $\xv^{\text{NW}}$ is not a real grid states in such a diagram, so we have to choose some arbitrary states to define $M^R_{\bfO}$ and in particular, the normalization in Section~\ref{sub:Basic properties on strongly invertible knots} no longer works. Also, in a periodic diagram, each real domain/rectangle contains same number of $O$ and $X$-base points, all generators live in a single $A^R$-level, so we cannot extract any interesting information from it.
\end{itemize}
\end{rem}

\begin{rem}
    In practice, it is more convenient to calculate $\widetilde{\GHR}$ or $\widetilde{\GHR^-}$ and then ``destabilize'' to get $\widehat{\GHR}$ or $\GHR^-$. For the graded version, when $L$ is a (generalized) strongly invertible link, we have an extra tensor factor $\F^2$ supported in $(M^R,A^R)=(-1,-1),(0,0)$ for each pair of extra $O$-base points. When $L$ is a (generalized) periodic link, we have a tensor factor $\F^2$ supported in $(M^R,A^R)=(0,0)$ for each  extra pair of $(O,X)$-base points. 
\end{rem}

\section{Basic properties and examples}\label{sec:Basic properties and examples}
\subsection{Basic properties on strongly invertible knots}\label{sub:Basic properties on strongly invertible knots}

When $L$ is a strongly invertible knot, or more generally a generalized strongly invertible link, we can define an absolute lift of the bi-grading as follows using the construction and remarks in the previous section. The formulae are given by \[M_{\bfO}^R(\xv)=\frac{1}{2} M_{\bfO}(\xv)-\frac{1}{4} \vert \xv\cap C \vert+\frac{1}{4}l_{f}\] 
\[M_{\bfX}^R(\xv)=\frac{1}{2} M_{\bfX}(\xv)-\frac{1}{4} \vert \xv\cap C\vert +\frac{1}{4}l_f \] 
\[A^R(\xv)=\frac{1}{2}(M_{\bfO}^R(\xv) -M_{\bfX}^R(\xv))-\frac{(n-l_f-2l_p)}{4}.\]  

Following notations in~\cite{OSS2015grid}, we use $\GHR^{\circ}_{d}(L,\fra,s)$ to denote the summand of $\GHR^{\circ}(L,\fra)$ in real Maslov grading $d$ and real Alexander grading $s$.

Note that modulo the equivalence relation of $\fra$, there are exactly four choices of $\fra$ for a strongly invertible knot $K\subset (S^3,\tau)$- determined only by the choices (1), (2) in Definition~\ref{def:strongly invertible knots with extra data}. Fix any choice of $\fra$ as a based data, the other three will be denoted by $\fra^r$, $\fra^i$ and $\fra^{i,r}$, in which $r$ means orientation reversed while $i$ means the roles of $\bfX$ and $\bfO$ are interchanged. If $(T^2,\bm\alpha,\bm\beta,\bfO,\bfX)$ represents $(K,\fra)$, then \begin{itemize}
    \item $(-T^2,\bm\beta,\bm\alpha,\bfO,\bfX)$ represents $(K,\fra^r)$;
    \item $(-T^2,\bm\beta,\bm\alpha,\bfX,\bfO)$ represents $(K,\fra^{i})$;
    \item $(T^2,\bm\alpha,\bm\beta,\bfX,\bfO)$ represents $(K,\fra^{i,r})$.
\end{itemize}
See Figure~\ref{fig:auxiliary data for strongly invertible knots} for an example using the strongly invertible unknot. 
\begin{figure}
    \centering
    \begin{overpic}[width=0.8\textwidth]{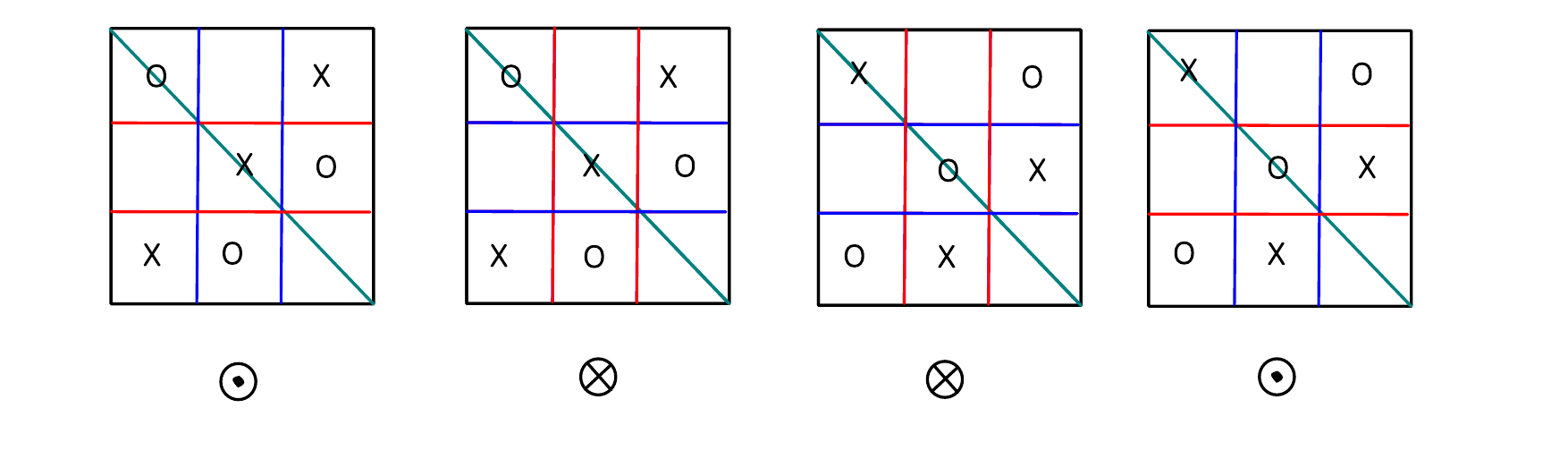}
			\put(12,0) {$(U,\fra)$}
			\put(34.5,0) {$(U,\fra^r)$}
			\put(56.5,0) {$(U,\fra^{i})$}
			\put(78,0) {$(U,\fra^{i,r})$}
		\end{overpic}
    \caption{Choices of auxiliary data for the unknot.}
    \label{fig:auxiliary data for strongly invertible knots}
\end{figure}
The proof of~\cite[Proposition~5.3.2]{OSS2015grid} applies to show that 
\begin{prop}\label{prop:fra v.s.fra^r}
If $K$ is a strongly invertible knot in $S^3$, with respect to the normalization of grading above, we have grading-preserving isomorphisms \[\widehat{\GHR}(K,\fra)\cong \widehat{\GHR}(K,\fra^r),\]  \[\GHR^-(K,\fra)\cong \GHR^-(K,\fra^{r}). \] 
And of course, there are similar isomorphisms connecting real grid homology of $(K,\fra^{i})$ and $(K,\fra^{i,r})$. 
\end{prop}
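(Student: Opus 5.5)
The plan is to follow the proof of \cite[Proposition~5.3.2]{OSS2015grid} and show that the real grid complexes for $(K,\fra)$ and $(K,\fra^r)$ are isomorphic as bigraded complexes. By the remark preceding the proposition, if $\cH=(T^2,\bm\alpha,\bm\beta,\bfO,\bfX,R)$ represents $(K,\fra)$, then $\cH^r=(-T^2,\bm\beta,\bm\alpha,\bfO,\bfX,R)$ represents $(K,\fra^r)$. So it suffices to compare $\GCR^-(\cH)$ with $\GCR^-(\cH^r)$, and likewise the hat versions. The statement for $\fra^{i}$ and $\fra^{i,r}$ is the same argument with the roles of $\bfO$ and $\bfX$ interchanged.

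First, I would redraw $\cH^r$ as an honest real grid diagram in the sense of Definition~\ref{def:real grid diagram}. Let $\rho$ be the reflection of the torus across the circle of slope $+1$. Since the fixed circle $C$ of $R$ has slope $-1$, the two lines are perpendicular, so $\rho$ commutes with $R$ and maps $C$ to itself. Moreover $\rho$ is orientation reversing and swaps horizontal and vertical circles. Therefore $\rho(\cH^r)=(T^2,\rho(\bm\alpha)\text{ as }\beta\text{'s},\rho(\bm\beta)\text{ as }\alpha\text{'s},\rho(\bfO),\rho(\bfX),R)$ is again a real grid diagram. The map $\xv\mapsto\rho(\xv)$ is then a bijection between the real grid states of $\cH$ and those of $\rho(\cH^r)$. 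It commutes with $R$ because $\rho R=R\rho$, and it preserves $|\xv\cap C|$.

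Second, I would check that this bijection carries empty real rectangles to empty real rectangles, with the same $\bfO$- and $\bfX$-multiplicities. The argument is that $\rho$ reverses orientation and also swaps the roles of $\alpha$ and $\beta$. Each of these two changes alone would reverse the direction of a rectangle (from $\xv\to\yv$ to $\yv\to\xv$), so together they preserve it. This applies to each of the four types in Proposition~\ref{prop:domains of real index 1}: squares, $L$-hexagons, $X$-octagons and interchanged pairs of rectangles. The map $\rho$ sends each type to the same type, preserving $R$-invariance and emptiness. Multiplicities at fixed $O$'s go to multiplicities at fixed $O$'s, and multiplicities at paired $O$'s go to multiplicities at paired $O$'s. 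Hence $\rho$ induces an isomorphism of $\cR^G$-complexes for the minus version, and after setting $u=0$, for the hat version.

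Third, I need gradings, which I expect to be the only real obstacle. The relative gradings agree automatically, since they are defined by the rectangle formulas. For the absolute gradings I would use the formula $M^R_{\bfO}=\tfrac12 M_{\bfO}-\tfrac14|\xv\cap C|+\tfrac14 l_f$ and its $\bfX$ analogue. Since $|\xv\cap C|$ is $\rho$-invariant, it remains to see that $M_{\bfO}(\rho\xv)=M_{\bfO}(\xv)$ and $M_{\bfX}(\rho\xv)=M_{\bfX}(\xv)$ for the underlying classical grid diagrams. This is exactly the content of OSS's proof: use $M_{\bfO}(\xv)=\mathcal J(\xv-\bfO,\xv-\bfO)+1$ and observe that $\mathcal J$ is invariant under a reflection that also swaps the horizontal and vertical directions. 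I would also verify that the planar cut $\rho$ induces is the one compatible with $\fra^r$. If it is not, I would apply a real cyclic permutation; this changes nothing, because the relevant absolute gradings are defined intrinsically on the torus. With that, $A^R=\tfrac12(M^R_{\bfO}-M^R_{\bfX})$ minus a constant is preserved, and the isomorphisms are bigraded.
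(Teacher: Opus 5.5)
Your proposal is correct and follows the paper's route. The paper's proof simply invokes the diagonal-reflection argument of \cite[Proposition~5.3.2]{OSS2015grid}, and your equivariant reflection $\rho$, together with your checks on real grid states, the four types of real rectangles, and the $\mathcal{J}$-formula for absolute gradings, is that argument made explicit.

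Two small remarks. First, $R$ itself (which extends to $\tau$ and fixes $C$ pointwise) carries $(-T^2,\bm\beta,\bm\alpha,\bfO,\bfX)$ onto $\cH$ and fixes every real grid state, so the identity map already gives the isomorphism. Second, $\rho$ restricts to a reflection of $C$, so the planar reading of $\rho(\cH^r)$ is the image of $(K,\fra^r)$ under an equivariant diffeomorphism that reverses $C$. The link to $(K,\fra^r)$ should therefore come from $\rho$ being an isomorphism of abstract real Heegaard diagrams, not from your proposed check of the planar cut.
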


The proof of~\cite[Proposition~7.1.1]{OSS2015grid} applies to show that 
\begin{prop}\label{prop:fra v.s. fra^i,r}
If $K$ is a strongly invertible knot, with respect to the normalization of grading above, we have \[\widehat{\GHR}_{d}(K,\fra,s)\cong \widehat{\GHR}_{d-2s}(K,\fra^{i,r},-s),\]  for all $d\in \Z$ and $s\in 1/2\Z$. Furthermore, there are similar isomorphisms connecting real grid homology of $(K,\fra^{r})$ and $(K,\fra^{i})$. 
\end{prop}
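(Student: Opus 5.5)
The plan is to adapt the proof of \cite[Proposition~7.1.1]{OSS2015grid}, which establishes the symmetry of $\widehat{\GH}$ by swapping the roles of $\bfO$ and $\bfX$ on one fixed grid. Let $\cH=(T^2,\bm\alpha,\bm\beta,\bfO,\bfX,R)$ represent $(K,\fra)$. Then $\cH'=(T^2,\bm\alpha,\bm\beta,\bfX,\bfO,R)$ represents $(K,\fra^{i,r})$, as listed above. The two diagrams share the same torus, curves and involution, so they have the same set of real grid states $(\T_\alpha\cap\T_\beta)^R$. By Proposition~\ref{prop:domains of real index 1}, they also have the same set of empty real rectangles; only the roles of the markings change.

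The first step is to pass to the tilde complexes. There, $\widetilde{\partial^G}$ counts empty real rectangles that avoid both $\bfO$ and $\bfX$, and this condition is symmetric in the two marking sets. Hence the identity map on real grid states gives an isomorphism of chain complexes $\widetilde{\GCR}(\cH)\cong\widetilde{\GCR}(\cH')$. The second step is to compare gradings. In $\cH'$, the $\bfO$-Maslov function is $M^R_{\bfX}$ of $\cH$, and vice versa. From the absolute normalizations in Section~\ref{sub:Basic properties on strongly invertible knots} (with $l_f=1$, $l_p=0$), a state $\xv$ of bigrading $(d,s)$ in $\cH$ therefore has bigrading
\[
\bigl(M^R_{\bfX}(\xv),\,-s\bigr)=\bigl(d-2s-\tfrac{n-1}{2},\,-s\bigr)
\]
in $\cH'$, after accounting for the constant term $-(n-1)/4$ in $A^R$. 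I expect the constant shift to cancel after destabilization. By the earlier remark, each of the $k=(n-1)/2$ extra pairs of $O$-base points contributes a tensor factor $\F^2$ supported in bigradings $(0,0)$ and $(-1,-1)$. Under the transformation $(d,s)\mapsto(d-2s,-s)$, the factor $\F_{(0,0)}\oplus\F_{(-1,-1)}$ becomes $\F_{(0,0)}\oplus\F_{(1,1)}$, which is the $(-1,-1)$ version shifted by $(1,1)$. Doing this $k$ times gives a total shift of $(k,k)$. This should be exactly compensated by the $-(n-1)/2$ term combined with the overall translation.

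The third step is to extract the hat statement. Write $\widetilde{\GHR}(\cH)\cong\widehat{\GHR}(K,\fra)\otimes V^{\otimes k}$ and $\widetilde{\GHR}(\cH')\cong\widehat{\GHR}(K,\fra^{i,r})\otimes V'^{\otimes k}$, where $V$ and $V'$ are the explicit two-dimensional bigraded spaces. A bigraded vector space is determined by its tensor product with a known finite-dimensional factor, which one sees by comparing Poincaré polynomials in $\Z[q^{\pm1},t^{\pm1/2}]$ and cancelling the nonzero factor $(1+q^{-1}t^{-1})^k$. It follows that $\widehat{\GHR}_d(K,\fra,s)\cong\widehat{\GHR}_{d-2s}(K,\fra^{i,r},-s)$. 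The statement relating $\fra^r$ and $\fra^i$ then follows by applying the same argument to the diagram $(-T^2,\bm\beta,\bm\alpha,\bfO,\bfX)$, or by combining the result with Proposition~\ref{prop:fra v.s.fra^r}.

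The main obstacle is the grading bookkeeping. The absolute normalization uses $\xv^{\mathrm{NW}}$ for the $\bfO$-markings, but in $\cH'$ the relevant reference state is the one adapted to $\bfX$. One must check that the shift of $M^R_{\bfX}$ relative to $M^R_{\bfO}$ at this reference state is consistent with the $-(n-1)/4$ correction in $A^R$. I would first verify this on the size-one and size-three unknot diagrams of Figure~\ref{fig:auxiliary data for strongly invertible knots}, then argue in general using the formula $M^R=\frac12 M-\frac14|\xv\cap C|+\frac14$ together with the classical identity $M_{\bfX}=M_{\bfO}-2A-(n-1)$ from \cite{OSS2015grid}.
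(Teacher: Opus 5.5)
Your route matches the paper's. The paper simply states that the proof of \cite[Proposition~7.1.1]{OSS2015grid} carries over: swap $\bfO$ and $\bfX$ on one real grid diagram, identify the tilde complexes by the identity on real grid states, compare bigradings, and destabilize. However, the bigrading you display in $\cH'$ is wrong in the Alexander component, and as written the constants do not cancel.

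Here is the correct computation. We have $A^R=\frac12(M^R_{\bfO}-M^R_{\bfX})-\frac{n-1}{4}$, and the swap negates the first term but leaves the constant unchanged. Hence $A^R_{\cH'}(\xv)=-A^R_{\cH}(\xv)-\frac{n-1}{2}$. With $k=\frac{n-1}{2}$, a state of bigrading $(d,s)$ in $\cH$ therefore has bigrading $(d-2s-k,\,-s-k)$ in $\cH'$, not $(d-2s-k,\,-s)$.

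This translation by $(-k,-k)$ is exactly what cancels the $(k,k)$ that $V^{\otimes k}$ picks up under the linear map $(d,s)\mapsto(d-2s,-s)$. With your formula you would instead get $\widehat{\GHR}_d(K,\fra,s)\cong\widehat{\GHR}_{d-2s}(K,\fra^{i,r},-s+k)$, which depends on the grid size. Once the Alexander component is corrected, your Poincar\'e polynomial cancellation in $\mathbb{Z}[q^{\pm1},t^{\pm1/2}]$ gives the statement. The $\fra^r$ versus $\fra^i$ case then follows as you describe.

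Your normalization worry does not arise. In \cite{OSS2015grid}, $M_{\bfO}$ and $M_{\bfX}$ are given by the same formula with the roles of the markings exchanged. So $M^R_{\bfO}$ computed in $\cH'$ is literally $M^R_{\bfX}$ computed in $\cH$. Equivalently, the NW corners of the $X$-squares again form a real grid state meeting $C$ in exactly one point, so the paper's normalization argument applies verbatim.
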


\begin{rem}
Careful readers should have noticed that Proposition~5.3.2 and 7.1.1 in \cite{OSS2015grid} lead to symmetric property on $\widehat{\GH}(K)$, while for us, Proposition~\ref{prop:fra v.s.fra^r} and~\ref{prop:fra v.s. fra^i,r} are stated as isomorphism between real grid homology groups of the same knot with different auxiliary data. This difference is essential: the definition of usual knot Floer homology relies on a choice of orientation and base points but the resulting groups do not depend on this choice. While for real knot (link) Floer homology, the choice of $\fra$ is essential in the sense that it affects the resulting bi-graded groups. Concrete examples will be given in Section~\ref{sec:Examples and application}.  
\end{rem}

The proof of~\cite[Proposition 7.1.2 and Proposition 7.4.3]{OSS2015grid} applies to show that 
\begin{prop}\label{prop:hat version mirror sym}
If $(K,\fra)$ is a strongly invertible knot in $S^3$ with auxiliary data chosen and $(m(K),m(\fra))$ is its mirror with inherit symmetry and auxiliary data. Then, with respect to the normalization of grading above, we have \[\widehat{\GHR}_{d}(K,\fra, s)\cong \widehat{\GHR}_{2s-d}(m(K),m(\fra),s),\]  for all $d\in \Z$ and $s\in 1/2\Z$.

For the minus version, we have an isomorphism of bigraded modules
\[\GHR^-(m(K),m(\fra))\cong \mathrm{Hom}(\GHR^-(K,\fra),\F[u])\oplus \mathrm{Ext}(\GHR^-(K,\fra),\F[u])[1,0].\] More explicitly, if \[\GHR^-(K,\fra)\cong \F[u]_{(-\tau^R,-\frac{1}{2}\tau^R)}\oplus (\bigoplus_{i=1}^k \F[u]/u^{n_i}_{(d_i,s_i)}),\] then 
\[\GHR^-(m(K),m(\fra))\cong \F[u]_{(\tau^R,\frac{1}{2}\tau^R)}\oplus (\bigoplus_{i=1}^k \F[u]/u^{n_i}_{(n_i-1-d_i,n_i-s_i)}),\] where $\tau^R=\tau^R(K)$ is the real $\tau$-invariant that will be defined in Section~\ref{sub:Definition of invariant and statement of main theorem}.

\end{prop}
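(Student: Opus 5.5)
We follow \cite[Proposition~7.1.2 and Proposition~7.4.3]{OSS2015grid}. If $\cH=(T^2,\bm\alpha,\bm\beta,\bfO,\bfX,R)$ is a real grid diagram for $(K,\fra)$, then reflecting the torus across a line that commutes with $R$ gives a real grid diagram $\cH^*$ for $(m(K),m(\fra))$. Concretely, we take the reflection across the antidiagonal of slope $+1$, or equivalently rotate the planar grid by a quarter turn; either choice commutes with the slope $-1$ reflection $R$. This map sends real grid states to real grid states bijectively. It preserves $\vert\xv\cap C\vert$, since it maps $C$ to itself. It also reverses real rectangles: every real domain in Proposition~\ref{prop:domains of real index 1} that goes from $\xv$ to $\yv$ in $\cH$ becomes a real domain of the same type from $\yv^*$ to $\xv^*$ in $\cH^*$. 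We then check that the induced orientation and $(O,X)$ labels on $C$ match the auxiliary data $m(\fra)$ inherited under mirroring, and choose the reflection so that this holds.

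\textbf{Hat version.} On $\widetilde{\GCR}$ the correspondence $\xv\mapsto\xv^*$ identifies $\widetilde{\GCR}(\cH^*)$ with the dual complex $\mathrm{Hom}(\widetilde{\GCR}(\cH),\F)$, because empty real rectangles avoiding $\bfO\cup\bfX$ are exchanged with reversed direction. For gradings we use the classical identities $M^*_{\bfO}(\xv^*)=-M_{\bfO}(\xv)+\text{const}$ and $A^*(\xv^*)=-A(\xv)+\text{const}$ from \cite{OSS2015grid}, and substitute them into the absolute formulas for $M^R_{\bfO}$ and $A^R$ of Section~\ref{sub:Basic properties on strongly invertible knots}. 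The correction term $-\frac14\vert\xv\cap C\vert$ does not change sign, so it is the one place where the classical computation must be redone. The expected outcome is $M^R(\xv^*)=-M^R(\xv)+2A^R(\xv)+c$ and $A^R(\xv^*)=-A^R(\xv)+c'$.

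After dualizing over $\F$ and destabilizing, using the tensor factors supported in bidegrees $(0,0)$ and $(-1,-1)$, we need to pin down the constants $c,c'$. For this we use Proposition~\ref{prop:fra v.s. fra^i,r} together with the symmetry $A^R=\frac12 A$ from the normalization; alternatively we can compare with the unknot. If $A^R$ simply changes sign, the statement $\widehat{\GHR}_d(K,s)\cong\widehat{\GHR}_{2s-d}(m(K),s)$ indicates that we must combine the dual with the $\fra^{i,r}$ symmetry $d\mapsto d-2s$, $s\mapsto -s$. This bookkeeping, and getting the auxiliary data right so that no spurious $\fra^i$ or $\fra^r$ appears, is where I expect the main difficulty. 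Proposition~\ref{prop:fra v.s.fra^r} lets us absorb any leftover $r$-twist.

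\textbf{Minus version.} Via the same correspondence, $\GCR^-(\cH^*)$ is the $\F[u,U_j]$-dual $\mathrm{Hom}_{\F[u,U_1,\dots,U_k]}(\GCR^-(\cH),\F[u,U_1,\dots,U_k])$ with grading shifted as above. By Proposition~\ref{prop:U,v action identification}, each $U_j$ acts as $u^2$. Setting $U_j=u^2$ up to quasi-isomorphism and destabilizing, we reduce to a free complex $C$ over the PID $\F[u]$. The universal coefficient theorem over $\F[u]$ then gives
\[H(\mathrm{Hom}(C,\F[u]))\cong \mathrm{Hom}(H(C),\F[u])\oplus\mathrm{Ext}(H(C),\F[u])[1,0].\]
Finally we apply this to the given decomposition of $\GHR^-(K,\fra)$. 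We use $\mathrm{Hom}(\F[u]_{(a,b)},\F[u])=\F[u]_{(-a,-b)}$ and $\mathrm{Ext}(\F[u]/u^{n}_{(d,s)},\F[u])\cong\F[u]/u^n$ placed in degree $(-d-n,\dots)$, and then apply the overall grading shift from the hat computation. This yields the stated explicit form.
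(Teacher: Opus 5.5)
You follow the same route as the paper, whose proof just invokes \cite[Propositions~7.1.2 and~7.4.3]{OSS2015grid}. However, the one step that is genuinely new in the real setting fails as written: realizing the mirror compatibly with $R$.

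The reflection across the slope $+1$ diagonal does commute with $R$. But it is $R$ followed by a rotation by $\pi$, and $\cH$ is $R$-invariant, so on the configuration it acts as a rotation by $\pi$. Its image therefore represents $(K,\fra)$ again; viewed as $(-T^2,\bm\beta,\bm\alpha,\bfO,\bfX)$, it is the symmetry behind Proposition~\ref{prop:fra v.s.fra^r}. It also sends a rectangle from $\xv$ to $\yv$ to a rectangle from $\xv'$ to $\yv'$, so nothing is dualized. The quarter turn does produce $m(K)$ and does reverse rectangles, but it is not equivalent to that reflection: it conjugates $R$ to the reflection across the slope $+1$ diagonal and moves $C$ onto that diagonal.

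In fact no grid symmetry commuting with $R$ yields the mirror, since the linear parts commuting with $R$ are $\pm I,\pm R$. What you can use is $(-T^2,\bm\alpha,\bm\beta,\bfO,\bfX,R)$, equivalently the quarter-turned grid, which is real for the other diagonal. This is not a real grid diagram in the sense of Definition~\ref{def:real grid diagram}, so you need its real-niceness and holomorphic invariance to compute with it. More importantly, the absolute grading formula of Section~\ref{sub:Basic properties on strongly invertible knots} is false there. A symmetric index-one square now has its $\xv$-corners, not its $\yv$-corners, on the fixed circle, so the correction is $+\frac14|\xv\cap C|$. If you keep $-\frac14|\xv\cap C|$, as you propose, you get $M^R(\xv^*)+M^R(\xv)=-\frac12|\xv\cap C|+\mathrm{const}$. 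This is not constant, since octagons change $|\xv\cap C|$ by $2$, so no graded duality follows.

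With the correct sign (equivalently, from $M^R=\mu_R-n_{\bfO}$ and the invariance of $\mu_R$ under reversing the orientation of $\Sigma$), there are two cases:
\begin{itemize}
\item If $\bfO,\bfX$ are carried along, you get $M^R(\xv^*)=-M^R(\xv)+c$ and $A^R(\xv^*)=-A^R(\xv)+c'$.
\item If $\bfO,\bfX$ are interchanged, you get $M^R(\xv^*)=2A^R(\xv)-M^R(\xv)+c$ and $A^R(\xv^*)=A^R(\xv)+c'$.
\end{itemize}
Your ``expected outcome'' mixes the two. The interchanged version gives the displayed hat formula directly, with no destabilization shift, because $(d,s)\mapsto(2s-d,s)$ fixes $W=\F_{(0,0)}\oplus\F_{(-1,-1)}$.

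Only the label-preserving version makes $\GCR^-$ of the mirror the $\F[u]$-dual of $\GCR^-(\cH)$. After interchanging, the mirror's $u$-action corresponds to the $v$-action on $K$, which has bidegree $(0,\frac12)$. The label-preserving version gives the $\mathrm{Hom}\oplus\mathrm{Ext}$ formula with tower at $(\tau^R,\frac12\tau^R)$. Via the hat/minus exact sequence it also forces $\widehat{\GHR}_d(K,\fra,s)\cong\widehat{\GHR}_{-d}(m(K),m(\fra),-s)$.

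So the spurious twist you worry about cannot be removed by bookkeeping. For one fixed $m(\fra)$, the two displayed formulas are compatible only if $\widehat{\GHR}(K,\fra)$ is invariant under $(d,s)\mapsto(d-2s,-s)$, and this fails for $(8_{20},\fra_1)$. You must state explicitly that $m(\fra)$ carries the labels along in the minus part and interchanges them in the hat part.

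Finally, $\mathrm{Ext}(\F[u]/u^n_{(d,s)},\F[u])$ is generated in bidegree $(n-d,\frac n2-s)$, not $(-d-n,\dots)$. After the $[1,0]$ shift this becomes $(n-1-d,\frac n2-s)$; the Alexander entry is $\frac n2-s$ because $\deg u=(-1,-\frac12)$.
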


By construction, we have the following long exact sequence.

\begin{prop}\label{prop:exact sequence relating minus and hat}
Let $(K,\fra)$ be a strongly invertible knot in $S^3$ with auxiliary data chosen. Then we have the following long exact sequence relating its minus and hat versions of real grid homology.
\[\ldots\to \GHR^-_{d+1}(K,\fra,s+1/2)\xrightarrow{u}\GHR^-_{d}(K,\fra,s)\to \widehat{\GHR}_d(K,\fra, s)\to \GHR^-_{d}(K,\fra,s)\to \ldots\]
Since $\GHR^-(K,\fra)$ is a module over a polynomial ring with one variable by construction, we omit the subscript of $u$.
\end{prop}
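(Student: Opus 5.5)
The plan is to realize the sequence as the long exact sequence of a short exact sequence of chain complexes. We work on a real grid diagram $\cH$ for $(K,\fra)$ with the convention of Proposition~\ref{prop:U,v action identification}. The basic sequence is
\[0\to \GCR^-(\cH)\xrightarrow{u}\GCR^-(\cH)\to \GCR^-(\cH)/(u)\to 0.\]
Here $u$ is the variable attached to the fixed $O$-base point $O^f_1$. Multiplication by $u$ is injective because $\GCR^-(\cH)$ is a free $\cR^G(\cH)$-module. It is a chain map because $\partial^{G,-}$ is $\cR^G(\cH)$-linear. Since $u$ has bidegree $(-1,-1/2)$, the inclusion carries the summand in bigrading $(d+1,s+1/2)$ to bigrading $(d,s)$. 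The quotient is by definition $\widehat{\GCR}(\cH)$, and $\widehat{\GHR}(\cH)=\widehat{\GHR}(K,\fra)$.

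The first issue is that $\GHR^-(K,\fra)$ is $H_*(\GCR^-(\cH))$ only when $\cH$ has size one, that is, one pair of base points on $C$. In general $\cH$ carries extra pairs $(O_j,O_j')$ with variables $U_j$. The quotient by $u$ alone therefore leaves the $U_j$ alive. This is consistent with the definition of $\widehat{\CFLR}$, which for a knot kills only $u$, so the $U_j$ act on it. By Proposition~\ref{prop:U,v action identification}, each $U_j$ acts on homology as $u^2$. As a module over $\F[u]$, then, $H_*(\GCR^-(\cH))$ is $\GHR^-(K,\fra)$; this is the invariance statement already recorded in the paper. Likewise $H_*(\GCR^-(\cH)/(u))=\widehat{\GHR}(K,\fra)$ by the definition of the hat theory.

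With these identifications, the snake lemma applied in each bigrading gives the stated sequence. The connecting map $\widehat{\GHR}_d(K,\fra,s)\to \GHR^-_{d-1}(K,\fra,s+1/2)$ is the standard one: lift a cycle, apply $\partial^{G,-}$, and divide by $u$. Its target lies in Maslov grading $d-1$, and the Alexander shift is read off from the grading of $u$.

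The main point to check is the bigrading bookkeeping. Because $\partial^{G,-}$ lowers $M^R$ by one and preserves $A^R$, multiplication by $u$ identifies the $(d+1,s+1/2)$ piece with part of the $(d,s)$ piece. I would therefore write the connecting homomorphism with its shift made explicit, so that the sequence reads exactly as stated, with the degree change absorbed into the arrow that returns to $\GHR^-$. No further obstacle is expected.
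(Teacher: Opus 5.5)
Your argument is the paper's: its proof consists only of the short exact sequence $0\to \GCR^-(\cH)\xrightarrow{u}\GCR^-(\cH)\to\widehat{\GCR}(\cH)\to 0$ and the associated long exact sequence in homology. Your identification of the homologies with $\GHR^-(K,\fra)$ (as an $\F[u]$-module) and with $\widehat{\GHR}(K,\fra)$ is fine. Your aside that the first identification holds ``only when $\cH$ has size one'' is wrong. The paper defines $\GHR^-(K,\fra)$ as $H_*(\GCR^-(\cH))$, regarded as an $\F[u]$-module, for every real grid diagram, and that is what you end up using.

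What does not hold up is your grading of the connecting map. Let $\hat{x}$ be a cycle of bigrading $(d,s)$ in $\widehat{\GCR}(\cH)$ with homogeneous lift $x$. Then $\partial^{G,-}x$ has bigrading $(d-1,s)$ and equals $u\cdot y$. Since $u$ has bidegree $(-1,-1/2)$, $y$ has bigrading $(d,s+1/2)$. Dividing by $u$ raises $M^R$ by one, so the target lies in Maslov grading $d$, not $d-1$. The correct sequence is
\[
\cdots\to \GHR^-_{d+1}(K,\fra,s+\tfrac12)\xrightarrow{u}\GHR^-_{d}(K,\fra,s)\to \widehat{\GHR}_d(K,\fra,s)\xrightarrow{\delta}\GHR^-_{d}(K,\fra,s+\tfrac12)\xrightarrow{u}\GHR^-_{d-1}(K,\fra,s)\to\cdots
\]
Here the arrow after $\delta$ is just the first arrow shifted down by one, as it must be. Your last step, absorbing a degree change so that the sequence ``reads exactly as stated'', is not a legitimate move, because $\delta$ has the definite bidegree $(0,1/2)$. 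The printed term $\GHR^-_d(K,\fra,s)$ after $\widehat{\GHR}_d(K,\fra,s)$ is a misprint for $\GHR^-_d(K,\fra,s+1/2)$. Its Maslov index $d$ is correct, and your $d-1$ is not.
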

\begin{proof}
This follows from the short exact sequence of chain complexes \[0\to \GCR^-(\cH)\xrightarrow{u} \GCR^-(\cH)\to \widehat{\GCR}(\cH)\to 0,\] for any compatible choice of real grid diagram $\cH$.
\end{proof}

\subsection{A spectral sequence to $\widehat{\HFR}(S^3,\tau)$}
In this subsection, we consider the relationship between the real knot Floer group and the real Heegaard Floer homology of the underlying real $3$-manifold. As in the usual case, they can be connected by a spectral sequence. 
\begin{thm}\label{thm:spectral sequence relating HFKR and HFR}
Let $(K,\fra)$ be a strongly invertible knot in $S^3$ with auxiliary data chosen. Then there is a spectral sequence \[\widehat{\HFKR}(K,\fra) \Longrightarrow \widehat{\HFR}(S^3,\tau).\]
\end{thm}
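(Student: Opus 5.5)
The plan is to imitate the classical argument that $\widehat{\HFK}(K)$ converges to $\widehat{\HF}(S^3)$, adapted to the real setting and phrased in terms of real grid diagrams (or a minimal real Heegaard diagram). We fix a real Heegaard diagram $\cH=(\Sigma,\bm\alpha,\bm\beta,\bfO,\bfX,R)$ representing $(K,\fra)$. The convenient choice is a minimal one, with exactly one pair $O^f_1,X^f_1$ on $\fix(R)$, and we assume it is weakly admissible.

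The key observation is that forgetting the $\bfX$ base points leaves $(\Sigma,\bm\alpha,\bm\beta,\bfO,R)$, which by Definition~\ref{def:real Heegaard diagram for strongly invertible links} is a multi-based real Heegaard diagram for $(S^3,\tau)$. On this diagram we consider the complex $\widehat{\CFR}(\Sigma,\bm\alpha,\bm\beta,\bfO,R)$. It is generated over $\F$ by the same real generators $(\T_\alpha\cap\T_\beta)^R$, and its differential counts real index one classes $\phi$ with $n_{\bfO}(\phi)=0$ but arbitrary $n_{\bfX}(\phi)$.

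Next we filter this complex by the real Alexander grading $A^R$. For $\phi\in\pi_2^R(\xv,\yv)$ with $n_{\bfO}(\phi)=0$, the relative grading formula gives $A^R(\xv)-A^R(\yv)=\tfrac12 n_{\bfX}(\phi)\ge 0$, so the differential is non-increasing in $A^R$. The associated graded complex keeps exactly the classes with $n_{\bfX}(\phi)=n_{\bfO}(\phi)=0$, and this is by definition $\widehat{\CFKR}(\cH)$. Since $\widehat{\CFKR}(\cH)$ is finitely generated, the filtration is bounded. The standard spectral sequence of a bounded filtered complex therefore has $E_1=\widehat{\HFKR}(K,\fra)$ (in the minimal case; otherwise a tilde-stabilization) and converges to $H_*(\widehat{\CFR}(\Sigma,\bm\alpha,\bm\beta,\bfO,R))$.

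The remaining step is to identify this limit with $\widehat{\HFR}(S^3,\tau)$. In the minimal case $\bfO$ consists of a single fixed basepoint, so this is just the singly-based real Heegaard Floer complex of \cite{guth2025real}, and the identification is their invariance theorem. One point needs care: admissibility. The diagram for $(S^3,\tau)$ must be weakly admissible, which we arrange by equivariant finger moves away from the base points as in \cite[Section~3.9]{guth2025real}. Since $b_1(S^3)=0$ and only one real $\spinc$ structure is relevant, this is straightforward.

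If we instead work with a multi-based diagram, for instance a real grid diagram, where the limit can be computed combinatorially, then the $E_\infty$ computation gives $\widehat{\HFR}(S^3,\tau)\otimes W^{\otimes k}$. In that case the extra tensor factors have to be removed, either by the same quasi-stabilization argument used for invariance in Section~\ref{sub:Strongly invertible links} or by first destabilizing to a minimal diagram. I expect this bookkeeping, together with checking that the Alexander filtration is well defined with respect to the half-integer normalization of $A^R$, to be the main technical obstacle. Working directly with a minimal diagram should sidestep most of it.
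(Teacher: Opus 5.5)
Your proposal is correct and is essentially the paper's argument. Take a minimal real Heegaard diagram and forget $\bfX$ to get a singly-pointed real diagram for $(S^3,\tau)$; filtering $\widehat{\CFR}$ by $A^R$ then gives associated graded $\widehat{\CFKR}$ and total homology $\widehat{\HFR}(S^3,\tau)$. The admissibility step you flag is automatic, since $H_2(S^3)=0$ leaves no nonzero periodic domains avoiding the basepoint.
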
 

To see this theorem in a quickest way, we step back to the holomorphic theory. From Proposition~\ref{prop:existence of real HD for strongly invertible knots and real H moves}, we know that any strongly invertible knot with auxiliary data $(K,\fra)$ admits a minimal real Heegaard diagram $\cH=(\Sigma,\bm\alpha,\bm\beta,\bfO=\{O\},\bfX=\{X\},R)$. In holomorphic theory, we have well-defined relative gradings characterized by \[M^R(\xv,\yv)=\mu_R(\cD)-n_{\bfO}(\cD),\] \[A^R(\xv,\yv)=\frac{1}{2}(n_{\bfX}(\cD)-n_{\bfO}(\cD)),\]
when $\xv$, $\yv$ is any pair of real generators and $\cD\in \pi_2^R(\xv,\yv)$. One can check that this definition coincides with the combinatorial one on multi-based diagrams. By forgetting the $X$-base points, we get a singly-pointed real Heegaard diagram $\underline{\cH}$ for $(S^3,\tau)$. $M^R$ acts as a homological grading on $\widehat{\CFR}(\underline{\cH})$ and $A^R$ induces a filtration on this chain complex. Here, the unique real $\spinc$ structure on $S^3$ is torsion, so $M^R$ is well-defined with value in $\Z$ and the null-homologous property of $K$ makes $A^R$ well-defined. Then the standard construction in algebraic topology leads to the desired spectral sequence.

\subsection{Some basic examples}
In this subsection, we illustrate the property of $\GHR^\circ$ using some basic examples. For these small knots, the auxiliary data $\fra$ is irrelevant for the real knot Floer groups, so we shall omit them from the notation for simplicity.

\begin{example}\label{ex:unknot}
A real grid diagram of size $3$ for the strongly invertible unknot $U$ in $S^3$ is shown in the left in Figure~\ref{fig:basic examples}. It can be calculated easily that \[\GHR^-(U)\cong \F[u]_{(0,0)} \quad \widehat{\GHR}(U)\cong \F_{(0,0)}.\]
\end{example}

\begin{figure}
    \centering
    \begin{overpic}[width=0.8\textwidth]{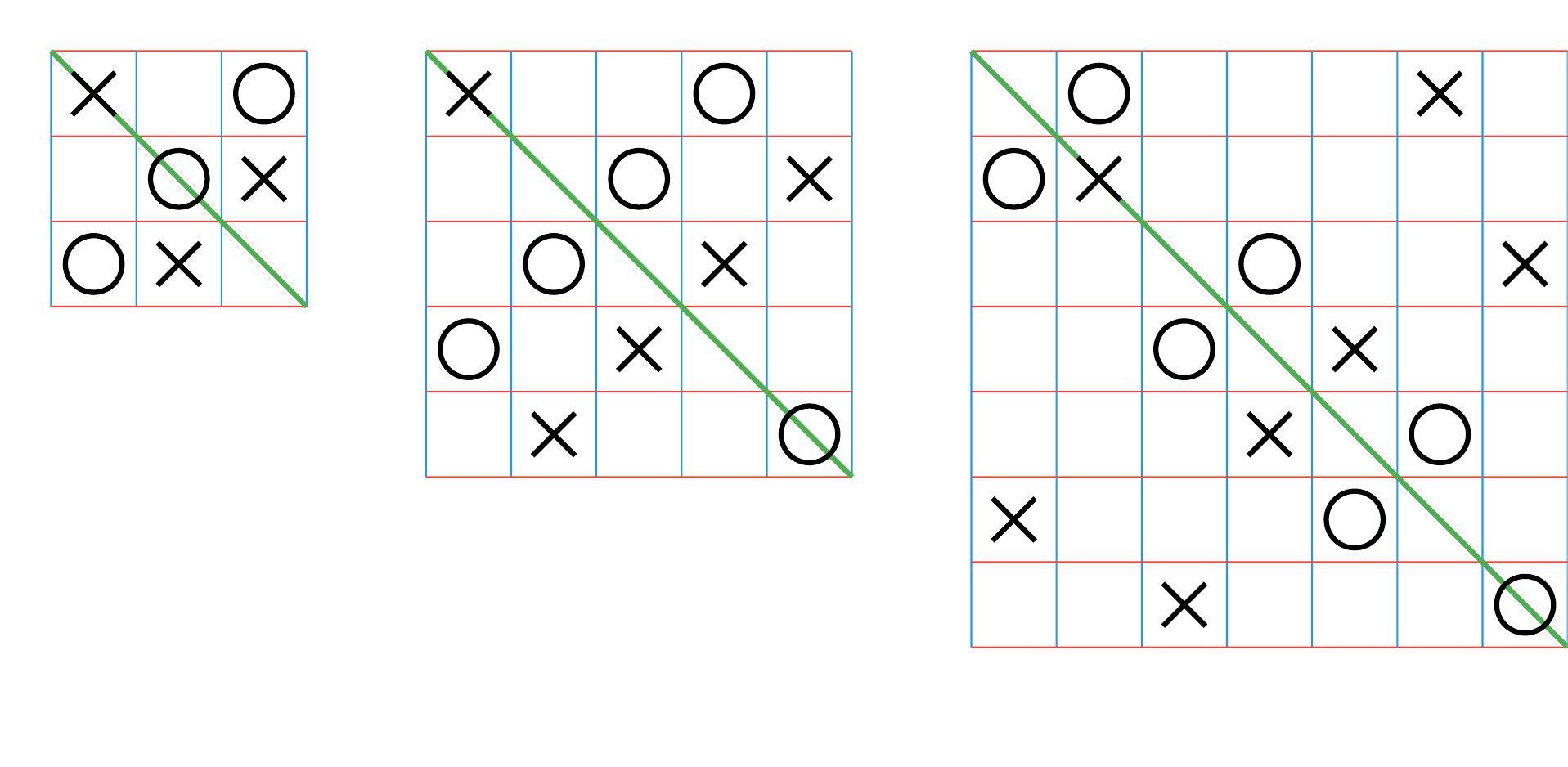}
			\put(7,25) {unknot}
			\put(40,15) {$\overline{3}_1$}
			\put(80,3) {$4_1$}
		\end{overpic}

    \caption{Grid diagrams for strongly invertible unknot, $\overline{3}_1$ and $4_1$.}
    \label{fig:basic examples}
\end{figure}

\begin{example}\label{ex:trefoil and figure 8}
Using computer program, we calculated various versions of real grid homology for the trefoil and figure eight knots using the real grid diagrams shown in the middle and right of Figure~\ref{fig:basic examples}. The result is the following: \[\widehat{\GHR}(\Bar{3}_1)=\F_{(0,-1/2)}\oplus \F_{(0,0)}\oplus \F_{(1,1/2)}\quad \GHR^-(\Bar{3}_1)=\F[u]_{(1,1/2)}\oplus \F_{(0,0)}. \]
\[\widehat{\GHR}(4_1)=\F_{(-1,-1/2)}\oplus \F_{(0,0)}\oplus \F_{(0,1/2)} \quad \GHR^-(4_1)=\F[u]_{(-1,-1/2)}\oplus \F_{(0,1/2)}. \]
For $\bar{3}_1$, the rank of hat version coincides with the rank of usual knot Floer homology as predicted for L-space knots in \cite{hendricks2025noterealheegaardfloer}. In \cite{hendricks2025noterealheegaardfloer}, Hendricks also calculated $\widehat{\HFKR}(4_1)$ using spectral sequence from $\widehat{\HFK}$ to $\widehat{\HFKR}$ (see Theorem~\ref{thm:spectral sequence from HFK,strongly invertible case}), our result agrees with hers. Except the part agrees with previous works, we also determine the real Maslov grading. This extra information together with Proposition~\ref{prop:hat version mirror sym}, tell us that $\widehat{\HFKR}$ actually distinguishes two strong inversions on $4_1$. (Recall that $4_1$ admits two distinct involutions related by mirroring, which can be distinguished by the induced action on $\fullCFK^{\infty}(4_1)$.)
\end{example}

\subsection{Relationship with $\widehat{HFK}$}

As we mentioned in Example~\ref{ex:trefoil and figure 8}, we have the following spectral sequence. 
\begin{thm}\label{thm:spectral sequence from HFK,strongly invertible case}(\cite[Theorem~1.4]{hendricks2025noterealheegaardfloer})
For a strongly invertible knot $(K,\fra)\subset (S^3,\tau)$, there is a spectral sequence starting from $\widehat{\HFK}(K)\otimes \F[\theta,\theta^{-1}]$ and converging to $\widehat{\HFKR}(K,\fra)\otimes \F[\theta,\theta^{-1}]$. Moreover, this spectral sequence respects the splitting of $\widehat{\HFK}$ and $\widehat{\HFKR}$ along the (real) Alexander grading. (In that paper, she used $\HFKR(\cH)$, since she did not show it is an invariant of the pair $(K,\fra)$.) 
\end{thm}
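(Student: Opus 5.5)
The natural route is the localization (Smith-type) spectral sequence for Lagrangian Floer homology of a real pair. Choose a minimal real Heegaard diagram $\cH=(\Sigma,\bm\alpha,\bm\beta,\{O\},\{X\},R)$ for $(K,\fra)$ using Proposition~\ref{prop:existence of real HD for strongly invertible knots and real H moves}. The involution on $\mathrm{Sym}^m(\Sigma)$ induced by $R$ is anti-symplectic for a suitable symmetric form and swaps $\T_\alpha$ and $\T_\beta$. We then use the standard trick of Hendricks, Lipshitz and Sarkar and of \cite{hendricks2025noterealheegaardfloer}. Compose with the anti-holomorphic involution of the strip, so that real disks correspond to $\Z/2$-fixed holomorphic disks for the pair $(\T_\alpha, \T_\beta)$ viewed with the $\Z/2$-action $\xv\mapsto R(\xv)$ on generators. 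In this setting $\widehat{\CFK}(\cH)$ carries a homotopy-coherent $\Z/2$-action, and its fixed-point data recovers $\widehat{\CFKR}(\cH)$.

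\textbf{Step 1: equivariant transversality.} Show that for a generic real almost complex structure (equivalently, a $\Z/2$-invariant family), all moduli spaces counted in $\widehat{\CFK}$ are cut out transversely. Show also that the fixed moduli spaces are cut out transversely in the real sense. This requires the index condition of Seidel--Smith / Large: the normal bundle of the fixed locus must admit a stable normal trivialization compatible with the Lagrangians. Both Lagrangians here are tori, and $R$ swaps them. Following \cite{hendricks2025noterealheegaardfloer} (or the analogous verification in \cite{guth2025real}), the required polarization is the one induced by the pullback of $T\Sigma$ via the conjugation.

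\textbf{Step 2: Borel complex and filtration.} Form the Borel complex $\widehat{\CFK}(\cH)\otimes \F[[\theta]]$ with differential $\partial + \theta(1+R_*)+\theta^2 H+\cdots$. Filter it by powers of $\theta$ to obtain a spectral sequence whose $E_1$ page is $\widehat{\HFK}(K)\otimes\F[\theta,\theta^{-1}]$ after inverting $\theta$. Localization (the Smith inequality argument) identifies the $\theta$-inverted Borel homology with $\widehat{\HFKR}(\cH)\otimes \F[\theta,\theta^{-1}]$. The invariance statement in Theorem~\ref{thm:intro-real link Floer homology} lets us replace $\widehat{\HFKR}(\cH)$ by $\widehat{\HFKR}(K,\fra)$.

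\textbf{Step 3: Alexander splitting.} Because $R$ fixes the single $O$ and the single $X$, each disk $\phi$ and its reflection have the same multiplicities at $O$ and $X$. Hence $R_*$ and all higher homotopies $H$ preserve the Alexander grading. The Borel complex therefore splits by Alexander grading, and so does the spectral sequence. The real Alexander grading is $A^R=\frac12 A$ on fixed generators, so the splittings match up to this rescaling.

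The main obstacle is Step 1. Equivariant transversality, together with the stable normal trivialization needed for the localization isomorphism, is the delicate analytic input. I would first try to import it directly from the argument already used for the closed real case in \cite{guth2025real} and \cite{hendricks2025noterealheegaardfloer}. The added base points only impose multiplicity conditions and do not affect the analysis.
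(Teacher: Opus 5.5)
Your overall route is the paper's. The paper simply applies Hendricks' localization theorem (Theorem~\ref{thm:spectral sequence on general Lagrangian Floer homology}) to $(M,L_0,L_1)=(\mathrm{Sym}^g(\Sigma-O-X),\T_\alpha,\T_\beta)$, with the involution induced by $R$ on a minimal real diagram. Your Steps 1--2 are a sketch of the inside of that theorem. Three things should be fixed.

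\textbf{Step 1 asks for something neither available in general nor needed.}

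A generic \emph{real} almost complex structure need not make the non-real moduli spaces of $\widehat{\CFK}$ transverse. At a real disk, i.e.\ a fixed point of $u\mapsto R\circ u\circ c$, the linearized operator splits into invariant and anti-invariant parts. Real perturbations of $J$ can only kill cokernel in the invariant part.

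The paper itself treats the existence of an $R$-symmetric $J$ that is regular for non-real moduli spaces as an extra property of particular diagrams. It uses this property only to identify $d_1=1+(\iota_K\tau_K)_*$.

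A single real $J$ is also not ``equivalent'' to an invariant family. The Borel complex is built from $\Z/2$-equivariant families of almost complex structures (or homotopy-coherent data) precisely to bypass equivariant transversality for ordinary moduli spaces.

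The stable normal trivialization is likewise not what gives real transversality; a generic real $J$ already does that, as in Guth--Manolescu. Its role is to control the anti-invariant operators at fixed disks, so that the localization map becomes an isomorphism after inverting $\theta$.

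\textbf{Puncture the surface instead of imposing multiplicity constraints.}

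Rather than treating $O$ and $X$ as multiplicity constraints in $\mathrm{Sym}^m(\Sigma)$, delete them. In $\mathrm{Sym}^g(\Sigma-O-X)$, which is exact with the structure of a symplectization near infinity, $\widehat{\CFK}(\cH)$ and $\widehat{\CFKR}(\cH)$ are literally the ordinary and real Floer complexes of $(\T_\alpha,\T_\beta)$. The hypotheses of the cited theorem then hold, and Steps 1--2 reduce to a citation.

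\textbf{Step 3 needs one added line.}

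Step 3 is fine otherwise. Reflection-invariance of $n_O$ and $n_X$ only shows that $A(R\xv)-A(\xv)$ is independent of $\xv$. Then $R^2=\mathrm{id}$ forces this constant to be $0$.
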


This follows from the more general theorem concerning real Lagrangian Floer homology of symplectic manifolds by taking $(M,L_0,L_1)=(\mathrm{Sym}^g({\Sigma}-O-X),\T_\alpha,\T_\beta)$ and $R$ to be the induced involution on the symmetric product for a minimal real Heegaard diagram $(\Sigma,\bm\alpha,\bm\beta,\bfO,\bfX, R)$ of $(K,\fra)$.

\begin{thm}\label{thm:spectral sequence on general Lagrangian Floer homology} (\cite[Theorem~1.1]{hendricks2025noterealheegaardfloer})
Let $(M,L_0,L_1)$ be a triple such that $M$ is an exact symplectic manifold which has the structure of a symplectization near infinity, and $L_0$ and $L_1$ are compact exact Lagrangians. Suppose that $R$ is an anti-symplectic involution on $M$ which interchanges $L_0$ and $L_1$. There is a spectral sequence whose $E_1$ page is \[\HF(M,L_0,L_1)\otimes \F[\theta,\theta^{-1}]\] and whose $E_{\infty}$ page is isomorphic to \[\HFR(M,L_0,L_1)\otimes \F[\theta,\theta^{-1}].\]
Here, $\HF$ and $\HFR$ denote the usual and real Lagrangian Floer homology on symplectic manifolds, respectively. (See~\cite[Section~2]{guth2025real})
\end{thm}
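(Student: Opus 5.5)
We would follow the standard localization strategy for a $\Z/2$-action, in the form used by Hendricks, adapted to Lagrangian Floer theory. The first step is to reinterpret real Floer homology as Floer homology of a Lagrangian with its image under $R$. Since $R$ is anti-symplectic and exchanges $L_0$ and $L_1$, a real holomorphic strip from $\xv$ to $\yv$ is determined by its restriction to half the strip. Doubling along the real locus identifies real generators (points of $(L_0\cap L_1)^R$) with intersection points of the fixed Lagrangian $L_0^R$-type object. Concretely, $\HFR(M,L_0,L_1)$ is computed from $\mathrm{Fix}$ of the induced involution on the Floer complex. Exactness and the symplectization-at-infinity hypothesis give us compactness and well-definedness of both $\CF(M,L_0,L_1)$ and the real complex, with no bubbling.

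The second step is to build a Borel (Tate) model. We choose an $R$-equivariant Floer datum; a time-dependent almost complex structure $J_t$ with $J_{1-t}=-R^*J_t$ works. With it, the map $\iota\colon \CF(L_0,L_1)\to \CF(L_0,L_1)$ given by $\xv\mapsto R(\xv)$, composed with strip reversal $u(s,t)\mapsto R\circ u(s,1-t)$, is a chain involution. Equivariant transversality generally fails, so instead of insisting on it we would follow Seidel--Smith and Hendricks--Lipshitz--Sarkar. We construct a homotopy-coherent $\Z/2$-action using a family of almost complex structures parametrized by $E\Z/2=S^\infty$. This produces the Tate complex
\[
\CF(L_0,L_1)\otimes \F[\theta,\theta^{-1}],\qquad \partial_{\mathrm{Tate}}=\partial+\theta(1+\iota)+\theta^2 h_2+\cdots .
\]
Filtering by powers of $\theta$ gives a spectral sequence whose $E_1$ page is $\HF(M,L_0,L_1)\otimes\F[\theta,\theta^{-1}]$.

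The third and main step is localization: identifying the $E_\infty$ page (the Tate homology) with $\HFR\otimes\F[\theta,\theta^{-1}]$. We would try to apply Seidel--Smith's localization theorem for the pair $(M,L_0,\,R(L_0)=L_1)$. This requires a stable normal trivialization, i.e.\ a polarization, of the normal bundle of $M^R$ over the relevant Lagrangians. The naive localization compares $\HF$ with Floer homology in the fixed locus. In the real setting the relevant fixed-point object is instead the half-strip count defining $\HFR$. The cleaner route is therefore to pass to the doubled picture. Take $\widetilde M=M\times M^-$ with the involution $(x,y)\mapsto(R y,R x)$, which is now symplectic. Take the Lagrangians $L_0\times L_1$ and the diagonal-type Lagrangian $\Gamma$. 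Then $\HF(\widetilde M,\ldots)\cong\HF(M,L_0,L_1)$, and the fixed locus $\widetilde M^{\mathrm{inv}}\cong M$ carries exactly the real strips.

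We expect the main obstacle to be verifying Seidel--Smith's stable normal triviality hypothesis for this doubled setup. For the normal bundle, which is isomorphic to $TM$ with the induced structure, we would try to show that the anti-linear involution gives an isomorphism of the normal bundle with the complexification of $TL_0$. That makes it stably trivial relative to the Lagrangians. Once this holds, localization gives the Tate homology as $\HFR\otimes\F[\theta,\theta^{-1}]$, and the theorem follows.
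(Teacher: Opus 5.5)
The paper gives no argument for this statement; it is imported from Hendricks \cite[Theorem~1.1]{hendricks2025noterealheegaardfloer}. The question is therefore whether your outline proves it, and it does not. The step you flag as the main obstacle is not merely unverified: it is false in the generality of the statement, and false in the application the paper needs.

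\textbf{The normal trivialization fails.} In your doubled picture the fixed locus is the graph $\{(x,Rx)\}\cong M$. Its normal bundle is the $(-1)$-eigenbundle $\{(v,-dR\,v)\}$, which is complex-linearly isomorphic to $TM$, using $J\oplus(-J)$ on $M\times M^-$ and $dR\circ J=-J\circ dR$. Seidel--Smith's stable normal trivialization asks for two things:
\begin{itemize}
\item a stable trivialization of this bundle over the \emph{whole} fixed locus, i.e.\ of $TM$ over $M$;
\item homotopies of the normal Lagrangians to constant ones, namely $TL_0$ over $L_0$ and $J\,TM^R$ over $M^R$.
\end{itemize}
Nothing in the hypotheses makes $TM$ stably trivial. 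For $M=\mathrm{Sym}^g$ of a punctured genus-$g$ surface with $g\ge 2$, Macdonald's formula shows that $c_1(TM)$ is the restriction of $-\theta$, which is non-torsion. So the hypothesis fails outright. Even the weaker requirement of a stable Lagrangian subbundle of the normal bundle would force $2c_1(TM)=0$. Your proposed verification conflates two different things. The isomorphism $TM|_{L_0}\cong TL_0\otimes\C$ holds for every Lagrangian, has nothing to do with $R$, and lives only over $L_0$. It cannot produce a trivialization over $M$, and it does not address the second normal condition over $M^R$.

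\textbf{Step 1 needs correcting, and the correction shows why localization is not formal here.} $\HFR$ is not computed by the fixed part of the ordinary Floer complex. Halving real strips gives $\HFR(M,L_0,L_1)\cong\HF(M,L_0,M^R)$. Moreover, a real strip of real index $1$ generally has ordinary Maslov index different from $1$. In general $\mu=2\mu_R$ plus a correction depending only on the endpoints. In a real grid diagram, the $L$-shaped hexagons and the pairs of rectangles have $\mu=2$, and the octagons have $\mu=3$.

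\textbf{What the anti-symplectic involution does give.} It supplies a canonical tangent--normal identification, not a trivialization. At a real strip, the anti-invariant part of the linearized operator is the real half-strip problem with the boundary condition $TM^R$ replaced by $J\,TM^R$. Equivalently, in the doubled picture the normal bundle is complex-linearly isomorphic to the tangent bundle of the fixed locus. This isomorphism carries the normal Lagrangians to the tangent spaces of $(L_0\times L_1)^{\mathrm{inv}}$ and to $J$ times the tangent spaces of $\Delta^{\mathrm{inv}}$.

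A proof of the hypothesis-free statement has to build the localization on this structure, or on some other argument specific to the anti-symplectic case. Invoking Seidel--Smith for $(M\times M^-,L_0\times L_1,\Delta)$ does not yield the theorem. A smaller point: $\Delta$ and $M^R$ are non-compact, so even setting up that theorem would need a separate confinement argument.
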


Note that the conditions in Theorem~\ref{thm:spectral sequence on general Lagrangian Floer homology} are still true for $(M,L_0,L_1)=(\mathrm{Sym}^g({\Sigma}-O-X),\T_\alpha,\T_\beta)$, when $O$, $X$-base points on $\Sigma$ are interchanged by $R$ instead of lying on $\fix(R)$. Thus, we have an analogue for doubly periodic knots.

\begin{thm}\label{thm:spectral sequence from HFK,doubly periodic case}
For a doubly periodic knot $(K,\fro)\subset (S^3,\tau)$, there is a spectral sequence starting from $\widehat{\HFK}(K)\otimes \F[\theta,\theta^{-1}]$ and converging to $\widehat{\HFKR}(K,\fra)\otimes \F[\theta,\theta^{-1}]$.
\end{thm}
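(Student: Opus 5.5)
The plan is to deduce the statement from Theorem~\ref{thm:spectral sequence on general Lagrangian Floer homology}, exactly as Theorem~\ref{thm:spectral sequence from HFK,strongly invertible case} was obtained. The only changes are in how we check its hypotheses and how we identify its two ends.

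First I fix the diagram. By the doubly periodic analogue of Proposition~\ref{prop:existence of real HD for strongly invertible knots and real H moves}, the knot $(K,\fro)$ admits a minimal real Heegaard diagram $\cH=(\Sigma,\bm\alpha,\bm\beta,\{O\},\{X\},R)$ in the sense of Definition~\ref{def:real Heegaard diagram for periodic links}. Here $R(O)=X$. Lemma~\ref{lem:odd number of pairs of base points on a periodic knot} guarantees that a single pair is consistent.

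Next I set $M=\mathrm{Sym}^g(\Sigma-O-X)$, $L_0=\T_\alpha$ and $L_1=\T_\beta$, and check the hypotheses. Since $R$ swaps $O$ and $X$, it preserves $\Sigma-O-X$, so it induces an involution on $M$. With the symplectic forms of \cite{guth2025real}, this involution is anti-symplectic and interchanges $\T_\alpha$ and $\T_\beta$. The punctured surface $\Sigma-\{O,X\}$ is an exact open surface with cylindrical ends. As in Hendricks' argument, $M$ can therefore be given an exact symplectic structure that is a symplectization near infinity. The tori $\T_\alpha,\T_\beta$ are compact and can be made exact Lagrangians, since each component of $\Sigma\setminus\bm\alpha$ and of $\Sigma\setminus\bm\beta$ contains a puncture. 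The key observation is that Hendricks' verification never uses that the punctures lie on $\fix(R)$; it only uses that $R$ preserves the set of punctures. I expect this step to be the main obstacle, specifically choosing the exact primitive $R$-equivariantly so that $R$ is anti-symplectic for an exact form. My first attempt will be to average a primitive $\lambda$ to $\tfrac12(\lambda-R^*\lambda)$ and to choose the cylindrical ends near $O$ and $X$ so that they are swapped by $R$.

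Finally I identify the ends of the spectral sequence. The cylindrical Floer homology $\HF(M,\T_\alpha,\T_\beta)$ counts disks avoiding both $O$ and $X$, so it equals $\widehat{\HFK}(K)$. The real version counts real disks avoiding $O$ and $X$. Because $R(O)=X$, for a real disk the conditions $n_X=0$ and $n_O=0$ are equivalent. Hence the real count equals $\fullCFLR^-(\cH,\s^R)/(V_1)$ summed over the real $\spinc$ structures, and its homology is $\widehat{\HFKR}(K,\fro)$. Applying Theorem~\ref{thm:spectral sequence on general Lagrangian Floer homology} then gives the claimed spectral sequence. Independence of the choice of diagram is not needed, since both endpoints are invariants of $(K,\fro)$ by Theorem~\ref{thm:intro-real link Floer homology}.
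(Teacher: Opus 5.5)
Your proposal is correct and follows the paper's argument. The paper's proof is the one-line observation that the hypotheses of Theorem~\ref{thm:spectral sequence on general Lagrangian Floer homology} still hold for $(\mathrm{Sym}^g(\Sigma-O-X),\T_\alpha,\T_\beta)$ on a minimal real diagram when $R$ interchanges $O$ and $X$ rather than fixing them, so the strongly invertible argument carries over verbatim; your equivariant choice of primitive and your use of $n_O=n_X$ for real classes to identify the real end with $\widehat{\HFKR}(K,\fro)$ simply spell that observation out.
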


\section{Torsion order and equivariant unknotting number}\label{sec:Torsion order and equivariant unknotting number}
\subsection{Equivariant crossing changes and maps associated to them}\label{sub:Equivariant crossing changes and maps associated to them}
In \cite{boyle2025equivariantunknottingnumbersstrongly}, Boyle and Chen introduced three types of equivariant unknotting operations and defined an equivariant unknotting number for strongly invertible knots. We first recall some basics from their work.
\begin{defn}\label{def:equiv unknotting number}
Let $K$ be a strongly invertible knot in $(S^3,\tau)$ and consider any of its transvergent planar diagram. On such a diagram, there are three types of unknotting operations as listed in Figure~\ref{fig:equivariant unknotting operations}. The \emph{equivariant unknotting number} $\widetilde{u}(K)$ was defined in  \cite{boyle2025equivariantunknottingnumbersstrongly} as the minimal number of crossing changes. In their convention, a type $A$ operation accounts for two crossing changes while a type $B$ or $C$ operation accounts for one. More generally, the \emph{type $X$ unknotting number} $\widetilde{u}_X(K)$ was defined to be the minimal number of type $X$ operations needed to change $K$ into the strongly invertible unknot, for $X=A,B$ or $C$. (They might be $\infty$, if $K$ cannot be unknotted only use that kind of crossing changes.) 
\end{defn}

\begin{figure}
    \centering
    \begin{overpic}[width=0.5\textwidth]{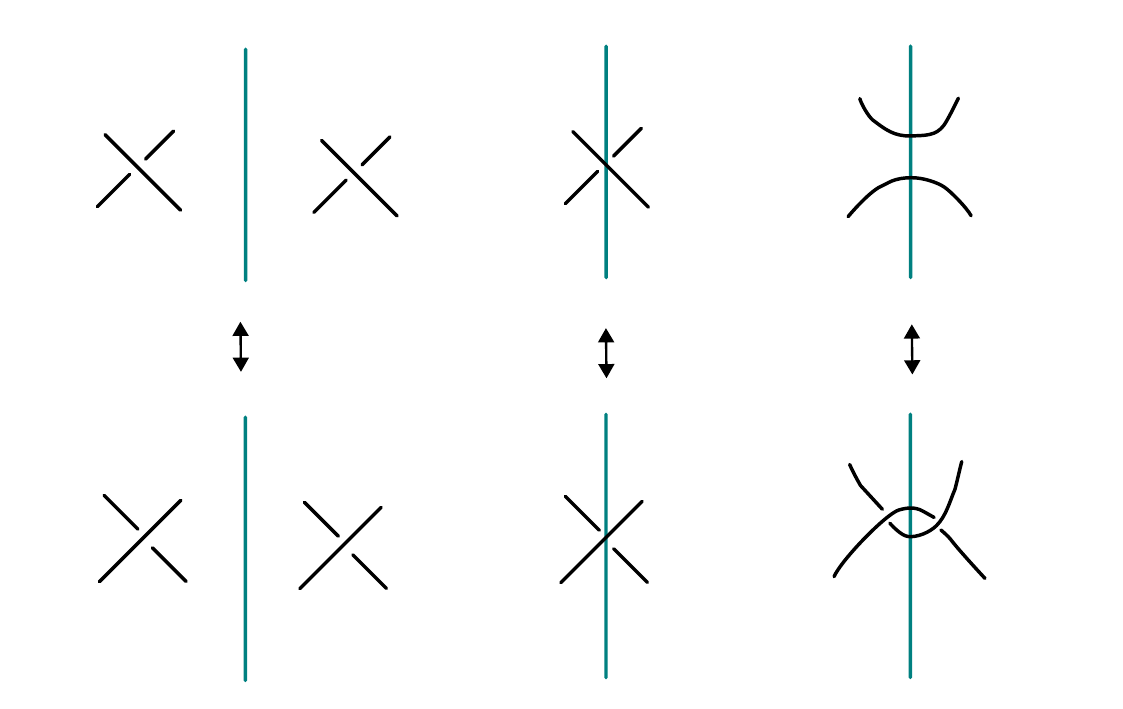}
			\put(15,-3) {type $A$}
			\put(45,-3) {type $B$}
			\put(75,-3) {type $C$}
			
		\end{overpic}
    \caption{Equivariant crossing changes.}
    \label{fig:equivariant unknotting operations}
\end{figure}

The authors of \cite{boyle2025equivariantunknottingnumbersstrongly} have shown that any strongly invertible knot can be unknotted using only type $A$ crossing changes and given an explicit criterion for when a strongly invertible knot can be unknotted using only type $C$ crossing changes. It remains open that whether all strongly invertible knots can be unknotted using only type $B$ crossing changes. 

To a type $A$ or $B$ crossing change, we associate a map on real grid homology mimicking the one considered in \cite[Section 6]{OSS2015grid}. This will lead to bounds on $\widetilde{u}(K)$ and also help us to see some algebraic property of $\GHR^{\circ}$.

\begin{prop} \label{prop:unknotting maps}
Let $(K_+,\fra_+),(K_-,\fra_-)$ be a pair of strongly invertible knots with auxiliary data. When $(K_-,\fra_-)$ is obtained from $(K_+,\fra_+)$ by a $+$ to $-$ type $A$ crossing change, we have $\F[u]$-module maps 
\[C_-^A\colon \GHR^-(K_+,\fra_+)\to \GHR^-(K_-,\fra_-),\quad C_+^A\colon \GHR^-(K_-,\fra_-)\to \GHR^-(K_+,\fra_+)\] of bigrading $(M^R,A^R)=(0,0)$, $(-2,-1)$, respectively, so that \[C_+^A\circ C_-^A=u^2,\quad C_-^A\circ C_+^A=u^2,\] where $u^2$ means that module action of multiplication by $u^2$.

When $K_+$ and $K_-$ are related by a type $B$ crossing change, then we have $\F[u]$-module maps \[C_-^B\colon \GHR^-(K_+,\fra_+)\to \GHR^-(K_-,\fra_-),\quad C_+^B\colon \GHR^-(K_-,\fra_-)\to \GHR^-(K_+,\fra_+)\] of bigrading $(M^R,A^R)=(-1,-1/2)$, $(-1,-1/2)$, respectively, so that \[C_+^B\circ C_-^B=u^2,\quad C_-^B\circ C_+^B=u^2.\] 
\end{prop}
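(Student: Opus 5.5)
We follow the proof of the crossing change maps in \cite[Section~6.1]{OSS2015grid}, which is built on the commutation maps of \cite[Section~5.1]{OSS2015grid}. Since real grid diagrams are real nice diagrams, every map can be defined by counting empty real polygons. The correctness of these counts can be checked either directly, or by appeal to \cite{BGX} and \cite{LOrealbordered}, which show that real index-one (and index-zero) domains contribute exactly one holomorphic representative modulo $2$.

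\textbf{Type $A$.} A type $A$ crossing change consists of two ordinary crossing changes exchanged by $\tau$, both away from $C$. First choose a real grid diagram $\cH_+$ for $(K_+,\fra_+)$ in which the two crossings appear as an $R$-symmetric pair of local configurations. Each configuration is as in \cite[Figure~6.1]{OSS2015grid}: two adjacent columns whose $X$-markings must be swapped. Then $\cH_-$ is obtained by performing the two column commutations simultaneously. Because the pair of configurations is disjoint from $C$, the combined move is a real commutation in the sense of Proposition~\ref{prop:real grid moves}, except that it changes the knot. To compare the two diagrams, draw both on a single torus with curves $\beta_i,\gamma_i$ and their $R$-images $\alpha$-curves. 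We then have two pairs of intersection points $(a,b)$ and $(R(a),R(b))$.

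Define $c_-\colon \GCR^-(\cH_+)\to\GCR^-(\cH_-)$ by counting empty \emph{real pentagons}. A real pentagon is an $R$-invariant pair consisting of an ordinary pentagon with a corner at $a$ (or $b$) and its reflection. Pentagons containing the relevant $X$ are excluded, and the count is weighted by $u$ and $U$ powers according to the $O$-markings. Define $c_+$ in the other direction similarly, using the other corner point. The proof of \cite[Lemma~5.1.4]{OSS2015grid} then shows that $c_\pm$ are chain maps: one decomposes each real juxtaposition of a real rectangle with a real pentagon in two ways, and boundary degenerations cancel in $R$-symmetric pairs.

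Next compute $c_+\circ c_-$ by counting real hexagon-type compositions, following \cite[Lemma~6.1.3]{OSS2015grid}. In the non-real setting, a single crossing change gives $U_{O}$-multiplication, coming from the thin annulus containing one $O$ on either side. Here that annulus appears together with its $R$-image, so the pair contributes $U_j$, where $(O_j,O_j')$ is the interchanged pair of $O$-markings in the annulus and its reflection. Proposition~\ref{prop:U,v action identification} identifies $U_j$ with $u^2$ up to homotopy, which gives $c_+c_-\simeq u^2$ and likewise $c_-c_+\simeq u^2$.

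Finally compute the gradings by decomposing into ordinary pentagons and using $M^R=\tfrac12M-\tfrac14|\xv\cap C|+\tfrac14$. The ordinary maps have bigradings $(0,0)$ and $(-2,-1)$ for each crossing, so the pair gives $(0,0)$ and $(-4,-2)$ in $(M,A)$, and halving gives $(0,0)$ and $(-2,-1)$. The intersection with $C$ is unchanged, since no corner lies on $C$.

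\textbf{Type $B$.} A type $B$ crossing change is a single crossing whose center lies on $C$ and which is fixed by $\tau$. Here the configuration is one $R$-symmetric region on the diagonal, so the commutation swaps an $R$-invariant pair of adjacent row and column markings. The intersection points $a,b$ then lie on $C$, and the polygons are $R$-invariant pentagons or heptagons, analogous to the $L$-shaped and $X$-shaped real rectangles of Proposition~\ref{prop:domains of real index 1}. The composite annulus is itself $R$-invariant and passes through one pair $(O_j,O_j')$ (or through the fixed $O$ twice), so it again contributes $u^2$ after Proposition~\ref{prop:U,v action identification}. For the grading, a generator containing $a$ changes $|\xv\cap C|$ by one. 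This shifts each map by $-\tfrac14\cdot 2$ relative to half of the ordinary grading, which balances the two maps to $(-1,-\tfrac12)$ each, consistent with the total $(-2,-1)$.

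\textbf{Main obstacle.} The hardest step is the type $B$ case. One must classify the empty real index-one pentagons and index-zero hexagons whose corners lie on $C$. One must also check that the $\partial c+c\partial$ and $c_+c_- - u^2$ decompositions cancel, where degenerate contributions now come singly rather than in $R$-pairs. I would do this by listing the real domains via their $R$-quotient half-domains, as in \cite[Section~8]{BGX}, and checking the real Maslov index formula case by case.
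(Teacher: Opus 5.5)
Your type A argument is the paper's. The change is realized as an $R$-symmetric pair of cross-commutations, and $c^A_\pm$ count empty real pentagons with corners at the two distinguished $R$-pairs. Real hexagons give $H_\pm\partial+\partial H_\pm=c^A_\pm c^A_\mp+U$, where $U$ is the variable of the pair of $O$'s in the two thin annuli. Proposition~\ref{prop:U,v action identification} then replaces $U$ by $u^2$, and the bigradings are halves of doubled classical shifts.

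One correction: you define a real pentagon as a pair $p\cup R(p)$ of ordinary pentagons. The paper's real pentagons and hexagons are modeled on \emph{all four} kinds of real rectangles. This includes $R$-invariant squares, $L$-shaped hexagons and $X$-shaped octagons with an $R$-paired pair of edges bent at the distinguished corners. These do occur in the combined diagram. They are needed so that the two decompositions of real index-two domains, in $\partial c+c\partial$ and in the hexagon homotopy, pair up.

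The type B part has a genuine gap, and here the paper takes a different route that avoids your ``main obstacle''. It realizes a type B change as a paired merge-split move (Definition~\ref{def:real saddle moves}). It then takes $C^B_\pm$ to be the saddle maps $\sigma,\mu$ of Proposition~\ref{prop:band maps}, which are defined on states rather than polygons: $\sigma$ is $u^2$ on $\cA$ and the identity on $\cB$, and $\mu$ is the reverse. Consequently:
\begin{itemize}
\item both composites are $u^2$ on the nose;
\item the chain-map property reduces to the fact that real rectangles between $\cA$ and $\cB$ pass through exactly one of the relevant $O$-pairs;
\item the degrees $(-1,-\frac12)$ come from the computation in \cite[Proposition~8.3.1]{OSS2015grid}, since $l_f$ and $|\xv\cap C|$ are unchanged.
\end{itemize}

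In your version, the classification of polygons with corners on $C$ and the cancellation of terms are not carried out. The grading step also fails as written:
\begin{itemize}
\item The $-\frac14|\xv\cap C|$ correction enters $M^R_{\bfO}$ and $M^R_{\bfX}$ identically. So $A^R=\frac12 A$ on real grid states, and counting points on $C$ can never change a real Alexander degree.
\item Shifting both maps by the same amount cannot make unequal degrees equal, e.g.\ the halves $(0,0)$ and $(-1,-\frac12)$ of the classical crossing-change degrees. That would require opposite shifts, including in the $A^R$ component.
\item $|\xv\cap C|\equiv n \pmod 2$ for every real state of a size-$n$ real grid diagram, so it never changes by one.
\end{itemize}
To rescue the polygon approach, you would have to show directly that both of your maps have real Alexander degree $-\frac12$. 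That is not what halving the classical crossing-change degrees $0$ and $-1$ produces. The band-map construction delivers exactly this, with no polygon classification needed.
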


\begin{figure}
    \centering
    \begin{overpic}[width=0.7\textwidth]{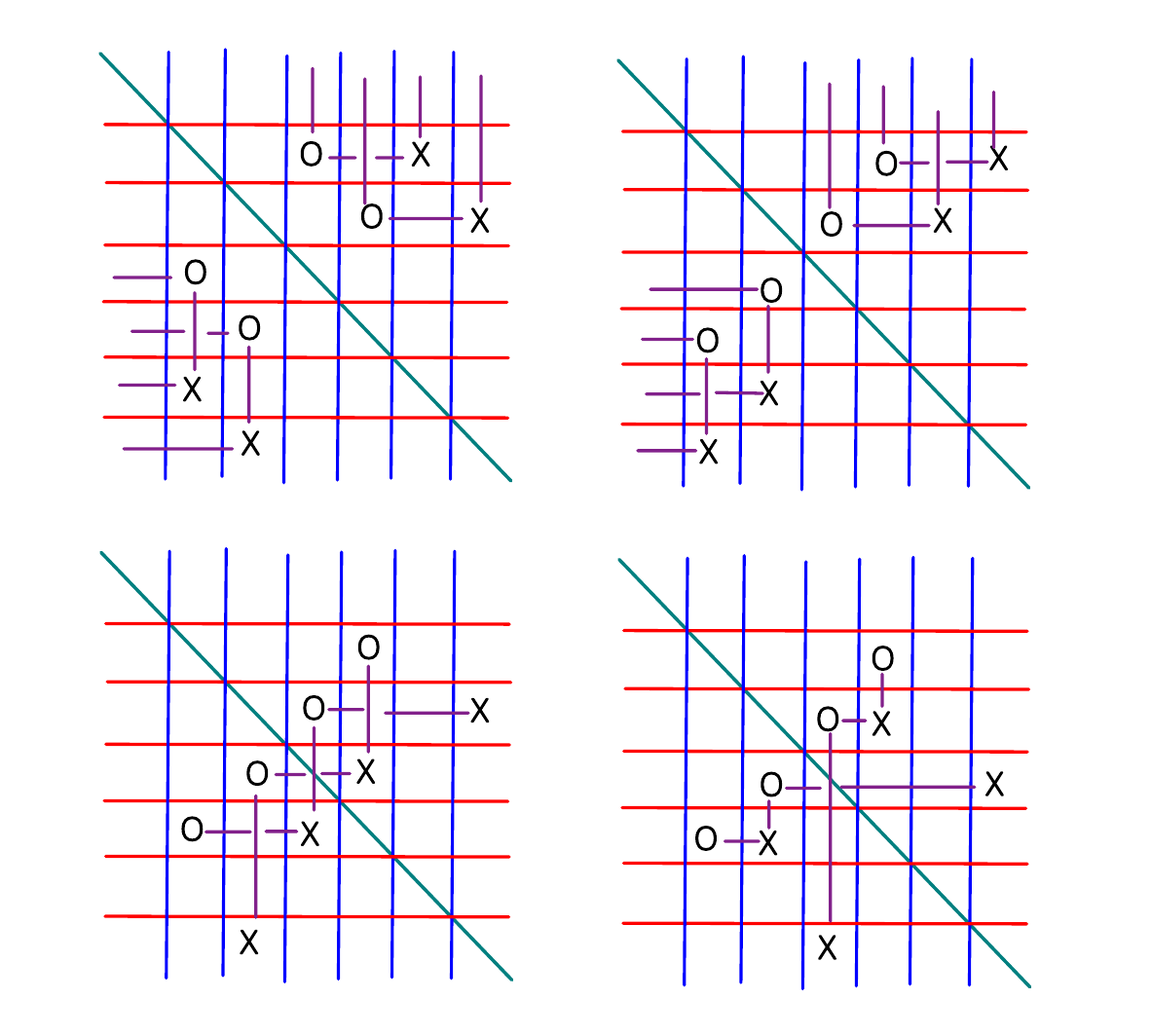}
			\put(35,44.5) {type $A$ crossing change}
			\put(35,0) {type $B$ crossing change}
			
		\end{overpic}
    \caption{Grid diagrams for type $A$ and $B$ crossing change.}
    \label{fig:type A crossing change}
\end{figure}

In Figure~\ref{fig:type A crossing change}, we realize type $A$ and $B$ crossing changes as local operations in real grid diagrams. We first observe that a type $B$ crossing change can be realized as a paired merge-split move that we shall introduce in Definition~\ref{def:real saddle moves}, so $C_+^B$ and $C_-^B$ can be defined as $\sigma$ and $\mu$ from Proposition~\ref{prop:band maps}. For a type $A$ crossing change, its realization on a real grid diagram appears as a pair of equivariant cross-commutations, so we can closely follow \cite[Section~6.2]{OSS2015grid} to define the crossing change maps. The only complication in real case is that we have four types of real rectangles in contrast to genuine rectangles in a usual grid diagram. Nevertheless, their construction works for us. To be concise, we will only sketch the construction and point out differences.

Up to real cyclic permutations, a type $A$ crossing change can be realized locally in a real grid diagram as in Figure~\ref{fig:combine diagram for crossing changes}. In that figure, we use $\alpha_\pm$, $\beta_\pm$ to denote different circles for diagrams $\cH_{\pm}$ associated to $K_{\pm}$, respectively. All the $\alpha$ and $\beta$-circles as well as $O$, $X$-base points that are omitted from the picture are the same in $\cH_+$ and $\cH_-$ by default. 

In $(\alpha_+\cap \alpha_-)\cup (\beta_+\cap \beta_-)$, we have two distinguished pairs of intersection points $(s,s')$ and $(t,t')$. Using them together with usual intersection points between $\alpha$ and $\beta$-circles, we can introduce the notion of real pentagons and real hexagons. By counting this, we construct the crossing change maps and verify the desired relations.

\begin{figure}
    \centering
    \begin{overpic}[width=0.8\textwidth]{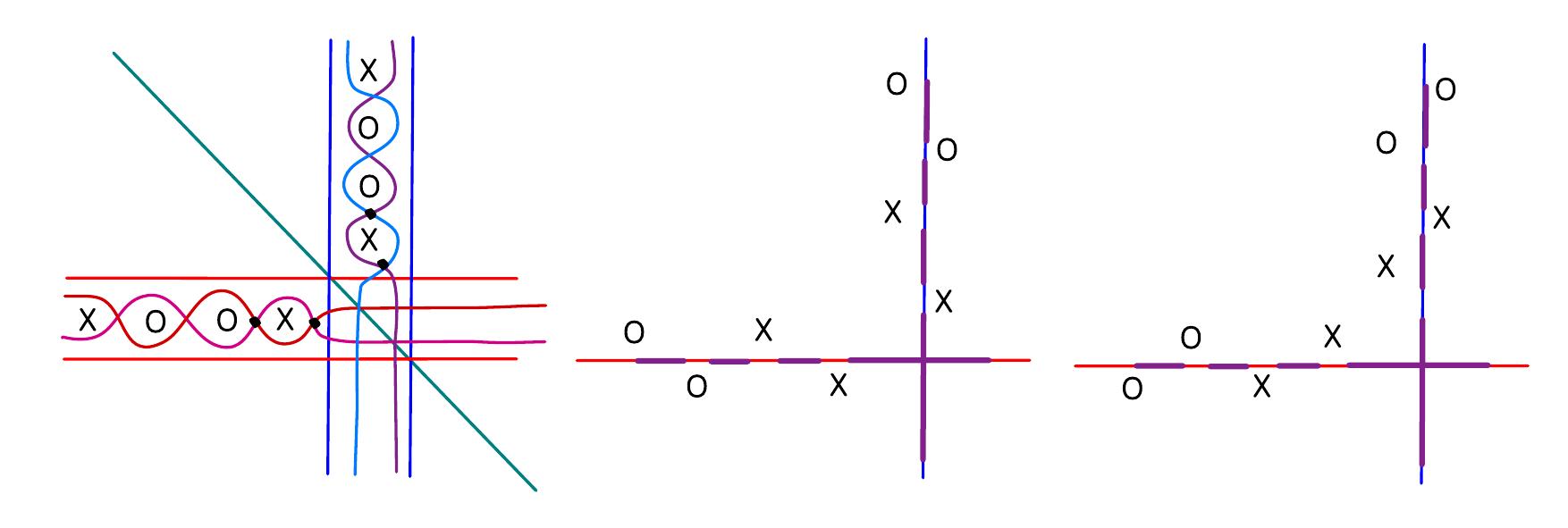}
			\put(0,15) {\textcolor{Mahogany}{$\alpha_+$}}
			\put(0,12) {\textcolor{Magenta}{$\alpha_-$}}
			\put(21,32) {\textcolor{Cyan}{$\beta_+$}}
			\put(24,32) {\textcolor{Purple}{$\beta_-$}}
            
			\put(40,8) {\textcolor{Purple}{$A'$}}
            \put(45,8) {\textcolor{Purple}{$B'$}}
            \put(49,8) {\textcolor{Purple}{$C'$}}
            \put(55,8) {\textcolor{Purple}{$D'$}}
            
            \put(73,8) {\textcolor{Purple}{$A'$}}
            \put(76.5,8) {\textcolor{Purple}{$B'$}}
            \put(82,8) {\textcolor{Purple}{$C'$}}
            \put(86,8) {\textcolor{Purple}{$D'$}}

            \put(59,26) {\textcolor{Purple}{$A$}}
            \put(59,21) {\textcolor{Purple}{$B$}}
            \put(59,17) {\textcolor{Purple}{$C$}}
            \put(59,12) {\textcolor{Purple}{$D$}}

            \put(88,26) {\textcolor{Purple}{$A$}}
            \put(88,21) {\textcolor{Purple}{$B$}}
            \put(91,17) {\textcolor{Purple}{$C$}}
            \put(91,12) {\textcolor{Purple}{$D$}}

            \put(14,10) {$s$}
            \put(19,10) {$t$}
            \put(26.5,20) {$s'$}
            \put(26.5,16) {$t'$}

		\end{overpic}
		
    \caption{Combined diagram for a type $A$ crossing change.}
    \label{fig:combine diagram for crossing changes}
\end{figure}

Recall from \cite[Section~5.1]{OSS2015grid}, a \emph{pentagon} in a grid diagram can be regarded as a rectangle with one edge replaced by a union of two consecutive segments in $\alpha_+\cup\alpha_-$ (or $\beta_+\cup \beta_-$) intersecting in a chosen distinguished vertex in $\alpha_+\cap\alpha_-$ (or $\beta_+\cap \beta_-$). Using this point of view, we can introduce a \emph{real pentagon} as a real domain in a combined real grid diagram that is obtained from any type of real rectangle by replacing a pair of edges with a pair of two consecutive segments in $\alpha_\pm$ then $\alpha_\mp$ and $\beta_\pm$ then $\beta_\mp$ intersecting in a chosen pair of vertices $(s,s')$ or $(t,t')$. Note that we already have four types of real rectangles and for each kind of them, the choice of edges for replacement is far from unique. This makes it hard to describe real pentagons systematically as in \cite[Definition~5.1.1]{OSS2015grid}. So instead of pursuing a rigorous definition, we illustrate real pentagons by examples: In Figure~\ref{fig:hexagon and pentagon}, the first four figures show examples of real pentagons with distinguished vertices at $(s,s')$. The underlying real rectangles of them are all different, so these have covered all kinds of real rectangles defined in Proposition~\ref{prop:domains of real index 1}. Similarly, we can define a \emph{real hexagon} as a real rectangle with a pair of edges replaced by a pair of three consecutive segments in $\alpha_\pm$, $\alpha_\mp$, $\alpha_\pm$ and $\beta_\pm$, $\beta_\mp$, $\beta_\pm$, where the corners are at $(s,s')$ and $(t,t')$ (or their order reversed). Two examples are shown in the last two frames in Figure~\ref{fig:hexagon and pentagon}, the lower-middle one models on a $X$-shaped octagon, the lower right one has a pair of rectangles as its underlying real rectangle. 

\begin{figure}
    \centering
    \begin{overpic}[width=0.8\textwidth]{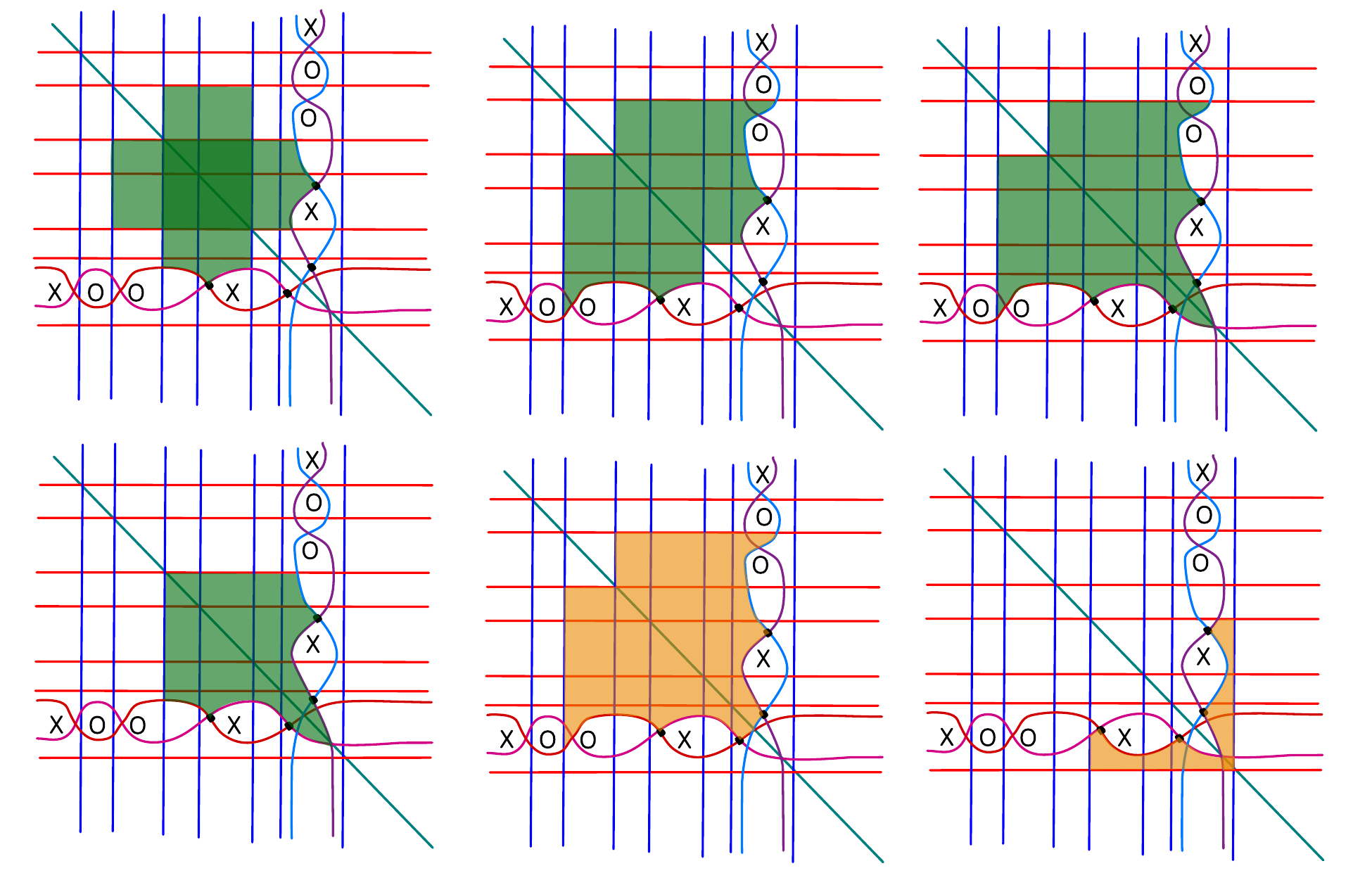}
			\put(-1,12) {\textcolor{Mahogany}{$\alpha_+$}}
			\put(-1,9) {\textcolor{Magenta}{$\alpha_-$}}
			\put(20,32) {\textcolor{Cyan}{$\beta_+$}}
			\put(23,32) {\textcolor{Purple}{$\beta_-$}}

            \put(-1,43) {\textcolor{Mahogany}{$\alpha_+$}}
			\put(-1,40) {\textcolor{Magenta}{$\alpha_-$}}
			\put(20,64) {\textcolor{Cyan}{$\beta_+$}}
			\put(23,64) {\textcolor{Purple}{$\beta_-$}}

            \put(32,11.5) {\textcolor{Mahogany}{$\alpha_+$}}
			\put(32,8.5) {\textcolor{Magenta}{$\alpha_-$}}
			\put(52,31) {\textcolor{Cyan}{$\beta_+$}}
			\put(55,31) {\textcolor{Purple}{$\beta_-$}}

            \put(32,43) {\textcolor{Mahogany}{$\alpha_+$}}
			\put(32,40) {\textcolor{Magenta}{$\alpha_-$}}
			\put(52,63) {\textcolor{Cyan}{$\beta_+$}}
			\put(55,63) {\textcolor{Purple}{$\beta_-$}}

            \put(64,11.5) {\textcolor{Mahogany}{$\alpha_+$}}
			\put(64,8.5) {\textcolor{Magenta}{$\alpha_-$}}
			\put(84,31) {\textcolor{Cyan}{$\beta_+$}}
			\put(87,31) {\textcolor{Purple}{$\beta_-$}}

            \put(64,43) {\textcolor{Mahogany}{$\alpha_+$}}
			\put(64,40) {\textcolor{Magenta}{$\alpha_-$}}
			\put(84,63) {\textcolor{Cyan}{$\beta_+$}}
			\put(87,63) {\textcolor{Purple}{$\beta_-$}}

            \put(14.5,8) {$s'$}
			\put(19,7) {$t'$}
			\put(25,18) {$s$}
			\put(25,12) {$t$}

            \put(47.5,7) {$s'$}
			\put(51.5,6.5) {$t'$}
			\put(57,17) {$s$}
			\put(57,12) {$t$}

            \put(79.5,7) {$s'$}
			\put(83.5,6.5) {$t'$}
			\put(90,17) {$s$}
			\put(90,12) {$t$}

            \put(14.5,40) {$s'$}
			\put(19,39) {$t'$}
			\put(25,50) {$s$}
			\put(25,44) {$t$}

            \put(47.5,39) {$s'$}
			\put(51.5,38.5) {$t'$}
			\put(57,49) {$s$}
			\put(57,44) {$t$}

            \put(79,39) {$s'$}
			\put(83.5,38.5) {$t'$}
			\put(90,49) {$s$}
			\put(90,44) {$t$}
		\end{overpic}

    \caption{Examples of pentagon and hexagon in a real grid diagram.}
    \label{fig:hexagon and pentagon}
\end{figure}

We will denote generators in $(\T_{\alpha_\pm}\cap\T_{\beta_\pm})^R$ using $\xv_\pm$, $\yv_\pm$, etc. Also, we will use $\xv_+$ and $\xv_-$ to denote two generators that are only different on components on $\alpha_\pm$ and $\beta_\pm$. Note that we have real pentagons with corners at $(s,s')$ ($(t,t')$) connecting $\xv_+$ to $\yv_-$ ($\xv_-$ to $\yv_+$). For real hexagons, following the boundary orientation, if the distinguished vertex $t$ appears before (after) $s$ up to cyclic permutation, then it connects some $\xv_+$ to $\yv_+$ ($\xv_-$ to $\yv_-$). We will use $\RPent(\xv_\pm,\yv_\mp)$, $\RHex(\xv_\pm,\yv_\pm)$ to denote the set of real pentagons or hexagons between a fixed pair of real generators. As for real rectangles, a real hexagon or pentagon is \emph{empty} if it contains no points of $\xv$ or $\yv$ in its interior, and the subset of empty real pentagon and hexagon will be denoted by $\RPento(\xv_\pm,\yv_\mp)$, $\RHexo(\xv_\pm,\yv_\pm)$ 

With all these preparations in hand, we can define maps 
\[c_-^A:\GCR^-(\cH_+)\to \GCR^-(\cH_-) \quad c_+^A:\GCR^-(\cH_-)\to \GCR^-(\cH_+)\] 
\[c_-^A(\xv_+)=\sum_{\yv_-\in (\T_{\alpha_-}\cap\T_{\beta_-})^R} \sum_{p\in \RPento(\xv_+,\yv_-),p\cap \bfX=\emptyset} u^{n_{O^{f}}(p)}\prod_{1\le i\le k} U^{n_{O_i}(p)} \yv_-;\]
\[c_+^A(\xv_-)=\sum_{\yv_+\in (\T_{\alpha_+}\cap\T_{\beta_+})^R} \sum_{p\in \RPento(\xv_-,\yv_+), p\cap \bfX=\emptyset} u^{n_{O^{f}}(p)}\prod_{1\le i\le k} U^{n_{O_i}(p)} \yv_+;\]
\[H_+:\GCR^-(\cH_+)\to \GCR^-(\cH_+)\quad H_-:\GCR^-(\cH_-)\to \GCR^-(\cH_-)\] 
\[H_+(\xv_+)= \sum_{\yv_+\in (\T_{\alpha_+}\cap\T_{\beta_+})^R} \sum_{h\in \RHexo(\xv_+,\yv_+), h\cap \bfX=\emptyset} u^{n_{O^{f}}(h)}\prod_{1\le i\le k} U^{n_{O_i}(h)} \yv_+;\]
\[H_-(\xv_-)= \sum_{\yv_-\in (\T_{\alpha_-}\cap\T_{\beta_-})^R} \sum_{h\in \RHexo(\xv_-,\yv_-), h\cap \bfX=\emptyset} u^{n_{O^{f}}(h)}\prod_{1\le i\le k} U^{n_{O_i}(h)} \yv_-;\]

The calculation in~\cite[Lemma~6.2.1]{OSS2015grid} works in real case without change, which shows that $c^A_\pm$ have the desired grading shifts. Note that the value of grading shift is actually the same- although real grading is roughly half of the original one, we perform a pair of cross commutation simultaneously, so the $1/2$ and $2$ factors cancel. 

Then, by considering decomposition of real domains (of real Maslov index $2$, if readers prefer a holomorphic theory point of view) into a real rectangle union a real pentagon, one can show that $c_\pm^A$ are chain maps. Further considering decomposition of real domains into two real pentagons or a real rectangle union a real hexagon, we can show that 
\[H_+\circ\partial^-+\partial^- \circ H_+=c_+^A \circ c_-^A +U;\]
\[H_-\circ\partial^-+\partial^- \circ H_-=c_-^A \circ c_+^A +U.\]
Here, $U$ is the variable associated to the pair of $O$-base points that is the top-most and left-most in Figure~\ref{fig:combine diagram for crossing changes}. Since we are working with a knot, Proposition~\ref{prop:U,v action identification}  tells us that the action of $u^2$ and $U$ agree on homology level. This concludes a sketch proof of Proposition~\ref{prop:unknotting maps} for type $A$ crossing changes.

\begin{defn}\label{def:torsion submodule and rank}(\cite[Definition~6.1.2]{OSS2015grid})
Let $M$ be a module over $\F[u]$. The \emph{torsion submodule} $\Tors(M)$ of $M$ is 
\[\Tors(M)=\{m\in M| \text{there is a $0\ne p\in \F[u]$ such that $p\cdot m=0$}\}.\] When $M$ is bigraded, then $M/\Tors(M)$ inherits a bigrading as well. When $M$ is finitely generated, there exists an $r\in \Z_{\ge 0}$ such that $M/\Tors(M)=\F[u]^r$, such $r$ will be called the \emph{rank} of $M$.
\end{defn}
\begin{lem}\label{lem:injectivity on torsion submodule}
Let $M$ and $N$ be two modules over $\F[u]$. If $\phi\colon M\to N$ and $\psi\colon N\to M$ are a pair of module maps so that $\psi\circ\phi=u^2$, then $\phi$ induces an injective map from $M/\Tors(M)$ to $N/\Tors(N)$.
\end{lem}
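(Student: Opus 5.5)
First I will check that $\phi$ descends to a map $\bar\phi\colon M/\Tors(M)\to N/\Tors(N)$. This is immediate: if $m\in\Tors(M)$ with $p\cdot m=0$ for some $0\ne p\in\F[u]$, then $p\cdot\phi(m)=\phi(p\cdot m)=0$, so $\phi(m)\in\Tors(N)$. Thus $\phi(\Tors(M))\subseteq\Tors(N)$ and $\bar\phi([m])=[\phi(m)]$ is a well-defined $\F[u]$-module map. If $M$ is bigraded and $\phi$ is homogeneous, then $\bar\phi$ respects the induced bigradings up to the same shift. By the same reasoning, $\psi$ induces $\bar\psi\colon N/\Tors(N)\to M/\Tors(M)$, and $\bar\psi\circ\bar\phi$ is multiplication by $u^2$ on $M/\Tors(M)$.

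For injectivity, take $[m]\in M/\Tors(M)$ with $\bar\phi([m])=0$, so $\phi(m)\in\Tors(N)$. Choose $0\ne q\in\F[u]$ with $q\cdot\phi(m)=0$. Then
\[
0=\psi(q\cdot\phi(m))=q\cdot\psi(\phi(m))=qu^2\cdot m.
\]
Since $\F[u]$ is an integral domain, $qu^2\ne0$, so $m\in\Tors(M)$ and $[m]=0$. An equivalent route is to note that $M/\Tors(M)$ is torsion-free, so multiplication by $u^2$ is injective on it; since $\bar\psi\circ\bar\phi=u^2$, the map $\bar\phi$ must be injective.

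The argument has no real obstacle. The one point to watch is the ordering: the torsion-preservation step has to be established first so that $\bar\phi$ and $\bar\psi$ are defined on the quotients before the composition $\bar\psi\circ\bar\phi$ is used. Finite generation is not needed for injectivity. It becomes relevant only for the intended consequence: when $M$ and $N$ are finitely generated, the lemma gives $\rank M\le\rank N$, and by symmetry (using $\phi\circ\psi=u^2$ as well) the ranks are equal.
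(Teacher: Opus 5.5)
Your proof is correct and follows the same argument as the paper. Both first note that $\phi$ preserves torsion. Both then show that if $\phi(m)$ is torsion, applying $\psi$ to an annihilating relation for $\phi(m)$ shows that $m$ is torsion. The one difference is that you annihilate $\phi(m)$ by an arbitrary nonzero $q\in\F[u]$, whereas the paper uses a power $u^k$. Your version is slightly more careful: for ungraded $\F[u]$-modules, torsion need not be killed by a power of $u$. The paper's choice is still valid in the homogeneous, graded setting where the lemma is applied.
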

\begin{proof}
It is obvious that if $m\in \Tors(M)$, then $\phi(m)\in \Tors(N)$. Conversely, if $\phi(m)\in \Tors(N)$, then there exists $k\ge0$ so that $u^k\phi(m)=0$. The assumption implies that $0=\psi(u^k\phi(m))=u^{k+2}\cdot m$, so $m\in \Tors(M)$. Thus, there is a well-defined map $\widetilde{\phi}\colon  M/\Tors(M)\to N/\Tors(N)$ whose injectivity is implied by the argument above. 
\end{proof}

\begin{prop}\label{prop:a single tower}
For any strongly invertible knot $K$ with auxiliary data $\fra$, $\GHR^-(K,\fra)$ has rank $1$; in fact \[\GHR^-(K,\fra)/\Tors\cong\F[u]\] is supported in bigradings $(d,s)$ with $d-2s=0$.
\end{prop}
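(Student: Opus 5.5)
Following \cite[Proposition~6.1.1]{OSS2015grid}, I will first prove the rank statement by unknotting and then fix the gradings by a separate argument.

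\textbf{Rank.} By \cite{boyle2025equivariantunknottingnumbersstrongly}, $K$ can be changed into the strongly invertible unknot $U$ by a finite sequence of type $A$ crossing changes. For each step, Proposition~\ref{prop:unknotting maps} gives maps $C^A_\pm$ whose compositions in either order are $u^2$. Lemma~\ref{lem:injectivity on torsion submodule} then shows that both $C^A_-$ and $C^A_+$ induce injections on the free quotients $M/\Tors(M)$. Finite generation follows from the finiteness of real grid states, so each free quotient is $\F[u]^r$ for some $r$. An injection $\F[u]^{r}\hookrightarrow\F[u]^{r'}$ forces $r\le r'$, so injections in both directions give equal ranks. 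The rank is therefore constant along the sequence.

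The auxiliary data are carried along the sequence, and the unknot may end up with any of its four choices of $\fra$. For each such choice, $\GHR^-(U,\fra)\cong\F[u]_{(0,0)}$. This follows from Example~\ref{ex:unknot} together with Propositions~\ref{prop:fra v.s.fra^r} and~\ref{prop:fra v.s. fra^i,r}; alternatively one can compute directly on the size $3$ diagram. Hence the rank is $1$ and $\GHR^-(K,\fra)/\Tors\cong\F[u]$.

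\textbf{Gradings.} Both the generator $u$ and the maps $C^A_\pm$ satisfy $\Delta M^R - 2\Delta A^R=0$, since their bigradings are $(-1,-1/2)$, $(0,0)$ and $(-2,-1)$. Consequently the images of the free quotients are compatible with the invariant $d-2s$. Because a nonzero $\F[u]$-map $\F[u]\to\F[u]$ is multiplication by a power of $u$, it preserves $d-2s$. Transporting along the sequence shows that the tower of $K$ has the same value of $d-2s$ as the tower of $U$, namely $0$.

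A cleaner, sequence-free check is also available. Set all $u$ equal to $1$ (the $u^{-1}$-localization) and observe that $M^R-2A^R$ is the real $\bfX$-Maslov grading up to shift. After localization the complex becomes $\widehat{\CFR}$ of $(S^3,\tau)$ computed with the $X$ basepoints, which is $\F$ in grading $0$. I would use this only as a consistency check.

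\textbf{Main obstacle.} The delicate step is the bookkeeping of gradings through the crossing-change maps, which are defined only up to the normalization in Section~\ref{sub:Basic properties on strongly invertible knots}. I must confirm that the absolute normalization (with the $-\frac{n-l_f-2l_p}{4}$ shift) is compatible with the stated bigrading of $C^A_\pm$ for diagrams of different sizes. If that becomes messy, I would fall back on the localization argument: there the $d-2s$ grading is pinned down by $\widehat{\HFR}(S^3,\tau)=\F$ in Maslov grading $0$ via Theorem~\ref{thm:spectral sequence relating HFKR and HFR}, using the $\bfX$-based version.
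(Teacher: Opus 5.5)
Your proposal is correct and is essentially the paper's proof: reduce to the unknot by type $A$ crossing changes, use the maps $C^A_\pm$ of Proposition~\ref{prop:unknotting maps} (whose compositions are $u^2$) together with Lemma~\ref{lem:injectivity on torsion submodule} to see that the free rank is unchanged, and track $d-2s$, which is preserved by $u$ and by $C^A_\pm$, exactly as in \cite[Proposition~6.1.4]{OSS2015grid}. You also check which auxiliary data the unknot ends up with, a point the paper leaves implicit; the direct size-$3$ computation is the clean justification there, since the $\fra$ versus $\fra^{i,r}$ comparison is stated only for the hat version, and the localization consistency check is not needed.
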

\begin{proof}
From Example~\ref{ex:unknot}, we know that $\GHR^-(U)=\F[u]$, supporting in bigrading $(0,0)$. So the conclusion is true for the unknot. Since any strongly invertible knot can be connected to the strongly invertible unknot by a sequence of type $A$ crossing changes. This is still true after we add auxiliary data to $K$. The proposition follows from Proposition~\ref{prop:unknotting maps} and Lemma~\ref{lem:injectivity on torsion submodule} as \cite[Proposition~6.1.4]{OSS2015grid}.
\end{proof}

\subsection{Torsion order}
We begin this subsection with an algebraic definition.
\begin{defn}\label{def:algebraic torsion order}
Let $M$ be an $\F[u]$-module, we define its \emph{torsion order} to be \[\ord_{u}(M)=\min \{k\in \N| u^k\cdot \Tors(M)=0\}\in \N\cup \{\infty\}.\]   
\end{defn}

Apply this to $\GHR^-$, we have the following strongly invertible knot invariant.
\begin{defn}\label{def:torsion order of a knot}
Let $(K,\fra)$ be a strongly invertible knot with auxiliary data in $(S^3,\tau)$, we define the \emph{torsion order} $\ord_{u}(K,\fra)$ to be $\ord_{u}(\GHR^-(K,\fra))$. This is always finite since $\GHR^-(K,\fra)$ is finitely generated as a $\F[u]$-module. 
\end{defn}

\begin{prop}
If $(K_+,\fra_+)$ and $(K_-,\fra_-)$ are a pair of knots related by a type $A$ or $B$ crossing change, then \[\vert \ord_{u}(K_+,\fra_+) -\ord_{u}(K_-,\fra_-)\vert \le 2. \]   
\end{prop}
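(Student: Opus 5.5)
The plan is to mimic the proof of \cite[Proposition~6.1.5]{OSS2015grid} (the analogous statement for $\ord_U$), using the crossing change maps of Proposition~\ref{prop:unknotting maps}. In either case (type $A$ or type $B$), Proposition~\ref{prop:unknotting maps} provides $\F[u]$-module maps
\[\phi=C_-\colon \GHR^-(K_+,\fra_+)\to \GHR^-(K_-,\fra_-),\qquad \psi=C_+\colon \GHR^-(K_-,\fra_-)\to \GHR^-(K_+,\fra_+)\]
satisfying $\psi\circ\phi=u^2$ and $\phi\circ\psi=u^2$. The grading shifts play no role here; only these composition identities matter. By the symmetry of the situation it suffices to show
\[\ord_u(K_+,\fra_+)\le \ord_u(K_-,\fra_-)+2;\]
the reverse inequality follows by swapping the roles of $\phi$ and $\psi$.

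Write $M=\GHR^-(K_+,\fra_+)$ and $N=\GHR^-(K_-,\fra_-)$, and let $k=\ord_u(N)$, which is finite by Definition~\ref{def:torsion order of a knot}. Take any $m\in\Tors(M)$. As noted in the proof of Lemma~\ref{lem:injectivity on torsion submodule}, module maps send torsion elements to torsion elements: if $p\cdot m=0$ with $p\neq 0$, then $p\cdot\phi(m)=\phi(p\cdot m)=0$. Hence $\phi(m)\in\Tors(N)$, and so $u^k\phi(m)=0$. Applying $\psi$ and using $\F[u]$-linearity gives
\[0=\psi(u^k\phi(m))=u^k\,\psi\phi(m)=u^{k+2}m.\]
Therefore $u^{k+2}\cdot\Tors(M)=0$, which means $\ord_u(M)\le k+2$ by Definition~\ref{def:algebraic torsion order}.

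The argument is purely algebraic once Proposition~\ref{prop:unknotting maps} is available. The only point needing care is that the crossing change maps must genuinely be $\F[u]$-module maps on homology satisfying both composition identities. For type $A$, the chain-level relation involves the pair variable $U$ rather than $u^2$, and Proposition~\ref{prop:U,v action identification} is needed to identify the $U$-action with the $u^2$-action on homology. For type $B$, the same property must be supplied by the band maps $\sigma,\mu$. Since both are part of the statement of Proposition~\ref{prop:unknotting maps}, which we assume, no further obstacle remains.
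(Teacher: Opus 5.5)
Your proposal is correct and is essentially the paper's argument. Both use only the crossing change maps of Proposition~\ref{prop:unknotting maps} and the fact that their composites are multiplication by $u^2$. The paper argues contrapositively: it picks $\xi$ realizing $m=\ord_{u}(K_+,\fra_+)$ and notes that $u^{m-3}C_-^{\circ}(\xi)$ is nonzero because $C_+^{\circ}$ sends it to $u^{m-1}\xi\neq 0$, so the torsion element $C_-^{\circ}(\xi)$ is not killed by $u^{m-3}$; you instead show directly that $u^{k+2}$ kills the whole torsion submodule.

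Your direct version is marginally cleaner. It needs no extremal element and no tacit assumption $m\ge 3$, and it avoids the slip in the paper's displayed equation, where the roles of $C_+$ and $C_-$ are interchanged.
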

\begin{proof}

For either type of crossing change, Proposition~\ref{prop:unknotting maps} provides us with a pair of maps $C_\pm^\circ$ between their real knot Floer groups such that composition in either direction is $u^2$. For simplicity, we omit the choice of auxiliary data from the notation during this proof. Let $\xi$ be an element in $\GHR^-(K_+)$ realizing the torsion order $m=\ord_{u}(K_+)$, then $u^m\cdot \xi=0$, while $u^{m-1}\cdot \xi\ne 0$. Since $C_+\circ C_-(\xi)= u^2 \cdot \xi$, \[C_-(u^{m-3}\cdot (C_+(\xi)))= u^{m-1}\cdot \xi\ne 0.\] This shows that $\ord_{u}(K_-)\ge\ord_{u}(K_+) -2$. The roles of $K_+$ and $K_-$ are symmetric here, so we can conclude that $\vert \ord_{u}(K_+) -\ord_{u}(K_-)\vert \le 2$.

\end{proof}

\begin{cor}\label{cor:torsion order bounds u'}
Let $K$ be a strongly invertible knot in $(S^3,\tau)$, for any choice of $\fra$, we have \[\ord_{u}(K,\fra)\le 2\widetilde{u}_A(K), \] \[\ord_{u}(K,\fra)\le 2\widetilde{u}_B(K). \] 
\end{cor}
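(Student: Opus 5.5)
The plan is to start from the unknot and apply the preceding proposition (that a single type $A$ or $B$ crossing change changes $\ord_u$ by at most $2$) once for each crossing change in an unknotting sequence. First, if $\widetilde{u}_A(K)=\infty$ or $\widetilde{u}_B(K)=\infty$, the corresponding inequality holds trivially. So fix $X\in\{A,B\}$ with $n=\widetilde{u}_X(K)<\infty$, and choose a sequence of strongly invertible knots
\[K=K_0,\ K_1,\ \ldots,\ K_n=U,\]
where each $K_{j+1}$ is obtained from $K_j$ by a single type $X$ crossing change and $U$ is the strongly invertible unknot.

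The next step, which is the only delicate point, is to carry auxiliary data along the sequence. A type $A$ or $B$ crossing change is a local equivariant modification supported away from the two fixed points $K\cap C$ (in the grid realization of Figure~\ref{fig:type A crossing change}, the marks on $C$ are untouched). Hence the orientation, the $(O,X)$ labelling of the fixed points, the orientation of $C$ and the half-axis all transfer from $K_j$ to $K_{j+1}$. Starting from the given $\fra$ on $K_0$, this produces auxiliary data $\fra_j$ on every $K_j$ such that each consecutive pair $(K_j,\fra_j)$, $(K_{j+1},\fra_{j+1})$ is exactly as in Proposition~\ref{prop:unknotting maps}. The crossing change may go from $+$ to $-$ or from $-$ to $+$; this does not matter, because that proposition supplies maps in both directions and the bound in the preceding proposition is symmetric.

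It remains to compute the unknot and chain the estimates. The end data $\fra_n$ on the unknot may be any of the four choices $\fra,\fra^r,\fra^i,\fra^{i,r}$. However, Example~\ref{ex:unknot} gives $\GHR^-(U)\cong\F[u]$, which is torsion-free, for each of them; the remark in the basic-examples subsection says auxiliary data is irrelevant for these small knots. Alternatively, the four choices are related by Propositions~\ref{prop:fra v.s.fra^r} and~\ref{prop:fra v.s. fra^i,r} together with the size-$3$ grid computation. In every case $\ord_u(U,\fra_n)=0$. Applying the preceding proposition $n$ times then gives
\[\ord_u(K,\fra)\le \ord_u(U,\fra_n)+2n=2\widetilde{u}_X(K),\]
which proves both inequalities.
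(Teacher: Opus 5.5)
Your proposal is correct and is essentially the argument the paper intends; the corollary is stated as an immediate consequence of the preceding proposition. You iterate $|\ord_{u}(K_+,\fra_+)-\ord_{u}(K_-,\fra_-)|\le 2$ along a minimal type $A$ (resp.\ type $B$) unknotting sequence, transport the auxiliary data across each change because the change avoids the fixed points, and use $\ord_{u}=0$ for the unknot from Example~\ref{ex:unknot}. One small quibble: your ``alternative'' justification overreaches, since Proposition~\ref{prop:fra v.s. fra^i,r} is stated only for $\widehat{\GHR}$. Your primary appeal to Example~\ref{ex:unknot} already suffices, as would one-dimensionality of $\widehat{\GHR}(U)$ combined with the exact sequence of Proposition~\ref{prop:exact sequence relating minus and hat}.
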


\begin{example}
The knot $6_1$ is known as the Stevedore knot or the twist knot with four half twists.  With the help of computer program, we computed that \[\GHR^-(6_1,\fra)=\F[u]_{(-1,-1/2)}\oplus\F_{(0,1/2)}\oplus \F[u]/(u^2)_{(0,1/2)},\] so that $\ord_{u}(6_1,\fra)=2$ for some choice of $\fra$. (See Appendix~\ref{app:Calculation results for knots with small crossing numbers} for its real grid diagram.) On the other hand, it is relatively easy to see that a single type $B$ move is enough to unknot $6_1$, so the factor $2$ in the second inequality in Corollary~\ref{cor:torsion order bounds u'} is essential.
\end{example}

\section{Real $\tau$-invariant and equivariant slice genus}\label{sec:Real tau-invariant and equivariant slice genus}
\subsection{Definition of invariant and statement of main theorem}\label{sub:Definition of invariant and statement of main theorem}
\begin{defn}
For any strongly invertible knot $K$ with a choice of auxiliary data $\fra$, the \emph{real $\tau$-invariant} $\tau^R(K,\fra)$ is $-1$ times the maximal integer $i$ for which there is a homogeneous, non-torsion element in $\GHR^-(K,\fra)$ with real Alexander grading equals to $i/2$. 
\end{defn}
\begin{prop}
If $(K_+,\fra_+)$ and $(K_-,\fra_-)$ are related by a pair of $+$ to $-$ crossing changes of type $A$, then \[0\le \tau^R(K_+,\fra_+)-\tau^R(K_-,\fra_-)\le 1.\] If $(K,\fra)$ and $(K',\fra')$ are related by a type $B$ crossing change, then \[\vert \tau^R(K,\fra)-\tau^R(K',\fra')\vert\le 1.\]
\end{prop}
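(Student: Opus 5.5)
The plan follows \cite[Section~6.2]{OSS2015grid}: I will use the crossing change maps of Proposition~\ref{prop:unknotting maps} and track how they act on the non-torsion part. By Proposition~\ref{prop:a single tower}, for any $(K,\fra)$ we have $\GHR^-(K,\fra)/\Tors\cong\F[u]$, generated by a homogeneous element of bigrading $(-\tau^R,-\tfrac12\tau^R)$. (The normalization is pinned down by $d-2s=0$ and the definition of $\tau^R$.) So it suffices to see where the generator of one tower is sent, and what power of $u$ appears.

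\textbf{Type $A$.} Let $\xi_\pm$ be the homogeneous generators of the towers of $K_\pm$. By Lemma~\ref{lem:injectivity on torsion submodule}, $C_-^A$ and $C_+^A$ induce injective maps $\F[u]\to\F[u]$ on the quotients by torsion, because each composition is $u^2$. Hence $C_-^A(\xi_+)\equiv u^a\xi_-$ and $C_+^A(\xi_-)\equiv u^b\xi_+$ modulo torsion, and $a+b=2$ since $C_+^A\circ C_-^A=u^2$ is also injective on the towers. Now compare Alexander gradings. The element $u$ has $A^R$-degree $-\tfrac12$. Since $C_-^A$ preserves $A^R$,
\[
-\tfrac12\tau^R(K_+)=-\tfrac12\tau^R(K_-)-\tfrac a2,
\]
so $\tau^R(K_+)-\tau^R(K_-)=a\ge0$. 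Since $C_+^A$ has $A^R$-shift $-1$,
\[
-\tfrac12\tau^R(K_-)-1=-\tfrac12\tau^R(K_+)-\tfrac b2,
\]
which gives $\tau^R(K_+)-\tau^R(K_-)=2-b$. Together these give $0\le \tau^R(K_+)-\tau^R(K_-)\le 2$. This is weaker than the claimed bound of $1$.

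\textbf{The expected obstacle.} Sharpening $0\le\cdot\le2$ to $0\le\cdot\le1$ is the step I expect to be hardest. My first attempt will be a grading parity argument. On the tower we have $M^R=2A^R$, and $C_-^A$ has bigrading $(0,0)$, so $a$ is constrained only through gradings that already agree. I would instead look at the underlying non-real grid states. By the formula $A^R=\tfrac12 A$ (after normalization), the fact that a type $A$ change is a pair of ordinary crossing changes, and the classical bound $0\le\tau(K_+)-\tau(K_-)\le1$ from \cite[Section~6.2]{OSS2015grid}, one would like to compare $\tau^R$ with a classical quantity. That comparison is not available in general, however.

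The more promising route is to factor the type $A$ change through an intermediate knot. A type $A$ operation changes two symmetric crossings, and it can be realized as a composition of a type $B$-like paired merge-split and its inverse through an equivariant intermediate diagram (a generalized strongly invertible link). Then I would show that the map to the intermediate object shifts the tower by at most one power of $u$. If that route fails, I would examine the hexagon count directly: I would try to show that the real pentagon map $c_-^A$ sends the tower generator (represented, say, via the state $\xv^{\mathrm{NW}}$-type cycle used for the unknot) to $u^{a}$ times a generator with $a\in\{0,1\}$, exploiting the fact that an empty real pentagon meeting the symmetric $O$ pair contributes $U=u^2$ only on the off-axis side, while the fixed marking contributes $u$.

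\textbf{Type $B$.} Here $C_\pm^B$ both have bigrading $(-1,-\tfrac12)$ and compose to $u^2$. The same tower argument gives exponents $a+b=2$ with $a,b\ge0$. From the Alexander gradings,
\[
-\tfrac12\tau^R(K)-\tfrac12=-\tfrac12\tau^R(K')-\tfrac a2,
\]
so $\tau^R(K)-\tau^R(K')=a-1\in\{-1,0,1\}$. This yields $|\tau^R(K)-\tau^R(K')|\le1$ directly, with no further work.
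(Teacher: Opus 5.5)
Your type $B$ argument is correct and is essentially the paper's. The paper deduces it from the paired merge-split case of Theorem~\ref{thm:comparison of real tau sets}, and the tower bookkeeping there is exactly yours: $\tau^R(K,\fra)-\tau^R(K',\fra')=a-1$ with $a\in\{0,1,2\}$. One nuance: the type $B$ maps are really the maps $\sigma,\mu$ of Proposition~\ref{prop:band maps} on $c\GHR^-\otimes W$, so there are two towers. Running your computation on $\tau^R_{\min}$ gives the same conclusion.

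For type $A$, your computation $\tau^R(K_+,\fra_+)-\tau^R(K_-,\fra_-)=a\in\{0,1,2\}$ is also correct, and it is all that Proposition~\ref{prop:unknotting maps} can give. Classically the composites are $U$, of Alexander degree $-1$. Here they are $u^2$, with $u$ of $A^R$-degree $-\tfrac12$. Compare the width-$2$ window in the paired split case of Theorem~\ref{thm:comparison of real tau sets}, which is the strategy the paper points to.

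The step you flag as the obstacle cannot be carried out by any method, because the upper bound $1$ is false, as the paper's own data show.
\begin{itemize}
\item In Appendix~\ref{app:Calculation results for knots with small crossing numbers}, the tower of $\GHR^-(5_1,\fra)$ sits at $2A^R=2$. So $|\tau^R(5_1,\fra)|=2$, while $\tau^R$ of the unknot is $0$.
\item Draw $T_{2,5}$ as the closure of $\sigma_1^5$, with the axis in the projection plane, perpendicular to the braid and through the middle crossing. This is the unique strong inversion of $T_{2,5}$, and the appendix lists only one for $5_1$.
\item The two outer crossings are exchanged by $\tau$. Changing both gives the closure of $\sigma_1^{-1}\sigma_1^{3}\sigma_1^{-1}\sim\sigma_1$, which is the unknot.
\item So a single type $A$ move changes $\tau^R$ by $2$ in absolute value, violating $0\le\tau^R(K_+,\fra_+)-\tau^R(K_-,\fra_-)\le 1$ whichever mirror is meant.
\item Likewise, $7_1$ has $|\tau^R|=3$ but is unknotted by two type $A$ moves, on the pairs of outer crossings.
\end{itemize}

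Consequently each of your proposed sharpenings must fail:
\begin{itemize}
\item Factoring through paired merge-split moves uses two of them, so it again gives $2$.
\item There is no parity obstruction, since $u$ and both maps preserve $M^R-2A^R$.
\item No pentagon count can force $a\le 1$, because $a=2$ actually occurs.
\end{itemize}

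What you have actually proved, $0\le\tau^R(K_+,\fra_+)-\tau^R(K_-,\fra_-)\le 2$, is the correct statement. The corresponding corollary should read $|\tau^R(K,\fra)|\le 2\widetilde{u}_A(K)$, so type $A$ moves do weigh twice as much as type $B$ moves, in line with Boyle--Chen.
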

\begin{proof}
This proposition shares the same proof strategy with Theorem~\ref{thm:comparison of real tau sets}. We will prove that theorem in details, so we leave this as an exercise for readers. Careful readers should note that the part for type $B$ crossing change follows immediately from second item in Theorem~\ref{thm:comparison of real tau sets}.
\end{proof}

\begin{cor}
Let $(K,\fra)$ be a strongly invertible knot with auxiliary data, then \[\vert \tau^R(K,\fra)\vert\le \widetilde{u}_A(K),\] 
\[\vert \tau^R(K,\fra)\vert\le \widetilde{u}_B(K).\] 
\end{cor}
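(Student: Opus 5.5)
The plan is to deduce the corollary from the preceding proposition by induction along an unknotting sequence, with the strongly invertible unknot as the base case. By Example~\ref{ex:unknot}, $\GHR^-(U)\cong\F[u]_{(0,0)}$ for the strongly invertible unknot $U$. The auxiliary data is irrelevant for $U$, so its unique free tower sits in real Alexander grading $0$. Hence $\tau^R(U,\fra)=0$ for every choice of $\fra$.

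For the type $A$ bound, first assume $\widetilde{u}_A(K)=n<\infty$; otherwise there is nothing to prove. Choose a sequence of $n$ type $A$ operations
\[K=K_0\to K_1\to\cdots\to K_n=U\]
realizing it. The main bookkeeping step is to carry the auxiliary data along this sequence. An equivariant crossing change is a local modification in a transvergent diagram away from the axis points $K\cap C$, so the orientation, the $(O,X)$-labels on the two fixed points, the orientation of $C$ and the half-axis all persist. This gives auxiliary data $\fra_j$ on each $K_j$ with $\fra_0=\fra$. Concretely, in the real grid realization of Figure~\ref{fig:type A crossing change}, the marks on $C$ are untouched, so the diagrams represent $(K_j,\fra_j)$ with compatible data. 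This is exactly the setting of Proposition~\ref{prop:unknotting maps} and of the preceding proposition.

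Each step is either a $+$ to $-$ or a $-$ to $+$ type $A$ change. The preceding proposition then gives $|\tau^R(K_j,\fra_j)-\tau^R(K_{j+1},\fra_{j+1})|\le1$, since $0\le\tau^R(K_+)-\tau^R(K_-)\le1$ in either direction. Summing this along the sequence with the triangle inequality gives
\[|\tau^R(K,\fra)|=|\tau^R(K,\fra)-\tau^R(U,\fra_n)|\le n=\widetilde{u}_A(K).\]
The type $B$ inequality follows identically, using the type $B$ part of the preceding proposition, which gives a change of at most $1$ per operation, over a minimal type $B$ unknotting sequence.

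One might worry about the normalization, since Boyle--Chen count a type $A$ operation as two crossing changes. Here, however, $\widetilde{u}_A$ counts type $A$ operations themselves, so each operation contributes at most $1$ and the count matches. I expect the only delicate point to be the transport of auxiliary data. In particular, I must make sure the data at the end is a legitimate choice for $U$; since every choice for $U$ gives $\tau^R=0$, this causes no issue.
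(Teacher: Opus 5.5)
Your proposal is correct and is essentially the paper's intended argument: iterate the preceding crossing-change proposition along a minimal type $A$ (respectively type $B$) unknotting sequence, transporting the auxiliary data, and use $\tau^R(U)=0$ from Example~\ref{ex:unknot}. The paper states the corollary without writing out more than this, so your bookkeeping fills in what it leaves implicit: carrying the auxiliary data along the sequence, and noting that each type $A$ step changes $\tau^R$ by at most $1$ in either direction.
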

 
\begin{rem}
The bounds from real knot Floer homology seems to tell us that a type $B$ crossing change share the same weight as a type $A$ crossing change. This is different from the point of view of \cite{boyle2025equivariantunknottingnumbersstrongly}, in which Boyle and Chen counted the total number of crossing changes, so a type $A$ move weights twice as a type $B$ move. 
\end{rem}

Our $\tau^R$ models on the $\tau$-invariant introduced by Ozsvath and Szabó in~\cite{OSHFKand4ballgenus}. It was shown that $\vert\tau\vert$ provides a lower bound for the usual unknotting number and smooth slice genus. (For a combinatorial proof, see~\cite[Section~6\&8]{OSS2015grid}.) In the real setting, we have an interesting counterpart. To state it precisely, we need to set up some conventions on equivariant cobordism first. 

In \cite{borodzik2025khovanovhomologyequivariantsurfaces} and \cite{BDMS26}, the authors have characterized a complete set of elementary cobordism for an equivariant cobordism in $(S^3\times I,\tau\times \id)$, which we shall rephrase in the following. Our situation is much simpler than the one in~\cite{BDMS26}, since we have no need to deal with the projection to a generic plane.

\begin{defn}\label{def:real saddle moves}
On an equivariant cobordism between generalized strongly invertible links, there are two kinds of equivariant saddle moves which appear in the level set change of equivariant Morse function at index $1$ critical points. \begin{itemize}
    \item If we think of a saddle move as attaching an unknotted band (1-handle) $B$ to a Seifert surface of $L$ and taking the new boundary, then the $1$-handle associated to a \emph{fixed point saddle move} is fixed as a set by $\tau$, see the first row of Figure~\ref{fig:equivariant saddle moves} for examples. In this case, $B$ is a copy of $[0,1]\times [0,1]$ with involution $\iota(x,y)=(x,1-y)$. Two kinds of attachment are allowed. \begin{enumerate}
        \item The attaching region is $\{0,1\}\times [0,1]$ and the equivariance forces $(0,1/2)$, $(1,1/2)$ to be identified with distinct fixed points on $L$. 
        \item The attaching region is $[0,1]\times \{0,1\}$, two new fixed points are introduced to the link.
    \end{enumerate} 
    \begin{itemize}
        \item For (1), it may happen that $l_f-1$ while $l_p+1$ or $l_f-1$ while $l_p$ keeps unchanged.
        \item For (2), it may happen that $l_f+1$ while $l_p-1$ or $l_f+1$ while $l_p$ keeps unchanged.
    \end{itemize} We call the move a \emph{fixed point split move} or a \emph{fixed point merge move} according to whether $l_f+2l_p$ increases by $1$ or decreases by $1$. 
    \item A \emph{pair of equivariant saddle moves} can be thought of as attaching two bands to an equivariant Seifert surface of $L$ equivariantly and taking the new boundary. In this case, the attaching region of the bands does not intersect the fixed points on $L$. $l_f$ is unaffected during this operation, while several different cases may happen to $l_p$.
    \begin{enumerate}
        \item $l_p$ increases by $1$;
        \item $l_p$ decreases by $1$;
        \item $l_p$ also kept unchanged.
    \end{enumerate}
    Some model pictures are shown in the second row of Figure~\ref{fig:equivariant saddle moves}. We call them \emph{paired merge move}, \emph{paired split move} and \emph{paired merge-split move}, respectively.
\end{itemize} 
\begin{figure}
    \centering
    \includegraphics[width=1.0\linewidth]{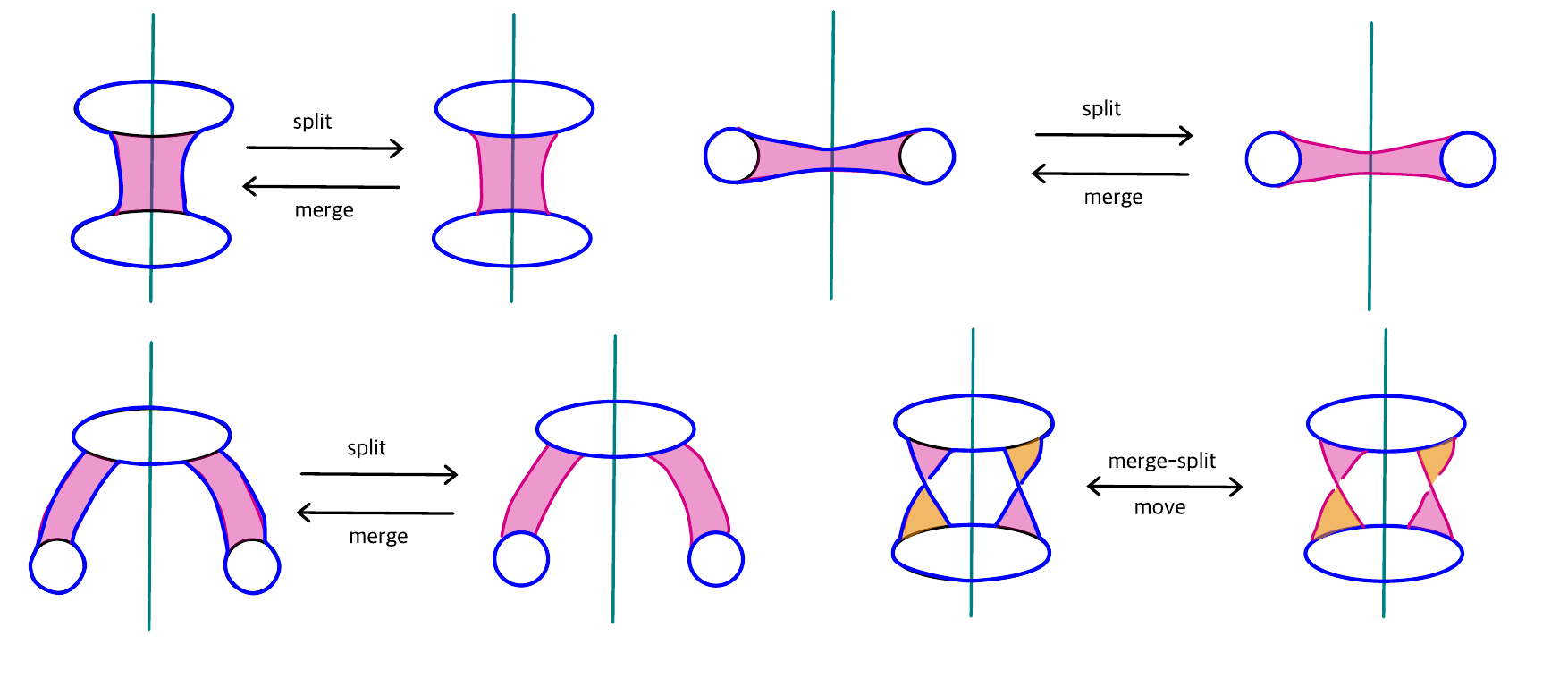}
    \caption{Equivariant saddle moves.}
    \label{fig:equivariant saddle moves}
\end{figure}
Among all the moves defined above, merge and split moves appear as a pair of inverse moves, while the inverse operation of a paired merge-split move is also a paired merge-split move.
\end{defn}

As in the usual case, we also have death and birth appearing in the change of level set of an equivariant Morse function.
\begin{defn}\label{def:equivariant birth and death}
There are two kinds of \emph{equivariant birth} that can happen on a generalized strongly invertible link:\begin{itemize}
    \item an index $0$ critical point along the fixed set introduces a new strongly invertible unknot to the link which is unlinked with the existing link;
    \item a pair of index $0$ critical points interchanged by the involution induces a new two component unlink which is unlinked with the existing link.
\end{itemize}
We shall call the first a \emph{fixed point birth}, the second a \emph{paired birth}. Dually, we have \emph{fixed point death} and \emph{paired death}. 
\end{defn}

The equivariant saddle moves together with the equivariant birth and death will be refer to \emph{equivariant Morse moves}.

\begin{defn}\label{def:compatible auxiliary data on cobordism}
If a generalized strongly invertible link $L_1$ is obtained from another generalized strongly invertible link $L_0$ by an equivariant saddle move (see Definition~\ref{def:real saddle moves}), an equivariant birth or death (see Definition~\ref{def:equivariant birth and death}), then for any auxiliary data on $L_0$, there is an auxiliary data on $L_1$ such that \begin{itemize}
    \item they are the same on components unaffected by the move;
    \item the choice of labeling and orientation on the new component is compatible with existing choice on $C$ if the move is a birth or fixed point split move;
    \item the choice of labeling and orientation on the ``new'' component is just the induced one if the move is fixed point merge move.
\end{itemize} 
When $L_0$ and $L_1$ are equipped with such a pair of auxiliary data $\fra_0$ and $\fra_1$, we say their auxiliary choices are \emph{compatible with the Morse move}.

Let $S$ be an equivariant connected cobordism $S$ in $(S^3\times [0,1],\tau\times \id)$ between two strongly invertible knots $K_0$ and $K_1$. After a small equivariant isotopy, $S$ can be decomposed into a sequence of elementary cobordisms with each stage an equivariant Morse move. We say two auxiliary data $\fra_0$ and $\fra_1$ on $K_0$ and $K_1$ are \emph{compatible with the cobordism $S$} if for any choice of decomposition, there are auxiliary data on intermediate links so that for each single move, the data on two sides are compatible in the sense above. It is easy to see that given any cobordism $S:K_0\to K_1$, there exists at least one pair of auxiliary data $\fra_0$ and $\fra_1$ compatible with it.
\end{defn}

\begin{thm}\label{thm:real taut bounds genus of cobordism}
Consider an equivariant connected cobordism $S$ of genus $g$ in $(S^3\times [0,1],\tau\times \id)$ between two strongly invertible knots $K_0$ and $K_1$. Fix compatible auxiliary data $\fra_0$ and $\fra_1$ on $K_0$ and $K_1$.  By perturbing $S$ if necessary, we may assume that the projection $p\colon S^3\times [0,1]$ defines an equivariant Morse function $S\to [0,1]$. Two types of critical points could appear on $S$: it may lie on the fixed set or appear in pairs, we call the first kind a \emph{fixed critical point}. (Note that a fixed critical point of index $1$ precisely corresponds to a fixed point saddle move defined in Definition~\ref{def:real saddle moves}, while a fixed critical point of index $0$ or $2$ corresponds to a fixed point birth or death defined in Definition~\ref{def:equivariant birth and death}.) Then \[\vert \tau^R(K_0,\fra_0)-\tau^R(K_1,\fra_1) \vert\le g+m/2,\] where $m$ is the minimal number of fixed critical points of $p|_{S'}$ ranging over all generic $S'$ (in the sense that $p|_{S'}$ is a Morse function) that are equivariantly isotopic to $S$.  
\end{thm}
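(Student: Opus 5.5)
We follow the combinatorial strategy of \cite[Section~8]{OSS2015grid}, adapted to the real setting. First we fix an equivariant isotopic representative $S'$ realizing the minimal number $m$ of fixed critical points. We then decompose $S'$ into elementary equivariant Morse moves in the sense of Definitions~\ref{def:real saddle moves} and~\ref{def:equivariant birth and death}. An equivariant isotopy lets us rearrange the critical levels so that all births occur first, then the saddles, then all deaths. The intermediate objects are generalized strongly invertible links with compatible auxiliary data, as in Definition~\ref{def:compatible auxiliary data on cobordism}. For such links we use the collapsed minus complex $c\GHR^-$, in which all $u_i$ and $U_j$ variables are identified with a single $u$ (with $U=u^2$). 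By Proposition~\ref{prop:U,v action identification}, $c\GHR^-$ is again a bigraded $\F[u]$-module.

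We then introduce a real $\tau$-set: the Alexander gradings of the generators of $c\GHR^-(L,\fra)/\Tors$. For each type of move we prove a comparison theorem, which is Theorem~\ref{thm:comparison of real tau sets}. Concretely, for every move we build $\F[u]$-module maps $\sigma\colon c\GHR^-(L_0)\to c\GHR^-(L_1)$ and $\mu\colon c\GHR^-(L_1)\to c\GHR^-(L_0)$, realized on real grid diagrams as in Proposition~\ref{prop:band maps}. These maps come from equivariant commutation-type moves, using real pentagon counts, or from identity-like maps on the diagram.

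\begin{itemize}
\item A paired saddle changes two $X$-markings symmetrically. Its maps have bigrading shift $(-1,-1/2)$ or $(0,0)$, and their composites are $u^2$ or $U$, or zero in the appropriate direction.
\item A fixed point saddle moves a single pair of markings on the axis. Its maps shift $A^R$ by $\pm 1/4$-type amounts. This corresponds to half of a usual saddle, since one fixed band is half of a pair.
\item A paired birth or death tensors with $W^{\otimes 2}$-like factors.
\item A fixed point birth or death tensors with a single two-dimensional factor.
\end{itemize}

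Lemma~\ref{lem:injectivity on torsion submodule} shows that these maps are injective on the free parts. Tracking the top free generator then bounds how far the maximal non-torsion Alexander grading can move under each move.

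Next we add up the contributions. Euler characteristic gives $2-2g-2=(\#\text{births}+\#\text{deaths})-\#\text{saddles}$ for a connected cobordism between knots. Write $b_p,d_p$ for paired births and deaths, $b_f,d_f$ for fixed ones, $s_p$ for paired saddles and $s_f$ for fixed saddles, where each paired critical point counts as two. The change in $\tau^R$ is bounded by a weighted sum of the moves, which we aim to show is at most $(s_p/2+s_f/2-b_p/2-d_p/2-b_f/2-d_f/2)\cdot\frac12+$ (correction from fixed points). The non-fixed contributions together give exactly $g$, by halving the usual argument: a real move corresponds to a pair of usual moves, while gradings are halved. Each fixed critical point contributes an error of at most $1/2$, which gives the $m/2$ term. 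The reverse inequality follows by turning $S$ upside down, or equivalently by mirroring via Proposition~\ref{prop:hat version mirror sym}.

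The main obstacle is the fixed point moves, especially fixed point births and deaths. A fixed point birth changes $l_f$, and with it the normalization of $A^R$, which involves $(n-l_f-2l_p)/4$. It also changes the form of the tensor factor $W$ versus $J$ seen in Theorem~\ref{thm:intro-minus oriented skein triple}. Here the half-integer shift can be absorbed only up to $1/2$, so these moves fail to cancel exactly against the saddles, unlike the paired case. We would first compute the maps for the unknot birth explicitly on small real grid diagrams. We would then check the grading shifts of $\sigma$ and $\mu$ for fixed saddles case by case, namely $l_f\pm1$ with $l_p$ fixed or changed, to confirm that each contributes at most $1/2$ to the discrepancy.
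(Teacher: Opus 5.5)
Your argument needs band maps $\sigma,\mu$ for \emph{fixed point} saddle moves with controlled grading shifts (your ``$\pm 1/4$-type'' shifts, and an ``error of at most $1/2$'' per fixed critical point). This is a genuine gap. The paper constructs no such maps, you do not construct them, and the natural construction fails.

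A fixed saddle creates or destroys two points of $L\cap C$, and in a real grid diagram these are exactly the markings on the fixed diagonal. The symmetric analogue of the saddle of \cite[Section~8]{OSS2015grid} exchanges an interchanged pair of same-type markings for two same-type markings on the diagonal. In every case ($l_f\pm1$, with $l_p$ fixed or changing by one), this leaves a strongly invertible component with two $O$'s or two $X$'s on the axis. That is not a diagram for any $(L,\fra)$, for the same reason real stabilizations at markings on $C$ are prohibited. Your grading claims for these moves are therefore unsupported, and you say yourself they remain to be checked.

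You also misplace the role of births and deaths. By the lemma on adding unknots, adding a strongly invertible unknot or an interchanged two-component unlink gives $c\GHR^-(L')\cong c\GHR^-(L)\oplus c\GHR^-(L)$ as bigraded modules, with no shift. This matches the rank $2^{l_f+l_p-1}$ of Proposition~\ref{prop:number of tower for strongly invertible links}, so a paired birth doubles the module rather than contributing a $W^{\otimes 2}$-type factor. Births and deaths therefore do not move the $\tau^R$-set at all.

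The missing idea is topological: fixed saddles are never needed. In Proposition~\ref{prop:rearrangement of saddlemoves in an equivariant cobordism}, a fixed $1$-handle that increases (resp.\ decreases) the number of fixed points in the level set is traded, as in Figure~\ref{fig:trading handles}, for a fixed $0$-handle (resp.\ $2$-handle) plus a pair of $1$-handles. This trade does not change the number of fixed critical points.

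After pushing minima down and maxima up, $S$ factors as $K_0\to\cU_{b_1,b_2}(K_0)\to L\to L'\to\cU_{d_1,d_2}(K_1)\to K_1$, with only paired saddles between $L$ and $L'$. Paired moves preserve $l_f$, so $b_1=d_1$ and $m=2d_1$. The Euler characteristic then forces exactly $g+d_1$ pairs of saddles from $L$ to $L'$.

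The lemma on adding unknots and Proposition~\ref{prop:real tau is kept by merging unknots} give $\tau^R_{\min}(L)=\tau^R_{\max}(L)=\tau^R(K_0,\fra_0)$, and similarly for $L'$ and $K_1$. Proposition~\ref{prop:change of tau is bounded by number of saddle moves}, which iterates Theorem~\ref{thm:comparison of real tau sets} over paired moves only, then yields $\vert\tau^R(K_0,\fra_0)-\tau^R(K_1,\fra_1)\vert\le g+d_1=g+m/2$.

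So the $m/2$ is not a per-critical-point error. It is the number of extra paired saddles that the fixed births and deaths force through the Euler characteristic.
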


Among all the $4$-manifolds bounded by $S^3$, $B^4$ is the simplest one. Regarding it as the unit ball $\C^2$, it has a canonical involution induced by complex conjugation which we will denote by $\tau_c$.

\begin{cor}\label{cor:real tau bounds tilde g4+m}
For any strongly invertible knot $K$ in $S^3$, let $S$ be an equivariant smooth slice surface of $K$ inside $(B^4,\tau_c)$. Then for any choice of auxiliary data $\fra$, \[\vert \tau^R(K,\fra) \vert\le g(S)+(m-1)/2,\] where $m$ is the minimal number of fixed critical points of $r|_{S'}$ for $r$ the radius function on $B^4$ ranging over all generic $S'$ (in the sense that $r|_{S'}$ is a Morse function) that are equivariantly isotopic to $S$.
\end{cor}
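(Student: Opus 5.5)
We deduce Corollary~\ref{cor:real tau bounds tilde g4+m} from Theorem~\ref{thm:real taut bounds genus of cobordism} by removing a small equivariant ball from $B^4$, so that $S$ becomes a cobordism from the strongly invertible unknot to $K$.

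Given $S\subset (B^4,\tau_c)$, we first replace it by an equivariantly isotopic generic $S'$ with $r|_{S'}$ Morse and with exactly $m$ fixed critical points. After a further equivariant isotopy, we may assume that $0$ is not on $S'$ and that the minimum of $r|_{S'}$ is a single fixed index-$0$ critical point. This is possible because $r$ is $\tau_c$-invariant, so some minimum of $r|_{S'}$ can be chosen to lie on the fixed set. The real part $\R^2\cap B^4$ is the fixed disk. We aim to arrange the lowest critical point on it, by an equivariant isotopy pushing a small fixed-point cap of $S'$ toward the origin without creating new fixed critical points.

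Let $\varepsilon$ be slightly larger than the minimal critical value. Then $S'\cap \{r\le \varepsilon\}$ is an equivariant disk capping a strongly invertible unknot $U$ in the sphere of radius $\varepsilon$, and it accounts for one fixed critical point. Removing this ball and rescaling gives an equivariant connected cobordism in $(S^3\times[0,1],\tau\times\id)$ from $U$ to $K$. It has genus $g(S)$ and at most $m-1$ fixed critical points, using the equivariant identification of the annulus $\{\varepsilon\le r\le 1\}$ with $S^3\times[0,1]$ via the radial coordinate.

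Next, choose auxiliary data on $U$ compatible with this cobordism and with the given $\fra$ on $K$. Definition~\ref{def:compatible auxiliary data on cobordism} guarantees compatible pairs exist. However, we must start from $\fra$ on $K$ and propagate backwards through the Morse moves; the compatibility conditions are symmetric (merge versus split, birth versus death), so reversing the cobordism gives data on $U$. Theorem~\ref{thm:real taut bounds genus of cobordism} then gives
\[|\tau^R(K,\fra)-\tau^R(U,\fra_U)|\le g(S)+(m-1)/2.\]
Example~\ref{ex:unknot} says $\GHR^-(U)\cong\F[u]_{(0,0)}$ for any auxiliary data, since all choices are equivalent for the unknot up to the symmetries in Propositions~\ref{prop:fra v.s.fra^r} and~\ref{prop:fra v.s. fra^i,r}. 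Hence $\tau^R(U)=0$, and the corollary follows.

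The main obstacle is the minimizing step. The quantity $m$ is defined as a minimum over isotopic surfaces, so the removal of the bottom cap must preserve minimality, giving a minimum of at most $m-1$ for the resulting cobordism. One also has to check that the unique lowest critical point can be taken to be fixed.

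Here is the argument for the second point. If all index-$0$ critical points were paired, the fixed set of $S'$ would still be nonempty, since it contains the two points of $K\cap C$. Consider the minimum of $r$ restricted to $\fix(\tau_c)\cap S'$, a union of arcs. We would perform an equivariant finger move of $S'$ along the real disk toward the origin to create a fixed minimum. This introduces a fixed minimum together with a fixed saddle and raises $m$ by two, which would spoil the bound. So we instead argue directly: the restriction of $r$ to the fixed arcs has a minimum there, and a generic $\tau$-invariant Morse function on $S'$ has a critical point at any local extremum of its restriction to the fixed curves of an orientation-reversing involution on the surface. Hence an optimal $S'$ already has a fixed critical point of index $0$, and we may take it to be the lowest by an isotopy lowering it. This yields the count $m-1$.
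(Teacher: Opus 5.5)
Your reduction is the right one, and it is what produces the $(m-1)/2$: cut out a small ball around a bottom fixed minimum to get an equivariant cobordism from the strongly invertible unknot to $K$ of genus $g(S)$ with at most $m-1$ fixed critical points, then apply Theorem~\ref{thm:real taut bounds genus of cobordism} together with $\tau^R(U)=0$. However, the step that makes this work is the existence of a fixed index-$0$ critical point that can be made the lowest one without increasing the count. That step is not established, and the argument you give for it is wrong.

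\textbf{Why the claimed step fails.} At a point $p$ minimizing $r$ on the fixed curves $A$ of $\tau_c|_{S'}$ you do get $d(r|_{S'})_p=0$. But the Hessian splits into two lines with no cross term:
\begin{itemize}
\item on the $(+1)$-eigenline $T_pA$ it is positive;
\item on the $(-1)$-eigenline normal to $A$ its sign is unconstrained.
\end{itemize}
If the normal sign is negative, $p$ is a fixed saddle of type (2) in Definition~\ref{def:real saddle moves}, with its cocore along the fixed set. Just below $p$ the surface is then two sheets interchanged by $\tau_c$. A concrete case is the equivariant disk for the unknot built from a paired birth followed by such a fixed saddle. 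It has $m=1$, which is minimal, and no fixed index-$0$ critical point at all. So the assertion ``an optimal $S'$ already has a fixed critical point of index $0$'' is false. So is the claim in your first paragraph that $\tau_c$-invariance lets you choose a minimum on the fixed set, since minima can come in interchanged pairs.

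\textbf{What is missing.}
\begin{enumerate}
\item \emph{Handle trade.} You need the trade of Figure~\ref{fig:trading handles}, which is also used in the proof of Proposition~\ref{prop:rearrangement of saddlemoves in an equivariant cobordism}. Near $p$, with $\tau(x,y)=(x,-y)$, a local $\tau_c$-invariant radial isotopy changes $r|_{S'}=c+x^2-y^2$ into $c+x^2+f(y)$. Here $f$ is even, has a local minimum at $0$ and local maxima at $\pm\delta$, and agrees with $-y^2$ away from $0$. This replaces the fixed saddle by a fixed minimum plus a pair of interchanged saddles, and the number of fixed critical points is unchanged.
\item \emph{Lowering.} Take $p$ to minimize $r$ over all of $S'\cap\fix(\tau_c)$, including any fixed circles. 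Since $S'\cap\fix(\tau_c)$ is exactly the fixed set of $\tau_c|_{S'}$, the radial segment from $p$ to $0$ in the real disk meets $S'$ only at $p$. An equivariant finger move along this segment then pushes the fixed minimum below every other critical value without creating new critical points.
\end{enumerate}

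With these two additions the rest of your argument goes through.
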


\subsection{Collapsed grid homology for generalized strongly invertible links}\label{sub:Collapsed grid homology for generalized strongly invertible links}
Since a generic level set of an equivariant cobordism in $(S^3\times [0,1],\tau\times \id)$ might be a generalized strongly invertible link with more than one components, it is necessary for us to investigate their grid homologies before proceeding to prove Theorem~\ref{thm:real taut bounds genus of cobordism}.

Slightly generalizing the definition in Section~\ref{sub:Real grid homology}, we can define real grid homologies for a generalized strongly invertible link with auxiliary data $(L,\fra)$. Assume that $L$ has $l_f$ components fixed by $\tau$ as sets and $l_p$ pairs of components interchanged by $\tau$. Let $\cH$ be a real grid diagram for $(L,\fra)$, with $O$-base points labeled so that $(O_1,O_1')$, $\ldots$, $(O_{l_p}, O_{l_p}')$ belong to distinct pairs of components. Then we have \[\GCR^-(\cH)=\F[u_1,\ldots,u_{l_f}, U_1,\ldots,U_k] \left \langle (\T_\alpha\cap\T_\beta)^R \right \rangle\] with differential \[\partial^{G,-}\xv=\sum_{\yv}\sum_{r\in \RRecto(\xv,\yv),r\cap \bfX=\emptyset} \prod_{1\le i\le l_f} u_{i}^{n_{O_i^f}(r)}\prod_{1\le j\le k} U_{j}^{n_{O_j}(r)} \yv.\] Here $k$ is the number of pairs of $O$-base points in $\cH$. 
Taking homology, we get $\GHR^-(L,\fra)$, which is an invariant of $(L,\fra)$ as an $\F[u_1,\ldots,u_{l_f},U_1,\ldots,U_{l_p}]$-module. As usual, $\widehat{\GCR}(\cH)=\GCR^-(\cH)/(u_i,U_j)_{1\le i\le l_f, 1\le j\le l_p}$ and $\widehat{\GHR}(\cH)$ is defined as its homology, which is an invariant of  $(L,\fra)$ as a $\F$-vector space. Similarly, we can define $\widetilde{\GCR}(\cH)$ by quotienting out all $u$ and $U$ variables from $\GCR^-$ and resulting in $\widetilde{\GHR}(\cH)$, which is a stabilization of $\widehat{\GHR}(L,\fra)$.

In the link case, we have a further variation, called the \emph{collapsed real grid homology}, naming after the construction in~\cite[Section~8.2]{OSS2015grid}. The chain complex is \[c\GCR^-(\cH)=\frac{\GCR^-(\cH)}{u_1=u_2=\ldots=u_{l_f}, u_1^2=U_1=\ldots=U_{l_p}},\] with induced differential $c\partial^{G,-}$. The homology \[c\GHR^-(L,\fra)=H_*(c\GCR^-(\cH),c\partial^{G,-})\] is an invariant of $(L,\fra)$ as an $\F[u]$-module. To make the $u$-action well-defined, we shall assume $l_f\ge 1$.

On real grid homology for generalized strongly invertible links, the absolute lift of gradings has been defined at the beginning of Section~\ref{sec:Basic properties and examples}. 

\begin{prop}\label{prop:band maps}
Let $W=\F_{(0,0)}\oplus\F_{(-1,-1)}$. 
\begin{enumerate}
    \item If $L'$ is obtained from $L$ by a paired split move, then for any choice of compatible auxiliary data $\fra$ and $\fra'$, there are $\F[u]$-module maps 
    \[\sigma_p\colon c\GHR^-(L,\fra)\otimes W\to c\GHR^-(L',\fra')\]
    \[\mu_p\colon c\GHR^-(L',\fra')\to c\GHR^-(L,\fra)\otimes W \] with the following properties: \begin{itemize}
        \item $\sigma_p$ is homogeneous of degree $(-1,0)$;
        \item $\mu_p$ is homogeneous of degree $(-1,-1)$;
        \item $\mu_p\circ\sigma_p$ is the multiplication by $u^2$;
        \item $\sigma_p\circ\mu_p$ is the multiplication by $u^2$.
    \end{itemize}
    \item If $L'$ is obtained from $L$ by a paired merge-split move, then there are $\F[u]$-module maps 
    \[\sigma\colon c\GHR^-(L,\fra)\otimes W\to c\GHR^-(L',\fra')\otimes W\]
    \[\mu\colon c\GHR^-(L',\fra')\otimes W\to c\GHR^-(L,\fra)\otimes W\] with the following properties: \begin{itemize}
        \item $\sigma$ is homogeneous of degree $(-1,-1/2)$;
        \item $\mu$ is homogeneous of degree $(-1,-1/2)$;
        \item $\mu\circ\sigma$ is the multiplication by $u^2$;
        \item $\sigma\circ\mu$ is the multiplication by $u^2$.
    \end{itemize}
\end{enumerate}
\end{prop}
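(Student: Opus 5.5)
We will follow the proof of the saddle move maps in \cite[Section~8.3--8.4]{OSS2015grid}. There, a saddle move is realized on a grid diagram by switching a pair of adjacent $X$-markings (a ``commutation'' that does not preserve the link). The maps $\sigma$ and $\mu$ come from comparing a diagram for $L$ with a diagram for $L'$ that differ only in those two columns (or rows).

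\textbf{Step 1: local models.} For a paired split move, or a paired merge-split move, we first choose real grid diagrams $\cH$ for $(L,\fra)$ and $\cH'$ for $(L',\fra')$. These should differ only by swapping two adjacent $X$-markings in a pair of columns, together with the $R$-image of that swap in the corresponding pair of rows. The band pair in Definition~\ref{def:real saddle moves} avoids the fixed set, and real cyclic permutations are allowed. So we may assume the two local modifications are disjoint from $C$ and interchanged by $R$. In particular $\cH$ and $\cH'$ share the same $\bm\alpha$, $\bm\beta$, the same $O$-markings, and hence the same set of real grid states.

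\textbf{Step 2: the maps.} Concretely, $\sigma$ and $\mu$ are the ``identity'' on real grid states, twisted by the local geometry.
\begin{itemize}
\item As in \cite[Lemma~8.3.2]{OSS2015grid}, we compare $\partial_{\cH}$ and $\partial_{\cH'}$ on the common module. A real rectangle counted in one but not the other must contain exactly one of the two swapped $X$'s (or its $R$-image).
\item For real rectangles of types (1)--(3), i.e.\ those meeting $C$, we check that this forces them to contain a symmetric pair of such $X$'s. For type (4) pairs, each rectangle contains one.
\item The tensor factor $W$ accounts for the extra $O$-pair adjacent to the band. We use Proposition~\ref{prop:U,v action identification} and the lemma generalizing \cite[Lemma~5.2.16]{OSS2015grid} to identify the collapsed homology of a diagram with a doubled $O$ with $c\GHR^-\otimes W$.
\end{itemize}

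\textbf{Step 3: gradings.} We compute the bigrading shifts from the formulas for $M^R_{\bfO}$, $M^R_{\bfX}$ and $A^R$ at the beginning of Section~\ref{sec:Basic properties and examples}. Swapping the two $X$'s changes $M_{\bfX}$ by the usual amount for each of the two symmetric swaps. The shift by $\frac{1}{4}(n-l_f-2l_p)$ changes because $l_p$ changes by $1$ in the split case and is unchanged in the merge-split case. This should account for degree $(-1,0)$ versus $(-1,-1)$ in the split case, and $(-1,-1/2)$ for both maps in the merge-split case.

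\textbf{Step 4: compositions.} We show $\mu\circ\sigma$ and $\sigma\circ\mu$ equal multiplication by $u^2$ up to homotopy. Composing the two identity-type maps is multiplication by the product of the $U$-variables of the two $O$'s adjacent to the swapped $X$'s. That product is $U_aU_b$, or $U$ for the two $R$-symmetric $O$'s of a single pair. In the collapsed complex every $U_j$ is identified with $u^2$. The symmetric pair of local moves contributes one factor $U$, not two, because one pair of $O$'s is interchanged by $R$ and carries a single variable. This gives $u^2$.

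\textbf{Main obstacle.} The hardest part is the merge-split case, where the components before and after the move are paired differently, together with verifying in Step 2 that no real rectangle of types (1)--(3) separates the swapped $X$'s asymmetrically. We would handle this by a case check on the four shapes in Proposition~\ref{prop:domains of real index 1}, using that the local modification is disjoint from $C$.
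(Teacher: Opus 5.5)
\textbf{The gap: the local model in Steps 1--2 is wrong.} With $X$-markings moved, the maps you want cannot be chain maps. If $\cH$ and $\cH'$ share $\bm\alpha$, $\bm\beta$ and $\bfO$ and differ by swapping $X$'s, recall that in $\GCR^-$ the $X$'s \emph{block} rectangles rather than weight them. So the two differentials differ by whole real rectangles. A rectangle containing exactly one of the swapped positions is counted in one complex and forbidden in the other.

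Between two states that differ in two points, the only rectangles are the two with those corners. These typically have different $O$-counts, so nothing cancels the discrepancy. A map $\xv\mapsto u^{a(\xv)}\xv$ only rescales coefficients, so the identity $u^{a(\yv)}\langle\partial_{\cH}\xv,\yv\rangle=u^{a(\xv)}\langle\partial_{\cH'}\xv,\yv\rangle$ fails whenever one side is zero and the other is not. Your second bullet in Step 2 is therefore the obstruction, not a step of the proof. Relatedly, $\sigma$ and $\mu$ are never actually defined, so Step 4 has nothing concrete to compose.

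\textbf{The paper's model.} Following \cite[Figure~8.1]{OSS2015grid}, the paper keeps $\bm\alpha$, $\bm\beta$ and $\bfX$ fixed and moves $O$-markings instead. In a $2\times 2$ block away from $C$, and in its $R$-image, $\cH$ carries the pairs $(O_1,O_1')$, $(O_2,O_2')$ on one diagonal, while $\cH'$ carries $(O_3,O_3')$, $(O_4,O_4')$ on the other. Now exactly the same real rectangles are counted in both complexes, and only their $O$-weights differ.

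Mark distinguished arcs $(\bfA,\bfA')$ on the $R$-interchanged $\alpha$- and $\beta$-circle where the saddle happens. Split the real grid states into $\cA$ (those meeting $\bfA$) and $\cB$ (those meeting the complement). Define $\sigma(\xv)=u^2\xv$ on $\cA$ and $\sigma(\xv)=\xv$ on $\cB$, and let $\mu$ do the reverse. These maps go between $c\GCR^-(\cH)/(U_1=U_2)$ and $c\GCR^-(\cH')$, with the target also divided by $U_3=U_4$ in the merge-split case.

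The consequences are:
\begin{itemize}
\item Both compositions are $u^2$ on the nose, not just up to homotopy.
\item The chain-map property reduces to a single check. Every real rectangle from $\cA$ to $\cB$ must contain exactly one of the pairs $(O_1,O_1')$, $(O_2,O_2')$ and neither of $(O_3,O_3')$, $(O_4,O_4')$, with the symmetric statement from $\cB$ to $\cA$. Your case check over the four shapes of real rectangles belongs here.
\item The factor $W$ comes from $H_*(c\GCR^-(\cH)/(U_1=U_2))\cong c\GHR^-(\cH)\otimes W$, the analogue of \cite[Lemma~8.3.8]{OSS2015grid}. It does not come from the cone lemma generalizing \cite[Lemma~5.2.16]{OSS2015grid}.
\end{itemize}

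Your grading bookkeeping, using that $l_f$ and $\vert \xv\cap C\vert$ are unchanged, and your remark that a symmetric pair of $O$'s contributes a single variable, both match the paper.
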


Similar to Proposition~\ref{prop:a single tower}, we have a rank computation of $c\GHR^-$ for a  generalized strongly invertible link.

\begin{prop}\label{prop:number of tower for strongly invertible links}
Let $(L,\fra)$ be a generalized strongly invertible link with auxiliary data. Assume $L$ has $l_f\ge 1$ strongly invertible knot component and $l_p$ components interchanged by $\tau$. Then we have an isomorphism \[c\GHR^-(L,\fra)/\Tors\cong \F[u]^{r},\] for $r=2^{l_p+l_f-1}$.
\end{prop}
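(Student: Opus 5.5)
We follow the strategy of Proposition~\ref{prop:a single tower} and \cite[Proposition~8.2.x]{OSS2015grid}: compute the rank for a model link, then transport it along moves that come with maps composing to $u^2$ in both directions. By Lemma~\ref{lem:injectivity on torsion submodule}, a pair $\phi\colon M\to N$, $\psi\colon N\to M$ with $\psi\circ\phi=u^2$ and $\phi\circ\psi=u^2$ induces injections $M/\Tors\hookrightarrow N/\Tors$ and $N/\Tors\hookrightarrow M/\Tors$. Since these are finitely generated free $\F[u]$-modules, the two ranks agree.

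\emph{Step 1: reduce to an unlink.} Every generalized strongly invertible link with $l_f\ge1$ can be changed, by type $A$ and type $B$ crossing changes, into the equivariant unlink $U_{l_f,l_p}$. This unlink has $l_f$ strongly invertible unknots and $l_p$ pairs of unknots interchanged by $\tau$, all split. We achieve it in three stages.
\begin{itemize}
\item Apply equivariant crossing changes to unlink the components from one another.
\item Use type $A$ changes to unknot each fixed component, as in \cite{boyle2025equivariantunknottingnumbersstrongly}.
\item Unknot each pair by ordinary crossing changes on one component, performed together with their images.
\end{itemize}
A crossing change on $K$ paired with the one on $\tau K$ is a type $A$ change, or a pair of cross-commutations in the grid. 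The construction of $c_\pm^A$ and $H_\pm$ in Proposition~\ref{prop:unknotting maps} is local, so it works verbatim in the collapsed complex $c\GCR^-$. There $U$ and $u^2$ are identified, and Proposition~\ref{prop:U,v action identification} identifies all $U$-actions on a component with $u^2$ (or with $U_j$ for a pair, which is collapsed to $u^2$). The rank of $c\GHR^-$ is therefore invariant under these moves, and $l_f,l_p$ do not change.

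\emph{Step 2: compute the rank for $U_{l_f,l_p}$.} We argue by induction using Proposition~\ref{prop:band maps}. Adding a pair of unknots to $L$ is a paired birth, which can be realized as a paired split move from $L\sqcup$ (a small unknot pair merged into a component). A cleaner route is to use a paired split move $L\to L'$ with $l_p'=l_p+1$ directly. Part (1) of Proposition~\ref{prop:band maps} gives $c\GHR^-(L')/\Tors\cong (c\GHR^-(L)\otimes W)/\Tors$, so the rank doubles. Going from $U_{l_f,l_p}$ to $U_{l_f,l_p+1}$ amounts to splitting off a pair from a fixed unknot by a paired split move, up to isotopy, so the rank multiplies by $2$.

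For fixed components we also need that a fixed point split move, increasing $l_f$ by one with $l_p$ fixed, doubles the rank. The cleanest way is a direct computation: take a real grid diagram of $U_{l_f,0}$ built as a block sum of minimal size-$1$ (or size-$3$) diagrams of the unknot along the diagonal. Here $c\GCR^-$ is the tensor product over $\F[u]$ of the single-unknot complexes, each with one extra generator coming from the block-sum cut. Computing its homology, as in \cite[Section~8.2]{OSS2015grid} for the unlink, gives a free module of rank $2^{l_f-1}$. The base case $l_f=1$, $l_p=0$ is Example~\ref{ex:unknot}. Combining, the rank is $2^{l_f-1}\cdot 2^{l_p}=2^{l_p+l_f-1}$.

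\emph{Main obstacle.} The hardest step is the fixed-component count, since Proposition~\ref{prop:band maps} as stated does not cover fixed point split moves. The first approach would be the explicit unlink grid computation above, checking that real rectangles between blocks cancel in pairs and leave a $W$-factor per extra fixed unknot. If that proves messy, the fallback is to construct fixed point split/merge maps $\sigma_f,\mu_f$ by the same local grid argument as $\sigma_p,\mu_p$, with a single fixed band in place of a symmetric pair, and verify that $\mu_f\circ\sigma_f=u^2$ and $\sigma_f\circ\mu_f=u^2$.
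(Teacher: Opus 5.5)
The gap is in the first stage of your Step~1: type $A$ and $B$ changes cannot in general split the components from one another. At a crossing on the axis of a transvergent diagram, $\tau$ exchanges the two strands, so they lie on $K$ and $\tau(K)$. Hence no type $B$ crossing involves two distinct strongly invertible components $K_1\neq K_2$, and crossings between them can only be changed in type $A$ pairs $\{c,\tau(c)\}$. Since $\tau$ preserves the orientation of $S^3$ and reverses the orientations of both $K_1$ and $K_2$, the crossings $c$ and $\tau(c)$ have the same sign. So every such move changes $\mathrm{lk}(K_1,K_2)$ by $\pm 2$, and $\mathrm{lk}(K_1,K_2)\bmod 2$ is preserved by all your moves, whereas it is $0$ in $U_{l_f,l_p}$.

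For a concrete counterexample, take round circles $K_1,K_2$ in two orthogonal planes through the axis, centred on the axis and placed so that their fixed points alternate along $C$. This is a Hopf link with $l_f=2$, $l_p=0$, it admits auxiliary data, and it can never reach $U_{2,0}$. More generally, neither equivariant isotopy nor type $A$/$B$ changes can move the $2l_f$ fixed points past one another on $C$. So the cyclic pattern in which the components meet $C$ is invariant, while in $U_{l_f,l_p}$ the two fixed points of each component are adjacent.

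The knot case (Proposition~\ref{prop:a single tower}) never sees this issue. The paper's only justification for links is ``similar to Proposition~\ref{prop:a single tower}'', i.e.\ the same plan as yours, so this step has to be supplied. You need either rank computations for model links realizing each fixed-point pattern, or an argument that bypasses the split unlink. One possible route is to localize at $u$:
\begin{itemize}
\item After also identifying the remaining $U_j$ with $u^2$ (which, as in the lemma identifying $H_*(c\GCR^-(\cH)/(U_i=U_j))$ with $c\GHR^-\otimes W$, only tensors with copies of $W$), the differential preserves $A^R$.
\item The localized complex is then $\F[u,u^{-1}]$ tensored with the complex of real grid states counting empty real rectangles that miss $\bfX$, with no condition on $\bfO$.
\item That is a multi-pointed real complex for $(S^3,\tau)$, which one would show depends only on the numbers of fixed and paired $X$-markings, and then compare with a split unlink having the same $l_f$.
\end{itemize}

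Step~2 is also not established on the fixed side. The block-sum computation is only asserted: grid states and real rectangles of a block sum do not factor through the blocks, so $c\GCR^-$ is not a tensor product of single-unknot complexes. The maps $\sigma_f,\mu_f$ are hypothetical. The paper already has what you need here: the lemma in the subsection on adding unknots (proved by neck-stretching on a minimal real Heegaard diagram) gives $c\GHR^-(\cU_{1,0}(L))\cong c\GHR^-(\cU_{0,1}(L))\cong c\GHR^-(L)\oplus c\GHR^-(L)$. Induction from Example~\ref{ex:unknot} then yields rank $2^{l_f+l_p-1}$ for $U_{l_f,l_p}$ directly. Your paired-split argument via Proposition~\ref{prop:band maps} is fine for the $l_p$ factor but becomes redundant.
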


\begin{defn}\label{def:real tau set for links}
Let $(L,\fra)$ be a generalized strongly invertible link with auxiliary data. As above, assume it has $l_f+2l_p$ components such that $l_f\ge 1$. Then we define its \emph{real $\tau$-set ($\tau^R$-set)} to be the set of integers $\tau^R_{\min}(L,\fra)=\tau^R_1\le\ldots\le \tau^R_{2^{l_f+l_p-1}}=\tau^R_{\max}(L,\fra)$, defined as follows. Choose a generating set of $2^{l_f+l_p-1}$ elements for $c\GHR^-(L,\fra)/\Tors$ such that each element is homogeneous with respect to the $A^R$-grading. Then $\tau^R$-set is defined to be $-2$ times the real Alexander grading of the generators. 
\end{defn}

\begin{cor}\label{cor:real tau set is an oriented link invariant}
The real $\tau$-set is an invariant of the generalized strongly invertible link $(L,\fra)$.  
\end{cor}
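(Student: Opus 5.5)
The plan is to reduce the corollary to two facts. First, the isomorphism type of $c\GHR^-(L,\fra)$ as a bigraded $\F[u]$-module is an invariant of $(L,\fra)$. Second, the multiset of Alexander gradings of a homogeneous basis of the free quotient of a finitely generated bigraded $\F[u]$-module is well defined. Once both are in place, the $\tau^R$-set is independent of the choices made in Definition~\ref{def:real tau set for links}.

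For the first fact, take two real grid diagrams $\cH_1,\cH_2$ representing $(L,\fra)$. By Proposition~\ref{prop:real grid moves}, they are connected by real cyclic permutations, real (de)stabilizations and real commutations. For each move I would use the chain homotopy equivalences on $\GCR^-$ coming from the invariance of $\GHR^-$; these are built from real pentagon and real hexagon counts, or equivalently from the holomorphic invariance in Section~\ref{sec:Real knot Floer chain complex and real knot Floer homology}. These maps are $\F[u_1,\dots,u_{l_f},U_1,\dots,U_k]$-linear and bigraded. After setting all $u_i$ equal and $U_j=u^2$, they descend to the collapsed complexes $c\GCR^-$. The identification of the extra $U_j$ actions with $u^2$ comes from Proposition~\ref{prop:U,v action identification}.

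For destabilization, the stabilized complex is the mapping cone of $U_O-U_{O_a}$, as in the lemma generalizing \cite[Lemma~5.2.16]{OSS2015grid}. After collapsing, this becomes a cone of a map that is null-homotopic or zero. The cone is then quasi-isomorphic to the original complex, with the bigrading preserved because the absolute normalization at the start of Section~\ref{sec:Basic properties and examples} is compatible with stabilization. Combining these gives that $c\GHR^-(L,\fra)$ is well defined up to bigraded $\F[u]$-isomorphism.

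For the second fact, $c\GHR^-(L,\fra)$ is finitely generated and graded over the graded PID $\F[u]$. By the graded structure theorem, its quotient $c\GHR^-/\Tors$ is a graded free module of rank $2^{l_p+l_f-1}$ by Proposition~\ref{prop:number of tower for strongly invertible links}. For a graded free module $F$, the number of basis elements in Alexander grading $s$ equals $\dim_\F (F/uF)_s$. This is intrinsic, so every homogeneous basis has the same multiset of gradings. A bigraded isomorphism preserves $\Tors$, and hence induces a graded isomorphism of the free quotients. Therefore the multiset of $-2A^R$ values, sorted increasingly, is an invariant.

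The main obstacle I expect is verifying that the move maps commute with the collapsing and respect the absolute bigrading, especially for real (de)stabilization on paired components, where $U$ rather than $u$ appears. I would handle this by first checking that the grading normalization $\frac14(n-l_f-2l_p)$ absorbs the $W$-factor shift, and then invoking Proposition~\ref{prop:U,v action identification} after collapsing.
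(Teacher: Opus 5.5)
Your two-step reduction is the same as the paper's. The paper takes the invariance of $c\GHR^-(L,\fra)$ as already established and cites \cite[Corollary~8.3.4]{OSS2015grid} for basis-independence. Your second step is exactly that argument: for $F=c\GHR^-(L,\fra)/\Tors$, the number of generators in Alexander grading $s$ is $\dim_\F(F/uF)_s$. It also covers the homogeneous generating sets of size $2^{l_f+l_p-1}$ allowed in Definition~\ref{def:real tau set for links}, since such a set maps onto, hence to a basis of, $F/uF$.

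The problem is in your sketch of the first step, at (de)stabilization. The cone of a zero or null-homotopic map $C\to C$ is quasi-isomorphic to $C\oplus C[1]$, i.e.\ to $C\otimes W$ (compare Lemma~\ref{lem:U=U' leads to otimes W}), not to $C$. This doubles the number of $\F[u]$-towers. A normalization such as $\frac14(n-l_f-2l_p)$ cannot absorb that, because a grading shift does not change rank. So your plan to let the normalization absorb a $W$-factor cannot work. Nor is $U_O-U_{O_a}$ null-homotopic on $c\GCR^-(\cH)[U_O]$: it acts injectively on the nonzero homology $c\GHR^-(\cH)[U_O]$.

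The correct mechanism is that the collapse identifies only the designated variables $u_1,\dots,u_{l_f}$ and $U_1,\dots,U_{l_p}$ at chain level. The new variable $U_O$, like every other non-designated $U_j$, stays free, and is identified with $u^2$ only on homology via Proposition~\ref{prop:U,v action identification}. Quotienting the stabilization quasi-isomorphism by the designated relations (Lemma~\ref{lem:induced quasi-isom on quotient}) gives $c\GCR^-(\cH^+)\simeq\Cone(U_O-U_{O_a}\colon c\GCR^-(\cH)[U_O]\to c\GCR^-(\cH)[U_O])$. Here $U_O-U_{O_a}$ is \emph{injective} on this free module, with cokernel $c\GCR^-(\cH)$. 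Hence the homology is $c\GHR^-(\cH)$ with no $W$ factor; this is the generalized \cite[Lemma~5.2.16]{OSS2015grid}.

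With this correction, or by simply citing the invariance of $c\GHR^-$ as the paper does, your argument is complete.
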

\begin{proof}
Since $c\GHR^-$ is an invariant of $(L,\fra)$, we only need to check the basis-independence. The proof of \cite[Corollary~8.3.4]{OSS2015grid} works here without change.
\end{proof}

\begin{thm}\label{thm:comparison of real tau sets}
Let $L$ and $L'$ be generalized strongly invertible links differ by an equivariant saddle move. Fix compatible auxiliary data $\fra$ and $\fra'$ on them. \begin{itemize}
    \item If $L'$ is obtained from $L$ by a paired split move, then \[\tau^R_{\min}(L,\fra)-2\le \tau^R_{min}(L',\fra')\le \tau^R_{\min}(L,\fra),\]
    \[\tau_{\max}(L,\fra)\le \tau_{\max}(L',\fra')\le \tau_{\max}(L,\fra)+2.\]
    \item If $L'$ is obtained from $L$ by a paired merge-split move, then \[\tau^R_{\min}(L,\fra)-1\le \tau^R_{min}(L',\fra')\le \tau^R_{\min}(L,\fra)+1,\]
    \[\tau^R_{\max}(L,\fra)-1\le \tau^R_{\max}(L',\fra')\le \tau^R_{\max}(L,\fra)+1.\]
\end{itemize}
\end{thm}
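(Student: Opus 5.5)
The proof follows \cite[Proposition~8.3.5]{OSS2015grid}. We combine the band maps of Proposition~\ref{prop:band maps} with the rank count of Proposition~\ref{prop:number of tower for strongly invertible links}, using Lemma~\ref{lem:injectivity on torsion submodule} in its multi-tower form.

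First we reduce to free modules. The maps $\sigma_p,\mu_p$ (resp. $\sigma,\mu$) compose to $u^2$ in both directions. So, as in Lemma~\ref{lem:injectivity on torsion submodule}, they descend to injective homogeneous maps between the torsion-free quotients. For the paired split move these quotients are
\[
F=\bigl(c\GHR^-(L,\fra)\otimes W\bigr)/\Tors \quad\text{and}\quad F'=c\GHR^-(L',\fra')/\Tors .
\]
By Proposition~\ref{prop:number of tower for strongly invertible links}, $F$ and $F'$ are free of the same rank: $l_p$ goes up by one, so $2\cdot 2^{l_f+l_p-1}=2^{l_f+l_p}$. Since $W=\F_{(0,0)}\oplus\F_{(-1,-1)}$, the $A^R$-gradings of a homogeneous basis of $F$ are those of $c\GHR^-(L,\fra)/\Tors$ together with the same values shifted by $-1$. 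In terms of $-2A^R$, the $\tau^R$-set of $F$ is the $\tau^R$-set of $L$ doubled, with one copy shifted by $+2$. In particular
\[
\tau_{\min}(F)=\tau^R_{\min}(L,\fra) \quad\text{and}\quad \tau_{\max}(F)=\tau^R_{\max}(L,\fra)+2 .
\]

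Next we use the algebraic fact from \cite[Lemma~8.3.3]{OSS2015grid}. Let $f\colon \F[u]^r\to\F[u]^r$ be an injective homogeneous map of $A^R$-degree $a$ between free bigraded modules. Since $\deg u=-1/2$ in $A^R$, we get $\max A^R(\text{target})\ge \max A^R(\text{source})+a$. The reason is that the image of a top-graded generator is a nonzero element of that degree, and every nonzero element of a free module lies in $A^R$-degree at most the top generator degree. Applying the same fact to a composite with the other map bounds the minimum. The minimum comparison needs more care; as in \cite{OSS2015grid}, we handle it by considering the submodule spanned by generators of degree at least a given value and using injectivity together with a rank count. Concretely, we use: an injective map of $A^R$-degree $a$ from $\F[u]^r$ to $\F[u]^r$ forces the $i$-th largest generator degree of the target to be at least the $i$-th largest generator degree of the source plus $a$. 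This follows because the images of the top $i$ source generators span a rank-$i$ submodule lying in degrees $\le$ that of the top $i$ target generators.

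Now we apply this. For $\sigma_p$, of $A^R$-degree $0$, we get $A^R_{(i)}(F')\ge A^R_{(i)}(F)$ for all $i$. For $\mu_p$, of degree $-1$, we get $A^R_{(i)}(F)\ge A^R_{(i)}(F')-1$. Converting with $\tau^R=-2A^R$ gives
\[
\tau_{(i)}(F)\ge \tau_{(i)}(F')\ge \tau_{(i)}(F)-2
\]
for the corresponding ordered lists. At $i=1$ (the top $A^R$, i.e. $\tau_{\min}$) this reads $\tau^R_{\min}(L)-2\le\tau^R_{\min}(L')\le\tau^R_{\min}(L)$. At $i=r$ it reads $\tau_{\max}(F)-2\le\tau^R_{\max}(L')\le\tau_{\max}(F)$, and substituting $\tau_{\max}(F)=\tau^R_{\max}(L)+2$ gives exactly the stated inequalities.

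For the paired merge-split move, both sides carry one factor of $W$, so the $\tau$-sets are both doubled-and-shifted, and their extrema shift compatibly. Both $\sigma$ and $\mu$ have $A^R$-degree $-1/2$. This yields $|\tau_{(i)}(L'\otimes W)-\tau_{(i)}(L\otimes W)|\le 1$, and restricting to $i=1$ and $i=r$ gives the claimed bounds. The main obstacle is the ordered-generator comparison for injective maps, in particular making sure that the torsion quotient of a tensor product with $W$ is computed as claimed and that homogeneity survives passage to the quotient. The remaining steps are bookkeeping of grading shifts.
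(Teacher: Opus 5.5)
Your proposal is correct and is essentially the paper's argument, which simply defers to the proof of \cite[Theorem~8.3.6]{OSS2015grid}. Both use the band maps composing to $u^2$ to get injections of the torsion-free quotients, then compare homogeneous generator gradings in order, accounting for the $W$-shift and for the $-1/2$ real Alexander shift of both $\sigma$ and $\mu$ in the merge-split case. One justification needs tidying: for the ordered comparison, the images of the top $i$ source generators have $A^R$-degree at least $a_i+a$, so they lie in the span of the target generators of $A^R$-degree at least $a_i+a$, which must therefore have rank at least $i$.
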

\begin{proof}
These can be shown using the same argument as that for \cite[Theorem~8.3.6]{OSS2015grid}. One only needs to be careful that for paired merge-split moves, both $\sigma$ and $\mu$ shift the real Alexander grading. 

\end{proof}

\begin{lem}\label{lem:U=U' leads to otimes W}
Let $\cH$ be a real grid diagram representing $(L,\fra)$ and $W=\F_{(0,0)}\oplus\F_{(-1,-1)}$. If $(O_i,O_i')$ and $(O_j,O_j')$ ($i,j\ge l_p+1$) belong to the same strongly invertible knot component of $L$ or a pair of components interchanged by $\tau$, then \[H_*(\frac{c\GCR^-(\cH)}{U_i=U_j})\cong c\GHR^-(\cH)\otimes W.\]
\end{lem}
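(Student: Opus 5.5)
This is the real analogue of \cite[Lemma~8.2.?]{OSS2015grid}: setting two homotopic variables equal is a mapping-cone construction, and the answer splits off a factor $W$. Throughout, $c\GCR^-(\cH)$ denotes the collapsed complex in which $U_i$ and $U_j$ are still independent variables, so $c\GHR^-(\cH)$ is its homology viewed as an $\F[u]$-module.

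First, I identify the quotient by $U_i=U_j$ with a mapping cone. Since $c\GCR^-(\cH)$ is free over the polynomial ring containing $U_i$ and $U_j$, multiplication by $U_i-U_j$ is injective. This gives a short exact sequence
\[0\to c\GCR^-(\cH)\xrightarrow{U_i-U_j} c\GCR^-(\cH)\to \frac{c\GCR^-(\cH)}{U_i=U_j}\to 0,\]
and hence a quasi-isomorphism
\[\frac{c\GCR^-(\cH)}{U_i=U_j}\simeq \Cone\bigl(U_i-U_j\colon c\GCR^-(\cH)\to c\GCR^-(\cH)\bigr).\]
The map $U_i-U_j$ must be shifted so that it is homogeneous. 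Each $U$ has bidegree $(-2,-1)$, so the source copy carries a grading shift of $(-2,-1)$ relative to the target. The shift from the cone construction then places the source copy in bidegree $(-1,-1)$ relative to the target copy.

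Next, I use Proposition~\ref{prop:U,v action identification}. The pairs $(O_i,O_i')$ and $(O_j,O_j')$ lie on the same strongly invertible component, or on the same pair of components. In the strongly invertible case, both $U_i$ and $U_j$ are homotopic to $u_m^2$, which after collapsing is $u^2$. In the paired case they are homotopic to each other directly. The homotopies in that proposition are given by counting real rectangles through the $X$-markings between the two $O$'s, as in \cite[Lemma~4.6.9]{OSS2015grid}. They commute with multiplication by all $u$ and $U$ variables, so they descend to the collapsed complex. Hence there is an $\F[u]$-linear map $\mathcal{H}$ with
\[\partial\mathcal{H}+\mathcal{H}\partial=U_i-U_j\]
on $c\GCR^-(\cH)$. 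The step needing the most care is checking that this real homotopy still exists after collapsing. Because the indices satisfy $i,j\ge l_p+1$, neither $U_i$ nor $U_j$ is one of the variables identified with $u^2$ in the collapsing, so the relation is not trivialized there. I would chain the homotopies along the consecutive $X$-markings between $O_i$ and $O_j$, as in the classical argument.

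Given a null-homotopic map, its cone is isomorphic to the cone of the zero map. Explicitly, the chain isomorphism is
\[(a,b)\mapsto (a,\,b+\mathcal{H}a).\]
The cone of the zero map is $c\GCR^-(\cH)\otimes W$ with $W=\F_{(0,0)}\oplus\F_{(-1,-1)}$. Taking homology, and using the Künneth formula over $\F$, gives the isomorphism claimed in Lemma~\ref{lem:U=U' leads to otimes W}. This isomorphism is $\F[u]$-linear and respects the bigrading. The remaining check is that the bidegree $(-1,-1)$ computed for the shifted copy in the first step matches the second summand of $W$.
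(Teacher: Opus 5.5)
Your proposal is correct and matches the paper's argument. The paper simply defers to the proof of \cite[Lemma~8.3.8]{OSS2015grid}, which likewise uses freeness to identify the quotient with the mapping cone of multiplication by $U_i-U_j$, then uses the homotopy $U_i\simeq U_j$ (Proposition~\ref{prop:U,v action identification}) to replace this by the cone of the zero map. One small correction of emphasis: the hypothesis $i,j\ge l_p+1$ is what makes $U_i-U_j$ nonzero, hence injective, in the collapsed ring, whereas the homotopy descends to the collapsed complex automatically by linearity.
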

\begin{proof}
This follows from the proof of \cite[Lemma~8.3.8]{OSS2015grid}.
\end{proof}

\begin{proof}[Proof of Proposition~\ref{prop:band maps}]
For the paired merge-split move or the paired split and merge move, $\sigma$ and $\mu$ maps are defined using same local picture shown in Figure~\ref{fig:saddle map}, which is just the usual one considered in~\cite[Figure~8.1]{OSS2015grid} union its reflection image. 
\begin{figure}
    \centering
    \begin{overpic}[width=0.7\textwidth]{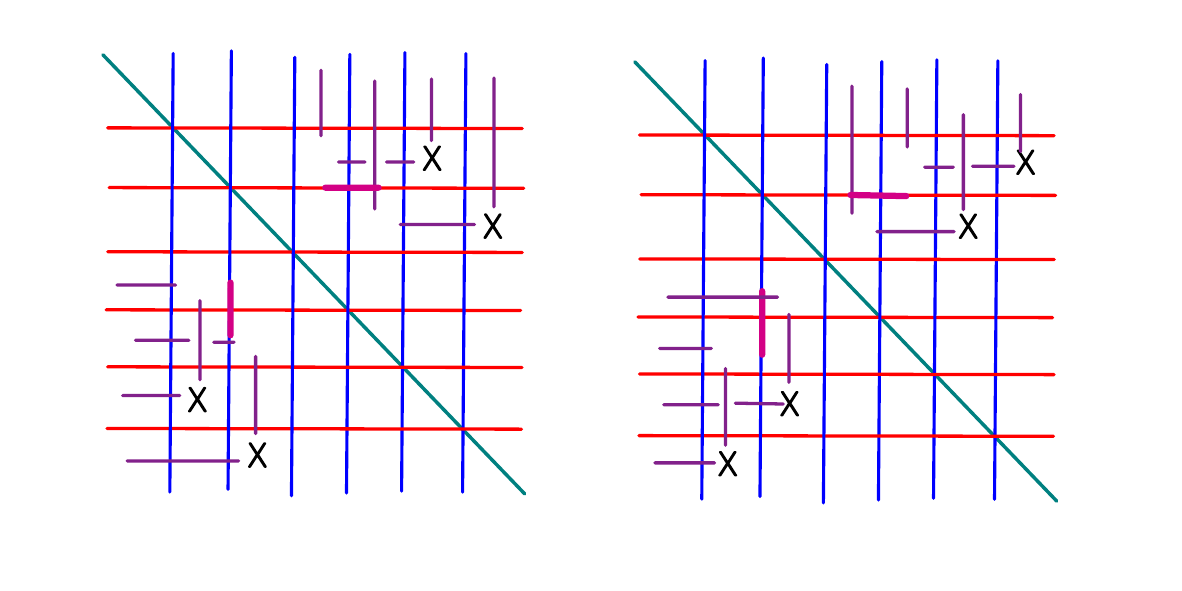}
			\put(15,25) {$O_4$}
			\put(20,20) {$O_3$}
			\put(25,35) {$O_4'$}
			\put(30,30) {$O_3'$}

            \put(65,24) {$O_1$}
			\put(60,20) {$O_2$}
			\put(70,30) {$O_1'$}
			\put(75,35) {$O_2'$}

            \put(19.5,25) {\textcolor{Magenta}{$A$}}
			\put(26,31) {\textcolor{Magenta}{$A'$}}

            \put(62,26) {\textcolor{Magenta}{$A$}}
			\put(70,35) {\textcolor{Magenta}{$A'$}}
		\end{overpic}
	
    \caption{Grid diagram for a paired saddle move.}
    \label{fig:saddle map}
\end{figure}

In Figure~\ref{fig:saddle map}, we mark a pair of distinguished segment $(\bfA,\bfA')$ on the pair of $\alpha$, $\beta$-circles where the saddle moves happen. We call their complements in the circles $(\bfB,\bfB')$. Then we let $\cA$ and $\cB$ be the set of real grid states such that $\xv\cap \bfA\ne\emptyset$ and  $\xv\cap \bfB\ne\emptyset$, respectively. For $L$ and $L'$ related by the paired merge and split moves, we can find grid diagram $\cH$ and $\cH'$ with the standard local pictures in Figure~\ref{fig:saddle map} and define maps \[\sigma_p:\frac{c\GCR^{-}(\cH)}{U_1=U_2}\to c\GCR^-(\cH'),\quad \text{and } \quad \mu_p\colon c\GCR^-(\cH')\to  \frac{c\GCR^{-}(\cH)}{U_1=U_2}\] by \[\sigma_p(\xv)=\begin{cases}
    u^2\cdot \xv & \text{if } \xv\in \cA\\
    \xv  & \text{if } \xv\in \cB
\end{cases}, \quad \text{and } \quad \mu_p(\xv)=\begin{cases}
    \xv & \text{if } \xv\in \cA\\
    u^2\cdot \xv  & \text{if } \xv\in \cB
\end{cases}.\]  
Similarly, for the paired merge-split move, we have 
\[\sigma\colon\frac{c\GCR^{-}(\cH)}{U_1=U_2}\to \frac{c\GCR^{-}(\cH')}{U_3=U_4},\quad \text{and } \quad \mu\colon \frac{c\GCR^{-}(\cH')}{U_3=U_4}\to  \frac{c\GCR^{-}(\cH)}{U_1=U_2}\] by \[\sigma(\xv)=\begin{cases}
    u^2\cdot \xv & \text{if } \xv\in \cA\\
    \xv  & \text{if } \xv\in \cB
\end{cases}, \quad \text{and } \quad \mu(\xv)=\begin{cases}
    \xv & \text{if } \xv\in \cA\\
    u^2\cdot \xv  & \text{if } \xv\in \cB
\end{cases}.\] 

It is obvious that various compositions are all equal to the multiplication by $u^2$. Now we check that they are chain maps: If $\xv$ and $\yv$ both belong to $\cA$ or $\cB$, then it is obvious. Then the claim follows by noting that any real rectangle from $\xv\in \cA$ to $\yv\in \cB$ contains exactly one pair in $(O_1,O_1')$ and $(O_2,O_2')$ and none of $(O_3,O_3')$ or $(O_4,O_4')$, while any real rectangle from $\xv\in \cB$ to $\yv\in \cA$ contains exactly one pair in $(O_3,O_3')$ or $(O_4,O_4')$ and none of $(O_1,O_1')$, $(O_2,O_2')$.

Lastly, the grading shift property follows from the calculation in the proof of \cite[Proposition~8.3.1]{OSS2015grid} and the facts that during both paired split move and paired merge-split move, $l_f$ and $\vert\xv\cap C\vert$ keep unchanged.
\end{proof}

\subsection{Adding unknots to a link}
\begin{defn}
  Let $L$ be a (generalized) strongly invertible link, then we denote by $\cU_{d_1,d_2}(L)$ the link obtained from $L$ by adding $d_1$ strongly invertible unknots and $d_2$ two-component unlinks with their two components interchanged by $\tau$.  
\end{defn}
\begin{lem}
If $L$ is a link of the form $\cU_{d_1,d_2}(K)$ for a strongly invertible knot $K$, then for any compatible choice of auxiliary data $\fra_K$ and $\fra_L$ on $K$ and $L$, \[\tau^R_{\min}(L,\fra_L)=\tau^R_{\max}(L,\fra_L)=\tau^R(K,\fra_K).\]
\end{lem}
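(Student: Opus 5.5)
The plan is to compute $c\GHR^-(L,\fra_L)$ explicitly in terms of $\GHR^-(K,\fra_K)$ by a Künneth-type analysis of a split real grid diagram. This mirrors \cite[Lemma~8.4.2]{OSS2015grid}. Adding a split unknotted component (or a split pair) is an equivariant disjoint-union operation. In a real grid diagram it can be realized by placing, next to a real grid diagram $\cH_K$ for $(K,\fra_K)$, a small block along the diagonal $C$. That block is a size-one real grid block (one $O$ and one $X$ on $C$) for each strongly invertible unknot. For each interchanged two-component unlink it is a size-two block off $C$ together with its reflection. We can also arrange all the extra components via real stabilizations and commutations, using Proposition~\ref{prop:real grid moves}.

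First step: show that
\[
c\GHR^-(\cU_{d_1,d_2}(K),\fra_L)\cong \GHR^-(K,\fra_K)\otimes V_1^{\otimes d_1}\otimes V_2^{\otimes d_2},
\]
where $V_1$ and $V_2$ are two-dimensional $\F$-vector spaces. Each of them is supported in two bigradings that differ in $A^R$, with the top generator at $A^R=0$. The argument goes as follows.

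\begin{itemize}
\item For a split block, real rectangles either stay in one block or must cross the block boundary and therefore contain an $X$. So the complex before collapsing is a tensor product over the ring $\F[u_1,\dots]$.
\item Collapsing identifies the new variable $u_i$ with $u$ (respectively the new $U_j$ with $u^2$). The unknot block's complex over $\F[u,u_i]$ has homology $\F[u_i]$.
\item Setting $u_i=u$ then turns the unknot factor into a mapping cone of $u_i-u$. By the generalization of \cite[Lemma~5.2.16]{OSS2015grid} stated in Section~\ref{sub:Strongly invertible links}, the result is $\GHR^-(K)$ tensored with a two-dimensional space, exactly as in Lemma~\ref{lem:U=U' leads to otimes W}.
\end{itemize}

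This is consistent with the rank count $2^{l_f+l_p-1}=2^{d_1+d_2}$ of Proposition~\ref{prop:number of tower for strongly invertible links}.

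Second step: read off the real Alexander gradings of the free part. By Proposition~\ref{prop:a single tower}, $\GHR^-(K,\fra_K)/\Tors\cong\F[u]$ is generated in $A^R=-\tau^R(K)/2$. Tensoring with $V_1^{\otimes d_1}\otimes V_2^{\otimes d_2}$ gives a free module whose generators sit at $A^R=-\tau^R(K)/2$ plus the $A^R$-shifts of the basis of the $V$'s. For the $\tau^R$-set to be a single value, every basis vector of each $V_i$ must have $A^R$-shift $0$; the two vectors may differ only in $M^R$. This differs from the $W$ of Lemma~\ref{lem:U=U' leads to otimes W}, which has $A^R$-degrees $0$ and $-1$.

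Main obstacle: this grading computation. One could first argue $V\cong \F_{(0,0)}\oplus\F_{(-1,0)}$ by the analogue of \cite[Lemma~8.4.2]{OSS2015grid}, where the new components contribute a factor with equal Alexander gradings after the normalization $A^R=\tfrac12(M^R_{\bfO}-M^R_{\bfX})-\tfrac{n-l_f-2l_p}{4}$. A cleaner route, which I would try first, is to compute explicitly on a small example: $\widehat{\GHR}$ of the $2$-component strongly invertible unlink and of $U_2$. This uses the absolute grading formula at the start of Section~\ref{sec:Basic properties and examples}, with the size-one block contributing $|\xv\cap C|$ terms and $l_f$, $l_p$ incrementing appropriately. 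The check is that the $-(n-l_f-2l_p)/4$ normalization exactly cancels the $\tfrac14|\xv\cap C|$ and $\tfrac12 M$ contributions of the new block, so that both of its generators have $A^R=0$. Once that is verified, $\tau^R_{\min}=\tau^R_{\max}=\tau^R(K,\fra_K)$ follows immediately from Definition~\ref{def:real tau set for links} and Corollary~\ref{cor:real tau set is an oriented link invariant}.
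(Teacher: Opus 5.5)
\paragraph{A genuine gap.} The content of this lemma is your second step, namely that the extra factor lives in a single $A^R$-level. Your first step does not supply it, and the mechanism you describe would in fact give the wrong answer.

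First, the block-split real grid complex is not a tensor product. A real grid state need not respect the blocks: it can match rows of one block with columns of another, $R$-symmetrically. The rectangles lying in rows of one block and columns of another contain no markings at all, so nothing kills them. (Also, a strongly invertible unknot cannot be a size-one block: one square cannot carry both an $O$ and an $X$, and the smallest such real block has size $3$.)

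Second, even granting a tensor product $C_K\otimes_{\F}C_U$ with $H(C_U)\cong\F[u_i]$, setting $u_i=u$ yields $C_K\otimes_{\F[u]}C_U\simeq C_K$, i.e.\ no doubling. That is exactly what the generalization of \cite[Lemma~5.2.16]{OSS2015grid} says: $H(\Cone(u_i-u))\cong H(C)$. The doubling lemma for $c\GCR^-/(U_i=U_j)$ applies only when the two variables already act homotopically, which is false for an independent split component.

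Third, if the second copy did arise from a null-homotopic $u_i-u$ (or $U_j-u^2$), it would sit $1/2$ (resp.\ $1$) lower in $A^R$ than the first. The normalization $-(n-l_f-2l_p)/4$ is a global shift and cannot undo a relative one, so this would contradict $\tau^R_{\min}=\tau^R_{\max}$. Your own description of $V_i$ as supported in two different $A^R$-levels reflects this tension. Checking one small example cannot repair it: without a natural K\"unneth isomorphism with a universal factor, there is nothing that transports the example's answer to a general $K$.

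\paragraph{The missing idea.} What you need is the correct source of the factor of two, and this is what the paper uses. Neck-stretching on a minimal real Heegaard diagram, together with invariance, gives $c\GHR^-(L')\cong c\GHR^-(L)\oplus c\GHR^-(L)$ with no $A^R$-shift for $L'=\cU_{1,0}(L)$ or $\cU_{0,1}(L)$. The lemma then follows by induction.

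Concretely, a fixed split unknot enters through a real connected sum with a genus-zero piece: $\alpha'$ and $\beta'=R(\alpha')$ meet in two real points $p,q$, and the new fixed $O$ and $X$ lie in the $R$-invariant lens between them. For $\cU_{0,1}$ one uses instead an $R$-interchanged pair of ordinary genus-zero pieces, each carrying one new $O$ and one new $X$. After stretching the neck:
\begin{itemize}
\item each generator of $L$ is doubled;
\item the local real domains between the two copies all cover a lens, hence a new $X$, so they drop out of the minus complex;
\item the two copies are joined by a domain containing equally many new $O$'s and $X$'s, so they have the same $A^R$.
\end{itemize}
That equality of $A^R$ is exactly what your argument needs and does not establish.
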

\begin{proof}
This follows from the lemma below by induction.
\end{proof}
\begin{lem}
Let $L$ be a (generalized) strongly invertible link with $l_f\ge 1$, and let $L'$ be $\cU_{1,0}(L)$ or $\cU_{0,1}(L)$. Then for any compatible choice of auxiliary data $\fra$ and $\fra'$, there is a bigraded isomorphism of $\F[u]$-modules \[c\GHR^-(L',\fra')=c\GHR^-(L,\fra)\oplus c\GHR^-(L,\fra).\] 
\end{lem}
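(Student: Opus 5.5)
We follow \cite[Lemma~8.4.2]{OSS2015grid}: build a real grid diagram for $L'$ from one for $L$ by a local insertion, then identify the collapsed complex as a mapping cone. Fix a real grid diagram $\cH$ for $(L,\fra)$, of size $n$.

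\emph{Case $L'=\cU_{1,0}(L)$.} Insert one new row and one new column, interchanged by $R$, with the new $O$ and $X$ marked on the fixed circle $C$. This gives a size-$1$ strongly invertible unknot, placed so that its $O$--$X$ labeling is compatible with the orientation and half-axis of $\fra$. After real cyclic permutations we may put it in a corner so that it is unlinked from $L$. Call the result $\cH'$.

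\emph{Case $L'=\cU_{0,1}(L)$.} Insert two rows and two columns and place a pair of $1\times1$ unknots interchanged by $R$, positioned off $C$ and away from the rest of the diagram.

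In both cases the new base points carry a variable that the collapsing identifies with the existing ones: the new fixed variable $u'$ becomes $u$, and the new $U$ becomes $u^2$. Compatibility of auxiliary data fixes which of the two admissible placements of the new $O$ and $X$ we use.

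The key step is to split the real grid states of $\cH'$ according to the corners of the new small square(s). Let $\mathbf{I}$ be the states that occupy the distinguished corner (or the pair of symmetric corners in the paired case), and let $\mathbf{N}$ be the rest. As in \cite[Lemma~8.4.2]{OSS2015grid}, $\mathbf{I}$ is a subcomplex or quotient complex, and projecting away the new curves identifies $\mathbf{I}$ and $\mathbf{N}$ each with $c\GCR^-(\cH)$, up to grading shifts. The component of the differential from $\mathbf{N}$ to $\mathbf{I}$ counts thin real rectangles (or pairs of rectangles) crossing the new marks. These cover either the new $O$ or the new $X$, so the map is $u'+u$, respectively $U'+u^2$, where the $X$-side term is zero in the minus complex. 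After collapsing $u'=u$ (and $U'=u^2$), the two terms cancel mod $2$, so the connecting map vanishes at the chain level. Alternatively, if the chosen local picture makes the off-diagonal term a nonzero map, we instead run the argument of Lemma~\ref{lem:U=U' leads to otimes W}, where the cone of $U'-u^2$ on $C[U']$ is quasi-isomorphic to $C$.

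The upshot should be that $c\GCR^-(\cH')$ is quasi-isomorphic to $c\GCR^-(\cH)\otimes V$, with $V$ two-dimensional. This gives $c\GHR^-(L')\cong c\GHR^-(L)\oplus c\GHR^-(L)$ as $\F[u]$-modules.

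The main obstacle is the bigrading. We must check that both summands land in the same bigrading, with no shift, after using the absolute normalizations of $M^R$ and $A^R$ from the start of Section~\ref{sec:Basic properties and examples}. In those normalizations $l_f$ or $l_p$ changes, and so does the correction term $-(n-l_f-2l_p)/4$. For instance, in the fixed case $|\xv\cap C|$ changes by $0$ or $1$ according as $\xv\in\mathbf{N}$ or $\mathbf{I}$, which shifts $M^R$ by $1/4$-type amounts. These must combine with the half-shifts of $M_{\bfO}$ computed in \cite{OSS2015grid}, and with the $+\tfrac14$ from $l_f\mapsto l_f+1$, to give total shift $(0,0)$ on both summands.

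I would verify this first on the trivial example: $L$ the unknot and $L'$ the two-component unlink with both components fixed. There one can compute $\widehat{\GHR}$ directly from a size-$2$ diagram with $\xv^{\mathrm{NW}}$ normalized to $0$, and check that both generators sit in bigrading $(0,0)$. That check pins the shifts, and the general case differs only by the $\mathbf{I}$/$\mathbf{N}$ local count.
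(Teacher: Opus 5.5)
There is a genuine gap, and it is in the step you call the key step. The mechanism you describe is that of a (quasi-)stabilization, not of adding a split component, and neither of your two outcomes yields the statement.

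A connecting map of the form $u'+u$ (or $U'+u^2$) arises when the two thin rectangles through the new corner cover an old and a new $O$ on the \emph{same} component, as in the $(0,3)$-stabilization computation of Section~\ref{sub:Strongly invertible links}. By your own description, the rectangles here cover the new $O$ or the new $X$. That gives $u'$, not $u'+u$.

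More decisively, suppose the map does vanish after collapsing. Then the cone splits as two copies of $c\GCR^-(\cH)$, shifted relative to each other by the degree of the map plus $(1,0)$. That shift is $(0,-\tfrac12)$ for $u'+u$, and $(-1,-1)$ for $U'+u^2$; the latter is the algebra of Lemma~\ref{lem:U=U' leads to otimes W}, with $W=\F_{(0,0)}\oplus\F_{(-1,-1)}$. Either way the two summands lie in different real Alexander gradings. This contradicts the statement, and also its only use, namely $\tau^R_{\min}=\tau^R_{\max}$ for $\cU_{d_1,d_2}(K)$.

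If instead you take the uncollapsed cone of $U'-u^2$ on $C[U']$, you get a single copy of $C$. That would say $c\GHR^-(L')\cong c\GHR^-(L)$, which contradicts Proposition~\ref{prop:number of tower for strongly invertible links}: passing to $\cU_{1,0}(L)$ or $\cU_{0,1}(L)$ raises $l_f+l_p$ by one and so doubles the number of towers.

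The two-dimensional factor has to come from a pair of generators whose connecting index-one domains contain no basepoints and contribute zero to the differential, like the $S^1\times S^2$ factor of a split union classically. The paper's argument supplies exactly this. On a minimal real Heegaard diagram the new component(s) are attached through an equivariant neck, and neck-stretching splits the complex as the complex for $L$ tensored with this basepoint-free two-generator piece. Invariance then transfers the result to $c\GHR^-$.

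The local models are also not available as stated. In a (real) grid, each row and column carries exactly one $O$ and one $X$, so every component uses at least two rows and two columns. A strongly invertible component needs its fixed $O$, its fixed $X$ and at least one $R$-pair, hence at least three new rows and columns. An $R$-swapped pair of unknots needs two disjoint $2\times 2$ blocks, hence four new rows and four new columns. So there is no ``size-$1$'' unknot and no pair of ``$1\times 1$'' unknots, and for the same reason the size-$2$ diagram in your sanity check does not exist.

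Once the inserted block has size at least three, splitting states according to a single corner does not identify $\mathbf{I}$ and $\mathbf{N}$ with copies of $c\GCR^-(\cH)$. Most real grid states of $\cH'$ pair old curves with new ones, and showing that these contribute nothing is the content a combinatorial proof would have to supply.

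Finally, the grading check you identify as the main obstacle is left undone. By the above, what must actually be established is that the two copies share the same real Alexander grading, and your proposed mechanism gives the opposite.
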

\begin{proof}
This follows from a neck-stretching argument on a minimal real Heegaard diagram and the invariance of real link Floer homology. 
\end{proof}

\subsection{Proof of the main theorem}\label{sub: Proof of the main theorem for tauR}
\begin{prop}\label{prop:change of tau is bounded by number of saddle moves}
Suppose that $L_1$ and $L_2$ are two strongly invertible links (not generalized!) with same number of components so that $L_2$ is constructed from $L_1$ by $g$ paired saddle moves. Fix any compatible auxiliary data $\fra_i$ $(i=1,2)$ on them. If $\tau^R_{\min}(L_i,\fra_i)=\tau^R_{\max}(L_i,\fra_i)$ for $i=1,2$, then\[\vert \tau^R_{\min}(L_1,\fra_1)-\tau^R_{\min}(L_2,\fra_2)\vert =\vert \tau^R_{\max}(L_1,\fra_1)-\tau^R_{\max}(L_2,\fra_2)\vert\le g.\] 
\end{prop}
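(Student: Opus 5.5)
We induct on the sequence of paired saddle moves, following the proof of \cite[Proposition~8.4.1]{OSS2015grid}. Write the sequence as $L_1=M_0\to M_1\to\cdots\to M_g=L_2$, where each step is a paired split, paired merge, or paired merge-split move, and carry compatible auxiliary data along it (Definition~\ref{def:compatible auxiliary data on cobordism}). Paired moves never touch the fixed points, so $l_f$ is constant, and it is at least $1$ because $L_1$ is strongly invertible with components. Hence every $c\GHR^-(M_j)$ is defined, and the real $\tau$-sets of Definition~\ref{def:real tau set for links} make sense at every stage. Since $L_1$ and $L_2$ are genuinely strongly invertible with the same number of components, $l_p=0$ at both ends. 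The sequence therefore contains equally many paired splits and paired merges, say $s$ of each, together with $g-2s$ paired merge-split moves.

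The estimates come from Theorem~\ref{thm:comparison of real tau sets}. A paired merge-split move changes both $\tau^R_{\min}$ and $\tau^R_{\max}$ by at most $1$. A paired split can decrease $\tau^R_{\min}$ by up to $2$ but cannot increase it, and it can increase $\tau^R_{\max}$ by up to $2$ but cannot decrease it. A paired merge is the inverse of a split, so its bounds are obtained by exchanging $L$ and $L'$: it can increase $\tau^R_{\min}$ by up to $2$, it can decrease $\tau^R_{\max}$ by up to $2$, and it moves neither in the opposite direction.

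I will run this bookkeeping on $\tau^R_{\max}$ to get a lower bound on $\tau^R_{\max}(L_2)$, and on $\tau^R_{\min}$ to get an upper bound on $\tau^R_{\min}(L_2)$. Splits only help $\tau^R_{\max}$, each merge lowers it by at most $2$, and each merge-split lowers it by at most $1$. This gives
\[\tau^R_{\max}(L_2)\ge \tau^R_{\max}(L_1)-2s-(g-2s)=\tau^R_{\max}(L_1)-g.\]
By the symmetric argument, $\tau^R_{\min}(L_2)\le\tau^R_{\min}(L_1)+g$. Now the hypothesis $\tau^R_{\min}=\tau^R_{\max}$ at both ends lets me combine these. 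Write $t_i$ for the common value at $L_i$. The two inequalities give $t_1-g\le t_2\le t_1+g$, which is the claim. The equality of the two absolute values in the statement is immediate from the hypothesis.

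The main obstacle is the case count. Naively, a split followed by a merge could move $\tau^R$ by $4$ using only $2$ moves, so the argument must show that opposite-direction bounds always pair each split with a merge. The bookkeeping above does this: in each one-sided estimate, splits contribute nothing and merges contribute $2$, so each split–merge pair costs at most $2$, matching the $2$ moves it uses. I also need to check that Theorem~\ref{thm:comparison of real tau sets} applies with the stated directions even at intermediate stages where $l_p>0$, and that the $W$-factors do not disturb the definition of the $\tau$-sets. Both should follow from Lemma~\ref{lem:U=U' leads to otimes W}, since tensoring with $W$ contributes a copy shifted by $(-1,-1)$ together with an unshifted copy.
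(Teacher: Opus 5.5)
Your proposal is correct and is essentially the paper's proof. The paper likewise observes that $l_p=0$ at both ends forces equally many paired split and paired merge moves, with the remaining moves being paired merge-splits, and concludes by iterating Theorem~\ref{thm:comparison of real tau sets}. Your one-sided bookkeeping (a lower bound for $\tau^R_{\max}$ and an upper bound for $\tau^R_{\min}$, combined using $\tau^R_{\min}=\tau^R_{\max}$ at the ends) spells out what that iteration leaves implicit. Your worry about intermediate stages is unnecessary, since the theorem is stated for arbitrary generalized strongly invertible links with $l_f\ge 1$.
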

\begin{proof}
Since $l_p=0$ at the beginning and the end, the number of paired merge moves and paired split moves must be equal, say $k$. And the remaining $g-2k$ are paired merge-split moves. Then the proposition follows from iterated use of Theorem~\ref{thm:comparison of real tau sets}.
\end{proof}

\begin{prop}\label{prop:real tau is kept by merging unknots}
Suppose that $K_1$ is a strongly invertible knot and $L_1$ is a strongly invertible link obtained from $\cU_{d_1,d_2}(K_1)$ by $d_2$ paired merge moves, then for any compatible auxiliary data on $\fra_K$ and $\fra_L$ on them,  $\tau^R(K_1,\fra_K)=\tau^R_{\min}(L_1,\fra_L)=\tau^R_{\max}(L_1,\fra_L)$. 
\end{prop}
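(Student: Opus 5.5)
We combine the lemma on adding unknots with Theorem~\ref{thm:comparison of real tau sets}. By the lemma preceding this subsection (iterating the $\cU_{1,0}$, $\cU_{0,1}$ isomorphism), $c\GHR^-(\cU_{d_1,d_2}(K_1),\fra')$ is a direct sum of $2^{d_1+d_2}$ copies of $c\GHR^-(K_1,\fra_K)$. Hence its $\tau^R$-set consists of $2^{d_1+d_2}$ copies of $\tau^R(K_1,\fra_K)$, so
\[\tau^R_{\min}(\cU_{d_1,d_2}(K_1))=\tau^R_{\max}(\cU_{d_1,d_2}(K_1))=\tau^R(K_1,\fra_K).\]
This holds for the compatible auxiliary data $\fra'$ obtained from $\fra_K$ by fixed-point and paired births.

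Next we follow the $d_2$ paired merge moves one at a time, obtaining links $\cU_{d_1,d_2}(K_1)=L^{(0)},L^{(1)},\ldots,L^{(d_2)}=L_1$, with auxiliary data compatible at each step. Each $L^{(j-1)}$ is obtained from $L^{(j)}$ by a paired split move. The first item of Theorem~\ref{thm:comparison of real tau sets}, read with $L=L^{(j)}$ and $L'=L^{(j-1)}$, gives
\[\tau^R_{\min}(L^{(j-1)})\le \tau^R_{\min}(L^{(j)})\quad\text{and}\quad \tau^R_{\max}(L^{(j)})\le \tau^R_{\max}(L^{(j-1)}).\]
Chaining these over $j$ yields
\[\tau^R(K_1)=\tau^R_{\min}(L^{(0)})\le \tau^R_{\min}(L_1)\le \tau^R_{\max}(L_1)\le \tau^R_{\max}(L^{(0)})=\tau^R(K_1).\]
All inequalities are therefore equalities, which proves the claim.

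Two points need checking. First, the notion of compatible auxiliary data on $L_1$ must agree with the data propagated through the moves. Second, the result should not depend on which compatible $\fra_L$ is chosen. Both follow from Definition~\ref{def:compatible auxiliary data on cobordism}, since the moves act away from the fixed components and hence leave data (1) and (2) on them unchanged, up to the equivalence of auxiliary data. The main obstacle is confirming that the unknot-adding lemma applies with gradings exactly preserved, i.e.\ that the neck-stretching isomorphism is bigraded with no shift in $A^R$. If the $\tau^R$-set of $\cU_{d_1,d_2}(K_1)$ were shifted, the squeeze would fail. I would verify this against the absolute grading formula from the beginning of Section~\ref{sec:Basic properties and examples}, checking that the correction term $-(n-l_f-2l_p)/4$ compensates correctly when components are added.
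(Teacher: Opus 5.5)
Your proof is correct and is essentially the paper's argument. The paper starts from the unknot-adding lemma, which gives $\tau^R_{\min}=\tau^R_{\max}=\tau^R(K_1)$ for $\cU_{d_1,d_2}(K_1)$, and then inducts on $d_2$ using the lemma that $\tau^R_{\min}=\tau^R_{\max}$ is preserved by a single paired merge move; that lemma's proof is exactly your squeeze from the paired-split inequalities of Theorem~\ref{thm:comparison of real tau sets}, and you simply unroll the induction and track the common value explicitly. Your concern about a possible $A^R$-shift does not need separate checking here, since the paper states the unknot-adding isomorphism as bigraded and uses it the same way.
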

The proposition is proved by induction on $d_2$ and the following lemma.
\begin{lem}\label{lem: min=max is kept by a single merge move}
Suppose that $(L',\fra')$ is strongly invertible link with auxiliary data with the property that $\tau^R_{\min}(L',\fra')=\tau^R_{\max}(L',\fra')$ and $(L,\fra)$ is obtained from $(L',\fra')$ by a paired merge move, then $\tau^R_{\min}(L,\fra)=\tau^R_{\max}(L,\fra)$.
\end{lem}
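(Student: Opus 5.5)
We compare $c\GHR^-$ of $L'$ and $L$ through the paired split/merge maps of Proposition~\ref{prop:band maps}, and then pin down all tower gradings of $L$ with Proposition~\ref{prop:number of tower for strongly invertible links}.

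\textbf{Setup.} Since $L$ is obtained from $L'$ by a paired merge move, $L'$ is obtained from $L$ by a paired split move. Because $L$ is strongly invertible (not generalized), $l_p(L)=0$ and $l_p(L')=1$. Let $l_f$ be the number of fixed components, so $l_f(L')=l_f(L)=l_f$. By Proposition~\ref{prop:number of tower for strongly invertible links}, $c\GHR^-(L)/\Tors$ has rank $2^{l_f-1}$ and $c\GHR^-(L')/\Tors$ has rank $2^{l_f}$.

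Proposition~\ref{prop:band maps}(1) gives maps
\[\sigma_p\colon c\GHR^-(L,\fra)\otimes W\to c\GHR^-(L',\fra'),\qquad \mu_p\colon c\GHR^-(L',\fra')\to c\GHR^-(L,\fra)\otimes W,\]
with both composites equal to $u^2$. By Lemma~\ref{lem:injectivity on torsion submodule}, applied in both directions, these induce injective maps between the free parts. Both free parts have rank $2^{l_f}$, so over $\F[u]$ each induced map becomes an isomorphism after inverting $u$.

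\textbf{Key steps.}
\begin{enumerate}
\item Track Alexander gradings. $W=\F_{(0,0)}\oplus\F_{(-1,-1)}$, so the free part of $c\GHR^-(L)\otimes W$ has towers with $\tau^R$-values $\{\tau_i\}\cup\{\tau_i+2\}$, where $\{\tau_i\}$ is the $\tau^R$-set of $L$. Recall that $\tau^R$ is $-2A^R$, and a shift of $A^R$ by $-1$ adds $2$ to $\tau^R$.
\item Use the hypothesis. The $\tau^R$-set of $L'$ is concentrated at a single value $t$.
\item Compare degrees. $\sigma_p$ has Alexander degree $0$. A degree-$0$ map that is injective on free parts, from towers generated at $A^R$-levels $a_j$ into towers generated uniformly at level $-t/2$, forces $a_j\le -t/2$ for every $j$, i.e.\ every tower value of $L\otimes W$ is at least $t$. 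The reason is that the image of a generator is a nonzero non-torsion homogeneous element, hence lies at or below the top generator level of the target.
\item Reverse the comparison. $\mu_p$ has Alexander degree $-1$. Applied to the generators of $L'$ at level $-t/2$, its image is nonzero in the free part and lands in level $-t/2-1$. This gives $\max_j(\text{tower values of } L\otimes W)\le t+2$.
\end{enumerate}

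Steps 3 and 4 only give $\tau_{\min}(L)\ge t$ and $\tau_{\max}(L)+2\le t+2$, i.e.\ $\tau_{\max}(L)\le t$. Together with $\tau_{\min}(L)\le\tau_{\max}(L)$, these force $\tau_{\min}(L)=\tau_{\max}(L)=t$, which is the lemma.

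\textbf{Main obstacle.} The delicate point is the justification in Steps 3 and 4 that these degree inequalities bound \emph{every} generator of the target's free part, not just $\min$ and $\max$ in the sense of Theorem~\ref{thm:comparison of real tau sets}. I would argue it as follows. After inverting $u$, a graded injection of free modules whose ranks agree is an isomorphism of $\F[u,u^{-1}]$-modules. Hence a homogeneous basis element of the source maps to $u^{k}$ times a combination of target generators with $k\ge 0$, and this yields the inequality for each generator.

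The care needed is in the sign conventions, since $u$ has degree $(-1,-1/2)$ and a higher $A^R$ means a smaller $\tau^R$. The directions of the inequalities in Steps 3 and 4 must be matched with those in Theorem~\ref{thm:comparison of real tau sets}. If my orientation is reversed, the same two-sided squeeze still works with $\sigma_p$ and $\mu_p$ swapped.
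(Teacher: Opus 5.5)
Your argument is correct and is essentially the paper's. The paper just applies Theorem~\ref{thm:comparison of real tau sets} to the paired split move $L\to L'$; its inequalities $\tau^R_{\min}(L')\le\tau^R_{\min}(L)$ and $\tau^R_{\max}(L)\le\tau^R_{\max}(L')$ give exactly your squeeze, and your Steps 3--4 re-derive them from $\sigma_p,\mu_p$ as in that theorem's proof.

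Two small remarks. First, Step 4 as first phrased (one nonzero image) only bounds the smallest tower value, so the rank argument of your last paragraph is genuinely needed: every generator of $(c\GHR^-(L)\otimes W)/\Tors$ must occur in some $\mu_p(h)$. Second, you need not assume $l_p(L)=0$, since the lemma is applied inductively to generalized links, and the same argument works with ranks $2^{l_f+l_p-1}$ and $2^{l_f+l_p}$.
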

\begin{proof}
This follows from Theorem~\ref{thm:comparison of real tau sets}.
\end{proof}
Before proving Theorem~\ref{thm:real taut bounds genus of cobordism}, we need an extra topological input. 

\begin{prop}\label{prop:rearrangement of saddlemoves in an equivariant cobordism}
Suppose that two strongly invertible knots $K_1$ and $K_2$ can be connected by an equivariant smooth, oriented genus $g$ cobordism $W$ inside $(S^3\times [-1,3], \tau\times\id)$. Then there are strongly invertible links $L_1$ and $L_2$ and pairs of non-negative integers $(b_1,b_2)$ and $(d_1,d_2)$ with the following properties. \begin{itemize}
    \item $\cU_{b_1,b_2}(K_1)$ can be obtained from $L_1$ by simultaneously performing $b_2$ paired saddle moves.
    \item $L_1$ and $L_2$ can be connected by a sequence of $g+d_1$ paired saddle moves.
    \item  $\cU_{d_1,d_2}(K_2)$ can be obtained from $L_2$ by simultaneously performing $d_2$ paired saddle moves.
\end{itemize} 
\end{prop}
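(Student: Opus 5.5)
We follow the proof of the corresponding non-equivariant rearrangement result in \cite[Section~8.4]{OSS2015grid} (Lemma~8.4.x there), carried out in the equivariant category. Our plan has three stages.

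\textbf{Step 1: normal form.} Perturb $W$ equivariantly so that the projection $p\colon W\to[-1,3]$ is an equivariant Morse function. Then rearrange critical values equivariantly: all index $0$ points move to levels near $-1/2$, all index $1$ points to levels in $(0,2)$, and all index $2$ points to levels near $5/2$.

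Equivariant rearrangement of critical points works as in the classical case. An index $i$ critical point may be pushed below an index $j\ge i$ critical point whenever their ascending/descending manifolds are disjoint, which holds after a generic equivariant perturbation. For a pair of interchanged critical points we move both simultaneously. For a fixed critical point we move it along the fixed direction. Genericity within the space of $\tau$-invariant gradient-like vector fields suffices here. I expect this to be the main technical point, since transversality must be achieved among invariant objects. For stable/unstable manifolds of points of different index, the standard argument (dimension counting in the invariant setting, as in \cite{BDMS26}) should go through.

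After this step, the level just after the births is $\cU_{b_1,b_2}(K_1)$. Here $b_1$ counts fixed births and $b_2$ counts paired births. Dually, the level just before the deaths is $\cU_{d_1,d_2}(K_2)$.

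\textbf{Step 2: convert births into merges.} Connectedness of $W$ means each newly born unknot must eventually merge with the rest. Choose saddles (paired, for the paired unlinks) that join each born pair of interchanged unknot components to the rest. Rearrange these $b_2$ paired saddles to occur first and simultaneously. The result is the link $L_1$, from which $\cU_{b_1,b_2}(K_1)$ is obtained, after reading the cobordism in the right direction, by $b_2$ simultaneous paired saddle moves.

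The fixed births cannot be absorbed by paired saddles. We would let them remain as strongly invertible components of $L_1$, consistent with the statement's requirement that $L_1$ be strongly invertible, though not necessarily connected or generalized. Symmetrically, we obtain $L_2$ and $d_2$ paired saddles before the deaths.

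\textbf{Step 3: count the remaining saddles.} The remaining index $1$ critical points lie between $L_1$ and $L_2$. Use the Euler characteristic identity
\[\chi(W)=-2g=(b_1+2b_2)+(d_1+2d_2)-\#\{\text{index }1\}.\]
The index-$1$ count is taken with paired points counting $2$, and fixed saddles counting $1$.

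The key difficulty, and the point where I am least sure, is showing that the middle portion uses only paired saddle moves, exactly $g+d_1$ pairs. One must justify that fixed saddles can be avoided or traded. A fixed saddle changes $l_f$ by one, so the fixed saddles pair up with fixed births and deaths. We would try to cancel fixed births against fixed saddles equivariantly along the fixed set, and absorb the mismatch into $d_1$ fixed unknots that stay as components of $L_2$ until their deaths.

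Bookkeeping with $l_f$ and $l_p$ at both ends, which equal $0$ pairs for the strongly invertible links $L_1$ and $L_2$, then yields the count $g+d_1$.
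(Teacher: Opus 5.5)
Your Steps~1--2 and the Euler characteristic count parallel the paper's argument. There is, however, a genuine gap exactly where you flag it, and the remedy you suggest does not work.

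The statement needs every saddle between $L_1$ and $L_2$ to be \emph{paired}, and the count $g+d_1$ needs $b_1=d_1$. Neither holds for an arbitrary equivariant Morse position of $W$, because fixed saddles may occur. Cancelling fixed saddles against fixed births is impossible in general. An equivariant isotopy of $W$ restricts to an isotopy of the fixed $1$-manifold $A=\fix(\tau|_W)$ inside the fixed annulus, so critical points of $p|_A$ forced by the topology of $A$ can never disappear. For instance, take a fixed birth followed by a fixed saddle joining the two fixed points of the newborn unknot. This splits it into an interchanged pair, which later merges with the rest via paired saddles. It produces a circle component of $A$ whose two critical points, of index $0$ and $1$ on $W$, cannot be cancelled.

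Your Step~2 suffers from the same problem. A born interchanged pair may be connected to the rest only through fixed saddles, e.g.\ a fixed merge fusing the pair into one strongly invertible component, followed by a fixed merge with $K_1$. Then there is no paired saddle to select.

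The missing idea is the paper's \emph{handle trade}, which removes fixed $1$-handles while keeping the number of fixed critical points unchanged. A fixed saddle is either a local minimum of $p|_A$ (it creates two fixed points in the level set) or a local maximum of $p|_A$ (it destroys two). By a local equivariant isotopy one replaces it by a fixed $0$-handle plus a pair of $1$-handles in the first case, and by a fixed $2$-handle plus a pair of $1$-handles in the second (Figure~\ref{fig:trading handles}). This must be done \emph{before} rearranging. Afterwards:
\begin{itemize}
\item all index-$1$ points are paired, hence disjoint from the fixed set, so the moves of \cite[Appendix~B]{OSS2015grid} can be performed in pairs to make them simultaneous;
\item born pairs are necessarily joined to the rest by paired bands;
\item $b_1=d_1$ holds because paired saddles do not change the number of fixed points of a level set;
\item your identity gives $g+d_1+b_2+d_2$ pairs in total, i.e.\ $g+d_1$ in the middle.
\end{itemize}

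Note also that your appeal to equivariant genericity in Step~1 is not valid at fixed critical points. A flow line of an invariant gradient-like field through a point of $A$ stays in $A$. So, e.g., two fixed saddles adjacent along $A$, one a local minimum and one a local maximum of $p|_A$, can never be moved to the same level. This is another reason the trade has to come first.
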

We sketch a proof of this proposition following the strategy from \cite[Appendix B]{OSS2015grid}. Here, we need an extra trick in handle trading, which was inspired by a discussion with Gary Guth.

\begin{proof}
Let $p$ be the projection $S^3\times [-1,3]\to[-1,3]$. By isotoping $W$ equivariantly, we may assume that $f_W=p|_{W}:W\to [-1,3]$ is an equivariant Morse function with critical values in $[0,2]$. This induces a handle decomposition of $W$. For $i=0,1,2$, and any $i$-handle $h$, it may be itself invariant under the involution or there is another $i$-handle $h'$ such that $\tau$ interchanges $h$ and $h'$ by a diffeomorphism. We call the first kind a \emph{fixed handle} and the second kind a \emph{pair of handles}. Our claim is that we can eliminate fixed $1$-handles at the cost of increasing the total number of handles. From the Morse function point of view, it is equivalent to that by isotoping $W$ equivariantly, we can achieve that $f_W$ has its index $1$ critical points appear in pairs. This is done as follows: If a fixed 1-handle introduces more fixed points to the level set, then we can trade it with a fixed $0$-handle and a pair of $1$-handles; if a fixed 1-handle decreases the number of fixed points in the level set, then we can trade it with a fixed $2$-handle and a pair of $1$-handles. The procedure is illustrated in Figure~\ref{fig:trading handles}.

\begin{figure}
    \centering
    \includegraphics[width=1.0\linewidth]{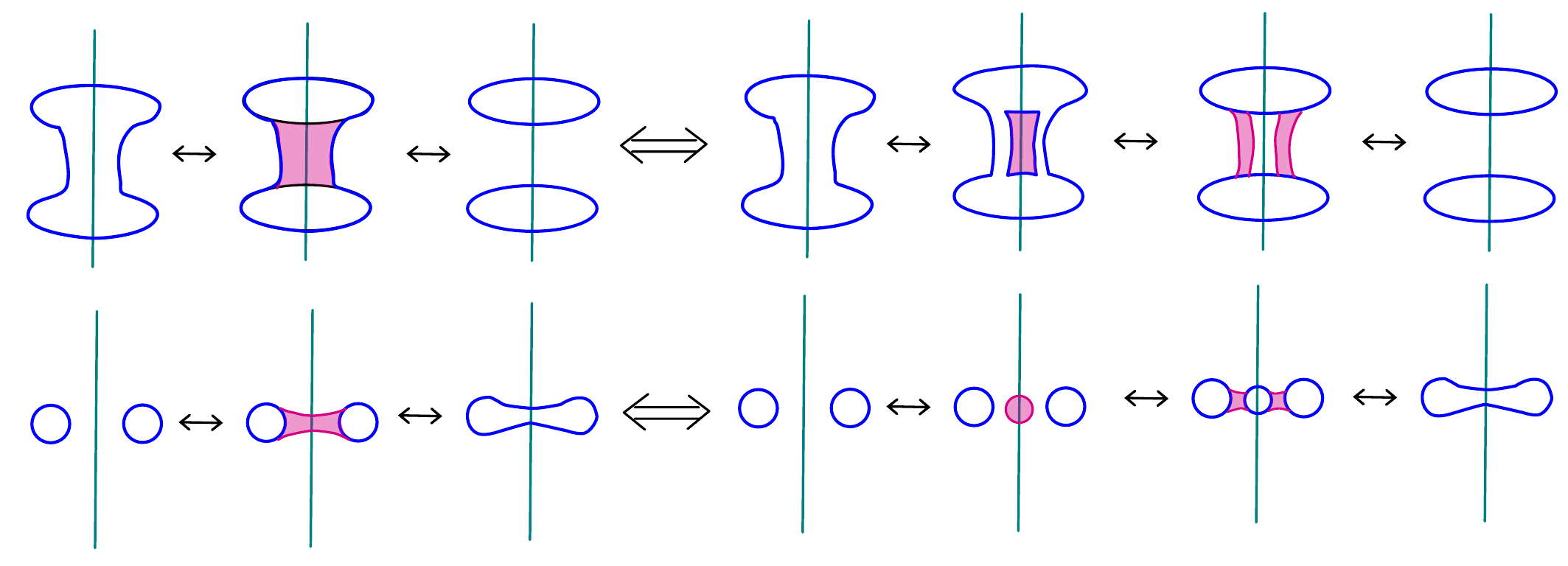}
    \caption{Eliminate fixed $1$-handles.}
    \label{fig:trading handles}
\end{figure}

After this, we can assume that $f_W$ has all its index 1 critical points appear in pairs. Then as in the prove of \cite[Lemma B.5.3]{OSS2015grid}, we can connect fixed index $0$ critical points to $S^3\times \{0\}$ using disjoint arcs inside $\fix(\tau)\times [0,2]$ and paired index $0$ critical points to $S^3\times \{0\}$ using disjoint and equivariant pairs of arcs. We may assume all these arcs are disjoint by choosing them generically. In this way, we can push all index $0$ critical points down to $S^3\times \{0\}$. In a similar fashion, we can push all index $2$ critical points up to $S^3\times \{2\}$.

Now, we deal with $1$-handles. The key observation is that since all these handles appear in pairs, they must be away from the fixed set. We can just perform moves in \cite[Figure B.25]{OSS2015grid} in pairs to arrange all these index $1$ critical points to appear simultaneously at level $f_W^{-1}(1)$.

After this, we take $\cU_{b_1,b_2}(K_1)$ as $f^{-1}_{W}(0.1)$ and $\cU_{d_1,d_2}(K_2)$ as $f^{-1}_{W}(1.9)$. Since the cobordism is connected, for each of the $b_2$ pairs of 2-component unlinks, up to some band slides along other pair of $1$-handles, there must be a pair of bands (a pair of equivariant $1$-handles) joining them to $K_1$ or one of the $d_1$ strongly invertible unknots. For each of the $b_2$ pairs, we pick such a pair of $1$-handles, attaching them to $\cU_{b_1,b_2}(K_1)$ to get a strongly invertible link $L_1$. Playing this trick upside down, we obtain $L_2$. Now the cobordism appear as a composition $K_1\to\cU_{b_1,b_2}(K_1)\to L_1\to L_2\to\cU_{d_1,d_2}(K_2)\to K_2$. Since paired $1$-handles do not change the number of fixed points, so $b_1=d_1$ and $\vert L_1\vert =\vert L_2\vert$. Finally, an Euler characteristic calculation shows that the middle cobordism $L_1\to L_2$ has $\chi=-2g-2d_1$. Since it has only paired index $1$ critical points, it follows that the cobordism consists of $g+d_1$ pairs of paired saddle moves.
\end{proof}

\begin{proof}[Proof of Theorem~\ref{thm:real taut bounds genus of cobordism}]
Without loss of generality, we assume that $p|_{S}$ is an equivariant Morse function and that $p|_{S}$ has the minimal number of fixed critical points for all $S'$ isotopic to $S$. Again, to simplify the notation, we omit auxiliary choices from it during the proof. Apply the construction in Proposition~\ref{prop:rearrangement of saddlemoves in an equivariant cobordism} to $S$, we get a decomposition of $S$ into three stages: $K_1\to L_1\to L_2\to K_2$. Note that rearranging heights of critical points does not increase the number of equivariant critical points and the trading process in Figure~\ref{fig:trading handles} also keeps it unchanged. In the final decomposition, we only have fixed index $0$ and $2$ critical points and the numbers of them are both $d_1$. Thus, we have $2d_1=m$. By Proposition~\ref{prop:real tau is kept by merging unknots}, $\tau^R(K_1)=\tau^R_{\min}(L_1)=\tau^R_{\max}(L_1)$,  $\tau^R(K_2)=\tau^R_{\min}(L_2)=\tau^R_{\max}(L_2)$. By Proposition~\ref{prop:change of tau is bounded by number of saddle moves}, $\vert \tau^R_{\min}(L_1)-\tau^R_{\min}(L_2)\vert =\vert \tau^R_{\max}(L_1)-\tau^R_{\max}(L_2)\vert\le g+d_1$. It follows that $\vert \tau^R(K_1)-\tau^R(K_2)\vert \le g+d_1$. This concludes the proof.
\end{proof}

We end this subsection with an estimation of $m$. 

\begin{thm}\label{thm:minimizing m on a slice surface}
Let $K$ be a strongly invertible knot in $S^3$. If $S\subset (B^4,\tau_c)$ is an equivariant slice surface of $K$ with a single arc as fixed set, then $m=1$.
\end{thm}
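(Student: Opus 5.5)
The plan is to prove the two inequalities $m\ge 1$ and $m\le 1$ separately. Both rest on a local observation: the fixed critical points of $r|_{S'}$ are exactly the critical points of $r$ restricted to the fixed arc $\gamma'=\fix(\tau_c)\cap S'$.

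\emph{Local observation.} Let $p\in\gamma'$ be a fixed point. Then $\tau_c$ acts on $T_pS'$ as a reflection, with $+1$-eigenspace $T_p\gamma'$ and $(-1)$-eigenspace the normal line to $\gamma'$ in $S'$. Since $r$ is $\tau_c$-invariant, $d(r|_{S'})_p$ is a $\tau_c$-invariant covector. Hence it vanishes on the $(-1)$-eigenspace. So $p$ is critical for $r|_{S'}$ if and only if it is critical for $r|_{\gamma'}$. (Near the origin I would work with $r^2$ instead, to keep everything smooth.)

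\emph{Lower bound $m\ge 1$.} For any $S'$ equivariantly isotopic to $S$, the fixed set $\gamma'$ is still a single properly embedded arc. Its two endpoints are the two points of $K\cap C$, where $r=1$, and its interior lies in $\mathrm{int}(B^4)$, where $r<1$. So $r|_{\gamma'}$ attains an interior minimum, which by the local observation is a fixed critical point of $r|_{S'}$. Thus $m\ge 1$. A Morse count on the interval, with both ends at the top value, even shows that the number of such points is always odd.

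\emph{Upper bound $m\le 1$.} The fixed set $\fix(\tau_c)$ is the real disk $D^2=B^4\cap\R^2$, and $\gamma\subset D^2$ is a proper arc with endpoints $a,b\in\partial D^2$. Any two proper arcs in a disk with the same endpoints are isotopic rel boundary. So there is an isotopy of $D^2$, rel a collar of $\partial D^2$, carrying $\gamma$ to the straight chord $[a,b]$; I would also straighten a collar of $\gamma$ near $\partial D^2$ to be radial. Along the chord, $r^2$ is a quadratic function with exactly one nondegenerate minimum and no other critical points.

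Next I extend this isotopy to a $\tau_c$-equivariant ambient isotopy of $B^4$ fixed near $S^3$. The normal bundle of $D^2$ is the trivial bundle $i\R^2$, on which $\tau_c$ acts by $-1$. Extend the isotopy as $\phi_t\times\mathrm{id}$ on a tubular neighbourhood, then cut off with an invariant bump function. Applying this ambient isotopy to $S$ gives an equivariantly isotopic surface $S''$ whose fixed arc is the chord, and which agrees with $S$ near $S^3$, so $\partial S''=K$.

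Finally, perturb $S''$ equivariantly, keeping a neighbourhood of the chord fixed except for a tiny adjustment at the minimum $p$. At $p$ the Hessian of $r^2|_{S''}$ splits along the $\pm1$-eigenspaces. It is nondegenerate in the arc direction, and a small $\tau_c$-equivariant bump (even in the normal coordinate) makes it nondegenerate in the normal direction too. Away from the arc, the action is free, so a generic equivariant perturbation (perturb on a fundamental domain and symmetrize) makes $r|_{S''}$ Morse without creating fixed critical points. The result is a generic $S'$ with exactly one fixed critical point, so $m\le1$, and therefore $m=1$.

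\emph{Expected main obstacle.} The delicate step is this last genericity: achieving Morse-ness of $r|_{S'}$ equivariantly while controlling the fixed locus, together with making sure the isotopy extension keeps $S$ embedded and fixes $K$. I would handle both using the equivariant tubular-neighbourhood description above, with collars near $S^3$ where $\partial_r\neq 0$ on $S$, so that no critical points appear near the boundary.
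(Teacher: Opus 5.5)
Your argument is correct, and it takes a genuinely different route from the paper.

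\textbf{The paper's route.} The paper pushes $S$ off the origin into $S^3\times I$ and works with the handle decomposition of $S$ induced by the height function. First it trades each fixed $1$-handle whose feet are swapped by $\tau$ (descending direction normal to the fixed arc $A$), and each fixed $2$-handle, for paired handles plus a fixed handle of lower index. After this, at every fixed critical point the index on $S$ equals the index on $A$. Then it repeatedly picks an adjacent local minimum/maximum pair $(a,b)$ on $A$. It frees the fixed $0$-handle $D_a$ from the other bands by shrinking it to $D_a'$ and introducing paired $0$- and $1$-handles, slides these over $B_b$, and cancels $D_a'$ against $B_b$.

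\textbf{Your route.} You use the ambient fact that the fixed set of $\tau_c$ is the real $2$-disk. This reduces the problem to straightening an arc in a disk rel endpoints. You extend that isotopy equivariantly using the product structure $D^2\times i\R^2$ of the normal bundle, then do an equivariant Morse perturbation away from the arc, plus a normal-even adjustment at $p$ that does not move the arc.

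\textbf{Comparison.} Your argument is shorter and avoids having to justify that the handle slides and cancellations can be realized by equivariant isotopies in the $4$-manifold. It also makes the lower bound $m\ge 1$ explicit, which the paper leaves implicit. The paper's approach buys finer control of the resulting handle decomposition: no fixed $2$-handles remain, and the unique fixed critical point is an index-$0$ point of $S$. This is the same language as the Morse-move decompositions in the proof of Theorem~\ref{thm:real taut bounds genus of cobordism}. Your perturbation does not control the sign of the normal Hessian at $p$, so $p$ could have index $1$ on $S'$. That is harmless for the statement: only the count $m$ matters, and the trade in Figure~\ref{fig:trading handles} turns such a point into a fixed index-$0$ point plus a pair of $1$-handles without changing the count.

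\textbf{Two small points to tidy.}
\begin{itemize}
\item An isotopy of $D^2$ rel a collar of $\partial D^2$ cannot end at the literal chord unless $\gamma$ already agrees with the chord near the boundary. Instead, take as target an arc that agrees with $\gamma$ on the collar and has a single interior critical point of $r$. This is possible because $\gamma$ meets $\partial D^2$ transversally: your local observation, together with transversality of $S$ to $S^3$, gives $d(r|_{\gamma})\neq 0$ at the endpoints.
\item If the two points of $K\cap C$ are antipodal, the chord passes through the origin, where $r$ is not smooth. Since $m$ is defined using $r$ rather than $r^2$, bend the model arc slightly so its unique minimum of $r$ stays away from $0$.
\end{itemize}
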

\begin{proof}
By isotoping $S$ equivariantly to avoid the origin of $B^4$, we can regard $S$ as a slice surface inside $S^3\times I$ with $\partial S=K\subset S^3\times \{1\}$ and $S\cap S^3\times \{0\}=\emptyset$. After a suitable perturbation if needed, we may assume that $h$ restricts to a Morse function on $S$. Then using similar tricks as shown in Figure~\ref{fig:trading handles}, we can 
\begin{itemize}
    \item replace each fixed $1$-handle whose attaching region is away from the fixed set with a pair of $1$-handles and a fixed $0$-handle;
    \item replace each fixed $2$-handle with a pair of $2$-handles and a fixed $1$-handle. In this way, we can make sure that $S$ has a handle decomposition with no fixed $2$-handle.
\end{itemize}


Let $A$ denote the fixed set of $\tau|_S$. By the equivariant assumption, a fixed critical point of $h|_S$ on $A$ must also be a critical point of $h|_A$ and vice versa. Then our goal is to show that we can isotope $S$ relative to its boundary to make $h|_A$ has a single local minimum, or equivalently, $h|_S$ has a single fixed critical point, which is of index $0$.

Suppose that we have an unwanted local maximum on $A$, then there must be more than one local minimum on $A$. After our replacement above, for any $x\in \crit (h|_A)$, 
\begin{itemize}
    \item if $\ind_A(x)=0$ then $\ind_S(x)=0$, which gives rise to a disk $D_x$ in the decomposition of $S$;
    \item if $\ind_A(x)=1$, then $\ind_S(x)=1$, which gives rise to a band $B_x$ in the decomposition of $S$.
\end{itemize} 
Choose a pair of local minimum and maximum $(a,b)$ such that $a$ is the local minimum that nearest to one of the end of $A$, $a$ and $b$ are adjacent on $A$. This is achievable since critical points with $\ind_A=0$ or $1$ must appear alternatingly along $A$ and the number of critical points with $\ind_A=0$ is one bigger than that of $\ind_A=1$. Forgetting $S$, $(a,b)$ are of course cancelable. The local pictures of $D_a$ and $B_b$ are shown in the left of Figure~\ref{fig:cancelling 0-1 pair}.
\begin{figure}
    \centering
    \begin{overpic}[width=0.5\textwidth]{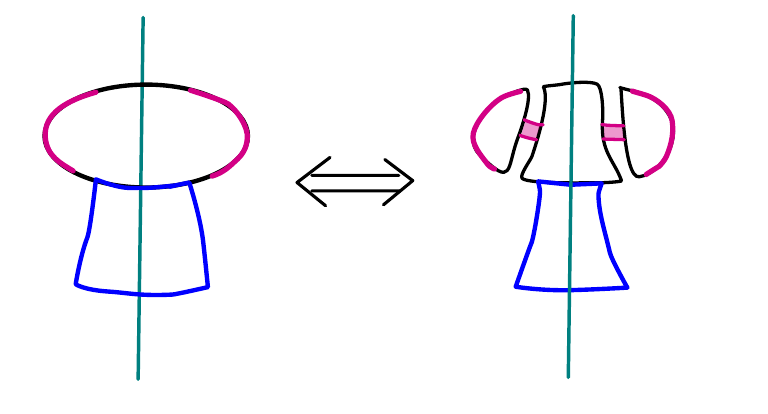}
			\put(28,10) {\textcolor{Blue}{$B_b$}}
			\put(30,40) {$D_a$}
			
			\put(62,35) {$D$}
			\put(82,35) {$D'$}

            \put(75,45) {$D'_a$}
            \put(80,10) {\textcolor{Blue}{$B_b$}}

		\end{overpic}
    \caption{Canceling a fixed $0$-$1$ handle pair.}
    \label{fig:cancelling 0-1 pair}
\end{figure}

By assumption, there is no fixed $1$-handle attaching to the upper fixed point of $D_a$ and since $A$ is an arc, the bottom edge of $B_b$ lies on another fixed index $0$ disk. Thus, $a$ and $b$ are geometrically cancelable. Before performing the cancellation, some handleslides are needed to free $D_a$ from other $1$-handles. These $1$-handles must appear in pairs. We replace $D_a$ with a smaller fixed disk $D_a'$, a pair of $1$-handles $(B,B')$ (shown in pink) and a pair of $0$-handles $(D,D')$, so that $D_a'$ is actually a neighborhood of $A$ in $D_a$ and $B,B'$ are the only bands with ends on $D_a'$. See the right of Figure~\ref{fig:cancelling 0-1 pair}. After this modification, we only need to slide $B$ and $B'$ over $B_b$ and then we can perform the cancellation of $D_a'$ and $B_b$. A concrete example is shown in Figure~\ref{fig:Surface of T23 as example}. On the left-hand-side, we see an equivariant slice surface of the trefoil with its standard handle decomposition given by Seifert circles. On the right-hand-side, we show how we can perform the handleslides and cancellation to eliminate a fixed $(0,1)$-handle pair at the cost of introducing pairs of interchanged handles.
 \begin{figure}
    \centering
    \includegraphics[width=1.0\linewidth]{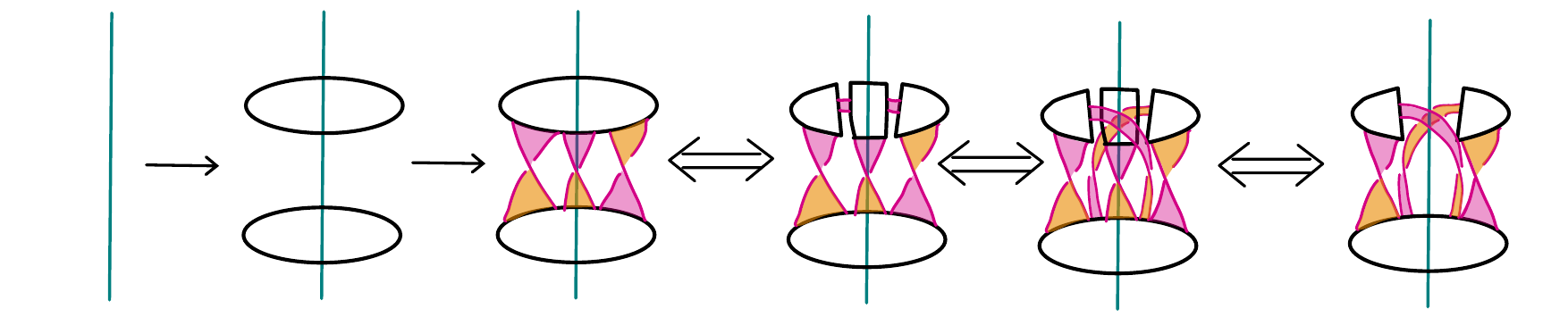}
    \caption{Eliminate fixed $1$-handles from the handle decomposition of an equivariant slice surface of $T_{2,3}$.}
    \label{fig:Surface of T23 as example}
\end{figure}

Apply the procedure above repeatedly until $h|_A$ has a single critical point. This concludes the proof of our theorem.
\end{proof}

\begin{cor}\label{cor:tau^R bounds equiv slice genus+m/2 enhanced}
Let $K$ be a strongly invertible knot in $S^3$ and $S$ be a connected equivariant slice surface of $K$ in $(B^4,\tau_c)$ with a connected fixed set. Then the fixed set of $\tau|_S$ is necessarily an arc and \[\vert \tau^R(K,\fra)\vert\le g(S).\] In particular, if $K$ is equivariantly slice, then $\tau^R(K,\fra)=0$ for any choice of auxiliary data.

\end{cor}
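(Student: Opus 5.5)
The corollary has two parts: first show that the fixed set is an arc, then combine Theorem~\ref{thm:minimizing m on a slice surface} with Corollary~\ref{cor:real tau bounds tilde g4+m}.

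\emph{Step 1: the fixed set is an arc.} Let $A=\fix(\tau|_S)$. Since $\tau|_S$ is a smooth involution of a surface, $A$ is a smooth properly embedded $1$-submanifold of $S$ (it is nonempty, since $K$ meets the axis). Its boundary satisfies
\[
\partial A = A\cap \partial S = \fix(\tau)\cap K,
\]
which consists of exactly two points because $K$ is strongly invertible. Here we use that $\fix(\tau_c)\cap B^4$ is the real ball $B^2$, whose boundary is the axis $C$, and that $S$ meets $S^3$ only in $K$. So $A$ has boundary, and being connected by hypothesis, it must be a single arc rather than a circle. One could worry about isolated fixed points; these cannot occur. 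An orientation-preserving involution of an oriented surface has isolated fixed points, while an orientation-reversing one has $1$-dimensional fixed set. Since $\tau|_K$ reverses the orientation of $K=\partial S$ and fixes points on it, $\tau|_S$ is orientation-reversing near the boundary, hence everywhere, because $S$ is connected. So $A$ is a $1$-manifold, as claimed.

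\emph{Step 2: the genus bound.} By Step 1, Theorem~\ref{thm:minimizing m on a slice surface} applies and gives $m=1$. Substituting into Corollary~\ref{cor:real tau bounds tilde g4+m} yields
\[
\vert\tau^R(K,\fra)\vert\le g(S)+(1-1)/2=g(S).
\]
One point needs care: Theorem~\ref{thm:minimizing m on a slice surface} produces an equivariant isotopy after which $r|_{S'}$ has a single fixed critical point. We must confirm this is compatible with the minimum over $S'$ in the corollary. It is, because the minimum is taken over all $S'$ equivariantly isotopic to $S$, so $m\le 1$. Also $m\ge 1$, since $r$ restricted to the arc $A$ must have a minimum, which is a fixed critical point. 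Hence $m=1$ exactly.

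\emph{Step 3: the slice case.} If $K$ is equivariantly slice, it bounds an equivariant disk $S$ in $(B^4,\tau_c)$. The fixed set of a disk under a reflection-type involution is connected: by Step 1 it is a $1$-manifold with two boundary points, and by standard facts about involutions of $D^2$ it is a single arc. Therefore $\vert\tau^R\vert\le 0$ for every choice of $\fra$. The main subtlety in the whole argument is Step 1, specifically ruling out extra circle components; the connectivity hypothesis handles this directly.
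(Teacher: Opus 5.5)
Your proof is correct and follows the paper's route. You obtain $m=1$ from Theorem~\ref{thm:minimizing m on a slice surface} and substitute it into Corollary~\ref{cor:real tau bounds tilde g4+m}, and for the slice case you use that an involution of $D^2$ has connected fixed set. Your Step~1 argument (orientation reversal forces a $1$-dimensional fixed set, which has exactly two boundary points) just makes explicit the ``necessarily an arc'' claim that the paper leaves implicit.
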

\begin{proof}
The first statement follows from Corollary~\ref{cor:real tau bounds tilde g4+m} and Theorem~\ref{thm:minimizing m on a slice surface}. The second one follows from the first and the fact that any involution on $D^2$ has connected fixed set.
\end{proof}

The proof of Theorem~\ref{thm:minimizing m on a slice surface} also gives a control on $m$ on more general knot cobordisms.

\begin{prop}\label{prop:minimize m on cobordism}
Let $S$ be an equivariant cobordism between two strongly invertible knots $K_1$ and $K_2$ inside $(S^3\times I, \tau\times \id)$. \begin{enumerate}
    \item If $\fix(\tau\times\id)\cap S$ is a pair of arcs, each has two ends on $S^3\times \{0\}$ and $S^3\times \{1\}$, respectively, then $m=0$.
    \item If $\fix(\tau\times\id)\cap S$ is a pair of arcs among which one has two ends on $S^3\times \{0\}$ and the other has two ends on $S^3\times \{1\}$, then $m=2$. 
\end{enumerate} 
\end{prop}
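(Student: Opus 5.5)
The plan is to follow the proof of Theorem~\ref{thm:minimizing m on a slice surface}, working with the fixed set $A=\fix(\tau\times\id)\cap S$, which is a compact $1$-manifold. Since $K_1$ and $K_2$ are strongly invertible knots, $A$ meets each boundary level in exactly two points. As before, a fixed critical point of $h|_S$ (with $h=p|_S$) is the same as a critical point of $h|_A$. First I will apply the handle trades of Figure~\ref{fig:trading handles}, replacing fixed $2$-handles and the relevant fixed $1$-handles by pairs of handles together with fixed handles of lower index. After this, every critical point of $h|_A$ has $\ind_A=\ind_S\in\{0,1\}$, and the task becomes an equivariant isotopy problem that simplifies $h|_A$ one arc at a time.

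For (1), each arc of $A$ runs from $S^3\times\{0\}$ to $S^3\times\{1\}$. The critical points of $h$ restricted to that arc therefore alternate, and there are equally many local minima and local maxima. I will pick an adjacent pair $(a,b)$ consisting of a local minimum and a local maximum, chosen, as in the slice case, so that $a$ is the minimum nearest an end of the arc. Then I cancel $D_a$ against $B_b$, using the same device as in Figure~\ref{fig:cancelling 0-1 pair}: shrink $D_a$ to a neighbourhood $D_a'$ of $A$, introduce a pair of $0$-handles and a pair of bands, and slide those bands over $B_b$ so that the cancellation becomes geometric. Repeating this removes every critical point on both arcs, which gives $m=0$.

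For (2), the arc $\gamma_0$ with both ends on $S^3\times\{0\}$ forces $h|_{\gamma_0}$ to have at least one interior maximum. Likewise the arc $\gamma_1$ with both ends on $S^3\times\{1\}$ forces at least one minimum. Hence $m\ge 2$. For the upper bound I run the same cancellation on each arc until $h|_{\gamma_0}$ has a single maximum and $h|_{\gamma_1}$ has a single minimum. On $\gamma_1$ this is exactly the slice-disk argument. On $\gamma_0$ it is the same argument turned upside down (replace $h$ by $1-h$), which calls for the dual trade that removes fixed $0$-handles in favour of fixed $1$-handles, mirroring the way fixed $2$-handles were removed before.

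The main obstacle is the geometric cancellability of each $(0,1)$ pair. I need the band $B_b$ to have its other foot on a \emph{different} fixed $0$-handle, and I need no extra fixed $1$-handle attached to the top of $D_a$. In the slice case both facts came from $A$ being a single arc. Here I must check that they still hold when there are two arcs and when the arc ends on a boundary level rather than at a critical point. I expect that choosing $a$ adjacent to an end of the arc, and using the fact that the two arcs are disjoint, settles this. The handle slides needed to free $D_a$ only involve paired bands, so they do not change the count of fixed critical points.
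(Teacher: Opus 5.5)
Your proposal is correct and is essentially the paper's approach: the paper only states that the argument of Theorem~\ref{thm:minimizing m on a slice surface} carries over, and you supply the needed adaptations. These are the lower bound $m\ge 2$ in (2), running the argument upside down (with the dual trades) on the arc with both ends at level $0$, and always taking $a$ to be the minimum adjacent to an end so that no other fixed band meets $D_a$. One point needs more careful wording: in (1), when $a$ is the last remaining minimum on an arc, the other foot of $B_b$ flows down to the collar of $K_1$ rather than to a fixed $0$-handle. This is harmless, since the cancellation only needs that foot to avoid $D_a$.
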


\begin{cor}
Let $S$ be an equivariant cobordism between two strongly invertible knots $K_1$ and $K_2$ inside $(S^3\times I, \tau\times \id)$. 
\begin{enumerate}
    \item If $\fix(\tau\times\id)\cap S$ is a pair of arcs, each has two ends on $S^3\times \{0\}$ and $S^3\times \{1\}$, respectively, then $\vert\tau^R(K_1,\fra_1)-\tau^R(K_2,\fra_2)\vert \le g(S)$, for any compatible choice of auxiliary data.
    \item If $\fix(\tau\times\id)\cap S$ is a pair of arcs in which one has two ends on $S^3\times \{0\}$ and the other has two ends on $\R^3\times \{1\}$, then $\vert\tau^R(K_1,\fra_1)- \tau^R(K_2,\fra_2)\vert \le g(S)+1$, for any compatible choice of auxiliary data.
\end{enumerate}
\end{cor}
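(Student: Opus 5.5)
The plan is to apply Theorem~\ref{thm:real taut bounds genus of cobordism} directly. The bound $g(S)+m/2$ then reduces each statement of the corollary to Proposition~\ref{prop:minimize m on cobordism}. The only thing to arrange is that the auxiliary data are compatible with the cobordism, and this is part of the hypothesis ``for any compatible choice of auxiliary data''.

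For case (1), the fixed set of $\tau\times\id$ on $S$ consists of two arcs, each running from $S^3\times\{0\}$ to $S^3\times\{1\}$. Proposition~\ref{prop:minimize m on cobordism}(1) then gives an equivariantly isotopic $S'$ on which $p|_{S'}$ has no fixed critical points, so $m=0$. Plugging this into Theorem~\ref{thm:real taut bounds genus of cobordism} yields $\vert\tau^R(K_1,\fra_1)-\tau^R(K_2,\fra_2)\vert\le g(S)$.

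For case (2), one fixed arc has both ends on $S^3\times\{0\}$ and the other has both ends on $S^3\times\{1\}$. Proposition~\ref{prop:minimize m on cobordism}(2) gives $m=2$. The theorem then gives the bound $g(S)+1$.

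There are two points to check.

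\begin{enumerate}
\item \textbf{Compatibility is preserved.} The auxiliary data compatible with $S$ must remain compatible with the equivariantly isotoped $S'$ realizing the minimal $m$. By Definition~\ref{def:compatible auxiliary data on cobordism}, compatibility is required for every Morse decomposition. The Morse moves of $S'$ still connect $(K_1,\fra_1)$ to $(K_2,\fra_2)$ through the same equivariant isotopy class, so compatibility should transfer. This is the step I expect to need the most care. If needed, I would argue that compatibility depends only on the induced orientation and labelling propagated along the fixed arcs, which an equivariant isotopy rel boundary preserves.
\item \textbf{The value of $m$ in case (2).} The bound only needs an upper bound on $m$, and Proposition~\ref{prop:minimize m on cobordism}(2) supplies one. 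In case (2) each fixed arc has both endpoints on the same boundary level. The height function restricted to such an arc therefore has an interior extremum, which is a fixed critical point. This accounts for at least one fixed critical point per arc, so $m=2$ is sharp.
\end{enumerate}

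The typo $\R^3\times\{1\}$ in the statement should be read as $S^3\times\{1\}$.
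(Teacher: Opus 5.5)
Your proposal is correct and matches the paper's proof, which simply combines Theorem~\ref{thm:real taut bounds genus of cobordism} with Proposition~\ref{prop:minimize m on cobordism}: $m=0$ gives the bound $g(S)$, and $m=2$ gives $g(S)+1$. Your two extra checks are harmless but not needed: the theorem already defines $m$ as a minimum over equivariantly isotopic $S'$, which covers the compatibility issue, and case (2) only uses the upper bound $m\le 2$, not its sharpness.
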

\begin{proof}
The estimation follows from Theorem~\ref{thm:real taut bounds genus of cobordism} and Proposition~\ref{prop:minimize m on cobordism}.
\end{proof}

The equivariant concordance group $\widetilde{\cC}$ of directed strongly invertible knots first appeared in \cite{Sakuma}. Many recent papers devoted to the study of its properties, for example \cite{MillerPowellequivslicegenera}, \cite{DiPrisaequivalgconcordance} and \cite{DMSequivariantknotandHFK}. Following them, we introduce the notion of concordance between strongly invertible knots with auxiliary data in $(S^3,\tau)$.
\begin{defn}
Let $(K_1,\fra_1)$ and $(K_2,\fra_2)$ be two strongly invertible knots with auxiliary data in $(S^3,\tau)$. We say they are \emph{smoothly (topologically) equivariantly concordant} if there exists a involution $\rho$ on $S^3\times I$ restrict to $\tau_{\std}$ on the two ends and an equivariantly properly embedded annulus $A\subset S^3\times I$ which is smooth (locally flat) such that \begin{itemize}
	\item $A\cap S^3\times \{0\}=K_1$ while $A\cap S^3\times \{1\}=K_2$; 
	\item the two fixed arcs on $A$ connects the $O$, $X$-base points on $K_1$ to the $O$, $X$-base points on $K_2$, respectively. 
\end{itemize}
\end{defn}

We define the connected sum of two strongly invertible knots with auxiliary data to be the connected sum along a fixed band which identifies an $O$-base point with an $X$-base point. In this way, we obtain the equivariant concordance group of strongly invertible knots with auxiliary data. This is similar to the equivariant concordance group of directed strongly invertible knots (see \cite[Definition~2.11]{MillerPowellequivslicegenera} or \cite[Definition~2.7]{DiPrisaequivalgconcordance}), as we remarked after Definition~\ref{def:strongly invertible knots with extra data}.

\begin{cor}\label{cor:tauR is a concordanc invariant}
If $(K_1,\fra_1)$ and $(K_2,\fra_2)$ are equivariantly concordant in $(S^3\times I,\tau\times \id)$, then $\tau^R(K_1,\fra_1)= \tau^R(K_2,\fra_2)$. In particular, if we consider smooth equivariant concordance group $\widetilde{\cC}_{\std}$ of strongly invertible knot with auxiliary data modulo equivariant concordance in $(S^3\times I,\tau\times \id)$, then $\tau^R$ is a well-defined and nontrivial map $\widetilde{\cC}_{\std}\to \Z$. 
\end{cor}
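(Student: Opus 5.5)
The plan is to treat the two assertions separately. The invariance statement follows from the cobordism bound already proved, once we check that an equivariant concordance has the right shape. Nontriviality will come from an explicit example.

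For the first assertion, let $A\subset S^3\times I$ be an equivariant concordance from $(K_1,\fra_1)$ to $(K_2,\fra_2)$ for the product involution $\tau\times\id$. By definition the fixed set $\fix(\tau\times\id)\cap A$ consists of two arcs. Each arc runs from an $O$- or $X$-base point on $K_1$ to the corresponding base point on $K_2$, so each arc has one end on $S^3\times\{0\}$ and one on $S^3\times\{1\}$. This is exactly case (1) of Proposition~\ref{prop:minimize m on cobordism}, so $m=0$. The annulus has genus $g(A)=0$, so case (1) of the Corollary following that proposition gives
\[
\vert\tau^R(K_1,\fra_1)-\tau^R(K_2,\fra_2)\vert\le 0.
\]

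The point needing care is that the auxiliary data $\fra_1,\fra_2$ are compatible with $A$ in the sense of Definition~\ref{def:compatible auxiliary data on cobordism}. I would argue as follows. The fixed arcs carry the $O$-labelled point of $K_1$ to the $O$-labelled point of $K_2$, and the $X$ point to the $X$ point. The labels and orientations are transported along the arcs through every level set of a decomposition into Morse moves, and this is exactly the compatibility condition. If one insists on checking it move by move, a fixed-point saddle can only occur where a fixed arc turns back. By Proposition~\ref{prop:minimize m on cobordism}(1) we may isotope $A$ so that no such turn-back occurs. Then all intermediate moves are paired moves or paired births and deaths, and these do not touch the labels on the fixed components.

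Given the first assertion, $\tau^R$ descends to a well-defined map $\widetilde{\cC}_{\std}\to\Z$. For nontriviality it suffices to exhibit one class with nonzero value. Take the mirror trefoil $\bar 3_1$ with the strong inversion and auxiliary data of Example~\ref{ex:trefoil and figure 8}. There $\GHR^-(\bar3_1)=\F[u]_{(1,1/2)}\oplus\F_{(0,0)}$, so the free tower sits in real Alexander grading $1/2$ and $\tau^R=-1\neq0$.

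I expect the only genuine obstacle to be the compatibility of auxiliary data along $A$, together with the technical point that the concordance must be isotoped to be Morse without introducing fixed critical points. Both are supplied by the handle-trading argument behind Proposition~\ref{prop:minimize m on cobordism}, which I would assume here.
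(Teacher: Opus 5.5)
Your proposal is correct and follows the same route as the paper, which gives no separate proof and lets the corollary fall out of the preceding corollary. The concordance annulus has genus zero and its fixed set is two arcs, each running from $K_1$ to $K_2$, so case (1) of the corollary to Proposition~\ref{prop:minimize m on cobordism} gives $\vert\tau^R(K_1,\fra_1)-\tau^R(K_2,\fra_2)\vert\le 0$; nontriviality then comes from a computed example such as $\bar 3_1$ with $\tau^R=-1$, and your extra check that the auxiliary data are compatible with the annulus fills in a point the paper leaves tacit.
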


Here, we add a subscript ``$\std$'' to note that we only allow the standard involution on $S^3\times I$ and require the concordance to be smooth in contrast to the existing notion of concordance group, which allows any involution on $S^3\times I$ extending the standard one on ends and locally flat embedded annuli. We have to add restrictions due to the combinatorial and Morse theoretic nature of our argument. But the author conjectures that the more general result is also true.

\begin{remark}\label{rmk:m is unavoidable}
Originally, the author expected that we can always reduce $m$ to $1$ when concerning equivariant slice surface in $(B^4,\tau_c)$ using the trick above and the fact that $\fix(\tau_c)\subset B^4$ is simply connected, which may allow us to do  ``compression'' along a fixed disk whenever a circle component appear in the fixed set. However, the author found that there is a framing obstruction lying in $\pi_1
(\mathrm{SO}(2))\cong \Z$. It was then pointed out by Lisa Piccirillo that such obstruction is theoretically essential as shown in \cite[Construction~2.3 \& Remark~2.4]{conway2025simplyslicingknots}. The author currently do not have a concrete example of a strongly invertible $K$ with an equivariant slice surface $S$ such that \[\tau^R(K,\fra)> g(S), \text{ but } \tau^R(K,\fra)\le g(S)+\frac{m-1}{2},\] but a potential example will be given in Section~\ref{sec:Examples and application}.

After the author posted the first draft of this paper, Alessio Di Prisa pointed out that this framing obstruction also appeared in his bound of equivariant slice genus via algebraic concordance in \cite[Section~4]{DiPrisaequivalgconcordance}. 
\end{remark}

\begin{example}\label{ex:equivariant ribbon surface}
Recall that there is a notion of \emph{ribbon surface} bounded by a knot which is an immersed surface in $S^3$ with $K$ as boundary with only ribbon singularities at those immersed points. They shall become slice surface when push into $B^4$. (It is also well-known that not every slice surface is ribbon.) Now, we consider an \emph{equivariant ribbon surface} $S$ for a strongly invertible knot $K$ in $S^3$. More precisely, $S$ is a ribbon surface of $K$ in $S^3$ so that $\tau$ restricts an automorphism of $S$. We claim that $g(S)\le \tau^R(K,\fra)$ for any choice of $\fra$. It follows from Corollary~\ref{cor:tau^R bounds equiv slice genus+m/2 enhanced} and the fact that $\fix(\tau)\subset S^3$ is just a copy of $S^1$. The later implies that the fixed set of an equivariant ribbon surface must be an arc, since we have no singularity at boundary.  
\end{example}

\begin{example}\label{ex:bounding butterfly 4-genus}
In \cite[Section~3.1]{BIequiv4genusofsiandperidicknots}, the authors introduced the notion of \emph{butterfly 4-genus} of a strongly invertible knot $K$ as \[\widetilde{bg}_4(K)= \min\{g(S)| \text{$S$ is a smooth invariant slice surface of $K$ with a separating arc in the fixed point set}\}.\] By assumption, the fixed set of a butterfly surface of $K$ is an arc, so Corollary~\ref{cor:tau^R bounds equiv slice genus+m/2 enhanced} implies that \[\vert \tau^R(K,\fra) \vert \le g(S) \]for any butterfly surface $S$ of $K$ inside $(B^4,\tau_c)$ and any choice of auxiliary data $\fra$ on $K$.
\end{example}

\subsection{Interlude: Failure of Kunneth principle}\label{sub:Kunneth principle fails}

Careful readers may have noticed that in Corollary~\ref{cor:tauR is a concordanc invariant}, we said that $\tau^R$ induces a well-defined map from $\widetilde{\cC}_{\std}$ to $\Z$ instead of a group homomorphism.  The subtlety comes from the facts that \begin{itemize}
	\item the inverse of $[(K,\fra)]$ in $\widetilde{C}$ is provided by $[(m(K),m(\fra)^{i})]$ and currently, the author do not know how the $\tau^R$-invariants of these two knots are related.
	\item the proof of $\tau$ is additive under connected sum fails for $\tau^R$- more precisely, the connected sum formula given in \cite[Section~7]{Holomorphicdisksandknotinvariants} is not true for $\HFKR^-$ or even $\widehat{\HFKR}$. 
	\end{itemize}  
Among all the examples we have computed, a tensor product formula is rarely true, but \[\tau^R(K_1\#K_2, \fra_1\#\fra_2)= \tau^R(K_1,\fra_1)+ \tau^R(K_2,\fra_2)\] always holds, so the author still expect $\tau^R$ to be a group homomorphism.

In the rest of this subsection, we provide some examples to illustrate real knot Floer homology of connected sums. A few more examples will be provided in Appendix~\ref{app:Calculation results for knots with small crossing numbers}.

In Figure~\ref{fig:grid diagram of connected sum}, we show real grid diagrams for equivariant connected sum $3_1\#4_1$, $3_1\#5_1$ and $3_1\#5_2$. Here, we abuse the notation from Rolfsen table for the knots and their mirror images.
	
\begin{figure}
		\centering
		\begin{overpic}[width=0.9\textwidth]{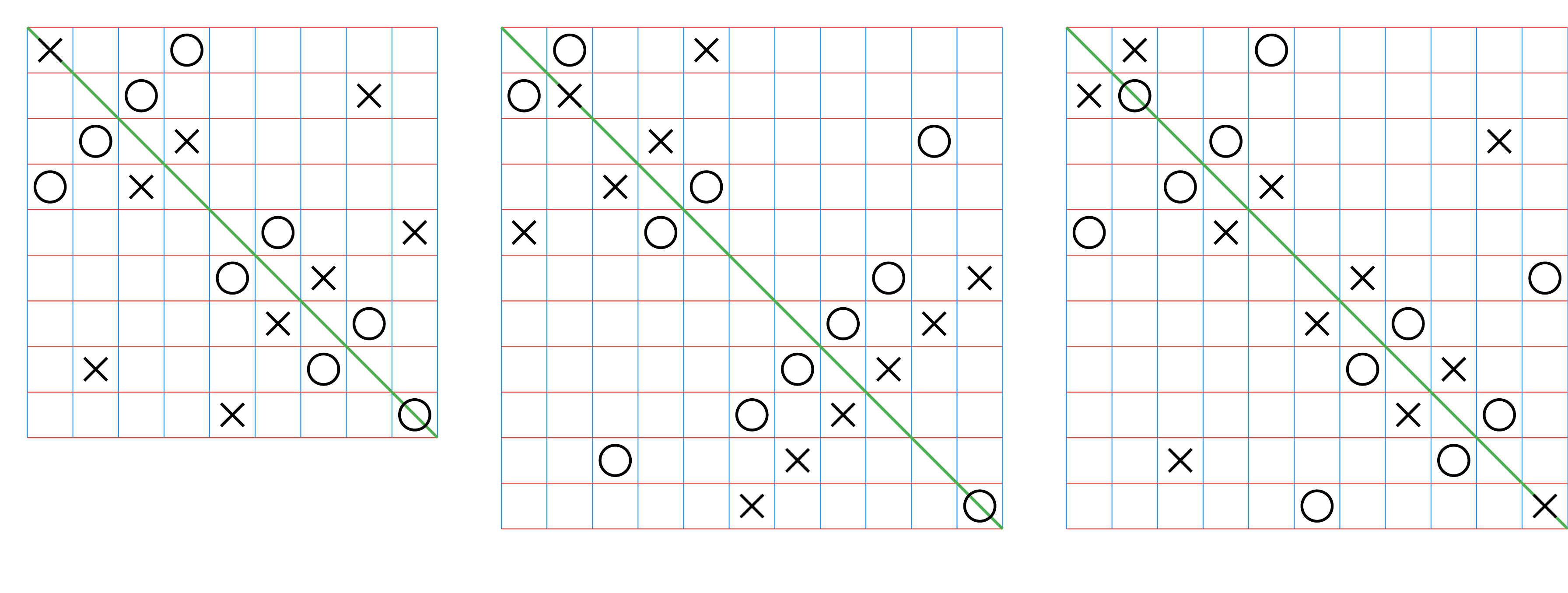}
			\put(9,0) {$(3_1\#4_1,\fra_1)$}
			\put(42,0) {$(3_1\#5_1,\fra_2)$}
			\put(78,0) {$(3_1\#5_2,\fra_3)$}
		\end{overpic}
		\caption{Real grid diagram for equivariant connected sums.}
		\label{fig:grid diagram of connected sum}
\end{figure}
	
Using \cite{ZhenkunLioythonprgram}, we calculate that \[\widehat{\GHR}(3_1\#4_1, \fra_1) \cong\F_{(-1,-1)}\oplus \F_{(0,0)}^3 \oplus \F_{(0,1/2)}\oplus \F_{(1,1/2)}\oplus  \F_{(1,1)};\]
\[\GHR^-(3_1\#4_1, \fra_1) \cong\F[u]_{(0,0)}\oplus \F_{(0,1/2)} \oplus \F_{(1,1)}\oplus \F[u]/(u^2)_{(0,0)};\]
	
\[\widehat{\GHR}(3_1\#5_1, \fra_2) \cong\F_{(0,-3/2)}\oplus \F_{(0,-1)}^2 \oplus \F_{(1,-1/2)}\oplus \F_{(1,0)}^3\oplus  \F_{(2,1/2)} \oplus  \F_{(2,1)}^2 \oplus  \F_{(3,3/2)};\]
\[\GHR^-(3_1\#5_1, \fra_2) \cong\F[u]_{(3,3/2)}\oplus \F_{(0,-1)} \oplus \F_{(1,0)}\oplus  \F_{(2,1)}\oplus \F[u]/(u^2)_{(1,0)} \oplus \F[u]/(u^2)_{(2,1)};\]
	
\[\widehat{\GHR}(3_1\#5_2,\fra_3)=\F_{(0,-1/2)}\oplus \F_{(0,0)}\oplus \F_{(1,1/2)},\]
\[\GHR^-(3_1\#5_2,\fra_3)=\F[u]_{(1,1/2)}\oplus \F_{(0,0)}. \]

By comparing these with the results from Appendix~\ref{app:Calculation results for knots with small crossing numbers}, we see that the obvious connected sum formula fails on the first two and holds for the last one trivially. Here, by trivially, we mean that $5_2$ with the indicated involution and auxiliary data has trivial real knot Floer groups.
	
It is well-known that the connected sum of strongly invertible knots depends on the choice of axes (see~\cite{Sakuma}) and it was shown in \cite{DiprisaTheequivariantconcordancegroupisnotabelian} that the equivariant concordance group of directed strongly invertible knots is not abelian, in contrast to the usual concordance group. Actually, the real knot Floer group detects this. We will illustrate this by an example. If we write $(3_1\#4_1,\fra_1)$ as $(4_1,\fra)\#(3_1,\frb)$, then the first diagram in Figure~\ref{fig:grid diagram of connected sum} with the roles of $\bfO$ and $\bfX$ reversed represents $(3_1,\frb^{i,r})\#(4_1,\fra^{i,r})$. Here, we use the oriented half axis from $O$ to $X$ as the choice of direction and follow the convention of equivariant connected sum in~\cite{Sakuma}. For this second composite knot, we have \[\widehat{\GHR}\cong\F_{(-1,-1)}\oplus \F_{(-1,-1/2)}\oplus \F_{(0,-1/2)} \F_{(0,0)}^3 \oplus \F_{(1,1)};\]
\[\GHR^- \cong\F[u]_{(0,0)}\oplus \F_{(-1,-1/2)} \oplus \F_{(0,0)}\oplus \F[u]/(u^2)_{(1,1)},\] which are different from $\GHR^\circ(3_1\# 4_1,\fra_1)$ computed above. However, as we have remarked in Example~\ref{ex:trefoil and figure 8}, for strongly inversions on simple knots as $3_1$ and $4_1$, real knot Floer groups do not depends on a choice of auxiliary data. In particular, we have \[\GHR^\circ(3_1,\frb)\cong \GHR^\circ(3_1,\frb^{i,r}) \quad \GHR^\circ(4_1,\fra)\cong \GHR^\circ(4_1,\fra^{i,r}) \] for $\circ=\widehat{}, -$. On one hand, this tells us that real knot Floer groups detect the order of connected sum. On the other hand, it also tells us that if one tries to develop a connected sum formula for this theory, one must take the order of connected sum into account. 

Based on the first observation, the author expects that the real knot Floer homology is able to detect the non-commutativity of $\widetilde{\cC}$. It was pointed out by Alessio Di Prisa that if we can find a concordance invariant originated from $\HFKR^\circ$ such that it takes different value on the standard unknot and some commutator $[(K,\fra),(K',\frb)]$, then it will provide a Floer theoretic reproof of results in~\cite{DiprisaTheequivariantconcordancegroupisnotabelian}. 

\subsection{An alternative characterization of $\tau^R$}
In this subsection, we give another characterization of $\tau^R$ using only $\widehat{\HFLR}$ and the filtration considered in Theorem~\ref{thm:spectral sequence relating HFKR and HFR}. Let 
$(K,\fra)$ be a strongly invertible knot with auxiliary data in $(S^3,\tau)$ and $\cH$ be a minimally-pointed real Heegaard diagram for $K$. Let $\underline{\cH}$ be $\cH$ with $\bfX$ forgotten which is a singly-pointed real Heegaard diagram for $(S^3,\tau)$. Then $\widehat{\CFLR}(\cH)$ can be regarded as $\widehat{\CFR}(\underline{\cH})$ with an extra filtration given by $A^R$. We can write this as \[\{0\}=\cF(K,\fra,-N)\subset \ldots \cF(K,\fra,m)\subset \cF(K,\fra,m+\frac{1}{2})\subset \ldots \cF(K,\fra,N)=\widehat{\CFR}(S^3),\] in which $N$ is any sufficiently large integer or half integer and we use the finitely generated property of $ \widehat{\CFR}(S^3)$ implicitly.

For each $m\in \frac{1}{2}\Z$, we have an inclusion map 
\[\iota_{K}^m: H_*(\cF(K,\fra,m))\to H_*(\widehat{\CFR}(S^3))=\widehat{\HFR}(S^3,\tau)\cong \F. \] Then we can define \[\tau^R(K)=2\min\{m\in \frac{1}{2}\Z| \iota_{K}^m\text{ is nontrivial}\}.\] This definition can be identified with the original one using the relationship between $\widehat{\HFLR}$ and $\HFLR^-$, as we have seen from their definition and Proposition~\ref{prop:exact sequence relating minus and hat}.

\section{Skein exact triangles}\label{sec:Skein exact triangles}
In this section, we consider two versions of skein exact triangle originating from real grid homology. As a corollary, we define real Alexander polynomial for generalized strongly invertible links and deduce that it satisfies certain skein relation.

\subsection{Oriented version}\label{sub:Oriented version}

Let $(L_+,\fro_+)$, $(L_-,\fro_-)$ and $(L_0,\fro_0)$ be three generalized strongly invertible links related by local moves shown in Figure~\ref{fig:oriented skein triple}. Such three links are said to form a \emph{real oriented skein triple}. It is obvious that $l_f$ keeps unchanged during these local moves and $l_p(L_-)=l_p(L_+)$ (define this to be $l_p$), while $l_p'=l_p(L_0)$ may equal to $l_p(L_+)+1$, $l_p(L_+)$ or $l_p(L_+)-1$. 
\begin{figure}
    \centering
    \begin{overpic}[width=0.7\textwidth]{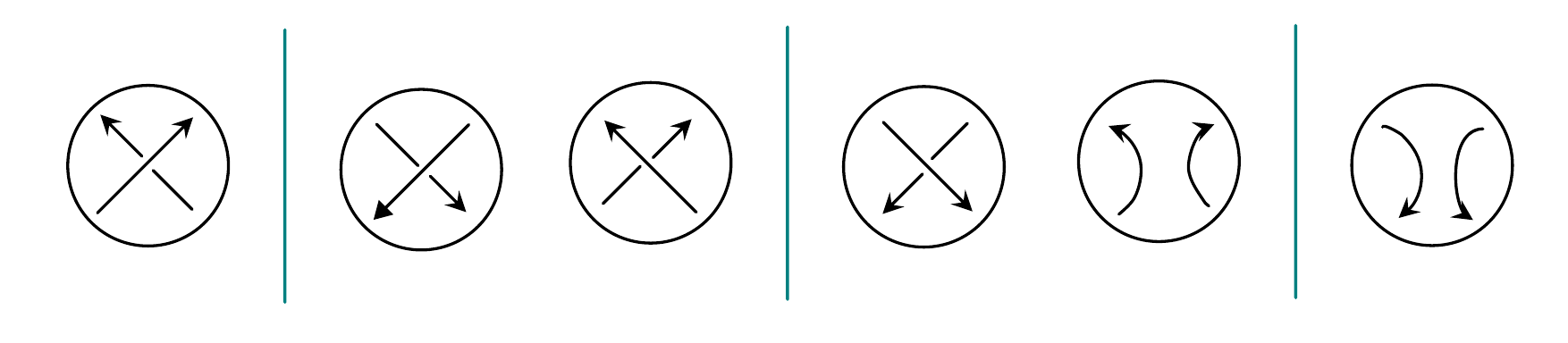}
			\put(16,-2) {$L_+$}
			\put(49,-2) {$L_-$}
            \put(81,-2) {$L_0$}
		\end{overpic}
    \caption{Real oriented skein triple.}
    \label{fig:oriented skein triple}
\end{figure}

Note that for any auxiliary data $\fra$ on one of $L_+$, $L_-$ and $L_0$, there are unique auxiliary data on the other two determined by $\fra$, since the local changes happen away from the fixed set. Throughout this section, whenever we talk about a real oriented skein triple, we fix a triple of auxiliary data constructed in this way, and abuse the same notation $\fra$ for all of them. When there is no ambiguity, this shall be omitted from the notation.

\begin{thm}\label{thm:minus oriented skein triple}
Let $(L_+, L_- ,L_0,\fra)$ be a real oriented skein triple defined as above.
\begin{itemize}
 \item If $l_p'=l_p+1$, we have a long exact sequence \begin{equation}\label{eq:l_p'=l_p+1 minus}
    \to c\GHR^-_d(L_+,s) \xrightarrow{f^-} c\GHR^-_d(L_-,s) \xrightarrow{g^-} c\GHR^-_{d-1}(L_0,s) \xrightarrow{h^-} c\GHR^-_{d-1}(L_+,s) \to 
    \end{equation}
 \item If $l_p'=l_p$, we have a long exact sequence \begin{equation}\label{eq:l_p'=l_p minus}
    \to c\GHR^-_d(L_+,s) \xrightarrow{f^-} c\GHR^-_d(L_-,s) \xrightarrow{g^-} (c\GHR^-(L_0)\otimes W)_{d-1,s} \xrightarrow{h^-} c\GHR^-_{d-1}(L_+,s) \to 
    \end{equation}
    for $W=\F_{(0,1/2)}\oplus \F_{(-1,-1/2)}$.
    \item If $l_p'=l_p-1$, we have a long exact sequence \begin{equation}\label{eq:l_p'=l_p-1 minus}
    \to c\GHR^-_d(L_+,s) \xrightarrow{f^-} c\GHR^-_d(L_-,s) \xrightarrow{g^-} (c\GHR^-(L_0)\otimes J)_{d-1,s} \xrightarrow{h^-} c\GHR^-_{d-1}(L_+,s) \to 
    \end{equation}
    for $J=\F_{(0,1)}\oplus \F_{(-1,0)}\oplus \F_{(-1,0)}\oplus \F_{(-2,-1)}$.
\end{itemize}
\end{thm}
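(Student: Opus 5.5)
I will follow the proof of the oriented skein exact sequence in \cite[Chapter~9]{OSS2015grid}, performing every local construction together with its reflection under $R$. The first step is to choose real grid diagrams $\cH_+$, $\cH_-$, $\cH_0$ for $L_+$, $L_-$, $L_0$ that coincide except in a local picture. Since the skein crossing lies away from $C$, it comes paired with its mirror crossing. For $\cH_\pm$ I take the standard configuration of \cite[Figure~9.1]{OSS2015grid} placed at both crossings. In this configuration $L_+$ and $L_-$ differ by a pair of equivariant cross-commutations, just as for the type $A$ crossing change in Proposition~\ref{prop:unknotting maps}. I then take $\cH_0$ to be the diagram obtained from $\cH_-$ by switching the $X$-markings in the two mirrored $2\times 2$ blocks (in the usual grid model, resolving the crossing amounts to exchanging two $X$'s in adjacent columns). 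The pairs of $O$-markings adjacent to these $X$'s, say $(O_1,O_1')$ and $(O_2,O_2')$, are the key players. In $\cH_-$ they lie on the same component or pair of components, while in $\cH_0$ they may lie on different ones.

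The algebraic core is the short exact sequence of chain complexes
\[0\to c\GCR^-(\cH_0)/(U_1=U_2)\to c\GCR^-(\cH_-)'\to c\GCR^-(\cH_+)\to 0,\]
constructed as in \cite[Section~9.2]{OSS2015grid}. Here $c\GCR^-(\cH_-)'$ denotes the mapping cone of $U_1-U_2$, or equivalently the filtered complex built from real states that do or do not contain the distinguished corner pair $(x_0,x_0')$. The connecting maps come from counting real pentagons with corners at the distinguished pairs of intersection points between $\beta_+$ and $\beta_-$ (and their reflections between $\alpha_\pm$), exactly as for $c^A_\pm$. One identification has to be checked carefully: real states in $\cH_-$ containing the corner pair $(x_0,x_0')$ are in bijection with real states of $\cH_0$, and under this bijection empty real rectangles that avoid the corner correspond to empty real rectangles in $\cH_0$. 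The reason is that each of the four types of real rectangles in Proposition~\ref{prop:domains of real index 1} is the union of an ordinary rectangle with its reflection, or an $R$-symmetric piece, so the local bookkeeping of \cite[Lemma~9.2.x]{OSS2015grid} applies to each half separately. Gradings will be computed from the formula $M^R=\frac12M-\frac14|\xv\cap C|+\frac14 l_f$, using that $|\xv\cap C|$ is unchanged by the local move. Because the move is performed twice symmetrically, the shift of $(-1,0)$ in \cite{OSS2015grid} gets halved and then doubled, which yields the $d-1$ appearing in the statement.

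It then remains to identify $H_*(c\GCR^-(\cH_0)/(U_1=U_2))$ in each of the three cases, and I expect this to be the main obstacle. In the case $l_p'=l_p+1$, the pairs $(O_1,O_1')$ and $(O_2,O_2')$ lie on different components of $L_0$, which form a new interchanged pair. Collapsing forces $U_1=U_2=u^2$ anyway, so setting $U_1=U_2$ costs nothing and we get $c\GHR^-(L_0)$ directly. In the case $l_p'=l_p$, they lie on the same component or pair of components, and Lemma~\ref{lem:U=U' leads to otimes W} produces a factor $W$. This factor has to be regraded to $\F_{(0,1/2)}\oplus\F_{(-1,-1/2)}$ through the relative Alexander normalization $-\frac{(n-l_f-2l_p)}{4}$. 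In the case $l_p'=l_p-1$, two pairs of $U$-variables belonging to a single merged component must be identified twice, namely $(O_1,O_2)$ and the mirror identification that becomes nontrivial after the merge. Applying the lemma twice then gives $W\otimes W=J$, again after regrading. I expect the hardest part to be pinning down the precise absolute gradings of $W$ and $J$. This requires tracking how the constant $l_p$ in the Alexander normalization changes between $L_0$ and $L_\pm$, so I would first check the gradings on small examples (the unlink and the Hopf-type links) and only then confirm the general formula.
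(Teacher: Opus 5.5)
There is a genuine gap at the step you call the algebraic core.

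\textbf{The short exact sequence.} The sequence $0\to c\GCR^-(\cH_0)/(U_1=U_2)\to c\GCR^-(\cH_-)'\to c\GCR^-(\cH_+)\to 0$ is not what \cite[Section~9.2]{OSS2015grid} constructs, and as you describe it the middle term is not well defined. The mapping cone of $U_1-U_2$ on an $\cH_0$-complex is quasi-isomorphic to the quotient by $U_1-U_2$, i.e.\ to an $L_0$-type complex. So it is not ``equivalently'' a complex of $\cH_-$-states filtered by the corner pair, and no maps are actually constructed.

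The bijection you rely on is also false. Your $\cH_0$ has the same curves as $\cH_-$, so its real states are \emph{all} real states of $\cH_-$. The natural bijection of corner-containing states is $T\colon\bfI_-\to\bfI_+$, between $\cH_-$ and $\cH_+$.

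\textbf{What the argument actually needs.} Diagrams with the same curves but different $X$-markings have the same states and rectangles; only the rectangles between $\bfI$ and $\bfN$ are blocked, in one direction or the other. This gives
$\GCR^-(\cH_+)=\Cone(\bfI_+\to\bfN_+)$, $\GCR^-(\cH_0)=\Cone(\bfN_+\to\bfI_+)$, $\GCR^-(\cH_-)=\Cone(\bfN_-\to\bfI_-)$ and $\GCR^-(\cH_0')=\Cone(\bfI_-\to\bfN_-)$, so you need both $\cH_0$ and $\cH_0'$. These assemble into a square whose total complex is at once $\Cone(P_{+,-})$ and the cone of a map $\GCR^-(\cH_0')\to\GCR^-(\cH_0)$.

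The substantive step, absent from your plan, is to show that this map is homotopic to $U_1-U_2$. This uses the commutation quasi-isomorphism $P$ and the maps $h_{X_2},h_Y$, which is where real pentagons enter. It yields $\Cone(P_{+,-})\simeq\Cone(U_1-U_2)[-1,\frac{l_p'-l_p-1}{2}]$. That identification is what produces a triangle containing $L_+$, $L_-$ and $L_0$. It also supplies the grading shifts you intended to guess from examples.

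\textbf{The case analysis} then has to be redone in those terms. Impose $L_+$'s collapse relations (quotients preserve quasi-isomorphisms of free complexes) and compare them with $L_0$'s.
\begin{itemize}
\item When $l_p'=l_p+1$, $L_0$ has one more class, and the cone of the injective map $U_1-U_2$ supplies exactly the missing relation.
\item When $l_p'=l_p$, the relations coincide, $U_1\simeq U_2$, and the cone of this null-homotopic map gives $\otimes W$.
\item When $l_p'=l_p-1$, $L_+$'s relations over-identify. This gives one $W$ by the lemma on quotienting $U_i=U_j$, and the null-homotopic cone gives the other.
\end{itemize}

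Your justification in the first case (collapsing already forces $U_1=U_2=u^2$, so the quotient is free) is backwards. If $U_1$ and $U_2$ were already homotopic, imposing $U_1=U_2$ would cost a factor $W$. Nothing is lost precisely because $(O_1,O_1')$ and $(O_2,O_2')$ lie in \emph{different} classes of $L_0$. For example, one lies on the surviving strongly invertible component and one on a new pair, or they lie on two different new pairs. They are not on the two halves of one new pair; that configuration actually occurs in the $l_p'=l_p$ case.

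Likewise, the second $W$ in the $J$ case comes from $L_+$'s collapse relation between the two classes that merge, not from a ``mirror identification''.
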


We also have similar exact sequences for the hat flavor.

\begin{thm}\label{thm:hat oriented skein triple}
Let $(L_+, L_- ,L_0,\fra)$ be a real oriented skein triple defined as above.
\begin{itemize}
 \item If $l_p'=l_p+1$, we have a long exact sequence \begin{equation}
    \to \widehat{\GHR}_d(L_+,s) \xrightarrow{\hat{f}} \widehat{\GHR}_d(L_-,s) \xrightarrow{\hat{g}} \widehat{\GHR}_{d-1}(L_0,s) \xrightarrow{\hat{h}} \widehat{\GHR}_{d-1}(L_+,s) \to 
    \end{equation}
 \item If $l_p'=l_p$, we have a long exact sequence \begin{equation}
    \to \widehat{\GHR}_d(L_+,s) \xrightarrow{\hat{f}} \widehat{\GHR}_d(L_-,s) \xrightarrow{\hat{g}} (\widehat{\GHR}(L_0)\otimes W)_{d-1,s} \xrightarrow{\hat{h}} \widehat{\GHR}_{d-1}(L_+,s) \to 
    \end{equation}
    for $W=\F_{(0,1/2)}\oplus \F_{(-1,-1/2)}$.
    \item If $l_p'=l_p-1$, we have a long exact sequence \begin{equation}
    \to \widehat{\GHR}_d(L_+,s) \xrightarrow{\hat{f}} \widehat{\GHR}_d(L_-,s) \xrightarrow{\hat{g}} (\widehat{\GHR}(L_0)\otimes J)_{d-1,s} \xrightarrow{\hat{h}} \widehat{\GHR}_{d-1}(L_+,s) \to 
    \end{equation}
    for $J=\F_{(0,1)}\oplus \F_{(-1,0)}\oplus \F_{(-1,0)}\oplus \F_{(-2,-1)}$.
\end{itemize}
\end{thm}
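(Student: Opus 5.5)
The plan is to deduce the hat exact sequences directly from the minus ones in Theorem~\ref{thm:minus oriented skein triple}, in the way \cite[Chapter~9]{OSS2015grid} passes from $\GH^-$ to $\widehat{\GH}$.

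\textbf{Chain-level setup.} We work with the chain-level construction behind Theorem~\ref{thm:minus oriented skein triple}. Choose real grid diagrams $\cH_+$, $\cH_-$, $\cH_0$ that agree away from the crossing region. The crossing change is realized, up to real cyclic permutation, as a pair of symmetric cross-commutations as in Figure~\ref{fig:combine diagram for crossing changes}, and $\cH_0$ is obtained by moving the corresponding pair of $X$-markings. In the minus proof one then constructs:
\begin{itemize}
\item an $\F[u]$-equivariant map $f^-$ counting empty real pentagons;
\item maps $g^-$ and $h^-$ modeled on \cite[Section~9.2]{OSS2015grid}, built from real rectangles and real pentagons meeting a distinguished pair of $X$ (or $O$) markings;
\item null-homotopies counting real hexagons, showing that each composite of consecutive maps is null-homotopic and that $\Cone(f^-)$ is quasi-isomorphic to the third complex.
\end{itemize}
When $l_p'\neq l_p+1$, the third complex is the stabilized complex $c\GCR^-(\cH_0)/(U_i=U_j)$. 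Lemma~\ref{lem:U=U' leads to otimes W} identifies its homology with $c\GHR^-(L_0)\otimes W$ when $l_p'=l_p$; applying the lemma twice when $l_p'=l_p-1$ gives the $J$ factor.

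\textbf{Reduction mod $u$.} The key observation is that all these maps and homotopies are $\F[u]$-equivariant. They are also built uniformly from counts of domains weighted by the $O$-variables. Setting $u=0$ therefore produces maps $\hat f$, $\hat g$, $\hat h$ between the hat complexes (on the collapsed complexes, $u=0$ also kills every $U_j$ with $j\le l_p$), and a quasi-isomorphism $\Cone(\hat f)\simeq$ (hat complex of $L_0$, suitably stabilized). The argument is to tensor the minus quasi-isomorphism with $\F[u]/u$. This preserves quasi-isomorphisms because all the complexes involved are free $\F[u]$-modules, so the short exact sequence $0\to C\xrightarrow{u}C\to C/u\to0$ is natural in $C$ and the five lemma applies. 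The resulting long exact sequence in homology is the claimed one. The grading shifts are unchanged because $u$ is homogeneous and the maps keep their bidegrees.

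\textbf{Stabilization bookkeeping.} The remaining step is to track the factors coming from the stabilization. The homology of $\widehat{c\GCR}(\cH_0)/(U_i=U_j)$ must equal $\widehat{\GHR}(L_0)\otimes W$, respectively $\otimes J$. This follows from the hat analogue of Lemma~\ref{lem:U=U' leads to otimes W}. In that analogue, $U_i-U_j$ acts on the hat complex as the difference of two chain-homotopic actions, via Proposition~\ref{prop:U,v action identification}, so the mapping cone splits off a two-dimensional factor with the stated bigradings.

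\textbf{Main obstacle.} The delicate point is the case $l_p'=l_p-1$ at the hat level. There, two independent pairs of base points must be identified, and one has to check that the homotopies realizing $U_i\simeq U_j$ survive setting $u=0$ without picking up $u$-dependent correction terms. I expect this to be handled exactly as in \cite[Lemma~8.3.8]{OSS2015grid}. The homotopy there counts rectangles through a single $X$-marking, so it involves no $u$ at all and descends verbatim.
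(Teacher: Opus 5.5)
Your proposal is correct and follows the paper's route. The paper proves the hat version by rerunning the minus argument with the collapsed variables additionally set to zero: it reduces the chain-level mapping-cone quasi-isomorphism of Theorem~\ref{thm:mapping cone isom for oriented skein triple}, after collapsing, modulo $u$ via Lemma~\ref{lem:induced quasi-isom on quotient}, and then identifies the third term with the $W$ or $J$ factor as you do. One small correction to your last paragraph: the homotopies $U_i\simeq U_j$ descend to $u=0$ simply because they are $\cR$-linear, not because they carry no $u$-weights (real rectangles through the relevant $X$-pair can contain fixed $O$-markings), so the case $l_p'=l_p-1$ needs no extra check.
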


We follow the strategy of \cite[Chapter~9]{OSS2015grid} to provide a proof. As for the construction of crossing change map and saddle map, the first step is realizing the local moves on real grid diagrams and constructing a combined diagram that includes information from all these diagrams. For any choice of $\fra$, we can find diagrams $\cH_+$, $\cH_-$ representing $L_+$, $L_-$ as well as two diagrams $\cH_0$ and $\cH'_0$ both representing $L_0$ with local picture showing in Figure~\ref{fig:grid diagram for oriented skein triple}.
\begin{figure}
    \centering
    \begin{overpic}[width=0.7\textwidth]{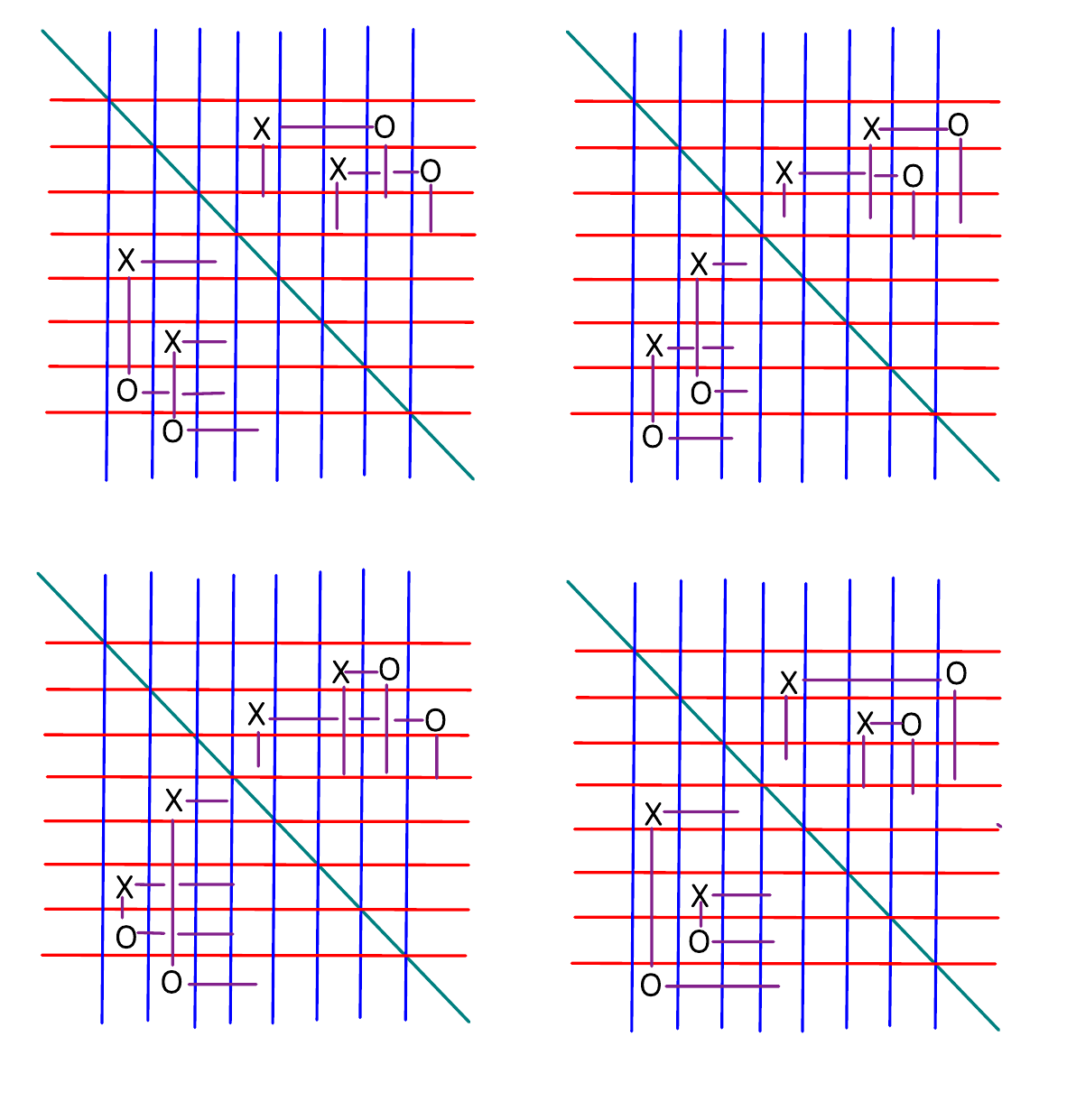}
			\put(22,51) {$\cH_+$}
			\put(70,51) {$\cH_-$}
            \put(22,3) {$\cH_0$}
            \put(70,3) {$\cH_0'$}
		\end{overpic}
    \caption{Grid diagrams for an real oriented skein triple.}
    \label{fig:grid diagram for oriented skein triple}
\end{figure}

In Figure~\ref{fig:combined diagram for oriented skein triple}, we show a combined real grid diagram adapted to the real oriented skein triple. The four diagrams share the same $\bfO$ as well as $X$-base points outside this region. They also have same set of $\alpha$, $\beta$-circles except those pairs drawn in distinguished colors. For the distinguished $X$-base points and curves: \begin{itemize}
    \item In $\cH_+$, we use $\bfX_+=\{X_1,X_1',X_2,X_2',\ldots\}$ and the pair of $\alpha$ and $\beta$-circles colored in light blue and dark red. The corresponding families will be denoted $\bm\alpha_+$, $\bm\beta_+$.
     \item In $\cH_-$, we use $\bfX_-=\{\widetilde{X}_1,\widetilde{X}_1',X_2,X_2',\ldots\}$ and the pair of $\alpha$ and $\beta$-circles colored in purple and pink. The corresponding families will be denoted $\bm\alpha_-$, $\bm\beta_-$.
    \item In $\cH_0$, we use $\bfX_0=\{Y_1,Y_1',Y_2,Y_2',\ldots\}$, $\bm\alpha_+$ and $\bm\beta_+$.
     \item In $\cH_0'$, we use $\bfX_0=\{Y_1,Y_1',Y_2,Y_2',\ldots\}$, $\bm\alpha_-$ and $\bm\beta_-$.
\end{itemize}    

\begin{figure}
    \centering
    \begin{overpic}[width=0.6\textwidth]{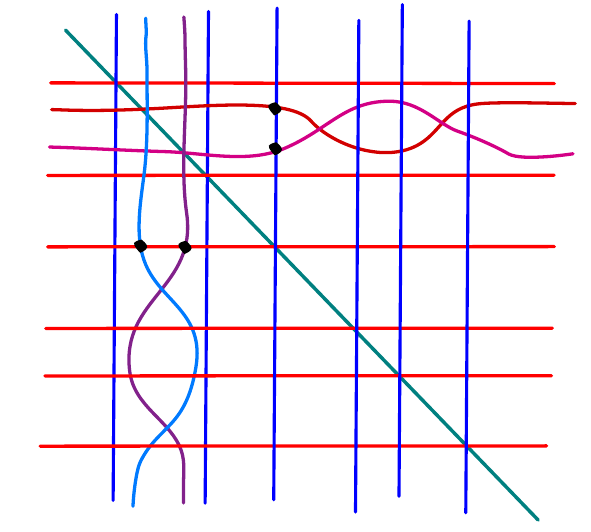}
			\put(20,49) {$c$}
			\put(27,49) {$\Tilde{c}$}
            \put(48,72) {$c'$}
            \put(48,61) {$\Tilde{c}'$}

            \put(24.5,43.5) {$X_2$}
			\put(25.5,54) {$Y_1$}
            \put(18.5,54) {$X_1$}
            \put(31.5,54) {$\widetilde{X_1}$}
            \put(24.5,35) {$Y_2$}
            
			\put(37,49.5) {$O_1$}
            \put(37,38) {$O_2$}
            \put(24.5,22) {$O_3$}
            \put(24.5,10) {$O_4$}

            \put(47,66.5) {$X_2'$}
			\put(40,66.5) {$Y_1'$}
            \put(36,72) {$X_1'$}
            \put(36,61) {$\widetilde{X_1}'$}
            \put(56,66.5) {$Y_2'$}
            
			\put(40.5,55) {$O_1'$}
            \put(50,55) {$O_2'$}
            \put(68,67) {$O_3'$}
            \put(81,67) {$O_4'$}
		\end{overpic}

    \caption{Combined diagram for oriented skein relation.}
    \label{fig:combined diagram for oriented skein triple}
\end{figure}
The main gadget is the following theorem.

\begin{thm}\label{thm:mapping cone isom for oriented skein triple}

Take any real oriented skein triple $(L_+, L_- ,L_0,\fra)$ as above and consider the associated real grid diagrams $\cH_+$, $\cH_-$, $\cH_0$, $\cH_0'$ that only differ locally as in Figure~\ref{fig:grid diagram for oriented skein triple}. Then there is a bigraded chain map $P_{+,-}:\GCR^-(\cH_+)\to \GCR^-(\cH_-)$ of $\F[u_{1},\ldots u_{l_f}, U_{1},\ldots, U_{k}]$-modules ($k$ is the number of pairs of $O$-base points in $\cH_+$), between the uncollapsed minus version of real grid chain complexes and a bigraded quasi-isomorphism from $\Cone (P_{+,-})$  to \[\Cone (U_1-U_2: \GCR^-(\cH_0)\to  \GCR^-(\cH_0))[-1,\frac{l_p'-l_p-1}{2}].\]
\end{thm}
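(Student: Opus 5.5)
We follow \cite[Section~9.2]{OSS2015grid}, carrying out every construction equivariantly in the combined diagram of Figure~\ref{fig:combined diagram for oriented skein triple}. The first step is to define $P_{+,-}$ by counting empty real pentagons with distinguished corner pair $(c,c')$ that avoid $\bfX_+\cup\bfX_-$, weighted by $u$ and $U$ powers as in the crossing change maps of Section~\ref{sub:Equivariant crossing changes and maps associated to them}. The decomposition of real domains into a real rectangle union a real pentagon then shows it is a chain map, exactly as for $c^A_\pm$. The real pentagons come in the four shapes modelled on Proposition~\ref{prop:domains of real index 1}, but since the pair $(c,c')$ is away from $C$, the local pictures near $(c,c')$ are just a usual picture union its reflection.

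The second step is to split the generators of $\GCR^-(\cH_-)$ and of $\GCR^-(\cH_0')$ according to whether they occupy the pair of intersection points $(\tilde c,\tilde c')$. Write $\mathbf{I}$ for those that do and $\mathbf{N}$ for those that do not. As in \cite[Lemma~9.2.3]{OSS2015grid}, $\GCR^-(\cH_-)$ and $\GCR^-(\cH_0')$ have the same underlying real grid states but different $X$-markings ($\widetilde{X}_1$ versus $Y_1,Y_2$). We then identify $\Cone(P_{+,-})$, after filtering by $\mathbf{I}$/$\mathbf{N}$ and cancelling the isomorphism component, with a cone built from $\GCR^-(\cH_0')$ in which the differential picks up $U_1-U_2$. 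Concretely, we would write down the real analogue of the map $\Phi$ of \cite[Proposition~9.2.4]{OSS2015grid}, given by counts of real rectangles and pentagons through $(\tilde c,\tilde c')$. We would check $\Phi\circ\partial=\partial\circ\Phi$ by the usual juxtaposition of real domains. After that we compose with the identification $\GCR^-(\cH_0')\simeq\GCR^-(\cH_0)$ coming from the real commutation invariance of Proposition~\ref{prop:real grid moves}, which is a chain homotopy equivalence given by real pentagon counts. Finally, a standard filtration argument (the cancelled piece is acyclic) upgrades the resulting map to a quasi-isomorphism.

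The third step is the bigrading. The Maslov shift $-1$ is formal, coming from the cone. For the Alexander shift we would use the absolute formulas at the start of Section~\ref{sec:Basic properties and examples}. The state sets and $|\xv\cap C|$ agree across the four diagrams, $l_f$ is unchanged, and the only varying term is $-(n-l_f-2l_p)/4$ together with the change of the $X$-set. The changes of $M^R_{\bfX}$ between $\cH_-$ and $\cH_0'$ can be computed from the ordinary grid computation \cite[Lemma~9.2.2]{OSS2015grid} divided by $2$ via the formula $M^R=\tfrac12 M-\tfrac14|\xv\cap C|+\tfrac14 l_f$. Adding the shift from the $l_p$ term should give exactly $\frac{l_p'-l_p-1}{2}$.

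We expect the main obstacle to be the second step. Real domains through $(\tilde c,\tilde c')$ can be any of the four real rectangle types, including $X$-shaped octagons and non-disjoint rectangle pairs. Verifying that every real index-$2$ domain decomposes in exactly two ways (or contributes a $U_1-U_2$ boundary term) requires a case analysis beyond the classical one. We would first try to reduce it to the classical case by observing that, away from $C$, real domains are pairs of ordinary domains interchanged by $R$. Only domains meeting $C$ require new checks, and these can be handled by the decomposition into ordinary rectangles used in the proof of the Maslov function proposition.
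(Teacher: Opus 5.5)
Your proposal has a genuine gap, starting with Step~1. The points $(c,c')$ lie in $\alpha_+\cap\beta_+$, so they are coordinates of real grid states of $\cH_+$ — exactly how you yourself use $(\widetilde c,\widetilde c')$ in Step~2. The distinguished corners of a real pentagon, by contrast, must lie in $(\alpha_+\cap\alpha_-)\cup(\beta_+\cap\beta_-)$, like $(s,s')$ and $(t,t')$ in Section~\ref{sub:Equivariant crossing changes and maps associated to them}. So ``real pentagons with corner pair $(c,c')$'' do not define a map. Even if you correct this to a crossing-change-type pentagon count, Step~2 gives no actual mechanism. The ``real analogue of $\Phi$'', the ``isomorphism component'' you cancel and the ``acyclic piece'' are never identified. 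Nothing in your argument explains why multiplication by $U_1-U_2$ should appear.

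The missing idea is a double mapping-cone structure.
\begin{itemize}
\item Split the states of $\cH_+$ and $\cH_0$ by $(c,c')$ into $\bfI_+\cup\bfN_+$, and those of $\cH_-$ and $\cH_0'$ by $(\widetilde c,\widetilde c')$ into $\bfI_-\cup\bfN_-$.
\item The placement of the $X$'s forces $\GCR^-(\cH_+)=\Cone(\partial^{\bfN_+}_{\bfI_+})$ but $\GCR^-(\cH_0)=\Cone(\partial^{\bfI_+}_{\bfN_+})$. Dually, $\GCR^-(\cH_-)=\Cone(\partial^{\bfI_-}_{\bfN_-})$ and $\GCR^-(\cH_0')=\Cone(\partial^{\bfN_-}_{\bfI_-})$.
\item The bijection $T\colon\bfI_-\to\bfI_+$ is a chain isomorphism (Lemma~\ref{lem:triangle map T}).
\end{itemize}
These assemble into the square \eqref{eq:combined chain complex}. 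Its columns are $\GCR^-(\cH_+)$ and $\GCR^-(\cH_-)$, and its rows are $\GCR^-(\cH_0')$ and $\GCR^-(\cH_0)$. Define $P_{+,-}$ as the horizontal arrows, which are restricted rectangle counts from the $L_0$-diagrams, not pentagons. Then $\Cone(P_{+,-})$ is tautologically the cone of the vertical map $\partial^{\bfN_+}_{\bfI_+}\circ T+T\circ\partial^{\bfI_-}_{\bfN_-}\colon\GCR^-(\cH_0')\to\GCR^-(\cH_0)$.

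The factor $U_1-U_2$ then comes from a separate computation. Post-composing the vertical map with the commutation quasi-isomorphism $P$ gives $h_{X_2}\circ h_Y+h_Y\circ h_{X_2}$ (Lemma~\ref{lem:property of commutation invariance map P}). This is homotopic to $U_1-U_2$ via homotopies $h_{X_2,Y_i}$ together with Proposition~\ref{prop:U,v action identification} applied to $(O_1,O_1')$ and $(O_4,O_4')$. That homotopy, not the decomposition case analysis you flag, is where the content lies. Your grading step is essentially Lemma~\ref{lem:grading shift in oriented skein triple} and is fine.
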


There are two pairs of intersection points $(c,c')$ and $(\widetilde{c},\widetilde{c}')$ on $\alpha_+\cap \beta_+$ and $ \alpha_-\cap \beta_-$, respectively. We split the real grid states in $(\T_{\bm\alpha_+}\cap \T_{\bm\beta_+})^{R}$ ($(\T_{\bm\alpha_-}\cap \T_{\bm\beta_-})^{R}$) into $\bfI_+\cup \bfN_+$ ($\bfI_-\cup \bfN_-$) according to whether the real grid state contains the intersection points $(c,c')$ ($(\widetilde{c},\widetilde{c}')$). After this, we have the following mapping cone expressions for various real grid chain complexes. 
\[\GHR^-(\cH_+)=\Cone(\partial^{\bfN_+}_{\bfI_+} \colon \bfI_+\to \bfN_+); \quad \GHR^-(\cH_0)=\Cone(\partial^{\bfI_+}_{\bfN_+}\colon \bfN_+\to \bfI_+);\]
\[\GHR^-(\cH_-)=\Cone(\partial^{\bfI_-}_{\bfN_-} \colon \bfN_-\to \bfI_-); \quad \GHR^-(\cH_0')=\Cone(\partial^{\bfN_-}_{\bfI_-}\colon \bfI_-\to \bfN_-).\]

The real grid states in $\cH_+$ and $\cH_0$ can be identified naturally, and of course, the same is true for $\cH_-$ and $\cH_0'$. However, the real Alexander grading are not the same- due to the change of $\bfX$. We also have a natural one-to-one correspondence between $\bfI_+$ and $\bfI_-$, given by replacing $(c,c')$ with $(\widetilde{c},\widetilde{c}')$. As above, let $l_p=l_p(L_+)=l_p(L_-)$ and $l_p'=l_p(L_0)$, then we can calculate that
 
\begin{lem}\label{lem:grading shift in oriented skein triple}
	
For $\xv\in \bfN_+$, \[M^R_+(\xv)=M^R_0(\xv), \quad A_+^R(\xv)-\frac{1}{2}-\frac{l_p}{2}=  A_0^R(\xv)-\frac{l_p'}{2}.\]

For $\xv\in \bfN_-$, \[M^R_-(\xv)=M^{R'}_0(\xv), \quad A_-^R(\xv)+\frac{1}{2}-\frac{l_p}{2}=  A_0^{R'}(\xv)-\frac{l_p'}{2}.\]

For $\xv\in \bfI_-$ and $T(\xv)\in \bfI_+$, \[M^R_+(T(\xv))=M^{R'}_0(\xv)-1, \quad A_+^R(T(\xv))+\frac{1}{2}-\frac{l_p}{2}= A_0^{R'}(\xv)-\frac{l_p'}{2};\]

\[M^R_0(T(\xv))=M^{R}_-(\xv)-1, \quad A_0^R(T(\xv))+\frac{1}{2}-\frac{l_p'}{2}= A_-^{R}(\xv)-\frac{l_p}{2}.\]   
\end{lem}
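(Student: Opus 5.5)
The plan is to reduce everything to the absolute grading formulas from the beginning of Section~\ref{sec:Basic properties and examples}:
\[M^R_{\bfO}(\xv)=\tfrac12 M_{\bfO}(\xv)-\tfrac14|\xv\cap C|+\tfrac14 l_f,\qquad
A^R(\xv)=\tfrac12\bigl(M^R_{\bfO}(\xv)-M^R_{\bfX}(\xv)\bigr)-\tfrac{n-l_f-2l_p}{4},\]
and then to import the classical computation of \cite[Lemma~9.2.2]{OSS2015grid}, which compares the usual Maslov and Alexander functions on the diagrams of an oriented skein triple.

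\textbf{Maslov grading.} The $\bfO$-markings and the $\alpha,\beta$-circles of $\cH_+$ and $\cH_0$ coincide, and likewise for $\cH_-$ and $\cH_0'$. For such pairs, $M_{\bfO}$ computed in the non-real grid is literally the same function. Since $l_f$ and $|\xv\cap C|$ are unchanged (the local move is away from $C$), this gives $M^R_+=M^R_0$ on $\bfN_+$ and $M^R_-=M^{R'}_0$ on $\bfN_-$.

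For the comparison through $T$, which swaps $(c,c')$ with $(\widetilde c,\widetilde c')$, the plan is to use the non-real fact that replacing $c$ by $\widetilde c$ changes $M_{\bfO}$ by exactly $1$. In the real setting this is done twice, symmetrically, so $M_{\bfO}$ changes by $2$ and $M^R_{\bfO}$ changes by $1$. The cleanest way to see the $2$ is to express $M_{\bfO}$ via the $\mathcal J$-function formula $M_{\bfO}(\xv)=\mathcal J(\xv-\bfO,\xv-\bfO)+1$ and expand the difference locally near the two symmetric pairs of points. Alternatively, one can connect $\xv$ and $T(\xv)$ by a thin real rectangle, or rather a pair of thin rectangles, in the combined diagram and count the $O$-markings it contains. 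The sign conventions must then be matched against $\bfI_\pm$ versus $\bfN_\pm$ to get the stated $-1$.

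\textbf{Alexander grading.} Write $A^R=\tfrac12(M^R_{\bfO}-M^R_{\bfX})+\text{const}$. The constant depends on $n$ (equal for all four diagrams) and on $l_p$. This is where the $\tfrac{l_p}{2}$ and $\tfrac{l_p'}{2}$ corrections come from: the difference of normalization constants between $L_\pm$ and $L_0$ is $\tfrac{2(l_p'-l_p)}{4}$. For the remaining term, $M^R_{\bfX}=\tfrac12 M_{\bfX}+\dots$, and the non-real lemma gives how $M_{\bfX}$ changes when $\bfX_+$ is replaced by $\bfX_0$. In $\cH_+$ versus $\cH_0$ only the pairs $X_1,X_2$ versus $Y_1,Y_2$ (and their mirrors) differ. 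The change of $\mathcal J(\xv-\bfX,\xv-\bfX)$ comes from winding numbers around these four points: each symmetric copy contributes the classical $\pm1$, so the total is $\pm2$ in $M_{\bfX}$. Halving twice turns this into the $\pm\tfrac12$ shift in $A^R$. The same computation with $\widetilde X_1$ handles $\cH_-$ versus $\cH_0'$.

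The main obstacle is bookkeeping the local winding-number contributions of the mirrored markings $X_i',Y_i'$. The cross terms between a marking and its reflection near the axis might not cancel the way they do in the doubled non-real count. I would first verify on the explicit local picture of Figure~\ref{fig:combined diagram for oriented skein triple} that these markings lie in disjoint symmetric regions, so that the non-real shift simply doubles. Finally, I would sanity-check the signs on a small example, such as the unknot skein triple.
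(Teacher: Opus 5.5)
Your proposal is correct and is essentially the paper's argument. The paper proves the lemma in one line by citing \cite[Lemma~9.2.2]{OSS2015grid} together with the absolute formulas for $M^R$ and $A^R$, noting only that $l_f$ and $|\xv\cap C|$ are unaffected by the local moves and by $T$; your doubling-then-halving bookkeeping (equivalently $A^R=\tfrac12 A$ with $\ell=l_f+2l_p$) is exactly what that line compresses. The cross-term concern you flag can be avoided by passing through the intermediate non-real grid diagram in which only one of the two mirror-image local changes has been made, so that the classical lemma applies verbatim at each of the two steps.
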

\begin{proof}
This follows directly from \cite[Lemma~9.2.2]{OSS2015grid}, since $l_f$ and $\vert \xv\cap C\vert$ keep unchanged.
\end{proof}

\begin{lem}\label{lem:triangle map T}
The identification $T$ extends to an isomorphism of chain complexes \[T\colon (\bfI_-, \partial_{\bfI_-}^{\bfI_-}) \to (\bfI_+, \partial_{\bfI_+}^{\bfI_+}).\]
\end{lem}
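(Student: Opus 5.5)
We model the argument on \cite[Lemma~9.2.3]{OSS2015grid}, upgraded to real rectangles. Recall that $T$ replaces the pair $(\widetilde{c},\widetilde{c}')$ in a real grid state of $\bfI_-$ by $(c,c')$; its inverse is the reverse substitution. So $T$ is a bijection of real generators, and extending it $\F[u_1,\ldots,u_{l_f},U_1,\ldots,U_k]$-linearly gives an isomorphism of modules. It therefore suffices to prove the intertwining relation
\[T\circ \partial^{\bfI_-}_{\bfI_-}=\partial^{\bfI_+}_{\bfI_+}\circ T.\]
Concretely, for $\xv,\yv\in\bfI_-$ we must build a bijection between empty real rectangles $r\in\RRecto(\xv,\yv)$ avoiding $\bfX_-$ and empty real rectangles $r'\in\RRecto(T\xv,T\yv)$ avoiding $\bfX_+$, with the same $O$-multiplicities. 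The statement only concerns an ungraded chain isomorphism, so gradings play no role here; Lemma~\ref{lem:grading shift in oriented skein triple} is needed only later.

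The key observation is how a real rectangle interacts with the distinguished pair of points. Since both $\xv$ and $\yv$ contain $(\widetilde{c},\widetilde{c}')$, any $r$ counted in $\partial^{\bfI_-}_{\bfI_-}$ fixes these coordinates. Hence $\widetilde{c},\widetilde{c}'$ are not corners of $r$, and by emptiness they do not lie in its interior. Now go through the four types of real rectangle in Proposition~\ref{prop:domains of real index 1}:
\begin{itemize}
\item For a pair of interchanged rectangles, $R$-symmetry reduces the claim to the non-real statement for each rectangle separately. That statement is exactly \cite[Lemma~9.2.3]{OSS2015grid}.
\item For the invariant types (square, $L$-hexagon, $X$-octagon), decompose $r$ into ordinary rectangles as in the proof of the grading proposition in Section~\ref{sub:Relative gradings}. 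Then apply the non-real correspondence piece by piece. The local picture in Figure~\ref{fig:combined diagram for oriented skein triple} is $R$-symmetric, so the resulting modified domain is again $R$-invariant and of the same type.
\end{itemize}

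In the non-real lemma, a rectangle avoiding $\widetilde{c}$ in $\cH_-$ corresponds to the rectangle in $\cH_+$ with the same corners on the unchanged curves. The correspondence works because the thin strips between $\alpha_+$ and $\alpha_-$ (and between $\beta_+$ and $\beta_-$) contain no $O$-markings. It also works because a rectangle whose boundary would cross the strip in a way that distinguishes the two diagrams would have to contain $X_1$ or $\widetilde{X}_1$ (or their reflections) or the point $\widetilde{c}$. Emptiness and the $\bfX$-avoidance condition rule both of these out.

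The main obstacle is the $R$-invariant types whose axis passes near the local picture, especially the $X$-shaped octagon. Its ordinary-rectangle decomposition passes through non-real intermediate states, and these intermediate states need not contain $(\widetilde{c},\widetilde{c}')$, so the non-real lemma cannot be applied blindly to each piece. To handle this, I would instead argue directly on the boundary of $r$. Its $\alpha$- and $\beta$-edges either avoid the distinguished circles entirely, or run along them without meeting the strip region between the $+$ and $-$ circles. In the latter case, replacing $\alpha_-,\beta_-$ by $\alpha_+,\beta_+$ deforms $r$ by an $R$-equivariant isotopy of its edges. This deformation does not change the shape type, the emptiness, or the $\bfO$- and $\bfX$-counts. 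I would check this by the same case inspection used for real pentagons in Section~\ref{sub:Equivariant crossing changes and maps associated to them}.
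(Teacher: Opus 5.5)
The paper states this lemma without proof, as the real analogue of the corresponding lemma in \cite[Chapter~9]{OSS2015grid}. Your plan, a bijection between counted real domains that preserves $O$-multiplicities, is the right one. But there is a gap exactly where you locate the difficulty. For the $R$-invariant types you end with an unjustified dichotomy (edges ``avoid the distinguished circles, or run along them'') and an isotopy deferred to a case inspection you do not carry out. Moreover, the obstacle that motivates this detour is not real.

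The missing observation is a corner count. A real grid state has exactly one point on each $\alpha$- and each $\beta$-circle, and every edge of a real rectangle joins two corners at which $\xv$ and $\yv$ differ. For type (4), apply this to each of the two rectangles. The intermediate state still contains $\widetilde c,\widetilde c'$: if the first rectangle moved $\widetilde c$, the second would have to restore it, and applying $R$ would make $\widetilde c'\in\xv$ a target corner of the first. Since $\xv$ and $\yv$ have the same point on $\beta_-$, on $\alpha_-$, and on the common circles through $\widetilde c$ and $\widetilde c'$, no edge of a counted $r$ lies on any of these four circles. So $\partial r$ lies on circles shared by $\cH_+$ and $\cH_-$. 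Also $\widetilde c,\widetilde c'\notin\overline{r}$, since they are not corners, not interior points, and not on edges. In particular, any decomposition of the octagon into ordinary rectangles stays inside $\overline{r}$, so its intermediate states do keep $(\widetilde c,\widetilde c')$. The second branch of your dichotomy never occurs.

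Hence the bijection is the identity on subsets of $T^2$. The region $r$ is itself an $R$-invariant domain of the same type in $\cH_+$, with the same corners (so it runs from $T\xv$ to $T\yv$) and the same $O$-multiplicities. No piecewise appeal to the non-real lemma is needed. This matters for type (4), which is not literally the lemma you cite, since the non-real diagrams differ by two cross-commutations. Two things remain to check.
\begin{itemize}
\item Emptiness. $c$ and $\widetilde c$ are joined by an arc of the common $\alpha$-circle through them that crosses only $\beta_\pm$, and neither of these carries an edge of $r$. So $c\notin\overline{r}$, and likewise $c'\notin\overline{r}$.
\item The $\bfX$-condition. You need $n_{X_1}(r)=n_{\widetilde X_1}(r)$, and likewise for the reflections. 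Your stated reason, that a distinguishing rectangle ``would have to contain $X_1$ or $\widetilde X_1$'', does not give this: a domain containing $\widetilde X_1$ but not $X_1$ would be counted for $\cH_+$ but not for $\cH_-$. The right reason is that, by construction of the combined diagram, $X_1$ and $\widetilde X_1$ lie in the same row with only $\beta_\pm$ between them, so no circle common to both diagrams separates them.
\end{itemize}
Finally, drop the assertion that the strips between the $\pm$ circles contain no $O$-markings. Those regions are where markings switch columns, and the assertion is unnecessary once $r$ is seen to be the same region in both diagrams.
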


Using this identification, we can consider the following chain complex \begin{equation}\label{eq:combined chain complex}
    \xymatrix{\bfI_- \ar[d]_{\partial^{\bfN_+}_{\bfI_+}\circ T} \ar[r]^{\partial^{\bfN_+}_{\bfI_+}} & \bfN_- \ar[d]^{T\circ \partial^{\bfI_-}_{\bfN_-}}\\
		\bfN_+ \ar[r]_{\partial^{\bfI_+}_{\bfN_+}} & \bfI_+\\}.
\end{equation}

Using Lemma~\ref{lem:grading shift in oriented skein triple} and \ref{lem:triangle map T}, this chain complex has the following nice properties.
\begin{lem}\label{lem:combined chain cplx}
Endow $\bfI_+$, $\bfN_+$ with bigrading $(M_0^R+1, A_0^R+\frac{l_p-l_p'+1}{2})$ and  $\bfI_-$, $\bfN_-$ with bigrading $(M_0^{R'}+1, A_0^{R'}+\frac{l_p-l_p'-1}{2})$, then the following is true for Diagram~\eqref{eq:combined chain complex}.
\begin{itemize}
    \item Each edge map is homogeneous of bigrading $(-1,0)$;
    \item The left column, as a bigraded module over $\F[u_1,\ldots,u_{l_f},U_1,\ldots,U_k]$ is identified with $\GCR^-(\cH_+)[-1,0]$;
    \item The right column as a bigraded module over $\F[u_1,\ldots,u_{l_f},U_1,\ldots,U_k]$ is identified with $\GCR^-(\cH_-)$;
    \item The top row is identified with $\GCR^-(\cH_0')[0,\frac{l_p'-l_p+1}{2}]$;
    \item The bottom row is identified with $\GCR^-(\cH_0)[-1,\frac{l_p'-l_p-1}{2}]$.
\end{itemize}
\end{lem}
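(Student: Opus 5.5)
The plan is to follow the proof of the corresponding statement in \cite[Chapter~9]{OSS2015grid}, checking each bullet of Lemma~\ref{lem:combined chain cplx} by combining Lemma~\ref{lem:grading shift in oriented skein triple} with Lemma~\ref{lem:triangle map T}. First I would check that Diagram~\eqref{eq:combined chain complex} really is a chain complex. The total differential is built from the internal differentials $\partial^{\bfI_\pm}_{\bfI_\pm}$, $\partial^{\bfN_\pm}_{\bfN_\pm}$ together with the four edge maps. Each row and column is, as a module, one of the mapping cones $\GCR^-(\cH_+)=\Cone(\partial^{\bfN_+}_{\bfI_+})$, $\GCR^-(\cH_0)=\Cone(\partial^{\bfI_+}_{\bfN_+})$, $\GCR^-(\cH_-)$ and $\GCR^-(\cH_0')$, where $T$ is used to transport $\bfI_-$ to $\bfI_+$. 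The vanishing of each component of $\partial^2$ then follows from $(\partial^{G,-})^2=0$ on these four complexes. I will also need the identity $\partial^{\bfN_+}_{\bfI_+}\circ T\circ\partial^{\bfI_-}_{\bfN_-}=\partial^{\bfN_+}_{\bfI_+}\circ\partial^{\bfI_+}_{\bfN_+}$-type relations, and these follow from the local combinatorics of real rectangles passing through $(c,c')$ versus $(\widetilde c,\widetilde c')$.

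The second step is the module identifications in the four bullets concerning rows and columns. For the left column, the underlying module is $\bfI_-\oplus\bfN_+\cong\bfI_+\oplus\bfN_+$ via $T$. Its vertical map is $\partial^{\bfN_+}_{\bfI_+}\circ T$, which under $T$ becomes exactly the cone description of $\GCR^-(\cH_+)$. The right column is literally $\bfN_-\oplus\bfI_-$ with map $\partial^{\bfI_-}_{\bfN_-}$, post-composed with the isomorphism $T$, so it is $\GCR^-(\cH_-)$. The rows are handled the same way: the top row is $\bfI_-\to\bfN_-$, which is $\GCR^-(\cH_0')$, and the bottom row is $\bfN_+\to\bfI_+$, which is $\GCR^-(\cH_0)$. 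These are purely formal once Lemma~\ref{lem:triangle map T} is granted. Throughout, the $\F[u_1,\ldots,u_{l_f},U_1,\ldots,U_k]$-module structure is preserved, since $\bfO$ is common to all four diagrams and the $U$-exponents are computed from $\bfO$ only.

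The third step is the grading bookkeeping, which I expect to be the main obstacle, chiefly because the $\frac{l_p}{2}$, $\frac{l_p'}{2}$ normalization terms in $A^R$ must be tracked carefully. The bigradings are fixed as in the statement: $(M_0^R+1,A_0^R+\frac{l_p-l_p'+1}{2})$ on the $+$ side and $(M_0^{R'}+1,A_0^{R'}+\frac{l_p-l_p'-1}{2})$ on the $-$ side. I would then take each edge in turn and, using Lemma~\ref{lem:grading shift in oriented skein triple}, convert it to $M^R_\pm$ and $A^R_\pm$. For instance, on $\bfN_+$ the rule $A^R_0=A^R_+-\frac12-\frac{l_p}{2}+\frac{l_p'}{2}$ gives assigned Alexander grading $A^R_+$. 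Likewise on $\bfI_+$, the formula $A^R_0(T\xv)=A^R_-(\xv)-\frac{l_p}{2}+\frac{l_p'}{2}-\frac12$ combined with the $T$-relation gives $A^R_+$. Each edge map is a component of a grid differential of degree $(-1,0)$ in that grid's own grading, and the assigned gradings differ from the native ones by consistent shifts. This proves the first bullet and yields the shifts $[-1,0]$, $[0,0]$, $[0,\frac{l_p'-l_p+1}{2}]$ and $[-1,\frac{l_p'-l_p-1}{2}]$ in the remaining bullets. The only real care needed is signs: I would verify one representative edge, say $\partial^{\bfI_+}_{\bfN_+}$, completely, and check that the other three follow by the symmetric relations in Lemma~\ref{lem:grading shift in oriented skein triple}, using that $l_f$ and $|\xv\cap C|$ are unchanged.
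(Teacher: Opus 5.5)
Your route is the one the paper intends: its only justification is the appeal to Lemma~\ref{lem:grading shift in oriented skein triple} and Lemma~\ref{lem:triangle map T}. Your module identifications are also fine. The problem is the grading step, which is the actual content of the lemma. There you assert the shifts rather than compute them, and your one computation at a corner transported by $T$ is wrong.

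For $T(\xv)\in\bfI_+$, the relation $A^R_0(T\xv)+\frac12-\frac{l_p'}{2}=A^R_-(\xv)-\frac{l_p}{2}$ gives assigned Alexander grading $A^R_0(T\xv)+\frac{l_p-l_p'+1}{2}=A^R_-(\xv)$. This is the $\cH_-$-grading, as it must be, since $\bfI_+$ sits in the right column. It is not $A^R_+(T\xv)$, because $T$ does not preserve gradings:
\begin{itemize}
\item Lemma~\ref{lem:grading shift in oriented skein triple} already gives $M^R_+(T\xv)=M^R_-(\xv)-1$.
\item A real rectangle from $\bfI_-$ to $\bfN_-$ avoiding $\bfX_0$ contains a pair of markings of $\bfX_-\setminus\bfX_0$; this is why $\GCR^-(\cH_-)$ is a cone from $\bfN_-$ to $\bfI_-$. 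Hence $A^{R'}_0-A^R_-$ is one less on $\bfI_-$ than on $\bfN_-$.
\item The relation $A^R_+(T\xv)+\frac12-\frac{l_p}{2}=A^{R'}_0(\xv)-\frac{l_p'}{2}$ then gives $A^R_+(T\xv)=A^R_-(\xv)-1$.
\end{itemize}
So at the corners $\bfI_\pm$ the native gradings of the row and of the column genuinely differ. The ``consistent shifts'' are exactly what must be verified there. Note also that the edge you propose to check in full, $\partial^{\bfI_+}_{\bfN_+}$, is the trivial one, since both ends use the $+$ assignment. The content lies in the two vertical edges, which join corners with different assignment rules through $T$.

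Doing that verification for the Maslov component shows the statement as printed cannot hold. We have $M^R_-=M^{R'}_0$ on $\bfN_-$, $M^R_0(T\xv)=M^R_-(\xv)-1$ and $M^{R'}_0(\xv)=M^R_{+}(T\xv)+1$. With the assignment $M^{R'}_0+1$ on the $-$ side:
\begin{itemize}
\item the right column carries $M^R_{-}+1$ on $\bfN_-$ but $M^R_-$ on $\bfI_+$;
\item the left column carries $M^R_{+}+2$ on $\bfI_-$ but $M^R_{+}+1$ on $\bfN_+$;
\item so neither column is a shifted grid complex, and both vertical edges have Maslov degree $-2$;
\item the top row is $\GCR^-(\cH_0')$ shifted by $[-1,\cdot]$ rather than $[0,\cdot]$, in the convention the lemma uses for the left column.
\end{itemize}
If the $-$ side is given $(M^{R'}_0,\,A^{R'}_0+\frac{l_p-l_p'-1}{2})$ and everything else is unchanged, all five bullets hold. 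In that case $P_{+,-}$ has degree $(0,0)$, as the skein sequence needs. Your proposal asserts the listed shifts without computing them, so as written it ``verifies'' a misprinted statement.

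Finally, your first step is not needed for this lemma, but as written it is wrong. The $\bfI_-\to\bfI_+$ component of $\partial^2$ is $\partial^{\bfI_+}_{\bfN_+}\circ\partial^{\bfN_+}_{\bfI_+}\circ T+T\circ\partial^{\bfI_-}_{\bfN_-}\circ\partial^{\bfN_-}_{\bfI_-}$. Each term composes components from two different diagrams ($\cH_+$ with $\cH_0$, and $\cH_0'$ with $\cH_-$), so its vanishing does not follow from $\partial^2=0$ in any of the four complexes. Also, the identity you wrote down has sides with different domains, $\bfN_-$ and $\bfN_+$.
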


Then we define $\F[u_1,\ldots,u_{l_f},U_1,\ldots,U_k]$-module maps $h_X: \GCR^-(\cH_0')\to \GCR^-(\cH_0')$ for $X=X_2, Y_1, Y_2$ using the formula \[h_X(\xv)=\sum_{\yv}\sum_{r\in \RRecto(\xv,\yv),r\cap (\bfX_+\cup \bfX_0)=\{X,X'\}} \prod_{1\le i\le l_f} u_{i}^{n_{O_i^f}(r)}\prod_{1\le j\le k} U_{j}^{n_{O_j}(r)} \yv.\]  We also set $h_Y=h_{Y_1}+h_{Y_2}$.

\begin{lem}\label{lem:properties of h_X}
The map $h_{X_2}$ is homogeneous of degree $(-1,0)$ and $h_{Y_i}$ is homogeneous of degree $(-1,-1)$. Furthermore, $h_{X_2}$ vanishes on $\bfN_-$ and takes $\bfI_-$ to $\bfN_-$ while $h_{Y_i}$ vanishes on $\bfI_-$. 
\end{lem}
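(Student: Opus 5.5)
We follow the proof of the corresponding statement for the maps $h_X$ in \cite[Chapter~9]{OSS2015grid}, working one real rectangle at a time. There are two parts: the grading computation and the support computation.

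\textbf{Grading.} Take an empty real rectangle $r\in\RRecto(\xv,\yv)$ contributing to $h_X$. By definition, $r$ avoids all $X$-type markings of $\bfX_+\cup\bfX_0$ except the pair $\{X,X'\}$. We will read off the bidegree from the formula $M^R_{\bfO}(\xv)-M^R_{\bfO}(\yv)=1-\vert r\cap\bfO\vert+\vert \xv\cap r\vert$, which the relative gradings proposition supplies for every real rectangle type in Proposition~\ref{prop:domains of real index 1}. Since $r$ is empty and each $O$ it contains is compensated by a $U$ (or $u$) factor, the Maslov degree is $-1$. For the Alexander degree we use $A^R=\frac12(M^R_{\bfO}-M^R_{\bfX})$, with $M^R_{\bfX}$ taken relative to the $X$-markings of $\cH_0'$, namely $\bfX_0=\{Y_1,Y_1',\dots\}$.

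Because $r$ contains a whole pair, we must be careful about how the halving of gradings interacts with it. A pair $(X,X')$ contributes $1$, not $2$, to the real count: this matches the factor $\frac12$ relating the real gradings to the ordinary ones via $M^R=\frac12M-\frac14\vert\xv\cap C\vert+\dots$. The term $\vert\xv\cap C\vert$ agrees for $\xv$ and $\yv$ whenever $r$ is a paired rectangle or a symmetric square. For octagons and hexagons it changes, but that change is already built into the axiom.

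The two cases then come out as follows.
\begin{itemize}
\item If $X=X_2$, the marking lies in $\bfX_+$ but not in $\bfX_0$, so it does not affect $M^R_{\bfX}$ of $\cH_0'$. The degree is therefore $(-1,0)$, exactly as in the usual case.
\item If $X=Y_i$, the pair $(Y_i,Y_i')$ is counted by $M^R_{\bfX}$. The $\bfX$-Maslov degree is then $-1+1=0$, so $A^R$ drops by $\frac12(-1-0)$ in each of two copies. Following the paper's normalization (degree $(-1,-1)$ for the $Y$'s, just as in \cite[Lemma~9.2.4]{OSS2015grid}), we need to check that the pair's contribution of $1$, combined with the $\frac12$ in $A^R$, produces $-1$ rather than $-\frac12$.
\end{itemize}
This last bookkeeping, for the fixed versus paired normalization, is the step where I would be most careful. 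Concretely, I would verify it on the model octagon and paired-rectangle cases using the decomposition into ordinary rectangles from the proof of the gradings proposition.

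\textbf{Support statements.} These are local and come from Figure~\ref{fig:combined diagram for oriented skein triple}. The markings $X_2,X_2'$ sit in the squares adjacent to $c$ (respectively $c'$). Hence any real rectangle with corners on $\bm\alpha_-,\bm\beta_-$ containing $X_2$ but no other marking of $\bfX_+\cup\bfX_0$ must have a corner at $\widetilde c$: the neighbouring markings $X_1$, $\widetilde X_1$ and $Y_1$ block every other option. By symmetry it then also has a corner at $\widetilde c'$. We need such a rectangle to start at a state containing $(\widetilde c,\widetilde c')$ and end at one that does not. If $\xv\in\bfN_-$, no corner is at $\widetilde c$, so $h_{X_2}$ vanishes on $\bfN_-$; for $\xv\in\bfI_-$ the output lies in $\bfN_-$.

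For $Y_i$, any real rectangle from a state in $\bfI_-$ that contains $Y_i$ and avoids the other $X$-markings would have to contain one of $X_1$, $\widetilde X_1$ or $X_2$. This is because $Y_1$ and $Y_2$ sit in the column and row through $\widetilde c$. So $h_{Y_i}$ vanishes on $\bfI_-$.

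Each case has to be checked for all four real rectangle types. The octagon and hexagon types only occur when the rectangle meets $C$, and the local picture is away from $C$ except through the pairing, so in practice only the paired rectangles and their reflections need checking. Those checks reduce to the ordinary argument of \cite[Lemma~9.2.4]{OSS2015grid} applied to one rectangle of the pair.
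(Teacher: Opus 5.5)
Your two-step plan matches the paper's one-line proof: read the bidegrees off the real grading axiom of Section~\ref{sub:Relative gradings}, then inspect the local picture at $(\widetilde c,\widetilde c')$. The grading step fails as written, however.

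\textbf{Grading.} Your key bookkeeping claim, that a pair $(X,X')$ contributes $1$ to the real count, is false. In the axiom $M^R_{\bfX}(\xv)-M^R_{\bfX}(\yv)=1-|r\cap\bfX|+|\xv\cap r|$, every marking counts separately, so a pair contributes $2$. For a paired rectangle $r_1\cup R(r_1)$, $|\xv\cap C|$ is unchanged, and the real difference is half of the ordinary two-rectangle difference $2-2\bigl(n(r_1)+n(R(r_1))\bigr)+\cdots$. This is exactly why $U_j$ has degree $(-2,-1)$ while $u_i$ has degree $(-1,-\tfrac12)$.

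Your convention is also internally inconsistent. A rectangle through one pair $(O,O')$ would have Maslov degree $-2$ after multiplying by $U$. For $h_{Y_i}$ it produces the $A^R$-degree $-\tfrac12$ that you flag and leave unresolved.

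With the correct count the computation is immediate. For an empty $r$, the output $\prod u_i^{n_{O_i^f}(r)}\prod U_j^{n_{O_j}(r)}\yv$ has $M^R=M^R(\xv)-1$ and $A^R=A^R(\xv)-\tfrac12|r\cap\bfX_0|$. This gives $A^R$-degree $0$ for $X_2\notin\bfX_0$ and $-1$ for the pair $(Y_i,Y_i')\subset\bfX_0$. Separately, your remark that $|\xv\cap C|$ is unchanged across a symmetric square is wrong: it changes by $2$. This is harmless once you rely on the axiom.

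\textbf{Support.} This part also needs repair, for two reasons.
\begin{itemize}
\item Dropping the $R$-invariant types is unjustified. A square, $L$-hexagon or $X$-octagon meets $C$, but it can still contain $(X,X')$ and have corners at $\widetilde c,\widetilde c'$.
\item $\widetilde X_1\in\bfX_-$ is not excluded in the definition of $h_X$, so it cannot serve as a blocker.
\end{itemize}
What works for all four types at once is a quadrant count at $\widetilde c$.
\begin{itemize}
\item In the local picture, $X_2$ and $\widetilde X_1$ occupy the two quadrants at $\widetilde c$ that are covered when $\widetilde c$ is an $\xv$-corner. $Y_1$ and $Y_2$ occupy the other two. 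This is what makes the complexes of $\cH_-$ and $\cH_0'$ the cones $\bfN_-\to\bfI_-$ and $\bfI_-\to\bfN_-$.
\item For $h_{X_2}$: a real domain containing $X_2$ but neither $Y_i$ cannot have $\widetilde c$ as an interior point, edge point or concave corner, since each of these covers a $Y$-quadrant. So $\widetilde c$ is a convex $\xv$-corner, which gives both claims for $h_{X_2}$.
\item For $h_{Y_i}$: an empty domain starting at $\xv\ni\widetilde c$ either covers no quadrant at $\widetilde c$ or has $\widetilde c$ as an $\xv$-corner. A convex $\xv$-corner covers only the $X_2$- or $\widetilde X_1$-quadrant. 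A concave $\xv$-corner contains $X_2$. In no case does the domain contribute to $h_{Y_i}$.
\end{itemize}
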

\begin{proof}
    This follows from the definition of bigradings (see~Section \ref{sub:Relative gradings}) and consider the local picture around $(\widetilde{c},\widetilde{c}')$. 
\end{proof}

Forgetting $X_i,X_i'$'s in Figure~\ref{fig:combined diagram for oriented skein triple}, the diagram then serves as a combined grid diagram for the real commutation relating $\cH_0$ and $\cH_0'$. There are two pairs of intersection points in $(\alpha_+\cap \beta_+)\cup (\alpha_-\cap \beta_-)$. Counting real pentagons with corners at them leads to a chain map $P\colon\GCR^-(\cH_0) \to \GCR^-(\cH_0')$ which is a quasi-isomorphism and its quasi-inverse.

\begin{lem}\label{lem:property of commutation invariance map P}
The map $P$ satisfies \[P\circ (\partial^{\bfN_+}_{\bfI_+}\circ T+ T\circ \partial^{\bfI_-}_{\bfN_-})=h_{X_2}\circ h_{Y}+h_{Y}\circ h_{X_2}\] showing that $h_{X_2}\circ h_{Y}+h_{Y}\circ h_{X_2}$ is a chain map. Thus, $P$ induces a quasi-isomorphism between \[\Cone(\partial^{\bfN_+}_{\bfI_+}\circ T+ T\circ \partial^{\bfI_-}_{\bfN_-}\colon \GCR^-(\cH_0')\to \GCR^-(\cH_0'))\] and \[\Cone( h_{X_2}\circ h_{Y}+h_{Y}\circ h_{X_2}\colon \GCR^-(\cH_0')\to \GCR^-(\cH_0')).\]
\end{lem}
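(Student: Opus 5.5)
The plan is to follow the proof of the analogous statement in \cite[Chapter~9]{OSS2015grid} (the lemma relating the commutation map $P$ to $h_{X_2}h_Y+h_Yh_{X_2}$). The idea is to obtain the identity
\[P\circ (\partial^{\bfN_+}_{\bfI_+}\circ T+ T\circ \partial^{\bfI_-}_{\bfN_-})=h_{X_2}\circ h_{Y}+h_{Y}\circ h_{X_2}\]
as a chain-level identity by decomposing real domains. Let $\partial_0$ be the differential on $\GCR^-(\cH_0)$ and $\partial_0'$ the one on $\GCR^-(\cH_0')$, both computed with $\bfX_0$. I will also use the chain homotopy equation for $P$, namely $P\partial_0+\partial_0'P=0$. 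The central ingredient is the following. Consider the ``extended'' differential $\partial'_{\bfX_0\setminus}$ on $\GCR^-(\cH_0')$, which counts empty real rectangles allowed to contain the pair $(X_2,X_2')$ and the pairs $(Y_i,Y_i')$ with the appropriate weights. Its square is computed by juxtaposing pairs of empty real rectangles, as in the curvature computation of Section~\ref{sub:Strongly invertible links}. Exactly the real annuli (thin vertical or horizontal real domains, or a row/column together with its $R$-image) survive the cancellation.

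First, I will use Lemma~\ref{lem:properties of h_X} to reduce both sides to components between $\bfI$ and $\bfN$. The right-hand side then has only the components $\bfI_-\to\bfI_-$ and $\bfN_-\to\bfN_-$, through $h_Y h_{X_2}$ and $h_{X_2}h_Y$. Next, I will expand $(\partial_0'+h_{X_2}+h_{Y})^2$. The pieces $\partial_0'^2=0$ and the cross terms $\partial_0'h_X+h_X\partial_0'$ are governed by the standard real-rectangle decomposition, so the only new terms come from compositions of two real rectangles that together contain one of $\{X_2,X_2'\}$ and one of $\{Y_i,Y_i'\}$. Such a composite domain either has an alternative decomposition, and so cancels, or is a real annulus passing through the columns of $(c,c')$ or $(\widetilde c,\widetilde c')$. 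In the latter case it is matched with a domain counted by $\partial^{\bfN_+}_{\bfI_+}T$ or $T\partial^{\bfI_-}_{\bfN_-}$ followed by $P$. Concretely, I will compare $h_{X_2}h_Y+h_Yh_{X_2}$ with $P$ composed with rectangles through the special vertices. This is done by a domain-by-domain bijection: a real pentagon with corner at $(c,c')$ glued to a real rectangle crossing the distinguished $\beta$-arcs corresponds to a pair of real rectangles, one containing $(X_2,X_2')$ and one containing $(Y_i,Y_i')$. The multiplicities of $O$ match, so the $u_i,U_j$ weights agree.

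Given the identity, $h_{X_2}h_Y+h_Yh_{X_2}$ is a chain map, because it equals a composition of chain maps. Indeed, $\partial^{\bfN_+}_{\bfI_+}T+T\partial^{\bfI_-}_{\bfN_-}$ is a chain map, being the square-type component of diagram~\eqref{eq:combined chain complex}, and $P$ is a chain map. A quasi-isomorphism $P$ intertwining two chain maps $f,g$ (here the relation is $P\circ f=g\circ P$ after composing with $P$ on both sides) induces $\mathrm{id}\oplus P$, or $P\oplus P$, between the cones. That map is a quasi-isomorphism by the five lemma on the long exact sequences of the cones. One subtlety: the identity as stated has $P$ only on the left, so I will read the target cone as built on $\GCR^-(\cH_0)$ on one side and use $P\oplus P$.

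The main obstacle is the case analysis matching composite real domains. Real rectangles come in four shapes (squares, $L$-hexagons, $X$-octagons, and pairs), and real pentagons have no clean classification. I would first prove the correspondence for pairs of interchanged ordinary domains, where it is literally the argument of \cite{OSS2015grid} applied together with its reflection. After that I would check the symmetric-domain cases near $C$ separately, using that the local picture in Figure~\ref{fig:combined diagram for oriented skein triple} lies away from the fixed set.
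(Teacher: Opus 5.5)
Your outer frame matches the paper: prove the identity by decomposing composite real domains, then apply a formal mapping-cone argument. However, the step you single out as the central ingredient is the wrong mechanism, and the claim it rests on is false.

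Squaring an extended differential $\partial_0'+h_{X_2}+h_Y$ on $\GCR^-(\cH_0')$ and keeping the surviving real annuli is the engine of Lemma~\ref{lem:composition of h=U_1-U_2}, not of this lemma. Your dichotomy for composites counted by $h_{X_2}h_Y+h_Yh_{X_2}$ (``it has an alternative decomposition and cancels, or it is a real annulus'') overlooks that an alternative decomposition only cancels if it again splits as a $(X_2,X_2')$-rectangle and a $(Y_i,Y_i')$-rectangle.

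For instance, take $\xv\in\bfN_-$, an $h_Y$-rectangle that adds $(\widetilde c,\widetilde c')$, followed by an $h_{X_2}$-rectangle that removes it. Let the two pieces abut along the $\beta_-$-arc through $\widetilde c$ but have different heights, together with their $R$-images. The composite is an $L$-shaped real domain from $\bfN_-$ to $\bfN_-$. Its other decomposition is a real rectangle containing no special marking followed by one containing both $(X_2,X_2')$ and $(Y_i,Y_i')$. So it survives on the right-hand side, and it is not an annulus. Such leftovers are what $\partial h_{X_2,Y_i}+h_{X_2,Y_i}\partial$ has to absorb, which is why Lemma~\ref{lem:composition of h=U_1-U_2} is only a homotopy and not an equality. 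Since real annuli go from $\xv$ to $\xv$, matching annuli with $P\circ(\partial^{\bfN_+}_{\bfI_+}T+T\partial^{\bfI_-}_{\bfN_-})$ cannot account for these off-diagonal terms.

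The paper's argument is the direct one, which you only mention in passing. The left-hand side counts composites of three pieces: a pair of small triangles (this is what $T$ is, carrying $(\widetilde c,\widetilde c')$ to $(c,c')$), a real rectangle through the distinguished corner, and a real pentagon of $P$. The right-hand side counts unions of two real rectangles. The identity comes from matching these two kinds of decompositions of the same real domains, with the remaining decompositions cancelling in pairs. Your ``Concretely'' sentence points at this, but it drops the triangles coming from $T$ and is subordinated to the annulus picture.

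Finally, fix the cone step. The map $V=\partial^{\bfN_+}_{\bfI_+}T+T\partial^{\bfI_-}_{\bfN_-}$ goes from $\GCR^-(\cH_0')$ to $\GCR^-(\cH_0)$, and $P\circ V=h_{X_2}h_Y+h_Yh_{X_2}$. So the comparison map is $\mathrm{id}\oplus P\colon \Cone(V)\to\Cone(h_{X_2}h_Y+h_Yh_{X_2})$. It is a chain map because $P$ is one and $PV$ is the new edge map, and it is a quasi-isomorphism by the five lemma. By contrast, $P\oplus P$ does not even have the right domain.
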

\begin{proof}
The equation is shown by considering decomposition of real domains into union of two real rectangles or a real rectangle, a real pentagon together with a pair of small triangles. The domains may be complicated in this case due to the complexity of real rectangles and pentagons, but the property is standard, so we won't provide details. The quasi-isomorphism between mapping cones follows from it using formal property of mapping cones.
\end{proof}

\begin{lem}\label{lem:composition of h=U_1-U_2}
The map $h_{X_2}\circ h_{Y}+h_{Y}\circ h_{X_2} \colon \GCR^-(\cH_0')\to \GCR^-(\cH_0')$ is chain homotopic to the multiplication by $U_1-U_2$ as homogeneous chain maps of degree $(-2,-1)$. 
\end{lem}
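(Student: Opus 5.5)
The plan is to follow the proof of \cite[Lemma~9.2.5]{OSS2015grid}, which is the non-real analogue: there $h_{X_2}\circ h_Y+h_Y\circ h_{X_2}$ is shown to equal $U_1-U_2$ up to homotopy by analysing juxtapositions of two empty rectangles, one containing $X_2$ and one containing some $Y_i$. Here every map counts real rectangles, so we analyse compositions of two empty real rectangles $r_1*r_2$ (in either order) with $r_1\cap(\bfX_+\cup\bfX_0)=\{X_2,X_2'\}$ and $r_2\cap(\bfX_+\cup\bfX_0)=\{Y_i,Y_i'\}$. We sort them by the real domain $D=r_1*r_2$ they compose to.

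The key steps, in order, are as follows.

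\textbf{Step 1 (grading).} We check that the composite is homogeneous of degree $(-2,-1)$. This is immediate from Lemma~\ref{lem:properties of h_X}, and it matches the degree of $U_1$ and $U_2$.

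\textbf{Step 2 (alternative decompositions cancel).} We show that whenever $D$ admits a decomposition of the above kind and is not a thin annulus, it admits exactly one other such decomposition. We use the case analysis of real domains of index~$2$: both rectangles are reflection-invariant, or one is a pair and one is invariant, or both are pairs. For $D$ away from the fixed set this is the usual \cite{OSS2015grid} argument applied to a rectangle and its $R$-image simultaneously. For domains meeting $C$ we mimic the proof that $(\partial^{G,-})^2=0$. Since the distinguished marks $X_2,Y_1,Y_2$ and their primes lie off $C$ (see Figure~\ref{fig:combined diagram for oriented skein triple}), the alternative decomposition swaps the roles of the $X_2$-rectangle and the $Y$-rectangle. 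This pairs a term of $h_{X_2}h_Y$ with a term of $h_Yh_{X_2}$, or with another term of the same composite, in every case. These domains therefore cancel mod~$2$.

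\textbf{Step 3 (annular terms).} The remaining domains are thin annuli, taken in symmetric pairs, that lie along the row/column pair through $X_2$ and $Y_1$ or $Y_2$. Exactly as in \cite{OSS2015grid}, an annulus containing $X_2$ together with a $Y$ contains exactly one of $O_1$ or $O_2$ (paired with its prime), and it is counted once for every generator. We therefore expect the composite to equal multiplication by $U_1+U_2$ plus the cancelled terms. If some leftover domains do not cancel on the nose, we absorb them into a homotopy built from $h$-type maps counting rectangles through both $Y_1$ and $Y_2$, as in the original argument.

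The main obstacle is Step~2 for domains that meet $C$. Real rectangles of $L$- and $X$-shape can overlap their partners in nonstandard ways, so we must verify that a unique alternative decomposition exists and respects the marking constraints. If this becomes unwieldy, the fallback is the holomorphic route. Real grid diagrams are real nice, so by \cite{BGX} the counts agree with the ends of index-$2$ real moduli spaces with boundary degenerations at the $X$ and $Y$ columns. Degenerating the $X_2$ and $Y$ markings then gives the identity $\partial^2=$ curvature, which contains $U_1V+U_2V$ terms. Differentiating with respect to the relevant $V$ variable yields the claimed relation.
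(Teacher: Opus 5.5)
\textbf{There is a genuine gap in Step~2, and a second one in Step~3.}

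\textbf{Step 2.} It is not true that every non-annular index-$2$ domain $D=r_1*r_2$, with $r_1\ni X_2$ and $r_2\ni Y_i$, has as its alternative decomposition another (rectangle containing $X_2$)$*$(rectangle containing $Y_i$). Whenever $X_2$ and $Y_i$ sit in $D$ so that the other cut keeps them together, the alternative decomposition is a real rectangle $r$ with $r\cap(\bfX_+\cup\bfX_0)=\{X_2,X_2',Y_i,Y_i'\}$ juxtaposed with a rectangle containing neither. This happens, for instance, with $X_2$ in the corner and $Y_i$ in one arm of an $L$-shaped juxtaposition, or its real analogue. That partner term is not in $h_{X_2}h_Y+h_Yh_{X_2}$, so the composite is not equal to the annular contribution on the nose. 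This is exactly why the lemma asserts a homotopy.

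The missing ingredient, which is how the paper argues (following \cite{OSS2015grid}), is the pair of maps $h_{X_2,Y_i}$. These count empty real rectangles containing exactly the pairs $\{X_2,X_2'\}$ and $\{Y_i,Y_i'\}$ among $\bfX_+\cup\bfX_0$. Sorting decompositions of index-$2$ real domains then gives that $\partial^-\circ h_{X_2,Y_i}+h_{X_2,Y_i}\circ\partial^-$ equals $h_{X_2}h_{Y_i}+h_{Y_i}h_{X_2}$ plus the thin-annulus terms. Your proposed fallback homotopy, built from rectangles through both $Y_1$ and $Y_2$, cannot produce the leftover terms, since those contain $X_2$ and exactly one $Y_i$.

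\textbf{Step 3.} You misidentify the annular terms. The (paired) thin annuli through $X_2$ and a $Y_i$ do not simply pick up $(O_1,O_1')$ and $(O_2,O_2')$. One of the resulting $U$-multiplications involves the pair $(O_4,O_4')$ rather than $(O_1,O_1')$. The paper closes this by applying Proposition~\ref{prop:U,v action identification} to $(O_1,O_1')$ and $(O_4,O_4')$, which lie on the same component or pair of components, so that $U_4\simeq U_1$. That is a second homotopy, and it does not appear in your plan.

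\textbf{Holomorphic fallback.} This does not rescue the argument. Extracting the coefficient of the product of the $X_2$- and $Y_i$-variables from $\partial^2$ yields precisely $h_{X_2}h_{Y_i}+h_{Y_i}h_{X_2}+\partial h_{X_2,Y_i}+h_{X_2,Y_i}\partial$, so the same homotopy reappears. Moreover, the combined diagram with the extra marks is not an honest link diagram, so the curvature formula of Section~\ref{sec:Real knot Floer chain complex and real knot Floer homology} does not apply verbatim. The curvature coefficient has to be read off from the actual annuli, which again gives $U_4$ rather than $U_1$.
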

\begin{proof}
This is proved by constructing chain homotopies $h_{X_2,Y_i}$ which shares similar formula as $h_X$'s and apply Proposition~\ref{prop:U,v action identification} to $(O_1,O_1')$ and $(O_4,O_4')$.
\end{proof}

\begin{proof}[Proof of Theorem~\ref{thm:mapping cone isom for oriented skein triple}]
Using Lemma~\ref{lem:combined chain cplx}, the two horizontal maps in Diagram~\eqref{eq:combined chain complex} induce a map $P_{+,-}\colon \GCR^-(\cH_+)\to \GCR^-(\cH_-)$. Then combining Lemma~\ref{lem:combined chain cplx}, \ref{lem:property of commutation invariance map P} and \ref{lem:composition of h=U_1-U_2}, we get a quasi-isomorphism from \[\Cone(P_{+,-}\colon \GCR^-(\cH_+)\to \GCR^-(\cH_-))\] to \[\Cone(U_2-U_1\colon \GCR^-(\cH_0')\to \GCR^-(\cH_0'))[-1,\frac{l_p'-l_p-1}{2}].\] Note that $P$ is a quasi-isomorphism between $\GCR^-(\cH_0)$ and $\GCR^-(\cH_0')$ interwining $u_i$ and $U_j$ actions, so we have the claimed quasi-isomorphism as desired.
\end{proof}

Before the proof, we need an analogue of~\cite[Lemma~9.3.1]{OSS2015grid}.
\begin{lem}\label{lem:induced quasi-isom on quotient}
Let $C$ and $C'$ be two free bigraded chain complexes over $\F[u_1,\ldots,u_{l_f}, U_1,\ldots,U_k]$ ($\deg(u_i)=(-1,-1/2)$, $\deg(U_j)=(-2,-1)$). Fix any nonzero, homogeneous polynomial $r\in \F[u_1,\ldots,u_{l_f}, U_1,\ldots,U_k]$. A quasi-isomorphism $\phi\colon C\to C'$ induces a quasi-isomorphism $\bar{\phi}:\frac{C}{rC}\to \frac{C'}{rC'}$.    
\end{lem}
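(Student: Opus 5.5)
The plan is to follow the argument of \cite[Lemma~9.3.1]{OSS2015grid}: exploit freeness to turn the quotient by $r$ into a mapping cone, and then use that cones of quasi-isomorphisms are quasi-isomorphisms. Write $\cR=\F[u_1,\ldots,u_{l_f},U_1,\ldots,U_k]$. Since $C$ and $C'$ are free over the integral domain $\cR$ and $r\neq 0$, multiplication by $r$ is injective on both. This gives short exact sequences of bigraded chain complexes
\[0\to C[\deg r]\xrightarrow{r} C\to C/rC\to 0,\qquad 0\to C'[\deg r]\xrightarrow{r} C'\to C'/rC'\to 0.\]
Here $[\deg r]$ denotes the bigrading shift making multiplication by the homogeneous element $r$ a degree-preserving map. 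Homogeneity of $r$ is exactly what makes these sequences bigraded.

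Since $\phi$ is $\cR$-linear, it commutes with multiplication by $r$. So $\phi$ induces a morphism between the two short exact sequences, and in particular a chain map $\bar\phi\colon C/rC\to C'/rC'$. Taking homology gives a morphism of the associated long exact sequences, bigrading by bigrading:
\[\cdots\to H(C)[\deg r]\xrightarrow{r} H(C)\to H(C/rC)\to H(C)[\deg r]\xrightarrow{r} H(C)\to\cdots\]
mapping to the analogous sequence for $C'$. The connecting homomorphism is natural, so all squares commute.

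Next, $\phi_*$ is an isomorphism on $H(C)$ and on its shift. In each bigrading the vertical maps are therefore isomorphisms at the four terms surrounding $H(C/rC)$. The five lemma then shows that $\bar\phi_*\colon H(C/rC)\to H(C'/rC')$ is an isomorphism in every bigrading, which is the claim.

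An equivalent route avoids the five lemma: $C/rC$ is quasi-isomorphic to $\Cone(r\colon C\to C)$ by freeness, the same holds for $C'$, and $\phi$ induces a map of cones. A map of cones whose components are quasi-isomorphisms is a quasi-isomorphism.

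The only point needing care is the injectivity of multiplication by $r$. This is where freeness, and not merely projectivity or flatness, is used: a free module over a domain is torsion-free. Together with the bookkeeping of the grading shift by $\deg r$, this lets us apply the five lemma grading by grading. I expect no other obstacle.
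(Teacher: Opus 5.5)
Your proposal is correct and is essentially the argument the paper intends: the paper gives no separate proof but cites \cite[Lemma~9.3.1]{OSS2015grid}, whose proof is exactly your short exact sequence $0\to C\xrightarrow{r}C\to C/rC\to 0$ (exact on the left by freeness), naturality of the long exact sequence, and the five lemma. One aside is inaccurate: only torsion-freeness is used, and flat (in particular projective) modules over a domain are already torsion-free, so the argument does not need freeness as opposed to projectivity or flatness---but this does not affect your proof.
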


\begin{proof}[Proof of Theorem~\ref{thm:minus oriented skein triple}]
Setup notations $l_f$, $l_p$ and $l_p'$ as above. We label the $O$-base points so that $(O_1^*,O_1^{*'}), \ldots, (O^*_{l_p},O^{*'}_{l_p})$ belong to different paired components of $L_+$. Here, we add a superscript $*$ to avoid confusion with labels in the combined grid diagram. By definition, \[c\GCR^-(\cH_+)=\frac{\GCR^-(\cH_+)}{u_1=u_2=\ldots=u_{l_f}, u_1^2=U^*_1=\ldots=U^*_{l_p}},\]
\[c\GCR^-(\cH_-)=\frac{\GCR^-(\cH_-)}{u_1=u_2=\ldots=u_{l_f}, u_1^2=U^*_1=\ldots=U^*_{l_p}}.\] Then using Lemma~\ref{lem:induced quasi-isom on quotient} repeatedly, the quasi-isomorphism from Theorem~\ref{thm:mapping cone isom for oriented skein triple} leads to a quasi-isomorphism 
\begin{equation}\label{eq:mapping cones in oriented skein tripple}\begin{aligned}
    &\Cone(\bar{P}_{+,-}\colon c\GCR^-(\cH_+) \to c\GCR^-(\cH_-))\\
    \to &\Cone (U_1-U_2\colon\frac{\GCR^-(\cH_0)}{u_1=u_2=\ldots=u_{l_f}, u_1^2=U^*_1=\ldots=U^*_{l_p}} \to \frac{\GCR^-(\cH_0)}{u_1=u_2=\ldots=u_{l_f}, u_1^2=U^*_1=\ldots=U^*_{l_p}}).
\end{aligned}
\end{equation}

The case $l_p'=l_p\pm 1$ follows from previous propositions in the same as \cite[Theorem~9.1.1]{OSS2015grid}, so we only illustrate the case $l_p'=l_p$ into details. In this case, the two strands involving in the crossing change in $L_+$ must belong to a single or two strongly invertible components. In either case, \[c\GCR^-(\cH_0)=\frac{\GCR^-(\cH_0)}{u_1=u_2=\ldots=u_{l_f}, u_1^2=U^*_1=\ldots=U^*_{l_p}},\] and $(O_1,O_1')$, $(O_2,O_2')$ does not appear in $O^*$-base points. Note also that $U_1$, $U_2$ actions are homotopic using the collapsing relation and Proposition~\ref{prop:U,v action identification}. Then, the second mapping cone in Equation~\eqref{eq:mapping cones in oriented skein tripple} is isomorphic to $c\GHR^-(\cH_0)\otimes W$, since the map is null-homotopic. Then, the long exact sequence associated to $\Cone(\bar{P}_{+,-})$ leads to Equation~\eqref{eq:l_p'=l_p minus}.

\end{proof}

\begin{proof}[Proof of Theorem~\ref{thm:hat oriented skein triple}]
The proof is almost identical to the previous one. We just need to  note that in the hat chain complex, we just further let the variables that were set equal in the collapsed minus version to be all zero.
\end{proof}
 
\subsection{Decategorification}
\begin{defn}
Let $(L,\fra)$ be a generalized strongly invertible link with auxiliary data in $S^3$. We define its real Alexander polynomial to be the Euler characteristic of $\widehat{\GHR}$. More precisely, \[\Delta^R(L,\fra)(t)=(t-t^{-1})^{-l_p}\sum_{s\in \frac{1}{2}\Z} \sum_{d\in \Z} (-1)^d t^{2s} \dim \widehat{\GHR}_d(L,\fra,s).\]
\end{defn}

\begin{prop}\label{prop:mirroring the polynomial invariant}
When $L$ is a strongly invertible knot, its real Alexander polynomial satisfies \[\Delta^R(m(L),m(\fra))(t)= \Delta^R(L,\fra)(-t).\]
\end{prop}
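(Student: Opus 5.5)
Since $L$ is a strongly invertible knot, $l_p=0$, so $\Delta^R(L,\fra)(t)=\sum_{d,s}(-1)^d t^{2s}\dim\widehat{\GHR}_d(L,\fra,s)$. The plan is to apply the hat part of Proposition~\ref{prop:hat version mirror sym}, which gives $\widehat{\GHR}_{d}(K,\fra,s)\cong\widehat{\GHR}_{2s-d}(m(K),m(\fra),s)$ for all $d\in\Z$ and $s\in\frac12\Z$, and then compare Euler characteristics term by term.

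Rewriting the sum for the mirror with $d'=2s-d$, we get
\[\Delta^R(m(L),m(\fra))(t)=\sum_{s}\sum_{d}(-1)^{2s-d}t^{2s}\dim\widehat{\GHR}_d(L,\fra,s).\]
Since $(-1)^{-d}=(-1)^d$, the summand equals $(-1)^{2s}(-1)^d t^{2s}\dim\widehat{\GHR}_d(L,\fra,s)$. Here $2s\in\Z$, and $(-1)^{2s}t^{2s}=(-t)^{2s}$. So the mirror sum is exactly the original sum with $t$ replaced by $-t$, which gives $\Delta^R(L,\fra)(-t)$.

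The only point needing care is that $2s$ is an integer, so that $(-1)^{2s}$ and $(-t)^{2s}$ are meaningful. This holds because the real Alexander grading takes values in $\frac12\Z$. One should also confirm that the absolute normalization of $(M^R,A^R)$ used in Proposition~\ref{prop:hat version mirror sym} is the one fixed at the start of Section~\ref{sub:Basic properties on strongly invertible knots}, which it is by the statement there. Beyond this, the argument is routine, with no real obstacle.
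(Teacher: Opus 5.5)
Your proposal is correct and follows the paper's proof, which consists only of citing Proposition~\ref{prop:hat version mirror sym}. Your term-by-term Euler characteristic computation, using $l_p=0$, $2s\in\Z$, and $(-1)^{2s-d}t^{2s}=(-1)^d(-t)^{2s}$, supplies exactly the detail the paper leaves implicit.
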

\begin{proof}
This follows from Proposition~\ref{prop:hat version mirror sym}.
\end{proof}

\begin{thm}\label{thm:skein relation for real Alexander polynomial}
Let $(L_+, L_- ,L_0,\fra)$ be a real oriented skein triple. Then we have \[\Delta^R(L_+,\fra)(t)-\Delta^R(L_-,\fra)(t)=(t-t^{-1})\cdot\Delta^R(L_0,\fra)(t).\]
\end{thm}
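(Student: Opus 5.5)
The plan is to decategorify the hat long exact sequences of Theorem~\ref{thm:hat oriented skein triple}. For a bigraded finite-dimensional vector space $V$, set $\chi(V)(t)=\sum_{d,s}(-1)^d t^{2s}\dim V_{d,s}$. This is multiplicative under tensor products. Each long exact sequence preserves the Alexander grading $s$, and the map $\hat h$ lowers the Maslov grading by one. Hence the alternating sum of dimensions vanishes in each Alexander grading, and after the degree shift $d\mapsto d-1$ in the third term we obtain
\[\chi(\widehat{\GHR}(L_+))-\chi(\widehat{\GHR}(L_-))-\chi(\text{third term})=0.\]
The $d-1$ shift contributes a sign: the third term appears at degree $d-1$ while the others sit at $d$, so its contribution enters with sign $-(-1)^{d-1}=(-1)^d$. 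I will fix these signs carefully and write $\chi_\pm=\chi(\widehat{\GHR}(L_\pm))$, $\chi_0=\chi(\widehat{\GHR}(L_0))$.

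Next I compute the Euler characteristics of the auxiliary spaces. For $W=\F_{(0,1/2)}\oplus\F_{(-1,-1/2)}$ we get $\chi(W)=t-(-1)t^{-1}\cdot(-1)^{0}$; carefully, $\chi(W)=t-t^{-1}$. For $J=\F_{(0,1)}\oplus\F_{(-1,0)}^2\oplus\F_{(-2,-1)}$ we get $\chi(J)=t^2-2+t^{-2}=(t-t^{-1})^2$.

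Then I sort the cases by $l_p'-l_p$ and combine with the normalisation $\Delta^R=(t-t^{-1})^{-l_p}\chi$:
\begin{itemize}
\item If $l_p'=l_p+1$, the relation reads $\chi_+-\chi_-=\pm\chi_0$. Dividing by $(t-t^{-1})^{l_p}$ gives $\Delta^R(L_+)-\Delta^R(L_-)=\pm(t-t^{-1})\Delta^R(L_0)$, since $\Delta^R(L_0)=(t-t^{-1})^{-l_p-1}\chi_0$.
\item If $l_p'=l_p$, the relation is $\chi_+-\chi_-=\pm(t-t^{-1})\chi_0$, and dividing by $(t-t^{-1})^{l_p}$ again gives the claim.
\item If $l_p'=l_p-1$, the relation is $\chi_+-\chi_-=\pm(t-t^{-1})^2\chi_0$, and $\Delta^R(L_0)=(t-t^{-1})^{-l_p+1}\chi_0$ again gives the claim.
\end{itemize}
So the normalising factor $(t-t^{-1})^{-l_p}$ is designed exactly to absorb $W$ and $J$.

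The main obstacle is pinning down the overall sign, i.e.\ that the shift $[-1]$ together with the Alexander shifts in Theorem~\ref{thm:mapping cone isom for oriented skein triple} produces $+(t-t^{-1})$ rather than $-(t-t^{-1})$ or a power of $t$. The maps $\hat f$, $\hat g$, $\hat h$ all preserve $s$, so no power of $t$ arises. For the sign I would track $(-1)^d$ through the mapping cone explicitly, using the fact that $\chi(\Cone(f\colon A\to B))=\chi(B)-\chi(A)$ with the cone's grading conventions from Lemma~\ref{lem:combined chain cplx}. As a sanity check, I would test the result on the unknot and Hopf-type examples, where $L_+$ and $L_-$ are computable from small real grid diagrams, to fix the sign convention.
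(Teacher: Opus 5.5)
Your proposal is correct and is the paper's argument: the paper proves this by taking Euler characteristics of the hat exact sequences of Theorem~\ref{thm:hat oriented skein triple}, and your values $\chi(W)=t-t^{-1}$ and $\chi(J)=(t-t^{-1})^2$ are exactly what absorbs the change in $l_p$ into the normalisation $(t-t^{-1})^{-l_p}$. The sign you flag as the main obstacle is already settled by your own first display. In a fixed Alexander grading, read the sequence as $\cdots\to A_d\to B_d\to X_{d-1}\to A_{d-1}\to\cdots$; exactness gives $\sum_d(-1)^d(\dim A_d-\dim B_d+\dim X_{d-1})=0$, i.e.\ $\chi_+-\chi_-=\chi(X)$. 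So every $\pm$ in your case analysis is $+$, and no cone-level sign tracking is needed; checking examples could not replace this argument anyway.
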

\begin{proof}
This follows from Theorem~\ref{thm:hat oriented skein triple}.
\end{proof}
\subsection{Unoriented version}\label{sub:Unoriented version}
In Section~\ref{sub:Oriented version}, we introduced the real oriented skein triple and show that the associated real link Floer groups fit into a long exact sequence. In this subsection, we consider an unoriented counterpart. If three generalized strongly invertible links $L$, $L_1$ and $L_2$ have transvergent planar diagrams differ only in a pair of disks as shown in Figure~\ref{fig:unoriented skein triple}, then we say that they form a \emph{real unoriented skein triple}. Unfortunately, this triple does not admit a consistent choice of orientations, thus no consistent choice of auxiliary data and the groups $\GHR^\circ$ are sensitive to it. Thus, we cannot expect the original bi-graded groups to fit into a long exact sequence. Instead, we will consider a single $\delta^R$-grading on their homology groups and show that suitable algebraic stabilization of the singly-graded groups fit into a long exact sequence. 

\begin{figure}
    \centering
     \begin{overpic}[width=0.7\textwidth]{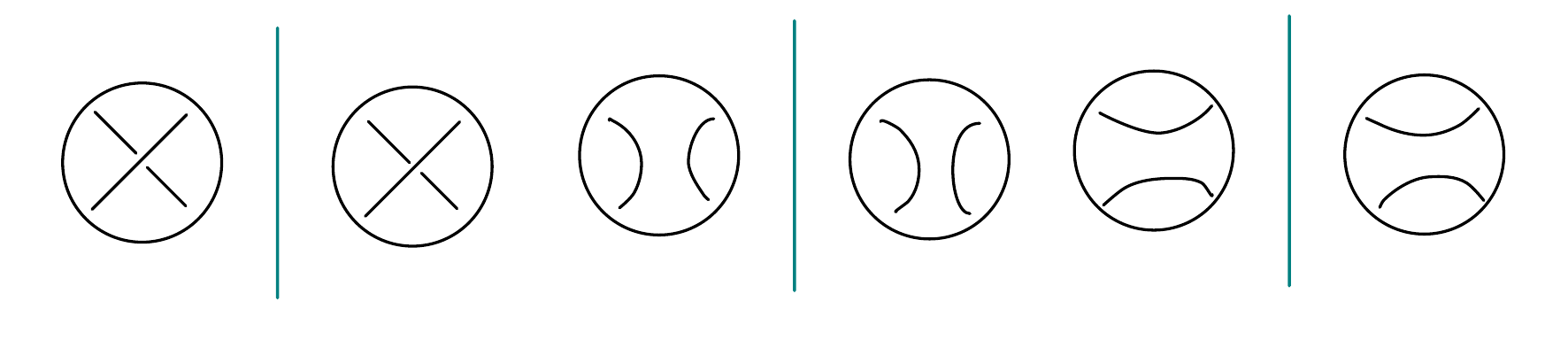}
            
			\put(16,-2) {$L$}
            \put(48,-2) {$L_1$}
            \put(80,-2) {$L_2$}
		\end{overpic}
        
    \caption{Real unoriented skein triple.}
    \label{fig:unoriented skein triple}
\end{figure}

Before investigating the unoriented triangle, we first loose our notion of auxiliary data. Careful readers may have noted that for a strongly invertible link, not all choices of $\fro$ and $\frl$ admits a compatible choice of $\fro'$ and $A$. A potential example is shown in Figure~\ref{fig:loosing fra}, for which the author can not find an obvious choice of $\fro'$ and $A$ that fits the quadruple into Definition~\ref{def:strongly invertible knots with extra data}. Since we may encounter such links during our manipulation of unoriented skein triple, in this subsection, $\fra$ will be referred to as only as a choice of $\fro$ and $\frl$, we no longer require (3) and (4) in Definition~\ref{def:strongly invertible knots with extra data} to exists. It can be seen easily that for such a choice of $\fra$, Proposition~\ref{prop:existence of real HD for strongly invertible knots and real H moves} still holds and for a real oriented skein triple $(L_+,L_-, L_0,\fra)$ with this loosen choice of $\fra$, the discussion from Section~\ref{sub:Oriented version} remains valid.

\begin{figure}
    \centering
    \includegraphics[width=0.5\linewidth]{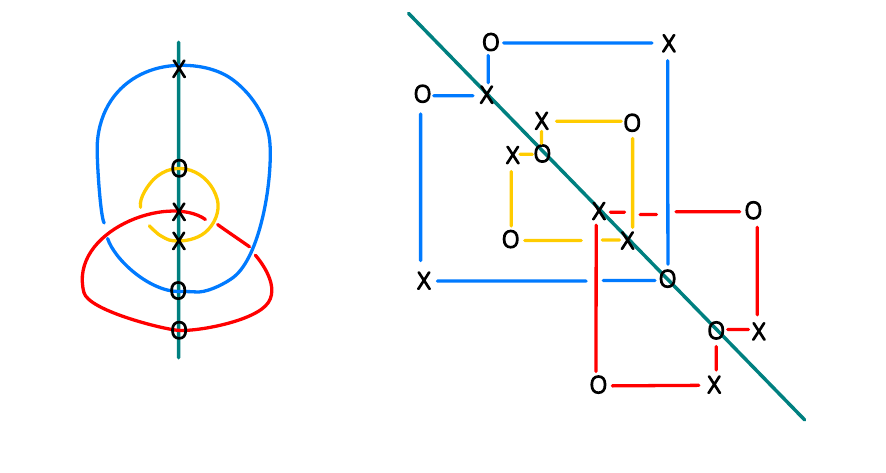}
    \caption{Loosing the restriction on auxiliary data.}
    \label{fig:loosing fra}
\end{figure}

Let $(L,\fra)$ be a generalized strongly invertible link with (loosed) auxiliary data in $S^3$. Take any of its real grid diagram $\cH$ and consider the associated chain complex $\GCR^-(\cH)$. We ``merge'' the two gradings on $\cH$ into a single \emph{$\delta^R$-grading} defined by \[\delta^R(\xv)=M^R(\xv)-A^R(\xv), \quad \delta^R(u_i)=-\frac{1}{2},\quad \delta^R(U_j)=-1.\]
Then, one see that \begin{align*}
    \delta^R(\xv)&=\frac{1}{2}(M^R_{\bfO}(\xv)+M^R_{\bfX}(\xv))+\frac{(n-l_f-2l_p)}{4}\\
    &= \frac{1}{4}(M_{\bfO}(\xv)+M_{\bfX}(\xv))-\frac{1}{4}\vert \xv\cap C\vert+\frac{1}{4}l_f+\frac{(n-l_f-2l_p)}{4}
\end{align*} Recall that the classical $\delta$-grading was defined by \begin{align*}
    \delta(\xv)&=M_{\bfO}(\xv)-A(\xv)\\
    &=\frac{1}{2}(M_{\bfO}(\xv)+M_{\bfX}(\xv))+\frac{(n-l_f-2l_p)}{2},\\
\end{align*} so $\delta^R(\xv)$ is not half of $\delta(\xv)$, instead, \[\delta^R(\xv)=\frac{1}{2}\delta(\xv)-\frac{1}{4}\vert \xv\cap C\vert+\frac{1}{4}l_f.\] Nevertheless, the extra correction terms are independent of position of $O$ and $X$-base points, so we can deduce the following lemma from~\cite[Lemma~10.1.7]{OSS2015grid}.

\begin{lem}\label{lem:change of deltaR under partial O,X switch}
Let $\cH$ be a real grid diagram associated to any generalized strongly invertible link $(L,\fra)$. Next, let $\cH'$ be a real grid diagram obtained from $\cH$ by interchanging some $O$ and $X$-base points. Then $\cH$ and $\cH'$ represent the same underlying  generalized strongly invertible with different choice of auxiliary data, say $\fra$ and $\fra'$. Then, we have two gradings $\delta^R_{\cH}$ and $\delta^R_{\cH'}$ associated to the same set of real grid states. For any real grid states $\xv$, we have \[\delta^R_{\cH}(\xv)+\frac{1}{8}\wri(L,\fra)=\delta^R_{\cH'}(\xv)+\frac{1}{8}\wri(L,\fra').\]
Here, $\wri$ denotes writhe of underlying planar diagram and we abuse $(L,\fra)$ to denote the underlying oriented link diagram. For a more diagram-independent description, we split $L=L_1\cup L_2$ so that orientations on $L_1$ are reversed when we go from $\cH$ to $\cH'$, while that on $L_2$ keeps unchanged. Then we have \[\delta^R_{\cH}(\xv)-\delta^R_{\cH'}(\xv)=-\frac{1}{2}\lk(L_1, L_2),\] when we induce orientations on $L_i$ from $\cH$.
\end{lem}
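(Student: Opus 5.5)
The plan is to reduce everything to the classical statement \cite[Lemma~10.1.7]{OSS2015grid} through the formula derived just before the lemma,
\[\delta^R_{\cH}(\xv)=\tfrac{1}{2}\delta_{\cH}(\xv)-\tfrac{1}{4}\vert \xv\cap C\vert+\tfrac{1}{4}l_f.\]
First I would check that $\cH'$ is again a real grid diagram for the same underlying generalized strongly invertible link. Since $\cH'$ is a legitimate real Heegaard diagram, the swap must be $R$-equivariant: fixed marks are swapped among themselves, and paired marks are swapped together with their images. The $\alpha$- and $\beta$-curves are untouched, so the set of real grid states is literally the same, and so is $\vert\xv\cap C\vert$ for each state $\xv$. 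The number $l_f$ of fixed components does not change, because we only relabel marks. Hence the correction terms $-\tfrac14\vert\xv\cap C\vert+\tfrac14 l_f$ cancel in $\delta^R_{\cH}(\xv)-\delta^R_{\cH'}(\xv)$, leaving
\[\delta^R_{\cH}(\xv)-\delta^R_{\cH'}(\xv)=\tfrac12\bigl(\delta_{\cH}(\xv)-\delta_{\cH'}(\xv)\bigr),\]
where $\delta_{\cH},\delta_{\cH'}$ are the classical $\delta$-gradings of the underlying (non-real) grid diagrams. One should also check that the normalisation shift $(n-l_f-2l_p)/4$ in $\delta^R$ agrees for $\cH$ and $\cH'$; it does, since $n$, $l_f$ and $l_p$ are unchanged.

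Next, forgetting the involution, $\cH'$ is obtained from $\cH$ by interchanging $O$ and $X$ markings on the components whose orientation is reversed. \cite[Lemma~10.1.7]{OSS2015grid} then gives
\[\delta_{\cH}(\xv)+\tfrac14\wri(L,\fra)=\delta_{\cH'}(\xv)+\tfrac14\wri(L,\fra'),\]
together with the diagram-free version $\delta_{\cH}-\delta_{\cH'}=-\lk(L_1,L_2)$. Halving these two identities gives exactly the two claimed formulas, with $\tfrac18$ in place of $\tfrac14$ and $-\tfrac12\lk(L_1,L_2)$ in place of $-\lk(L_1,L_2)$. The identity $\wri(L,\fra)-\wri(L,\fra')=-4\lk(L_1,L_2)$ is the standard count of crossings between $L_1$ and $L_2$, whose signs flip.

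The main obstacle is matching normalisations. The classical lemma is stated for the absolutely normalised $\delta$ on a grid diagram with a given number of markings per component. I must make sure that the absolute $M^R_{\bfO}$ and $M^R_{\bfX}$ fixed at the start of Section~\ref{sec:Basic properties and examples} are exactly $\tfrac12 M_{\bfO}$ and $\tfrac12 M_{\bfX}$ shifted by the stated correction, with the same constant for $\bfO$ and $\bfX$. This is immediate from the displayed formulas. A second subtle point is that under the loosened auxiliary data, $\fra'$ need not satisfy conditions (3)--(4). This is harmless here, since the $\delta^R$-grading is defined purely combinatorially from the diagram.
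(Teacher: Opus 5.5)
This is essentially the paper's own argument: since $\delta^R_{\cH}(\xv)=\frac{1}{2}\delta_{\cH}(\xv)-\frac{1}{4}\vert \xv\cap C\vert+\frac{1}{4}l_f$ and these correction terms do not depend on where the $O$ and $X$-base points sit, both formulas follow by halving \cite[Lemma~10.1.7]{OSS2015grid}. One small slip in your side remark: reversing $L_1$ flips the sign of every crossing between $L_1$ and $L_2$, so with orientations induced from $\cH$ one has $\wri(L,\fra)-\wri(L,\fra')=4\lk(L_1,L_2)$, not $-4\lk(L_1,L_2)$; with your sign, the two halved formulas would contradict each other.
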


It follows from the lemma and the fact that as a vector space, $\widetilde{\GHR}$ does not distinguish $\bfO$ and $\bfX$ that

\begin{prop}\label{prop:isomorphism of tilde from OX rearrangement}
Let $\cH$, $\cH'$, $(L,\fra)$ and $(L,\fra')$ be set-up as in Lemma~\ref{lem:change of deltaR under partial O,X switch}. Then we have an isomorphism between $\delta^R$-graded vector spaces \[\widetilde{\GHR}_{\delta^R}(\cH)\cong\widetilde{\GHR}_{\delta^R+c}(\cH'), \] in which $c=\frac{1}{8}(\wri(L,\fra)-\wri(L,\fra'))$.
\end{prop}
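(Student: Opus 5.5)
The plan is to show that the two tilde complexes $\widetilde{\GCR}(\cH)$ and $\widetilde{\GCR}(\cH')$ coincide as chain complexes over $\F$. The grading claim will then follow from Lemma~\ref{lem:change of deltaR under partial O,X switch}.

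\textbf{Step 1: same generators.} The diagrams $\cH$ and $\cH'$ share the torus, the reflection $R$, and the $\alpha$- and $\beta$-circles. They differ only in which marked points are labelled $O$ and which are labelled $X$. Hence they have the same set of real grid states $(\T_\alpha\cap\T_\beta)^R$. In particular, the underlying $\F$-vector spaces of $\widetilde{\GCR}(\cH)$ and $\widetilde{\GCR}(\cH')$ are literally equal.

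\textbf{Step 2: same differential.} By the explicit description of $\widetilde{\partial^G}$ in Section~\ref{sub:Real grid homology}, the differential counts empty real rectangles $r\in\RRecto(\xv,\yv)$ with $r\cap\bfX=\emptyset$ and $r\cap\bfO=\emptyset$. This condition is the same as $r\cap(\bfO\cup\bfX)=\emptyset$. The union $\bfO\cup\bfX=\bfO'\cup\bfX'$ does not change under the switch. Moreover, the list of real index-one domains in Proposition~\ref{prop:domains of real index 1} depends only on $(T^2,\bm\alpha,\bm\beta,R)$, and emptiness refers only to the states. So $\widetilde{\partial^G_{\cH}}=\widetilde{\partial^G_{\cH'}}$, and the identity map is an isomorphism of ungraded complexes.

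One point to check is that the switched labelling is still a valid real grid diagram for some loosened auxiliary data $\fra'$. The switched set $\bfO'$ is still $R$-invariant, with the same fixed marks, since the switch happens component by component and respects $R$. This is exactly the loosened setting fixed at the start of this subsection.

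\textbf{Step 3: gradings.} The identity map on states shifts $\delta^R$ by a constant. By Lemma~\ref{lem:change of deltaR under partial O,X switch}, for every real grid state $\xv$ we have $\delta^R_{\cH}(\xv)=\delta^R_{\cH'}(\xv)+c$ with $c=\tfrac18(\wri(L,\fra')-\wri(L,\fra))$, up to the sign convention fixed in the statement. Because every generator shifts by the same constant, the identity is a $\delta^R$-homogeneous chain isomorphism of that degree. Taking homology then gives $\widetilde{\GHR}_{\delta^R}(\cH)\cong\widetilde{\GHR}_{\delta^R+c}(\cH')$.

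\textbf{Main obstacle.} The only real work lies in the grading comparison. Lemma~\ref{lem:change of deltaR under partial O,X switch} handles it, since the correction terms $-\tfrac14|\xv\cap C|+\tfrac14 l_f$ in the formula for $\delta^R$ are unchanged by the switch, so the comparison reduces to \cite[Lemma~10.1.7]{OSS2015grid}. What remains is to confirm that the sign of $c$ matches the convention in the statement. I would do this by checking on a small example, such as the unknot diagram with one pair reversed.
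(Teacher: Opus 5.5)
Your proposal is correct and is essentially the paper's argument. The paper also deduces the proposition from Lemma~\ref{lem:change of deltaR under partial O,X switch} together with the observation that $\widetilde{\GCR}$ does not distinguish $O$ from $X$, since its differential counts empty real rectangles missing all of $\bfO\cup\bfX$. The identity on real grid states is therefore a chain isomorphism that shifts $\delta^R$ by a constant.

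The sign needs no test case. The lemma gives directly $\delta^R_{\cH'}(\xv)=\delta^R_{\cH}(\xv)+\tfrac18(\wri(L,\fra)-\wri(L,\fra'))$, which is exactly the statement's $c$. Your intermediate constant has the opposite sign, and your final line silently reverses it.
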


\begin{thm}\label{thm:unoriented skein exact sequence}
Let $L$, $L_1$ and $L_2$ be a real unoriented skein triple and $l_p$, $l_p^1$, $l_p^2$ be the number of paired components in them, respectively. By definition, they share the same number of strongly invertible components. Then for sufficiently large $N\in \Z$, we have an exact triangle  
\[\begin{tikzcd}
    \widehat{\GHR}(L_1)\otimes W^{N-l_f-l_p^1} \arrow{rr} & & \widehat{\GHR}(L_2)\otimes W^{N-l_f-l_p^2}  \arrow{dl}\\
    & \widehat{\GHR}(L)\otimes W^{N-l_f-l_p} \arrow[swap]{ul}\\
\end{tikzcd}\]
Here, $W$ is a 2-dimensional $\frac{1}{2}\Z$-graded $\F$-vector space supported in $\delta^R$-grading zero, and $\widehat{\GHR}$ groups are thought of as relatively $\frac{1}{2}\Z$-graded vector spaces, using $\delta^R$-grading.
\end{thm}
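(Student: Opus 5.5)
The plan is to follow the proof of the unoriented skein exact triangle in \cite[Chapter~10]{OSS2015grid}, run in real form. All the required ingredients are already available: real pentagon counts, the $\delta^R$-grading, and Proposition~\ref{prop:isomorphism of tilde from OX rearrangement}.

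First, I will place $L$, $L_1$ and $L_2$ on real grid diagrams $\cH$, $\cH_1$, $\cH_2$ of the same size $N$. Using real stabilizations away from $C$, I will arrange that the three diagrams differ only in one pair of $\beta$-circles (and their $R$-images among the $\alpha$-circles), or equivalently in one pair of adjacent columns/rows swapped symmetrically. This is the real analogue of \cite[Figure~10.3]{OSS2015grid}.

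Since the triple admits no consistent orientation, I will work with the tilde complexes $\widetilde{\GCR}$, which only see $\bfO\cup\bfX$ as an unordered set of markings. I will equip them with the $\delta^R$-grading, using the loosened auxiliary data. Proposition~\ref{prop:isomorphism of tilde from OX rearrangement} lets me change orientations freely at the cost of a computable $\delta^R$-shift. I will also use the identification $\widetilde{\GHR}(\cH)\cong\widehat{\GHR}(L)\otimes W^{k-l_p}$, where $k=(N-l_f)/2$ counts pairs of $O$'s. This accounts for the exponents $N-l_f-l_p^{(i)}$ in the statement, up to rescaling $N$, after checking that the stabilization factors have $\delta^R$-grading $0$.

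Next, I will split the real grid states of each diagram according to which of the distinguished pair of intersection points they occupy, as in Lemma~\ref{lem:triangle map T} and the proof of Theorem~\ref{thm:mapping cone isom for oriented skein triple}. The three complexes then decompose into pieces related cyclically. I will define maps $\widetilde{\GCR}(\cH)\to\widetilde{\GCR}(\cH_1)\to\widetilde{\GCR}(\cH_2)\to\widetilde{\GCR}(\cH)$ by counting empty real pentagons containing no markings, with corners at the distinguished pairs. I will define null-homotopies of consecutive compositions by counting real hexagons. I will then invoke the exact triangle detection lemma \cite[Lemma~10.2.x]{OSS2015grid}: if $f_{i+1}\circ f_i$ is null-homotopic via $H_i$ and $f_{i+2}\circ H_i+H_{i+1}\circ f_i$ is a quasi-isomorphism, then the triangle is exact. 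For the last condition, I will show that the relevant sum equals the identity plus a filtration-lowering map, by considering the real domains (quadrilaterals made of two pentagons, hexagon plus rectangle, and so on) with the unique domain of ``thin annulus'' type contributing the identity.

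Finally, I will check gradings. Each map should shift $\delta^R$ by a constant, computed from the unreal computation in \cite[Lemma~10.1.x]{OSS2015grid} via $\delta^R=\frac12\delta-\frac14|\xv\cap C|+\frac14 l_f$. The $|\xv\cap C|$ term is unchanged by all these maps because the local modification avoids $C$. This gives relative $\frac12\Z$-grading compatibility. The main obstacle is the real-domain bookkeeping in the hexagon/quasi-isomorphism step. Real pentagons and hexagons built on the four types of real rectangles, especially $X$-shaped octagons and paired rectangles overlapping the distinguished region, must decompose in exactly two ways except for the identity term. I would first verify this on the paired-rectangle type, where it reduces to two copies of the classical argument. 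I would then treat the self-symmetric types by passing to the quotient by $R$, or by appealing to real nice diagram counts from \cite{BGX}.
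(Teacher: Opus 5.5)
Your route differs from the paper's, and as written it leaves unproved the step that carries the content. The triangle detection lemma requires $f_{i+2}\circ H_i+H_{i+1}\circ f_i$ to be a quasi-isomorphism, and you only assert this. You name no filtration. You do not say which real domain produces the identity: the ``thin annulus'' mechanism comes from commutation invariance, and in holomorphic proofs of such triangles this step needs an auxiliary translated curve, higher polygons and an energy/nearest-point argument. You also do not check that the other pentagon--hexagon composites cancel in pairs, including those modeled on $L$-shaped hexagons, $X$-shaped octagons or diagonal squares meeting $C$.

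Nothing you cite supplies this step. The paper follows \cite[Section~10.2]{OSS2015grid} as a derivation from two oriented skein triples, not a triangle-detection argument. Proposition~\ref{prop:domains of real index 1} only classifies real index-one domains for the differential.

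The paper avoids all new polygon counting:
\begin{enumerate}
\item It reduces to the case where the strands in a disk lie on different components and takes $L=L_+$.
\item It forms two real oriented skein triples $(L_+,L_-,L_0)$ and $(R_+,R_-,R_0)$, the second obtained by reversing the orientation of one component or pair, so that $L_0$ and $R_0$ are the two resolutions.
\item Theorem~\ref{thm:mapping cone isom for oriented skein triple-reduction to tilde} identifies $\Cone(\widetilde P_{+,-})$ and $\Cone(\widetilde P'_{+,-})$ with $\widetilde{\GCR}(\cH_0)\otimes W$ and $\widetilde{\GCR}(\cH_0')\otimes W$, up to shift.
\item Proposition~\ref{prop:isomorphism of tilde from OX rearrangement} converts $\widetilde P'_{+,-}$ into $\widetilde P_{-,+}$.
\item Lemma~\ref{lem:Pwilde has null composition} gives $\widetilde P_{-,+}\circ\widetilde P_{+,-}\simeq 0$. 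Both maps are sums of crossing-change maps whose composites are homotopic to multiplication by $u^2$, which vanishes on the tilde complex.
\item The purely algebraic Lemma~\ref{lem:mapping cone of composition} then gives the triangle, with $\Cone(\widetilde P_{-,+}\circ\widetilde P_{+,-})$ contributing $\widetilde{\GHR}(\cH_+)\otimes W$.
\end{enumerate}

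Two further steps in your plan are wrong as stated.

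First, ``one pair of adjacent columns/rows swapped symmetrically'' cannot realize the triple. A column swap carries its markings along and does not change which $O$ is joined to which $X$ in any row or column. So it only ever realizes an isotopy or a crossing change, never a resolution. You would instead need curves that re-pair the four markings of a double column in three ways. The resulting combined diagram is not a real grid diagram. Moreover, one of the three pairings joins $O$ to $O$, so that diagram represents a link only after relabeling, and Lemma~\ref{lem:change of deltaR under partial O,X switch} would have to be invoked there.

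Second, your $\delta^R$-homogeneity argument fails. Real pentagons modeled on the diagonal square or on the $X$-shaped octagon have corners on $C$ and can change $\vert\xv\cap C\vert$ by $2$. The constancy of the shift therefore has to come from decomposing each real polygon into classical ones, as in the construction of $M^R_{\bfO}$, not from the local modification being away from $C$.
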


The proof of this theorem relies on a corollary of Theorem~\ref{thm:mapping cone isom for oriented skein triple}. 

\begin{thm}\label{thm:mapping cone isom for oriented skein triple-reduction to tilde}
Take any real oriented skein triple $(L_+,L_-,L_0,\fra)$ and consider the associated real grid diagrams $\cH_+$, $\cH_-$, $\cH_0$, $\cH_0'$. Then there is a $\delta^R$-grading preserving chain map $\widetilde{P}_{+,-}:\widetilde{\GCR}(\cH_+)\to \widetilde{\GCR}(\cH_-)$ a quasi-isomorphism \[\Cone (\widetilde{P}_{+,-}) \to \widetilde{\GCR}(\cH_0)\oplus\widetilde{\GCR}(\cH_0)[\frac{l_p'-l_p-1}{2}].\]
\end{thm}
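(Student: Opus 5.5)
Most of the work is already done by Theorem~\ref{thm:mapping cone isom for oriented skein triple}: it gives a chain map $P_{+,-}\colon\GCR^-(\cH_+)\to\GCR^-(\cH_-)$ and a bigraded quasi-isomorphism
\[\Phi\colon \Cone(P_{+,-})\to\Cone(U_1-U_2\colon\GCR^-(\cH_0)\to\GCR^-(\cH_0))[-1,\tfrac{l_p'-l_p-1}{2}].\]
The plan is to reduce this statement to the tilde level by setting every variable to zero.

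First, set $u_i=U_j=0$ for all $i,j$. These are free complexes over $\cR=\F[u_1,\ldots,u_{l_f},U_1,\ldots,U_k]$. Applying Lemma~\ref{lem:induced quasi-isom on quotient} repeatedly, one homogeneous variable at a time, shows that $\Phi$ descends to a quasi-isomorphism of the quotients. Since the mapping cone of a map of free complexes commutes with taking these quotients, the left side becomes $\Cone(\widetilde P_{+,-})$, where $\widetilde P_{+,-}$ is the induced map $\widetilde{\GCR}(\cH_+)\to\widetilde{\GCR}(\cH_-)$. On the right side $U_1-U_2$ becomes the zero map. Hence the target cone is $\widetilde{\GCR}(\cH_0)\oplus\widetilde{\GCR}(\cH_0)[1]$, with the total shift $[-1,\frac{l_p'-l_p-1}{2}]$ applied.

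Next, collapse the bigrading to $\delta^R=M^R-A^R$. A bigraded shift $[a,b]$ then becomes a $\delta^R$-shift by $a-b$. I need to check that $P_{+,-}$ is $\delta^R$-homogeneous of degree $0$. Lemma~\ref{lem:combined chain cplx} says each edge map in Diagram~\eqref{eq:combined chain complex} has bidegree $(-1,0)$, and the left column is $\GCR^-(\cH_+)[-1,0]$. So $P_{+,-}$ is bigraded-preserving, and in particular it preserves $\delta^R$. The cone's internal shift on the domain is absorbed into the cone convention, in the same way as in the bigraded theorem. The two summands on the right then land in $\delta^R$-degrees differing by the $M$-shift $1$ of the cone and the $A$-shift $\frac{l_p'-l_p-1}{2}$. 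After normalising, one summand is unshifted and the other is shifted by $\frac{l_p'-l_p-1}{2}$. Tracking the signs should produce exactly the stated $\widetilde{\GCR}(\cH_0)\oplus\widetilde{\GCR}(\cH_0)[\frac{l_p'-l_p-1}{2}]$.

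The main obstacle is this grading bookkeeping. It requires reconciling the $\delta^R$ conventions (with $\delta^R(u_i)=-\frac12$ and $\delta^R(U_j)=-1$) with the shift convention of the mapping cone, and confirming that the two copies of $\widetilde{\GCR}(\cH_0)$ differ by precisely $\frac{l_p'-l_p-1}{2}$ rather than by that amount plus $1$. I would verify this using the explicit grading comparisons in Lemma~\ref{lem:grading shift in oriented skein triple}, computing $\delta^R$ directly on $\bfI_\pm$ and $\bfN_\pm$. A secondary point is that Lemma~\ref{lem:induced quasi-isom on quotient} is stated for a single quotient by one polynomial $r$. Iterating it is legitimate because each successive quotient is again a free complex over the remaining polynomial ring.
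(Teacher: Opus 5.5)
Your reduction is the one the paper intends. The paper gives no separate proof and treats this statement as a corollary of Theorem~\ref{thm:mapping cone isom for oriented skein triple}, obtained as you do:
\begin{itemize}
\item kill all $u_i,U_j$ using Lemma~\ref{lem:induced quasi-isom on quotient}; your remark that each successive quotient is free over the remaining polynomial ring justifies the iteration;
\item note that $U_1-U_2$ becomes the zero map;
\item collapse the bigrading to $\delta^R$.
\end{itemize}
Your check that $P_{+,-}$ has bidegree $(0,0)$, so that $\widetilde P_{+,-}$ preserves $\delta^R$, is also correct.

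The gap is in the grading conclusion you set out to confirm, which is false. You treat the cone's internal shift as a pure Maslov shift by $1$. That is right for $\Cone(P_{+,-})$, which is where the $[-1,0]$ on the left column in Lemma~\ref{lem:combined chain cplx} comes from. But $U_1-U_2$ has bidegree $(-2,-1)$. For $\Cone(U_1-U_2\colon C\to C)$ to have a differential of bidegree $(-1,0)$, the domain copy must therefore be $C[1,1]$, with the convention $X[a,b]_{d,s}=X_{d+a,s+b}$. Under your own rule $[a,b]\mapsto a-b$, this is a $\delta^R$-shift of $0$. Equivalently, $\delta^R(U_j)=-1$ already equals the $\delta^R$-degree of the differential.

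Consequently, after the variables are set to zero, the two copies of $\widetilde{\GCR}(\cH_0)$ sit in the \emph{same} $\delta^R$-grading. The bigraded shift $[-1,\frac{l_p'-l_p-1}{2}]$ becomes one common shift of the whole direct sum, by $-1-\frac{l_p'-l_p-1}{2}$ in your convention. So the bracket in the statement must be read as applying to $\widetilde{\GCR}(\cH_0)\oplus\widetilde{\GCR}(\cH_0)$, not to the second summand alone. This is exactly how the result is used in the proof of Theorem~\ref{thm:unoriented skein exact sequence}: there the cone is identified with $\widetilde{\GCR}(\cH_0)\otimes W$, with $W$ concentrated in $\delta^R$-grading zero, ``up to an overall grading shift''.

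Trying to show that the copies differ by $\frac{l_p'-l_p-1}{2}$ (for instance by $-\tfrac12$ when $l_p'=l_p$) would fail. The direct computation you propose, on $\bfI_\pm,\bfN_\pm$ via Lemma~\ref{lem:grading shift in oriented skein triple}, instead gives equal shifts for both rows. The top row is $\GCR^-(\cH_0')[0,\frac{l_p'-l_p+1}{2}]$ and the bottom row is $\GCR^-(\cH_0)[-1,\frac{l_p'-l_p-1}{2}]$; both have $\delta^R$ raised by $\frac{l_p'-l_p+1}{2}$.
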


Using the trick shown in \cite[Figure~10.5]{OSS2015grid}, we only need to prove the theorem in the case that strands appear in a single disk in Figure~\ref{fig:unoriented skein triple} belong to different components of $L$. As in \cite[Section~10.2]{OSS2015grid}, we prove the theorem by manipulating two closely related real oriented skein triples. The local figures and associated real grid diagrams are given in Figure~\ref{fig:grid diagram for unoriented skein triple}. 

\begin{figure}
    \centering
    \begin{overpic}[width=0.7\textwidth]{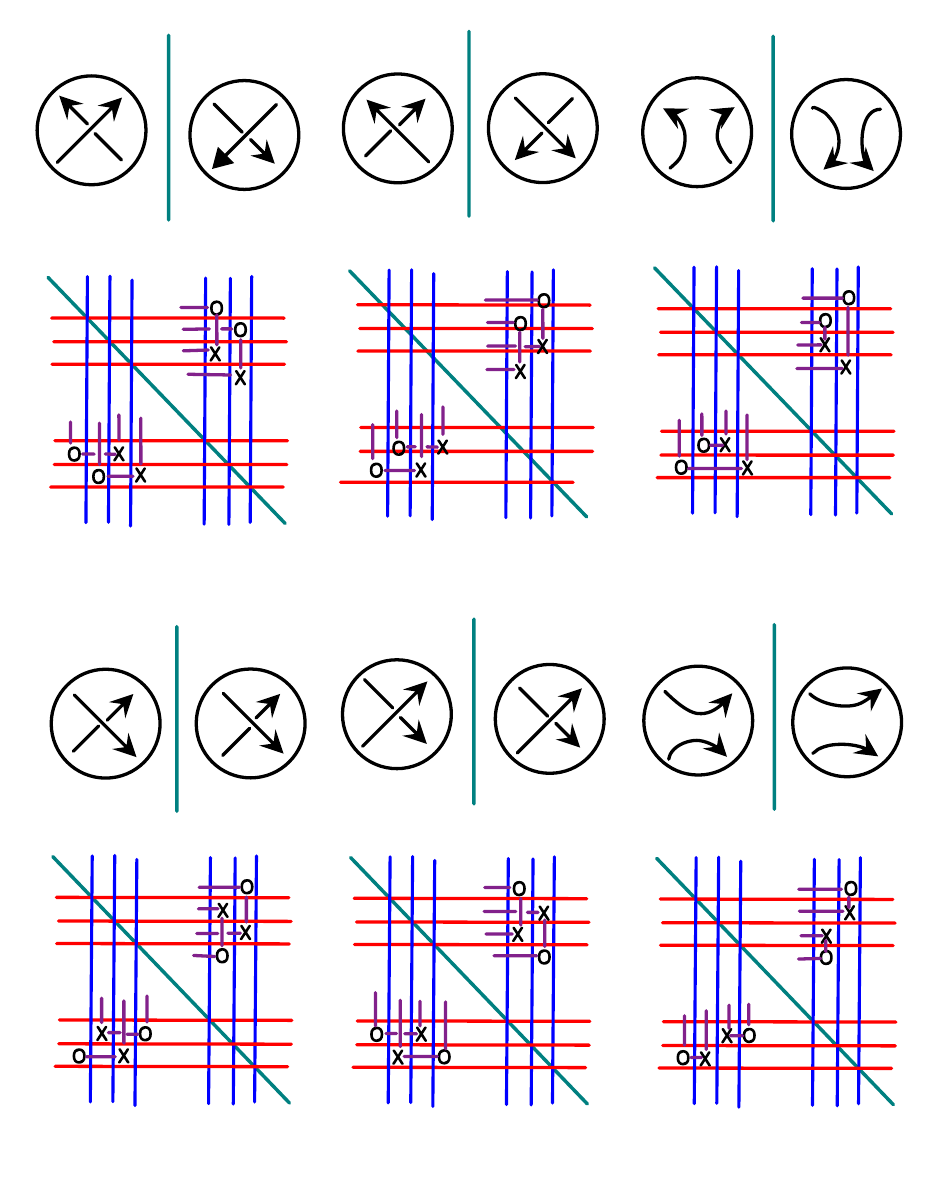}
			\put(14,78) {$L_+$}
            \put(14,52) {$\cH_+$}
            \put(38,78) {$L_-$}
            \put(38,52) {$\cH_-$}
            \put(64,78) {$L_0$}
            \put(64,52) {$\cH_0$}

            \put(14,28) {$R_+$}
            \put(14,3) {$\cH_+'$}
            \put(38,28) {$R_-$}
            \put(38,3) {$\cH_-'$}
            \put(64,2853) {$R_0$}
            \put(64,3) {$\cH_0'$}
            
		\end{overpic}
    \caption{Grid diagrams for a real unoriented skein triple.}
    \label{fig:grid diagram for unoriented skein triple}
\end{figure}

As shown in the figure, we assume that first three $O$, $X$-base points above $C$ are in consecutive rows while left most three base points below $C$ are in consecutive columns. The three links in the first and the second row form real oriented skein triples, respectively. The diagram $\cH_+$ for $L_+$ and the diagram $\cH_-'$ for $R_-$ differ by interchanging $O$, $X$-base points on a strongly invertible component or a pair of interchanged components. Similar observation holds for $\cH_-$ and $\cH_+'$. The consecutive assumption on base points leads to maps \[\widetilde{P}_{+,-}:\widetilde{\GCR}(\cH_+)\to \widetilde{\GCR}(\cH_-), \quad \widetilde{P}_{+,-}':\widetilde{\GCR}(\cH_+')\to \widetilde{\GCR}(\cH_-'),\] coming from Theorem~\ref{thm:mapping cone isom for oriented skein triple-reduction to tilde}. 

Using this, we define a map \[\widetilde{P}_{-,+}:\widetilde{\GCR}(\cH_-)\to \widetilde{\GCR}(\cH_+)\] as the composition \[\widetilde{\GCR}(\cH_-)\to \widetilde{\GCR}(\cH_+')\xrightarrow{\widetilde{P}_{+,-}'}  \widetilde{\GCR}(\cH_-') \to \widetilde{\GCR}(\cH_+),\] where the first and third arrow comes from the observation on $O$, $X$-base points rearrangement and Proposition~\ref{prop:isomorphism of tilde from OX rearrangement}. This is by construction a chain map and lower the $\delta^R$-grading by $1$.

\begin{lem}\label{lem:Pwilde has null composition}
Let $(L_+,L_-,L_0,\fra)$ be a real oriented skein triple and take associated real grid diagrams $\cH_+$, $\cH_-$, $\cH_0$ satisfying the consecutive rows(columns) assumption. Applying the previous discussion, we obtain maps $\widetilde{P}_{+,-}$ and $\widetilde{P}_{-,+}$. Then both compositions $\widetilde{P}_{+,-}\circ \widetilde{P}_{-,+}$ and $\widetilde{P}_{-,+}\circ \widetilde{P}_{+,-}$ are null-homotopic.
\end{lem}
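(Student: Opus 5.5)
The plan is to follow the proof of \cite[Lemma~10.2.3]{OSS2015grid} closely, working in the tilde complexes where all $O$- and $X$-base points are blocked. In this setting the $\bfO$/$\bfX$ interchanges used to build $\widetilde{P}_{-,+}$ are identities on generators, by Proposition~\ref{prop:isomorphism of tilde from OX rearrangement}. Recall that $\widetilde{P}_{+,-}$ comes from the top row of Diagram~\eqref{eq:combined chain complex}, with all variables set to zero. It is the sum of the component $\partial^{\bfN_-}_{\bfI_-}$-type map counting empty real rectangles through the local region (pushed to $\cH_-$ via $T$) and the identity-like piece given by $T$ on $\bfI$-states. Concretely, $\widetilde{P}_{+,-}$ is given by counting real rectangles and real pentagons in the combined diagram of Figure~\ref{fig:combined diagram for oriented skein triple} that avoid every base point. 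The same holds for $\widetilde{P}'_{+,-}$ on the primed diagrams.

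\textbf{Step 1: a combined diagram for the composite.} Since $\cH_-$ and $\cH'_+$ have the same curves and the same set of base points (only the $O$/$X$ labels differ), the composite $\widetilde{P}_{-,+}\circ\widetilde{P}_{+,-}$ can be read off from a single diagram. This diagram carries the curves $\bm\alpha_\pm,\bm\beta_\pm$ and all the distinguished base points of Figure~\ref{fig:grid diagram for unoriented skein triple}, with every marking treated as blocked. Each term of the composite is then a juxtaposition of two elementary real domains, each being an empty real rectangle or real pentagon, which is itself a real domain connecting $\xv_+$ to $\yv_+$. The same description applies to the other composite.

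\textbf{Step 2: the homotopy.} Define $\widetilde{H}\colon\widetilde{\GCR}(\cH_+)\to\widetilde{\GCR}(\cH_+)$ by counting empty real hexagons, together with the thin ``real rectangle--pentagon'' domains that pass through both the distinguished pair $(c,c')$ and the pair $(\widetilde c,\widetilde c')$, all avoiding $\bfO\cup\bfX$. This is the real analogue of the domains used in the proof of Proposition~\ref{prop:unknotting maps}. We then verify
\[\widetilde{\partial}\circ\widetilde{H}+\widetilde{H}\circ\widetilde{\partial}=\widetilde{P}_{-,+}\circ\widetilde{P}_{+,-}\]
by the usual domain-decomposition argument. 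Every real domain of real Maslov index $2$ of the relevant shape decomposes in exactly two ways, or else it is one of the thin annular domains around the local region.

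\textbf{Main obstacle: the thin annular domains.} In ordinary grid homology these domains contribute $U_O$ terms, and they are killed here because every such annulus contains a base point. The consecutive rows and columns hypothesis ensures that each thin real annulus (and its $R$-image) through the local region contains an $O$ or $X$ marking, so it is excluded from the tilde count. The genuinely new difficulty is the bookkeeping of decompositions for the four shapes of real rectangles in Proposition~\ref{prop:domains of real index 1}, in particular domains that straddle $C$, such as $X$-shaped octagons with pentagon corners. My first attempt would be to lift each real domain to its underlying ordinary domain in the non-real grid diagram, apply the ordinary cancellation from \cite{OSS2015grid}, and check that the pairing of decompositions there can be chosen $R$-equivariantly. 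This is possible because the distinguished corners $(c,c')$, $(\widetilde c,\widetilde c')$ come in $R$-interchanged pairs off $C$. The symmetric argument, with the roles of the two skein triples swapped, handles $\widetilde{P}_{+,-}\circ\widetilde{P}_{-,+}$.
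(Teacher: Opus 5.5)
Your mechanism (a hexagon-type homotopy in which the thin domains that would produce a variable carry a marking and so drop out of the tilde complex) is the one underlying the paper's argument. However, the proposal has a gap exactly at the step that lets this mechanism apply.

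\textbf{What the paper does.} It first identifies the maps themselves. Adapting \cite[Proposition~9.6.1]{OSS2015grid}, it shows $\widetilde{P}_{+,-}=\widetilde{c}_-+\widetilde{c}_-'$ and $\widetilde{P}_{-,+}=\widetilde{c}_++\widetilde{c}_+'$. These are sums of the pentagon-counting crossing-change maps for the two realizations ($L_\pm$ and $R_\pm$) of the crossing change. The composites then expand into compositions of crossing-change maps. The hexagon homotopies $H_\pm$ from Section~\ref{sub:Equivariant crossing changes and maps associated to them}, with $H_+\partial+\partial H_+=c_+c_-+U$, make each such composition homotopic to multiplication by a variable. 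That is zero in $\widetilde{\GCR}$. No new homotopy is built.

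\textbf{Where your proposal falls short.} You never establish this identification, and the description you use in its place is wrong. $\widetilde{P}_{+,-}$ is induced only by the two horizontal arrows of \eqref{eq:combined chain complex}, i.e.\ the rectangle counts $\bfI_-\to\bfN_-$ and $\bfN_+\to\bfI_+$ twisted by $T$. It has no ``identity-like piece given by $T$''; if it did, the composite would contain an identity-like summand that no domain count could cancel. The missing idea is recognizing these $T$-twisted rectangle counts as exactly the real pentagons of $\widetilde{c}_-$ and $\widetilde{c}_-'$.

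Without that identification, your $\widetilde{H}$ is not a well-defined count. The pairs $(c,c')\subset\alpha_+\cap\beta_+$ and $(\widetilde{c},\widetilde{c}')\subset\alpha_-\cap\beta_-$ are corners of generators of different diagrams. Real hexagons, by contrast, have their distinguished corners at points of $\alpha_+\cap\alpha_-$ and $\beta_+\cap\beta_-$. So ``rectangle--pentagon domains passing through both pairs'' specify nothing, and the ``exactly two decompositions'' claim has no concrete object to apply to. You also do not address the mixed terms $\widetilde{c}_+\circ\widetilde{c}_-'$ and $\widetilde{c}_+'\circ\widetilde{c}_-$ that come from the two different realizations.

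\textbf{A misattribution.} The consecutive rows/columns hypothesis is not what puts markings in the thin regions; that comes from the crossing-change geometry. Its role is to make $\cH_-$ and $\cH_+'$ (and $\cH_+$ and $\cH_-'$) differ only by an $O/X$ interchange, so that $\widetilde{P}_{-,+}$ is defined at all.
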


An adaptation of proof of \cite[Proposition~9.6.1]{OSS2015grid} shows that \[\widetilde{P}_{-,+}=\widetilde{c}_++\widetilde{c}_+',\quad \widetilde{P}_{+,-}=\widetilde{c}_-+\widetilde{c}_-'.\] 
Here, $\widetilde{c}_\pm$ and $\widetilde{c}_{\pm}'$ are crossing change maps (associated to $L_{\pm}$ and $R_{\pm}$) constructed in Section~\ref{sub:Equivariant crossing changes and maps associated to them} by counting pentagons. Though we worked with knots previously, the construction works for links without essential change. We have seen that $c_\pm\circ c_{\mp}\simeq u^2$ on $\GHR^-$ for knots and this is still true for links. Since all the variables are set to be zero in $\widetilde{\GCR}$, we can actually show various compositions between $\widetilde{c}_+$, $\widetilde{c}_+'$ and  $\widetilde{c}_-$, $\widetilde{c}_-'$ are null-homotopic. 

\begin{lem}\label{lem:mapping cone of composition}
Let $f\colon C\to C'$ and $g\colon C'\to C$ be a pair of chain maps between $\frac{1}{2}\Z$-graded chain complexes. Suppose that $f$ is homogeneous of degree $a$ and $g$ is homogeneous of degree $b$. Then there is map $\Phi:\Cone (f)\to \Cone (g)$ homogeneous of degree $-a-1$ whose induced map on homology fit into the following exact triangle: 
\[\begin{tikzcd}
    H(\Cone (f)) \arrow{rr}{\Phi_*}  & & H(\Cone (g))\arrow{dl} \\
    & H(\Cone (f\circ g)) \arrow[swap]{ul}
\end{tikzcd}.\]
\end{lem}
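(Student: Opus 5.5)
This is the octahedral-axiom statement from homological algebra, as in \cite[Lemma~10.2.?]{OSS2015grid}. I will prove it at chain level, which also keeps track of the degree of $\Phi$.

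Write $\Cone(f)=C[1]\oplus C'$ with differential $(c,c')\mapsto(\partial c,\, f(c)+\partial c')$. Similarly, $\Cone(g)=C'[1]\oplus C$ and $\Cone(f\circ g)=C'[1]\oplus C'$. Over $\F$ signs are irrelevant; only the grading shifts need bookkeeping, and the shift of the inner summand is chosen so that the differential has degree $-1$. Define
\[\Phi(c,c')=(c',0)\in \Cone(g).\]
This is the composite $\Cone(f)\to C'[1]\hookrightarrow \Cone(g)$ of the projection onto $C'$ followed by the inclusion. For $\Phi$ to be a chain map we would need $\partial(c',0)=(\partial c', g(c'))$ to equal $(f(c)+\partial c',0)$, which fails. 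So I will instead use $\Phi(c,c')=(c',c)$. Then
\[\partial(c',c)=(\partial c',\, g(c')+\partial c),\qquad \Phi\partial(c,c')=(f(c)+\partial c',\,\partial c).\]
These still differ by the terms $g(c')$ and $f(c)$. The fix is to take $\Phi(c,c')=(c',0)$ and build the third object as the cone of this $\Phi$.

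So the cleaner plan is the standard octahedral argument. Consider the map of cones $\psi\colon \Cone(f\circ g)\to\Cone(f)$ given by $(c',d')\mapsto(g(c'),d')$; it is a chain map because $f(g(c'))$ matches the term in the differential of $\Cone(f\circ g)$. Also consider $\chi\colon \Cone(g)\to\Cone(f\circ g)$ given by $(c',c)\mapsto(c',f(c))$. Checking these two are chain maps is a one-line computation. The octahedral axiom in the homotopy category of complexes then gives a distinguished triangle
\[\Cone(f\circ g)\xrightarrow{\psi}\Cone(f)\xrightarrow{\Phi}\Cone(g)[1]\xrightarrow{\chi[1]}\Cone(f\circ g)[1],\]
where $\Phi(c,c')=(c',0)$ after the appropriate shift. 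Concretely, I will show $\Cone(\psi)$ is chain homotopy equivalent to $\Cone(g)[1]$. Filter $\Cone(\psi)=\Cone(f\circ g)[1]\oplus\Cone(f)$ and cancel the acyclic piece made of the two copies of $C'$ joined by the identity component of $\psi$. Gaussian elimination of that identity arrow leaves $C'[\ast]\oplus C$ with the induced differential $g$, which is $\Cone(g)$ up to shift. Tracking the elimination identifies the composite $\Cone(f)\to\Cone(\psi)\simeq\Cone(g)$ with $\Phi$.

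The long exact sequence of $\Cone(\psi)$ then gives the stated exact triangle on homology. The step I expect to need the most care is the degree count: $f$ has degree $a$ and $g$ has degree $b$, so the internal shifts are $C[a+1]$ in $\Cone(f)$ and $C'[b+1]$ in $\Cone(g)$. The map $\Phi$ sends the $C'$-summand (unshifted in $\Cone(f)$) to the shifted $C'$-summand of $\Cone(g)$, which gives the homogeneous degree $-a-1$ after normalizing as in the statement; I will fix conventions so that this comes out exactly. Everything else is formal.
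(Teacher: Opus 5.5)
Your overall route is sound: $\psi$ and $\chi$ are chain maps, and cancelling the identity component of $\psi$ inside $\Cone(\psi)$ does leave $\Cone(g)$ up to shift. The gap is the map $\Phi$ itself, which is what the lemma is about.

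\textbf{The map you commit to is wrong.} You checked yourself that $(c,c')\mapsto(c',0)$ is not a chain map: $\partial\Phi(c,c')=(\partial c',g(c'))$, while $\Phi\partial(c,c')=(f(c)+\partial c',0)$. So ``build the third object as the cone of this $\Phi$'' has no meaning, this $\Phi$ induces nothing on homology, and your closing identification ``$\Phi(c,c')=(c',0)$ after the appropriate shift'' is false. The degree claim also fails. With your shifts, $c'\in C'_j$ sits in degree $j$ in $\Cone(f)$ and in degree $j+b+1$ in $\Cone(g)$, so this map has degree $b+1$. The statement's degree $-a-1$ involves only $a$. That already tells you $\Phi$ must be supported on the $C$-summand of $\Cone(f)$, the one shifted by $a+1$.

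\textbf{The fix.} Take $\Phi(c,c')=(0,c)$: project $\Cone(f)$ onto its shifted copy of $C$, then include $C$ as the unshifted target summand of $\Cone(g)$.
\begin{itemize}
\item It is a chain map, since $\partial(0,c)=(0,\partial c)=\Phi(\partial c,\,f(c)+\partial c')$.
\item An element $c\in C_k$ has degree $k+a+1$ in $\Cone(f)$ and degree $k$ in $\Cone(g)$, so $\Phi$ has degree exactly $-a-1$.
\end{itemize}
This is also what your own elimination produces if you track it. In $\Cone(\psi)$, the cancelled copy of $C'$ (the target copy in $\Cone(f\circ g)$) has no differential components into the surviving summands. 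So there are no zig-zag corrections, and the equivalence $\Cone(\psi)\to\Cone(g)$ is plain projection. Precomposing with $\Cone(f)\hookrightarrow\Cone(\psi)$ then gives $(c,c')\mapsto(0,c)$, not $(c',0)$.

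The paper states this lemma without proof, following \cite{OSS2015grid}. With the correct $\Phi$, the shortest argument skips the octahedral axiom. Cancel the identity component $C\to C$ of $\Phi$ inside $\Cone(\Phi)$. The only zig-zag is $C'\xrightarrow{g}C\xrightarrow{\id}C\xrightarrow{f}C'$, so $\Cone(\Phi)\simeq\Cone(f\circ g)$ up to shift. The long exact sequence of $\Cone(\Phi)$ is then exactly the stated triangle.
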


\begin{proof}[Proof of Theorem~\ref{thm:unoriented skein exact sequence} in the case that crossings happen between different components]

We have noted in Proposition~\ref{prop:isomorphism of tilde from OX rearrangement} that as a (relatively) $\delta^R$-graded vector space, $\widetilde{\GHR}$ is independent of the choice of orientation on $L$, so without loss of generality, we assume that the local crossing in $L$ is positive. We take $L=L_+$ and form triples $(L_+,L_-,L_0)$ and $(R_+,R_-,R_0)$. Then, we can construct triples of real grid diagrams $(\cH_+,\cH_-,\cH_0)$, $(\cH_+',\cH_-',\cH_0')$ and introduce various $\widetilde{P}_{\pm,\mp}$ maps. Apply Lemma~\ref{lem:mapping cone of composition} to the null-homotopic composition considered in Lemma~\ref{lem:Pwilde has null composition}, we get 
\[\begin{tikzcd}
    H(\Cone (\widetilde{P}_{-,+})) \arrow{rr}  & & H(\Cone (\widetilde{P}_{+,-})) \arrow{dl} \\
    & H(\Cone (\widetilde{P}_{-,+}\circ\widetilde{P}_{+,-})) \arrow[swap]{ul}
\end{tikzcd}.\]

The desired triangle will be derived from this by identifying the mapping cones with suitable real grid homology groups. The strongly invertible links in the triple appear in $\cH_+$, $\cH_0$ and $\cH_0'$. Theorem~\ref{thm:mapping cone isom for oriented skein triple-reduction to tilde} provides us with quasi-isomorphisms \[\Cone (\widetilde{P}_{+,-})\to \widetilde{\GCR}(\cH_0)\oplus\widetilde{\GCR}(\cH_0), \quad \Cone (\widetilde{P}_{+,-}')\to \widetilde{\GCR}(\cH_0')\oplus\widetilde{\GCR}(\cH_0'),\] up to an overall grading shift. Apply Proposition~\ref{prop:isomorphism of tilde from OX rearrangement} to the composition formula of $\widetilde{P}_{-,+}$, we know that \[\Cone (\widetilde{P}_{+,-}')\cong \Cone (\widetilde{P}_{-,+}),\] up to a grading shift. Since $\widetilde{P}_{-,+}\circ\widetilde{P}_{+,-}$ is null-homotopic by Lemma~\ref{lem:Pwilde has null composition}, the homology of the mapping cone is just the direct sum of that of its domain and codomain, i.e., \[H(\Cone(\widetilde{P}_{-,+}\circ\widetilde{P}_{+,-}))\cong \widetilde{\GHR}(\cH_+)\oplus \widetilde{\GHR}(\cH_+).\]

Substituting this identifications into the triangle of mapping cones, we obtain \[\begin{tikzcd}
    \widetilde{\GHR}(\cH_0')\otimes W \arrow{rr}  & & \widetilde{\GHR}(\cH_0)\otimes W \arrow{dl} \\
    & \widetilde{\GHR}(\cH_+)\otimes W \arrow[swap]{ul}
\end{tikzcd}.\] 
Then the proof is completed by recalling that for any real grid diagram $\cH$ representing $L$ with $l_f$ strongly invertible components and $l_p$ pairs of components, we have an isomorphism of $\delta^R$-graded vector spaces \[\widetilde{\GHR}(\cH)=\widehat{\GHR}(\cH)\otimes W^{\frac{n-l_f-2l_p}{2}},\] where $n$ is the size of the real grid diagram. 
\end{proof}

\section{Examples and applications}\label{sec:Examples and application}
With the help of a python program \cite{ZhenkunLioythonprgram} by Zhenkun Li, we compute real grid homology for strongly invertible knots with small crossing numbers. In this section, we record some interesting phenomena that have been observed. A full list of calculation results will be given in Appendix~\ref{app:Calculation results for knots with small crossing numbers}.

\subsection{Observations from calculation results}
\begin{example}\label{ex:amphichiral}
In the classical knot Floer theory, we have $\tau(K)=-\tau(m(K))$, so an amphichiral knot must have zero $\tau$-invariant. This is not true when a strong inversion is added. The two strong inversions on $4_1$ have $\tau^R$-invariant $\pm 1$, respectively, independent of the choice of the auxiliary data. 
Similarly, amphichiral knots $8_{12}$ and $8_{18}$ have non-zero $\tau^R$-invariants when they are equipped with suitable strong inversions and auxiliary data. See Appendix~\ref{app:Calculation results for knots with small crossing numbers}.
\end{example}

\begin{example}\label{ex:slice with nonzero tauR}
Using computer computation, we also find some strongly invertible knots whose underlying knots are slice, but have non-zero $\tau^R$-invariants when equipped with suitable auxiliary data. They are $6_1$, $9_{46}$, $10_{35}$ and $4_1\#4_1$. See Appendix~\ref{app:Calculation results for knots with small crossing numbers} for the choice of involution and auxiliary data.

It can be deduced from Corollary~\ref{cor:tau^R bounds equiv slice genus+m/2 enhanced} that these strongly invertible knots are not equivariantly slice, i.e., they do not bound equivariant slice disk in $(B^4,\tau_c)$, since an equivariant slice disk must have connected fixed set.

We also note that for some choice of involution and auxiliary data, the amphichiral knot $8_{12}$ has $\tau^R$ equals to $-2$ whose absolute value its strictly bigger than its usual smooth slice genus $1$. Though Corollary~\ref{cor:tau^R bounds equiv slice genus+m/2 enhanced} is not enough to conclude that $8_{12}$ has equivariant slice genus bigger than $1$, it still serves as a potential example for that.
\end{example}

\begin{example}\label{ex:twist knots}
Twist knots form an interesting family of two bridges knots. It is known that they have two distinct families of involutions. See Figure~\ref{fig:involution on Twisted knots} for an illustration.  
\begin{figure}
    \centering
    \begin{overpic}[width=0.7\textwidth]{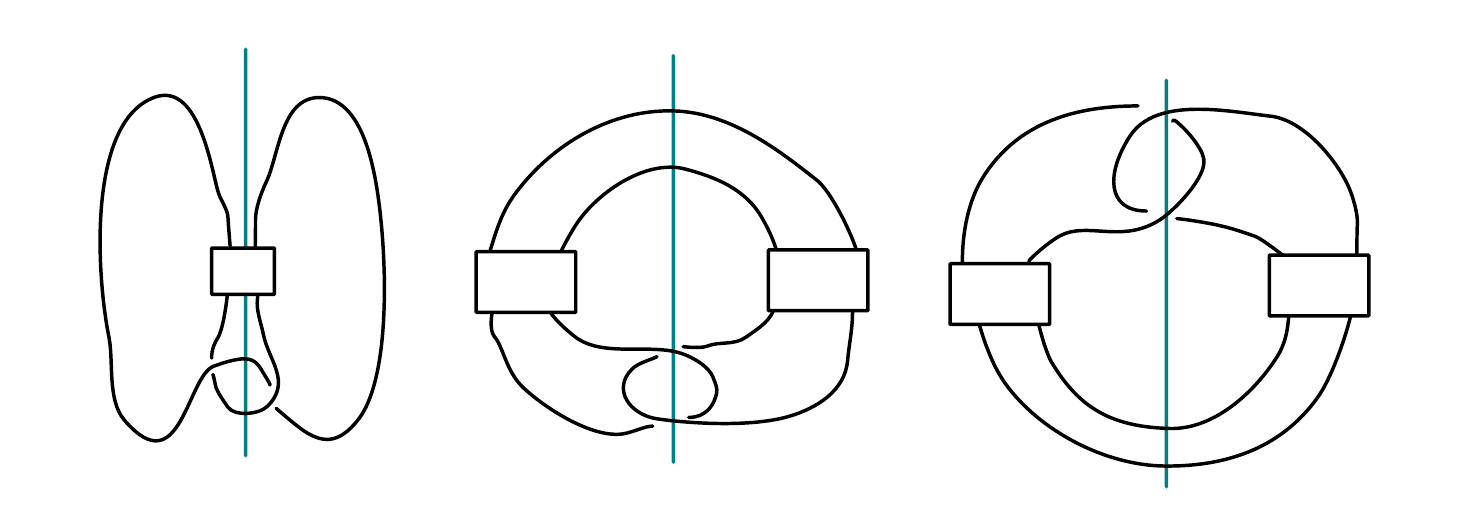}
			\put(15,0) {$T_n$}
            \put(44,0) {$T_{2n}$}
            \put(75,0) {$T_{2n-1}$}

            \put(15.5,17) {$n$}
            \put(35,16.5) {$n$}
            \put(54.5,16.5) {$n$}
            \put(66.4,15.5) {$n$}
            \put(88.6,15.5) {$n$}
	\end{overpic}

    \caption{Involution on twist knots: The left picture shows the first family of involutions, the middle and right figures show the second family of involutions on even and odd twist knots, respectively.} 
    \label{fig:involution on Twisted knots}
\end{figure}

For those with less than nine half-twists, we calculated their $\widehat{\GHR}$ for both involutions and find that for the left family, we have \[\Delta^R(T_n)= \begin{cases}
    1 & \text{for } n\equiv 0,3 \text{ mod } 4\\
    -t^{-1}+1+t & \text{for } n\equiv 1,2 \text{ mod } 4
\end{cases};\] 
while for the right one, \[\Delta^R(T_n)=-kt^{-1}+1+kt,\] when $n=2k$ or $2k-1$. In both cases, their real knot Floer group are determined by the polynomial up to a grading shift. The results are independent of $\fra$, so we omit it from our notation. For the right family, one can show that this is true for any $n$ using Theorem~\ref{thm:skein relation for real Alexander polynomial} and an induction and the author expects the formula for the left family is also true for general $n$.
\end{example}

\begin{example}\label{ex:nontrivial knots with trivial homology}
Besides some interesting phenomena, there are also some strongly invertible knots that do not enjoy interesting real knot Floer groups, i.e., $\HFKR^\circ$ fails to tell them apart from the standard strongly invertible unknot. Some of them are $5_2$, $6_1$, $8_3$ equipped with one of their involutions.
\end{example}

\begin{example}
In \cite{hendricks2025noterealheegaardfloer}, Hendricks asked for strongly invertible knots with $\dim \widehat{\HFKR} (K,0)>1$. For $\HFK$-thin knots, we have \begin{itemize}
    \item $6_2$, $6_3$, $7_3$, $7_5$, $7_6$ with one of their two strong inversions;
    \item $7_7$ with either strong inversion;
    \item $8_9$, $8_{18}$, $9_6$, $10_{75}$ with some strong inversion.
\end{itemize}
See Appendix~\ref{app:Calculation results for knots with small crossing numbers} for their diagrams and detailed real knot Floer groups.
\end{example}

\begin{example}\label{ex:direction dependence}
We calculated real knot Floer homology for knots $8_{19}$, $8_{20}$ and $9_{42}$ equipped with suitable involutions and auxiliary as shown in Figure~\ref{fig:grid diagrams for 8-19,8-20 and 9-42}. These examples originated from~\cite[Section~6]{Sarkar_2015movingbasept}, which pointed out that $8_{19}$ and $9_{42}$ are the only two prime knots with $\le 9$ crossings and non-thin knot Floer homology, while $8_{20}$ and $9_{42}$ admit no periodic involution. The top left grid diagram represents $(8_{19}, \fra_0)$ for some choices of auxiliary data $\fra_0$, the top middle and right diagrams represent a pair $(8_{20},\fra_1)$ and $(8_{20},\fra_1^{i,r})$. All three grid diagrams in the bottom row represent $9_{42}$ with the same involution, but different choices of $\fra$, say $\fra_a$, $\fra_b$ and $\fra_c$. The middle and right diagrams are related a real cyclic permutation, so $\fra_b$ and $\fra_c$ are actually equivalent (i.e., they share the same choices of (1) and (2) in Definition~\ref{def:strongly invertible knots with extra data}), while it is obvious that $\fra_b=\fra_a^{i,r}$.
\begin{figure}
    \centering
    \begin{overpic}[width=0.8\textwidth]{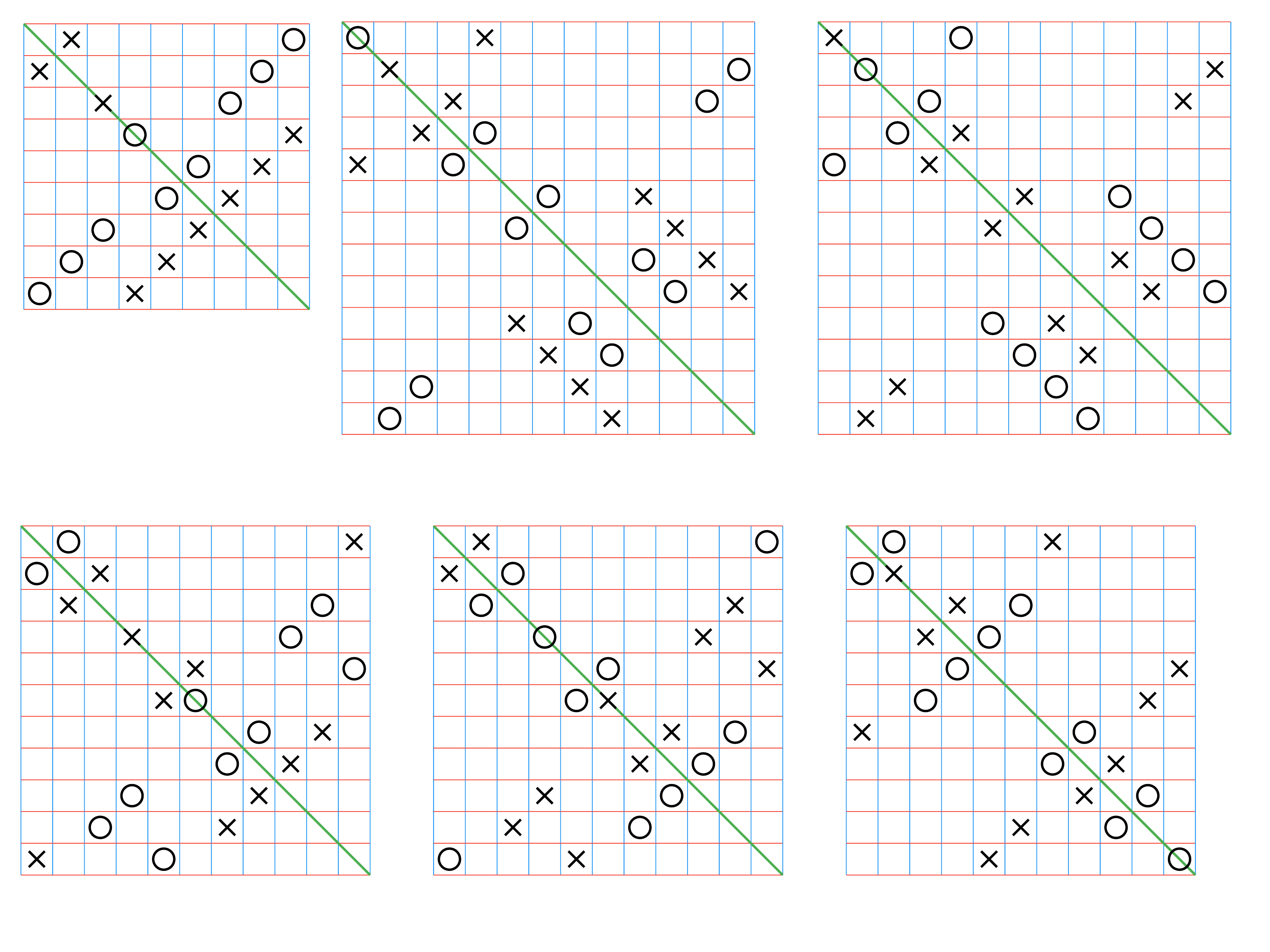}
			\put(11.5,0) {$(9_{42},\fra_a)$}
            \put(43,0) {$(9_{42},\fra_b)$}
            \put(75,0) {$(9_{42},\fra_c)$}

            \put(9,37) {$(8_{19},\fra_0)$}
            \put(40,37) {$(8_{20},\fra_1)$}
            \put(75,37) {$(8_{20},\fra_1^{i,r})$} 
	\end{overpic}
    
    \caption{}
    \label{fig:grid diagrams for 8-19,8-20 and 9-42}
\end{figure}
The results of calculation are \begin{itemize}
\item \[\widehat{\GHR}(8_{19},\fra_0)\cong \F_{(0,-3/2)}\oplus \F_{(0,-1)}\oplus \F_{(1,0)}\oplus \F_{(2,1)}\oplus \F_{(3,3/2)},\] \[\Delta^{R}(8_{19},\fra_0)=t^{-3}+t^{-2}-1+t^2-t^3.\]
\item \[\widehat{\GHR}(8_{20},\fra_1)\cong \F_{(-1,-1)}\oplus \F_{(-1,-1/2)}\oplus \F_{(0,-1/2)}\oplus \F_{(0,0)}^3\oplus \F_{(1,1)},\] \[\Delta^{R}(8_{20},\fra_0)=-t^{-2}+3-t^2.\]
\item \[\widehat{\GHR}(8_{20},\fra_1^{i,r})\cong\F_{(-1,-1)}\oplus  \F_{(0,0)}^3\oplus \F_{(0,1/2)}\oplus \F_{(1,1/2)}\oplus\F_{(1,1)},\]\[\Delta^{R}(8_{20},\fra^{i,r}_1)=-t^{-2}+3-t^2.\]
\item \[\widehat{\GHR}(9_{42},\fra_a)\cong\F_{(-1,-1)}\oplus \F_{(-1,-1/2)}\oplus \F_{(0,-1/2)}\oplus \F_{(0,0)}^3\oplus \F_{(1,1)},\] \[\Delta^{R}(9_{42},\fra_{a})=-t^{-2}+3-t^2.\]
\item \[\widehat{\GHR}(9_{42},\fra_a^{i,r})\cong\widehat{\GHR}(9_{42},\fra_c)\cong \F_{(-1,-1)}\oplus  \F_{(0,0)}^3\oplus \F_{(0,1/2)}\oplus \F_{(1,1/2)}\oplus\F_{(1,1)},\] \[\Delta^{R}(9_{42},\fra_{a}^{i,r})=\Delta^{R}(9_{42},\fra_c)=-t^{-2}+3-t^2.\]
\end{itemize}

From these, one can see easily that the choice of auxiliary data $\fra$ indeed affects the real Floer homology of $8_{20}$ and $9_{42}$ in the way predicted in Proposition~\ref{prop:fra v.s. fra^i,r}. 
Recall from~\cite{BGCOMPUTATIONSOFHEEGAARD-FLOERKNOTHOMOLOGY}, 
\[\widehat{\HFK}(8_{19})\cong\F_{(0,-3)}\oplus \F_{(1,-2)}\oplus \F_{(2,0)}\oplus \F_{(5,2)}\oplus \F_{(6,3)},\] 
\[\widehat{\HFK}(9_{42})\cong\F_{(-1,-2)}\oplus \F_{(0,-1)}^2\oplus \F_{(0,0)}\oplus \F_{(1,0)}^2\oplus \F_{(2,1)}^2\oplus \F_{(3,2)}.\]  Taking Euler characteristic, we get 
\[\Delta(8_{19})=t^{-3}+t^{-2}-1+t^2-t^3,\]
\[\Delta(9_{42})=-t^{-2}+2t^{-1}-1+2t-t^2.\]

Though $8_{19}$ and $9_{42}$ are both non-thin, so there is no predictable collapse of the spectral sequence from \cite{hendricks2025noterealheegaardfloer}, the behaviors of their real Floer groups are in great contrast. For $8_{19}$, the spectral sequence from $\widehat{\HFK}(8_{19})$ to $\widehat{\HFKR}(8_{19},\fra_0)$ collapses immediately at the first page leading to $\Delta(8_{19})=\Delta^R(8_{19},\fra_0)$. For $9_{42}$, the spectral sequence does not degenerate immediately and the constant term of $\Delta^R(9_{42},\fra_a)$ has bigger absolute value than that of $\Delta(9_{42})$. 

It is also worth noting that the real grid homology of $3_1\#4_1$ (see Section~\ref{sub:Kunneth principle fails}) equipped with one of the connected sum involutions shares the same real grid homology with $(9_{42},\fra_c)$. Combining this with Proposition~\ref{prop:fra v.s.fra^r}, we know that $3_1\#4_1$ provides us with another example of a strongly invertible knot whose real grid homology depends on the choice of auxiliary data, while whose underlying knot is $\HFK$-thin (since  $3_1\#4_1$ is alternating).
\end{example}

Combining the example and the above mentioned result from \cite{Sarkar_2015movingbasept}, we make the following conjecture. \begin{conjecture}
Let $K$ be a strongly invertible knot in $(S^3,\tau)$. The auxiliary data $\fra$ does not affect the real knot Floer groups when there is a periodic involution $\iota$ on the same underlying knot $K$ such that $\iota\circ \tau=\tau\circ\iota$. 
\end{conjecture} 

\begin{example}\label{ex: knots with <8 crossings}
With the help of planar diagrams from \cite{Lobb2021ArefinementofKhovanovhomology}, we constructed real grid diagrams for all strongly invertible knots with $\le 7$ crossings. Then, we used the python program to compute various versions of real knot Floer groups associated to these knots. The results are shown in Appendix~\ref{app:Calculation results for knots with small crossing numbers}. It turns out that although real knot Floer groups may be trivial on non-trivial knots, the real Alexander polynomial is able to distinguish different strong inversions on the same knot for all knots with crossing number $\le 7$. This needs the remark on mirroring we mentioned in Remark~\ref{rmk:intro-polynomial} and then proves Proposition~\ref{prop:intro-distinguish involutions on <=7 crossing knots}.
\end{example}
The example above motivates the following conjecture.
\begin{conjecture}
Let $K$ be any knot in $S^3$. If $K$ admits a strong inversion, then there is a strong inversion on it with nontrivial real knot Floer group.  
\end{conjecture}

\begin{example}
In Appendix~\ref{app:Calculation results for knots with small crossing numbers}, we calculated real knot Floer homology of $K\#\tau K^r$ equipped with involution $\widetilde{\tau}_{sw}$ from \cite{BIequiv4genusofsiandperidicknots,DMSequivariantknotandHFK,MallickHFKandsurgeryonequivaiantknot} for $K=3_1, 4_1, 5_1$ and $5_2$. All these examples satisfy  \[\widehat{\HFK}(K)\cong \widehat{\HFKR}(K\#\tau K^r,\fra)\] as bigraded groups (under suitable normalization) for any choice of auxiliary data. 
Motivated by this, we argued in Proposition~\ref{prop:intro-recovering hat HFK from hat HFKR} that for any knot $K$ in $S^3$, we have \[\widehat{\HFK}(K)\cong \widehat{\HFKR}(K\#\tau K^r,\fra)\] as bigraded group (under the correspondence $(M,A)=(M^R,2A^R)$) for any choice of auxiliary data. We now further conjecture that 

\begin{conjecture}
For any knot $K$ in $S^3$, we have \[\HFK^-(K)\cong \HFKR^-(K\#\tau K^r,\fra)\] as bigraded group for any choice of auxiliary data.
\end{conjecture}
\end{example}

\begin{remark}
Using \cite{ZhenkunLioythonprgram}, we can also calculate $\widehat{\GHR}(K,\fro)$ for doubly periodic knots $(K,\fro)$ which admit real grid diagrams of size $\le 13$. However, the author haven't found any example with $\widehat{\GHR}$ supporting in more than one real Maslov gradings, which makes this theory kind of uninteresting. Thus, we mainly focused on strongly invertible knots and links in this paper, though properties like skein relations are expected to be true also for doubly periodic links. 
\end{remark}

\subsection{Relationship between spectral sequences}
For a strongly invertible knot with auxiliary data $(K,
\fra)$ in $(S^3,\tau)$, we have the following diagram of Floer homology groups, in which each arrow represents a spectral sequence.

\[\begin{tikzcd}
    \widehat{\HFK}(K) \arrow{r} \arrow{d} &  \widehat{\HFKR}(K,\fra)\arrow{d} \\
      \widehat{\HF}(S^3) \arrow{r} & \widehat{\HFR}(S^3,\tau)\\ 
\end{tikzcd}\]

The two horizontal spectral sequences come from \cite{hendricks2025noterealheegaardfloer}, the right one comes from Theorem~\ref{thm:spectral sequence relating HFKR and HFR} and the left one is its classical counterpart. Then one can ask that whether the square commutes. We give a negative answer to this using $4_1$. The left and right spectral sequence are shown schematically in Diagram~\eqref{eq:HFK and HFKR of 4_1}. Each row shows a $A$ or $A^R=\frac{1}{2} A$ grading level.

\begin{equation}\label{eq:HFK and HFKR of 4_1}
\begin{tikzcd}
    & \bullet c \arrow{dr}  \\
    \bullet a \arrow{dr} & \bullet x& \bullet e\\
     & \bullet Ub \\ 
\end{tikzcd}\quad \quad \begin{tikzcd}
     \bullet  \arrow{d}  \\
    \bullet  \\
      \bullet \\ 
\end{tikzcd}
\end{equation}

Using the calculation from \cite{DMSequivariantknotandHFK}, Hendricks computed the differential for the top horizontal spectral sequence: $a+x$, $b$ and $c$ survives to $\widehat{\HFKR}$. Combining this, one can see that the generator survives to $\widehat{\HF}(S^3)$ and $\widehat{\HFR}(S^3,\tau)$ are not the ``same'', while we know the bottom horizontal arrow in just the identity map. This is also reflected by the fact that $\tau(4_1)=0$ while $\tau^R(4_1)\ne 0$ (for either involution, see Example~\ref{ex:trefoil and figure 8}). 

\subsection{Calculating $\tau_K$ action on $\widehat{\CFK}$}\label{sub:calculatetauKonclassicalHFK}

In \cite{DMSequivariantknotandHFK}, the authors showed that when $K$ is a strongly invertible knot in $(S^3,\tau)$, there is a well-defined $\tau_K$ action on $\mathcal{CFK}^-$ (the full knot Floer complex) induced by the symmetry of knot. In \cite{hendricks2025noterealheegaardfloer}, Hendricks noted that when the knot admits a real Heegaard diagram on which there is a $R$-symmetric family of almost complex structure such that $\mathrm{Sym}^m(J)$ achieves transversality for non-real moduli spaces, then the differential $d_1$ of her spectral sequence (see Theorem~\ref{thm:spectral sequence from HFK,strongly invertible case}) can be calculated as \[d_1=(1+(\iota_K\tau_K)_*),\]
in which $\iota_K$ is the involution on $\mathcal{CFK}^-$ coming from involutive knot Floer homology (see~\cite{HMinvolutive2017}) and $(\iota_K\tau_K)_*$ denote the induced action of their composition on $\widehat{\HFK}$. It was pointed out by Lipshitz and Ozsv\'{a}th in \cite{LOrealbordered} that the construction in \cite{BGX} is sufficient for this use. By noting that for a $\HFK$-thin knot, \begin{itemize}
    \item $\iota_K$ has been described in \cite[Section~8]{HMinvolutive2017} using some standard pieces;
    \item the spectral sequence $\widehat{\HFK}(K)\Longrightarrow \widehat{\HFKR}(K,\fra)$ collapses after the first page;
\end{itemize}
we expect that we can recover $\tau_K$-action on $\widehat{\HFK}(K)$ or even on $\mathcal{CFK}^-(K)$ via calculation of $\widehat{\HFKR}(K,\fra)$ for nice pairs $(K,\fra)$.

As a concrete example, we note that the involution on $\widehat{\HFK}(6_2)$ equipped with involution shown in Figure~\ref{fig:6_2 diagram} can be determined by $\widehat{\HFKR}(6_2,\fra)$ with $\fra$ implicitly given in the figure. The calculation was done by Kristen Hendricks after chatting with the author. 

\begin{figure}
    \centering
    \includegraphics[width=0.35\linewidth]{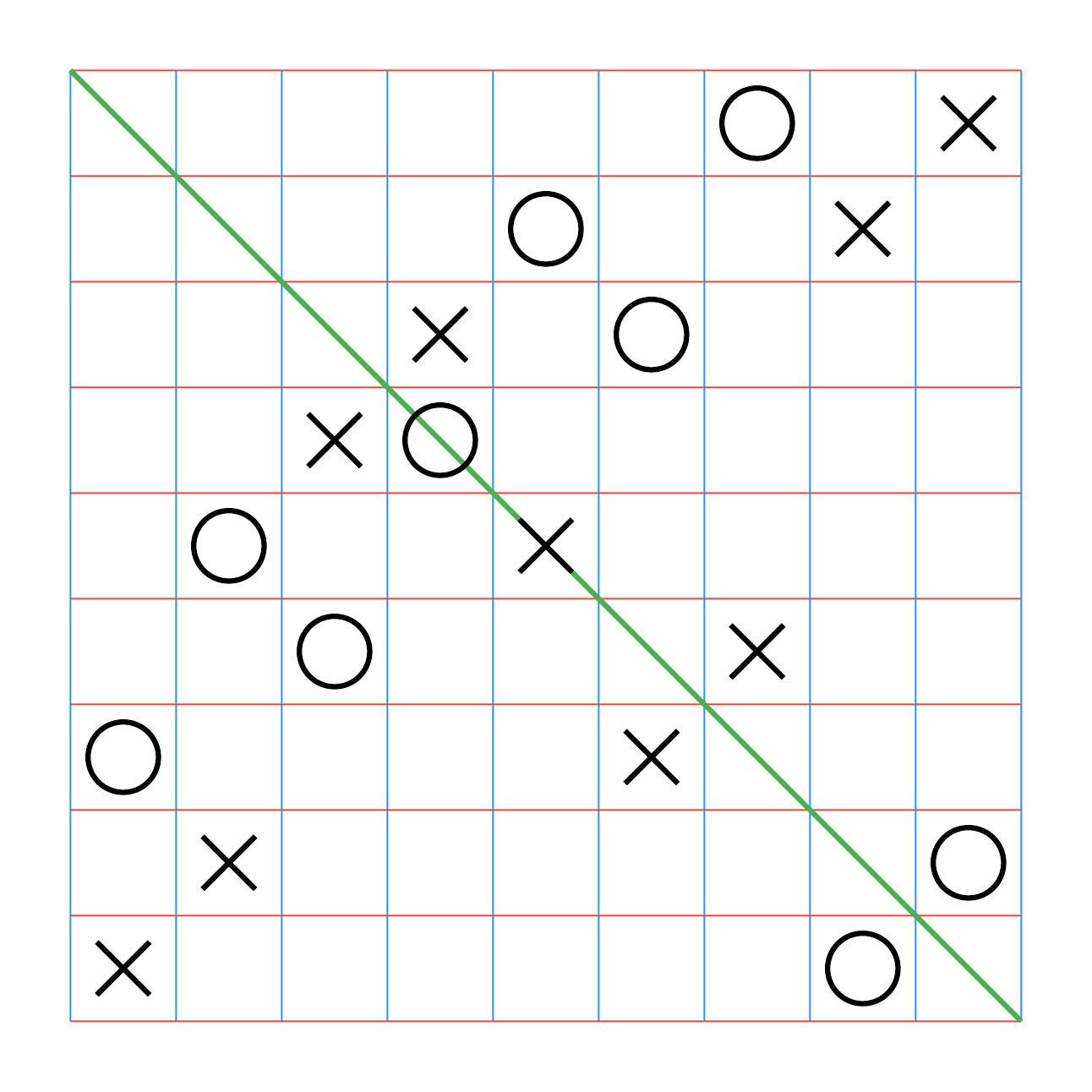}
    \caption{A grid diagram for $6_2$ with one of its a strongly inversion.}
    \label{fig:6_2 diagram}
\end{figure}

$\mathcal{CFK}(6_2)$ looks like in Diagram~\eqref{eq:HFK of 6_2}.

\begin{equation}\label{eq:HFK of 6_2}
\begin{tikzcd}
\bullet b_1 \arrow{d} &\bullet a_1\arrow{d} \arrow{l} \\
\bullet e_1  & \bullet c_1\arrow{l}\\
\bullet x_1  & \bullet x_0\arrow{d}\arrow{l} & \bullet b_2\arrow{d} & \bullet a_2 \arrow{d} \arrow{l} \\
     & \bullet x_2 & \bullet e_2 & \bullet c_2\arrow{l}\\ 
\end{tikzcd} 
\quad
\end{equation}

The $\iota_K$ action is given by \[x_0\mapsto x_0, \quad x_1 \leftrightarrow x_2,\quad a_1\mapsto a_2+e_2,\quad a_2\mapsto a_1,\] \[c_1 \leftrightarrow b_2,\quad b_1 \leftrightarrow c_2, \quad e_1 \leftrightarrow e_2,\] which follows immediately from \cite[Section~8]{HMinvolutive2017}.

From Appendix~\ref{app:Calculation results for knots with small crossing numbers}, we can see that $\widehat{\HFKR}(6_2,\fra)$ has rank 1, 1, 3, 1, 1 in $A^R$ grading -1, -1/2, 0, 1/2, 1, respectively. Using these and the formal property of $\iota_K$ and $\tau_K$, one can deduce that up to basis changes, $\tau_K$ is given by 
\[x_0\mapsto x_0, \quad x_1 \leftrightarrow x_2,\quad a_1\mapsto a_2+x_2,\quad a_2\mapsto a_1+x_1,\] \[c_1 \leftrightarrow b_2,\quad b_1 \leftrightarrow c_2, \quad e_1 \leftrightarrow e_2.\]

In general, it is not enough to determine $\tau_K$ up to basis change from $\widehat{\HFK}$, $\widehat{\HFKR}$ and $\iota_K$, even when the knot is thin. In fact, $6_1$ with one of its involutions provides such an example. But the author expect that for some specific examples like $6_2$, $\tau_K$ can be determined and have further interesting applications.

Before ending this subsection, we state a similar result for doubly periodic knots. This was communicated by Hendricks to us, which follows from the argument of \cite[Lemma~2.9]{hendricks2025noterealheegaardfloer} (see Section~6 there for details) and the construction of real nice diagrams from \cite{BGX}.

\begin{prop}
Any doubly periodic knot $(K,\fra)$ in $S^3$ admits a real Heegaard diagram on which there is a $R$-symmetric family of almost complex structure such that $\mathrm{Sym}^m(J)$ achieves transversality for non-real moduli spaces, so the differential $d_1$ in the spectral sequence from Theorem~\ref{thm:spectral sequence from HFK,doubly periodic case} can be calculated as \[d_1=(1+(\iota_K\tau_K\varsigma_K)_*),\]
in which $\iota_K$ is same as above, $\tau_K$ is the induced action of $\tau$ on $\widehat{\HFK}(K)$ coming from \cite[Section~8]{DMSequivariantknotandHFK} and $\varsigma_K$ is the Sarkar map from \cite{Sarkar_2015movingbasept}.  
\end{prop}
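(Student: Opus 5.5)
The argument combines Hendricks' transversality lemma with the real nice diagrams of \cite{BGX}, following the strongly invertible case. First, I will produce a real nice Heegaard diagram for $(K,\fro)$. Start from any real Heegaard diagram representing the doubly periodic knot as in Definition~\ref{def:real Heegaard diagram for periodic links}. By Lemma~\ref{lem:odd number of pairs of base points on a periodic knot} it has an odd number of $(O,X)$-pairs, so we can trade genus for base points until there is a single pair with $R(O)=X$. Then apply the equivariant Sarkar--Wang type algorithm of \cite{BGX}, which uses real isotopies and real handleslides away from the base points, to make every region not containing $O$ or $X$ a bigon or rectangle, or a pair of such regions swapped by $R$. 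On this diagram, non-real index-one classes are supported on embedded bigons and rectangles. These are combinatorially rigid, so for a product $\mathrm{Sym}^m(J)$ with $J$ an $R$-symmetric complex structure on $\Sigma$, every such domain has a transversely cut out moduli space counting to $1$. This is the input required in the argument of \cite[Lemma~2.9]{hendricks2025noterealheegaardfloer}. Because the structure is a product of symmetric structures, the family is $R$-symmetric.

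Second, I will identify $d_1$. In Hendricks' spectral sequence (Theorem~\ref{thm:spectral sequence on general Lagrangian Floer homology}), with a symmetric, transversality-achieving $J$, the $E_1$ differential is $1+R_*$. Here $R_*$ is the map on $\HF(\T_\alpha,\T_\beta)$ induced by the anti-holomorphic involution. It sends a generator and disk counts for $(\Sigma,\bm\alpha,\bm\beta,O,X)$ to those for $(-\Sigma,\bm\beta,\bm\alpha,R(O),R(X))=(-\Sigma,\bm\beta,\bm\alpha,X,O)$. I will then decompose this tautological identification into known pieces:
\begin{itemize}
\item \emph{Swapping $\alpha/\beta$ and reversing the orientation of $\Sigma$.} Together with the $O/X$ exchange, this realizes the Hendricks--Manolescu conjugation, which after returning to the original diagram via naturality is $\iota_K$. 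For a strong inversion, the $O/X$ exchange is automatic because $O$ and $X$ are fixed points.
\item \emph{Pushing forward by $\tau$.} The map $(\Sigma,O,X)\mapsto(\tau\Sigma,\tau O,\tau X)$ gives $\tau_K$ as in \cite[Section~8]{DMSequivariantknotandHFK}. In the periodic case $\tau$ preserves the knot orientation but sends the $O$ base point to the $X$ base point, and vice versa.
\end{itemize}

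The main obstacle is the base point bookkeeping. Composing $\iota_K$ and $\tau_K$, each defined with its own convention for returning the base points to their original positions, leaves a discrepancy: the base pair $(O,X)$ is moved once around the knot, exchanging $O$ with $X$ along the orientation. By \cite{Sarkar_2015movingbasept} such a full base point rotation acts on $\widehat{\HFK}$ as the Sarkar map $\varsigma_K$. I would check this carefully by tracking the naturality maps of \cite{DMSequivariantknotandHFK} for $\tau_K$ and \cite{HMinvolutive2017} for $\iota_K$. The expected outcome is that their composite differs from $R_*$ by exactly one half-rotation from the $\iota$ convention and one from the periodic $\tau$ convention. These add to a full rotation, giving $R_*=(\iota_K\tau_K\varsigma_K)_*$ and hence $d_1=1+(\iota_K\tau_K\varsigma_K)_*$. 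Since $\varsigma_K$ is an involution up to homotopy and commutes with the other maps on homology, the order of the factors does not matter.
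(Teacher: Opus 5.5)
Your proposal is correct and follows the same route as the paper. The paper's only justification is that Hendricks' argument for \cite[Lemma~2.9]{hendricks2025noterealheegaardfloer}, together with the identification of $d_1$ in her Section~6, goes through on the real nice diagrams of \cite{BGX}; that is your first step. Your explanation of the extra factor is the right bookkeeping: because $R$ exchanges $O$ and $X$ while preserving the orientation of $K$, the half-twist built into $\iota_K$ and the half-twist built into the periodic $\tau_K$ add up to one full twist, which is $\varsigma_K$. As you already note, this depends on both half-twists being taken in the same direction in the conventions of \cite{HMinvolutive2017} and \cite{DMSequivariantknotandHFK}.
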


\appendix
\section{Calculation results for strongly invertible knots with small crossing numbers}\label{app:Calculation results for knots with small crossing numbers}





{\bf Introduction}. This is made jointly by Zhenkun Li and Yonghan Xiao. The first author developed the computer program and made the table shown below, while the second author collected examples of strongly invertible knots and constructed real grid diagrams for them.

The appendix includes a list of small crossing strongly invertible knots and their real grid homology. We cover all involutions on knots with less than or equal to $7$ crossings, using the planar diagrams from \cite[Appendix]{Lobb2021ArefinementofKhovanovhomology} and some other strongly invertible knots that admit a real grid diagram with size less than $17$. The computer program \cite{ZhenkunLioythonprgram} is able to calculate minus version of real grid homology for strongly invertible knots with diagrams of size $\le 11$, hat version of real grid homology for strongly invertible knots with diagrams of size $\le 13$ and the real Alexander polynomial for strongly invertible knots with diagrams of size $\le 17$.

{\bf AI usage}. Here we explain how the appendix was jointly made by human and a AI coding agent.
\begin{itemize}
	\item The decomposition and translation from mathematical construction to algorithm realizable by computer were done by human.
	\item The python code was written by a AI coding agent and then read through and verified by human.
	\item Knot data was collected by human and grid diagrams were also constructed and input into the python program by hand.
	\item The knot homologies and polynomials were output by the python program via json files.
	\item The AI coding agent wrote a python program with json files as input and latex code as output. This was used to generate the table below. 
	\item It has been checked by human that data in the table agree with the original output from json files.
\end{itemize}

{\bf Code trustworthiness}.
The scripts that compute real invariants from grid diagrams were developed by Zhenkun Li with assistance from the coding agent, while the grid diagrams of various knots were collected by Yonghan Xiao. We would like to be transparent about the extent to which these scripts should be trusted.
\begin{itemize}
	\item The core functionality---building the set of generators for the real chain complex, finding all domains between generators, and computing the bi-grading of each generator---was written by hand and tested against one example: a size-5 grid diagram of the trefoil, which has 26 generators and 61 domains.
	\item The computations for the polynomial and the real homology (hat and minus versions) were generated by the coding agent. The following validations have been applied:
	\begin{itemize}
		\item Direct comparison on the trefoil knot, the only non-trivial example computable by hand.
		\item Verification that $d^2 = 0$ for all examples.
		\item An intermediate step of the computation produces the real homology of the tilde version, whose dimension is divisible by a high power of $2$ (e.g., $128$ for any strongly invertible knot on a size-15 grid). This divisibility holds for all examples we tested.
		\item Symmetry of the polynomials and homologies for small-crossing knots.
	\end{itemize}
\end{itemize}
{\bf Notation}.
\begin{itemize}
	\item For the name of knots, we essentially use the name from Rolfsen table following \cite{knotinfo}, but we won't distinguish a knot with its mirror.
	\item For hat version of the homology, each summand is of the form $(2a,m)^d$ where $a$ is the real Alexander grading, $m$ is the real Maslov grading, and $d$ is the dimension of the corresponding graded space.
	\item For minus version, each summand is of the form $\bigg(U^o_{(2a,m)}\bigg)^d$. Here again $(a,m)$ encodes the grading information. The "power" of $U$ represents the $U$-torsion order, and $d$ represents the number of generators on the corresponding bi-grading and has the corresponding order. The first factor is always of the form $U^{\infty}_{(a,m)}$ representing the generator of the (unique) infinite $U$-tower.
\end{itemize}
We remind readers that the notation here is a little bit different from the one in the main part of our paper, which we write the grading as $(M^R,A^R)$ instead of $(2A^R,M^R)$.
\vspace{0.2in}

\setlength{\tabcolsep}{3pt}
\begin{longtable}{m{0.25\textwidth} m{0.3\textwidth} m{0.45\textwidth}}
  \hline
  \centering \textbf{Knot} & \centering \textbf{Grid Diagram} & \centering \textbf{Real Invariants} \tabularnewline
  \hline
  \endfirsthead
  \hline
  \centering \textbf{Knot} & \centering \textbf{Grid Diagram} & \centering \textbf{Real Invariants} \tabularnewline
  \hline
  \endhead
  \hline
  \endfoot
  \parbox[c]{\linewidth}{\centering $3_1$\\{\tiny Also twisted${}_1$}} & \centering \includegraphics[width=0.25\textwidth]{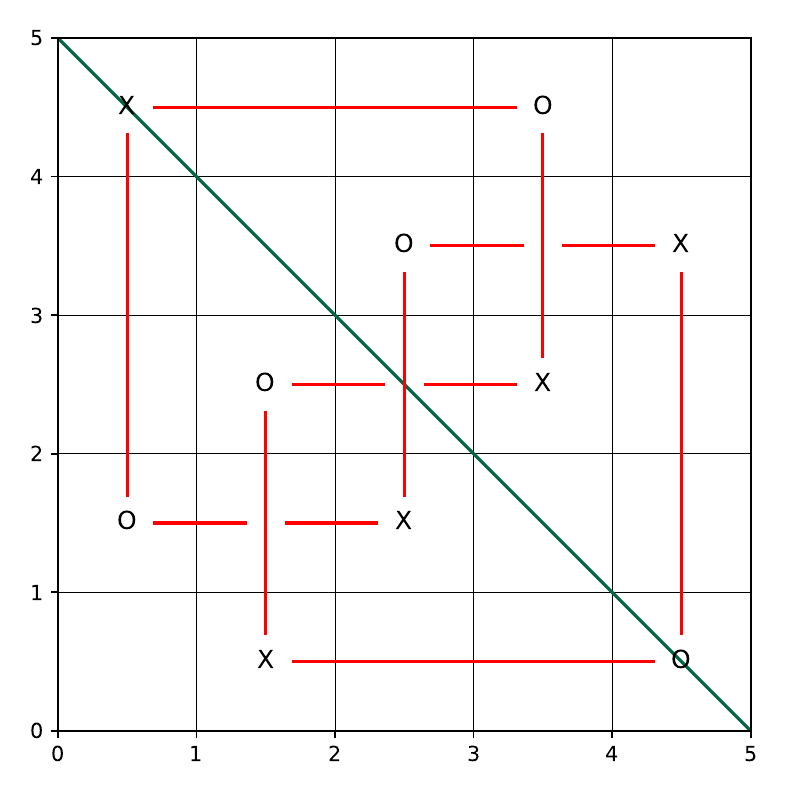} & \parbox[c]{\linewidth}{\centering\tiny
\textcolor{blue}{Polynomial Invariant}\\
$t^{-1} + 1 - t$\\[0.1in]
\textcolor{blue}{Real grid homology - hat version}\\
$(-1,0)\oplus (0,0)\oplus (1,1)$\\[0.1in]
\textcolor{blue}{Real grid homology - minus version}\\
$U^{\infty}_{(1,1)}\oplus U_{(0,0)}$
} \tabularnewline
  \hline
  \parbox[c]{\linewidth}{\centering $4_1$\\{\tiny Also twisted${}_2$}} & \centering \includegraphics[width=0.25\textwidth]{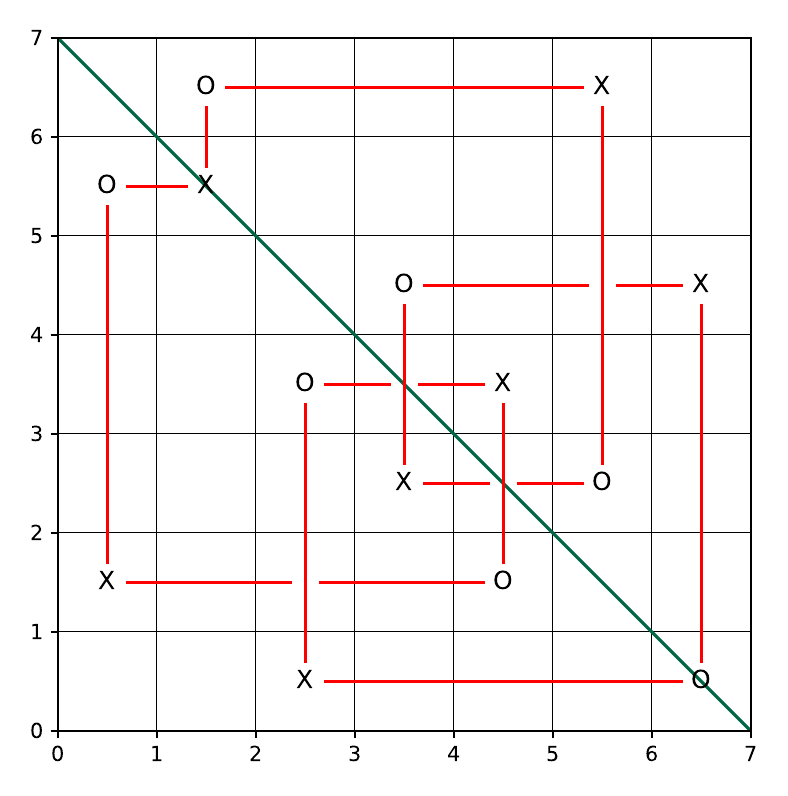} & \parbox[c]{\linewidth}{\centering\tiny
\textcolor{blue}{Polynomial Invariant}\\
$-t^{-1} + 1 + t$\\[0.1in]
\textcolor{blue}{Real grid homology - hat version}\\
$(-1,-1)\oplus (0,0)\oplus (1,0)$\\[0.1in]
\textcolor{blue}{Real grid homology - minus version}\\
$U^{\infty}_{(-1,-1)}\oplus U_{(1,0)}$
} \tabularnewline
  \hline
  \centering $5_1$ & \centering \includegraphics[width=0.25\textwidth]{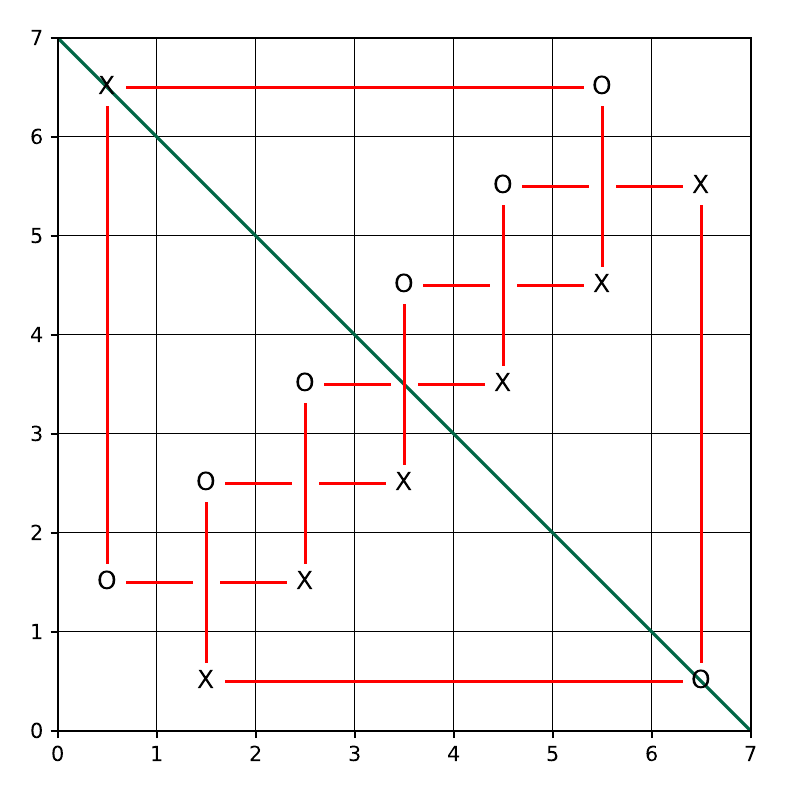} & \parbox[c]{\linewidth}{\centering\tiny
\textcolor{blue}{Polynomial Invariant}\\
$t^{-2} + t^{-1} - 1 - t + t^{2}$\\[0.1in]
\textcolor{blue}{Real grid homology - hat version}\\
$(-2,0)\oplus (-1,0)\oplus (0,1)\oplus (1,1)\oplus (2,2)$\\[0.1in]
\textcolor{blue}{Real grid homology - minus version}\\
$U^{\infty}_{(2,2)}\oplus U_{(-1,0)}\oplus U_{(1,1)}$
} \tabularnewline
  \hline
  \parbox[c]{\linewidth}{\centering $5_2$\\{\tiny Also twisted${}_3$}} & \centering \includegraphics[width=0.25\textwidth]{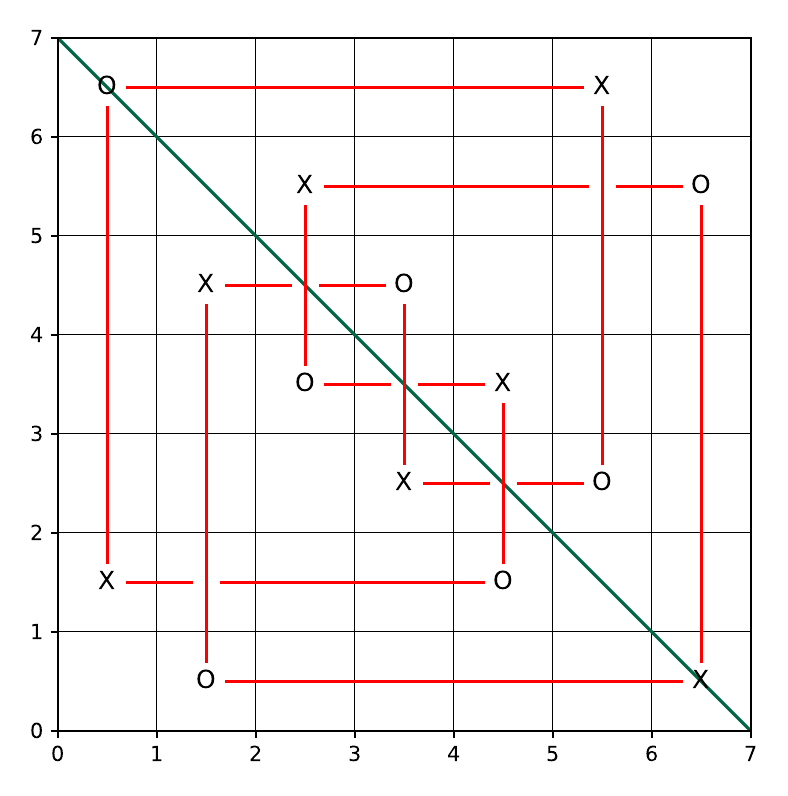} & \parbox[c]{\linewidth}{\centering\tiny
\textcolor{blue}{Polynomial Invariant}\\
$1$\\[0.1in]
\textcolor{blue}{Real grid homology - hat version}\\
$(0,0)$\\[0.1in]
\textcolor{blue}{Real grid homology - minus version}\\
$U^{\infty}_{(0,0)}$
} \tabularnewline
  \hline
  \parbox[c]{\linewidth}{\centering $(5_2)'$\\{\tiny Second symmetry}} & \centering \includegraphics[width=0.25\textwidth]{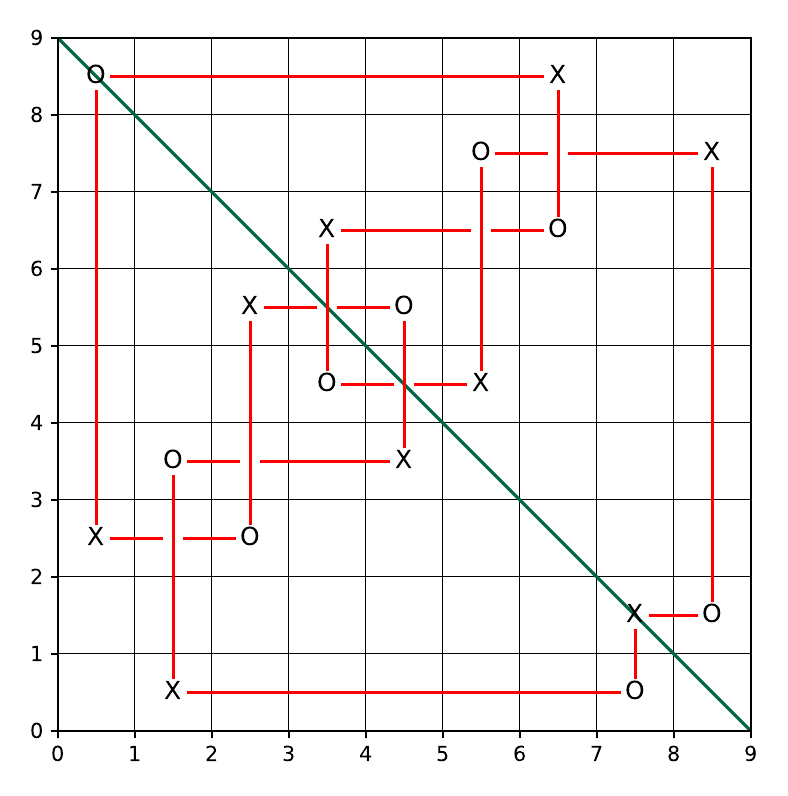} & \parbox[c]{\linewidth}{\centering\tiny
\textcolor{blue}{Polynomial Invariant}\\
$-2t^{-1} + 1 + 2t$\\[0.1in]
\textcolor{blue}{Real grid homology - hat version}\\
$(-1,-1)^{2}\oplus (0,0)\oplus (1,0)^{2}$\\[0.1in]
\textcolor{blue}{Real grid homology - minus version}\\
$U^{\infty}_{(-1,-1)}\oplus U^{2}_{(1,0)}\oplus U_{(1,0)}$
} \tabularnewline
  \hline
  \parbox[c]{\linewidth}{\centering $6_1$\\{\tiny Also twisted${}_4$}} & \centering \includegraphics[width=0.25\textwidth]{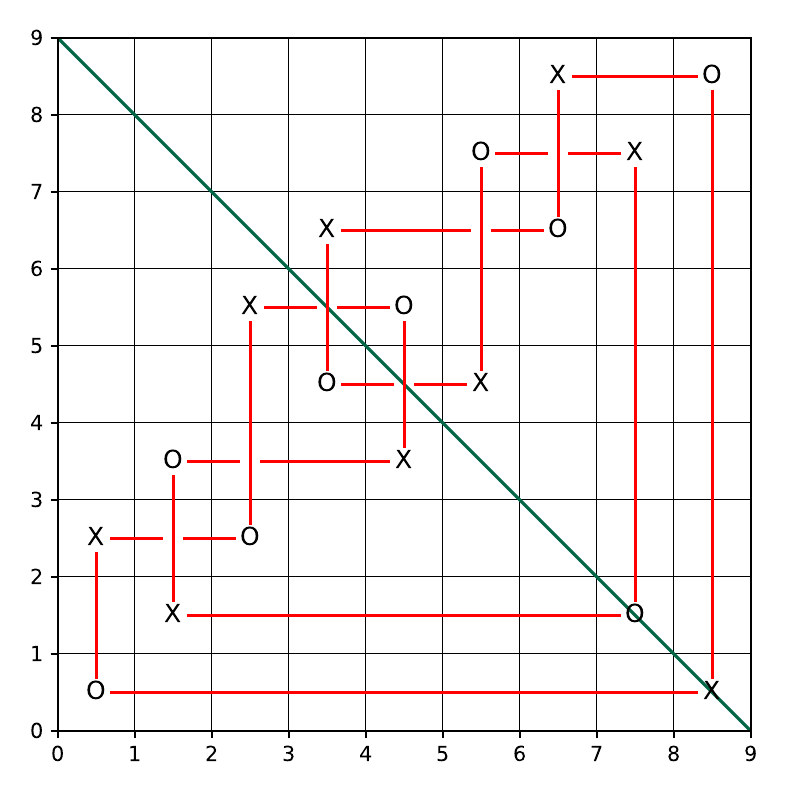} & \parbox[c]{\linewidth}{\centering\tiny
\textcolor{blue}{Polynomial Invariant}\\
$-2t^{-1} + 1 + 2t$\\[0.1in]
\textcolor{blue}{Real grid homology - hat version}\\
$(-1,-1)^{2}\oplus (0,0)\oplus (1,0)^{2}$\\[0.1in]
\textcolor{blue}{Real grid homology - minus version}\\
$U^{\infty}_{(-1,-1)}\oplus U_{(1,0)}\oplus U^{2}_{(1,0)}$
} \tabularnewline
  \hline
  \parbox[c]{\linewidth}{\centering $(6_1)'$\\{\tiny Second symmetry}} & \centering \includegraphics[width=0.25\textwidth]{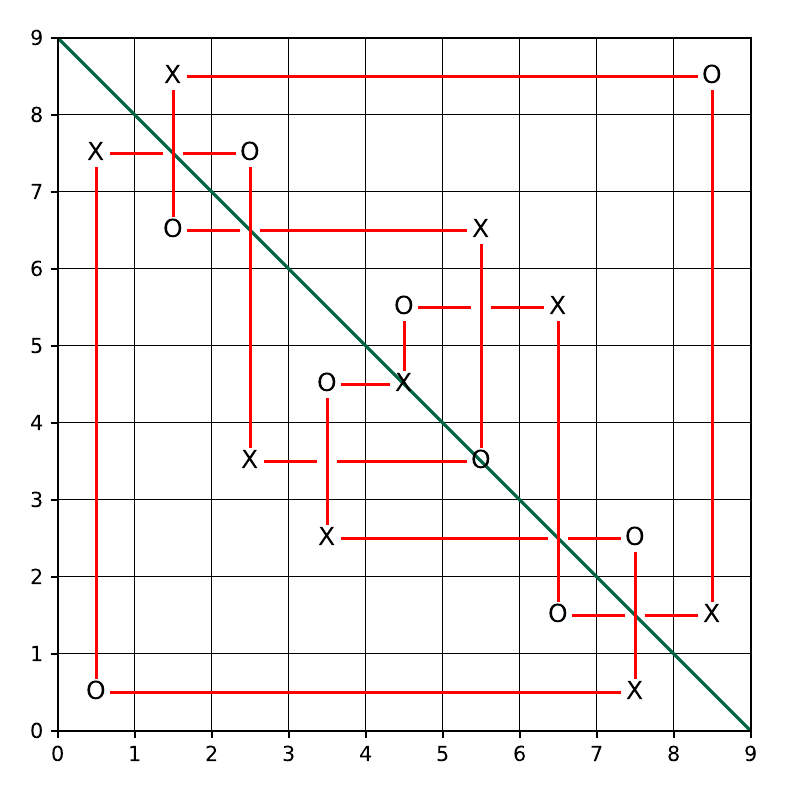} & \parbox[c]{\linewidth}{\centering\tiny
\textcolor{blue}{Polynomial Invariant}\\
$1$\\[0.1in]
\textcolor{blue}{Real grid homology - hat version}\\
$(0,0)$\\[0.1in]
\textcolor{blue}{Real grid homology - minus version}\\
$U^{\infty}_{(0,0)}$
} \tabularnewline
  \hline
  \centering $6_2$ & \centering \includegraphics[width=0.25\textwidth]{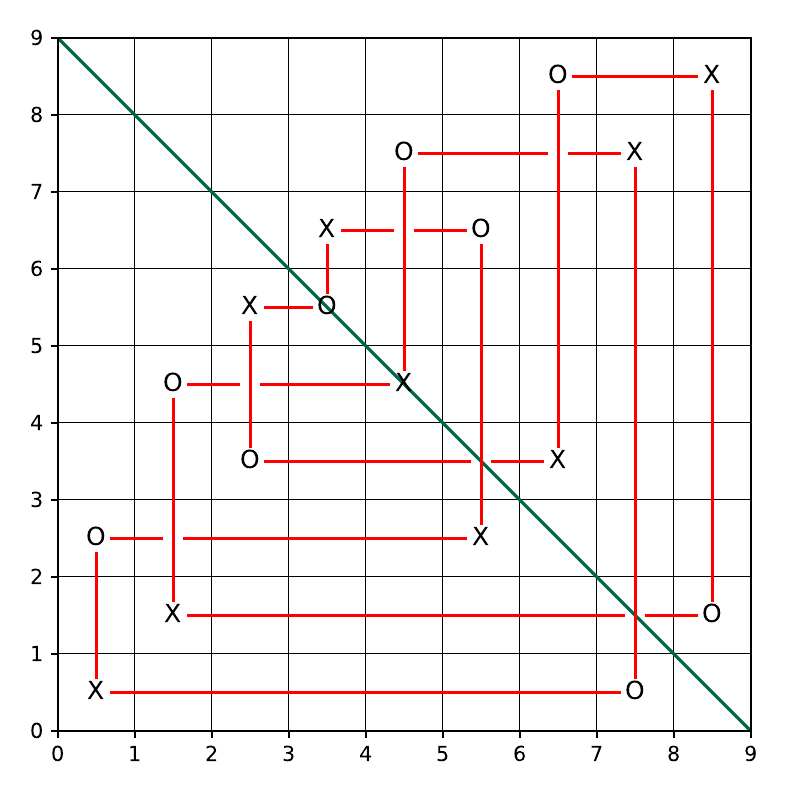} & \parbox[c]{\linewidth}{\centering\tiny
\textcolor{blue}{Polynomial Invariant}\\
$-t^{-2} + t^{-1} + 3 - t - t^{2}$\\[0.1in]
\textcolor{blue}{Real grid homology - hat version}\\
$(-2,-1)\oplus (-1,0)\oplus (0,0)^{3}\oplus (1,1)\oplus (2,1)$\\[0.1in]
\textcolor{blue}{Real grid homology - minus version}\\
$U^{\infty}_{(0,0)}\oplus U_{(0,0)}\oplus U_{(2,1)}\oplus U^{2}_{(0,0)}$
} \tabularnewline
  \hline
  \parbox[c]{\linewidth}{\centering $(6_2)'$\\{\tiny Second symmetry}} & \centering \includegraphics[width=0.25\textwidth]{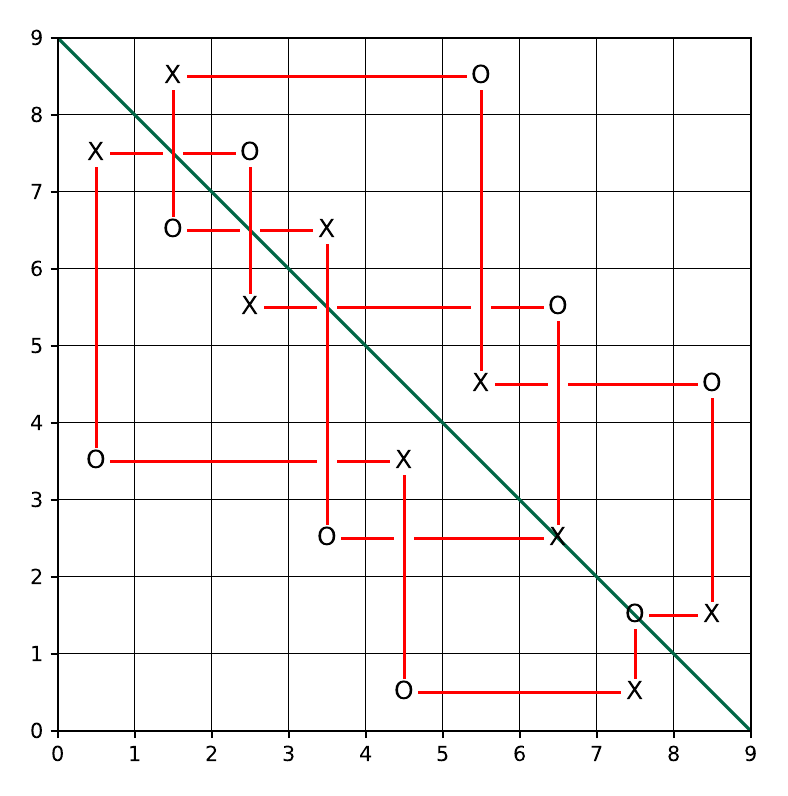} & \parbox[c]{\linewidth}{\centering\tiny
\textcolor{blue}{Polynomial Invariant}\\
$t^{-2} - t^{-1} - 1 + t + t^{2}$\\[0.1in]
\textcolor{blue}{Real grid homology - hat version}\\
$(-2,-2)\oplus (-1,-1)\oplus (0,-1)\oplus (1,0)\oplus (2,0)$\\[0.1in]
\textcolor{blue}{Real grid homology - minus version}\\
$U^{\infty}_{(-2,-2)}\oplus U_{(0,-1)}\oplus U_{(2,0)}$
} \tabularnewline
  \hline
  \centering $6_3$ & \centering \includegraphics[width=0.25\textwidth]{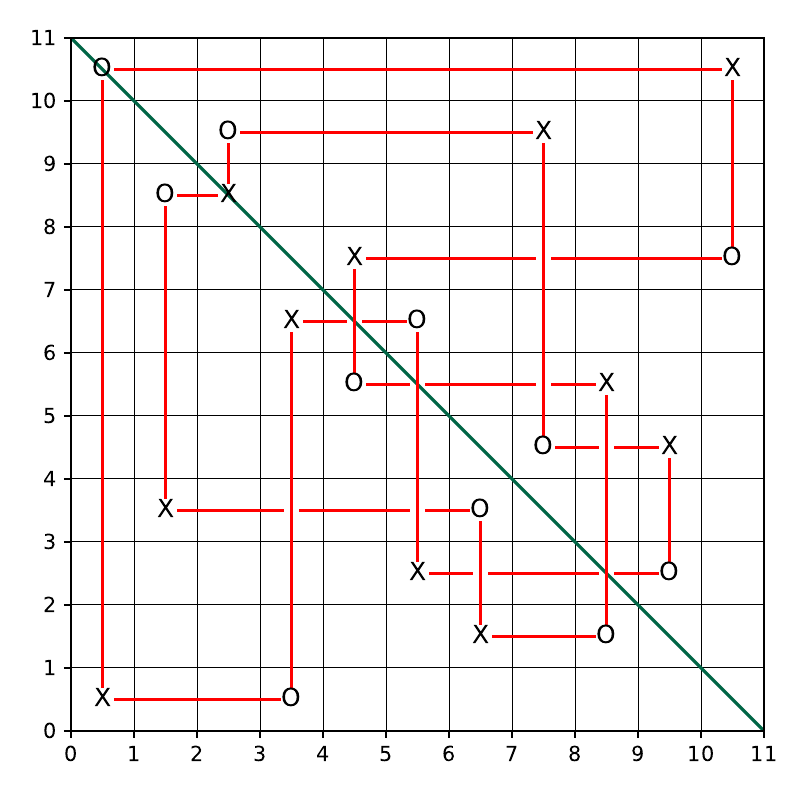} & \parbox[c]{\linewidth}{\centering\tiny
\textcolor{blue}{Polynomial Invariant}\\
$-t^{-2} - t^{-1} + 3 + t - t^{2}$\\[0.1in]
\textcolor{blue}{Real grid homology - hat version}\\
$(-2,-1)\oplus (-1,-1)\oplus (0,0)^{3}\oplus (1,0)\oplus (2,1)$\\[0.1in]
\textcolor{blue}{Real grid homology - minus version}\\
$U^{\infty}_{(0,0)}\oplus U^{2}_{(2,1)}\oplus U_{(-1,-1)}\oplus U_{(1,0)}$
} \tabularnewline
  \hline
  \centering $7_1$ & \centering \includegraphics[width=0.25\textwidth]{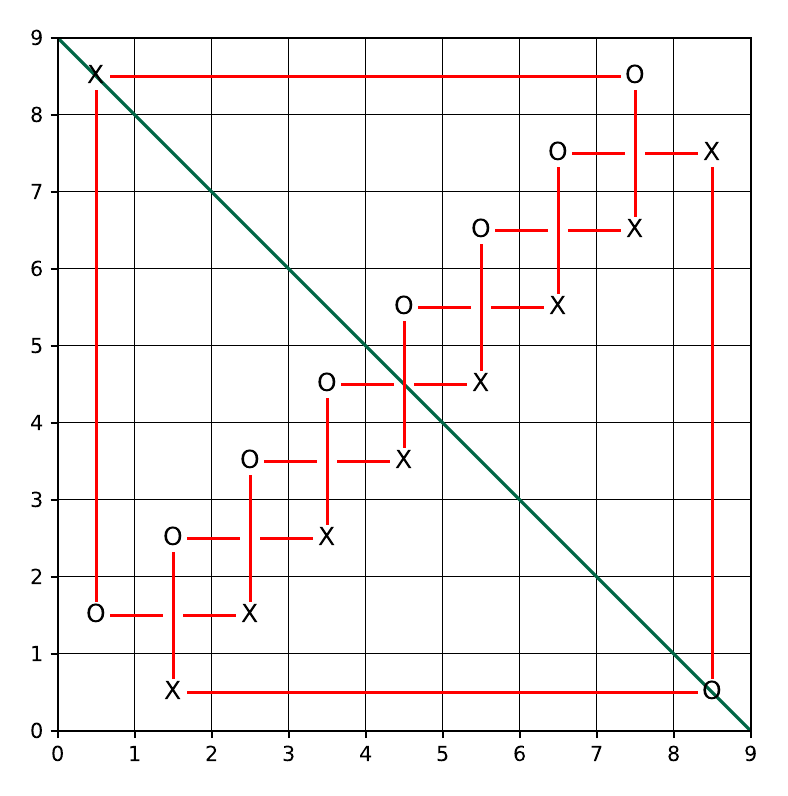} & \parbox[c]{\linewidth}{\centering\tiny
\textcolor{blue}{Polynomial Invariant}\\
$t^{-3} + t^{-2} - t^{-1} - 1 + t + t^{2} - t^{3}$\\[0.1in]
\textcolor{blue}{Real grid homology - hat version}\\
$(-3,0)\oplus (-2,0)\oplus (-1,1)\oplus (0,1)\oplus (1,2)\oplus (2,2)\oplus (3,3)$\\[0.1in]
\textcolor{blue}{Real grid homology - minus version}\\
$U^{\infty}_{(3,3)}\oplus U_{(-2,0)}\oplus U_{(0,1)}\oplus U_{(2,2)}$
} \tabularnewline
  \hline
  \parbox[c]{\linewidth}{\centering $7_2$\\{\tiny Also twisted${}_5$}} & \centering \includegraphics[width=0.25\textwidth]{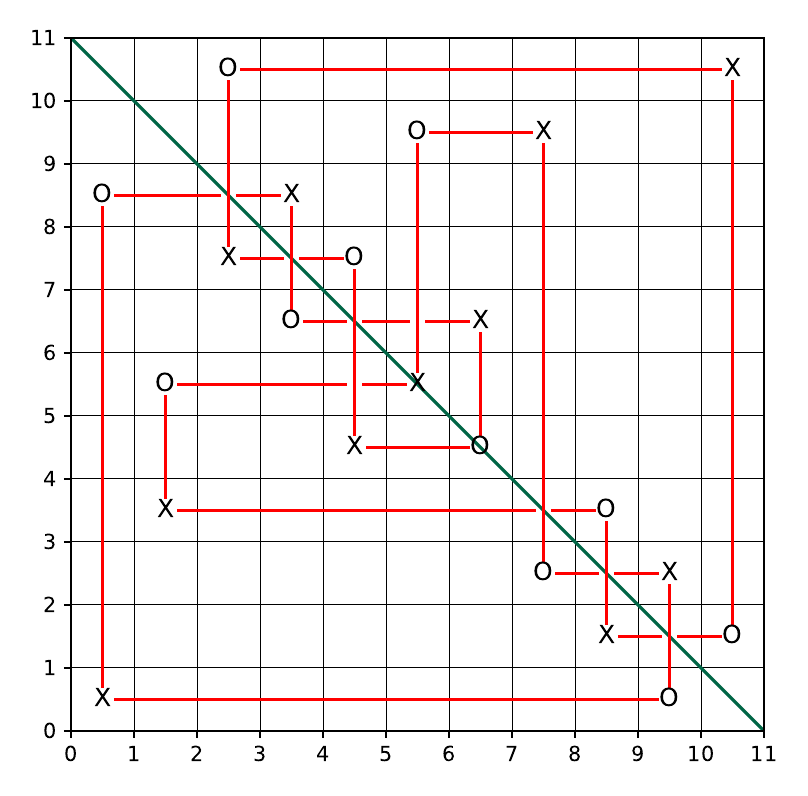} & \parbox[c]{\linewidth}{\centering\tiny
\textcolor{blue}{Polynomial Invariant}\\
$t^{-1} + 1 - t$\\[0.1in]
\textcolor{blue}{Real grid homology - hat version}\\
$(-1,0)\oplus (0,0)\oplus (1,1)$\\[0.1in]
\textcolor{blue}{Real grid homology - minus version}\\
$U^{\infty}_{(1,1)}\oplus U_{(0,0)}$
} \tabularnewline
  \hline
  \parbox[c]{\linewidth}{\centering $(7_2)'$\\{\tiny Second symmetry}} & \centering \includegraphics[width=0.25\textwidth]{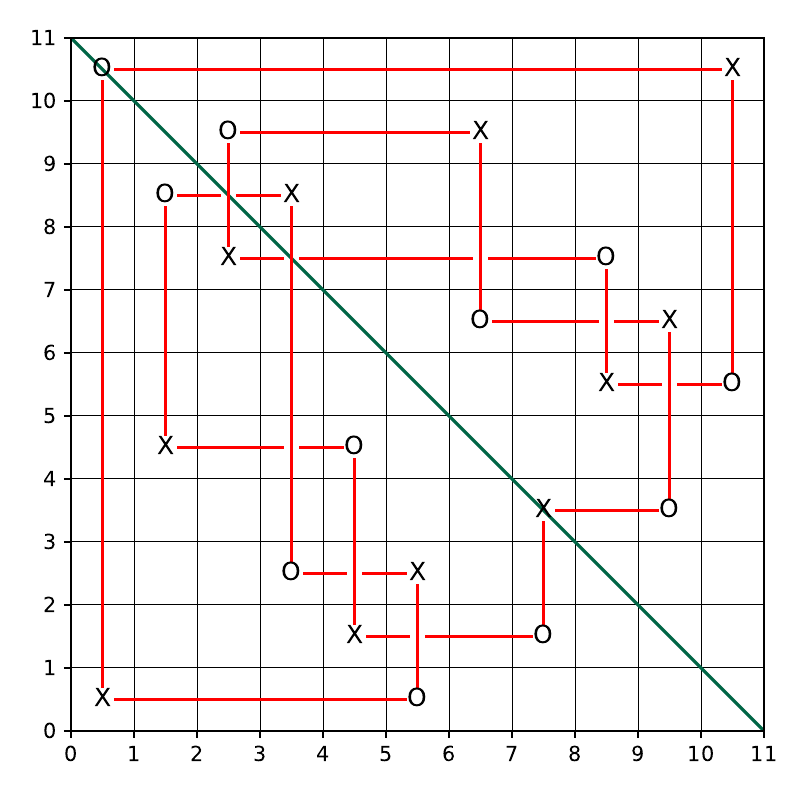} & \parbox[c]{\linewidth}{\centering\tiny
\textcolor{blue}{Polynomial Invariant}\\
$3t^{-1} + 1 - 3t$\\[0.1in]
\textcolor{blue}{Real grid homology - hat version}\\
$(-1,0)^{3}\oplus (0,0)\oplus (1,1)^{3}$\\[0.1in]
\textcolor{blue}{Real grid homology - minus version}\\
$U^{\infty}_{(1,1)}\oplus \bigg(U^{2}_{(1,1)}\bigg)^{2}\oplus U_{(0,0)}$
} \tabularnewline
  \hline
  \centering $7_3$ & \centering \includegraphics[width=0.25\textwidth]{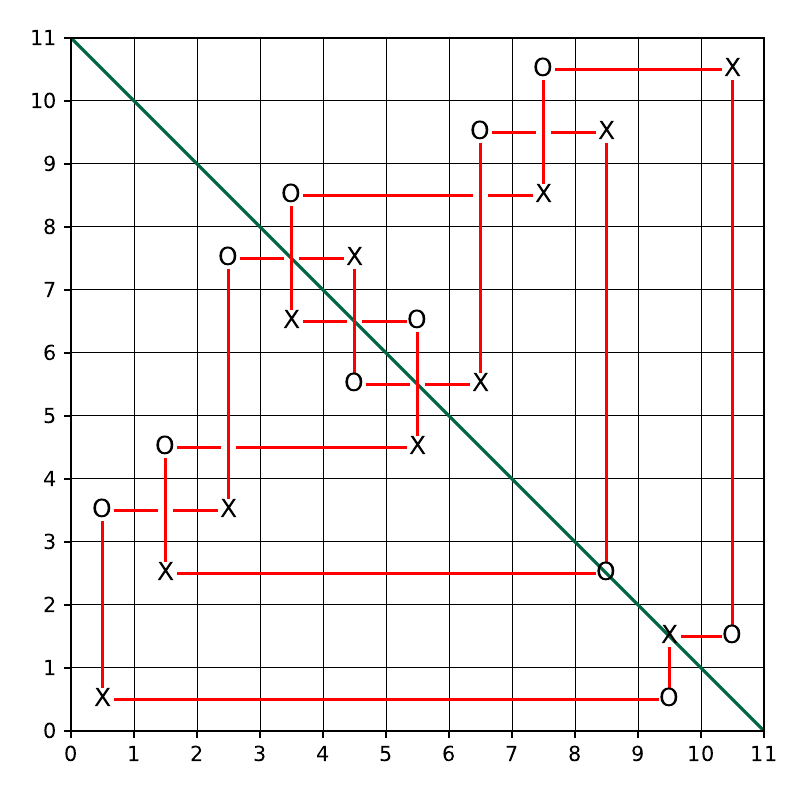} & \parbox[c]{\linewidth}{\centering\tiny
\textcolor{blue}{Polynomial Invariant}\\
$t^{-1} + 1 - t$\\[0.1in]
\textcolor{blue}{Real grid homology - hat version}\\
$(-1,0)\oplus (0,0)\oplus (1,1)$\\[0.1in]
\textcolor{blue}{Real grid homology - minus version}\\
$U^{\infty}_{(1,1)}\oplus U_{(0,0)}$
} \tabularnewline
  \hline
  \parbox[c]{\linewidth}{\centering $(7_3)'$\\{\tiny Second symmetry}} & \centering \includegraphics[width=0.25\textwidth]{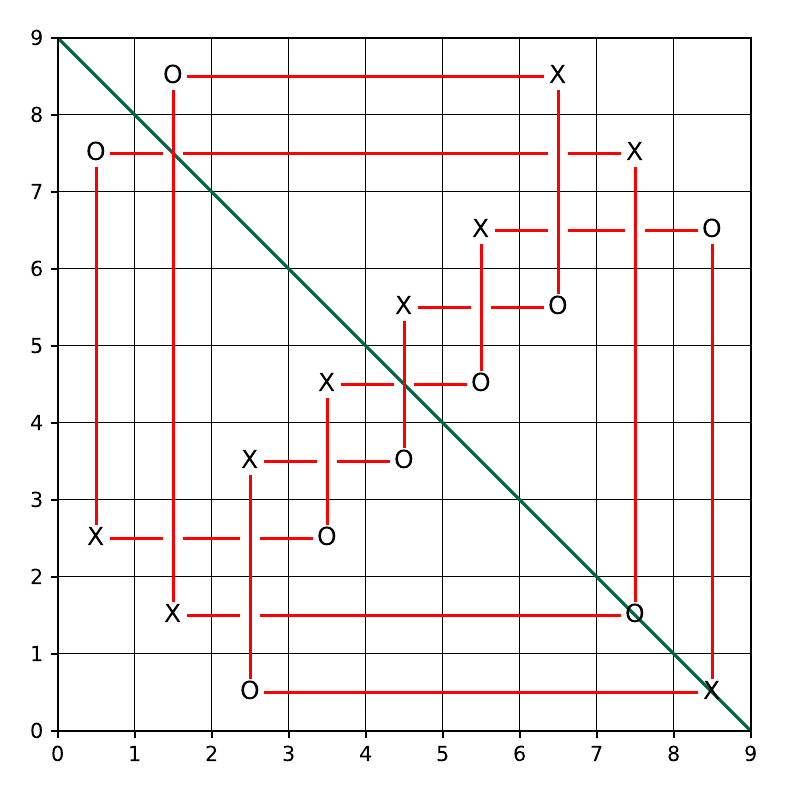} & \parbox[c]{\linewidth}{\centering\tiny
\textcolor{blue}{Polynomial Invariant}\\
$2t^{-2} + t^{-1} - 3 - t + 2t^{2}$\\[0.1in]
\textcolor{blue}{Real grid homology - hat version}\\
$(-2,0)^{2}\oplus (-1,0)\oplus (0,1)^{3}\oplus (1,1)\oplus (2,2)^{2}$\\[0.1in]
\textcolor{blue}{Real grid homology - minus version}\\
$U^{\infty}_{(2,2)}\oplus U^{2}_{(0,1)}\oplus U^{2}_{(2,2)}\oplus U_{(-1,0)}\oplus U_{(1,1)}$
} \tabularnewline
  \hline
  \centering $7_4$ & \centering \includegraphics[width=0.25\textwidth]{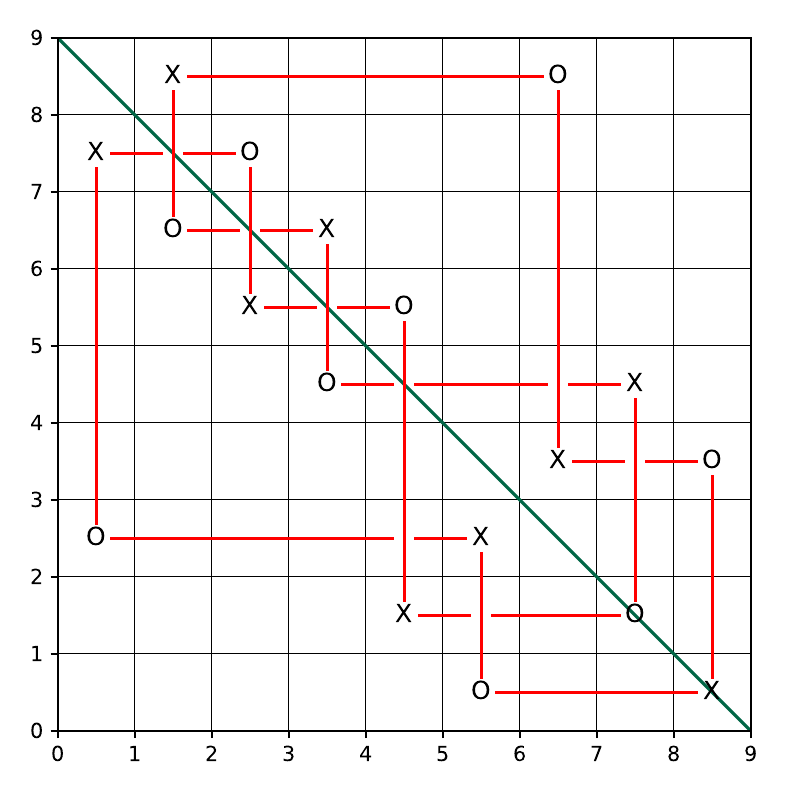} & \parbox[c]{\linewidth}{\centering\tiny
\textcolor{blue}{Polynomial Invariant}\\
$1$\\[0.1in]
\textcolor{blue}{Real grid homology - hat version}\\
$(0,0)$\\[0.1in]
\textcolor{blue}{Real grid homology - minus version}\\
$U^{\infty}_{(0,0)}$
} \tabularnewline
  \hline
  \parbox[c]{\linewidth}{\centering $(7_4)'$\\{\tiny Second symmetry}} & \centering \includegraphics[width=0.25\textwidth]{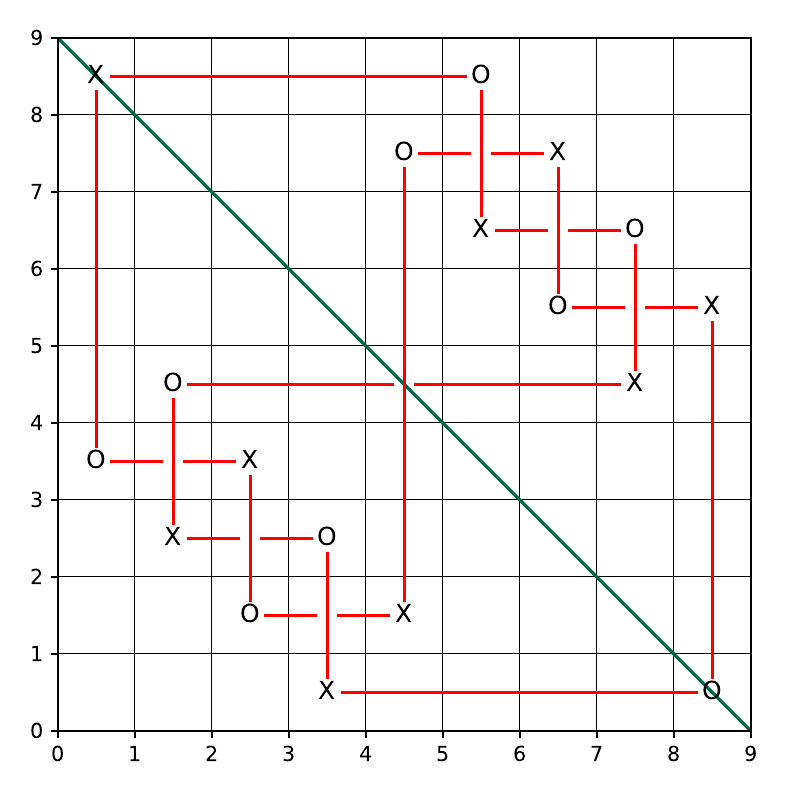} & \parbox[c]{\linewidth}{\centering\tiny
\textcolor{blue}{Polynomial Invariant}\\
$2t^{-1} + 1 - 2t$\\[0.1in]
\textcolor{blue}{Real grid homology - hat version}\\
$(-1,0)^{2}\oplus (0,0)\oplus (1,1)^{2}$\\[0.1in]
\textcolor{blue}{Real grid homology - minus version}\\
$U^{\infty}_{(1,1)}\oplus U^{2}_{(1,1)}\oplus U_{(0,0)}$
} \tabularnewline
  \hline
  \centering $7_5$ & \centering \includegraphics[width=0.25\textwidth]{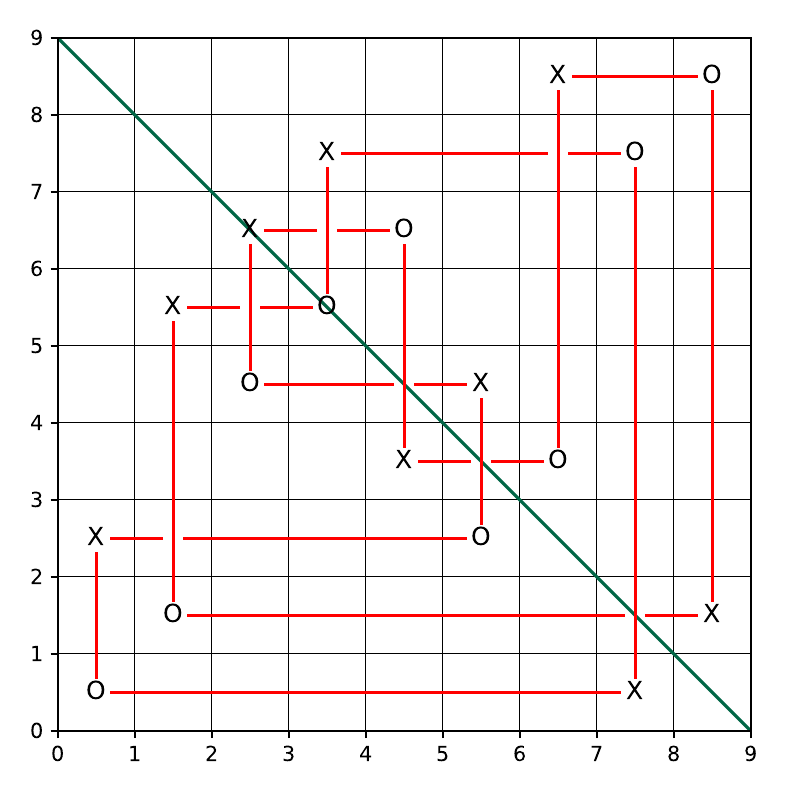} & \parbox[c]{\linewidth}{\centering\tiny
\textcolor{blue}{Polynomial Invariant}\\
$2t^{-1} + 1 - 2t$\\[0.1in]
\textcolor{blue}{Real grid homology - hat version}\\
$(-1,0)^{2}\oplus (0,0)\oplus (1,1)^{2}$\\[0.1in]
\textcolor{blue}{Real grid homology - minus version}\\
$U^{\infty}_{(1,1)}\oplus U^{2}_{(1,1)}\oplus U_{(0,0)}$
} \tabularnewline
  \hline
  \parbox[c]{\linewidth}{\centering $(7_5)'$\\{\tiny Second symmetry}} & \centering \includegraphics[width=0.25\textwidth]{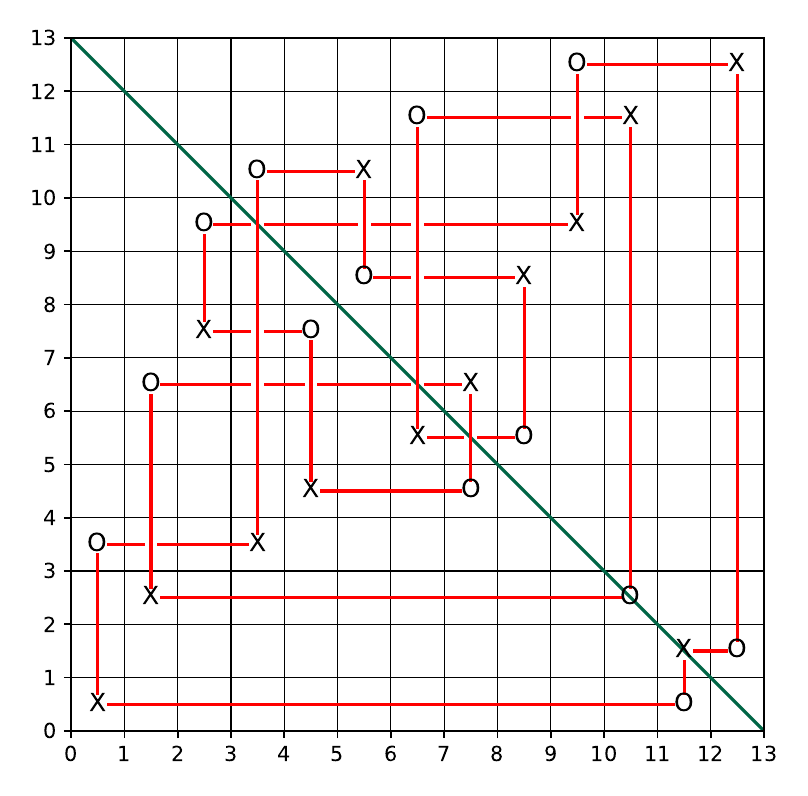} & \parbox[c]{\linewidth}{\centering\tiny
\textcolor{blue}{Polynomial Invariant}\\
$2t^{-2} + 2t^{-1} - 3 - 2t + 2t^{2}$\\[0.1in]
\textcolor{blue}{Real grid homology - hat version}\\
$(-2,0)^{2}\oplus (-1,0)^{2}\oplus (0,1)^{3}\oplus (1,1)^{2}\oplus (2,2)^{2}$
} \tabularnewline
  \hline
  \centering $7_6$ & \centering \includegraphics[width=0.25\textwidth]{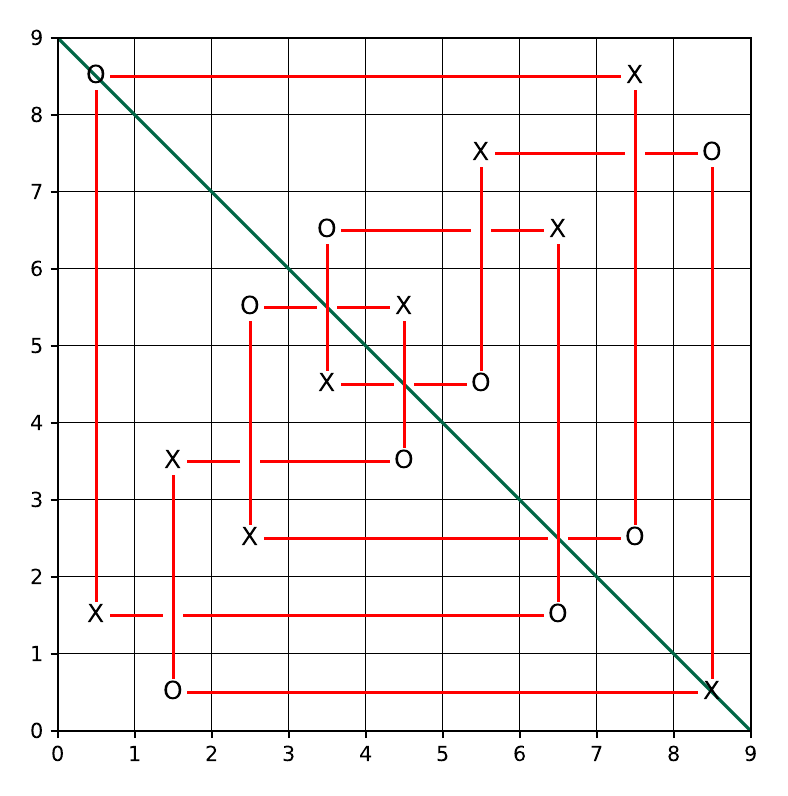} & \parbox[c]{\linewidth}{\centering\tiny
\textcolor{blue}{Polynomial Invariant}\\
$-t^{-2} - t^{-1} + 3 + t - t^{2}$\\[0.1in]
\textcolor{blue}{Real grid homology - hat version}\\
$(-2,-1)\oplus (-1,-1)\oplus (0,0)^{3}\oplus (1,0)\oplus (2,1)$\\[0.1in]
\textcolor{blue}{Real grid homology - minus version}\\
$U^{\infty}_{(0,0)}\oplus U^{2}_{(2,1)}\oplus U_{(-1,-1)}\oplus U_{(1,0)}$
} \tabularnewline
  \hline
  \parbox[c]{\linewidth}{\centering $(7_6)'$\\{\tiny Second symmetry}} & \centering \includegraphics[width=0.25\textwidth]{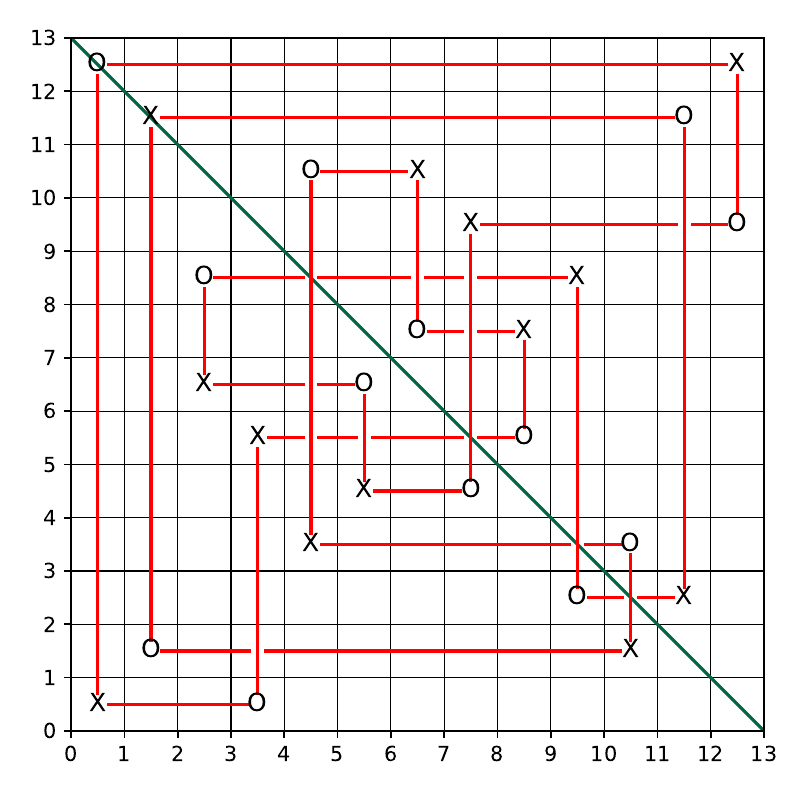} & \parbox[c]{\linewidth}{\centering\tiny
\textcolor{blue}{Polynomial Invariant}\\
$t^{-2} - t^{-1} - 1 + t + t^{2}$\\[0.1in]
\textcolor{blue}{Real grid homology - hat version}\\
$(-2,-2)\oplus (-1,-1)\oplus (0,-1)\oplus (1,0)\oplus (2,0)$
} \tabularnewline
  \hline
  \centering $7_7$ & \centering \includegraphics[width=0.25\textwidth]{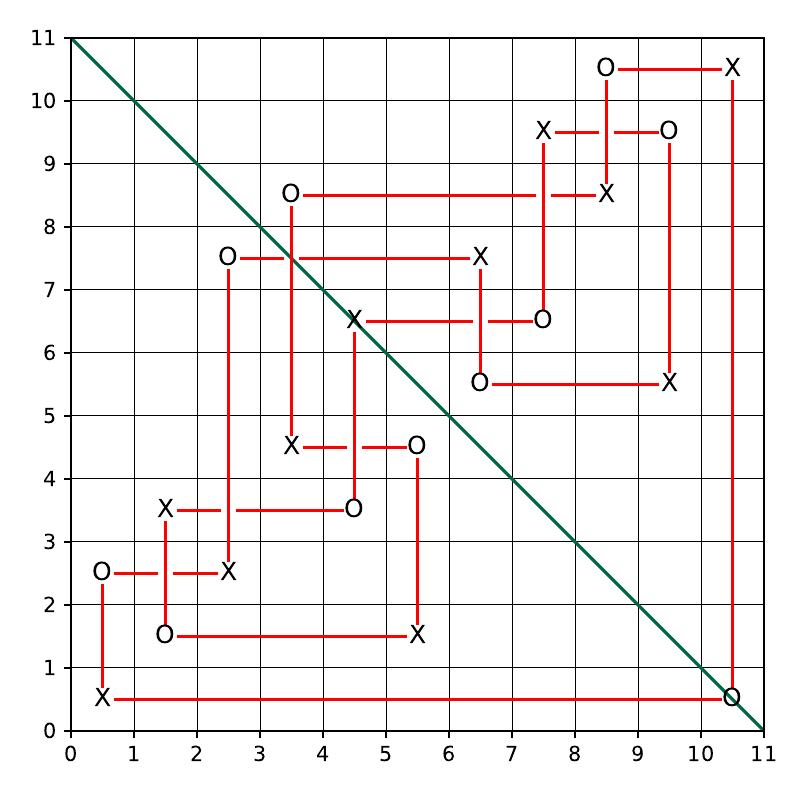} & \parbox[c]{\linewidth}{\centering\tiny
\textcolor{blue}{Polynomial Invariant}\\
$-t^{-2} - t^{-1} + 3 + t - t^{2}$\\[0.1in]
\textcolor{blue}{Real grid homology - hat version}\\
$(-2,-1)\oplus (-1,-1)\oplus (0,0)^{3}\oplus (1,0)\oplus (2,1)$\\[0.1in]
\textcolor{blue}{Real grid homology - minus version}\\
$U^{\infty}_{(0,0)}\oplus U^{2}_{(2,1)}\oplus U_{(-1,-1)}\oplus U_{(1,0)}$
} \tabularnewline
  \hline
  \parbox[c]{\linewidth}{\centering $(7_7)'$\\{\tiny Second symmetry}} & \centering \includegraphics[width=0.25\textwidth]{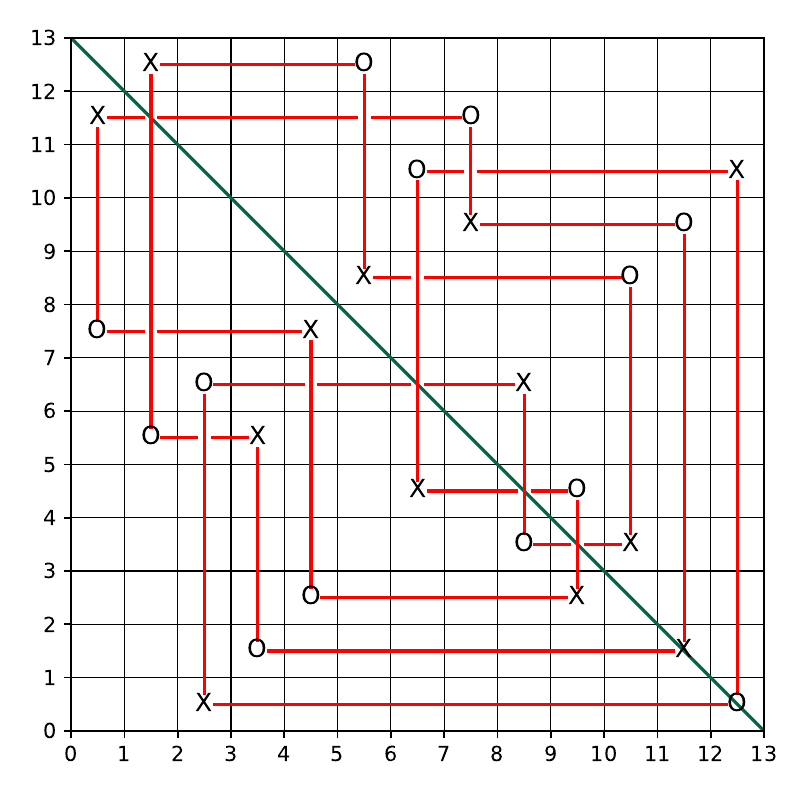} & \parbox[c]{\linewidth}{\centering\tiny
\textcolor{blue}{Polynomial Invariant}\\
$-t^{-2} - t^{-1} + 3 + t - t^{2}$\\[0.1in]
\textcolor{blue}{Real grid homology - hat version}\\
$(-2,-1)\oplus (-1,-1)\oplus (0,0)^{3}\oplus (1,0)\oplus (2,1)$
} \tabularnewline
  \hline
  \parbox[c]{\linewidth}{\centering twsited${}_6$\\{\tiny Another symmetry of $8_1$}} & \centering \includegraphics[width=0.25\textwidth]{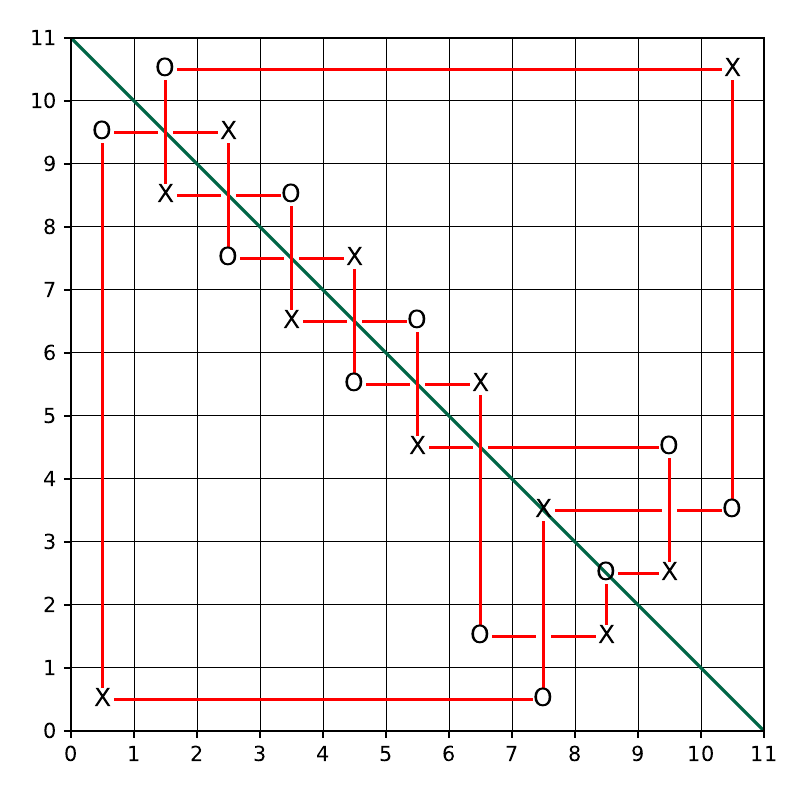} & \parbox[c]{\linewidth}{\centering\tiny
\textcolor{blue}{Polynomial Invariant}\\
$-t^{-1} + 1 + t$\\[0.1in]
\textcolor{blue}{Real grid homology - hat version}\\
$(-1,-1)\oplus (0,0)\oplus (1,0)$\\[0.1in]
\textcolor{blue}{Real grid homology - minus version}\\
$U^{\infty}_{(-1,-1)}\oplus U_{(1,0)}$
} \tabularnewline
  \hline
  \centering $8_1$ & \centering \includegraphics[width=0.25\textwidth]{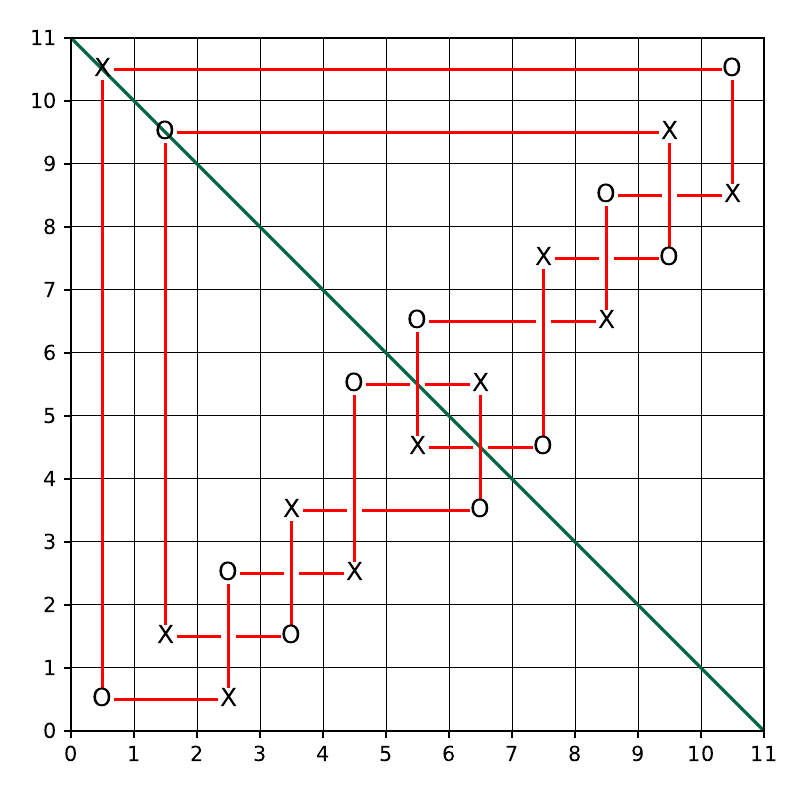} & \parbox[c]{\linewidth}{\centering\tiny
\textcolor{blue}{Polynomial Invariant}\\
$-3t^{-1} + 1 + 3t$\\[0.1in]
\textcolor{blue}{Real grid homology - hat version}\\
$(-1,-1)^{3}\oplus (0,0)\oplus (1,0)^{3}$\\[0.1in]
\textcolor{blue}{Real grid homology - minus version}\\
$U^{\infty}_{(-1,-1)}\oplus \bigg(U^{2}_{(1,0)}\bigg)^{2}\oplus U_{(1,0)}$
} \tabularnewline
  \hline
  \centering twisted${}_7$ & \centering \includegraphics[width=0.25\textwidth]{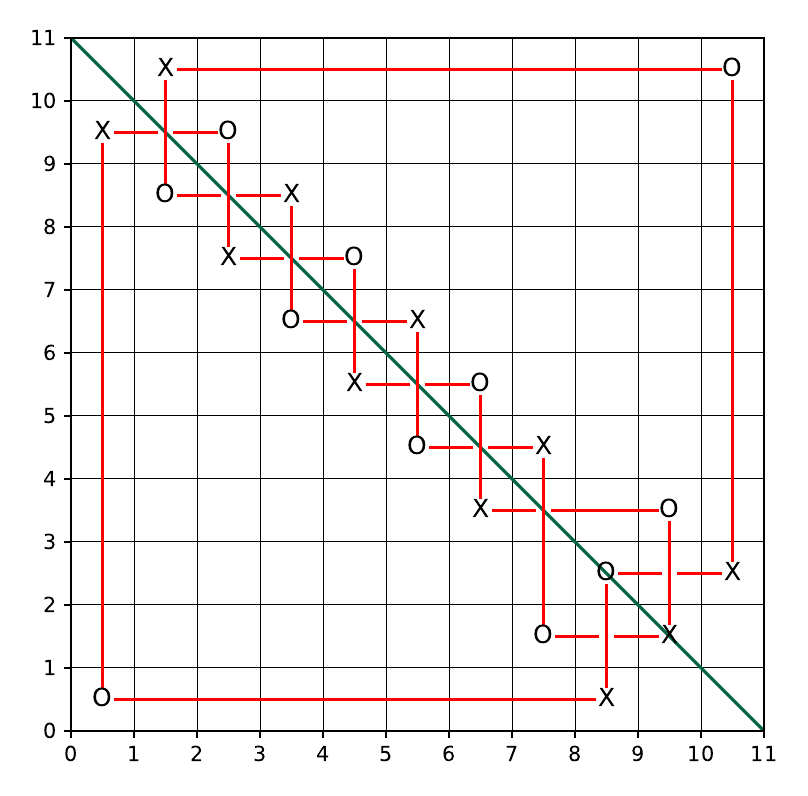} & \parbox[c]{\linewidth}{\centering\tiny
\textcolor{blue}{Polynomial Invariant}\\
$1$\\[0.1in]
\textcolor{blue}{Real grid homology - hat version}\\
$(0,0)$\\[0.1in]
\textcolor{blue}{Real grid homology - minus version}\\
$U^{\infty}_{(0,0)}$
} \tabularnewline
  \hline
  \parbox[c]{\linewidth}{\centering $($twsited${}_7)'$\\{\tiny Second symmetry}} & \centering \includegraphics[width=0.25\textwidth]{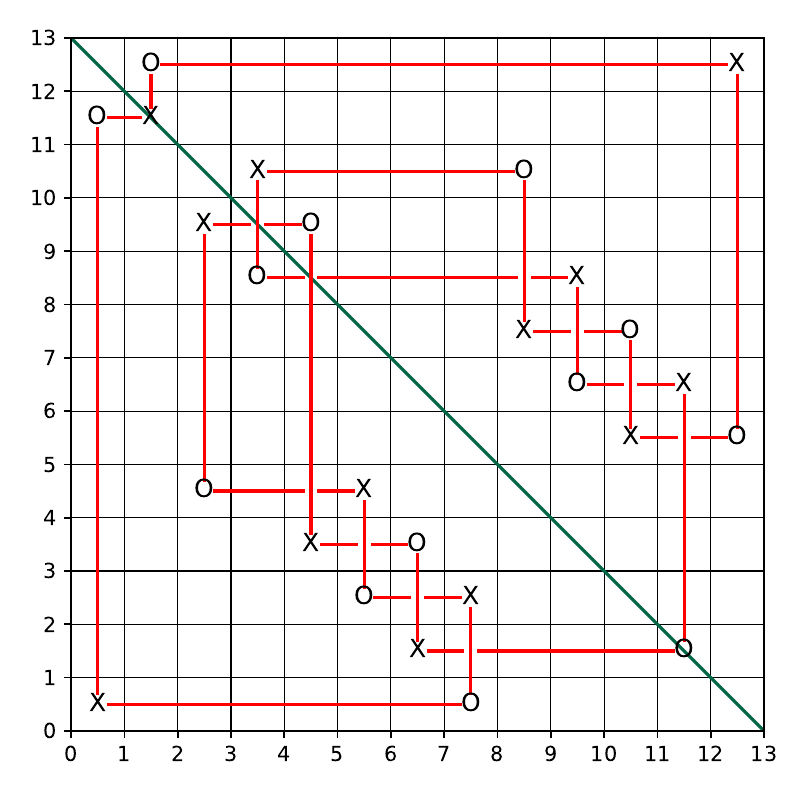} & \parbox[c]{\linewidth}{\centering\tiny
\textcolor{blue}{Polynomial Invariant}\\
$4t^{-1} + 1 - 4t$\\[0.1in]
\textcolor{blue}{Real grid homology - hat version}\\
$(-1,0)^{4}\oplus (0,0)\oplus (1,1)^{4}$
} \tabularnewline
  \hline
  \centering twsited${}_8$ & \centering \includegraphics[width=0.25\textwidth]{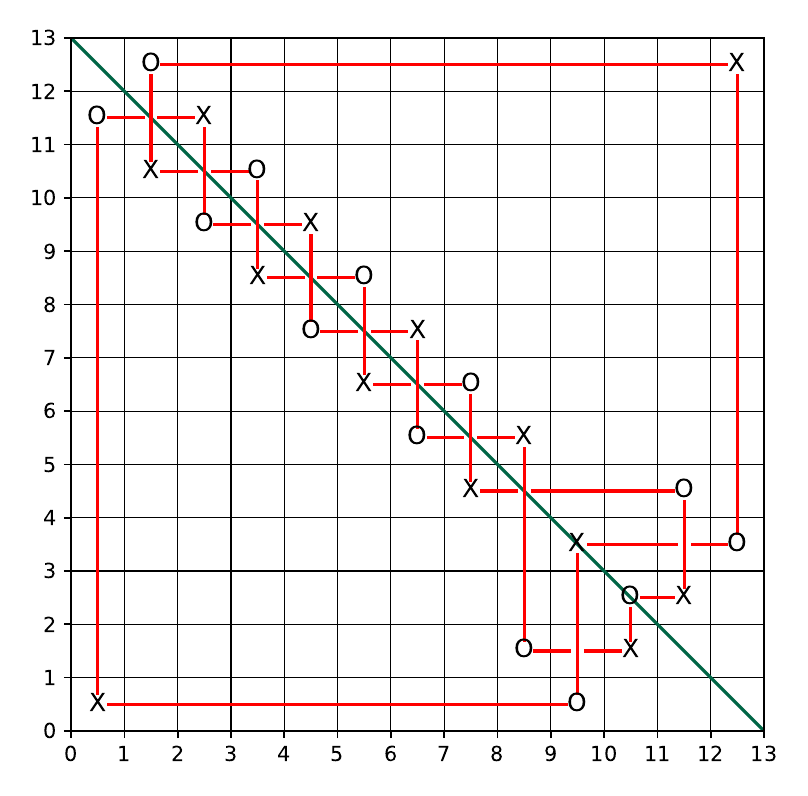} & \parbox[c]{\linewidth}{\centering\tiny
\textcolor{blue}{Polynomial Invariant}\\
$1$\\[0.1in]
\textcolor{blue}{Real grid homology - hat version}\\
$(0,0)$
} \tabularnewline
  \hline
  \parbox[c]{\linewidth}{\centering $($twsited${}_8)'$\\{\tiny Second symmetry}} & \centering \includegraphics[width=0.25\textwidth]{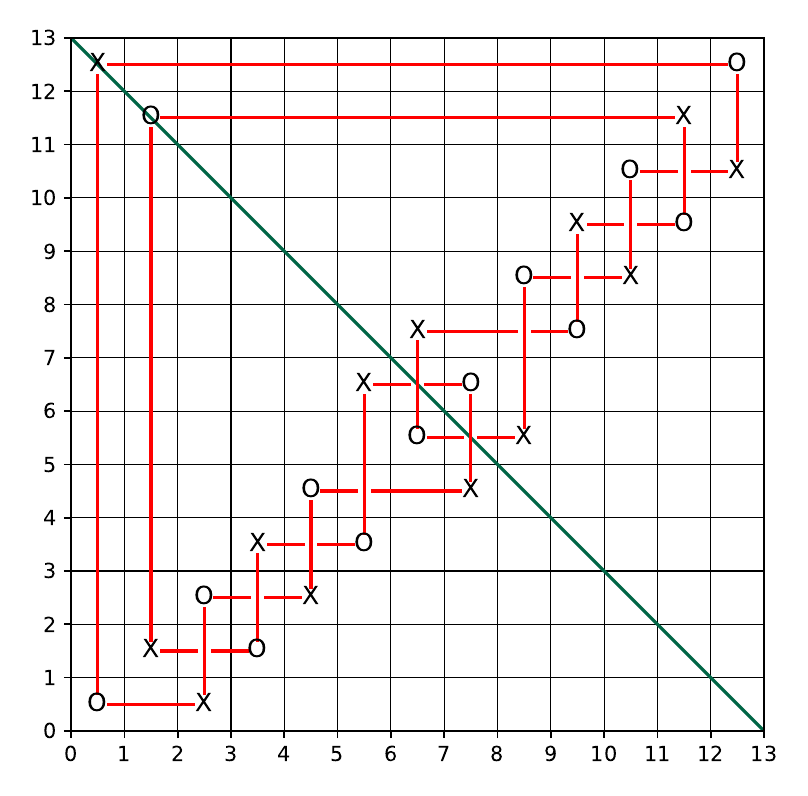} & \parbox[c]{\linewidth}{\centering\tiny
\textcolor{blue}{Polynomial Invariant}\\
$-4t^{-1} + 1 + 4t$\\[0.1in]
\textcolor{blue}{Real grid homology - hat version}\\
$(-1,-1)^{4}\oplus (0,0)\oplus (1,0)^{4}$
} \tabularnewline
  \hline
  \centering $8_2$ & \centering \includegraphics[width=0.25\textwidth]{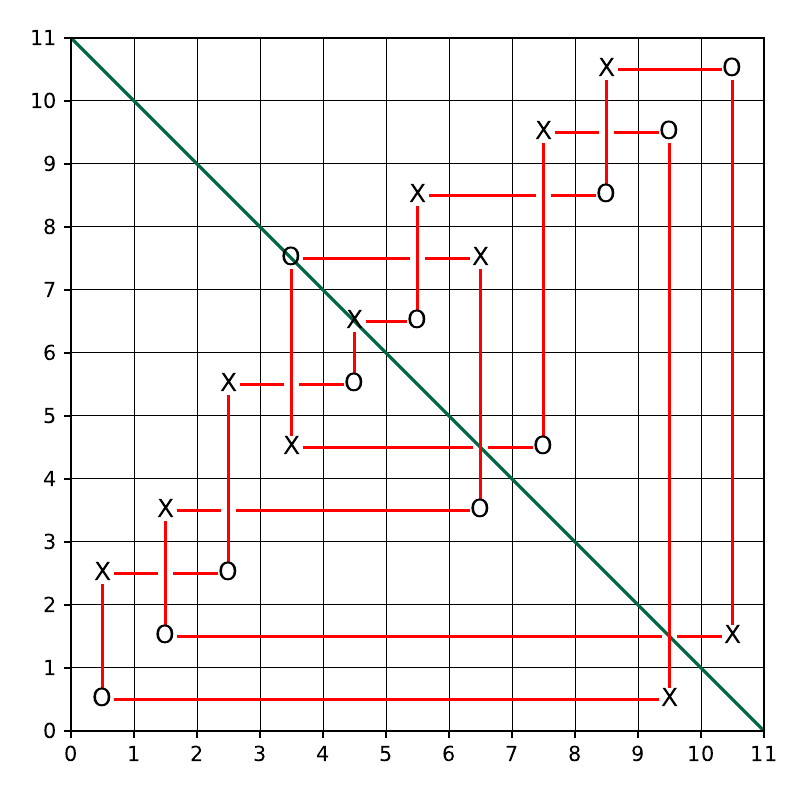} & \parbox[c]{\linewidth}{\centering\tiny
\textcolor{blue}{Polynomial Invariant}\\
$-t^{-3} + t^{-2} + 3t^{-1} - 1 - 3t + t^{2} + t^{3}$\\[0.1in]
\textcolor{blue}{Real grid homology - hat version}\\
$(-3,-1)\oplus (-2,0)\oplus (-1,0)^{3}\oplus (0,1)\oplus (1,1)^{3}\oplus (2,2)\oplus (3,2)$\\[0.1in]
\textcolor{blue}{Real grid homology - minus version}\\
$U^{\infty}_{(1,1)}\oplus U_{(-1,0)}\oplus U_{(1,1)}\oplus U_{(3,2)}\oplus U^{2}_{(-1,0)}\oplus U^{2}_{(1,1)}$
} \tabularnewline
  \hline
  \centering $8_3$ & \centering \includegraphics[width=0.25\textwidth]{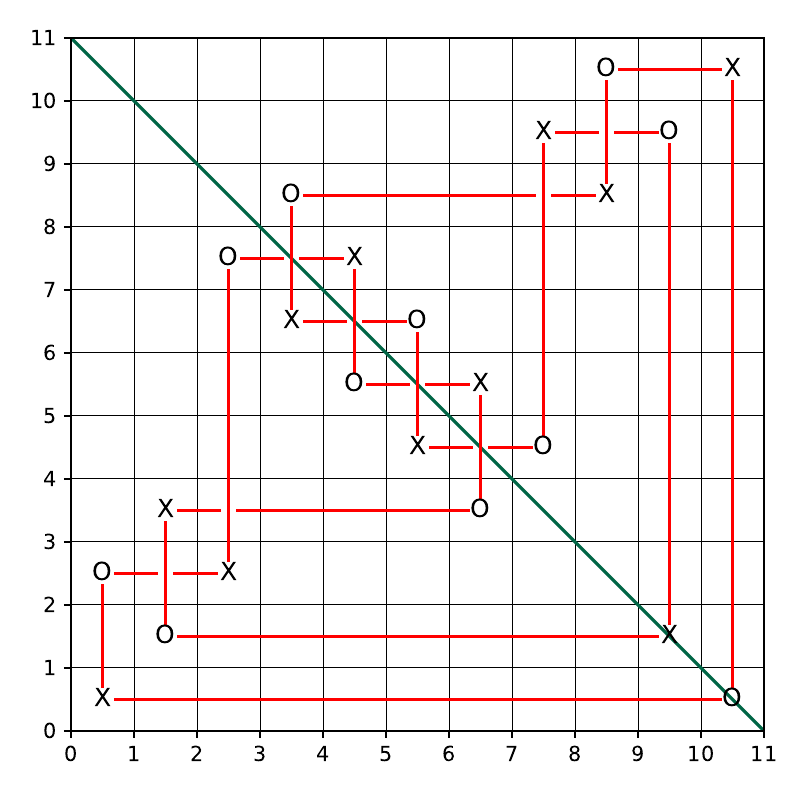} & \parbox[c]{\linewidth}{\centering\tiny
\textcolor{blue}{Polynomial Invariant}\\
$1$\\[0.1in]
\textcolor{blue}{Real grid homology - hat version}\\
$(0,0)$\\[0.1in]
\textcolor{blue}{Real grid homology - minus version}\\
$U^{\infty}_{(0,0)}$
} \tabularnewline
  \hline
  \centering $8_8$ & \centering \includegraphics[width=0.25\textwidth]{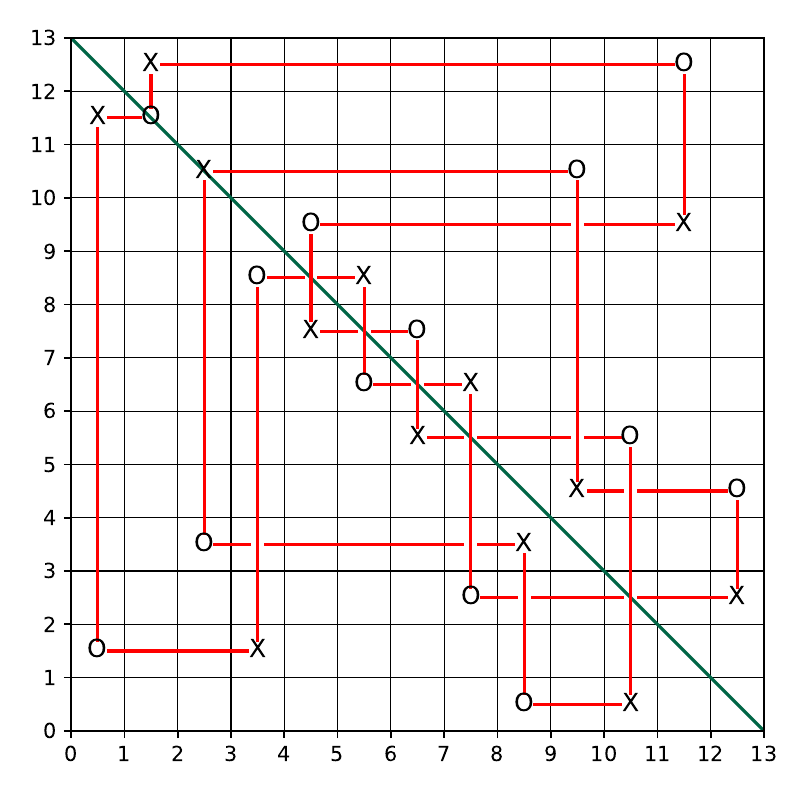} & \parbox[c]{\linewidth}{\centering\tiny
\textcolor{blue}{Polynomial Invariant}\\
$1$\\[0.1in]
\textcolor{blue}{Real grid homology - hat version}\\
$(0,0)$
} \tabularnewline
  \hline
  \parbox[c]{\linewidth}{\centering $8_8$\\{\tiny Second symmetry}} & \centering \includegraphics[width=0.25\textwidth]{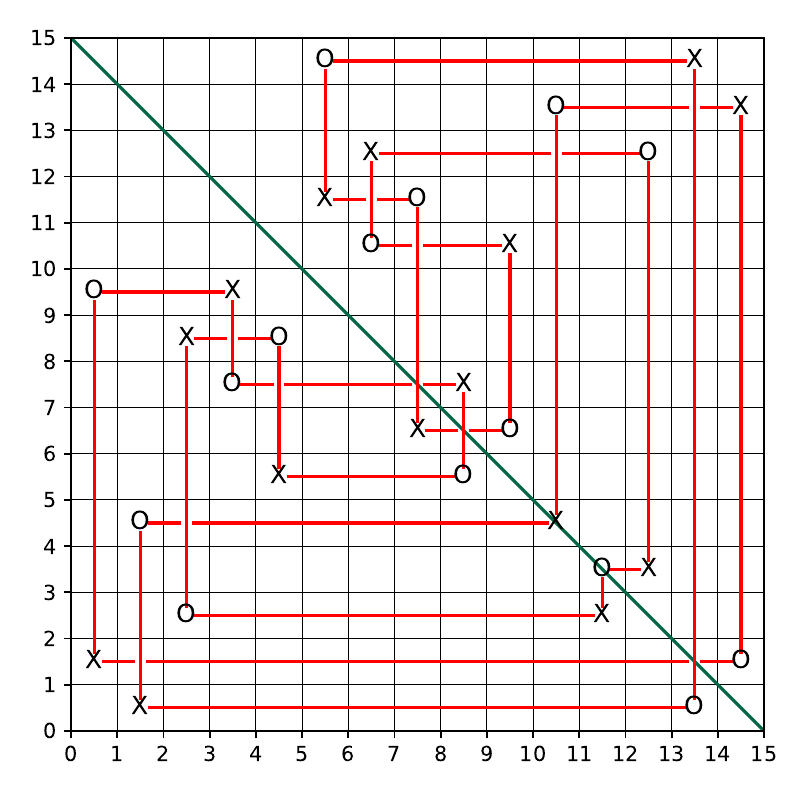} & \parbox[c]{\linewidth}{\centering\tiny
\textcolor{blue}{Polynomial Invariant}\\
$-2t^{-2} + 2t^{-1} + 5 - 2t - 2t^{2}$
} \tabularnewline
  \hline
  \centering $8_9$ & \centering \includegraphics[width=0.25\textwidth]{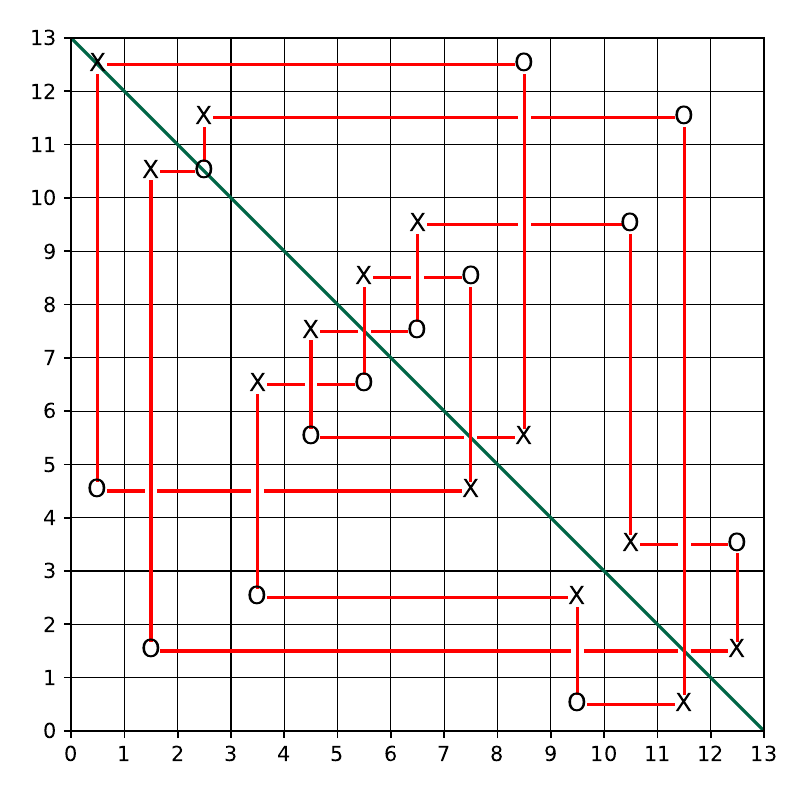} & \parbox[c]{\linewidth}{\centering\tiny
\textcolor{blue}{Polynomial Invariant}\\
$t^{-3} - t^{-2} - 3t^{-1} + 3 + 3t - t^{2} - t^{3}$\\[0.1in]
\textcolor{blue}{Real grid homology - hat version}\\
$(-3,-2)\oplus (-2,-1)\oplus (-1,-1)^{3}\oplus (0,0)^{3}\oplus (1,0)^{3}\oplus (2,1)\oplus (3,1)$
} \tabularnewline
  \hline
  \centering $8_{12}$ & \centering \includegraphics[width=0.25\textwidth]{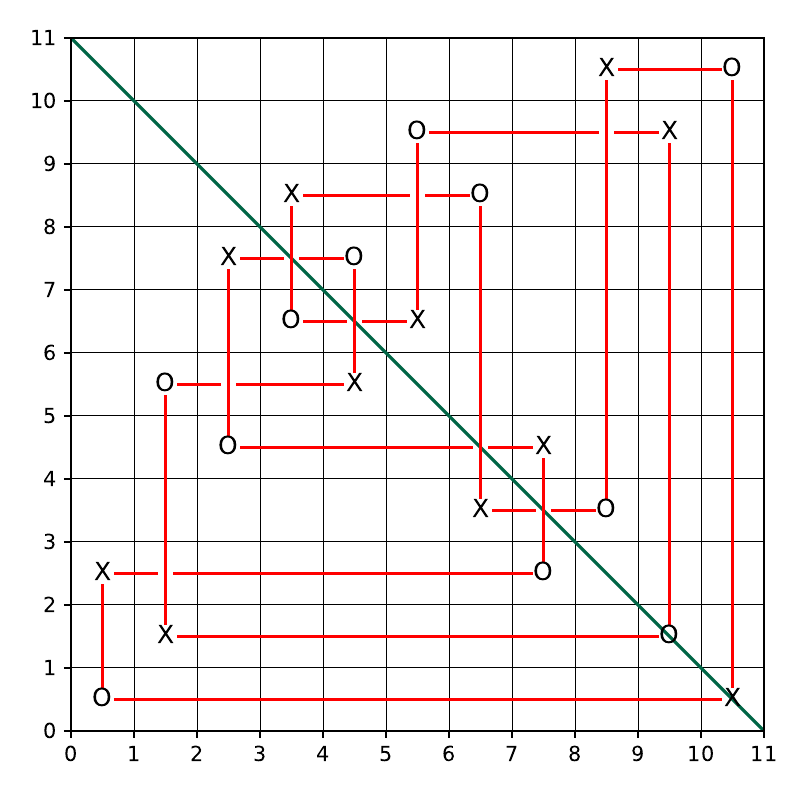} & \parbox[c]{\linewidth}{\centering\tiny
\textcolor{blue}{Polynomial Invariant}\\
$t^{-2} - t^{-1} - 1 + t + t^{2}$\\[0.1in]
\textcolor{blue}{Real grid homology - hat version}\\
$(-2,-2)\oplus (-1,-1)\oplus (0,-1)\oplus (1,0)\oplus (2,0)$\\[0.1in]
\textcolor{blue}{Real grid homology - minus version}\\
$U^{\infty}_{(-2,-2)}\oplus U_{(0,-1)}\oplus U_{(2,0)}$
} \tabularnewline
  \hline
  \centering $8_{18}$ & \centering \includegraphics[width=0.25\textwidth]{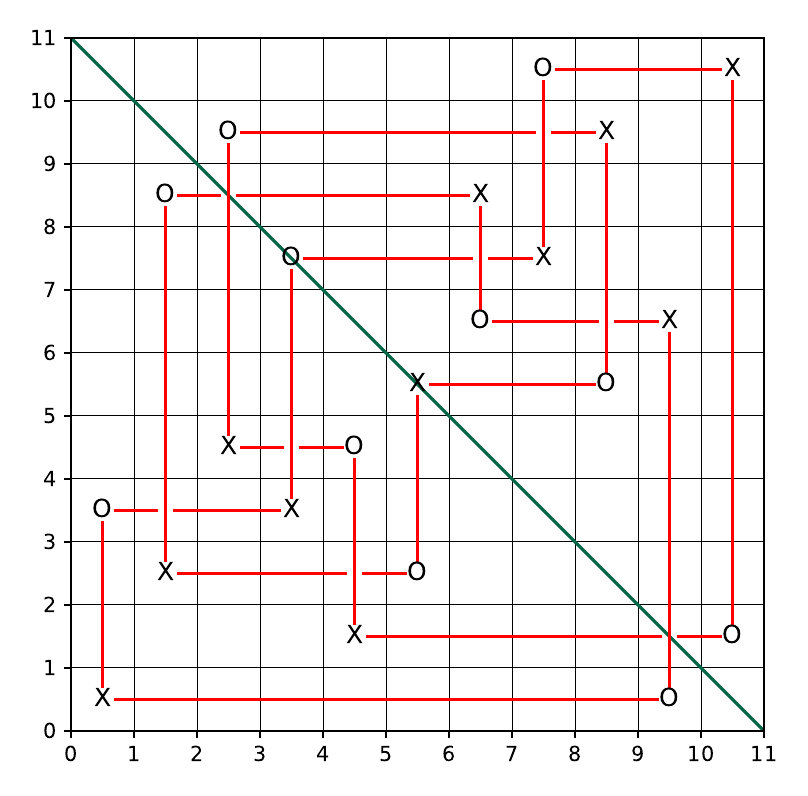} & \parbox[c]{\linewidth}{\centering\tiny
\textcolor{blue}{Polynomial Invariant}\\
$t^{-3} - t^{-2} - 2t^{-1} + 3 + 2t - t^{2} - t^{3}$\\[0.1in]
\textcolor{blue}{Real grid homology - hat version}\\
$(-3,-2)\oplus (-2,-1)\oplus (-1,-1)^{2}\oplus (0,0)^{3}\oplus (1,0)^{2}\oplus (2,1)\oplus (3,1)$\\[0.1in]
\textcolor{blue}{Real grid homology - minus version}\\
$U^{\infty}_{(-1,-1)}\oplus \bigg(U_{(1,0)}\bigg)^{2}\oplus U_{(3,1)}\oplus U^{2}_{(-1,-1)}\oplus U^{2}_{(0,0)}$
} \tabularnewline
  \hline
  \centering $8_{19}$ & \centering \includegraphics[width=0.25\textwidth]{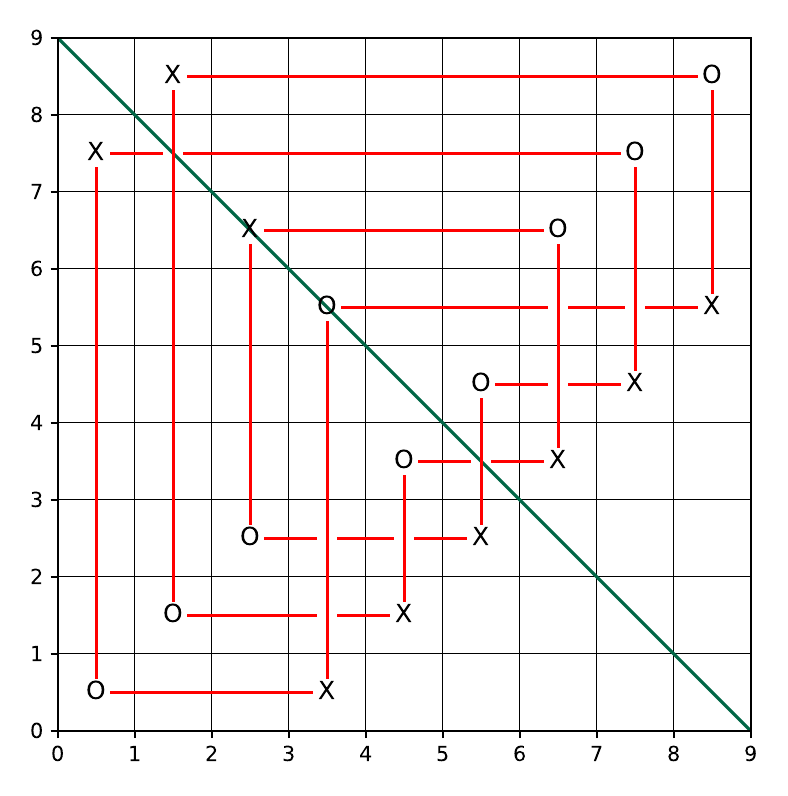} & \parbox[c]{\linewidth}{\centering\tiny
\textcolor{blue}{Polynomial Invariant}\\
$t^{-3} + t^{-2} - 1 + t^{2} - t^{3}$\\[0.1in]
\textcolor{blue}{Real grid homology - hat version}\\
$(-3,0)\oplus (-2,0)\oplus (-1,1)^{0}\oplus (0,1)\oplus (1,2)^{0}\oplus (2,2)\oplus (3,3)$\\[0.1in]
\textcolor{blue}{Real grid homology - minus version}\\
$U^{\infty}_{(3,3)}\oplus U^{2}_{(2,2)}\oplus U_{(-2,0)}$
} \tabularnewline
  \hline
  \centering $8_{20}$ & \centering \includegraphics[width=0.25\textwidth]{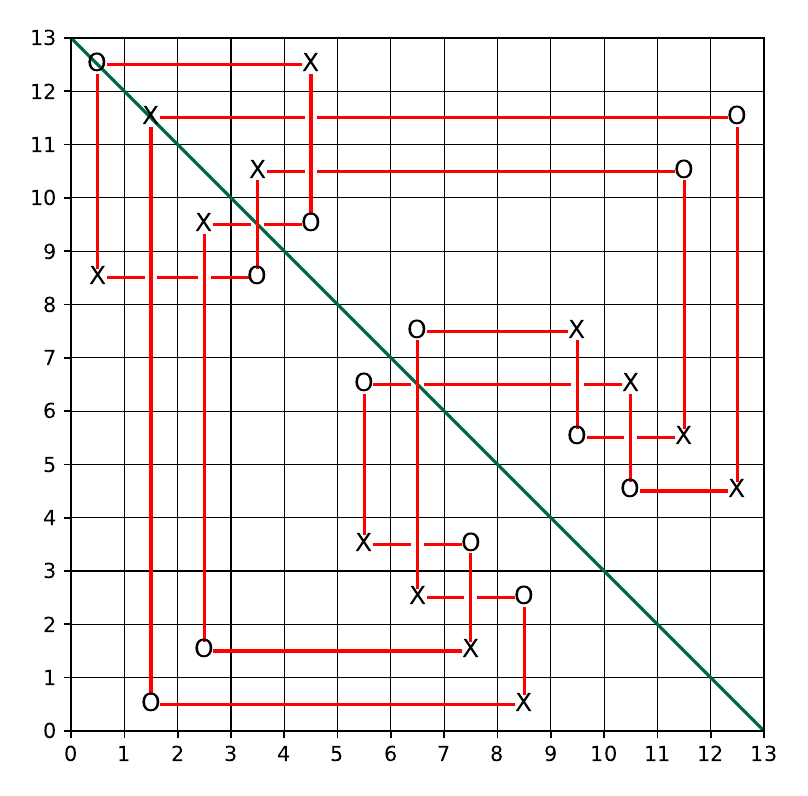} & \parbox[c]{\linewidth}{\centering\tiny
\textcolor{blue}{Polynomial Invariant}\\
$-t^{-2} + 3 - t^{2}$\\[0.1in]
\textcolor{blue}{Real grid homology - hat version}\\
$(-2,-1)\oplus (-1,-1)\oplus (-1,0)\oplus (0,0)^{3}\oplus (2,1)$
} \tabularnewline
  \hline
  \centering $8_{21}$ & \centering \includegraphics[width=0.25\textwidth]{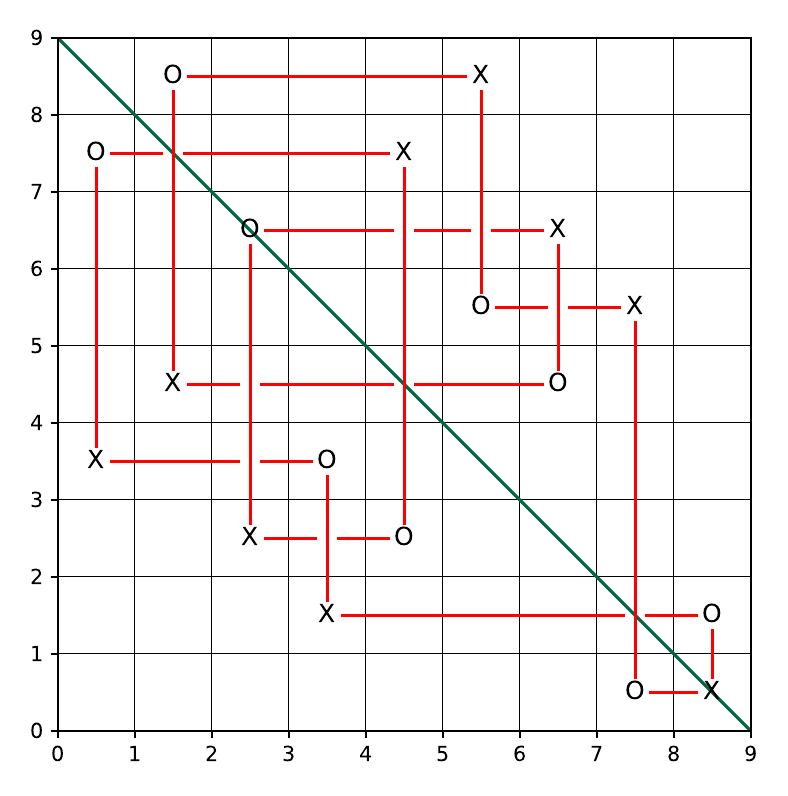} & \parbox[c]{\linewidth}{\centering\tiny
\textcolor{blue}{Polynomial Invariant}\\
$t^{-2} - 2t^{-1} - 1 + 2t + t^{2}$\\[0.1in]
\textcolor{blue}{Real grid homology - hat version}\\
$(-2,-2)\oplus (-1,-1)^{2}\oplus (0,-1)\oplus (1,0)^{2}\oplus (2,0)$\\[0.1in]
\textcolor{blue}{Real grid homology - minus version}\\
$U^{\infty}_{(-2,-2)}\oplus U_{(0,-1)}\oplus U_{(2,0)}\oplus U^{2}_{(1,0)}$
} \tabularnewline
  \hline
  \centering $9_3$ & \centering \includegraphics[width=0.25\textwidth]{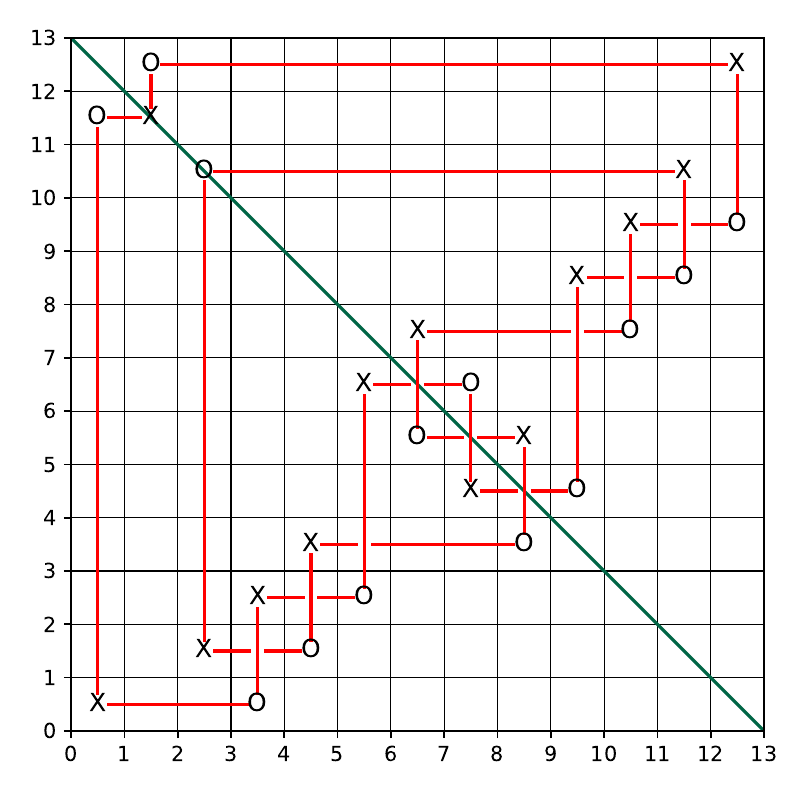} & \parbox[c]{\linewidth}{\centering\tiny
\textcolor{blue}{Polynomial Invariant}\\
$t^{-2} + t^{-1} - 1 - t + t^{2}$\\[0.1in]
\textcolor{blue}{Real grid homology - hat version}\\
$(-2,0)\oplus (-1,0)\oplus (0,1)\oplus (1,1)\oplus (2,2)$
} \tabularnewline
  \hline
  \centering $9_6$ & \centering \includegraphics[width=0.25\textwidth]{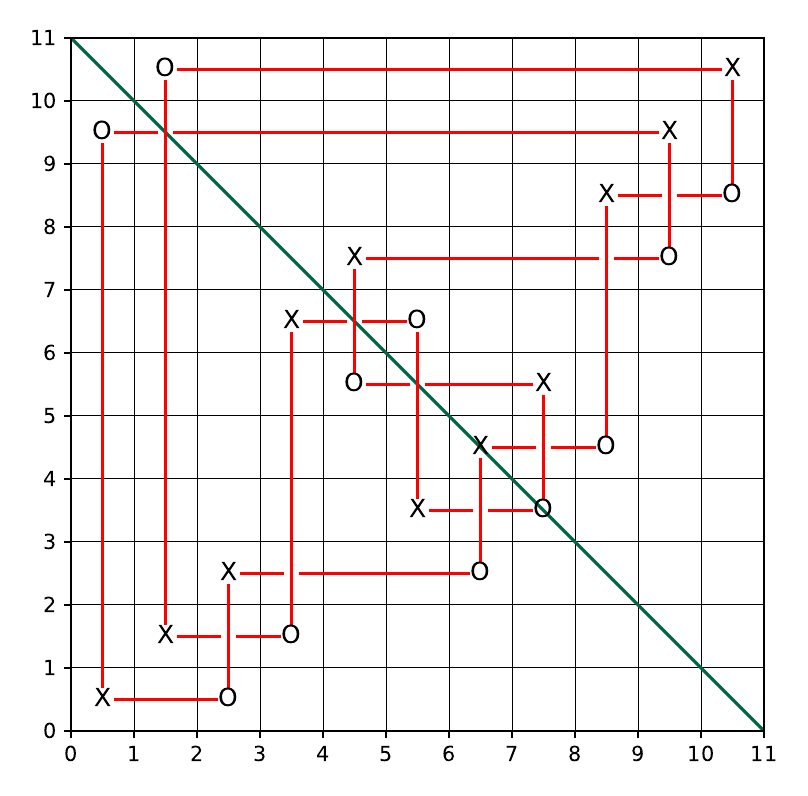} & \parbox[c]{\linewidth}{\centering\tiny
\textcolor{blue}{Polynomial Invariant}\\
$2t^{-2} + t^{-1} - 3 - t + 2t^{2}$\\[0.1in]
\textcolor{blue}{Real grid homology - hat version}\\
$(-2,0)^{2}\oplus (-1,0)\oplus (0,1)^{3}\oplus (1,1)\oplus (2,2)^{2}$\\[0.1in]
\textcolor{blue}{Real grid homology - minus version}\\
$U^{\infty}_{(2,2)}\oplus U^{2}_{(0,1)}\oplus U^{2}_{(2,2)}\oplus U_{(-1,0)}\oplus U_{(1,1)}$
} \tabularnewline
  \hline
  \centering $9_9$ & \centering \includegraphics[width=0.25\textwidth]{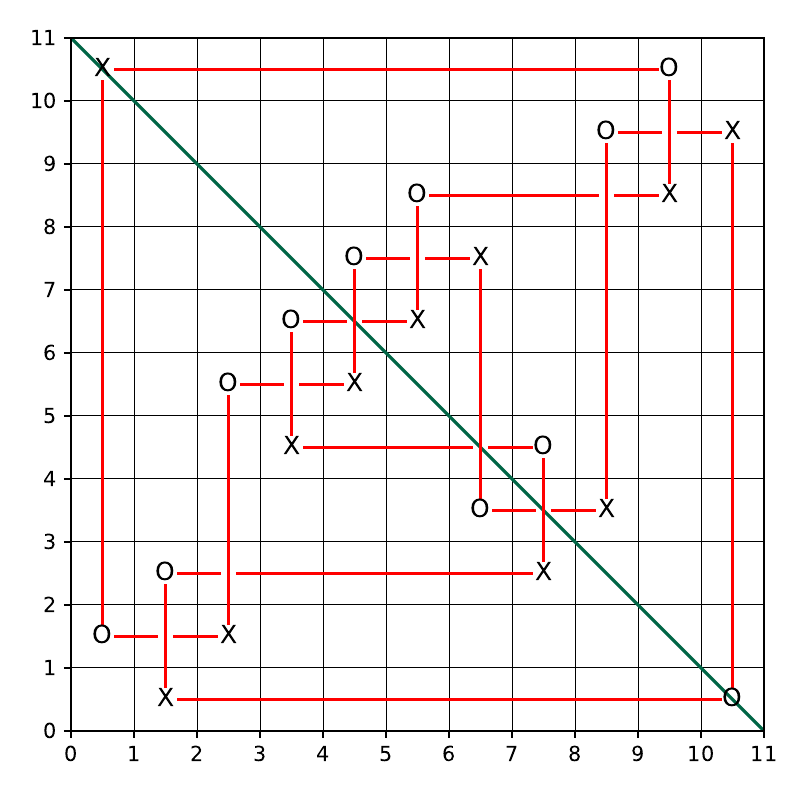} & \parbox[c]{\linewidth}{\centering\tiny
\textcolor{blue}{Polynomial Invariant}\\
$2t^{-2} + 2t^{-1} - 3 - 2t + 2t^{2}$\\[0.1in]
\textcolor{blue}{Real grid homology - hat version}\\
$(-2,0)^{2}\oplus (-1,0)^{2}\oplus (0,1)^{3}\oplus (1,1)^{2}\oplus (2,2)^{2}$\\[0.1in]
\textcolor{blue}{Real grid homology - minus version}\\
$U^{\infty}_{(2,2)}\oplus U^{2}_{(2,2)}\oplus \bigg(U_{(-1,0)}\bigg)^{2}\oplus \bigg(U_{(1,1)}\bigg)^{2}$
} \tabularnewline
  \hline
  \centering $9_{28}$ & \centering \includegraphics[width=0.25\textwidth]{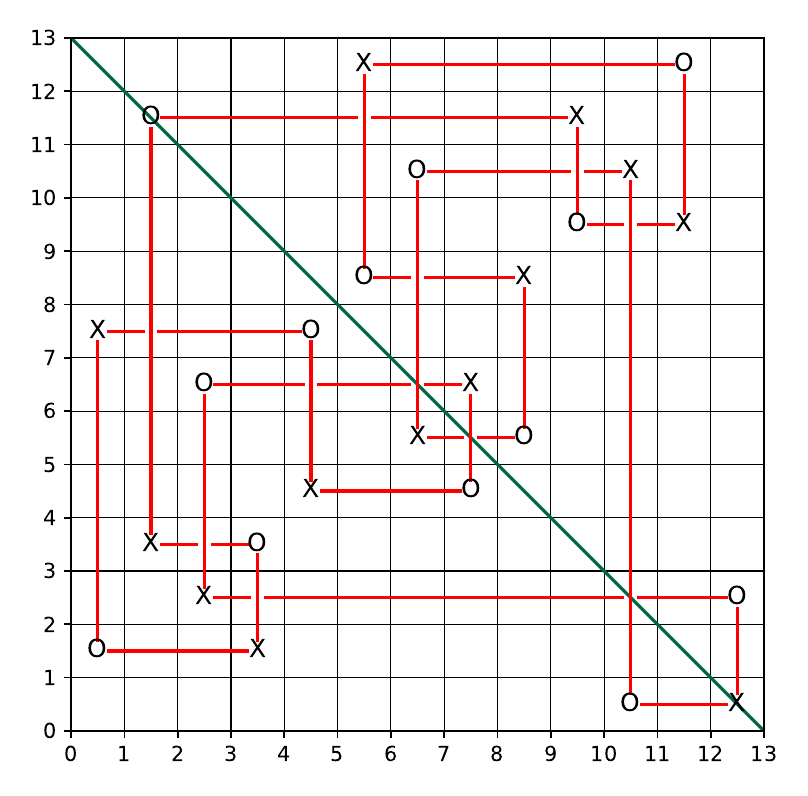} & \parbox[c]{\linewidth}{\centering\tiny
\textcolor{blue}{Polynomial Invariant}\\
$t^{-3} + t^{-2} - 4t^{-1} - 1 + 4t + t^{2} - t^{3}$\\[0.1in]
\textcolor{blue}{Real grid homology - hat version}\\
$(-3,-2)\oplus (-2,-2)\oplus (-1,-1)^{4}\oplus (0,-1)\oplus (1,0)^{4}\oplus (2,0)\oplus (3,1)$
} \tabularnewline
  \hline
  \centering $9_{40}$ & \centering \includegraphics[width=0.25\textwidth]{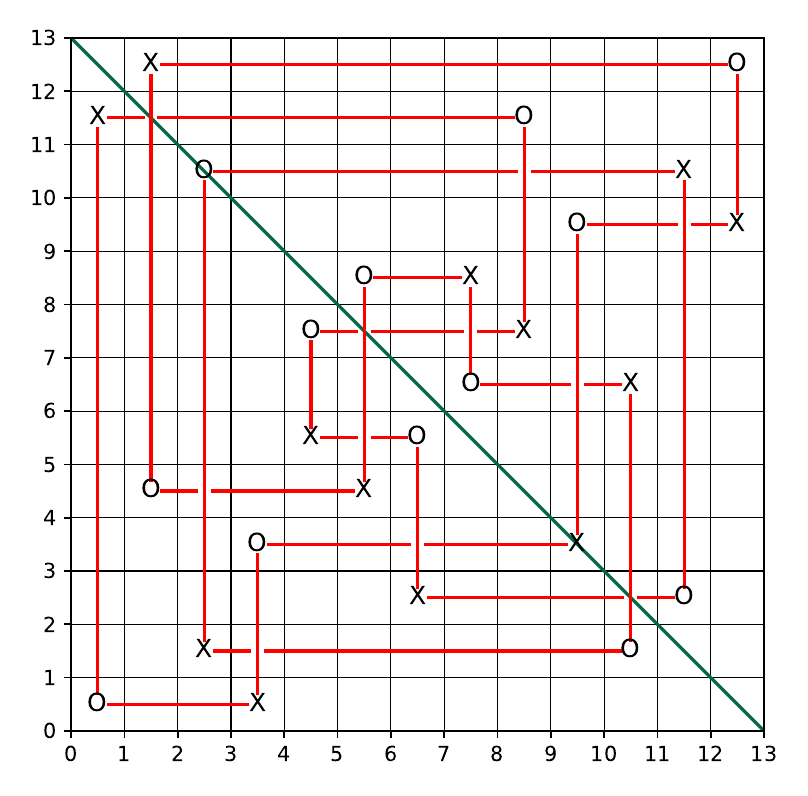} & \parbox[c]{\linewidth}{\centering\tiny
\textcolor{blue}{Polynomial Invariant}\\
$-t^{-3} - t^{-2} + 2t^{-1} + 3 - 2t - t^{2} + t^{3}$\\[0.1in]
\textcolor{blue}{Real grid homology - hat version}\\
$(-3,-1)\oplus (-2,-1)\oplus (-1,0)^{2}\oplus (0,0)^{3}\oplus (1,1)^{2}\oplus (2,1)\oplus (3,2)$
} \tabularnewline
  \hline
  \centering $9_{41}$ & \centering \includegraphics[width=0.25\textwidth]{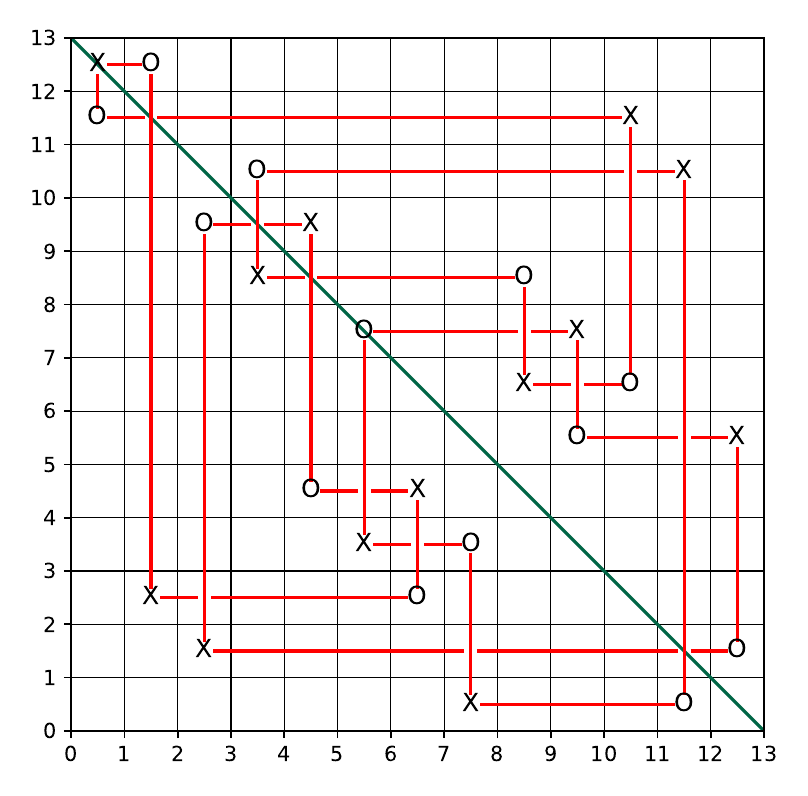} & \parbox[c]{\linewidth}{\centering\tiny
\textcolor{blue}{Polynomial Invariant}\\
$-t^{-2} + 3 - t^{2}$\\[0.1in]
\textcolor{blue}{Real grid homology - hat version}\\
$(-2,-1)\oplus (-1,-1)\oplus (-1,0)\oplus (0,0)^{3}\oplus (2,1)$
} \tabularnewline
  \hline
  \centering $9_{42}$ & \centering \includegraphics[width=0.25\textwidth]{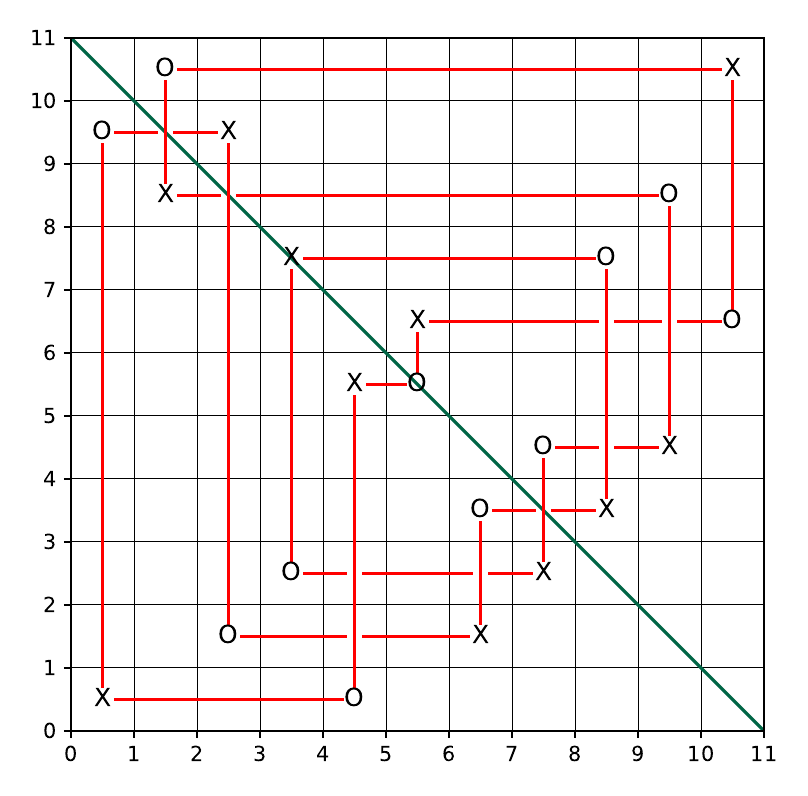} & \parbox[c]{\linewidth}{\centering\tiny
\textcolor{blue}{Polynomial Invariant}\\
$-t^{-2} + 3 - t^{2}$\\[0.1in]
\textcolor{blue}{Real grid homology - hat version}\\
$(-2,-1)\oplus (0,0)^{3}\oplus (1,0)\oplus (1,1)\oplus (2,1)$\\[0.1in]
\textcolor{blue}{Real grid homology - minus version}\\
$U^{\infty}_{(0,0)}\oplus U_{(1,0)}\oplus U_{(2,1)}\oplus U^{2}_{(0,0)}$
} \tabularnewline
  \hline
  \centering $9_{46}$ & \centering \includegraphics[width=0.25\textwidth]{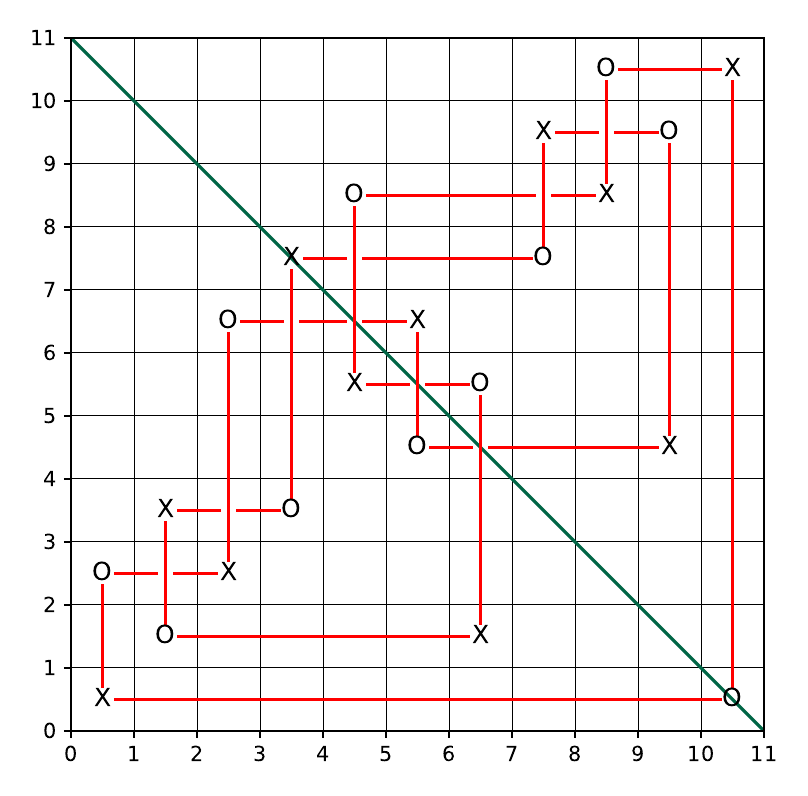} & \parbox[c]{\linewidth}{\centering\tiny
\textcolor{blue}{Polynomial Invariant}\\
$-2t^{-1} + 1 + 2t$\\[0.1in]
\textcolor{blue}{Real grid homology - hat version}\\
$(-1,-1)^{2}\oplus (0,0)\oplus (1,0)^{2}$\\[0.1in]
\textcolor{blue}{Real grid homology - minus version}\\
$U^{\infty}_{(-1,-1)}\oplus U^{2}_{(1,0)}\oplus U_{(1,0)}$
} \tabularnewline
  \hline
  \centering $10_{35}$ & \centering \includegraphics[width=0.25\textwidth]{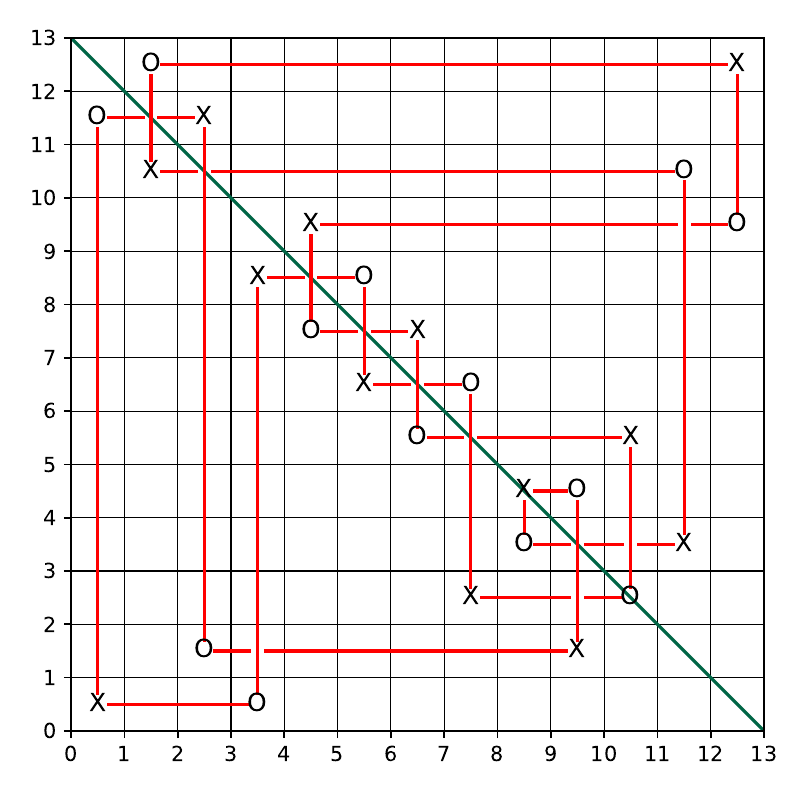} & \parbox[c]{\linewidth}{\centering\tiny
\textcolor{blue}{Polynomial Invariant}\\
$-2t^{-1} + 1 + 2t$\\[0.1in]
\textcolor{blue}{Real grid homology - hat version}\\
$(-1,-1)^{2}\oplus (0,0)\oplus (1,0)^{2}$
} \tabularnewline
  \hline
  \centering $10_{74}$ & \centering \includegraphics[width=0.25\textwidth]{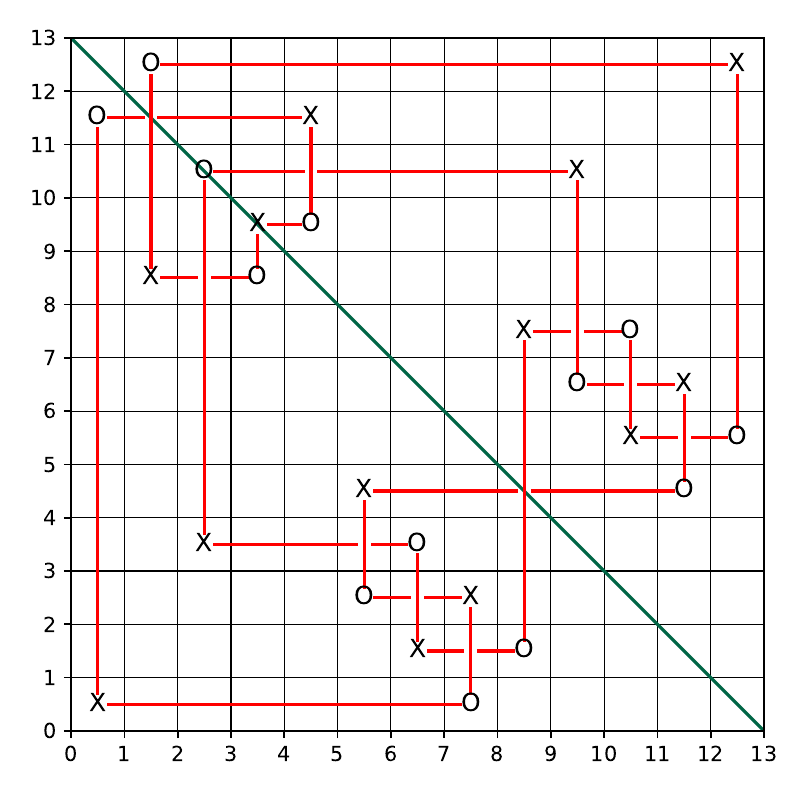} & \parbox[c]{\linewidth}{\centering\tiny
\textcolor{blue}{Polynomial Invariant}\\
$-2t^{-2} + 2t^{-1} + 5 - 2t - 2t^{2}$\\[0.1in]
\textcolor{blue}{Real grid homology - hat version}\\
$(-2,-1)^{2}\oplus (-1,0)^{2}\oplus (0,0)^{5}\oplus (1,1)^{2}\oplus (2,1)^{2}$
} \tabularnewline
  \hline
  \centering $10_{75}$ & \centering \includegraphics[width=0.25\textwidth]{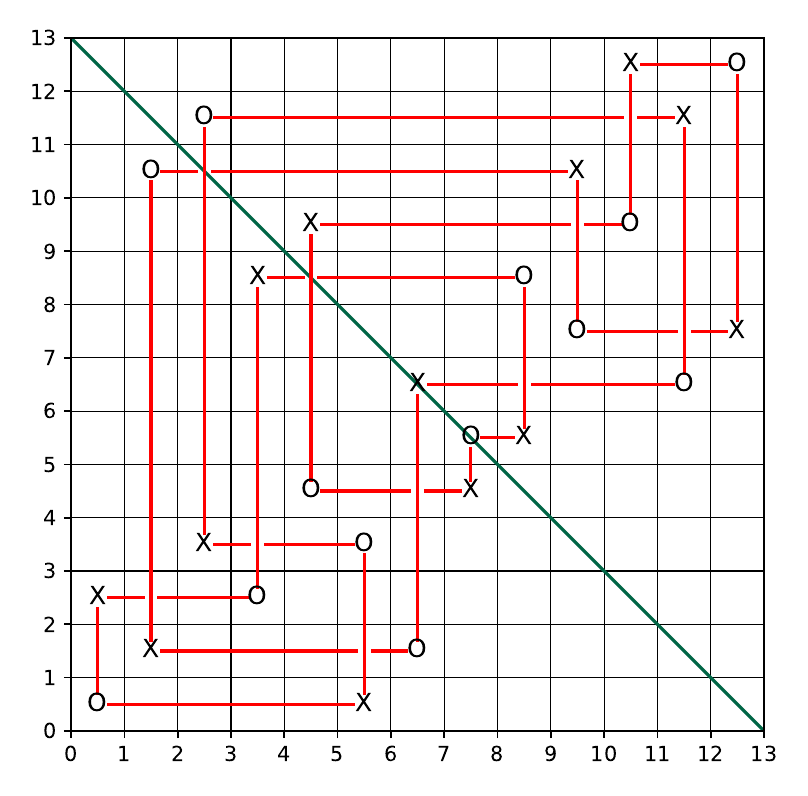} & \parbox[c]{\linewidth}{\centering\tiny
\textcolor{blue}{Polynomial Invariant}\\
$t^{-3} - t^{-2} - 5t^{-1} + 3 + 5t - t^{2} - t^{3}$\\[0.1in]
\textcolor{blue}{Real grid homology - hat version}\\
$(-3,-2)\oplus (-2,-1)\oplus (-1,-1)^{5}\oplus (0,0)^{3}\oplus (1,0)^{5}\oplus (2,1)\oplus (3,1)$
} \tabularnewline
  \hline
  \centering $10_{99}$ & \centering \includegraphics[width=0.25\textwidth]{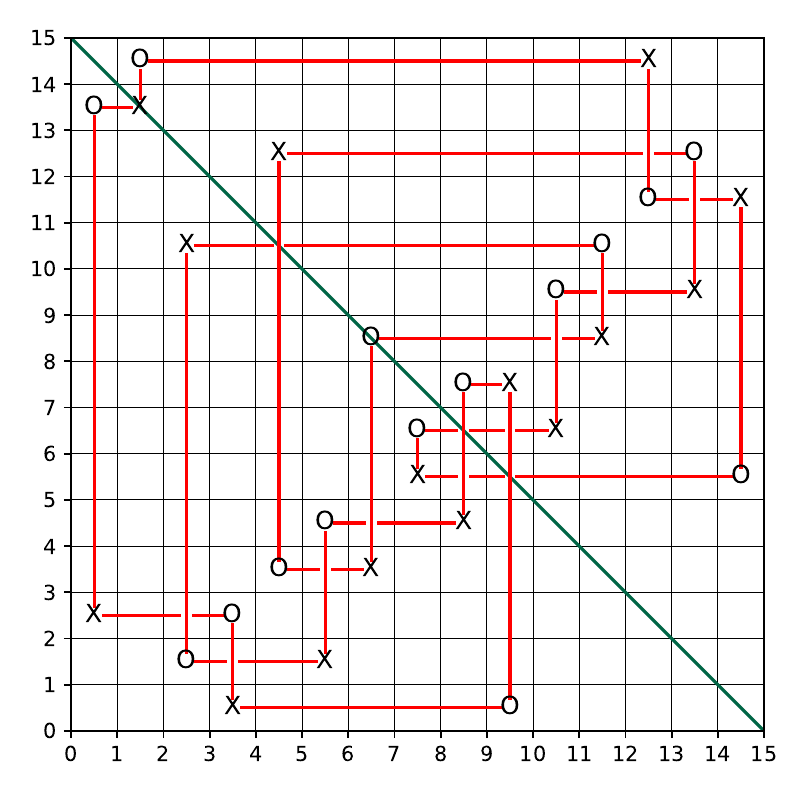} & \parbox[c]{\linewidth}{\centering\tiny
\textcolor{blue}{Polynomial Invariant}\\
$2t^{-2} + t^{-1} - 3 - t + 2t^{2}$
} \tabularnewline
  \hline
  \centering $10_{129}$ & \centering \includegraphics[width=0.25\textwidth]{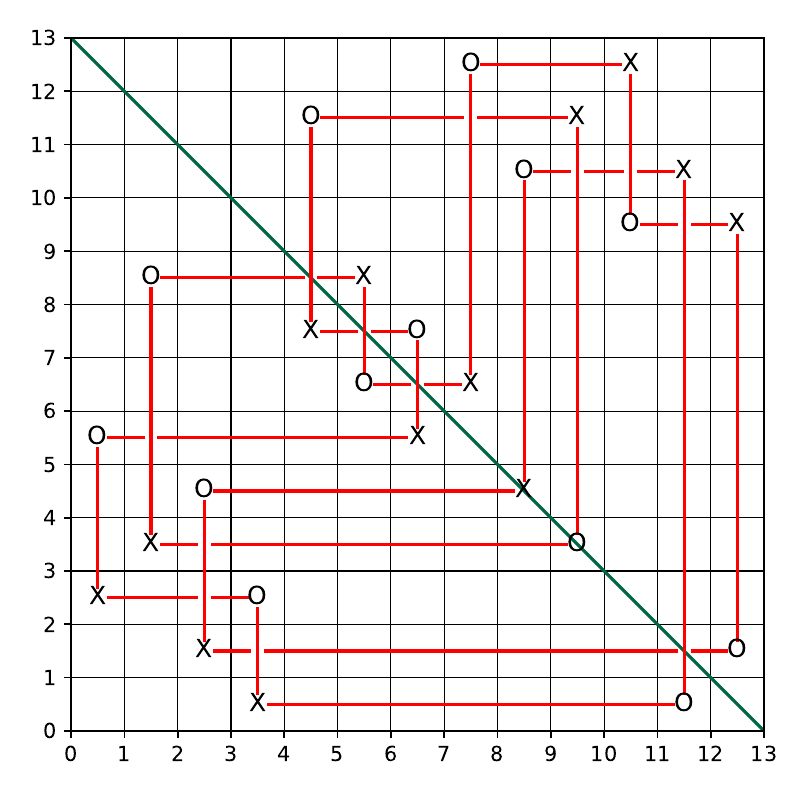} & \parbox[c]{\linewidth}{\centering\tiny
\textcolor{blue}{Polynomial Invariant}\\
$-2t^{-1} + 1 + 2t$\\[0.1in]
\textcolor{blue}{Real grid homology - hat version}\\
$(-1,-3)^{2}\oplus (0,-2)\oplus (1,-2)^{2}$
} \tabularnewline
  \hline
  \centering $11_{a281}$ & \centering \includegraphics[width=0.25\textwidth]{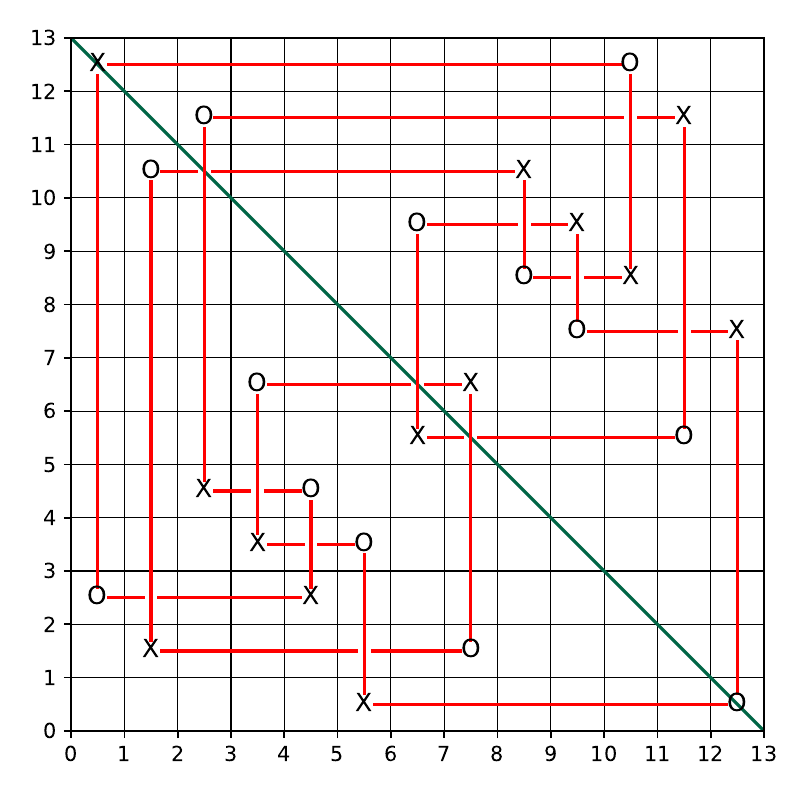} & \parbox[c]{\linewidth}{\centering\tiny
\textcolor{blue}{Polynomial Invariant}\\
$-t^{-4} + 2t^{-2} - t^{-1} - 1 + t + 2t^{2} - t^{4}$\\[0.1in]
\textcolor{blue}{Real grid homology - hat version}\\
$(-4,-3)\oplus (-2,-2)^{2}\oplus (-1,-1)\oplus (0,-1)\oplus (1,0)\oplus (2,0)^{2}\oplus (3,0)\oplus (3,1)\oplus (4,1)$
} \tabularnewline
  \hline
  \centering $11_{a181}$ & \centering \includegraphics[width=0.25\textwidth]{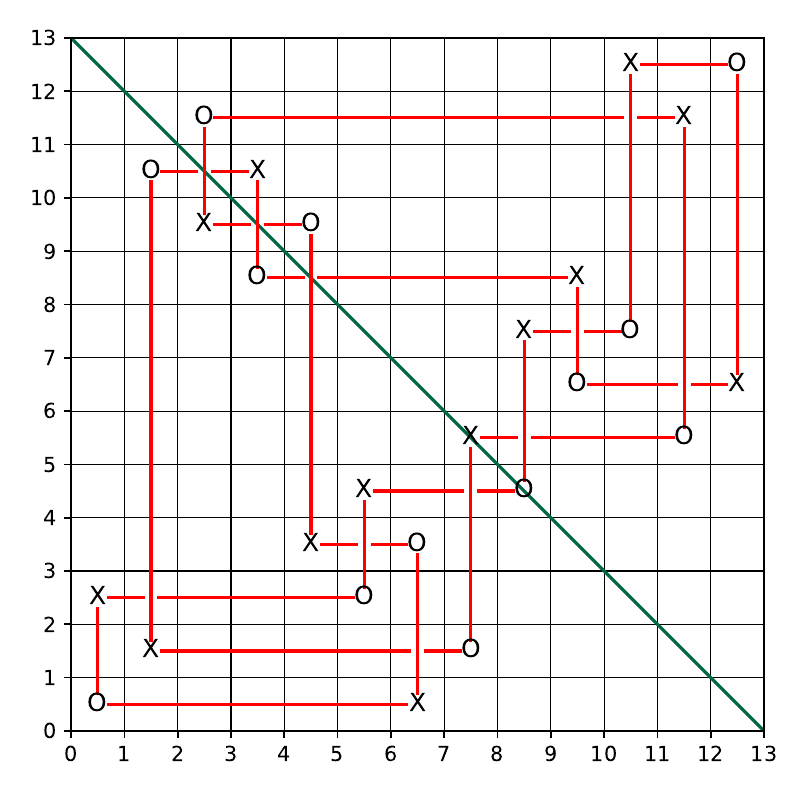} & \parbox[c]{\linewidth}{\centering\tiny
\textcolor{blue}{Polynomial Invariant}\\
$-t^{-2} - t^{-1} + 3 + t - t^{2}$\\[0.1in]
\textcolor{blue}{Real grid homology - hat version}\\
$(-2,-1)\oplus (-1,-1)\oplus (0,0)^{3}\oplus (1,0)\oplus (2,1)$
} \tabularnewline
  \hline
  \centering $3_1 \# 3_1$ & \centering \includegraphics[width=0.25\textwidth]{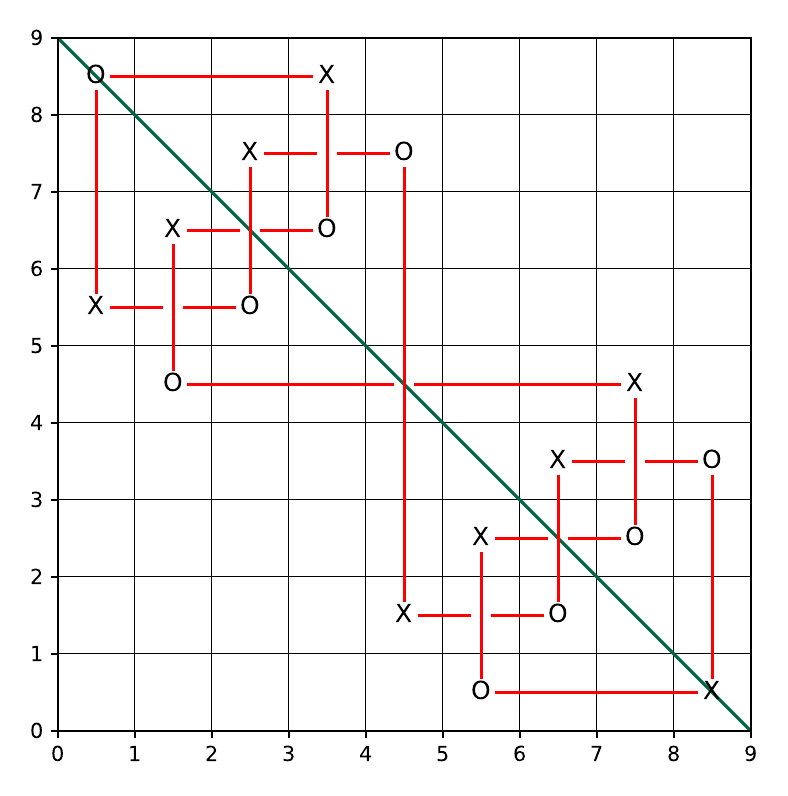} & \parbox[c]{\linewidth}{\centering\tiny
\textcolor{blue}{Polynomial Invariant}\\
$t^{-2} + 2t^{-1} - 1 - 2t + t^{2}$\\[0.1in]
\textcolor{blue}{Real grid homology - hat version}\\
$(-2,0)\oplus (-1,0)^{2}\oplus (0,1)\oplus (1,1)^{2}\oplus (2,2)$\\[0.1in]
\textcolor{blue}{Real grid homology - minus version}\\
$U^{\infty}_{(2,2)}\oplus U_{(-1,0)}\oplus U_{(1,1)}\oplus U^{2}_{(1,1)}$
} \tabularnewline
  \hline
  \centering $(3_1 \# 3_1)_{\text{swap}}$ & \centering \includegraphics[width=0.25\textwidth]{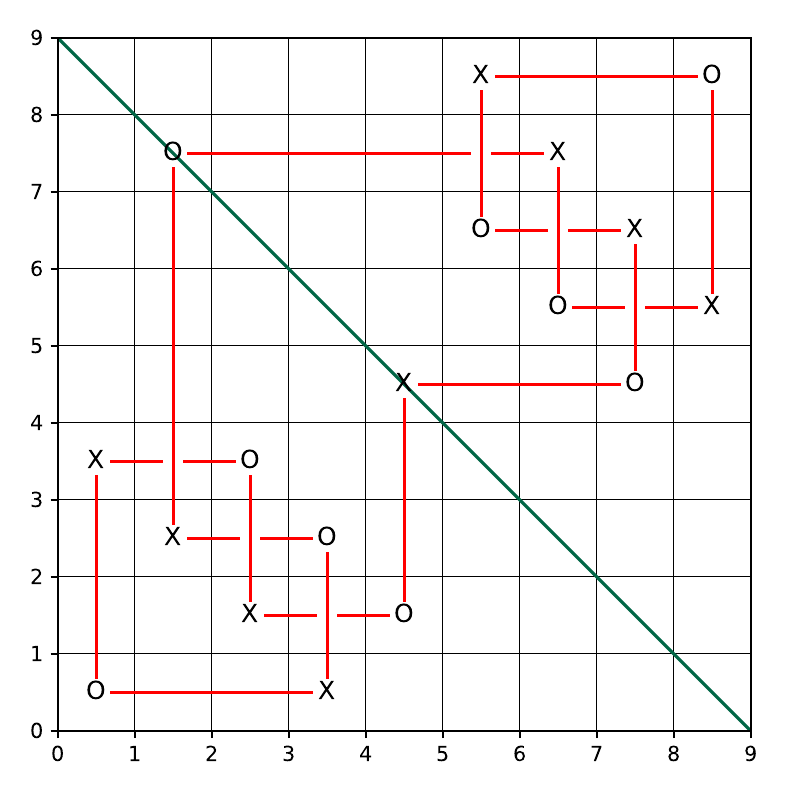} & \parbox[c]{\linewidth}{\centering\tiny
\textcolor{blue}{Polynomial Invariant}\\
$t^{-2} - 1 + t^{2}$\\[0.1in]
\textcolor{blue}{Real grid homology - hat version}\\
$(-2,-2)\oplus (0,-1)\oplus (2,0)$\\[0.1in]
\textcolor{blue}{Real grid homology - minus version}\\
$U^{\infty}_{(-2,-2)}\oplus U^{2}_{(2,0)}$
} \tabularnewline
  \hline
  \centering $3_1 \# 3_1^*$ & \centering \includegraphics[width=0.25\textwidth]{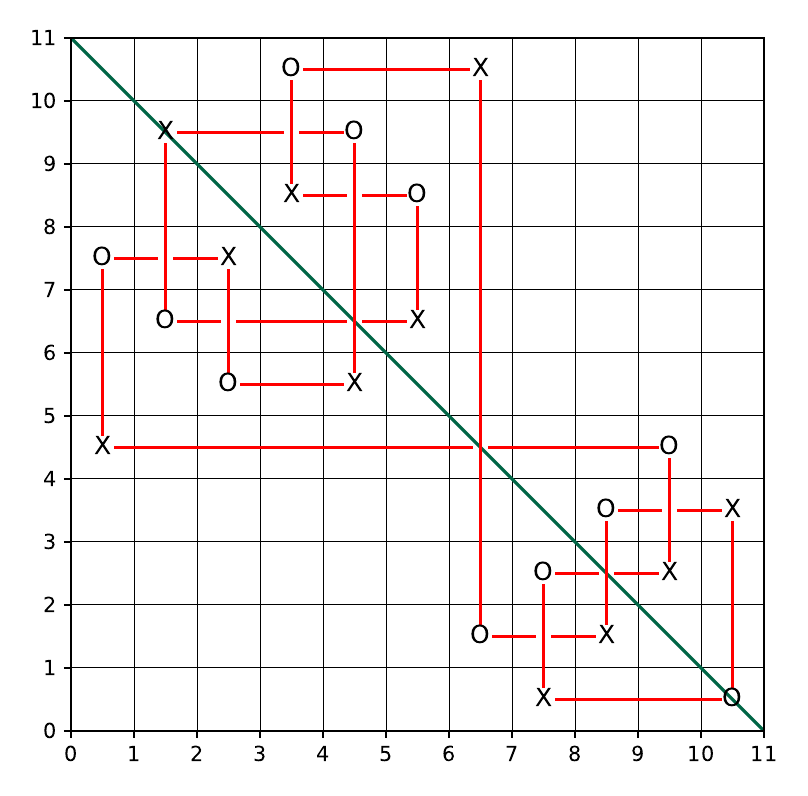} & \parbox[c]{\linewidth}{\centering\tiny
\textcolor{blue}{Polynomial Invariant}\\
$-t^{-2} + 3 - t^{2}$\\[0.1in]
\textcolor{blue}{Real grid homology - hat version}\\
$(-2,-1)\oplus (-1,-1)\oplus (-1,0)\oplus (0,0)^{3}\oplus (2,1)$\\[0.1in]
\textcolor{blue}{Real grid homology - minus version}\\
$U^{\infty}_{(0,0)}\oplus U^{2}_{(2,1)}\oplus U_{(-1,-1)}\oplus U_{(0,0)}$
} \tabularnewline
  \hline
  \centering $3_1 \# 4_1$ & \centering \includegraphics[width=0.25\textwidth]{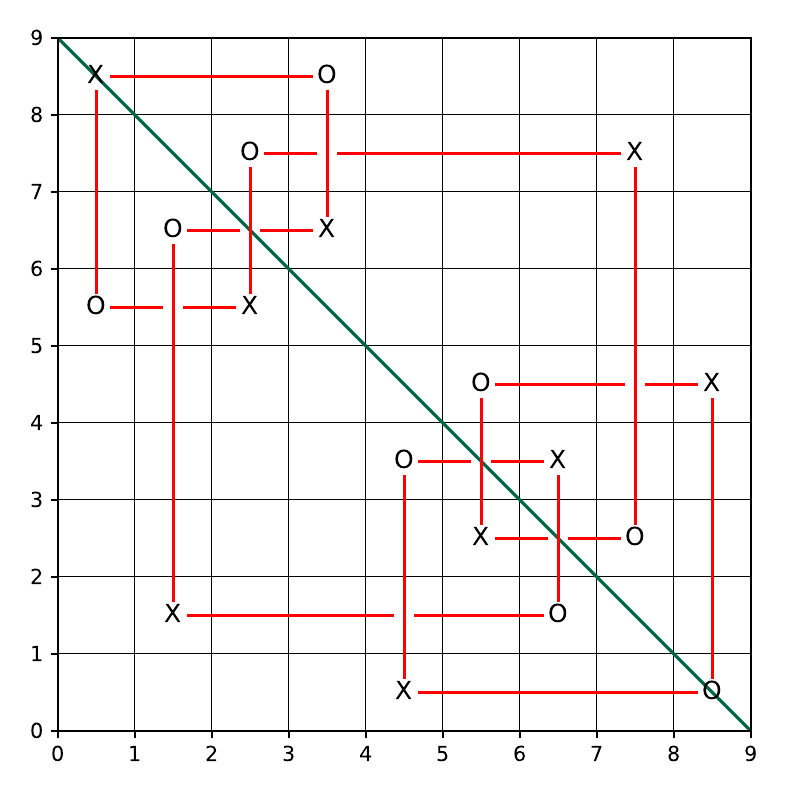} & \parbox[c]{\linewidth}{\centering\tiny
\textcolor{blue}{Polynomial Invariant}\\
$-t^{-2} + 3 - t^{2}$\\[0.1in]
\textcolor{blue}{Real grid homology - hat version}\\
$(-2,-1)\oplus (0,0)^{3}\oplus (1,0)\oplus (1,1)\oplus (2,1)$\\[0.1in]
\textcolor{blue}{Real grid homology - minus version}\\
$U^{\infty}_{(0,0)}\oplus U_{(1,0)}\oplus U_{(2,1)}\oplus U^{2}_{(0,0)}$
} \tabularnewline
  \hline
  \centering $(3_1 \# 4_1)'$ & \centering \includegraphics[width=0.25\textwidth]{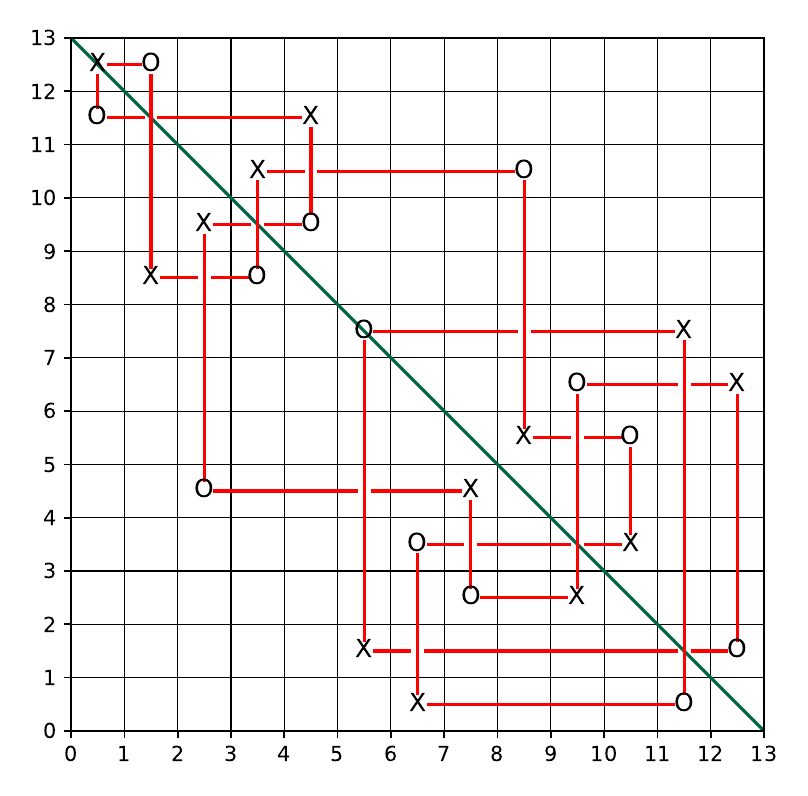} & \parbox[c]{\linewidth}{\centering\tiny
\textcolor{blue}{Polynomial Invariant}\\
$t^{-2} + 2t^{-1} - 1 - 2t + t^{2}$\\[0.1in]
\textcolor{blue}{Real grid homology - hat version}\\
$(-2,0)\oplus (-1,0)^{2}\oplus (0,1)\oplus (1,1)^{2}\oplus (2,2)$
} \tabularnewline
  \hline
  \centering $(3_1 \# 4_1)_{\text{rev}}$ & \centering \includegraphics[width=0.25\textwidth]{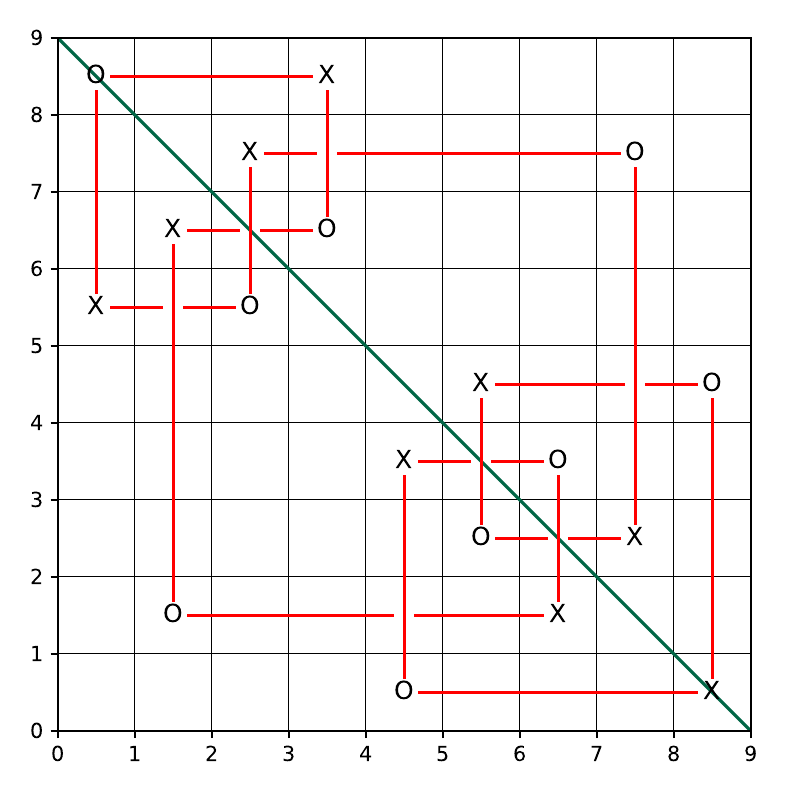} & \parbox[c]{\linewidth}{\centering\tiny
\textcolor{blue}{Polynomial Invariant}\\
$-t^{-2} + 3 - t^{2}$\\[0.1in]
\textcolor{blue}{Real grid homology - hat version}\\
$(-2,-1)\oplus (-1,-1)\oplus (-1,0)\oplus (0,0)^{3}\oplus (2,1)$\\[0.1in]
\textcolor{blue}{Real grid homology - minus version}\\
$U^{\infty}_{(0,0)}\oplus U^{2}_{(2,1)}\oplus U_{(-1,-1)}\oplus U_{(0,0)}$
} \tabularnewline
  \hline
  \centering $3_1 \# 5_1$ & \centering \includegraphics[width=0.25\textwidth]{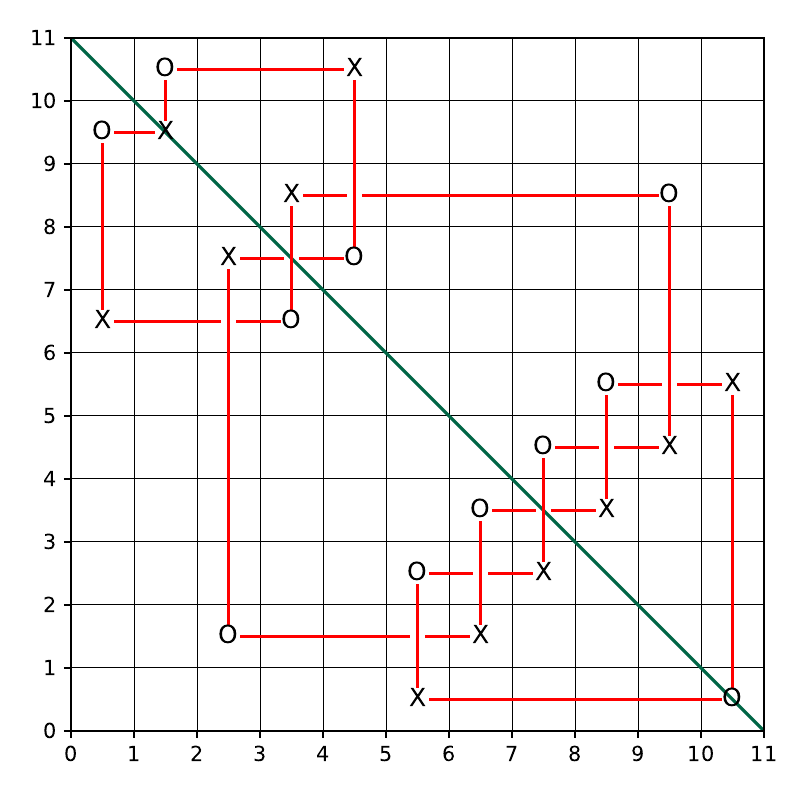} & \parbox[c]{\linewidth}{\centering\tiny
\textcolor{blue}{Polynomial Invariant}\\
$t^{-3} + 2t^{-2} - t^{-1} - 3 + t + 2t^{2} - t^{3}$\\[0.1in]
\textcolor{blue}{Real grid homology - hat version}\\
$(-3,0)\oplus (-2,0)^{2}\oplus (-1,1)\oplus (0,1)^{3}\oplus (1,2)\oplus (2,2)^{2}\oplus (3,3)$\\[0.1in]
\textcolor{blue}{Real grid homology - minus version}\\
$U^{\infty}_{(3,3)}\oplus U^{2}_{(0,1)}\oplus U^{2}_{(2,2)}\oplus U_{(-2,0)}\oplus U_{(0,1)}\oplus U_{(2,2)}$
} \tabularnewline
  \hline
  \centering $3_1 \# 5_2$ & \centering \includegraphics[width=0.25\textwidth]{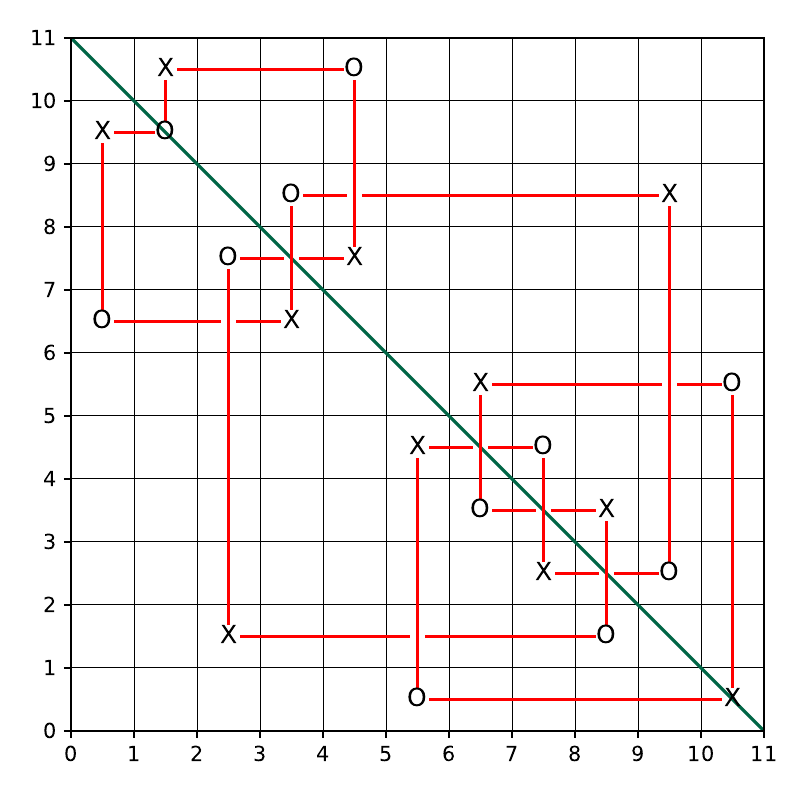} & \parbox[c]{\linewidth}{\centering\tiny
\textcolor{blue}{Polynomial Invariant}\\
$t^{-1} + 1 - t$\\[0.1in]
\textcolor{blue}{Real grid homology - hat version}\\
$(-1,0)\oplus (0,0)\oplus (1,1)$\\[0.1in]
\textcolor{blue}{Real grid homology - minus version}\\
$U^{\infty}_{(1,1)}\oplus U_{(0,0)}$
} \tabularnewline
  \hline
  \centering $3_1 \# 6_1$ & \centering \includegraphics[width=0.25\textwidth]{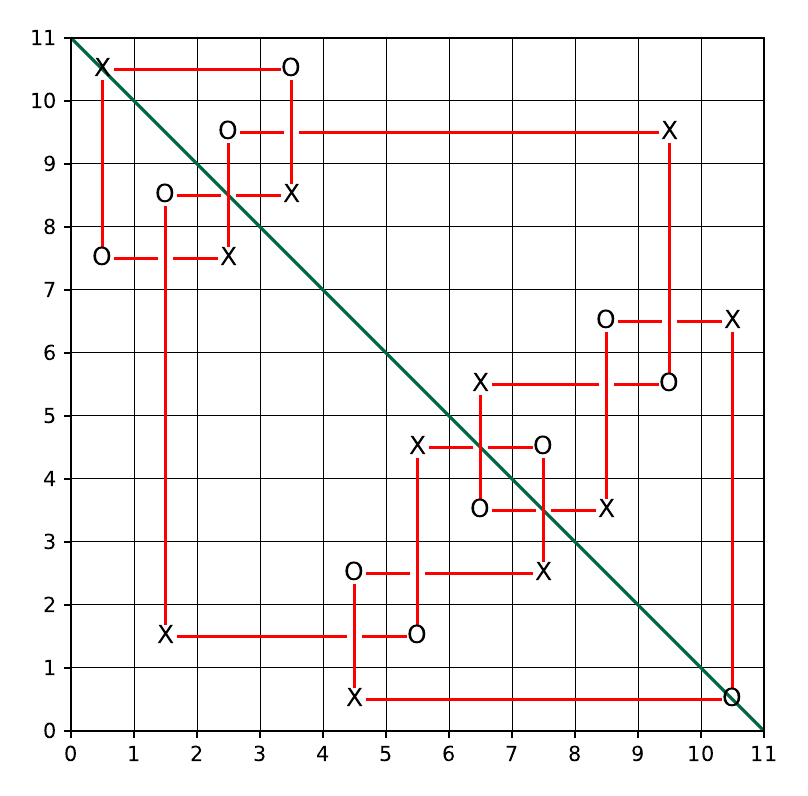} & \parbox[c]{\linewidth}{\centering\tiny
\textcolor{blue}{Polynomial Invariant}\\
$-2t^{-2} - t^{-1} + 5 + t - 2t^{2}$\\[0.1in]
\textcolor{blue}{Real grid homology - hat version}\\
$(-2,-1)^{2}\oplus (-1,-1)\oplus (0,0)^{5}\oplus (1,0)^{2}\oplus (1,1)\oplus (2,1)^{2}$\\[0.1in]
\textcolor{blue}{Real grid homology - minus version}\\
$U^{\infty}_{(0,0)}\oplus U_{(-1,-1)}\oplus \bigg(U_{(1,0)}\bigg)^{2}\oplus U_{(2,1)}\oplus U^{2}_{(0,0)}\oplus U^{2}_{(2,1)}$
} \tabularnewline
  \hline
  \centering $(3_1 \# 6_1)_{\text{rev}}$ & \centering \includegraphics[width=0.25\textwidth]{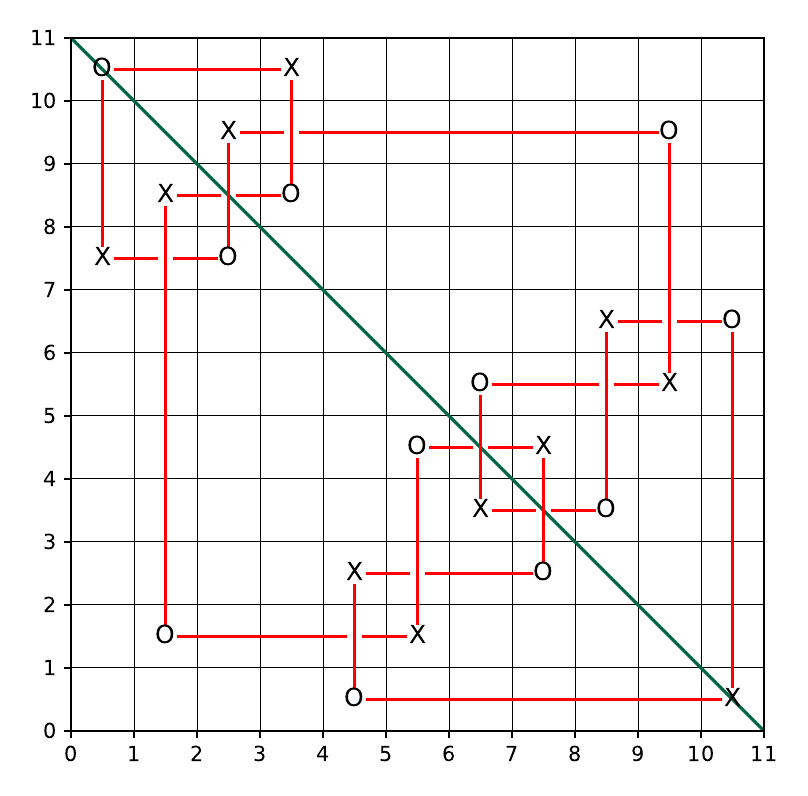} & \parbox[c]{\linewidth}{\centering\tiny
\textcolor{blue}{Polynomial Invariant}\\
$-2t^{-2} - t^{-1} + 5 + t - 2t^{2}$\\[0.1in]
\textcolor{blue}{Real grid homology - hat version}\\
$(-2,-1)^{2}\oplus (-1,-1)^{2}\oplus (-1,0)\oplus (0,0)^{5}\oplus (1,0)\oplus (2,1)^{2}$\\[0.1in]
\textcolor{blue}{Real grid homology - minus version}\\
$U^{\infty}_{(0,0)}\oplus \bigg(U^{2}_{(2,1)}\bigg)^{2}\oplus \bigg(U_{(-1,-1)}\bigg)^{2}\oplus U_{(0,0)}\oplus U_{(1,0)}$
} \tabularnewline
  \hline
  \centering $3_1 \# 6_2$ & \centering \includegraphics[width=0.25\textwidth]{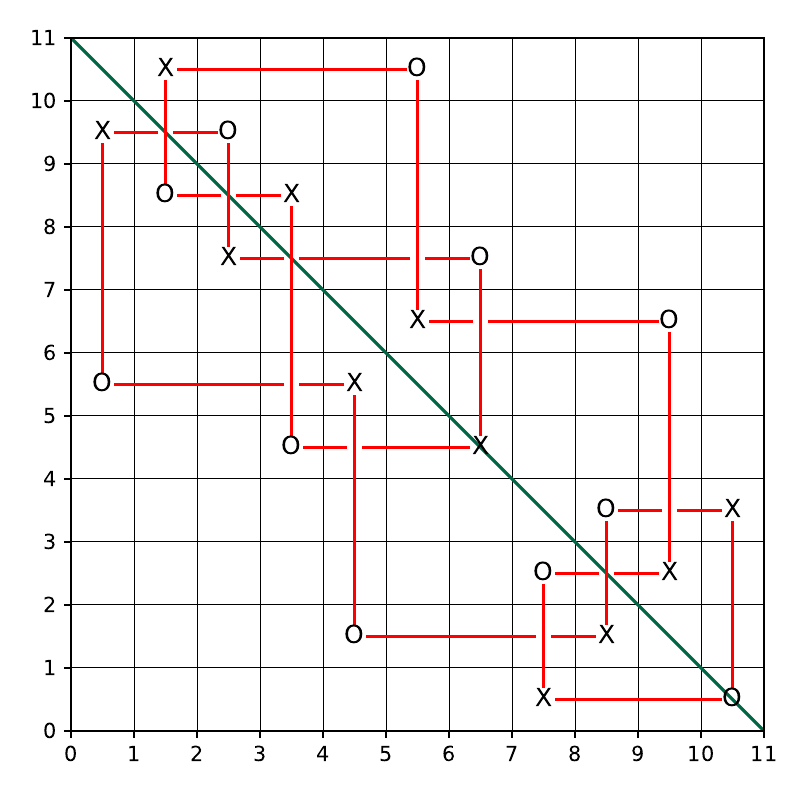} & \parbox[c]{\linewidth}{\centering\tiny
\textcolor{blue}{Polynomial Invariant}\\
$t^{-3} - 3t^{-1} + 1 + 3t - t^{3}$\\[0.1in]
\textcolor{blue}{Real grid homology - hat version}\\
$(-3,-2)\oplus (-2,-2)\oplus (-2,-1)\oplus (-1,-1)^{3}\oplus (0,0)\oplus (1,0)^{3}\oplus (3,1)$\\[0.1in]
\textcolor{blue}{Real grid homology - minus version}\\
$U^{\infty}_{(-1,-1)}\oplus U^{2}_{(1,0)}\oplus U^{2}_{(3,1)}\oplus U_{(-2,-2)}\oplus U_{(-1,-1)}\oplus U_{(1,0)}$
} \tabularnewline
  \hline
  \centering $(3_1 \# 6_2)'$ & \centering \includegraphics[width=0.25\textwidth]{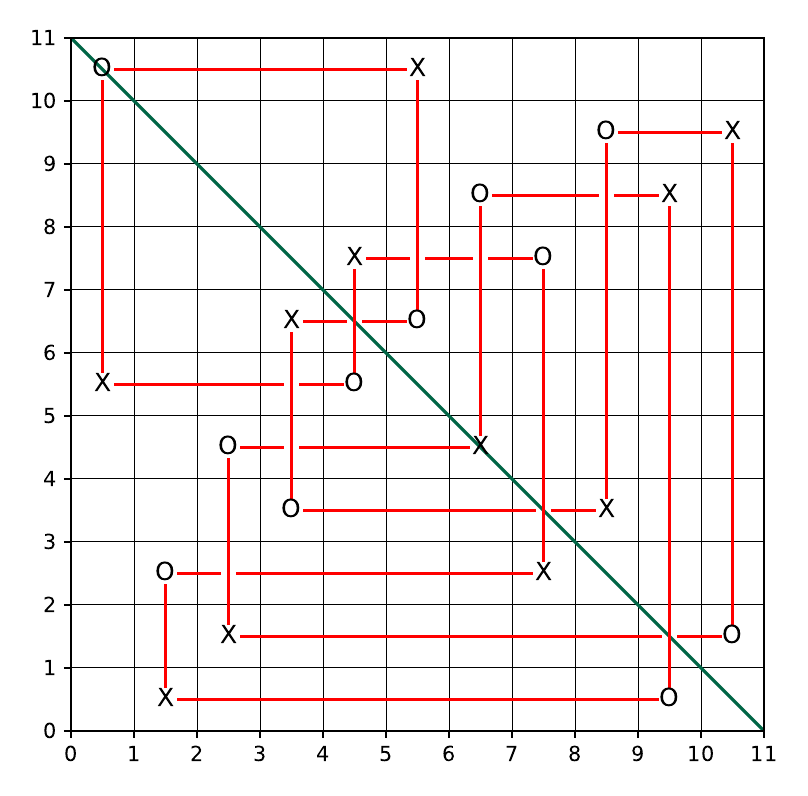} & \parbox[c]{\linewidth}{\centering\tiny
\textcolor{blue}{Polynomial Invariant}\\
$-t^{-3} + 5t^{-1} + 1 - 5t + t^{3}$\\[0.1in]
\textcolor{blue}{Real grid homology - hat version}\\
$(-3,-1)\oplus (-2,-1)\oplus (-2,0)\oplus (-1,0)^{5}\oplus (0,0)^{2}\oplus (0,1)\oplus (1,1)^{5}\oplus (3,2)$\\[0.1in]
\textcolor{blue}{Real grid homology - minus version}\\
$U^{\infty}_{(1,1)}\oplus \bigg(U^{2}_{(1,1)}\bigg)^{2}\oplus U^{2}_{(3,2)}\oplus U_{(-2,-1)}\oplus U_{(-1,0)}\oplus \bigg(U_{(0,0)}\bigg)^{2}\oplus U_{(1,1)}$
} \tabularnewline
  \hline
  \centering $(3_1 \# 6_2)'_{\text{rev}}$ & \centering \includegraphics[width=0.25\textwidth]{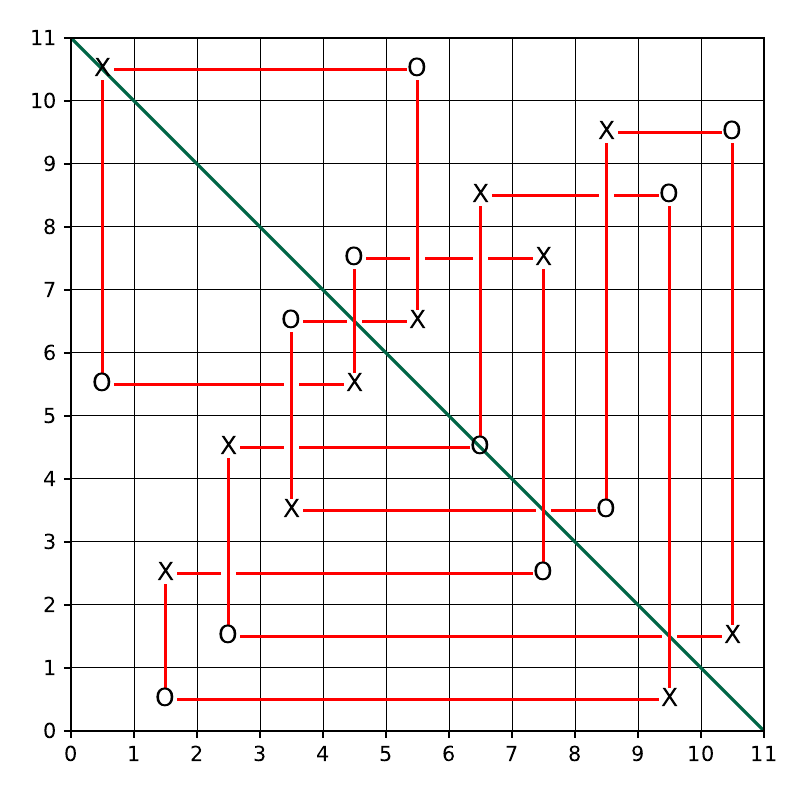} & \parbox[c]{\linewidth}{\centering\tiny
\textcolor{blue}{Polynomial Invariant}\\
$-t^{-3} + 5t^{-1} + 1 - 5t + t^{3}$\\[0.1in]
\textcolor{blue}{Real grid homology - hat version}\\
$(-3,-1)\oplus (-1,0)^{5}\oplus (0,0)^{2}\oplus (0,1)\oplus (1,1)^{5}\oplus (2,1)\oplus (2,2)\oplus (3,2)$\\[0.1in]
\textcolor{blue}{Real grid homology - minus version}\\
$U^{\infty}_{(1,1)}\oplus \bigg(U_{(0,0)}\bigg)^{2}\oplus U_{(1,1)}\oplus U_{(2,1)}\oplus U_{(3,2)}\oplus U^{2}_{(-1,0)}\oplus \bigg(U^{2}_{(1,1)}\bigg)^{2}$
} \tabularnewline
  \hline
  \centering $(3_1 \# 6_2)_{\text{rev}}$ & \centering \includegraphics[width=0.25\textwidth]{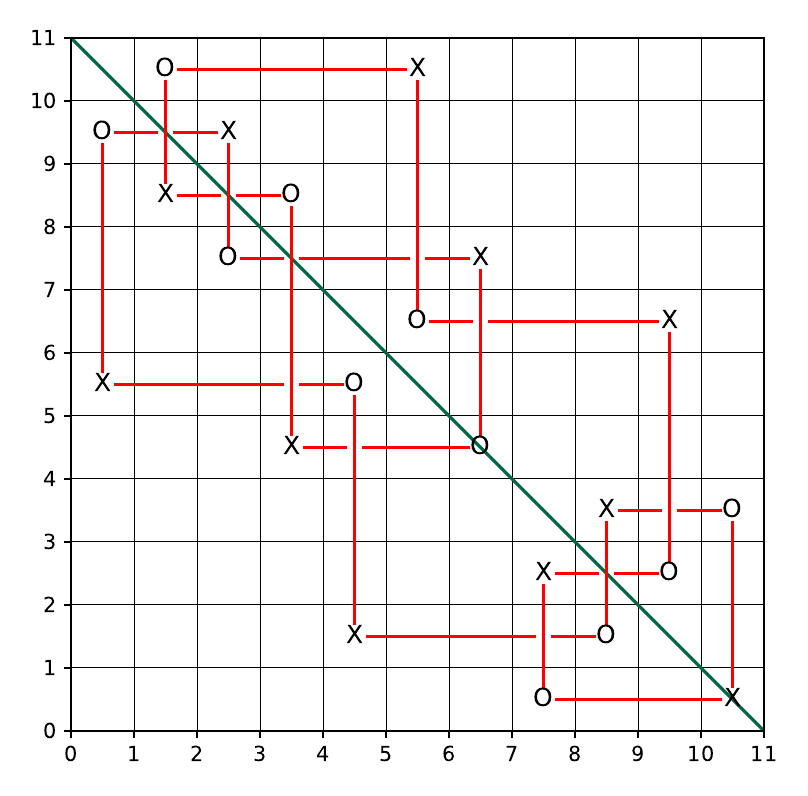} & \parbox[c]{\linewidth}{\centering\tiny
\textcolor{blue}{Polynomial Invariant}\\
$t^{-3} - 3t^{-1} + 1 + 3t - t^{3}$\\[0.1in]
\textcolor{blue}{Real grid homology - hat version}\\
$(-3,-2)\oplus (-1,-1)^{3}\oplus (0,0)\oplus (1,0)^{3}\oplus (2,0)\oplus (2,1)\oplus (3,1)$\\[0.1in]
\textcolor{blue}{Real grid homology - minus version}\\
$U^{\infty}_{(-1,-1)}\oplus U_{(1,0)}\oplus U_{(2,0)}\oplus U_{(3,1)}\oplus U^{2}_{(-1,-1)}\oplus U^{2}_{(1,0)}$
} \tabularnewline
  \hline
  \centering $(4_1 \# 4_1)_{\text{swap}}$ & \centering \includegraphics[width=0.25\textwidth]{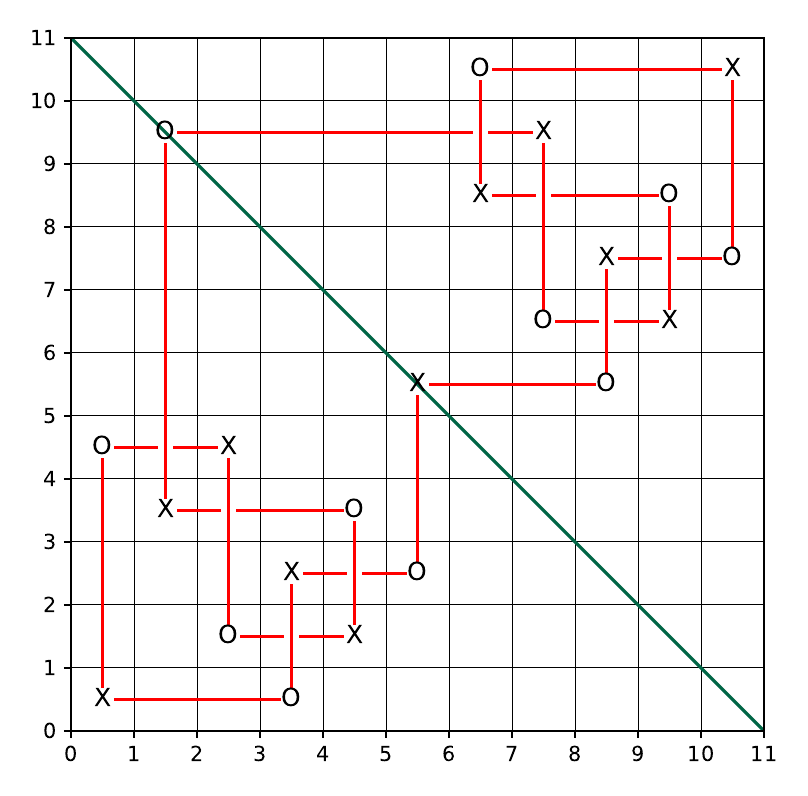} & \parbox[c]{\linewidth}{\centering\tiny
\textcolor{blue}{Polynomial Invariant}\\
$-t^{-2} + 3 - t^{2}$\\[0.1in]
\textcolor{blue}{Real grid homology - hat version}\\
$(-2,-1)\oplus (0,0)^{3}\oplus (2,1)$\\[0.1in]
\textcolor{blue}{Real grid homology - minus version}\\
$U^{\infty}_{(0,0)}\oplus U^{2}_{(0,0)}\oplus U^{2}_{(2,1)}$
} \tabularnewline
  \hline
  \centering $4_1 \# 4_1$ & \centering \includegraphics[width=0.25\textwidth]{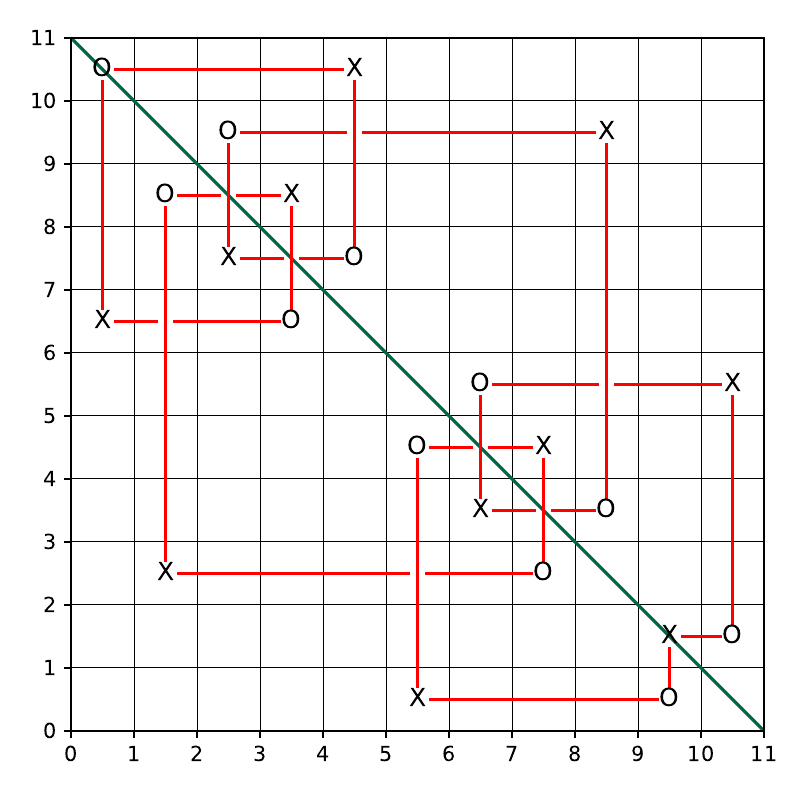} & \parbox[c]{\linewidth}{\centering\tiny
\textcolor{blue}{Polynomial Invariant}\\
$t^{-2} - 2t^{-1} - 1 + 2t + t^{2}$\\[0.1in]
\textcolor{blue}{Real grid homology - hat version}\\
$(-2,-2)\oplus (-1,-1)^{2}\oplus (0,-1)\oplus (1,0)^{2}\oplus (2,0)$\\[0.1in]
\textcolor{blue}{Real grid homology - minus version}\\
$U^{\infty}_{(-2,-2)}\oplus U_{(0,-1)}\oplus U_{(2,0)}\oplus U^{2}_{(1,0)}$
} \tabularnewline
  \hline
  \centering $4_1 \# 5_1$ & \centering \includegraphics[width=0.25\textwidth]{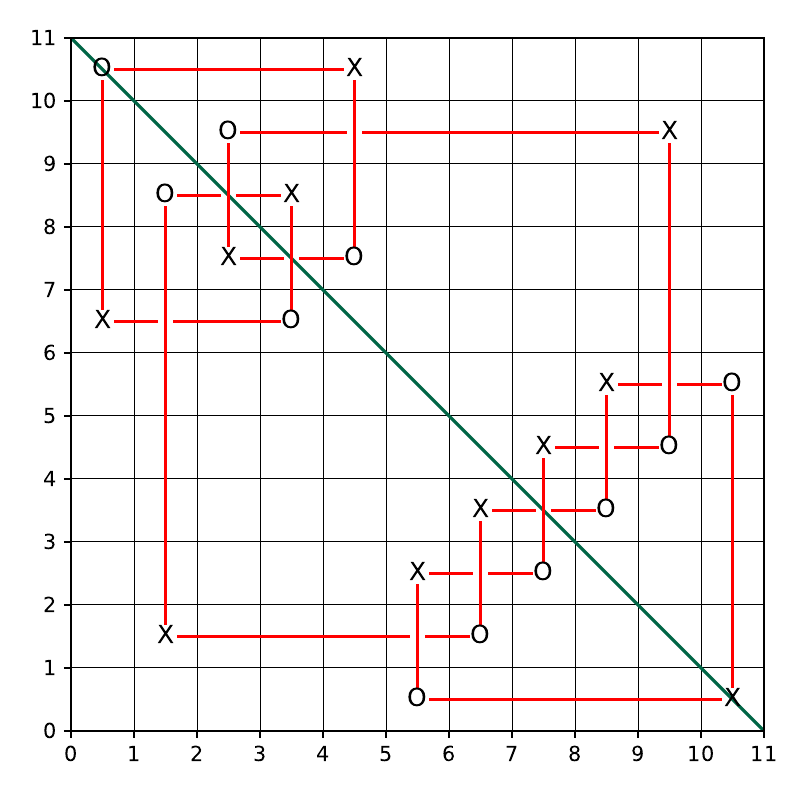} & \parbox[c]{\linewidth}{\centering\tiny
\textcolor{blue}{Polynomial Invariant}\\
$-t^{-3} + 3t^{-1} + 1 - 3t + t^{3}$\\[0.1in]
\textcolor{blue}{Real grid homology - hat version}\\
$(-3,-1)\oplus (-1,0)^{3}\oplus (0,0)\oplus (1,1)^{3}\oplus (2,1)\oplus (2,2)\oplus (3,2)$\\[0.1in]
\textcolor{blue}{Real grid homology - minus version}\\
$U^{\infty}_{(1,1)}\oplus U_{(0,0)}\oplus U_{(2,1)}\oplus U_{(3,2)}\oplus U^{2}_{(-1,0)}\oplus U^{2}_{(1,1)}$
} \tabularnewline
  \hline
  \centering $(4_1 \# 5_1)_{\text{rev}}$ & \centering \includegraphics[width=0.25\textwidth]{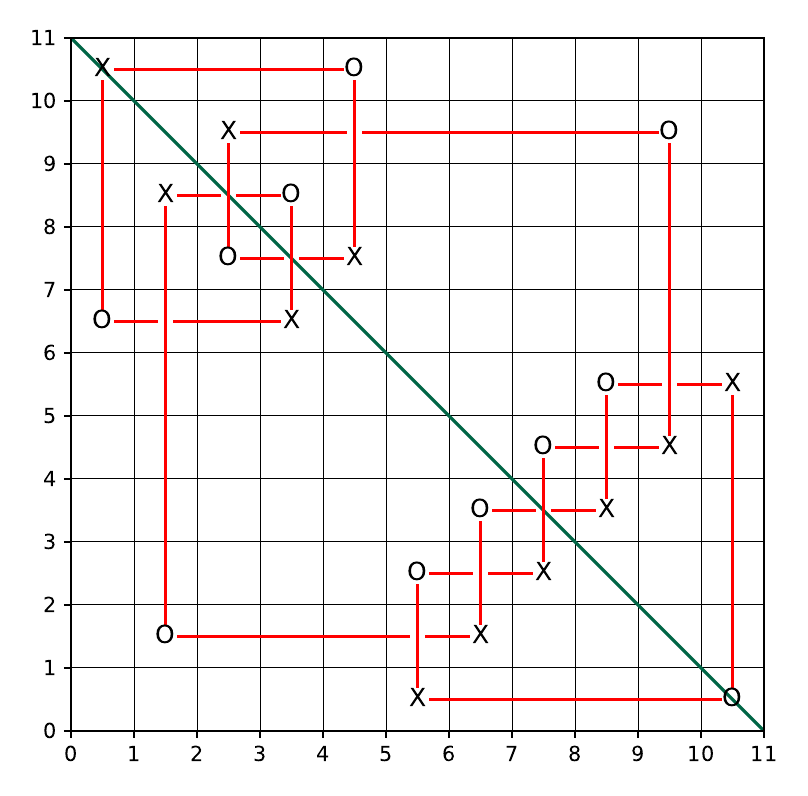} & \parbox[c]{\linewidth}{\centering\tiny
\textcolor{blue}{Polynomial Invariant}\\
$-t^{-3} + 3t^{-1} + 1 - 3t + t^{3}$\\[0.1in]
\textcolor{blue}{Real grid homology - hat version}\\
$(-3,-1)\oplus (-2,-1)\oplus (-2,0)\oplus (-1,0)^{3}\oplus (0,0)\oplus (1,1)^{3}\oplus (3,2)$\\[0.1in]
\textcolor{blue}{Real grid homology - minus version}\\
$U^{\infty}_{(1,1)}\oplus U^{2}_{(1,1)}\oplus U^{2}_{(3,2)}\oplus U_{(-2,-1)}\oplus U_{(-1,0)}\oplus U_{(0,0)}$
} \tabularnewline
  \hline
  \centering $5_1 \# 5_1$ & \centering \includegraphics[width=0.25\textwidth]{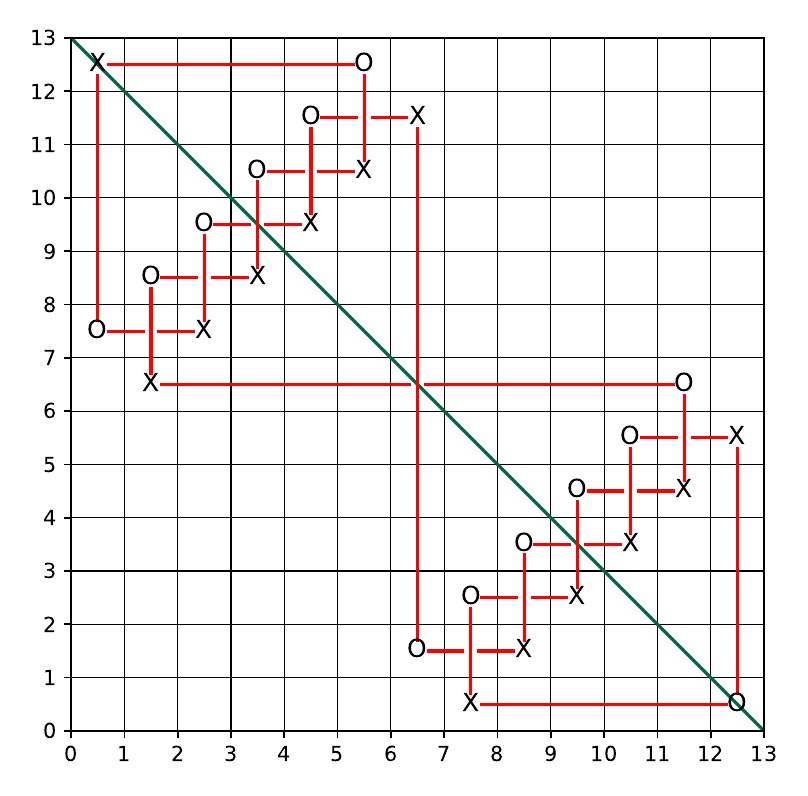} & \parbox[c]{\linewidth}{\centering\tiny
\textcolor{blue}{Polynomial Invariant}\\
$t^{-4} + 2t^{-3} - t^{-2} - 4t^{-1} + 1 + 4t - t^{2} - 2t^{3} + t^{4}$\\[0.1in]
\textcolor{blue}{Real grid homology - hat version}\\
$(-4,0)\oplus (-3,0)^{2}\oplus (-2,1)\oplus (-1,1)^{4}\oplus (0,2)\oplus (1,2)^{4}\oplus (2,3)\oplus (3,3)^{2}\oplus (4,4)$
} \tabularnewline
  \hline
  \centering $(5_1 \# 5_1)_{\text{swap}}$ & \centering \includegraphics[width=0.25\textwidth]{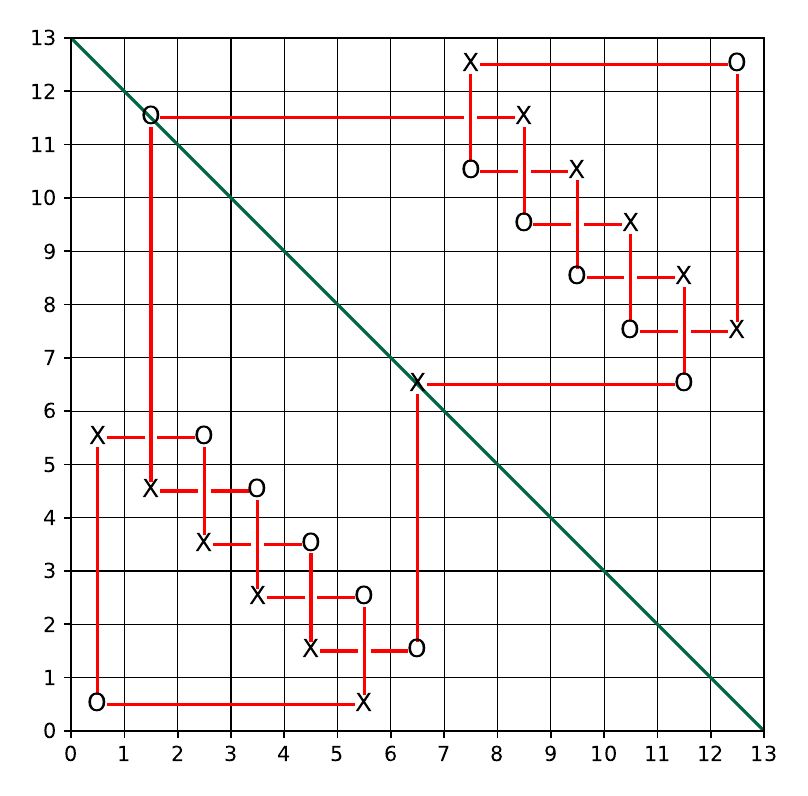} & \parbox[c]{\linewidth}{\centering\tiny
\textcolor{blue}{Polynomial Invariant}\\
$t^{-4} - t^{-2} + 1 - t^{2} + t^{4}$\\[0.1in]
\textcolor{blue}{Real grid homology - hat version}\\
$(-4,-4)\oplus (-2,-3)\oplus (0,-2)\oplus (2,-1)\oplus (4,0)$
} \tabularnewline
  \hline
  \centering $(5_2 \# 5_2)_{\text{swap}}$ & \centering \includegraphics[width=0.25\textwidth]{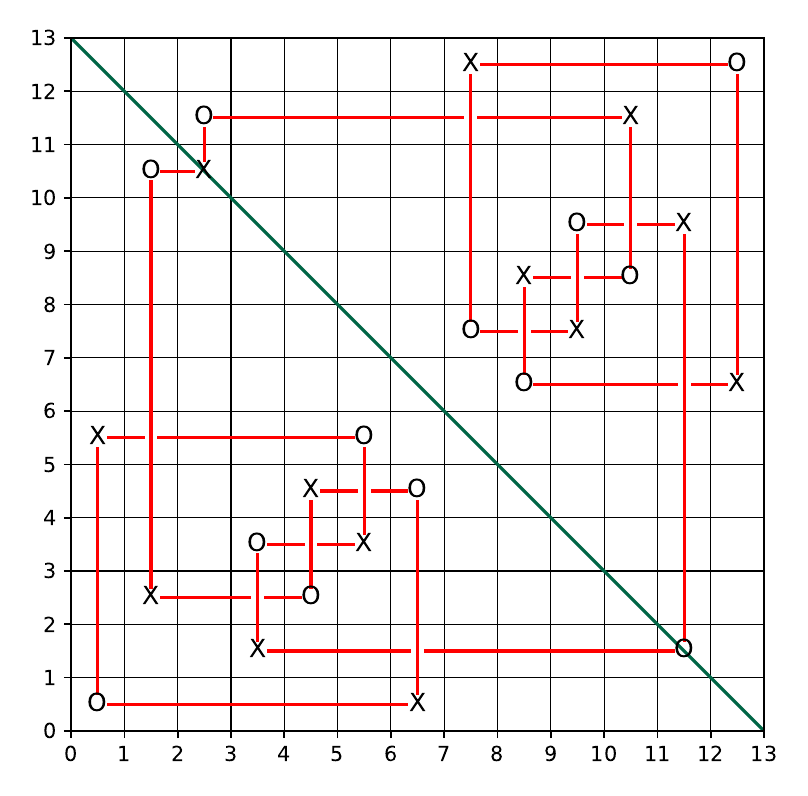} & \parbox[c]{\linewidth}{\centering\tiny
\textcolor{blue}{Polynomial Invariant}\\
$2t^{-2} - 3 + 2t^{2}$\\[0.1in]
\textcolor{blue}{Real grid homology - hat version}\\
$(-2,-2)^{2}\oplus (0,-1)^{3}\oplus (2,0)^{2}$
} \tabularnewline
  \hline
  \centering $(3_1 \# 3_1) \# (3_1 \# 3_1)$ & \centering \includegraphics[width=0.25\textwidth]{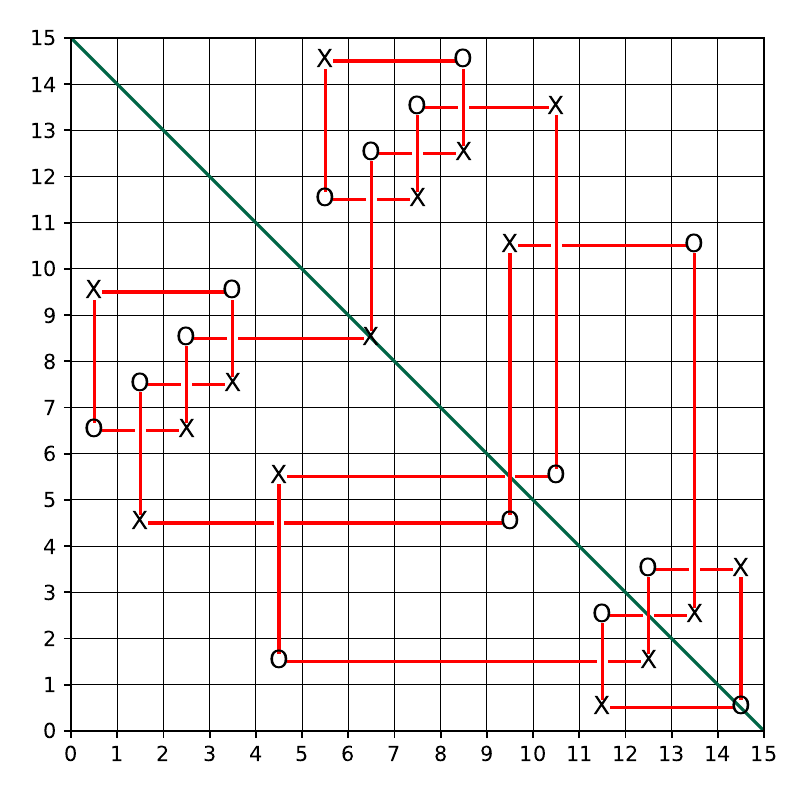} & \parbox[c]{\linewidth}{\centering\tiny
\textcolor{blue}{Polynomial Invariant}\\
$t^{-4} + 2t^{-3} - 2t^{-2} - 4t^{-1} + 3 + 4t - 2t^{2} - 2t^{3} + t^{4}$
} \tabularnewline
  \hline
\end{longtable}


\bibliographystyle{plain}
\bibliography{bibliography}

@article{Konno2024,
author = {Konno, Hokuto and Miyazawa, Jin and Taniguchi, Masaki},
title = {{Involutions, links, and Floer cohomologies}},
journal = {Journal of Topology},
volume = {17},
number = {2},
pages = {e12340},
doi = {https://doi.org/10.1112/topo.12340},
url = {https://londmathsoc.onlinelibrary.wiley.com/doi/abs/10.1112/topo.12340},
eprint = {https://londmathsoc.onlinelibrary.wiley.com/doi/pdf/10.1112/topo.12340},
year = {2024}
}

@article{SarkarWang2010,
	ISSN = {0003486X},
	URL = {http://www.jstor.org/stable/20752237},
	author = {Sucharit Sarkar and Jiajun Wang},
	journal = {Annals of Mathematics},
	number = {2},
	pages = {1213--1236},
	publisher = {Annals of Mathematics},
	title = {{An algorithm for computing some Heegaard Floer homologies}},
	urldate = {2024-12-21},
	volume = {171},
	year = {2010}
}

@article{li2022monopolefloerhomologyreal,
      title={Monopole {F}loer Homology and Real Structures}, 
      author={Jiakai Li},
      year={2022},
      journal={arXiv preprint arXiv:2211.10768},
}

@misc{li2024realmonopolesspectralsequence,
	title={Real monopoles and a spectral sequence from Khovanov homology}, 
	author={Jiakai Li},
	year={2024},
	journal={arXiv preprint arXiv:2406.00152},
	eprint={2406.00152},
	archivePrefix={arXiv},
	primaryClass={math.GT},
	url={https://arxiv.org/abs/2406.00152}, 
}

@article{Lobb2021ArefinementofKhovanovhomology,
author = {Lobb, Andrew and Watson, Liam},
year = {2021},
month = {07},
pages = {1861-1917},
title = {{A refinement of Khovanov homology}},
volume = {25},
journal = {Geometry \& Topology},
doi = {10.2140/gt.2021.25.1861}
}

@article{hirasawa2023equivariant,
  title={The equivariant genera of marked strongly invertible knots associated with $2$-bridge knots},
  author={Hirasawa, Mikami and Hiura, Ryota and Sakuma, Makoto},
  journal={arXiv preprint arXiv:2312.06156},
  year={2023}
}

@incollection {hirasawainvariantseifertsurface,
    AUTHOR = {Hirasawa, Mikami and Hiura, Ryota and Sakuma, Makoto},
     TITLE = {Invariant {S}eifert surfaces for strongly invertible knots},
 BOOKTITLE = {Essays in geometry---dedicated to {N}orbert {A}'{C}ampo},
    SERIES = {IRMA Lect. Math. Theor. Phys.},
    VOLUME = {34},
     PAGES = {325--349},
 PUBLISHER = {EMS Press, Berlin},
      YEAR = {[2023] \copyright 2023},
      ISBN = {978-3-98547-024-2; 978-3-98547-524-7},
   MRCLASS = {57K10 (57M60)},
  MRNUMBER = {4631272},
MRREVIEWER = {Masakazu\ Teragaito},
}

@article{Holomorphicdisksandknotinvariants,
	title = {Holomorphic disks and knot invariants},
	volume = {186},
	issn = {0001-8708},
	url = {http://www.sciencedirect.com/science/article/pii/S0001870803002330},
	doi = {10.1016/j.aim.2003.05.001},
	language = {en},
	number = {1},
	urldate = {2020-06-30},
	journal = {Advances in Mathematics},
	author = {Ozsv\'{a}th, Peter and Szab\'{o}, Zolt\'{a}n},
	month = aug,
	year = {2004},
	pages = {58--116}
}

@article{ozsvath2004holomorphic,
  title={Holomorphic disks and topological invariants for closed three-manifolds},
  author={Ozsv{\'a}th, Peter and Szab{\'o}, Zolt{\'a}n},
  journal={Annals of Mathematics},
  pages={1027--1158},
  year={2004},
  publisher={JSTOR}
}

@article{Rasmussen,
	title = {Floer homology and knot complements},
	url = {http://arxiv.org/abs/math/0306378},
	urldate = {2020-06-30},
	journal = {arXiv:math/0306378},
	author = {Rasmussen, Jacob},
	month = jun,
	year = {2003},
	note = {arXiv: math/0306378}
}

@article{zemke2019link,
  title={Link cobordisms and functoriality in link {F}loer homology},
  author={Zemke, Ian},
  journal={Journal of Topology},
  volume={12},
  number={1},
  pages={94--220},
  year={2019},
  publisher={Wiley Online Library}
}

@article{Kang_2026,
	title={{Cables of the figure-eight knot via real Frøyshov invariants}},
	volume={30},
	ISSN={1465-3060},
	url={http://dx.doi.org/10.2140/gt.2026.30.781},
	DOI={10.2140/gt.2026.30.781},
	number={2},
	journal={Geometry \& Topology},
	publisher={Mathematical Sciences Publishers},
	author={Kang, Sungkyung and Park, JungHwan and Taniguchi, Masaki},
	year={2026},
	month=Mar, pages={781–833} 
}

@article{DKMPS,
	URL = {https://doi.org/10.1007/s00222-024-01286-w},
	author = {Irving Dai and Sungkyung Kang and Abhishek Mallick and  JungHwan Park and  Matthew Stoffregen},
	journal = {Inventiones mathematicae},
	pages = {371--390},
	publisher = {Springer},
	title = {The $(2,1)$-cable of the figure-eight knot is not smoothly slice},
	volume = {238},
	year = {2024}
}

@article {MOST2007grid,
    AUTHOR = {Manolescu, Ciprian and Ozsv\'{a}th, Peter and Szab\'{o},
              Zolt\'{a}n and Thurston, Dylan},
     TITLE = {On combinatorial link {F}loer homology},
   JOURNAL = {Geom. Topol.},
  FJOURNAL = {Geometry \& Topology},
    VOLUME = {11},
      YEAR = {2007},
     PAGES = {2339--2412},
      ISSN = {1465-3060,1364-0380},
   MRCLASS = {57R58 (57M25 57M27)},
  MRNUMBER = {2372850},
MRREVIEWER = {Thomas\ E.\ Mark},
       DOI = {10.2140/gt.2007.11.2339},
       URL = {https://doi.org/10.2140/gt.2007.11.2339},
}

@article {MOS2009knot,
    AUTHOR = {Manolescu, Ciprian and Ozsv\'{a}th, Peter and Sarkar,
              Sucharit},
     TITLE = {A combinatorial description of knot {F}loer homology},
   JOURNAL = {Ann. of Math. (2)},
  FJOURNAL = {Annals of Mathematics. Second Series},
    VOLUME = {169},
      YEAR = {2009},
    NUMBER = {2},
     PAGES = {633--660},
      ISSN = {0003-486X,1939-8980},
   MRCLASS = {57R58 (57M27)},
  MRNUMBER = {2480614},
MRREVIEWER = {Burak\ Ozbagci},
       DOI = {10.4007/annals.2009.169.633},
       URL = {https://doi.org/10.4007/annals.2009.169.633},
}

@book {OSS2015grid,
    AUTHOR = {Ozsv\'{a}th, Peter S. and Stipsicz, Andr\'{a}s I. and
              Szab\'{o}, Zolt\'{a}n},
     TITLE = {Grid homology for knots and links},
    SERIES = {Mathematical Surveys and Monographs},
    VOLUME = {208},
 PUBLISHER = {American Mathematical Society, Providence, RI},
      YEAR = {2015},
     PAGES = {x+410},
      ISBN = {978-1-4704-1737-6},
   MRCLASS = {57M27 (53D10 57M25 57R17 57R58)},
  MRNUMBER = {3381987},
MRREVIEWER = {Paolo\ Ghiggini},
       DOI = {10.1090/surv/208},
       URL = {https://doi.org/10.1090/surv/208},
}

@article {Sarkar2011tau,
    AUTHOR = {Sarkar, Sucharit},
     TITLE = {Grid diagrams and the {O}zsv\'{a}th-{S}zab\'{o} tau-invariant},
   JOURNAL = {Math. Res. Lett.},
  FJOURNAL = {Mathematical Research Letters},
    VOLUME = {18},
      YEAR = {2011},
    NUMBER = {6},
     PAGES = {1239--1257},
      ISSN = {1073-2780,1945-001X},
   MRCLASS = {57M27 (57M25 57N70)},
  MRNUMBER = {2915478},
MRREVIEWER = {Burak\ Ozbagci},
       DOI = {10.4310/MRL.2011.v18.n6.a13},
       URL = {https://doi.org/10.4310/MRL.2011.v18.n6.a13},
}

@article{Alishahi_2020,
   title={{Knot Floer homology and the unknotting number}},
   volume={24},
   ISSN={1465-3060},
   url={http://dx.doi.org/10.2140/gt.2020.24.2435},
   DOI={10.2140/gt.2020.24.2435},
   number={5},
   journal={Geometry \& Topology},
   publisher={Mathematical Sciences Publishers},
   author={Alishahi, Akram and Eftekhary, Eaman},
   year={2020},
   month=dec, pages={2435–2469} }

@article{hendricks2025noterealheegaardfloer,
      title={{A note on real Heegaard Floer homology and localization}}, 
      author={Kristen Hendricks},
      year={2025},
      journal={arXiv preprint arXiv:2508.03897},
      eprint={2508.03897},
      archivePrefix={arXiv},
      primaryClass={math.GT},
      url={https://arxiv.org/abs/2508.03897}, 
}

@article{miyazawa2023gaugetheoreticinvariantembedded,
      title={A gauge theoretic invariant of embedded surfaces in $4$-manifolds and exotic $P^2$-knots}, 
      author={Jin Miyazawa},
      year={2023},
      journal={arXiv preprint arXiv:2312.0204},
      eprint={2312.02041},
      archivePrefix={arXiv},
      primaryClass={math.GT},
      url={https://arxiv.org/abs/2312.02041}, 
}

@article{miyazawa2025satelliteformularealseibergwitten,
      title={A satellite formula for real Seiberg-Witten Floer homotopy types}, 
      author={Jin Miyazawa and JungHwan Park and Masaki Taniguchi},
      year={2025},
      journal={arXiv preprint arXiv:2504.03270},
      eprint={2504.03270},
      archivePrefix={arXiv},
      primaryClass={math.GT},
      url={https://arxiv.org/abs/2504.03270}, 
}

@article{boyle2025equivariantunknottingnumbersstrongly,
      title={Equivariant unknotting numbers of strongly invertible knots}, 
      author={Keegan Boyle and Wenzhao Chen},
      year={2025},
      journal={arXiv preprint arXiv:2412.09797},
      archivePrefix={arXiv},
      primaryClass={math.GT},
      url={https://arxiv.org/abs/2412.09797}, 
}

@article{OSHFKand4ballgenus,
author = {Ozsvath, Peter and Szabó, Zoltán},
year = {2003},
month = {02},
pages = {1364-380},
title = {{Knot Floer homology and the four-ball genus}},
volume = {7},
journal = {Geometry \& Topology},
doi = {10.2140/gt.2003.7.615}
}

@article{DMSequivariantknotandHFK,
author = {Dai, Irving and Mallick, Abhishek and Stoffregen, Matthew},
year = {2023},
month = {09},
pages = {1167-1236},
title = {{Equivariant knots and knot Floer homology}},
volume = {16},
journal = {Journal of Topology},
doi = {10.1112/topo.12312}
}

@article{Sarkar_2015movingbasept,
   title={{Moving basepoints and the induced automorphisms of link Floer homology}},
   volume={15},
   ISSN={1472-2747},
   url={http://dx.doi.org/10.2140/agt.2015.15.2479},
   DOI={10.2140/agt.2015.15.2479},
   number={5},
   journal={Algebraic \& Geometric Topology},
   publisher={Mathematical Sciences Publishers},
   author={Sarkar, Sucharit},
   year={2015},
   month=nov, pages={2479–2515} }

@article{YXHFLR2,
      title={{More versions of Real Link Floer homology}}, 
      author={Yonghan Xiao},
      year={2026},
      journal={In preparation}
}

@article{LOrealbordered,
	title={{Real bordered Floer homology}}, 
	author={Robert Lipshitz and Peter Ozsváth},
	year={2026},
	journal={arXiv preprint arXiv:2604.20565}, 
}

@article{GM_real_naturality,
	title={{Naturality in real Heegaard Floer theory}}, 
	author={Gary Guth and Ciprian Manolescu},
	journal={arXiv preprint arXiv:2608.00256},
	year={2026},
	eprint={2608.00256},
	archivePrefix={arXiv},
	primaryClass={math.GT},
	url={https://arxiv.org/abs/2608.00256}, 
}

@article{BGX,
	title={{Real sutured Heegaard Floer homology}}, 
	author={Fraser Binns and Gary Guth and Yonghan Xiao},
	year={2026},
	journal={arXiv preprint arXiv:2607.11082},
	eprint={2607.11082},
	archivePrefix={arXiv},
	primaryClass={math.GT},
	url={https://arxiv.org/abs/2607.11082}, 
}

@article{collari2026polynomialinvariantstronglyinvolutive,
	title={A Polynomial Invariant of Strongly Involutive Links}, 
	author={Carlo Collari and Paolo Lisca},
	journal={arXiv preprint arXiv:2606.21471},
	year={2026},
	eprint={2606.21471},
	archivePrefix={arXiv},
	primaryClass={math.GT},
	url={https://arxiv.org/abs/2606.21471}, 
}

@article{BDMS26,
	title={Reidemeister and movie moves for involutive links}, 
	author={Maciej Borodzik and Irving Dai and Abhishek Mallick and Matthew Stoffregen},
	year={2026},
	journal={arXiv preprint arXiv:2604.26369},
	eprint={2604.26369},
	archivePrefix={arXiv},
	primaryClass={math.GT},
	url={https://arxiv.org/abs/2604.26369}, 
}

@article{BGCOMPUTATIONSOFHEEGAARD-FLOERKNOTHOMOLOGY,
author = {Baldwin, John A. and Gillam, William D.},
title = {{Computations of Heegaard-Floer knot homology}},
journal = {Journal of Knot Theory and Its Ramifications},
volume = {21},
number = {08},
pages = {1250075},
year = {2012},
doi = {10.1142/S0218216512500757},
URL = {https://doi.org/10.1142/S0218216512500757},
eprint = { https://doi.org/10.1142/S0218216512500757},
}

@InProceedings{Involutionsandisotopiesoflensspaces,
author="Hodgson, Craig
and Rubinstein, J. H.",
editor="Rolfsen, Dale",
title="Involutions and isotopies of lens spaces",
booktitle="Knot Theory and Manifolds",
year="1985",
publisher="Springer Berlin Heidelberg",
address="Berlin, Heidelberg",
pages="60--96",
isbn="978-3-540-39616-1"
}

@article{Realopenbooksandrealcontactstructures,
url = {https://doi.org/10.1515/advgeom-2015-0031},
title = {Real open books and real contact structures},
author = {Ferit Öztürk and Nermin Salepci},
pages = {415--431},
volume = {15},
number = {4},
journal = {Advances in Geometry},
doi = {doi:10.1515/advgeom-2015-0031},
year = {2015},
lastchecked = {2025-08-07}
}

@article{Sakuma,
author = {Sakuma, Makoto},
year = {1986},
month = {01},
pages = {},
title = {On strongly invertible knots},
journal = {Algebraic and topological theories (Kinosaki, 1984)}
}

@article{conway2025simplyslicingknots,
      title={Simply slicing knots}, 
      author={Anthony Conway and Patrick Orson and Mark Pencovitch},
      year={2025},
      journal={arXiv preprint arXiv:2507.00431},
}

@article{HMinvolutive2017,
author = {Kristen Hendricks and Ciprian Manolescu},
title = {{Involutive Heegaard Floer homology}},
volume = {166},
journal = {Duke Mathematical Journal},
number = {7},
publisher = {Duke University Press},
pages = {1211 -- 1299},
year = {2017},
doi = {10.1215/00127094-3793141},
URL = {https://doi.org/10.1215/00127094-3793141}
}

@article{borodzik2025khovanovhomologyequivariantsurfaces,
      title={Khovanov homology and equivariant surfaces}, 
      author={Maciej Borodzik and Irving Dai and Abhishek Mallick and Matthew Stoffregen},
      year={2025},
      journal={arXiv preprint arXiv:2507.13642},
      eprint={2507.13642},
      archivePrefix={arXiv},
      primaryClass={math.GT},
      url={https://arxiv.org/abs/2507.13642}, 
}

@article{Zemke2019absolutegradinginHFL,
author = {Zemke, Ian},
year = {2019},
month = {03},
pages = {207-323},
title = {{Link cobordisms and absolute gradings on link Floer homology}},
volume = {10},
journal = {Quantum Topology},
doi = {10.4171/QT/124}
}

@article{Zemkequasistabandbasepointmoving,
author = {Ian Zemke},
title = {{Quasistabilization and basepoint moving maps in link Floer homology}},
volume = {17},
journal = {Algebraic \& Geometric Topology},
number = {6},
publisher = {MSP},
pages = {3461 -- 3518},
year = {2017},
doi = {10.2140/agt.2017.17.3461},
URL = {https://doi.org/10.2140/agt.2017.17.3461}
}

@article{Edmonds1983GroupAO,
  title={Group actions on fibered three-manifolds},
  author={Allan L. Edmonds and Charles Livingston},
  journal={Commentarii Mathematici Helvetici},
  year={1983},
  volume={58},
  pages={529-542},
  url={https://api.semanticscholar.org/CorpusID:51573484}
}

@article{Murasugi1971OnPK,
  title={On periodic knots},
  author={Kunio Murasugi},
  journal={Commentarii Mathematici Helvetici},
  year={1971},
  volume={46},
  pages={162-174},
  url={https://api.semanticscholar.org/CorpusID:120483606}
}

@article{Trotter1961PeriodicAO,
  title={Periodic automorphisms of groups and knots},
  author={Hale F. Trotter},
  journal={Duke Mathematical Journal},
  year={1961},
  volume={28},
  pages={553-557},
  url={https://api.semanticscholar.org/CorpusID:122115092}
}

@article{Sano_2025,
   title={{Involutive Khovanov homology and equivariant knots}},
   volume={25},
   ISSN={1472-2747},
   url={http://dx.doi.org/10.2140/agt.2025.25.5059},
   DOI={10.2140/agt.2025.25.5059},
   number={8},
   journal={Algebraic \& Geometric Topology},
   publisher={Mathematical Sciences Publishers},
   author={Sano, Taketo},
   year={2025},
   month=nov, pages={5059–5111} }

@article{Boylechennegativeamphichiral,
author = {Boyle, Keegan and Chen, Wenzhao},
year = {2023},
month = {08},
pages = {581-622},
title = {{Negative amphichiral knots and the half-Conway polynomial}},
volume = {40},
journal = {Revista Matemática Iberoamericana},
doi = {10.4171/RMI/1442}
}

@article{BIequiv4genusofsiandperidicknots,
author = {Boyle, Keegan and Issa, Ahmad},
title = {Equivariant 4-genera of strongly invertible and periodic knots},
journal = {Journal of Topology},
volume = {15},
number = {3},
pages = {1635-1674},
doi = {https://doi.org/10.1112/topo.12254},
url = {https://londmathsoc.onlinelibrary.wiley.com/doi/abs/10.1112/topo.12254},
eprint = {https://londmathsoc.onlinelibrary.wiley.com/doi/pdf/10.1112/topo.12254},
year = {2022}
}

@article{MillerPowellequivslicegenera,
author = {Miller, Allison N. and Powell, Mark},
title = {Strongly invertible knots, equivariant slice genera, and an equivariant algebraic concordance group},
journal = {Journal of the London Mathematical Society},
volume = {107},
number = {6},
pages = {2025-2053},
doi = {https://doi.org/10.1112/jlms.12732},
url = {https://londmathsoc.onlinelibrary.wiley.com/doi/abs/10.1112/jlms.12732},
eprint = {https://londmathsoc.onlinelibrary.wiley.com/doi/pdf/10.1112/jlms.12732},
year = {2023},
}

@inbook{Montesinos+1975+227+260,
url = {https://doi.org/10.1515/9781400881512-015},
title = {{Surgery on Links and Double branched covers of $S^3$}},
booktitle = {Knots, Groups and 3-Manifolds (AM-84), Volume 84},
author = {Jose M. Montesinos},
publisher = {Princeton University Press},
address = {Princeton},
pages = {227--260},
doi = {doi:10.1515/9781400881512-015},
isbn = {9781400881512},
year = {1975},
lastchecked = {2025-11-20},
}

@article{boyle2025involutionss4,
      title={{Involutions on $S^4$}}, 
      author={Keegan Boyle and Wenzhao Chen and Anthony Conway},
      year={2025},
      journal={arXiv preprint arXiv:2512.22724},
}

@article{guth2025real,
	title={Real {H}eegaard {F}loer Homology},
	author={Guth, Gary and Manolescu, Ciprian},
	journal={arXiv preprint arXiv:2504.09034},
	year={2025},
}

@article{dai2026exoticdiskssingularinstanton,
	title={Exotic disks and singular instanton Floer homology}, 
	author={Irving Dai and Abhishek Mallick and Masaki Taniguchi},
	year={2026},
	journal={arXiv preprint arXiv:2606.05819},
	eprint={2606.05819},
	archivePrefix={arXiv},
	primaryClass={math.GT},
	url={https://arxiv.org/abs/2606.05819}, 
}

@misc{ZhenkunLioythonprgram,
	author = {Zhenkun Li},
	note = {\url{https://github.com/Zhenkun-Li/RealGridHomology}},
	title = {}
}

@article{DiPrisaequivalgconcordance,
	author = {Di Prisa, Alessio},
	title = {Equivariant algebraic concordance of strongly invertible knots},
	journal = {Journal of Topology},
	volume = {17},
	number = {4},
	pages = {e70006},
	doi = {https://doi.org/10.1112/topo.70006},
	url = {https://londmathsoc.onlinelibrary.wiley.com/doi/abs/10.1112/topo.70006},
	eprint = {https://londmathsoc.onlinelibrary.wiley.com/doi/pdf/10.1112/topo.70006},
	year = {2024}
}

@article{DiprisaTheequivariantconcordancegroupisnotabelian,
	author = {Di Prisa, Alessio},
	title = {The equivariant concordance group is not abelian},
	journal = {Bulletin of the London Mathematical Society},
	volume = {55},
	number = {1},
	pages = {502-507},
	doi = {https://doi.org/10.1112/blms.12741},
	url = {https://londmathsoc.onlinelibrary.wiley.com/doi/abs/10.1112/blms.12741},
	eprint = {https://londmathsoc.onlinelibrary.wiley.com/doi/pdf/10.1112/blms.12741},
	year = {2023}
}

@article{MallickHFKandsurgeryonequivaiantknot,
	author = {Mallick, Abhishek},
	title = {Knot Floer homology and surgery on equivariant knots},
	journal = {Journal of Topology},
	volume = {17},
	number = {4},
	pages = {e70001},
	doi = {https://doi.org/10.1112/topo.70001},
	url = {https://londmathsoc.onlinelibrary.wiley.com/doi/abs/10.1112/topo.70001},
	eprint = {https://londmathsoc.onlinelibrary.wiley.com/doi/pdf/10.1112/topo.70001},
	year = {2024}
}

@article{JZstabilizationdistancefromHFL,
	author = {Juhász, András and Zemke, Ian},
	title = {{Stabilization distance bounds from link Floer homology}},
	journal = {Journal of Topology},
	volume = {17},
	number = {2},
	pages = {e12338},
	doi = {https://doi.org/10.1112/topo.12338},
	url = {https://londmathsoc.onlinelibrary.wiley.com/doi/abs/10.1112/topo.12338},
	eprint = {https://londmathsoc.onlinelibrary.wiley.com/doi/pdf/10.1112/topo.12338},
	year = {2024}
}

@article{Myers1981HomologyW,
	title={Homology \$3\$-spheres which admit no PL involutions.},
	author={John Myers},
	journal={Pacific Journal of Mathematics},
	year={1981},
	volume={94},
	pages={379-384},
	url={https://api.semanticscholar.org/CorpusID:125011828}
}

@misc{knotinfo,
	Author = {Livingston, Charles and Moore, Allison H.},
	howpublished = {URL: \url{knotinfo.org}},
	Month = {Current Month},
	Title = {KnotInfo: Table of Knot Invariants},
	Year = {Current Year},
}
\end{document}